\newtheorem{theorem}{Theorem}[section]
\newtheorem{lemma}[theorem]{Lemma}
\newtheorem{proposition}[theorem]{Proposition}
\newtheorem{corollary}[theorem]{Corollary}
\theoremstyle{definition} 
\newtheorem{definition}[theorem]{Definition}
\newtheorem{example}[theorem]{Example}
\newtheorem{convention}[theorem]{Convention}
\newtheorem{procedure}[theorem]{Procedure}
\newtheorem{remark}[theorem]{Remark}
\DeclareMathOperator{\ADBimod}{ADBimod}
\DeclareMathOperator{\can}{can}
\DeclareMathOperator{\forget}{forget}
\DeclareMathOperator{\fs}{fs}
\DeclareMathOperator{\fu}{fu}
\DeclareMathOperator{\HFK}{HFK}
\DeclareMathOperator{\Hom}{Hom}
\DeclareMathOperator{\hs}{hs}
\DeclareMathOperator{\id}{id}
\DeclareMathOperator{\low}{lower}
\DeclareMathOperator{\midd}{mid}
\DeclareMathOperator{\mult}{mult}
\DeclareMathOperator{\oo}{oo}
\DeclareMathOperator{\opp}{opp}
\DeclareMathOperator{\OSz}{OSz}
\DeclareMathOperator{\ou}{ou}
\DeclareMathOperator{\Rep}{Rep}
\DeclareMathOperator{\st}{st}
\DeclareMathOperator{\ungr}{ungr}
\DeclareMathOperator{\uo}{uo}
\DeclareMathOperator{\upp}{upper}
\DeclareMathOperator{\uu}{uu}
\renewcommand{\u}{\mathrm{u}}
\renewcommand{\o}{\mathrm{o}}
\newcommand{\C}{\mathbb{C}}
\newcommand{\F}{\mathbb{F}}
\newcommand{\Q}{\mathbb{Q}} 
\newcommand{\R}{\mathbb{R}}
\newcommand{\Z}{\mathbb{Z}}
\newcommand{\One}{\mathbf{1}}
\newcommand{\Ib}{\mathbf{I}}
\newcommand{\A}{\mathcal{A}}
\newcommand{\B}{\mathcal{B}}
\newcommand{\E}{\mathcal{E}}
\newcommand{\Fc}{\mathcal{F}}
\newcommand{\Sc}{\mathcal{S}}
\newcommand{\U}{\mathcal{U}}
\newcommand{\Zc}{\mathcal{Z}}
\newcommand{\BrCob}{\mathbf{BrCob}}
\newcommand{\co}{\colon}
\newcommand{\down}{\downarrow}
\newcommand{\gl}{\mathfrak{gl}}
\newcommand{\SC}{\mathcal{SC}}
\newcommand{\Udot}{\dot{U}}
\newcommand{\up}{\uparrow}
\newcommand{\ootimes}{ 
  \mathbin{
    \mathchoice
      {\buildcircleotimes{\displaystyle}}
      {\buildcircleotimes{\textstyle}}
      {\buildcircleotimes{\scriptstyle}}
      {\buildcircleotimes{\scriptscriptstyle}}
  } 
}
\newcommand\buildcircleotimes[1]{%
  \begin{tikzpicture}[baseline=(X.base), inner sep=0, outer sep=0]
    \node[draw,circle] (X)  {$#1\otimes$};
  \end{tikzpicture}%
}
\title[Trivalent vertices and bordered HFK in the standard basis]{Trivalent vertices and bordered knot Floer homology in the standard basis}
\author[Andrew Manion]{Andrew Manion}
\address{Department of Mathematics, North Carolina State University, 2108 SAS Hall, Raleigh, NC 27695}
\email{ajmanion@ncsu.edu}
\begin{document}

\begin{abstract}

We define new algebras, local bimodules, and bimodule maps in the spirit of Ozsv{\'a}th--Szab{\'o}'s bordered knot Floer homology. We equip them with the structure of 2-representations of the categorified negative half $\U^-$ of $U_q(\gl(1|1))$, 1-morphisms of such, and 2-morphisms respectively, and show that they categorify representations of $U_q(\gl(1|1)^-)$ and maps between them. Unlike with Ozsv{\'a}th--Szab{\'o}'s algebras, the algebras considered here can be built from a higher tensor product operation recently introduced by Rouquier and the author. 

Our bimodules are all motivated by holomorphic disk counts in Heegaard diagrams; for positive and negative crossings, the bimodules can also be expressed as mapping cones involving a singular-crossing bimodule and the identity bimodule. In fact, they arise from an action of the monoidal category of Soergel bimodules via Rouquier complexes in the usual way, the first time (to the author's knowledge) such an expression has been obtained for braiding bimodules in Heegaard Floer homology. 

Furthermore, the singular crossing bimodule naturally factors into two bimodules for trivalent vertices; such bimodules have not appeared in previous bordered-Floer approaches to knot Floer homology. The action of the Soergel category comes from an action of categorified quantum $\gl(2)$ on the 2-representation 2-category of $\U^-$ in line with the ideas of skew Howe duality, where the trivalent vertex bimodules are associated to 1-morphisms $\mathcal{E},\mathcal{F}$ in categorified quantum $\gl(2)$. 
\end{abstract}

\maketitle

\tableofcontents

\section{Introduction}

Heegaard Floer homology, due to Ozsv{\'a}th--Szab{\'o} \cite{HFOrig}, is a powerful set of invariants for low-dimensional manifolds; it is of modern interest in topology, representation theory, and physics. Heegaard Floer homology encodes a symplectic approach to Seiberg--Witten theory, which is closely related to categorified Witten--Reshetikhin--Turaev invariants associated to the Lie superalgebra $\gl(1|1)$. Indeed, Heegaard Floer homology contains a homological knot invariant (knot Floer homology or $\HFK$, \cite{HFKOrig,RasmussenThesis}) that categorifies the quantum $\gl(1|1)$ knot invariant (the Alexander polynomial), analogously to how Khovanov homology categorifies the Jones polynomial although the definitions of $\HFK$ and Khovanov homology are quite different.

Ozsv{\'a}th--Szab{\'o}'s theory of bordered $\HFK$ \cite{OSzNew,OSzNewer,OSzHolo,OSzLinks}, based on the ideas of bordered Floer homology \cite{LOTBorderedOrig}, is a major advance in the efficient computation of $\HFK$ (see \cite{HFKCalc}) as well as its algebraic structure and relationship to $\gl(1|1)$ categorification \cite{ManionDecat,ManionKS,LaudaManion,Hypertoric1}. If $V$ is the vector representation of $U_q(\gl(1|1))$, Ozsv{\'a}th--Szab{\'o} define algebras categorifying arbitrary mixed tensor products of $V$ and its dual $V^*$. They also define bimodules categorifying the intertwining maps associated to arbitrary tangles, recovering $\HFK$ for closed knots. Basic idempotents of Ozsv{\'a}th--Szab{\'o}'s algebras naturally correspond to canonical or crystal basis elements of the underlying representations; in particular, they do not correspond to elements of the standard tensor-product basis. Ellis--Petkova--V{\'e}rtesi have a related theory with similar properties \cite{PetkovaVertesi,EPV}, called tangle Floer homology; Tian \cite{TianUT} and Sartori \cite{SartoriCat} have categorifications of the $U_q(\gl(1|1))$ representation $V^{\otimes n}$ but do not recover $\HFK$ from their constructions.\footnote{For bordered $\HFK$, Tian's theory, and Sartori's theory, the quantum group that acts on $V^{\otimes n}$ is not $U_q(\gl(1|1))$ itself but rather a modified version. In Ellis--Petkova--Vert{\'e}si's theory, $U_q(\gl(1|1))$ itself acts, but on a representation that is slightly different from $V^{\otimes n}$.}

In general, Heegaard Floer homology associates chain complexes to Heegaard diagrams. Bordered Floer homology decomposes such diagrams along a set of non-intersecting cuts; to each cut one assigns an algebra, and to the pieces one assigns bimodules, recovering the chain complex of the original diagram after tensoring together the bimodules for local pieces over the boundary algebras. When applied to Heegaard diagrams arising from knot projections, the Heegaard diagram cuts often arise from cuts in the knot projection as in Figure~\ref{fig:OneCut}.

\begin{figure}
\includegraphics[scale=0.8]{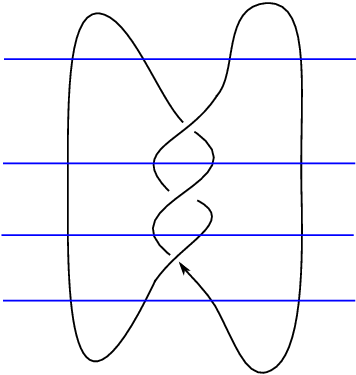}
\caption{Knot diagram cuts typically analyzed using bordered Floer homology.}
\label{fig:OneCut}
\end{figure}

Extending one level down, cornered Floer homology \cite{DM,DLM} decomposes a Heegaard diagram along two transverse cuts. In \cite{ManionRouquier}, Rapha{\"e}l Rouquier and the author reformulated and extended the cornered-Floer algebra gluing operation in terms of a tensor product construction $\ootimes$ for 2-representations of a monoidal dg category $\U$ introduced by Khovanov \cite{KhovOneHalf}. When applied to Heegaard diagrams arising from knot projections, one would like cornered Floer homology to recover the algebra for $n$ tangle endpoints (say positively oriented) as a tensor power $\ootimes^n$ of the algebra for a single positive endpoint, and one would like to recover the bordered-Floer bimodule for (e.g.) a crossing between strands $i,i+1$ out of $n$ strands from a simpler bimodule for a truly-local crossing with only two strands (see Figure~\ref{fig:TwoCuts}). 

\begin{figure}
\includegraphics[scale=0.8]{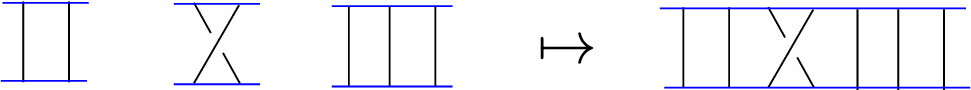}
\caption{Type of knot-diagram gluing where cornered Floer homology should be relevant.}
\label{fig:TwoCuts}
\end{figure}

The algebras appearing in Ozsv{\'a}th--Szab{\'o}'s bordered HFK cannot arise as tensor powers $\ootimes^n$; indeed, the basic idempotents of a tensor power $\ootimes^n$ would correspond to standard tensor-product basis elements of tensor-product representations, not canonical or crystal basis elements. For similar reasons, Petkova--V{\'e}rtesi's tangle Floer algebras \cite{PetkovaVertesi,EPV} cannot arise as tensor powers $\ootimes^n$; the algebras appearing in Tian \cite{TianUT} are closely related to tensor power algebras.

The above three categorifications of $V^{\otimes n}$ are closely related to the bordered-Floer ``strands algebras'' $\A(\Zc)$ for (generalized) arc diagrams\footnote{i.e. chord diagrams} $\Zc$ (Sartori's categorification is more algebraic but is closely related \cite{LaudaManion} to Ozsv{\'a}th--Szab{\'o}'s bordered $\HFK$). Specifically, the diagrams $\Zc$ in question appear as three of the four vertices of a square of diagrams; see Figure~\ref{fig:FourSquare}. The horizontal edges of the square are diagrammatic equivalences (sequences of arcslides) giving derived equivalences for the corresponding algebras $\A(\Zc)$. The vertical edges of the square are the operation $\mathcal{Z} \leftrightarrow \Zc_*$ from \cite{LOTMorphism} which should, by analogy with the results of \cite{LOTMorphism}, give Koszul dualities for the corresponding algebras $\A(\Zc)$.

\begin{figure}
\includegraphics[scale=0.7]{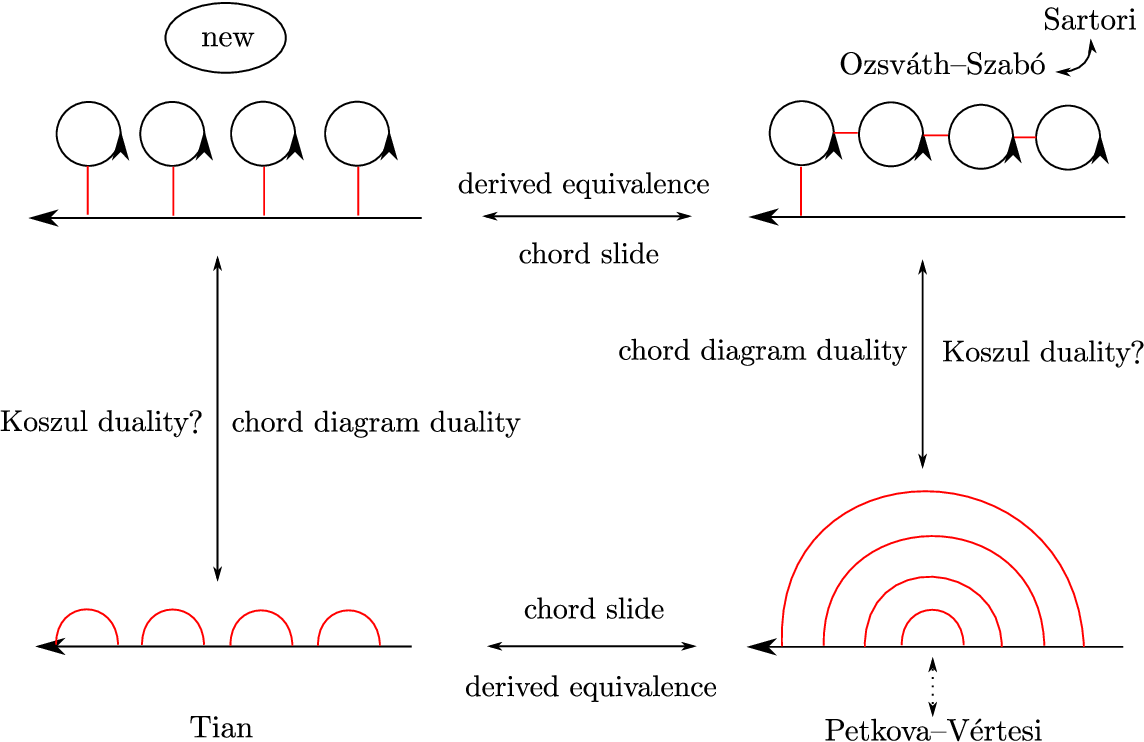}
\caption{Square of four chord diagrams (for $n=4$).}
\label{fig:FourSquare}
\end{figure}

The diagram $\Zc$ in the top left corner of Figure~\ref{fig:FourSquare} is the one remaining diagram in this square that has not yet been used to categorify $V^{\otimes n}$ or define bordered-Floer bimodules. Like the diagram related to Tian's theory (bottom left corner), the algebra $\A(\Zc)$ for the top-left diagram $\Zc$ arises as a tensor power $\ootimes^n$.

In this paper we initiate the study of categorifications of $\gl(1|1)$ representations and bordered theories for $\HFK$ based on the diagram $\Zc$ in the top-left corner of Figure~\ref{fig:FourSquare}. In order to work with bigraded algebras and bimodules, we define a differential bigraded 2-category $\U^-$ (based on $\U$) that categorifies the idempotented form $\Udot_q(\gl(1|1)^-)$ of the negative half of $U_q(\gl(1|1))$, as well as a bigraded version of the tensor product $\ootimes$ from \cite{ManionRouquier}.

\begin{theorem}[cf. Section~\ref{sec:BigradedTensor}, Section~\ref{sec:BigradedTensorDecat}]\label{thm:IntroCat}
The bigraded tensor product operation for 2-representations of $\U^-$ is well-defined and categorifies\footnote{See Section~\ref{sec:SplitGG} for the notion of categorification we use here.} the usual tensor product of representations of $\Udot_q(\gl(1|1)^-)$.
\end{theorem}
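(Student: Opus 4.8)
The plan is to establish the theorem in two logically separate pieces, matching the two sections it cites. First I would verify well-definedness of the bigraded tensor product $\ootimes$ for 2-representations of $\U^-$: the underlying (singly-graded) construction already exists in \cite{ManionRouquier}, so the content here is checking that the extra homological/Alexander grading is preserved by every piece of the data — the bimodule underlying $M \ootimes N$, its differential, and the $\U^-$-action 2-morphisms. Concretely I would recall the explicit formula for $\ootimes$ as a box-tensor-type construction over the algebra $\U$ (or its negative-half variant), show that each structure map is bigraded of the correct degree, and confirm that the $A_\infty$ or dg relations, already known to hold ungraded, are homogeneous. The bigrading on $\U^-$ itself must first be pinned down so that the generating 1- and 2-morphisms carry the degrees dictated by $\Udot_q(\gl(1|1)^-)$; then associativity and unitality of $\ootimes$ up to coherent bigraded equivalence follow from the ungraded statements in \cite{ManionRouquier} together with a degree bookkeeping check.

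Second I would prove the decategorification statement. Using the notion of split Grothendieck group (or whatever ``categorification'' means in Section~\ref{sec:SplitGG}, as promised by the footnote), I would identify $K_0$ of the 2-category $\U^-$ with $\Udot_q(\gl(1|1)^-)$, which should either be given earlier in the paper or follow from Khovanov's computation for $\U$ \cite{KhovOneHalf} adapted to the negative half and refined by the second grading to produce the $\Z[q,q^{-1}]$-module structure. Then for a 2-representation $M$ of $\U^-$, its split Grothendieck group $K_0(M)$ is a module over $\Udot_q(\gl(1|1)^-)$, and I must show the natural map $K_0(M) \otimes_{\Z[q,q^{-1}]} K_0(N) \to K_0(M \ootimes N)$ is an isomorphism of modules. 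The key computational input is that the indecomposable objects (with their gradings) of $M \ootimes N$ are, up to grading shift, exactly tensor products of indecomposables of $M$ and $N$ — this is where the explicit combinatorial description of the tensor-power algebra $\A(\Zc) \cong \A^{\ootimes n}$ for the top-left diagram $\Zc$ does the work, since its basic idempotents literally are the standard tensor-product basis elements, so the bimodule $M \ootimes N$ is ``free enough'' that its $K_0$ splits as a tensor product.

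I expect the main obstacle to be the decategorification half, specifically controlling $K_0(M \ootimes N)$: the tensor product $\ootimes$ is not simply a tensor product of modules over a tensor product of algebras, but a more subtle box-tensor/derived construction, so one must rule out that passing to $K_0$ loses or creates classes (e.g. from differentials in the tensored complex that could make indecomposables decompose, or from grading collisions). I would handle this by exhibiting an explicit bigraded basis of $M \ootimes N$ indexed by pairs of basis elements of $M$ and $N$ — available because the relevant algebra is a genuine tensor power — and checking the differential is filtered so that the associated graded is a direct sum of the expected pieces; this forces the $K_0$ splitting. A secondary, more routine obstacle is compatibility of all grading shifts with the comultiplication on $\Udot_q(\gl(1|1)^-)$: one must verify that the $\U^-$-action on $K_0(M \ootimes N)$ induced categorically agrees with the one coming from the coproduct of $\Udot_q(\gl(1|1)^-)$ acting on $K_0(M) \otimes K_0(N)$, which amounts to checking the theorem on generating 1-morphisms $\E, \F$ and their images under $\ootimes$, reducing everything to a finite check.
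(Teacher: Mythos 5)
Your first half matches the paper's route: the paper takes the ungraded construction of \cite{ManionRouquier} as given, specifies the bigrading by an explicit shift ($q^{\omega'_1+\omega'_2}$ and $[\omega'_2-1]$ on the dual bimodule factor), realizes ${_{\omega-\alpha}}F_{\omega}$ as the mapping cone of a degree-zero map $u$, and then verifies degree by degree that the left-action map $w$ and the endomorphism $\tau$ (which must have $\deg^h=-1$) are homogeneous. That is exactly the ``degree bookkeeping on the known ungraded structure maps'' you describe.

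Your second half would eventually work but is aimed at difficulties that do not exist in the paper's setup, and it invokes an input the paper does not need. The notion of categorification here (Section~\ref{sec:SplitGG}) is deliberately elementary: $G_0(A)$ is the split Grothendieck group with basis \emph{the objects of $A$} (guaranteed by the augmentation $\epsilon$), and a bimodule decategorifies to the matrix of $q$-graded Euler characteristics of its $(S',S)$-components. Since the objects of $A_1\ootimes A_2$ are by definition pairs $(S_1,S_2)$, the identification $G_0(A_1\ootimes A_2)\cong G_0(A_1)\otimes G_0(A_2)$ as super vector spaces is immediate; there is no risk of ``losing or creating classes,'' no filtration or associated-graded argument, and no role for the strands-algebra isomorphism $\A_{1,\ldots,1}\cong\A(\Zc^{\st}_{1,\ldots,1})$ (the theorem is for arbitrary right finite, right bounded 2-representations, and its proof never mentions strands algebras). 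The entire content of the decategorification is the single computation you relegate to a ``secondary, more routine obstacle'': showing $[F]=[F_1]\otimes 1+q^{H_1+H_2}\otimes[F_2]$. The paper does this by forgetting the differential on the mapping cone, so that ${_{\omega-\alpha}}F_{\omega}$ splits as a direct sum of $q^{\omega'_1+\omega'_2}\bigl((A_1)_{\omega'}\otimes{_{\omega''-\alpha}}(F_2)_{\omega''}\bigr)[\omega'_2]$ and ${_{\omega'-\alpha}}(F_1)_{\omega'}\otimes(A_2)_{\omega''}$, whereupon the Euler characteristic yields $(-1)^{\omega'_2}q^{\omega'_1+\omega'_2}$ on the second-factor term --- precisely the super sign and the $q^{H_1+H_2}$ coefficient in the coproduct. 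Also note that $\U^-$ has only the 1-morphisms $f_\omega$ (no $\E$), so the ``finite check on generators'' reduces to this one identity for $[F]$; and the paper does not compute $K_0$ of $\U^-$ itself at any point.
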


We then define bigraded algebras $\A_K$ for $K \geq 1$, equipped with 2-actions of $\U^-$, that categorify the representations $\wedge_q^K V$ of $\Udot_q(\gl(1|1)^-)$. The algebra $\A_1$ agrees with the strands algebra of the $n=1$ diagram in the top left corner of Figure~\ref{fig:FourSquare}; the other algebras $\A_K$ are new but closely related to $\A_1$. Taking bigraded tensor products, we get the following corollary.

\begin{corollary}[cf. Corollary~\ref{cor:GeneralKDecat}]
The algebra
\[
\A_{K_1,\ldots,K_n} \coloneqq \A_{K_1} \ootimes \cdots \ootimes \A_{K_n},
\]
with its 2-action of $\U^-$, categorifies the representation $\wedge_q^{K_1} V \otimes \cdots \otimes \wedge_q^{K_n} V$ with its action of $\Udot_q(\gl(1|1)^-)$.
\end{corollary}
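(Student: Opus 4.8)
The plan is to deduce the corollary from Theorem~\ref{thm:IntroCat} together with the (already-established) fact that each $\A_K$, with its $2$-action of $\U^-$, categorifies $\wedge_q^K V$. First I would record the precise statement of what ``categorifies'' means here, referring to Section~\ref{sec:SplitGG}: a $2$-representation of $\U^-$ presents a representation of $\Udot_q(\gl(1|1)^-)$ on the appropriate split Grothendieck group, with the bigrading inducing the $q$-action, and the $\U^-$-$2$-action inducing the action of the $E$- and $F$-type generators of $\Udot_q(\gl(1|1)^-)$. The corollary then asks us to identify the split Grothendieck group of $\A_{K_1} \ootimes \cdots \ootimes \A_{K_n}$, as a representation of $\Udot_q(\gl(1|1)^-)$, with $\wedge_q^{K_1} V \otimes \cdots \otimes \wedge_q^{K_n} V$.

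Next I would set up an induction on $n$. The base case $n = 1$ is exactly the statement that $\A_{K_1}$ categorifies $\wedge_q^{K_1} V$, which is assumed. For the inductive step, write $\A_{K_1,\ldots,K_n} = \A_{K_1,\ldots,K_{n-1}} \ootimes \A_{K_n}$. By the inductive hypothesis, the $2$-representation $\A_{K_1,\ldots,K_{n-1}}$ of $\U^-$ categorifies $\wedge_q^{K_1} V \otimes \cdots \otimes \wedge_q^{K_{n-1}} V$; by the $n=1$ case, $\A_{K_n}$ categorifies $\wedge_q^{K_n} V$. Now apply Theorem~\ref{thm:IntroCat}: the bigraded tensor product $\ootimes$ of $2$-representations is well-defined, and on split Grothendieck groups it computes the tensor product of the corresponding $\Udot_q(\gl(1|1)^-)$-representations. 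Hence $\A_{K_1,\ldots,K_{n-1}} \ootimes \A_{K_n}$ categorifies $\left( \wedge_q^{K_1} V \otimes \cdots \otimes \wedge_q^{K_{n-1}} V \right) \otimes \wedge_q^{K_n} V$, which by associativity of the tensor product of representations equals $\wedge_q^{K_1} V \otimes \cdots \otimes \wedge_q^{K_n} V$. This closes the induction.

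A couple of bookkeeping points deserve attention. One must check that the $2$-action of $\U^-$ on $\A_{K_1,\ldots,K_n}$ referred to in the corollary is indeed the one produced by iterating the tensor-product construction, rather than some a priori different action; this is a matter of unwinding the definition of $\ootimes$ on $2$-representations and observing that it is designed precisely so that the induced action on the Grothendieck group is the coproduct action. One should also confirm that Theorem~\ref{thm:IntroCat} applies to the specific $2$-representations $\A_K$ — i.e., that they satisfy whatever finiteness or boundedness hypotheses are needed for the tensor product and its decategorification to behave well — but since the theorem is stated as an unconditional well-definedness and decategorification result for $2$-representations of $\U^-$, and the $\A_K$ are among the $2$-representations under consideration, this should be immediate.

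The main obstacle, such as it is, is not in the logical structure of the argument but in verifying that the hypotheses packaged into the notion of categorification (Section~\ref{sec:SplitGG}) are stable under the tensor product in the form we need, and that the split Grothendieck group of $\A_{K_1} \ootimes \cdots \ootimes \A_{K_n}$ is genuinely free of the expected rank over $\Z[q,q^{-1}]$ rather than having unexpected relations — in other words, that no collapse occurs when passing to $K_0$. This is exactly the content that Theorem~\ref{thm:IntroCat} is designed to supply, so once that theorem is in hand the corollary follows formally by the induction above; the proof is therefore essentially a citation of Theorem~\ref{thm:IntroCat} plus the $n=1$ case, assembled inductively.
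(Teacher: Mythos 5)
Your proof is correct and follows the same route as the paper: the paper derives Corollary~\ref{cor:GeneralKDecat} immediately from the proposition in Section~\ref{sec:BigradedTensorDecat} (that $G_0(A_1 \ootimes A_2) \cong G_0(A_1) \otimes G_0(A_2)$ as $\Udot_q(\gl(1|1)^-)$-modules for right finite 2-representations) together with the construction of $\A_K$, which identifies $G_0(\A_K)$ with $\wedge_q^K V$. Your explicit induction on $n$ and the bookkeeping about right finiteness being preserved under $\ootimes$ just spell out what the paper leaves implicit.
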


While analogous categorification results often require detailed computations to show that the action of the quantum group on the decategorification is correct, here we get the result immediately from properties of the bigraded tensor product. The results of \cite{ManionRouquier} imply that $\A_{1,\ldots,1}$ agrees with the strands algebra of the general diagram in the top-left corner of Figure~\ref{fig:FourSquare}; the bigraded tensor product gives this algebra a bigrading.

We then proceed to define the basic ``truly local'' pieces appearing in gluings like the one shown in Figure~\ref{fig:TwoCuts}, beginning with bimodules for trivalent vertices. In Sections \ref{sec:EasyVertex} and \ref{sec:HardVertex} we describe bimodules ${^{\vee}}\Lambda$ and ${^{\vee}}Y$ associated to trivalent vertices; ${^{\vee}}\Lambda$ is a $(\A_2,\A_{1,1})$-bimodule and ${^{\vee}}Y$ is a $(\A_{1,1},\A_2)$-bimodule. Both ${^{\vee}}\Lambda$ and ${^{\vee}}Y$ are AD bimodules, a type of $A_{\infty}$ bimodule appearing in bordered Floer homology.

\begin{theorem}[cf. Sections~\ref{sec:EasyVertex}, \ref{sec:HardVertex}]
The trivalent vertex bimodules ${^{\vee}}\Lambda$ and ${^{\vee}}Y$ are well-defined AD bimodules and categorify $U_q(\gl(1|1))$-intertwining maps $V^{\otimes 2} \to \wedge_q^2 V$ and $\wedge_q^2 V \to V^{\otimes 2}$ respectively arising from skew Howe duality (see Appendix~\ref{sec:Uqgl11Review}).
\end{theorem}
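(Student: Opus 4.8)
The plan is to establish each trivalent vertex bimodule statement in two stages: first verify that the proposed structure maps define a genuine AD bimodule (i.e.\ satisfy the $A_\infty$ structure equations and the required $\U^-$-equivariance), and then compute its decategorification and match it against the skew Howe intertwiners $V^{\otimes 2}\leftrightarrow\wedge_q^2 V$. For the first stage, I would write down the bimodule maps explicitly on idempotents and generators, organizing generators by the relevant $\gl(1|1)$-weight decomposition, and then check the AD bimodule relations directly. Because the algebras $\A_{1,1}$ and $\A_2$ are small and explicitly presented, the structure relations reduce to a finite check on each weight space; I would set up the computation so that the relations for the ``easy'' vertex (over $\A_{1,1}$) follow from the tensor-product formula of Theorem~\ref{thm:IntroCat}, and the relations for the ``hard'' vertex (over $\A_2$) are checked against the explicit presentation of $\A_2$ given in Section~\ref{sec:HardVertex}. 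Compatibility with the 2-action of $\U^-$ is verified by checking that the bimodule maps commute (up to the prescribed homotopies) with the generating 1-morphisms $\Fc$ (and their bigraded shifts) of $\U^-$; this is where I would use that both $\A_{1,1}$ and $\A_2$ carry compatible 2-actions by construction.

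For the second stage, I would pass to the Grothendieck group. By the notion of categorification fixed in Section~\ref{sec:SplitGG}, the class of an AD (or dually DA) bimodule induces a $\Udot_q(\gl(1|1)^-)$-module map between the decategorifications of the source and target algebras. Using the previous corollary, these decategorifications are identified with $V^{\otimes 2}$ and $\wedge_q^2 V$ with their standard bases indexed by basic idempotents. I would then read off the matrix of the induced map in these bases directly from the idempotent-level data of the bimodule (the ranks of the relevant morphism spaces, with their bigradings recording the $q$-powers), and check that the resulting linear map agrees, up to an overall grading normalization, with the skew Howe intertwiner recalled in Appendix~\ref{sec:Uqgl11Review}. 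Since skew Howe duality determines these intertwiners uniquely up to scalar once one fixes the weight-space behavior, it suffices to match the map on a single well-chosen weight space and to confirm $\Udot_q(\gl(1|1)^-)$-equivariance, which follows from the $\U^-$-equivariance established in stage one.

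The main obstacle I anticipate is the ``hard'' vertex over $\A_2$: unlike $\A_{1,1}$, the algebra $\A_2$ is not a tensor power, so its structure and the $\U^-$-action on it are not handed to us by the formal properties of $\ootimes$, and the AD bimodule relations there must be checked against an explicit, somewhat intricate presentation. I expect the bulk of the genuine work to be in verifying the $A_\infty$ relations and the $\Fc$-equivariance homotopies in that case, and in confirming that the higher AD structure maps (those with more than one algebra input) are exactly what is needed for the relations to close up --- a purely local but delicate bookkeeping computation. A secondary subtlety is tracking the bigradings carefully enough that the decategorified map lands on the nose on the skew Howe intertwiner rather than merely a scalar multiple of it; I would handle this by normalizing the gradings of the vertex bimodules so that the induced map sends a chosen standard basis element to the expected image with trivial $q$-shift, and then deduce the rest by equivariance.
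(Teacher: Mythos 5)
Your overall plan---define the bimodules explicitly, verify the structure relations by hand, then decategorify and compare matrices---is the same strategy the paper follows, and most of it would go through. But two specific steps in your write-up do not work as stated.

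First, neither trivalent vertex bimodule is produced by the tensor product construction, so the relations for the ``easy'' vertex do not ``follow from the tensor-product formula of Theorem~\ref{thm:IntroCat}.'' That theorem builds the \emph{algebra} $\A_{1,1} = \A_1 \ootimes \A_1$ and its 2-representation structure; the vertex bimodules $\Lambda$ and $Y$ are bimodules \emph{between} $\A_{1,1}$ and $\A_2$, written down ad hoc from Heegaard-diagram disk counts, and their well-definedness is a direct check in both cases (for $\Lambda$ this is just commutativity of a few small matrices, since it has no higher actions; for $Y$ one must verify that the squared secondary matrix cancels against the multiplication matrix as in Procedure~\ref{proc:DAWellDef}). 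Relatedly, the Y-shaped vertex as dictated by its Heegaard diagram is \emph{infinitely generated} (the $O$-basepoints contribute polynomial variables $w_1,w_2$, and the bimodule is a two-term mapping cone $q^{-2}Y'[-1]\xrightarrow{\Xi}Y'$); before one can decategorify in the sense of Section~\ref{sec:SplitGG} or speak of 1-morphism structure, one must simplify to a finitely generated homotopy-equivalent model $Y_{\fs}$. Your proposal never confronts this reduction, which is where a nontrivial share of the work in Section~\ref{sec:HardVertex} lives (computing the homology of $Y_{\fu}$ as a left module and the induced right actions on it).

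Second, your shortcut for the decategorification---match one weight space and invoke uniqueness of the intertwiner up to scalar---is not justified by what stage one gives you. The categorified structure only provides equivariance for $F$ (the 2-category $\U^-$ has no $E$), and the space of weight-preserving, $F$-equivariant maps $V^{\otimes 2}\to\wedge_q^2 V$ is more than one-dimensional: the middle weight space of $V^{\otimes 2}$ is two-dimensional while that of $\wedge_q^2 V$ is one-dimensional, and $F$-equivariance imposes only one linear condition on the three matrix entries. So matching a single weight space does not pin down the map. The fix is simply to compute the full decategorified matrices, which is immediate from the primary matrices and bigradings of the generators; this is what the paper does, and there is no need for any uniqueness argument.
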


\begin{corollary}
The singular crossing AD bimodule ${^{\vee}}X := {^{\vee}}Y \boxtimes {^{\vee}}\Lambda$ of Section~\ref{sec:SingularCrossing}, over the algebra $\A_{1,1}$, is well-defined and categorifies the $U_q(\gl(1|1))$-intertwining map $V^{\otimes 2} \to V^{\otimes 2}$ for a singular crossing.
\end{corollary}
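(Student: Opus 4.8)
The plan is to build the singular crossing bimodule not from scratch but as a composite of the two trivalent vertex bimodules of Sections~\ref{sec:EasyVertex} and \ref{sec:HardVertex}, reflecting the factorization of a singular crossing into a ``merge'' trivalent vertex $V^{\otimes 2} \to \wedge_q^2 V$ followed by a ``split'' trivalent vertex $\wedge_q^2 V \to V^{\otimes 2}$. Concretely, working with the DA duals (convenient for computation), I would define the singular crossing DA bimodule over $\A_{1,1}$ as the box tensor product $\boxtimes$ over $\A_2$ of the DA dual of the merge vertex bimodule with the DA dual of the split vertex bimodule, and then dualize back to obtain the AD bimodule over $\A_{1,1}$ described in Section~\ref{sec:SingularCrossing}.

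First I would verify well-definedness. Each trivalent bimodule is a well-defined AD bimodule by the preceding theorem, so it suffices to check that their (DA-dual) box tensor product over $\A_2$ is defined, i.e.\ that the sums over $\A_2$-tensor factors appearing in each structure map are finite. This reduces to an appropriate boundedness statement for one of the two trivalent bimodules over $\A_2$; since $\A_2$ is finite-dimensional and the trivalent bimodules have the explicit form given in Sections~\ref{sec:EasyVertex} and \ref{sec:HardVertex}, one of them can be arranged to be bounded on the $\A_2$ side, which guarantees convergence and that the composite is itself a well-defined DA (hence, after dualizing, AD) bimodule. One should also note that the composite carries the structure of a 1-morphism of 2-representations of $\U^-$, which is automatic since a box tensor product of 1-morphisms of 2-representations of $\U^-$ is again such a 1-morphism.

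For the decategorification statement I would invoke functoriality of the split Grothendieck group construction of Section~\ref{sec:SplitGG} with respect to the tensor product of bimodules: $K_0$ of a box tensor product is the composite, as a map of $\Udot_q(\gl(1|1)^-)$-representations, of the $K_0$'s of the factors. By the trivalent vertex theorem, $K_0$ of the merge bimodule is the skew Howe intertwining map $V^{\otimes 2} \to \wedge_q^2 V$ and $K_0$ of the split bimodule is the intertwiner $\wedge_q^2 V \to V^{\otimes 2}$, so $K_0$ of the singular crossing bimodule is their composite $V^{\otimes 2} \to V^{\otimes 2}$. It then remains to identify this composite with the singular crossing intertwining map in the conventions of Appendix~\ref{sec:Uqgl11Review}: this is a purely representation-theoretic check that the merge-then-split endomorphism of $V^{\otimes 2}$ equals the map assigned to a singular crossing by skew Howe duality, valid up to the normalization fixed in the appendix.

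The main obstacle I expect is not the decategorification step but the well-definedness and convergence of the box tensor product over $\A_2$: one must pin down the precise boundedness property (or, failing that, a direct finiteness-of-sums argument) for the chosen pair of trivalent bimodules, and then check the $A_\infty$-type structure relations for the composite, which, while formal, require simultaneous bookkeeping with the higher actions of both trivalent bimodules and the $\U^-$ 2-action. If the natural pair of trivalent bimodules is not bounded on the nose, one would instead pass to a homotopy-equivalent bounded model over $\A_2$, or verify directly that the relevant matrix coefficients of the tensored structure maps vanish for all but finitely many intermediate idempotents.
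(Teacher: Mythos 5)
Your proposal follows essentially the same route as the paper: Section~\ref{sec:SingularCrossing} defines $X_{\midd}=\Lambda_{\midd}\boxtimes Y_{\fs,\midd}$ and $X_{\low}=\Lambda_{\low}\boxtimes Y_{\fs,\low}$, gets well-definedness from the well-definedness and (left-)boundedness of the finitely generated trivalent-vertex factors, and decategorifies using multiplicativity of decategorification under $\boxtimes$ together with the direct computation of $1_{1,1}F_{\gl(2)}1_{2,0}E_{\gl(2)}1_{1,1}$ in Appendix~\ref{sec:SingularNonsingular}. One small correction: $\A_2$ is not finite-dimensional (its weight summands are polynomial algebras $\F_2[e_1]$ and $\F_2[e_1,e_2]$); the relevant finiteness comes instead from the finite generation and boundedness of the simplified bimodules $\Lambda$ and $Y_{\fs}$.
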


Both trivalent vertex bimodules are directly motivated by domains for holomorphic curve counts in the two Heegaard diagrams on the left of Figure~\ref{fig:IntroHDs}; see Figures~\ref{fig:EasyVertexDomains},~\ref{fig:HardVertexDomains}. As the rest of Figure~\ref{fig:IntroHDs} illustrates, these diagrams glue to a diagram that agrees (after Heegaard moves) with a reasonable adaptation of the Ozsv{\'a}th--Stipsicz--Szab{\'o} diagram \cite{OSSz} for a singular crossing; see also Figure~\ref{fig:OSSzDiag}. The presence of $O_1$ and $O_2$ basepoints in one of the trivalent-vertex diagrams leads to an infinitely-generated bimodule; we simplify to a homotopy equivalent but finitely generated version and work primarily with this simplification. 

\begin{remark}
The compatibility requirements of the structures considered here are relatively restrictive on possible choices of convention; these requirements led us to formulate the above results in terms of AD bimodules rather than the more familiar DA bimodules. However, since we still prefer DA bimodules, we will do most computations in terms of DA bimodules, then take duals to get AD bimodule versions.
\end{remark}

\begin{figure}
\includegraphics[scale=0.8]{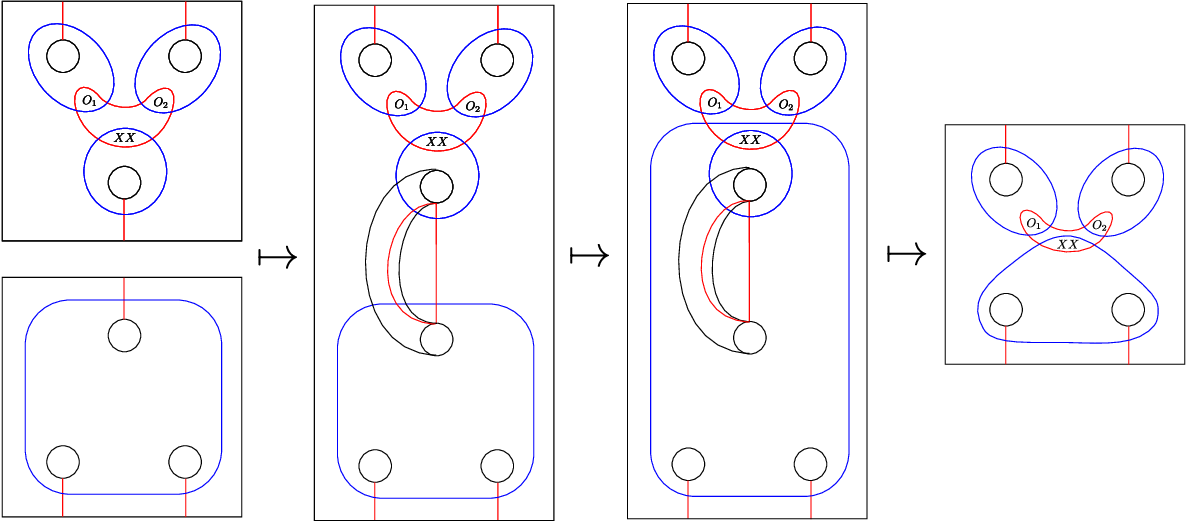}
\caption{Heegaard diagrams for trivalent vertices being glued to form Heegaard diagram for singular crossing (after a $\beta$ handleslide and a destabilization).}
\label{fig:IntroHDs}
\end{figure}

\begin{figure}
\includegraphics[scale=0.8]{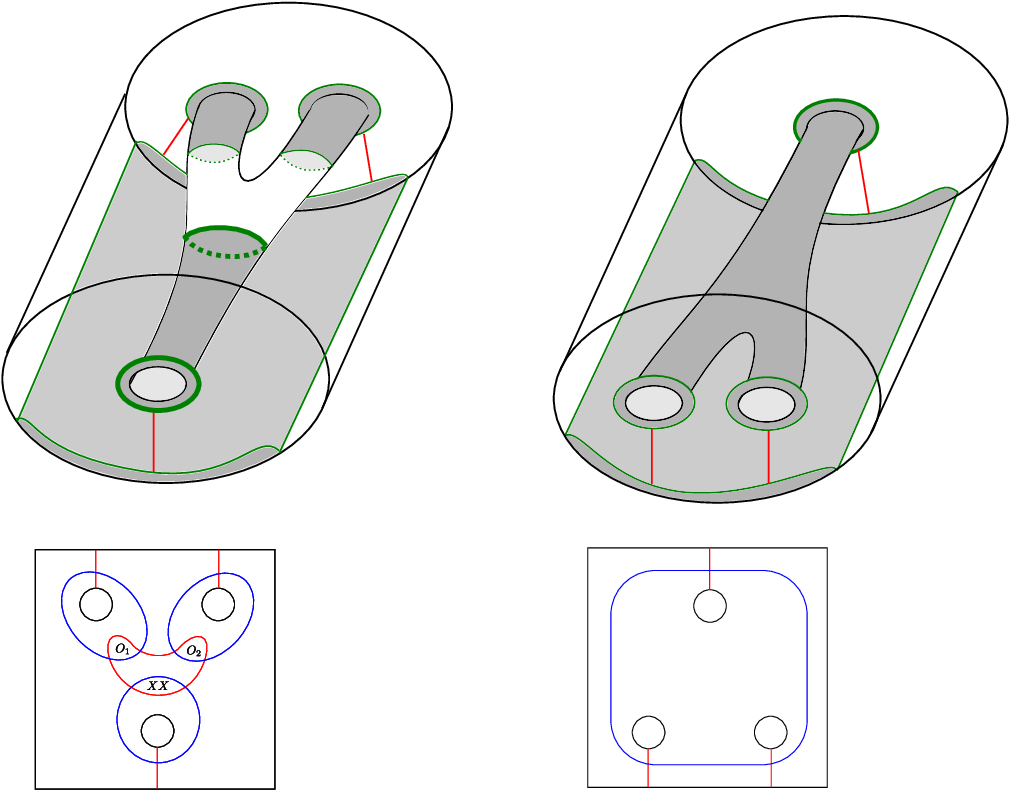}
\caption{Bordered sutured cobordisms represented by the Heegaard diagrams from Figure~\ref{fig:IntroHDs}.}
\label{fig:Cobordisms}
\end{figure}

\begin{remark}
The new algebra $\A_2$ categorifying $\wedge_q^2 V$, with its extra polynomial generator as compared to $\A_1$, is exactly what is needed to permit the recovery of the singular-crossing bimodule from the trivalent-vertex pieces. The algebras $\A_K$ are defined by analogy to $\A_2$.
\end{remark}

\begin{remark}\label{rem:WeightedSutures}
By the rules for assigning 3d cobordisms to Heegaard diagrams in bordered sutured Floer homology \cite{Zarev}, the trivalent-vertex Heegaard diagrams in Figure~\ref{fig:IntroHDs} should represent bordered sutured cobordisms like the ones shown in Figure~\ref{fig:Cobordisms} (see \cite[Figure 2(b)]{Zarev} for a figure drawn with similar visual conventions). Each 3d cobordism is the complement of (an open tubular neighborhood of) the corresponding trivalent vertex, viewed as a web in $D^2 \times [0,1]$, with a bordered sutured structure on the boundary.

While the literature does not seem to give a sutured interpretation for the doubled $X$ basepoint in the top-left Heegaard diagram of Figure~\ref{fig:IntroHDs}, it is reasonable to suppose that an $X$ or $O$ basepoint appearing $m$ times in some region of a Heegaard diagram should give rise to a ``suture with weight $m$,'' and that sutured and bordered sutured Floer homology can be generalized to accommodate such weighted sutures. In Figure~\ref{fig:Cobordisms}, we indicate weight-two sutures with green circles that are thicker than the ones for weight-one sutures.
\end{remark}

\begin{remark}
In contrast with the top-left Heegaard diagram of Figure~\ref{fig:IntroHDs}, the bottom-left Heegaard diagram of this figure does not by itself seem to give any indication that the algebra on the one-circle side should be $\A_2$ rather than $\A_1$. It is plausible that, if a generalized bordered sutured theory including higher-weight sutures and algebras $\A_K$ does exist, one would need to specify, along with the bottom-left Heegaard diagram of Figure~\ref{fig:IntroHDs}, the additional data that the top circle has weight 2 in order to get a bimodule involving $\A_2$ from the diagram. 
\end{remark}

We equip the trivalent vertex bimodules (and thus their compositions, like singular crossing bimodules) with the structure of 1-morphisms between 2-representations of $\U^-$, for a definition of 1-morphism adapted to the setting of AD and DA bimodules as discussed in Section~\ref{sec:2RepMorphisms}. 

Passing to 2-morphisms, we further upgrade these trivalent vertex bimodules to form the 1-morphism part of a functor from $\dot{\U}_q(\gl(2))$, the 2-category categorifying the idempotented form $\Udot_q(\gl(2))$ (see \cite{KLIII, Rouquier2KacMoody}), to the 2-representation bicategory $2\Rep(\U^-)$ of $\U^-$ as defined in Section~\ref{sec:2RepMorphisms}.\footnote{This appearance of $\gl(2)$ acting on representations of $\gl(1|1)$ is an instance of skew Howe duality; see Remark~\ref{rem:IntroSkewHowe} for further discussion.}
\begin{theorem}[cf. Theorem~\ref{thm:SkewHowe2Action}]\label{thm:IntroSkewHowe}
The 2-morphisms in $2\Rep(\U^-)$ associated to the generating dot, crossing, cap, and cup 2-morphisms in $\dot{\U}_q(\gl(2))$ are well-defined and satisfy the defining relations for 2-morphisms in $\dot{\U}_q(\gl(2))$, so they give a functor from $\dot{\U}_q(\gl(2))$ to $2\Rep(\U^-)$ (descending to the ``Schur quotient'' $\Sc(2,2)$ of $\dot{\U}_q(\gl(2))$ from Mackaay--Sto{\v s}i{\' c}--Vaz \cite{MSVSchur}).
\end{theorem}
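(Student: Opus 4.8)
The plan is to verify Theorem~\ref{thm:IntroSkewHowe} by checking, one relation at a time, that the assignments of 2-morphisms in $2\Rep(\U^-)$ defined in Section~\ref{sec:SkewHowe} satisfy the defining relations of $\dot{\U}_q(\gl(2))$ in the presentation of \cite{KLIII, Rouquier2KacMoody}. Since the 1-morphisms $\E$ and $\F$ have already been identified with the trivalent vertex bimodules, and (via the earlier theorems) categorify the skew Howe maps $V^{\otimes 2} \leftrightarrow \wedge_q^2 V$, the content here is entirely about the 2-morphism level: the dots, the crossings (the $\E\E \to \E\E$ map), the cups and caps ($\id \to \F\E$, $\E\F \to \id$ and their rotations) realized as bimodule maps, and the relations they must satisfy. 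First I would recall the explicit generating 2-morphisms and organize the relations into the standard groups: (i) the nilHecke relations among dots and crossings on $\E^a$ (and on $\F^a$), (ii) the biadjunction (zig-zag) relations between $\E$ and $\F$, (iii) the $\gl(2)$ (or $\mathfrak{sl}_2$) relations expressing $\E\F$ versus $\F\E$ as a direct sum with identity summands in the appropriate weight spaces, together with the associated bubble relations, and (iv) the cyclicity/rotation compatibilities that make the functor well-defined on all generators rather than just a chosen set.

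For each group I would reduce the relation to a computation of DA bimodule morphisms, using the DA bimodule duals of the AD trivalent vertex bimodules (as the excerpt indicates is the preferred computational model). The weight spaces involved are small — we are working over $\A_{1,1}$ and $\A_2$, i.e. essentially the $n=2$ part of the picture and its Schur quotient $\Sc(2,2)$ — so each relevant bimodule is finitely generated (after the stated simplification removing the $O_1, O_2$ basepoints) and each morphism space is finite-dimensional, making the relations concretely checkable. The nilHecke relations should follow from the combinatorics of the generators of $\A_2$ and the explicit form of the dot and crossing maps; the biadjunction relations should come from a direct identification of the relevant bimodule compositions with the identity bimodule, tracking idempotents; and the $\mathfrak{sl}_2$/bubble relations are where the extra polynomial generator of $\A_2$ (highlighted in the Remark following the second theorem) does its work, producing exactly the identity summands and bubble values predicted by $\dot{\U}_q(\gl(2))$. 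Finally, descending to $\Sc(2,2)$ amounts to checking that the 2-morphisms killed in the Mackaay--Sto\v{s}i\'{c}--Vaz quotient \cite{MSVSchur} — those factoring through weight spaces that vanish on $\wedge^{K}_q V$ with $K$ out of the allowed range — act as zero here, which is automatic because the corresponding bimodules are zero.

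The main obstacle I expect is the $\mathfrak{sl}_2$ relation together with its attendant bubble/infinite-Grassmannian relations in the $\gl(2)$ (rather than $\mathfrak{sl}_2$) normalization: getting the signs, the quantum-integer coefficients, and the degree shifts to match requires pinning down the bigradings on $\A_1$, $\A_2$, $\A_{1,1}$ and on the trivalent vertex bimodules very precisely, and then exhibiting the explicit homotopies that split $\E\F$ (or $\F\E$) as the predicted direct sum. A secondary subtlety is cyclicity: to know the functor is well-defined one must check that the cups and caps chosen really are mutually (bi)adjoint and that the rotated versions of the dot and crossing 2-morphisms agree with what the adjunctions force, so that no relation among generators is omitted. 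Once the bigradings are fixed and the biadjunctions established, the remaining relations should reduce to finite bookkeeping in the style of the nilHecke and Khovanov--Lauda calculations, carried out in the DA bimodule model.
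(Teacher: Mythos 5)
Your proposal matches the paper's strategy: the paper defines the generating 2-morphisms (dots, crossings, cups, caps) as explicit bimodule maps, verifies each is a 2-morphism compatible with the 1-morphism structure $\varphi$, and then checks the relations one by one by direct matrix computation, working from the start with the Mackaay--Sto{\v s}i{\'c}--Vaz presentation of the Schur quotient $\Sc(2,2)^{*,*}$ so that the descent is built in. The sign/coefficient issues you flag largely evaporate because everything is over $\F_2$, and the extended-$\mathfrak{sl}_2$/bubble relations are verified as on-the-nose matrix identities rather than via explicit splittings, but these are presentational differences, not substantive ones.
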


\begin{remark}
In particular, our trivalent vertex bimodules are biadjoint to each other; this property follows from the relations in $\dot{\U}_q(\gl(2))$.
\end{remark}

We get the following corollary from \cite{MSVSchur}.

\begin{corollary}[cf. Corollary~\ref{cor:SoergelFunctor}]\label{cor:IntroSoergel}
We have a functor from the Soergel category $\SC'_1$ (viewed as a 2-category with 1 object; see Section~\ref{sec:Soergel} for notation) to $2\Rep(\U^-)$.
\end{corollary}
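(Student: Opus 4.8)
The plan is to obtain Corollary~\ref{cor:IntroSoergel} by composing the functor of Theorem~\ref{thm:IntroSkewHowe} with a functor $\SC'_1 \to \Sc(2,2)$ extracted from \cite{MSVSchur}. Recall that Theorem~\ref{thm:IntroSkewHowe} already produces a functor $\dot{\U}_q(\gl(2)) \to 2\Rep(\U^-)$ that factors through the Schur quotient $\Sc(2,2)$, so it suffices to exhibit a functor from $\SC'_1$ into $\Sc(2,2)$ and then compose; composition of functors of bicategories is associative, so nothing more is needed at the formal level.

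For the second functor I would invoke the categorified skew Howe duality of Mackaay--Sto{\v s}i{\' c}--Vaz: in \cite{MSVSchur} the Schur quotient $\Sc(n,d)$ of $\dot{\U}_q(\gl(n))$ is shown to categorify the $q$-Schur algebra, and for the parameters relevant here the object of $\Sc(2,2)$ corresponding to the weight of $V^{\otimes 2}$ (both tensor factors ``present'') carries a copy of the Soergel category for $S_2$, with the Bott--Samelson generator $B_s$ realized as the composite $\E\Fc$ at that weight and the generating 2-morphisms $B_s \to \One$, $\One \to B_s$, $B_s \to B_s B_s$, $B_s B_s \to B_s$ built from the cup, cap, and trivalent $\gl(2)$ 2-morphisms. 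I would record this as a lemma, checking that the defining relations of $\SC'_1$ (the $S_2$ isotopy and associativity relations among these 2-morphisms) follow from the $\dot{\U}_q(\gl(2))$ relations already verified in Theorem~\ref{thm:IntroSkewHowe}; once the dictionary between the two diagrammatic calculi is fixed, this is bookkeeping that can be imported essentially verbatim from \cite{MSVSchur}.

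The main obstacle is matching conventions rather than proving anything genuinely new. I would need to pin down the precise version of $\SC'_1$ in use (base ring, whether one quotients by positive-degree symmetric functions, the internal grading-shift normalizations), check that the bigrading on $2\Rep(\U^-)$ restricts to the single ``quantum'' grading expected on the image of $\SC'_1$, select the correct object of $\Sc(2,2)$ as the target of the one object of $\SC'_1$, and confirm that Karoubi/additive-closure issues do not interfere --- i.e., that the image of each Soergel 1-morphism already exists on the nose in $2\Rep(\U^-)$ (as a composite of trivalent-vertex bimodules and identities, up to shifts) rather than only after passing to an idempotent completion. Once these normalizations are aligned, the corollary is immediate: the desired functor is the composite $\SC'_1 \to \Sc(2,2) \to 2\Rep(\U^-)$.
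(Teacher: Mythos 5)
Your strategy is the paper's strategy for the unprimed category $\SC_1$: Corollary~\ref{cor:SoergelFunctor} is obtained exactly as you propose, by composing Mackaay--Sto{\v s}i{\'c}--Vaz's functor $\SC_1 \to \Sc(2,2)$ (checked to be bigraded) with the functor of Theorem~\ref{thm:SkewHowe2Action}, with $B_s$ realized as $\E\Fc$ at the weight $(1,1)$. For that part nothing is missing.

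The gap is that the statement concerns $\SC'_1$, not $\SC_1$, and your proposal never deals with the extra structure of the primed category: the two polynomial ``box'' generators from the monoidal unit to itself and the three additional relations \eqref{it:SCPrimeRel1}--\eqref{it:SCPrimeRel3}. These cannot be imported verbatim from \cite{MSVSchur}, which (as used in Section~\ref{sec:Soergel}) only supplies a functor out of $\SC_1$; so your plan of factoring the entire $\SC'_1$ functor through $\Sc(2,2)$ requires you to first choose images in $\Sc(2,2)$ for the boxes (presumably dotted bubbles at the weight $(1,1)$) and to verify the primed relations there, none of which you do. The paper instead extends the functor directly: it sends the boxes labelled $1$ and $2$ to (the duals of) multiplication by $U_1$ and $U_2$ on the identity bimodule of $\A_{1,1}$ --- new $2$-morphisms in $2\Rep(\U^-)^{*,*}$ assigned by hand, not pulled back from $\Sc(2,2)$ --- and then checks \eqref{it:SCPrimeRel1}--\eqref{it:SCPrimeRel3} by explicit computation. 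Relations \eqref{it:SCPrimeRel2} and \eqref{it:SCPrimeRel3} reduce to the centrality of $U_1+U_2$ and $U_1U_2$ in $\A_{1,1}$, but \eqref{it:SCPrimeRel1} (the barbell relation) genuinely requires computing the image of the composite $\varepsilon'\eta$ and matching it, idempotent by idempotent, with the sum of the two box images; this is a real verification, not bookkeeping, and it is the step your proposal omits.
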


Further, by Elias--Krasner \cite{EliasKrasner} we also get a functor from the 2-strand braid cobordism category. Let ${^{\vee}}P'$ and ${^{\vee}}N'$ be the resulting AD bimodules for positive and negative crossings, obtained as mapping cones involving ${^{\vee}}X$ and the identity AD bimodule over $\A_{1,1}$, as in Section~\ref{sec:PosNegCrossings}.

\begin{corollary}[cf. Corollary~\ref{cor:BrCob}]\label{cor:IntroBraidCob}
The positive and negative crossing bimodules ${^{\vee}}P'$ and ${^{\vee}}N'$ extend to a functor from the 2-strand braid cobordism category $\BrCob(2)$ (viewed as a 2-category with 1 object) to a bicategory $\ADBimod$ of dg categories, certain AD bimodules, and homotopy classes of closed AD bimodule morphisms.
\end{corollary}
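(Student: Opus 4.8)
The plan is to obtain the functor by composition: transport the Elias--Krasner description of $\BrCob(2)$ in terms of Rouquier complexes over the Soergel category through the functor of Corollary~\ref{cor:IntroSoergel}, and then pass to AD bimodules at the homotopy-class level. Since the two ingredient functors are already available from the excerpt (Corollary~\ref{cor:IntroSoergel} and Theorem~\ref{thm:IntroSkewHowe}) and from \cite{EliasKrasner}, the corollary amounts to checking that the composite is well defined as a functor of one-object 2-categories (equivalently bicategories) into $\ADBimod$.

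First I would recall the precise form of the Elias--Krasner theorem \cite{EliasKrasner} in the two-strand case: sending the positive (resp. negative) braid generator $\sigma$ (resp. $\sigma^{-1}$) to the corresponding Rouquier complex over $\SC'_1$, and each generating braid cobordism to the associated chain map, is well defined up to homotopy and satisfies the movie-move relations, hence defines a functor from $\BrCob(2)$ to $K^b(\SC'_1)$ viewed as the morphism bicategory of a one-object 2-category. The two-strand case is especially clean because there is only one braid generator, so only a short list of movie moves must be invoked. Next I would extend the monoidal functor $\SC'_1 \to 2\Rep(\U^-)$ of Corollary~\ref{cor:IntroSoergel} to bounded complexes in the usual way: any monoidal functor of (dg or additive) categories induces a monoidal functor on homotopy categories of bounded complexes, sending a complex to the totalization of the termwise image and a chain map to the induced morphism, with homotopic chain maps going to homotopic morphisms. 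Composing, one gets a functor from $\BrCob(2)$ to the homotopy category of bounded complexes of $1$-morphisms in $2\Rep(\U^-)$.

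It then remains to identify this target with the bicategory $\ADBimod$. The $1$-morphisms in $2\Rep(\U^-)$ are realized by (twisted complexes of) AD bimodules, so a bounded complex of them totalizes to a single AD bimodule, obtained as an iterated mapping cone; a chain map between such complexes totalizes to a closed AD bimodule morphism, and chain homotopies totalize to AD bimodule homotopies. The resulting assignment on 2-morphisms therefore lands precisely at the level of homotopy classes of closed AD bimodule morphisms, which is exactly the 2-morphism level of $\ADBimod$; forgetting the $\U^-$-action gives the functor into $\ADBimod$. Horizontal composition of braid cobordisms corresponds, via the monoidality carried through each step, to tensor product of the associated AD bimodules over the appropriate boundary algebra, so the composite respects the bicategory structure as required. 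Under this composite, the Rouquier complexes for $\sigma$ and $\sigma^{-1}$ are built from the trivalent-vertex bimodules $\mathcal{E},\mathcal{F}$ via the $\dot{\U}_q(\gl(2))$-action of Theorem~\ref{thm:IntroSkewHowe}, so the functor genuinely extends the crossing-bimodule assignments discussed earlier in the paper.

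The main obstacle I anticipate is bookkeeping rather than conceptual. One must verify that the totalization of a bounded complex of $1$-morphisms in $2\Rep(\U^-)$ into a single AD bimodule is well-behaved and functorial -- boundedness of Rouquier complexes rules out any convergence or completion issue -- and that this passage is compatible with horizontal composition and with the coherence data of $\ADBimod$. The key safeguard is that the Elias--Krasner chain maps are only defined, and only satisfy the movie-move relations, up to homotopy, so all of this ambiguity is absorbed once the target is taken at the level of homotopy classes of closed morphisms, as it is in $\ADBimod$; thus the composite is insensitive to the homotopy-theoretic slack in each step. A secondary point is to confirm that the Rouquier-complex image of each braid generator matches the crossing bimodule one expects, which follows from the compatibility between the skew Howe $\dot{\U}_q(\gl(2))$-action and the Soergel action already set up in Theorem~\ref{thm:IntroSkewHowe} and Corollary~\ref{cor:IntroSoergel}.
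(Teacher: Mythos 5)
Your proposal takes essentially the same route as the paper: Section~\ref{sec:BraidCob} composes the (bigraded lift of the) Elias--Krasner functor $\BrCob(2) \to K(\SC_1^{*,*})$ with the extension to complexes of $\SC_1^{*,*} \to 2\Rep(\U^-)^{*,*} \xrightarrow{\forget} \ADBimod^{*,*}$, totalizing the Rouquier complexes into mapping cones of AD bimodules exactly as you describe. Your decision to forget the $2$-representation structure before totalizing is precisely the point of Remark~\ref{rem:Strong2Mor}: mapping cones of $2$-morphisms do not naturally inherit $1$-morphism structure, which is why the target is $\ADBimod$ rather than $2\Rep(\U^-)$.
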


Due to algebraic subtleties in taking mapping cones on 2-morphisms, we do not automatically get a functor from $\BrCob(2)$ to $2\Rep(\U^-)$. However, at least at the level of 1-morphisms, we can lift from $\BrCob(2) \to \ADBimod$ to $\BrCob(2) \to 2\Rep(\U^-)$ as discussed in Remark~\ref{rem:UpgradingBrCobFunctor}. In particular, for truly local positive and negative crossings, we have 1-morphisms between 2-representations of $\U^-$, and for 2-strand braid cobordisms we have bimodule maps (presumably 2-morphisms) satisfying the movie moves. Functoriality for 4d cobordisms, like braid cobordisms, is an important \textit{a priori} part of Heegaard Floer theory, but to our knowledge braid cobordism maps have not yet been defined in any local, bordered-Floer-based approach to $\HFK$.

Although they are defined over different algebras, it is natural to ask how our positive and negative crossing bimodules relate to Ozsv{\'a}th--Szab{\'o}'s bordered $\HFK$ bimodules for positive and negative crossings. To address this question, we define explicit bimodules for the derived equivalence from the top edge of Figure~\ref{fig:FourSquare} in the $n=2$ case, based on certain ``change-of-basis'' Heegaard diagrams.

\begin{theorem}[cf. Section~\ref{sec:ChangeOfBasis}]\label{thm:IntroChangeOfBasis}
Our change-of-basis bimodules are well-defined, homotopy inverses to each other, and categorify the change-of-basis matrices between the tensor-product basis and the canonical basis (see Appendix~\ref{sec:NonstandardBases}) for $V^{\otimes 2}$.
\end{theorem}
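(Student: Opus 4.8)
The plan is to construct the two change-of-basis bimodules directly from the change-of-basis Heegaard diagrams of Section~\ref{sec:ChangeOfBasis} --- one diagram interpolating between a top-left and a top-right chord diagram of Figure~\ref{fig:FourSquare} in the $n=2$ case, and the other its mirror --- and then establish the three asserted properties in turn. For well-definedness I would first read off the finite generating set and the candidate bimodule structure maps from the domains that can support holomorphic curves, working with the DA bimodule duals for computational convenience; because these diagrams can be arranged so that every relevant domain is a bigon or a rectangle, the DA bimodule relations (equivalently $\partial^2 = 0$ for the box tensor product against an arbitrary type-$D$ module) reduce to a finite check on composites of these elementary domains, which I would verify by hand rather than by quoting a general ``nice diagram'' theorem. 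The bigradings are inherited from those on $\A_{1,1}$ and on the $n=2$ Ozsv{\'a}th--Szab{\'o} algebra, and one checks that each structure map is homogeneous of the expected bidegree.

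For the homotopy-inverse statement, the key input is a gluing argument: the box tensor product of the two change-of-basis bimodules over one of the two algebras is computed by the bimodule of the Heegaard diagram obtained by gluing the change-of-basis diagram to its mirror along the common boundary. This glued diagram represents the identity cobordism, so one expects the resulting bimodule to be homotopy equivalent to the identity DA bimodule; I would establish this by simplifying the glued diagram through a sequence of destabilizations and handleslides, or --- perhaps more safely, given the weighted-suture subtleties noted in Remark~\ref{rem:WeightedSutures} --- by performing the corresponding homological reduction directly on the finitely generated box tensor product, exhibiting an explicit acyclic summand whose cancellation leaves precisely the identity bimodule. Running the same argument over the other algebra gives the inverse in the opposite order. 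Throughout, one must verify that the homotopy equivalence is bigraded; in practice the bigrading rigidly constrains which cancellations are available and thereby organizes the computation.

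For the decategorification statement, I would compute the bigraded Grothendieck groups of $\A_{1,1}$ and of the $n=2$ Ozsv{\'a}th--Szab{\'o} algebra, identifying the classes of indecomposable projectives with the tensor-product basis of $V \otimes V$ (via Theorem~\ref{thm:IntroCat} and the corollary following it) and with the canonical basis of $V^{\otimes 2}$ respectively, with grading shifts normalized as in Appendix~\ref{sec:NonstandardBases}. Since each change-of-basis bimodule is given explicitly on generators, the induced $\Z[q,q^{-1}]$-linear map is a small matrix whose entries are signed, $q$-weighted counts of generators in each pair of idempotents; I would compute this matrix, compare it entry by entry with the change-of-basis matrices recorded in Appendix~\ref{sec:NonstandardBases}, and do the same for the mirror bimodule, whose matrix should come out as the inverse. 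The homotopy-inverse property established above already forces the two decategorified matrices to be mutually inverse, which pins down the answer up to the choice of normalization.

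I expect the main obstacle to be the homotopy-inverse computation: identifying the box tensor product with the identity bimodule requires either a gluing theorem valid in the (possibly weighted-suture) generality at hand, or a hands-on cancellation of a potentially sizable acyclic piece, and in either route the bigrading must be tracked carefully so that the final equivalence is graded rather than merely ungraded. A secondary difficulty is fixing the sign and grading-shift conventions precisely enough that the decategorified matrix equals the stated change-of-basis matrix on the nose, rather than only up to an overall shift.
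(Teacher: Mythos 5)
Your fallback branches reproduce the paper's actual argument, but your primary branches would not go through, and it is worth being precise about why. The paper does \emph{not} define the change-of-basis bimodules by counting holomorphic curves, nor does it invoke any gluing theorem: as stated in Section~\ref{sec:ExamplesHDInterp}, the analytic theory has not been defined for Heegaard diagrams of this type (regions with repeated basepoints, weighted sutures), so the diagrams in Figures~\ref{fig:StandardToCanonical}--\ref{fig:LambdaDomains} serve only as heuristic motivation. The bimodules are \emph{defined} by explicit primary and secondary matrices (Definitions~\ref{def:FirstCOBBimodule} and \ref{def:SecondCOBBimodule}), well-definedness is checked via Procedure~\ref{proc:DAWellDef}, the composites are computed via Procedure~\ref{proc:BoxTensorBimods} and reduced to the identity bimodule by the cancellation procedure of Section~\ref{sec:PrelimSimplifying}, and the decategorification is read off from the primary matrices exactly as you describe. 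Your ``safer'' alternative for the homotopy-inverse step --- direct homological reduction of the finitely generated box tensor product --- is therefore the proof, not a backup. Two specific inaccuracies: first, your claim that every relevant domain is a bigon or rectangle is false; the secondary matrices contain infinite families such as $U_1^{k+1}\otimes U_1^{k+1}$ (domains of multiplicity $k+1$) and genuine higher $A_\infty$ terms with two algebra inputs such as $L_1 U_1^k \otimes (\lambda, U_1^{k+1})$, so the relations are verified by squaring the secondary matrix and comparing with the multiplication matrix, not by a nice-diagram argument. Second, the glued diagram representing the identity cobordism cannot currently be converted into a statement about bimodules in this setting, so that route should be dropped rather than merely deprioritized. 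Your decategorification step is correct and matches the paper, including the observation that invertibility of the categorified maps forces the two matrices to be mutually inverse.
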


Tensoring our nonsingular crossing bimodules on either side with change-of-basis bimodules, we can compare with Ozsv{\'a}th--Szab{\'o}'s nonsingular crossing bimodules.

\begin{theorem}[cf. Section~\ref{sec:OSzRelationship}]\label{thm:IntroOSz}
After categorified change of basis, our nonsingular crossing bimodules are homotopy equivalent to Ozsv{\'a}th--Szab{\'o}'s.
\end{theorem}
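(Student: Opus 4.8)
The plan is to compare our nonsingular crossing bimodules with Ozsv\'ath--Szab\'o's after conjugating by the change-of-basis bimodules from Theorem~\ref{thm:IntroChangeOfBasis}, reducing everything to a finite, explicit computation over the $n=2$ algebras. First I would fix the relevant algebras: on our side the algebra $\A_{1,1} = \A_1 \ootimes \A_1$, and on the Ozsv\'ath--Szab\'o side the corresponding $n=2$ bordered-$\HFK$ algebra, together with the derived (in fact homotopy) equivalence between them realized by the change-of-basis bimodules. Let $X$ denote a change-of-basis bimodule and $X^{-1}$ its homotopy inverse (Theorem~\ref{thm:IntroChangeOfBasis}). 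Given our positive-crossing bimodule $P$, I would form the conjugate $X^{-1} \boxtimes P \boxtimes X$ (working with DA bimodules, as the paper does for computations) and show it is homotopy equivalent to Ozsv\'ath--Szab\'o's positive-crossing bimodule; likewise for the negative crossing.

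The key steps, in order: (1) write down explicit generators and structure maps for our crossing bimodules (available from the mapping-cone descriptions in terms of the singular-crossing bimodule and the identity bimodule, which in turn come from the trivalent-vertex bimodules of Sections~\ref{sec:EasyVertex}, \ref{sec:HardVertex}); (2) write down explicit generators and structure maps for the change-of-basis bimodules and their inverses from Section~\ref{sec:ChangeOfBasis}; (3) compute the box tensor product $X^{-1} \boxtimes P \boxtimes X$, simplifying via the usual cancellation/homotopy-reduction lemmas for DA bimodules to arrive at a small model; (4) recall Ozsv\'ath--Szab\'o's crossing bimodules in the same $n=2$ setting and exhibit an explicit homotopy equivalence (or an isomorphism of already-reduced models) between the simplified box tensor product and theirs, checking it intertwines all structure maps. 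Throughout, one should track the bigrading (Alexander and Maslov-type) to cut down the space of possible maps and confirm the equivalence is grading-preserving. An independent sanity check is decategorification: by Theorem~\ref{thm:IntroCat} and the surrounding corollaries our side descends to the braiding intertwiner on $V^{\otimes 2}$ in the tensor-product basis, while Theorem~\ref{thm:IntroChangeOfBasis} says $X$ descends to the change-of-basis matrix to the canonical basis, so the conjugate descends to the braiding in the canonical basis, matching the decategorification of Ozsv\'ath--Szab\'o's bimodule; this fixes the equivalence up to homotopy on each graded piece and guides the categorified construction.

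The main obstacle I expect is step (3) together with step (4): the box tensor product of three bimodules, even over the small $n=2$ algebras, produces a model with many generators and differentials, and one must perform the homotopy reduction carefully and canonically enough that the resulting structure maps can be matched on the nose with Ozsv\'ath--Szab\'o's. A secondary subtlety is that the two sides are genuinely $A_\infty$ (AD/DA) bimodules rather than ordinary bimodules, so the homotopy equivalence is a morphism with higher terms, and verifying the $A_\infty$ relations for it — rather than just a chain-map condition — is where the bookkeeping is heaviest. Bigrading constraints and the decategorification check should make the final matching essentially forced, but confirming it requires the explicit computation rather than an abstract argument, since our bimodules and Ozsv\'ath--Szab\'o's are defined over different algebras and there is no purely formal reason their conjugates must agree beyond what the computation shows.
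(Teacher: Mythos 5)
Your proposal matches the paper's argument: Section~\ref{sec:OSzRelationship} carries out exactly this explicit box-tensor computation with the change-of-basis bimodules of Section~\ref{sec:ChangeOfBasis} in matrix notation, simplifies via the cancellation procedure, and matches generators with the Heegaard-diagram bimodules $P$ and $N$ (shown in Section~\ref{sec:PosNegCrossings} to be isomorphic to the simplified mapping cones). The only cosmetic difference is the direction of conjugation — the paper conjugates $P_{\OSz}$ and $N_{\OSz}$ into the standard-basis algebra $\A_{1,1}$ rather than conjugating $P$ and $N$ into $\A_{1,1}^{\can}$ — and after simplification the match turns out to be an isomorphism on the nose, so the higher $A_\infty$ terms you worry about in the comparison map do not arise.
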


\begin{figure}
\includegraphics[scale=0.7]{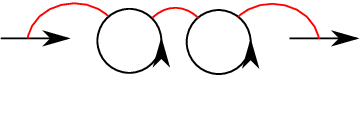}
\caption{Diagram $\Zc$ needed for Ozsv{\'a}th--Szab{\'o}'s minimal local crossing bimodules.}
\label{fig:OSzFullLocalZ}
\end{figure}

\begin{remark}
The Ozsv{\'a}th--Szab{\'o} bimodules appearing in Theorem~\ref{thm:IntroOSz} are simpler than the minimal bimodules Ozsv{\'a}th--Szab{\'o} need to encode all the relevant local holomorphic-curve data in their theory. We work with Ozsv{\'a}th--Szab{\'o} bimodules defined over an algebra $\A_{1,1}^{\can}$ (quasi-isomorphic to the strands algebra of the $n=2$ top-right corner of Figure~\ref{fig:FourSquare} by \cite{LP,MMW1,MMW2}; our notation indicates the relationship between this algebra and the canonical basis for $V^{\otimes 2}$). However, the minimal local bimodules in Ozsv{\'a}th--Szab{\'o}'s theory are defined over a larger algebra $\B(2)$ quasi-isomorphic to $\A(\Zc)$ for $\Zc$ the diagram shown in Figure~\ref{fig:OSzFullLocalZ}. While $\A_{1,1}^{\can}$ has four basic idempotents (corresponding to four basis elements of $V^{\otimes 2}$), the algebra $\B(2)$ has eight basic idempotents.

Even though simpler Ozsv{\'a}th--Szab{\'o} bimodules over the smaller algebra $\A_{1,1}^{\can}$ appear in Theorem~\ref{thm:IntroOSz}, we expect that the corresponding bimodules over $\A_{1,1}$ (with their 1-morphism structure) contain all the local data needed to build $n$-strand crossing bimodules as in Figure~\ref{fig:TwoCuts}. This is one advantage of the algebras $\A_{1,\ldots,1}$ and the tensor product construction; just as standard tensor-product basis elements are easier to glue together than canonical basis elements are, the extension process that builds $n$-strand bimodules out of our 2-strand bimodules should be simpler (in terms of numbers of idempotents, although perhaps not in terms of holomorphic geometry) and more algebraically structured than in Ozsv{\'a}th--Szab{\'o}'s theory.

\end{remark}

\begin{remark}
The Heegaard diagrams appearing in this paper can be readily drawn and composed in the plane (although as in Figure~\ref{fig:IntroHDs}, the compositions often produce tubes which can be removed after destabilization). Thus, one can think of the theory developed here as an instance of ``bordered knot Floer homology done using planar diagrams'' in a broad sense. However, ``the planar diagram'' also has a more specific meaning in knot Floer homology, referring to a specific diagram introduced in \cite{OSzCube,ManolescuCube}. The diagrams considered here do not look exactly like this specific planar diagram, although they are related; for a proposed approach to defining a bordered HFK theory based on the planar diagram of \cite{OSzCube,ManolescuCube} using the $n$-strand Ozsv{\'a}th--Szab{\'o} algebras $\A_{1,\ldots,1}^{\can}$, see \cite{ManionDiagrams}.
\end{remark}

\begin{remark}
Various bimodules below will be equipped with the structure of 1-morphisms of 2-representations. For bimodules arising from Heegaard diagrams, the 1-morphism structure should also arise from counts of holomorphic disks whose domains have multiplicity at corners of the Heegaard diagram as in \cite{DM,DLM}. We hope to return to this point in a future paper. It would also be desirable to have Heegaard diagram interpretations for the bimodule maps we define in Section~\ref{sec:SkewHowe2Action} below; perhaps these maps arise from Heegaard diagram representations of 4d cobordisms between web complements given by foam complements with certain sutured structure.
\end{remark}

\begin{remark}\label{rem:COB1Mor}
The change-of-basis bimodules in Section~\ref{sec:ChangeOfBasis} can be upgraded to 1-morphisms of 2-representations, and one can deduce 1-morphism structure for the Ozsv{\'a}th--Szab{\'o} positive and negative crossing bimodules considered in Section~\ref{sec:OSzRelationship}. Diagrammatically, though, one can see that the 1-morphism structure on our change-of-basis bimodules will not be enough to define change-of-basis bimodules between $\A_{1,\ldots,1}$ and $\A_{1,\ldots,1}^{\can}$ for an arbitrary number $n$ of strands; a more global definition will be required. For Ozsv{\'a}th--Szab{\'o}'s bimodules, the 1-morphism structure is most interesting for their full local bimodules over the algebra $\B(2)$, where it encodes a compatibility between the summands of their bimodules that morally underlies their extensions as in Figure~\ref{fig:TwoCuts}; this is another point to which we hope to return in a future paper.
\end{remark}

\begin{remark}
One would like to extend $\ootimes$ to a monoidal structure on $2\Rep(\U^-)$ (or a related 2-category) and then upgrade to a braided monoidal structure. Unlike with Ozsv{\'a}th--Szab{\'o}'s bimodules, the bimodules for positive and negative crossings defined here should be instances of this higher braiding.
\end{remark}

\begin{remark}\label{rem:IntroSkewHowe}
Our constructions give a categorification of the ``skew Howe'' representation $\wedge_q^2 (\C_q^{1|1} \otimes \C_q^2)$ with its commuting actions of $U_q(\gl(1|1)^-)$ and $U_q(\gl(2))$; see \cite{QS, TVW, LQR}. Both actions are categorified here, and the 1-morphism structure on our trivalent vertex bimodules encodes the commutativity. It would be desirable to extend to a categorification of $\wedge_q^K(\C_q^{1|1} \otimes \C_q^n)$ for arbitrary $n$ and $K$.
\end{remark}

\begin{remark}
To recover $\HFK$ from the constructions of this paper, one would first want to extend $\ootimes$ from objects to at least (DA or AD bimodule) 1-morphisms in $2\Rep(\U^-)$, allowing one to perform gluings as in Figure~\ref{fig:TwoCuts}. One would then want to extend to mixed orientations (both $V$ and $V^*$) and define bimodules for local maximum and minimum points. Invariance of tangle invariants under Reidemeister moves, and recovery of $\HFK$, would follow if one could also define change-of-basis bimodules between $\A_{1,\ldots,1}$ and $\A_{1,\ldots,1}^{\can}$ as mentioned in Remark~\ref{rem:COB1Mor} and establish a general relationship with Ozsv{\'a}th--Szab{\'o}'s theory. It would also be desirable to derive the Reidemeister invariance as a consequence of an action of the $n$-strand braid cobordism category coming from an action of $\dot{\U}_q(\gl(n))$, as part of a categorification of $\wedge_q^K(\C_q^{1|1} \otimes \C_q^n)$ for all $n,K$.
\end{remark}

\subsection*{Organization}

In Section~\ref{sec:BorderedAlg}, we review what we need of the algebra of bordered Floer homology. We also introduce a matrix-based notation to facilitate computations with the $A_{\infty}$ bimodules and morphisms that will appear frequently below. 

In Section~\ref{sec:2Reps}, we define a bigraded tensor product operation and discuss its decategorification, proving Theorem~\ref{thm:IntroCat}. We also define 1-morphisms and 2-morphisms of 2-representations of the dg category $\U^-$ introduced in this section.

In Section~\ref{sec:OurExamples}, we define the algebras $\A_K$ and study their tensor products, including their 2-representation structure. For $\A_{1,\ldots,1}$, we discuss how the 2-representation structure relates to strands pictures and to Heegaard diagrams.

In Section~\ref{sec:EasyVertex}, we define a 1-morphism of 2-representations for one type of trivalent vertex, and in Section~\ref{sec:HardVertex} we do the same for the other type. Compositions of these bimodules are analyzed in Section~\ref{sec:SimpleWebs}.

In Section~\ref{sec:SkewHowe2Action}, we extend to an action of 2-morphisms in $\dot{\U}_q(\gl(2))$, proving Theorem~\ref{thm:IntroSkewHowe}. In Section~\ref{sec:SoergelBraidCob} we deduce Corollaries~\ref{cor:IntroSoergel} and \ref{cor:IntroBraidCob}, while also extending our functor to the Soergel category $\SC'_1$.

In Section~\ref{sec:PosNegCrossings}, we simplify the resulting bimodules for positive and negative crossings and relate them to bimodules motivated by holomorphic curve counts in Heegaard diagrams. In Section~\ref{sec:ChangeOfBasis}, we define change-of-basis bimodules between our algebras and the analogous algebras in Ozsv{\'a}th--Szab{\'o}'s bordered $\HFK$, proving Theorem~\ref{thm:IntroChangeOfBasis}. In Section~\ref{sec:OSzRelationship} we use the change-of-basis bimodules to prove Theorem~\ref{thm:IntroOSz}.

Finally, in Appendix~\ref{sec:Uqgl11Review}, we review what we need of the representation theory of $U_q(\gl(1|1))$.

\subsection*{Acknowledgments}

The author would like to thank Aaron Lauda, Ciprian Manolescu, Aaron Mazel--Gee, Peter Ozsv{\'a}th, Ina Petkova, Rapha{\"e}l Rouquier, and Zolt{\'a}n Szab{\'o} for useful conversations. The author would especially like to thank the referee for a careful reading of the details of the paper. The author was partially supported by NSF grants DMS-1902092 and DMS-2151786 and Army Research Office W911NF2010075.

\section{Bordered algebra and matrix notation}\label{sec:BorderedAlg}

In this section we review relevant aspects of the algebra of bordered Floer homology and introduce a convenient matrix-based notation for DA bimodules. Let $k$ be a field of characteristic $2$; we will occasionally work over more general $k$, e.g. polynomial rings over fields of characteristic 2, but what we say here generalizes to this setting without issue. 

All categories, algebras, modules, etc. discussed below are assumed to be $k$-linear. We will also assume that these algebras and modules come equipped with a bigrading by $\Z^2$, consisting of a $\Z$-grading $\deg^q$ (the quantum grading) with respect to which algebra multiplication maps, module action maps, and differentials are degree $0$, as well as a $\Z$-grading $\deg^h$ (the homological grading) with respect to which differentials are degree $1$. For a bigraded vector space $W$, we write $q^i W [j]$ for $W$ in which all quantum degrees have been shifted up by $i$ and all homological degrees have been shifted down by $j$. 

\begin{remark}
The below definitions also make sense in the ungraded setting, where the degree shifts in the below formulas should be ignored, as well as in the more general setting of gradings by nonabelian groups $G$ as is common in bordered Floer homology (see \cite{LOTBimodules} for the definitions in the $G$-graded case).
\end{remark}

\subsection{DA bimodules}

If $A$ is a dg category, we write $I$ for the non-full subcategory of $A$ having the same objects as $A$ but having only identity morphisms. We think of $A$ as being a (possibly non-unital) dg algebra equipped with a collection of orthogonal idempotents given by identity morphisms on the objects of $A$. Correspondingly, we will call $I$ the \emph{idempotent ring} or \emph{ring of idempotents} of $A$, even though $I$ is a category. In all examples considered in this paper, $A$ (and thus $I$) will have finitely many objects.

By convention, we assume all dg categories $A$ come equipped with an augmentation functor $\epsilon\co A \to I$ restricting to the identity on $I \subset A$; we write $A_+$ for the kernel of $\epsilon$.

\begin{definition}\label{def:DABimod}
Let $A, A'$ be dg categories with rings of idempotents $I, I'$. A DA bimodule over $(A',A)$ is a pair $(M,\{\delta^1_i: i \geq 1\})$ where $M$ is a bigraded $(I',I)$-bimodule (i.e. a $k$-linear functor from $I' \otimes I^{\opp}$ to bigraded $k$-vector spaces) and, for $i \geq 1$,
\[
\delta^1_i\co M \otimes_{I} (A[1])^{\otimes(i-1)} \to A'[1] \otimes_{I'} M
\]
is an $(I',I)$-bilinear map of bidegree zero satisfying the DA bimodule relations
\begin{align*}
&\sum_{j=1}^i(\mu' \otimes \id) \circ (\id \otimes \delta^1_{i-j+1}) \circ (\delta^1_j \otimes \id) \\
&+ \sum_{j=1}^{i-1} \delta^1_i \circ (\id \otimes \partial_j) \\
&+ \sum_{j=1}^{i-2} \delta^1_{i-1} \circ (\id \otimes \mu_{j,j+1}) \\
&+ (\partial' \otimes \id) \circ \delta^1_i \\
&= 0,
\end{align*}
where $\mu', \partial'$ denote the multiplication and differential on $A'$, $\partial_j$ denotes the differential on the $j^{th}$ factor of $A^{\otimes(i-1)}$, and $\mu_{j,j+1}$ denotes the multiplication on the $j^{th}$ and $(j+1)^{st}$ factors of $A^{\otimes(i-1)}$.
\end{definition}

\begin{example}
The identity DA bimodule $\mathbb{I}_A$ over $A$ has $\mathbb{I}_A(S',S) = k$ if $S = S'$ and $\mathbb{I}_A(S',S) = 0$ otherwise, so $\mathbb{I}_A$ is the identity bimodule over the idempotent ring $I$. We set $\delta^1_2$ to be the identity map (endofunctor) on $A[1]$; we set $\delta^1_i = 0$ for $i \neq 2$. One can check that the DA bimodule relations are satisfied.
\end{example}

\begin{remark}
A DA bimodule $M$ has an underlying $A_{\infty}$ bimodule $A' \otimes_{I'} M$. As a left $A_{\infty}$ module, $A' \otimes_{I'} M$ has no higher actions; it is a left dg module (even projective as a non-differential module). However, the right action of $A$ on $A' \otimes_{I'} M$ may have higher $A_{\infty}$ terms. We have a natural identification of $A \otimes_{I} \mathbb{I}_A$ with the ordinary identity bimodule over $A$.
\end{remark}

Let $\delta^1 \coloneqq \sum_i \delta^1_i$, a map from $M \otimes_{I} T^*(A[1])$ to $A'[1] \otimes_{I'} M$ of bidegree zero. Define 
\[
\delta^j_i\co M \otimes_{I} (A[1])^{\otimes(i-1)} \to (A'[1])^{\otimes j} \otimes_{I'} M
\]
by
\[
\delta^j_i(-,a_1,\ldots,a_{i-1}) = \sum_{i-1 = (i_1 - 1) + \cdots + (i_j - 1)} (\id \otimes \delta^1_{i_j}(-,\ldots,a_{i-1})) \circ \cdots \circ \delta^1_{i_1}(-,a_1,\ldots,a_{i_1 - 1}),
\]
where there are $j$ factors in the composition.

\begin{definition}
We say that a DA bimodule $M$ is:
\begin{itemize}
\item left bounded if for each $x \in M$ and each $i \geq 1$, there exists $n$ such that $\delta^j_i(x,-,\ldots,-)$ vanishes on $(A[1])^{\otimes(i-1)}$ for all $j > n$;
\item finitely generated if $M$ is finite-dimensional over $k$;
\item strictly unital if, for all $x \in M(S',S)$, we have 
\begin{itemize}
\item $\delta^1_2(x,\id_{S}) = \id_{S'} \otimes x$ 
\item $\delta^1_i(x,\ldots,\id_{S''},\ldots) = 0$ for $i > 2$ and any $S''$.
\end{itemize}
\end{itemize} 
\end{definition}

\subsection{Matrix notation}

We specify a DA bimodule $M$ (finitely generated at first) by giving a ``primary matrix'' and a ``secondary matrix.'' Let $A$ and $A'$ be dg categories with finitely many objects; we start with a primary matrix whose columns are indexed by objects $S$ of $A$, whose rows are indexed by objects $S'$ of $A'$, and whose entries are sets (finite sets at first). If the primary matrix has block form, we will often give each block separately. From the primary matrix, we can define $M$ as an $(I',I)$-bimodule; we take the set in row $S'$, column $S$ as a basis for $M(S',S)$. 

We also assume we are given a secondary matrix with both rows and columns indexed by entries of the primary matrix. The entries of the secondary matrix should be sums (potentially infinite) of expressions like $a' \otimes (a_1, \ldots, a_{i-1})$ where $a' \in A'$ and $(a_1, \ldots, a_{i-1})$ are distinct sequences of elements of some chosen homogeneous basis for (the morphism spaces in) $A$ such that: 
\begin{itemize}
\item identity morphisms are basis elements;
\item all other basis elements are in $A_+$.
\end{itemize}
We require that no $a_j$ is an identity morphism in any such sequence $(a_1, \ldots, a_{i-1})$. By convention, we write $0$ for the empty sum, and when $i=1$ we write $a'$ as shorthand for $a' \otimes ()$. As with the primary matrix, we will often give secondary matrices block-by-block. There are many examples below; see, for instance, Section~\ref{sec:SingularCrossing}.

\begin{definition}
Given this data, if $(a_1,\ldots,a_{i-1})$ is a sequence of elements in the chosen basis for $A$ (possibly empty but none an identity morphism) and $x$ is a basis element of $M(S',S)$, we take $\delta^1_i(x \otimes a_1 \otimes \cdots \otimes a_{i-1})$ to be the sum, over all rows $y$ in column $x$ of the secondary matrix, of $a' \otimes y$ if $a' \otimes (a_1, \ldots, a_{i-1})$ is a term of the secondary matrix entry in row $y$ and column $x$ and zero otherwise. For any identity morphism $\id_{S}$ in $A$ and any basis element $x$ of $M(S',S)$, we set $\delta^1_2(x,\id_{S}) = \id_{S'} \otimes x$. We extend $\delta^1_i$ linearly over $k$. 
\end{definition}

A main advantage of this matrix-based notation is that it allows computations involving DA bimodules to be formulated using familiar linear-algebraic operations. For example, we describe a procedure for checking that a DA bimodule defined by primary and secondary matrices is well-defined.
\begin{procedure}\label{proc:DAWellDef}
To check that the DA bimodule relations hold for $M$ defined by primary and secondary matrices, the first step is to multiply the secondary matrix by itself. When multiplying (a term of) an entry $a' \otimes (a_1, \ldots, a_{i-1})$ in the right matrix factor with another (term of an) entry $b' \otimes (b_1, \ldots, b_{j-1})$ in the left matrix factor, the result is $a'b' \otimes (a_1, \ldots, a_{i-1}, b_1,\ldots,b_{j-1})$. 

Next, one considers a ``multiplication matrix'' derived from the secondary matrix as follows: whenever an algebra input $a_j$ in a term $a' \otimes (a_1,\ldots,a_{i-1})$ of the secondary matrix (row $y$, column $x$) is a nonzero term of the basis expansion of $b_1 \cdot b_2$ for two basis elements $b_1, b_2$ of $A$ (with coefficient $c \in k$), add a term $ca' \otimes (a_1,\ldots,a_{j-1},b_1,b_2,a_{j+1},\ldots,a_{i-1})$ to the multiplication matrix in row $y$ and column $x$. 

Finally, one considers two ``differential matrices;'' the first is obtained from the secondary matrix by replacing each term $a' \otimes (a_1,\ldots,a_{i-1})$ with $\partial'(a') \otimes (a_1,\ldots,a_{i-1})$ in each entry. The second is obtained as follows: whenever an algebra input $a_j$ in a term $a' \otimes (a_1,\ldots,a_{i-1})$ of the secondary matrix (row $y$, column $x$) is a nonzero term of the basis expansion of $\partial(b)$ for some basis element $b$ of $A$ (with coefficient $c \in k$), add a term $ca' \otimes (a_1,\ldots,a_{j-1},b,a_{j+1},\ldots,a_{i-1})$ to the second differential matrix in row $y$ and column $x$.

The DA bimodule relations amount to checking that the sum of the squared secondary matrix, the multiplication matrix, and the two differential matrices is zero.
\end{procedure}

\begin{remark}
A priori, one would want to add terms $\id_{S'} \otimes \id_{S}$ to the diagonal entries of the secondary matrix before performing the above operations; one can check that the above procedure (without $\id_{S'} \otimes \id_{S}$ terms) suffices to check the DA bimodule relations, and that DA bimodules arising from primary and secondary matrices are strictly unital.
\end{remark}

\begin{remark}
With appropriate care, one can apply the same ideas to infinitely-generated DA bimodules whose generators come in regular families. We will see several examples below; in the secondary matrix, even if there are infinitely many rows, one requires that only finitely many entries of each column for any given input sequence are nonzero.
\end{remark}

\begin{remark}
Suppose that $M$ is a DA bimodule over $(A',A)$ with $\delta^1_i = 0$ for $i > 2$. It follows that $A' \otimes_{I'} M$ is an ordinary dg bimodule, with differential given by $\delta^1_1$ and right action of $A$ given by $\delta^1_2$ (the left action of $A'$ comes from multiplication in $A'$). For such bimodules, it will often be convenient to omit the full secondary matrix in favor of matrices for the differential and for the right action of each multiplicative generator of $A$; this type of description can be considerably simpler. 

Assume for simplicity that $A$ and $A'$ have no differential (this will be true for nearly all the examples in this paper). To check that such a bimodule $M$ is well-defined, it suffices to check that:
\begin{itemize}
\item the matrix for the differential on $M$ squares to zero;
\item the matrices for the right action commute with the matrix for the differential;
\item whatever relations are satisfied by the multiplicative generators in the algebra are satisfied for the corresponding right-action matrices.
\end{itemize}
\end{remark}

\subsection{Morphisms}

\begin{definition}
Let $A, A'$ be dg categories with rings of idempotents $I, I'$, and let $M, N$ be DA bimodules over $(A',A)$. A DA bimodule morphism $f\co M \to N$ is a collection $\{f_i : i \geq 1\}$ where
\[
f_i\co M \otimes_{I} (A_+[1])^{\otimes(i-1)} \to A' \otimes_{I'} N
\]
are $(I',I)$-bilinear maps (if all have the same bidegree, then $f$ is said to be homogeneous of this bidegree). Such morphisms form a chain complex whose differential is defined by
\begin{align*}
d(f)_i &= \sum_{j=1}^i (\mu' \otimes \id) \circ (\id \otimes \delta^1_{i-j+1}) \circ f_j \\
&+ \sum_{j=1}^i (\mu' \otimes \id) \circ (\id \otimes f_{i-j+1}) \circ \delta^1_j \\
&+ \sum_{j=1}^{i-1} f_i \circ (\id \otimes \partial_j) \\
&+ \sum_{j=1}^{i-2} f_{i-1} \circ (\id \otimes \mu_{j,j+1}) \\
&+ (\partial' \otimes \id) \circ f_i.
\end{align*}
We say that $f$ is strict if $f_i = 0$ for $i > 1$.
\end{definition}

\begin{remark}
A morphism of DA bimodules $f\co M \to N$ gives a morphism of $A_{\infty}$ bimodules $A' \otimes_{I'} M \to A' \otimes_{I'} N$, compatibly with the differential on morphisms.
\end{remark}

If $M$ and $M'$ are defined by specifying primary and secondary matrices, then we can also specify a morphism $f$ by giving a matrix. The columns of the matrix for $f$ are indexed by entries $x$ of the primary matrix for $M$, and the rows are indexed by entries $y$ of the primary matrix for $M'$. The entry of the matrix for $f$ in row $y$ and column $x$ should be a sum of expressions like $a' \otimes (a_1, \ldots, a_{i-1})$ where $a' \in A'$ and $(a_1, \ldots, a_{i-1})$ are distinct sequences of elements of a chosen basis for $A$ as above. We require that no $a_j$ is an identity morphism in any such sequence (so all $a_j$ are in $A_+$), we write $0$ for the empty sum, and when $i=1$ we write $a'$ as shorthand for $a' \otimes ()$. 

\begin{definition}
Given this data, if $(a_1,\ldots,a_{i-1})$ is a sequence of elements in the chosen basis for $A$ and $x$ is a basis element of $M(S',S)$, we take $f_i(x \otimes a_1 \otimes \cdots \otimes a_{i-1})$ to be the sum, over all rows $y$ in column $x$ of the matrix for $f$, of $a' \otimes y$ if $a' \otimes (a_1, \ldots, a_{i-1})$ is a term of the matrix entry for $f$ in row $y$ and column $x$ and zero otherwise. We extend $\delta^1_i$ linearly over $k$.
\end{definition}

To compose DA bimodule morphisms $f$ and $g$ given by matrices, one multiplies the matrices the same way one does when computing the squared secondary matrix in Procedure~\ref{proc:DAWellDef}.

\begin{procedure}
If $f\co M \to N$ is a morphism of DA bimodules and $M, N, f$ are given in matrix notation, a matrix for $d(f)$ can be obtained as the sum of the following matrices:
\begin{itemize}
\item Two matrices from multiplying the matrix with $f$ and the secondary matrices for $M, N$ (in the order that makes sense),
\item One ``multiplication matrix'' and two ``differential matrices'' obtained from the matrix for $f$ as in Procedure~\ref{proc:DAWellDef}.
\end{itemize}

\end{procedure}

For a given $(A',A)$, the DA bimodules over $(A',A)$ and chain complexes of DA bimodule morphisms form a dg category; isomorphism and homotopy equivalence of DA bimodules are defined in terms of this category. A closed morphism of DA bimodules $f\co M \to N$ is called a quasi-isomorphism if its strict part $f_1$ induces an isomorphism on the homology of the underlying left dg modules $A' \otimes_{I'} M$ and $A' \otimes_{I'} N$.

\begin{remark}
In fact, every quasi-isomorphism of DA bimodules is a homotopy equivalence (see \cite[Corollary 2.4.4]{LOTBimodules}); this is a general feature of $A_{\infty}$ morphisms.
\end{remark}

\subsection{Box tensor products}\label{sec:BoxTensor}

If $M$ and $N$ are two DA bimodules over $(A'',A')$ and $(A',A)$ respectively, one can define a DA bimodule $M \boxtimes N$ as in \cite[Section 2.3.2]{LOTBimodules} (given suitable finiteness conditions). If both $M$ and $N$ are left bounded, then $M \boxtimes N$ is well-defined and left bounded by \cite[Proposition 2.3.10(1), Remark 2.3.12]{LOTBimodules}.

If $M$ and $N$ are given by primary and secondary matrices, then we can describe $M \boxtimes N$ similarly. 

\begin{procedure}\label{proc:BoxTensorBimods}
The primary matrix for $M \boxtimes N$ is obtained by multiplying the primary matrices for $M$ and $N$. Each entry of the primary matrices is a set; to multiply two entries, take the Cartesian product. To sum over all products of entries for each entry in the resulting matrix, take the disjoint union.

The secondary matrix for $M \boxtimes N$ is obtained as follows: let $(x,y)$ and $(x',y')$ be entries of the primary matrix for $M \boxtimes N$. For every choice of:  
\begin{itemize}
\item $i \geq 1$
\item basis elements $y = y_1, y_2, \ldots, y_i = y'$ of $N$
\item term $a'' \otimes (a'_1,\ldots,a'_{i-1})$ in row $x'$ and column $x$ of the secondary matrix of $M$
\item terms $b'_j \otimes (a_{j,1},\ldots,a_{j,i_j-1})$ of the secondary matrix of $N$ in row $y_{j+1}$ and column $y_j$ for $1 \leq j \leq i-1$, where $c_j a'_j$ is a nonzero term in the basis expansion of $b'_j$ (for some $c_j \in k$),
\end{itemize}
add the entry
\[
c_1 \cdots c_{i-1} a'' \otimes (a_{1,1},\ldots,a_{1,i_1-1}, \ldots, a_{i-1,1}, \ldots, a_{i-1,i_{i-1}-1})
\]
to row $(x',y')$ and column $(x,y)$ of the secondary matrix for $M \boxtimes N$.
\end{procedure}

By \cite[Lemma 2.3.14(2)]{LOTBimodules}, box tensor products of DA bimodules are associative up to canonical isomorphism. Also, there are canonical isomorphisms between $M$ and the box tensor product of $M$ with an identity DA bimodule on either side.

\subsection{Box tensor products of morphisms}\label{sec:BoxTensorOfMorphisms}

Let $M_1, M_2$ be DA bimodules over $(A'',A')$ and let $N_1, N_2$ be DA bimodules over $(A',A)$. Let $f\co M_1 \to M_2$ and $g\co N_1 \to N_2$ be morphisms. As discussed in \cite[Section 2.3.2]{LOTBimodules}, the box tensor product $f \boxtimes g$ is only defined up to homotopy. However, expressions like $f \boxtimes \id$ and $\id \boxtimes g$ are unambiguously defined and we can compute them in matrix notation.

\begin{procedure}\label{proc:BoxTensorMorphisms}
Suppose that $N_1 = N_2 = N$ and we have a morphism $f\co M_1 \to M_2$. The matrix for $f \boxtimes \id_N$ has columns indexed by primary matrix entries for $M_1 \boxtimes N$ and rows indexed by primary matrix entries for $M_2 \boxtimes N$. For each entry $a'' \otimes (a'_1,\ldots,a'_{i-1})$ of the matrix for $f$ (in column $x_1$ and row $x_2$), consider the $(i-1)^{st}$ power of the secondary matrix for $N$, defined by concatenating both input sequences and output terms when multiplying individual entries. If $(x_1,y_1)$ and $(x_2,y_2)$ are entries of the primary matrices of $M_1 \boxtimes N$ and $M_2 \boxtimes N$ respectively, then for each entry of the $(i-1)^{st}$ power matrix of $N$ in column $y_1$ and row $y_2$ (say with output sequence $(b'_1, \ldots, b'_{i-1})$ and input sequence $\vec{S}$), let $c_j \in k$ be the coefficient of $a'_j$ in the basis expansion of $b'_j$ for $1 \leq j \leq i-1$. Let $c = c_1 \cdots c_{k-1}$ and add $ca'' \otimes \vec{S}$ to the matrix for $f \boxtimes \id_N$ in column $(x_1,y_1)$ and row $(x_2,y_2)$.

Now suppose that $M_1 = M_2 = M$ and we have a morphism $g\co N_1 \to N_2$. The matrix for $\id_M \boxtimes g$ is defined as above, except the matrix for $f$ above is replaced by the secondary matrix for $M$ with the terms $\id_S \otimes \id_{S'}$ included. The $(i-1)^{st}$ power matrix above is replaced, for $i \geq 2$, by the sum of all ways of multiplying one instance of the matrix for $g$ with $i-2$ instances of the secondary matrices for $N_1$ or $N_2$ as appropriate, again concatenating both input sequences and output terms when multiplying. 
\end{procedure}

Note that $i=1$ terms in the secondary matrix for $M$ do not contribute to the matrix for $\id_M \boxtimes g$.

\subsection{Simplifying DA bimodules}\label{sec:PrelimSimplifying}

Suppose that a DA bimodule $M$ is given by primary and secondary matrices, and that some entry of the secondary matrix (say in column $x$ and row $y$) is an identity morphism of $A'$ (i.e. $\id_{S'} = \id_{S'} \otimes ()$). We can simplify $M$ to obtain a homotopy equivalent DA bimodule $\widetilde{M}$ having two fewer basis elements than $M$. We describe $\widetilde{M}$ in matrix notation as follows.

\begin{procedure}
The primary matrix of $\widetilde{M}$ is the primary matrix of $M$ with the elements $x$ and $y$ removed from their corresponding entries. The secondary matrix of $\widetilde{M}$ is obtained as follows:
\begin{itemize}
\item Start by deleting the row and column corresponding to $x$, as well as the row and column corresponding to $y$, from the secondary matrix of $M$. 
\item Now, for each term $a' \otimes (a_1,\ldots,a_{i-1})$ of an entry of the original secondary matrix of $M$ in row $y$ (and some column $x'$) other than the $\id_{S'}$ we are canceling, and each such term $b' \otimes (b_1,\ldots,b_{j-1})$ in column $x$ (and some row $y'$) other than the $\id_{S'}$ we are canceling, add the term $a'b' \otimes (a_1,\ldots,a_{i-1},b_1,\ldots,b_{j-1})$ to the secondary matrix of $\widetilde{M}$ in column $x'$ and row $y'$.

\end{itemize}
\end{procedure}

If we can cancel $x$ and $y$ from $M$ as above, we will call $(x,y)$ a canceling pair in $M$. If there are multiple disjoint canceling pairs in $M$, we can cancel them all at once, potentially picking up more general terms than the ones above; we can even do this when $M$ has an infinite set of disjoint canceling pairs, assuming all resulting matrix entries and sums are finite.

\subsection{Duals and AD bimodules}\label{sec:DualsAD}

One can modify Definition~\ref{def:DABimod} by interchanging left and right everywhere; one obtains the definition of an AD bimodule. We can specify AD bimodules $N$ over $(A',A)$ using matrix notation, just as for DA bimodules; the difference is that the entries of a secondary matrix specifying an AD bimodule are sums of terms of the form $(a'_1, \ldots, a'_{i-1}) \otimes a$. Right boundedness, finite generation, and strict unitality for AD bimodules are defined like left boundedness, finite generation, and strict unitality for DA bimodules. Morphisms, including isomorphisms, homotopy equivalences, and quasi-isomorphisms, are also defined similarly, and every quasi-isomorphism of AD bimodules is a homotopy equivalence.

The box tensor product of AD bimodules is also defined analogously to the DA bimodule case, and is associative up to canonical isomorphism. There are identity AD bimodules as in the DA case, and the box tensor product of an AD bimodule $N$ with an identity AD bimodule on either side is canonically isomorphic to $N$.

If $M$ is a finitely generated DA bimodule over $(A',A)$, the opposite AD bimodule to $M$ (over $(A,A')$) is defined in \cite[Definition 2.2.53]{LOTBimodules}. Similarly, a finitely generated AD bimodule $N$ over $(A',A)$ has an opposite DA bimodule over $(A,A')$. The opposite of $M$ corresponds to the bimodule $\Hom_{A'}(A' \otimes_{I'} M, A')$ with an induced left $A_{\infty}$ action of $A$, so we will write the opposite AD bimodule to $M$ as ${^{\vee}}M$ and call it the left dual of $M$. Similarly, the opposite of $N$ corresponds to $\Hom_{A}(N \otimes_{I} A, A)$; we will write the opposite of $N$ as $N^{\vee}$ and call the right dual of $N$. 

Duality preserves finite generation and strict unitality; it sends left bounded DA bimodules to right bounded AD bimodules and vice-versa. It reverses both $q$-degree and homological degree; we have ${^{\vee}}(q^i M [j]) = q^{-i} ({^{\vee}}M) [-j]$. The opposite of an identity DA bimodule is an identity AD bimodule and vice-versa.

Duality is compatible with box tensor products after reversing the order: we have natural identifications ${^{\vee}}(M_1 \boxtimes M_2) \cong {^{\vee}}M_2 \boxtimes {^{\vee}}M_1$ and $(N_1 \boxtimes N_2)^{\vee} \cong N_2^{\vee} \boxtimes N_1^{\vee}$. 

A morphism of finitely generated DA bimodules $f\co M \to M'$ gives a dual morphism ${^{\vee}}f\co {^{\vee}}M' \to {^{\vee}}M$, such that ${^{\vee}}(g \circ f) = {^{\vee}}f \circ {^{\vee}}g$ and ${^{\vee}}(d(f)) = d({^{\vee}}f)$. Similarly, we can dualize morphisms of finitely generated AD bimodules to get morphisms of DA bimodules.

\begin{remark}
Since duality reverses bidegrees on DA and AD bimodules while simultaneously reversing the direction of morphisms, the dual ${^{\vee}}f$ of a DA bimodule morphism $f$ has the same bidegree as $f$, and similarly for duals of AD bimodule morphisms.
\end{remark}

Concretely, duality acts on bimodules presented in matrix notation as follows.

\begin{procedure}
For a finitely generated DA bimodule $M$ given by a primary matrix and secondary matrix, form a primary matrix for ${^{\vee}}M$ by taking the transpose of the primary matrix for $M$. Form a secondary matrix for ${^{\vee}}M$ by
\begin{itemize}
\item taking the transpose of the secondary matrix for $M$;
\item changing all entries $a' \otimes (a_1, \ldots, a_{i-1})$ in this secondary matrix to $(a_1, \ldots, a_{i-1}) \otimes a'$.
\end{itemize}
\end{procedure}

\subsection{Split Grothendieck groups}\label{sec:SplitGG}

In this section we will require our dg categories $A$ to have finitely many objects for simplicity. While the Grothendieck groups we consider below can be defined over $\Z[q,q^{-1}]$, we will pass to $\C_q \coloneqq \C(q)$ since we will be relating these Grothendieck groups to representations of $U_q(\gl(1|1))$.

\begin{definition}
Let $A$ be a dg category with finitely many objects; recall that we assume $A$ is equipped with a functor $\epsilon\co A \to I$ restricting to the identity on $I \subset A$. Let $G_0(A)$ be the split Grothendieck group of the closure of $A$ under direct sums and degree shifts (both quantum and homological), tensored over $\Z[q,q^{-1}]$ with $\C_q$. In other words, $G_0(A)$ is the $\C_q$-vector space generated by isomorphism classes of objects $[S]$ of this closure of $A$ under the relations
\[
[S_1 \oplus S_2] = [S_1] + [S_2], \,\,\, [qS] = q[S], \,\,\, [S[1]] = -[S].
\]
It follows from the existence of $\epsilon$ that $G_0(A)$ has a basis given by the objects of $A$ ($\epsilon$ ensures that no two distinct objects of $A$ can be isomorphic).
\end{definition}

Our convention will be to use AD bimodules to induce maps on split Grothendieck groups as follows. Suppose that $A$ and $A'$ each have finitely many objects.

\begin{definition}
Let $N$ be a finitely generated AD bimodule over $(A',A)$. Define a $\C_q$-linear map $[N]\co G_0(A) \to G_0(A')$ by declaring that, for basis vectors $[S]$ of $G_0(A)$ and $[S']$ of $G_0(A')$ coming from objects $S$ of $A$ and $S'$ of $A'$, the matrix entry of $[N]$ in row $[S']$ and column $[S]$ is the $q$-graded Euler characteristic of the finite-dimensional bigraded $k$-module $N(S',S)$.
\end{definition}

Heuristically, with this convention one thinks of applying the functor $N \boxtimes -$ to simple $A$-modules and expanding the result in terms of simple $A'$-modules, but we will stick with the above elementary definition in this paper.

We can also decategorify DA bimodules; let $K_0(A)$ be the $\C_q$-vector space defined in the same way as $G_0(A)$, except that we call the generators $[P]$ instead of $[S]$.
\begin{definition}
Let $M$ be a finitely generated DA bimodule over $(A',A)$. Define a $\C_q$-linear map $[M]\co K_0(A) \to K_0(A')$ by declaring that, for basis vectors $[P]$ of $K_0(A)$ and $[P']$ of $K_0(A')$ coming from objects $P$ of $A$ and $P'$ of $A'$, the matrix entry of $[M]$ in row $[P']$ and column $[P]$ is the $q$-graded Euler characteristic of the finite-dimensional bigraded $k$-module $M(P',P)$.
\end{definition}

With $K_0$ one thinks of applying the functor $M \boxtimes -$ to indecomposable projective $A$-modules and expanding the result in terms of indecomposable projective $A'$-modules.

In terms of matrix notation, we have the following decategorification procedure.

\begin{procedure}
To decategorify a finitely generated AD or DA bimodule given in matrix notation, one discards the secondary matrix and replaces sets in the entries of the primary matrix with sums of $(-1)^j q^i$ for each element with $\deg^q = i$ and $\deg^h = j$.

If $M$ is a finitely generated DA bimodule given in matrix notation, then to decategorify ${^{\vee}}M$, one discards the secondary matrix, replaces sets in the entries of the primary matrix of $M$ with sums of $(-1)^j q^{-i}$ for each element with $\deg^q = i$ and $\deg^h = j$, and takes the transpose.
\end{procedure}

\section{2-representations and morphisms}\label{sec:2Reps}

\subsection{Preliminaries}

Let $\U$ denote the differential monoidal category of \cite[Section 4.1.1]{ManionRouquier}. We first define a variant $\U^-$ of $\U$. Write $\varepsilon_1 = (1,0)$ and $\varepsilon_2 = (0,1)$ for the standard basis vectors of $\Z^2$ and let $\alpha = \varepsilon_1 - \varepsilon_2 = (1,-1) \in \Z^2$.

\begin{definition}
Let $\U^-$ be the strict dg 2-category defined as follows:
\begin{itemize}
\item The objects are lattice points $\omega = \omega_1 \varepsilon_1 + \omega_2 \varepsilon_2 \in \Z^2$.

\item The 1-morphisms are generated under composition by $f_{\omega}\co \omega \to \omega - \alpha$ for all $\omega \in \Z^2$.

\item The 2-morphisms are generated under horizontal and vertical composition by endomorphisms $\tau_{\omega}$ of $f_{\omega - \alpha} f_{\omega}$ for $\omega \in \Z^2$, subject to
\[
\tau_{\omega}^2 = 0, \,\,\, f_{\omega - 2\alpha} \tau_{\omega} \circ \tau_{\omega - \alpha} f_{\omega} \circ f_{\omega - 2\alpha} \tau_{\omega} = \tau_{\omega - \alpha} f_{\omega} \circ f_{\omega - 2\alpha} \tau_{\omega} \circ \tau_{\omega - \alpha} f_{\omega}, \,\,\, d(\tau_{\omega}) = \id_{f_{\omega - \alpha} f_{\omega}}.
\]
We set $\deg^q(\tau_{\omega}) = 0$ and $\deg^h(\tau_{\omega}) = -1$.
\end{itemize}

\end{definition}

We let $H_n$ denote the dg 2-endomorphism algebra of the 1-morphism $f_{\omega-(n-1)\alpha} \circ \cdots \circ f_{\omega}$ of $\U^-$ for any $\omega \in \Z^2$ (the dg algebra $H_n$ does not depend on the choice of $\omega$). 

A bimodule 2-representation of $\U^-$ is a functor from $\U^-$ into (dg categories, dg bimodules, bimodule maps); it amounts to dg categories $A_{\omega}$, dg bimodules ${_{\omega - \alpha}}F_{\omega}$, and bimodule endomorphisms $\tau_{\omega}$ of ${_{\omega - 2\alpha}} F_{\omega - \alpha} F_{\omega}$ satisfying the above relations. By forgetting the bigrading and weight decomposition, a bimodule 2-representation of $\U^-$ gives a bimodule 2-representation of $\U$.

\begin{definition}
We say a bimodule 2-representation of $\U^-$ is right finite if each bimodule ${_{\omega - \alpha}}F_{\omega}$ is given by $N_{\omega} \otimes_{I_{\omega}} A_{\omega}$ for some AD bimodule $N_{\omega}$ over $(A_{\omega - \alpha}, A_{\omega})$ with vanishing higher action terms $\delta^1_i$ for $i > 2$; in this case, we write ${_{\omega}}F^{\vee}_{\omega - \alpha}$ for the right dual of ${_{\omega - \alpha}}F_{\omega}$ (equal to $A_{\omega} \otimes_{I_{\omega}} N_{\omega}^{\vee}$ where $N_{\omega}^{\vee}$ is defined as in Section~\ref{sec:DualsAD}).
\end{definition}

When working with right finite 2-representations, we will often take the AD bimodules representing the dg bimodules ${_{\omega - \alpha}} F_{\omega}$ to be part of the data.

\subsection{Bigraded tensor product}\label{sec:BigradedTensor}

In this section we make use of the tensor product $\ootimes$ from \cite[Section 5.3.4]{ManionRouquier}, which is an instance of the $\Delta'_{\lambda}$ construction of \cite[Sections 5.3.1--5.3.3]{ManionRouquier}. Specifically, it is an instance of the $\Delta_{\sigma}$ construction from \cite[Sections 5.3.2--5.3.3]{ManionRouquier}, and we will refer to the maps $u$ and $w$ from \cite[Section 5.3.3]{ManionRouquier}. 

If $(A_1, F_1, \tau_1)$ and $(A_2, F_2, \tau_2)$ are right finite bimodule 2-representations of $\U^-$, we can take the tensor product $\ootimes$ of the underlying bimodule 2-representations of $\U$ to get another bimodule 2-representation of $\U$. We will call this tensor product 2-representation $\left( A_1 \ootimes A_2, X, \tau \right)$. We now define a bimodule 2-representation $\left( (A_1 \ootimes A_2)_\omega, \, {_{\omega - \alpha}}F_{\omega}, \, \tau_{\omega} \right)$ of $\U^-$ lifting $\left( A_1 \ootimes A_2, X, \tau \right)$.
\begin{definition}
We first define a dg category $(A_1 \ootimes A_2)_{\omega}$ for each $\omega \in \Z^2$. Recall that the objects of $A_1 \ootimes A_2$ are the same as the objects of $A_1 \otimes A_2$, namely pairs $S = (S_1, S_2)$ where $S_i$ is an object of $A_i$. 
\begin{itemize}
\item For $\omega \in \Z^2$, we let $(A_1 \ootimes A_2)_{\omega}$ be the full differential subcategory of $A_1 \ootimes A_2$ on objects $(S_1,S_2)$ such that, for some $\omega', \omega'' \in \Z^2$, $S_1$ is an object of $(A_1)_{\omega'}$, $S_2$ is an object of $(A_2)_{\omega''}$, and we have $\omega = \omega' + \omega''$. Note that $A_1 \ootimes A_2 = \oplus_{\omega \in \Z^2} (A_1 \ootimes A_2)_{\omega}$ as differential categories.

\item For objects $S = (S_1,S_2)$ and $T = (T_1,T_2)$ of $(A_1 \otimes A_2)_{\omega}$ where $S_1$ is an object of $(A_1)_{\omega'}$, $S_2$ is an object of $(A_2)_{\omega''}$, and $\omega = \omega' + \omega''$, we lift $\Hom_{A_1 \ootimes A_2}(S,T)$ to a bigraded chain complex by setting
\[
\Hom_{(A_1 \ootimes A_2)_{\omega}}(S,T) = \bigoplus_{i \geq 0} \left({_{\omega'+i\alpha}}(\widetilde{F}_1^{\vee})_{\omega'}^i(T_1, S_1)\right) \otimes_{H_i} \Big({_{\omega''-i\alpha}}(F_2)^i_{\omega''} (T_2, S_2)\Big)
\]
where
\[
{_{\omega' + \alpha}}(\widetilde{F}_1^{\vee})_{\omega'} \coloneqq q^{\omega'_1 + \omega'_2} {_{\omega' + \alpha}}(F_1^{\vee})_{\omega'} [\omega'_2 - 1].
\]
\end{itemize}
By construction, composition in $(A_1 \ootimes A_2)_{\omega}$ has degree zero, so $(A_1 \ootimes A_2)_{\omega}$ is a dg category.
\end{definition}

Recall from \cite{ManionRouquier} that $X = X(T,S)$ is the mapping cone of the (ungraded) map
\begin{equation}\label{eq:MapU}
u(T,S)\co \Big((A_1 \otimes F_2) \otimes_{A_1 \otimes A_2} (A_1 \ootimes A_2)\Big)(T,S) \to \Big((F_1 \otimes A_2) \otimes_{A_1 \otimes A_2} (A_1 \ootimes A_2)\Big)(T,S)
\end{equation}
where $u$ is adjoint to the multiplication map as in \cite[Section 5.3.3]{ManionRouquier}; see also \cite[Section 5.3.4]{ManionRouquier}. Note that for an object $S$ of $(A_1 \ootimes A_2)_{\omega}$, the left differential $A_1 \ootimes A_2$-module $X(-,S)$ vanishes on $T$ unless $T$ is an object of $(A_1 \ootimes A_2)_{\omega - \alpha}$. Thus, $X = \oplus_{\omega \in \Z^2} ({_{\omega - \alpha}}F_{\omega})$ as differential bimodules over $A_1 \ootimes A_2$, where for $\omega \in \Z^2$, ${_{\omega - \alpha}}F_{\omega}$ is the differential bimodule over $((A_1 \ootimes A_2)_{\omega - \alpha}, (A_1 \ootimes A_2)_{\omega})$ defined by ${_{\omega - \alpha}}F_{\omega}(T,S) = X(T,S)$ for objects $S$ of $(A_1 \ootimes A_2)_{\omega}$ and $T$ of $(A_1 \ootimes A_2)_{\omega - \alpha}$.

\begin{definition}
For $\omega \in \Z^2$, we give the differential bimodule ${_{\omega - \alpha}}F_{\omega}$ a bigrading as follows. For objects $S = (S_1, S_2)$ of $(A_1 \ootimes A_2)_{\omega}$ and $T = (T_1, T_2)$ of $(A_1 \ootimes A_2)_{\omega - \alpha}$ such that $T_1$ is an object of $(A_1)_{\omega'}$, $T_2$ is an object of $(A_2)_{\omega'' - \alpha}$, and $\omega = \omega' + \omega''$, the map $u(T,S)$ of equation~\eqref{eq:MapU} has degree zero when viewed as a map
\begin{align*}
u(T,S)\co & \,\,\, q^{\omega'_1 + \omega'_2} \Big( ((A_1)_{\omega'} \otimes ({_{\omega'' - \alpha}}(F_2)_{\omega''})) \otimes_{(A_1)_{\omega'} \otimes (A_2)_{\omega''}} \left(A_1 \ootimes A_2\right)_{\omega}\Big) (T,S) [\omega'_2 - 1] \\
&\to \Big(({_{\omega'}}(F_1)_{\omega'+\alpha} \otimes (A_2)_{\omega'' - \alpha}) \otimes_{(A_1)_{\omega' + \alpha} \otimes (A_2)_{\omega'' - \alpha}} \left(A_1 \ootimes A_2\right)_{\omega} \Big)(T,S).
\end{align*}
Indeed, the multiplication map
\begin{align*}
\Big( ({_{\omega' + \alpha}}(\widetilde{F}_1^{\vee})_{\omega'}) &\otimes ({_{\omega'' - \alpha}}(F_2)_{\omega''})) \otimes_{(A_1)_{\omega'} \otimes (A_2)_{\omega''}} \left(A_1 \ootimes A_2\right)_{\omega}\Big) (T,S) \\
&\to \left(A_1 \ootimes A_2\right)_{\omega} (T,S)
\end{align*}
has degree zero by the definition of $A_1 \ootimes A_2$, and to get $u(T,S)$ we have expanded out the grading shifts in the definition of $\widetilde{F}_1^{\vee}$ and dualized.

We let ${_{\omega - \alpha}}F_{\omega}(T,S)$ be the mapping cone of the above degree-zero map $u(T,S)$; we get a dg bimodule ${_{\omega - \alpha}}F_{\omega}$ over $((A_1 \otimes A_2)_{\omega - \alpha}, (A_1 \ootimes A_2)_{\omega})$ such that, as a non-differential bimodule, ${_{\omega - \alpha}}F_{\omega}(T,S)$ is a direct sum of
\[
q^{\omega'_1 + \omega'_2} \Big( ((A_1)_{\omega'} \otimes ({_{\omega'' - \alpha}}(F_2)_{\omega''})) \otimes_{(A_1)_{\omega'} \otimes (A_2)_{\omega''}} \left(A_1 \ootimes A_2\right)_{\omega}\Big) (T,S) [\omega'_2]
\]
and
\[
\Big(({_{\omega'}}(F_1)_{\omega'+\alpha} \otimes (A_2)_{\omega'' - \alpha}) \otimes_{(A_1)_{\omega' + \alpha} \otimes (A_2)_{\omega'' - \alpha}} \left(A_1 \ootimes A_2\right)_{\omega} \Big)(T,S)
\]
\end{definition}

\begin{proposition}
The map defining the left action of $(A_1 \ootimes A_2)_{\omega - \alpha}$ on ${_{\omega - \alpha}}F_{\omega}$ (i.e. the map $w$ from \cite[Section 5.3.3]{ManionRouquier}; see also the $2 \times 2$ matrices in \cite[Remark 5.3.5]{ManionRouquier}) has degree zero.
\end{proposition}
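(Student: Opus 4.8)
The plan is to reduce the degree-zero claim for the left-action map $w$ to the degree-zero claim for $u$ that was just established in the preceding definition, using the explicit $2\times 2$ matrix description of $w$ recalled from \cite[Remark 5.3.5]{ManionRouquier}. First I would unpack the bigraded direct-sum decomposition of ${_{\omega-\alpha}}F_{\omega}(T,S)$ into its two summands (the ``$A_1 \otimes F_2$ piece'' carrying the $q^{\omega'_1+\omega'_2}[\omega'_2]$ shift, and the ``$F_1 \otimes A_2$ piece'' carrying no shift), both on the target side (objects of weight $\omega-\alpha$) and on the source side (objects of weight $\omega$), being careful to track which of $\omega'$, $\omega''$ shifts by $\alpha$ in each case. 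The key bookkeeping point is that the left-action map, like the differential, is built from three types of constituent maps: (i) multiplication within $A_1 \ootimes A_2$, which is degree zero by the construction of $(A_1 \ootimes A_2)_\omega$ as a dg category; (ii) the component of $\tau$ or the $F_i$-module actions, which are degree zero by hypothesis on the input 2-representations; and (iii) the adjoint map $u$, which is degree zero by the proposition immediately preceding this one.

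The core of the argument is then a shift-matching computation on each of the four matrix entries of $w$. For the two diagonal entries (the ones that preserve which summand a vector lies in), $w$ acts by a combination of type-(i) and type-(ii) maps, so one only needs to check that applying the same shifts $q^{\omega'_1+\omega'_2}[\omega'_2]$ on both source and target of the $A_1 \otimes F_2$ summand is consistent — here the point is that the left action of $(A_1 \ootimes A_2)_{\omega-\alpha}$ does not change the splitting $\omega = \omega' + \omega''$ except through the already-accounted-for shift in $F_2$ or $F_1$, so the shifts cancel. For the off-diagonal entry mapping the $A_1 \otimes F_2$ summand of the source to the $F_1 \otimes A_2$ summand of the target, $w$ factors through (a restriction of) the map $u$, so the relevant shift discrepancy is exactly the $q^{\omega'_1+\omega'_2}[\omega'_2 - 1 \mapsto \omega'_2]$ bookkeeping already present in the mapping-cone construction; since $u$ was shown to be degree zero as a map with source shifted by $q^{\omega'_1+\omega'_2}[\omega'_2-1]$, and the mapping cone shifts that source to $[\omega'_2]$, the off-diagonal component of $w$ is a degree-$1$ map out of the cone, hence degree zero as a component of a bimodule action on the shifted complex — which is precisely the assertion. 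The remaining off-diagonal entry is either zero or another type-(i)/(ii) map whose degree is immediate.

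The main obstacle I expect is purely organizational rather than conceptual: keeping straight the several simultaneous decompositions (source weight vs.\ target weight, first-factor weight $\omega'$ vs.\ second-factor weight $\omega''$, and which summand of the cone one is in), together with the sign/shift convention that the differential raises $\deg^h$ by $1$ while the cone construction shifts by $[\omega'_2]$. In particular one must verify that the $[\omega'_2] - [\omega'_2 - 1]$ discrepancy between the cone shift and the shift appearing in the definition of ${_{\omega'+\alpha}}(\widetilde{F}_1^\vee)_{\omega'}$ is consistent between the $\Hom$-complex definition of $(A_1 \ootimes A_2)_\omega$ and the bimodule definition of ${_{\omega-\alpha}}F_\omega$; this is really a consistency check on the conventions set up in the two preceding definitions, and once it is confirmed the degree-zero statement for $w$ follows formally from that for $u$ together with the dg-category property of $(A_1 \ootimes A_2)_\omega$. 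I would also double-check against the corresponding ungraded statement in \cite[Section 5.3.3]{ManionRouquier}, which guarantees $w$ is a well-defined bimodule action map; the only new content here is the compatibility with the bigrading, so it suffices to confirm that every shift introduced in passing from $\U$ to $\U^-$ has been consistently propagated through the matrix entries of $w$.
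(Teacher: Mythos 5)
Your overall strategy---verify degree zero entry by entry on the $2\times 2$ matrix for $w$---is the same as the paper's, but your identification of what those entries actually are is off in a way that leaves the real content of the proof undone. The entries of $w$ do not reduce to ``multiplication plus module actions'' on the diagonal and ``a restriction of $u$'' off the diagonal. From \cite[Remark 5.3.5]{ManionRouquier}, the nontrivial entries are composites built from: the endomorphism $\tau_2$ of $F_2^2$, the swap maps $\lambda$ and $\sigma$, the counit $\varepsilon$ of the adjunction $F_1^{\vee} \dashv F_1$, the map $\rho$ of \cite[equation (4.4.2)]{ManionRouquier} (which has the same bidegree as $\tau$, i.e.\ $\deg^h = -1$), and multiplication. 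In particular the top-left entry starts from a source carrying the shift $q^{2\omega'_1 + 2\omega'_2}[2\omega'_2 - 1]$ (two factors of $\widetilde{F}_1^{\vee}$, one from the acting element and one from the module element) and lands in a target with shift $q^{\omega'_1+\omega'_2}[\omega'_2 - 1]$; the degree count closes only because the homological degree $-1$ of $\tau_2$ is exactly absorbed in passing from $[2\omega'_2-1]$ to $[2\omega'_2-2]$ before multiplication collapses the shift. The composite containing $\rho$ requires the same kind of compensation. Your assertion that for the diagonal entries ``the shifts cancel'' because the splitting $\omega = \omega' + \omega''$ is unchanged skips precisely this step, which is the only place the proposition could fail.

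Likewise, the off-diagonal entry does not factor through $u$: $u$ is the adjoint of the multiplication map and is the connecting map of the mapping cone (i.e.\ it contributes to the \emph{differential} of ${_{\omega-\alpha}}F_{\omega}$, not to the left action), whereas the off-diagonal entry of $w$ is built from $\sigma$ followed by the counit $\varepsilon$. So the degree-zero statement for $u$ from the preceding definition is not the input here, and your proposed reduction does not go through. To complete the argument you need to write out each of the three nonzero composites with their explicit quantum and homological shifts and check each constituent map individually, which is what the paper does; there is no shortcut via $u$.
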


\begin{proof}
By expanding out grading shifts in the definition of $\widetilde{F}_1^{\vee}$, and taking the grading shift $q^{\omega'_1 + \omega'_2}[\omega'_2]$ of the first summand of ${_{\omega - \alpha}}F_{\omega}(T,S)$ into account, one sees that the maps
\begin{align*}
&q^{2 \omega'_1 + 2\omega'_2} \Big( \Big( {_{\omega'+\alpha}}(F_1^{\vee})_{\omega'} \otimes (A_2)_{\omega''-2\alpha} \Big) \otimes_{(A_1)_{\omega'} \otimes (A_2)_{\omega''-2\alpha}} \Big( (A_1)_{\omega'} \otimes {_{\omega''-2\alpha}}(F_2)_{\omega''-\alpha} \Big) \\
& \quad \quad \otimes_{(A_1)_{\omega'} \otimes (A_2)_{\omega''-\alpha}} \Big( (A_1)_{\omega'} \otimes {_{\omega''-\alpha}}(F_2)_{\omega''} \Big) \\
& \quad \quad \otimes_{(A_1)_{\omega'} \otimes (A_2)_{\omega''}} (A_1 \ootimes A_2)_{\omega} \Big) (T,S) [2 \omega'_2 - 1] \\
&\xrightarrow{\id \otimes \tau_2 \otimes \id} q^{2 \omega'_1 + 2\omega'_2} \Big( \Big( {_{\omega'+\alpha}}(F_1^{\vee})_{\omega'} \otimes (A_2)_{\omega''-2\alpha} \Big) \otimes_{(A_1)_{\omega'} \otimes (A_2)_{\omega''-2\alpha}} \Big( (A_1)_{\omega'} \otimes {_{\omega''-2\alpha}}(F_2)_{\omega''-\alpha} \Big) \\
& \quad \quad \otimes_{(A_1)_{\omega'} \otimes (A_2)_{\omega''-\alpha}} \Big( (A_1)_{\omega'} \otimes {_{\omega''-\alpha}}(F_2)_{\omega''} \Big) \\
& \quad \quad \otimes_{(A_1)_{\omega'} \otimes (A_2)_{\omega''}} (A_1 \ootimes A_2)_{\omega} \Big) (T,S) [2 \omega'_2 - 2] \\
&\xrightarrow{\lambda \otimes \id} q^{2\omega'_1 + 2\omega'_2} \Big( \Big( (A_1)_{\omega'+\alpha} \otimes {_{\omega''-2\alpha}}(F_2)_{\omega''-\alpha} \Big) \otimes_{(A_1)_{\omega'+\alpha} \otimes (A_2)_{\omega''-\alpha}} \Big( {_{\omega' + \alpha}}(F_1^{\vee})_{\omega'} \otimes (A_2)_{\omega''-\alpha} \Big) \\
& \quad \quad \otimes_{(A_1)_{\omega'} \otimes (A_2)_{\omega''-\alpha}} \Big( (A_1)_{\omega'} \otimes {_{\omega''-\alpha}}(F_2)_{\omega''} \Big) \\
& \quad \quad \otimes_{(A_1)_{\omega'} \otimes (A_2)_{\omega''}} (A_1 \ootimes A_2)_{\omega} \Big) (T,S) [2 \omega'_2 - 2] \\
&\xrightarrow{\id \otimes \mult} q^{\omega'_1 + \omega'_2} \Big( \Big( (A_1)_{\omega'+\alpha} \otimes {_{\omega''-2\alpha}}(F_2)_{\omega''-\alpha} \Big) \otimes_{(A_1)_{\omega'+\alpha} \otimes (A_2)_{\omega''-\alpha}} (A_1 \ootimes A_2)_{\omega} \Big) (T,S) [\omega'_2 - 1] \\
\end{align*}
each have degree zero, where
\begin{align*}
\lambda\co &\Big( {_{\omega'+\alpha}}(F_1^{\vee})_{\omega'} \otimes (A_2)_{\omega''-2\alpha} \Big) \otimes_{(A_1)_{\omega'} \otimes (A_2)_{\omega''-2\alpha}} \Big( (A_1)_{\omega'} \otimes {_{\omega''-2\alpha}}(F_2)_{\omega''-\alpha} \Big) \\
& \to \Big( (A_1)_{\omega'+\alpha} \otimes {_{\omega''-2\alpha}}(F_2)_{\omega''-\alpha} \Big) \otimes_{(A_1)_{\omega'+\alpha} \otimes (A_2)_{\omega''-\alpha}} \Big( {_{\omega' + \alpha}}(F_1^{\vee})_{\omega'} \otimes (A_2)_{\omega''-\alpha} \Big)
\end{align*}
is defined in \cite[equation (5.3.1)]{ManionRouquier} and is equal to the usual swap isomorphism by the definitions in \cite[Section 5.3.4]{ManionRouquier}. Thus, the composite of these maps (the top-left entry of the $2 \times 2$ matrix defining the left action of $(A_1 \ootimes A_2)_{\omega}$ on ${_{\omega-\alpha}}F_{\omega}$) has degree zero. Similarly, the maps
\begin{align*}
&q^{\omega'_1 + \omega'_2} \Big( \Big( {_{\omega'}}(F_1^{\vee})_{\omega' - \alpha} \otimes (A_2)_{\omega''-\alpha} \Big) \otimes_{(A_1)_{\omega' - \alpha} \otimes (A_2)_{\omega''-\alpha}} \Big( (A_1)_{\omega' - \alpha} \otimes {_{\omega''-\alpha}}(F_2)_{\omega''} \Big) \\
& \quad \quad \otimes_{(A_1)_{\omega' - \alpha} \otimes (A_2)_{\omega''}} \Big( {_{\omega' - \alpha}}(F_1)_{\omega'} \otimes (A_2)_{\omega''} \Big) \\
& \quad \quad \otimes_{(A_1)_{\omega'} \otimes (A_2)_{\omega''}} (A_1 \ootimes A_2)_{\omega} \Big) (T,S) [\omega'_2] \\
&\xrightarrow{\id \otimes \sigma \otimes \id} q^{\omega'_1 + \omega'_2} \Big( \Big( {_{\omega'}}(F_1^{\vee})_{\omega' - \alpha} \otimes (A_2)_{\omega''-\alpha} \Big) \otimes_{(A_1)_{\omega' - \alpha} \otimes (A_2)_{\omega''-\alpha}} \Big( {_{\omega' - \alpha}}(F_1)_{\omega'} \otimes (A_2)_{\omega''-\alpha} \Big) \\
& \quad \quad \otimes_{(A_1)_{\omega'} \otimes (A_2)_{\omega''-\alpha}} \Big( (A_1)_{\omega'} \otimes {_{\omega''-\alpha}}(F_2)_{\omega''} \Big) \\
& \quad \quad \otimes_{(A_1)_{\omega'} \otimes (A_2)_{\omega''}} (A_1 \ootimes A_2)_{\omega} \Big) (T,S) [\omega'_2] \\
&\xrightarrow{\varepsilon \otimes \id} q^{\omega'_1 + \omega'_2} \Big( \Big( (A_1)_{\omega'} \otimes {_{\omega''-\alpha}}(F_2)_{\omega''} \Big) \otimes_{(A_1)_{\omega'} \otimes (A_2)_{\omega''}} (A_1 \ootimes A_2)_{\omega} \Big) (T,S) [\omega'_2] \\
\end{align*}
each have degree zero where $\sigma$ is the swap isomorphism as in \cite[Section 5.3.4]{ManionRouquier} and $\varepsilon$ is the counit of the adjunction $F_1^{\vee} \dashv F_1$. Finally, the maps
\begin{align*}
&q^{\omega'_1 + \omega'_2} \Big( \Big( {_{\omega'}}(F_1^{\vee})_{\omega' - \alpha} \otimes (A_2)_{\omega''-\alpha} \Big) \otimes_{(A_1)_{\omega' - \alpha} \otimes (A_2)_{\omega''-\alpha}} \Big( (A_1)_{\omega' - \alpha} \otimes {_{\omega''-\alpha}}(F_2)_{\omega''} \Big) \\
& \quad \quad \otimes_{(A_1)_{\omega' - \alpha} \otimes (A_2)_{\omega''}} \Big( {_{\omega' - \alpha}}(F_1)_{\omega'} \otimes (A_2)_{\omega''} \Big) \\
& \quad \quad \otimes_{(A_1)_{\omega'} \otimes (A_2)_{\omega''}} (A_1 \ootimes A_2)_{\omega} \Big) (T,S) [\omega'_2] \\
&\xrightarrow{\id \otimes \sigma \otimes \id} q^{\omega'_1 + \omega'_2} \Big( \Big( {_{\omega'}}(F_1^{\vee})_{\omega' - \alpha} \otimes (A_2)_{\omega''-\alpha} \Big) \otimes_{(A_1)_{\omega' - \alpha} \otimes (A_2)_{\omega''-\alpha}} \Big( {_{\omega' - \alpha}}(F_1)_{\omega'} \otimes (A_2)_{\omega''-\alpha} \Big) \\
& \quad \quad \otimes_{(A_1)_{\omega'} \otimes (A_2)_{\omega''-\alpha}} \Big( (A_1)_{\omega'} \otimes {_{\omega''-\alpha}}(F_2)_{\omega''} \Big) \\
& \quad \quad \otimes_{(A_1)_{\omega'} \otimes (A_2)_{\omega''}} (A_1 \ootimes A_2)_{\omega} \Big) (T,S) [\omega'_2] \\
&\xrightarrow{\rho \otimes \id} q^{\omega'_1 + \omega'_2} \Big( \Big( {_{\omega'}}(F_1)_{\omega' + \alpha} \otimes (A_2)_{\omega''-\alpha} \Big) \otimes_{(A_1)_{\omega' + \alpha} \otimes (A_2)_{\omega''-\alpha}} \Big( {_{\omega' + \alpha}}(F_1^{\vee})_{\omega'} \otimes (A_2)_{\omega''-\alpha} \Big) \\
& \quad \quad \otimes_{(A_1)_{\omega'} \otimes (A_2)_{\omega''-\alpha}} \Big( (A_1)_{\omega'} \otimes {_{\omega''-\alpha}}(F_2)_{\omega''} \Big) \\
& \quad \quad \otimes_{(A_1)_{\omega'} \otimes (A_2)_{\omega''}} (A_1 \ootimes A_2)_{\omega} \Big) (T,S) [\omega'_2 - 1] \\
&\xrightarrow{\id \otimes \mult} \Big( \Big( {_{\omega'}}(F_1)_{\omega' + \alpha} \otimes (A_2)_{\omega'' - \alpha} \Big) \otimes_{(A_1)_{\omega' + \alpha} \otimes (A_2)_{\omega'' - \alpha}} (A_1 \ootimes A_2)_{\omega} \Big) (T,S)
\end{align*}
each have degree zero, where $\rho$ is defined in \cite[equation (4.4.2)]{ManionRouquier} (note that $\rho$ has the same degree as $\tau$).
\end{proof}

It follows that ${_{\omega-\alpha}}F_{\omega}$ is a dg bimodule over $((A_1 \ootimes A_2)_{\omega-\alpha}, (A_1 \ootimes A_2)_{\omega})$ lifting ${_{\omega-\alpha}}F_{\omega}$ as a differential bimodule over $((A_1 \ootimes A_2)_{\omega-\alpha}, (A_1 \ootimes A_2)_{\omega})$.

\begin{proposition} 
The map defining the endomorphism $\tau$ of $X^2$ (see \cite[equation (5.3.4)]{ManionRouquier}) restricts to an endomorphism $\tau_{\omega}$ of the dg bimodule ${_{\omega - 2\alpha}}F^2_{\omega}$ with $\deg^q(\tau_{\omega}) = 0$ and $\deg^h(\tau_{\omega}) = -1$.
\end{proposition}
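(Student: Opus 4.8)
The plan is to mimic exactly the structure of the preceding proposition's proof: establish that the underlying (ungraded) endomorphism $\tau$ of $X^2$ from \cite[equation (5.3.4)]{ManionRouquier} respects the weight-space decomposition $X = \oplus_\omega {_{\omega-\alpha}}F_\omega$, so that it restricts to a map ${_{\omega-2\alpha}}F^2_\omega \to {_{\omega-2\alpha}}F^2_\omega$, and then track the quantum and homological degree shifts through the formula for $\tau$ to see that it has bidegree $(0,-1)$. That $\tau$ preserves weights is immediate from the fact that all the structure maps of the tensor-product 2-representation do (this was already used to decompose $X$ and to define ${_{\omega-\alpha}}F_\omega$), so the real content is the degree bookkeeping.

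Concretely, I would recall that $\tau$ on $X^2$ is built, as in \cite[equation (5.3.4)]{ManionRouquier}, from the generating 2-morphism $\tau_2$ of the second factor together with the counit/unit maps $\varepsilon, \rho$ (equivalently the adjunction data for $F_1^{\vee} \dashv F_1$) and multiplication maps, assembled via the $2\times 2$-matrix formalism of \cite[Remark 5.3.5]{ManionRouquier}. On each matrix entry I would write out the corresponding composite of the bigraded maps, exactly as in the proof of the previous proposition: the relevant pieces are the degree-zero maps $\id \otimes \tau_2 \otimes \id$ (contributing $\deg^h = -1$ from $\deg^h(\tau_2) = -1$ and $\deg^q = 0$), the swap isomorphisms $\sigma$ and $\lambda$ (degree zero), the adjunction maps, and the multiplication maps, together with the $q^{\omega'_1+\omega'_2}$ and $[\omega'_2]$ or $[\omega'_2 - 1]$ shifts coming from the definition of ${_{\omega-\alpha}}F_\omega$ and of ${_{\omega'+\alpha}}(\widetilde F_1^{\vee})_{\omega'}$. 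Summing the shifts along each composite, all the $q$-shifts cancel and the net homological shift is exactly $-1$, since $\tau$ involves $\tau_2$ once; I would note that $\deg^h(\tau_{\omega}) = \deg^h(\tau) = \deg^h(\tau_2) = -1$ and $\deg^q(\tau_{\omega}) = 0$, matching the assigned degrees of $\tau_\omega$ in $\U^-$.

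I would then check that $\tau_\omega$ is genuinely a bimodule endomorphism over $((A_1 \ootimes A_2)_{\omega-2\alpha}, (A_1 \ootimes A_2)_{\omega})$ rather than merely a map of the underlying vector spaces: commutation with the differentials and with the left/right actions follows because $\tau$ already has these properties on $X^2$ as an endomorphism of differential bimodules over $A_1 \ootimes A_2$, and restricting to a weight space is compatible with the (block-diagonal) action of $(A_1 \ootimes A_2)_\omega$. The previous proposition, which shows the left action $w$ on ${_{\omega-\alpha}}F_\omega$ has degree zero, is exactly what guarantees this restriction lands in the bigraded category rather than just the differential one.

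The main obstacle I expect is purely combinatorial: correctly propagating the grading shifts $q^{\omega'_1+\omega'_2}$ and $[\omega'_2 - 1]$ through a composite in which the intermediate terms involve $F_1^{\vee} F_1$ (or $F_1 F_1^{\vee}$) and the weight $\omega'$ of the first factor changes by $\pm\alpha$ at each adjunction map, so that one must be careful that the shifts attached to the ``$\widetilde F_1^{\vee}$'' normalization on the outer tensor factors and the shifts in the definition of ${_{\omega-\alpha}}F_\omega$ (mapping-cone shifts on the $F_1$- and $F_2$-summands) conspire to produce a net bidegree of $(0,-1)$ and not some $\omega$-dependent shift. Once that accounting is done on each of the (at most four) matrix entries as in the previous proof, the rest — weight-preservation, compatibility with $d$, and the bimodule property — is formal.
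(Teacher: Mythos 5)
Your overall plan — restrict to weight spaces, then check the bidegree of each entry of the matrix defining $\tau$ on $X^2$, keeping track of the shifts $q^{\omega'_1+\omega'_2}[\cdots]$ built into the definition of ${_{\omega-\alpha}}F_{\omega}$ — is the same as the paper's. But your stated mechanism for where the homological degree $-1$ comes from is wrong for the one entry that actually requires an argument. The matrix defining $\tau$ is $4\times 4$ (not $2\times 2$; the $2\times 2$ matrices of \cite[Remark 5.3.5]{ManionRouquier} describe the left action $w$, which is the \emph{previous} proposition). Its two diagonal entries are $\id\otimes\tau_2$ and $\tau_1\otimes\id$, and for those your reasoning ``$\tau$ involves $\tau_i$ once, hence $\deg^h=-1$'' is fine, since source and target carry identical shifts. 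The entry the paper actually checks, however, is the off-diagonal one in row $2$, column $3$: it is the swap isomorphism $\sigma^{-1}$ from the $({_{\omega'-\alpha}}(F_1)_{\omega'}\otimes A_2)\otimes(A_1\otimes F_2)$-summand to the $(A_1\otimes F_2)\otimes({_{\omega'-\alpha}}(F_1)_{\omega'}\otimes A_2)$-summand, and it contains no occurrence of $\tau_1$ or $\tau_2$ at all. By your own logic (degree-zero swap, no $\tau_i$) that entry would have bidegree $(0,0)$, making $\tau_\omega$ inhomogeneous and the proposition false.

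The correct source of the $-1$ there is the shift bookkeeping you relegated to an ``obstacle'': the homological shift attached to an $A_1\otimes F_2$-type summand is $[\,(\text{weight of the }A_1\text{-factor})_2\,]$, and across the swap the $A_1$-factor sits in weight $\omega'$ on one side and $\omega'-\alpha$ on the other, so the two summands carry $[\omega'_2]$ and $[\omega'_2+1]$ respectively while the $q$-shifts $q^{\omega'_1+\omega'_2}$ agree (since $(\omega'-\alpha)_1+(\omega'-\alpha)_2=\omega'_1+\omega'_2$). The degree-zero map $\sigma^{-1}$ between the unshifted objects therefore becomes a map of bidegree $(0,-1)$ between the summands as they actually sit inside ${_{\omega-2\alpha}}F^2_{\omega}$. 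If you carry out the entry-by-entry computation you describe, you would presumably discover this; but as written, the proposal's explanation of the degree $-1$ is incorrect for the key entry, so the gap needs to be closed by this explicit shift comparison rather than by counting occurrences of $\tau_2$.
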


\begin{proof}
For the top-left and bottom-right entries of the $4 \times 4$ matrix defining $\tau$, this is clear. For the entry in row 2, column 3, the map
\begin{align*}
&q^{\omega'_1 + \omega'_2} \Big( \Big( {_{\omega'- \alpha}}(F_1)_{\omega'}  \otimes (A_2)_{\omega'' - \alpha} \Big) \otimes_{(A_1)_{\omega'} \otimes (A_2)_{\omega''-\alpha}} \Big( (A_1)_{\omega'} \otimes {_{\omega''- \alpha}}(F_2)_{\omega''} \Big) \\
& \quad \quad \otimes_{(A_1)_{\omega'} \otimes (A_2)_{\omega''}} (A_1 \ootimes A_2)_{\omega} \Big) (T,S)[\omega'_2] \\
&\xrightarrow{\sigma^{-1}} q^{\omega'_1 + \omega'_2} \Big( \Big( (A_1)_{\omega'-\alpha} \otimes {_{\omega''-\alpha}}(F_2)_{\omega''} \Big) \otimes_{(A_1)_{\omega'-\alpha} \otimes (A_2)_{\omega''}} \Big( {_{\omega'-\alpha}}(F_1)_{\omega'}  \otimes (A_2)_{\omega''} \Big)  \\
& \quad \quad \otimes_{(A_1)_{\omega'} \otimes (A_2)_{\omega''}} (A_1 \ootimes A_2)_{\omega} \Big) (T,S)[\omega'_2] 
\end{align*}
has bidegree zero, so
\begin{align*}
&q^{\omega'_1 + \omega'_2} \Big( \Big( {_{\omega'- \alpha}}(F_1)_{\omega'}  \otimes (A_2)_{\omega'' - \alpha} \Big) \otimes_{(A_1)_{\omega'} \otimes (A_2)_{\omega''-\alpha}} \Big( (A_1)_{\omega'} \otimes {_{\omega''- \alpha}}(F_2)_{\omega''} \Big) \\
& \quad \quad \otimes_{(A_1)_{\omega'} \otimes (A_2)_{\omega''}} (A_1 \ootimes A_2)_{\omega} \Big) (T,S)[\omega'_2] \\
&\xrightarrow{\sigma^{-1}} q^{\omega'_1 + \omega'_2} \Big( \Big( (A_1)_{\omega'-\alpha} \otimes {_{\omega''-\alpha}}(F_2)_{\omega''} \Big) \otimes_{(A_1)_{\omega'-\alpha} \otimes (A_2)_{\omega''}} \Big( {_{\omega'-\alpha}}(F_1)_{\omega'}  \otimes (A_2)_{\omega''} \Big)  \\
& \quad \quad \otimes_{(A_1)_{\omega'} \otimes (A_2)_{\omega''}} (A_1 \ootimes A_2)_{\omega} \Big) (T,S)[\omega'_2 + 1] 
\end{align*}
has $\deg^q = 0$ and $\deg^h = -1$. This degree is what we want because, when viewing $\sigma^{-1}$ as a matrix entry of the new map $\tau$ and taking grading shifts of the first summand of $F$ in the domain and codomain of $\tau$ into account, the domain shift is determined by $\omega' \in \mathbb{Z}^2$ while the codomain shift is determined by $\omega' - \alpha \in \mathbb{Z}^2$.
\end{proof}

\subsection{Decategorification}\label{sec:BigradedTensorDecat}

Below we will consider right finite bimodule 2-representations $(A,F,\tau)$ of $\U^-$ where the dg category $A$ has only finitely many objects and where each bimodule ${_{\omega - \alpha}}F_{\omega}$ comes with the data of an AD bimodule $N_{\omega}$ (without higher terms $\delta^1_i$ for $i > 2$) satisfying ${_{\omega - \alpha}}F_{\omega} = N_{\omega} \otimes_{I_{\omega}} A_{\omega}$. 

From Section~\ref{sec:SplitGG} we have a map $[{_{\omega - \alpha}}F_{\omega}]\co G_0(A_{\omega}) \to G_0(A_{\omega - \alpha})$; summing over all $\omega$, we get an endomorphism $[F]$ of $G_0(A)$. If $(A,F,\tau)$ is a right finite bimodule 2-representation of $\U^-$, then $(G_0(A), [F])$ is a representation of $U_q(\gl(1|1)^-)$ (see Appendix~\ref{sec:Uqgl11Review}) equipped with a decomposition into $\gl(1|1)$ weight spaces for $\omega \in \Z^2$. The weight space decomposition determines the structure of a super or $\Z_2$-graded $\C_q$-vector space on $G_0(A)$; for an object $S$ of $A_{\omega}$ for $\omega \in \Z^2$, we declare $[S]$ to be even if $\omega_2$ is even and odd if $\omega_2$ is odd. The weight space decomposition also determines an action on $G_0(A)$ of the Hopf subalgebra of $U_q(\gl(1|1))$ generated by $F$, $q^{H_1}$, and $q^{H_2}$, with $q^{H_i} [S] \coloneqq q^{\omega_i} [S]$ if $S$ is an object of $A_{\omega}$; as in Appendix~\ref{sec:Uqgl11Review}, we can equivalently view this data as an action of $\Udot_q(\gl(1|1)^-)$ on $G_0(A)$.

\begin{proposition}
If $(A_1, F_1, \tau_1)$ and $(A_2, F_2, \tau_2)$ are right finite bimodule 2-representations of $\U^-$, we have a natural identification
\[
G_0(A_1 \ootimes A_2) \cong G_0(A_1) \otimes G_0(A_2)
\]
as modules over $\Udot_q(\gl(1|1)^-)$

\end{proposition}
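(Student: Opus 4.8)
The plan is to establish the identification at the level of $\C_q$-vector spaces first, then promote it to a statement of $\Udot_q(\gl(1|1)^-)$-modules by checking compatibility with the weight decomposition and with the map $[F]$. For the vector space statement, recall that by the discussion in Section~\ref{sec:SplitGG}, $G_0$ of a dg category with finitely many objects has a canonical basis given by its objects; the objects of $(A_1 \ootimes A_2)_\omega$ are pairs $(S_1, S_2)$ with $S_i$ an object of $(A_i)_{\omega'}$, $(A_i)_{\omega''}$ respectively and $\omega' + \omega'' = \omega$. So I would define the map $G_0(A_1 \ootimes A_2) \to G_0(A_1) \otimes G_0(A_2)$ on basis elements by $[(S_1, S_2)] \mapsto [S_1] \otimes [S_2]$, and observe immediately that this is a bijection on bases (the Hom-complex grading shifts in the definition of $(A_1 \ootimes A_2)_\omega$ affect morphism spaces, not the set of objects, so they do not interfere with this identification of bases).

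Next I would check compatibility with the weight decomposition. By construction $(A_1 \ootimes A_2)_\omega = \bigoplus_{\omega' + \omega'' = \omega} \big((A_1)_{\omega'} \otimes (A_2)_{\omega''}\big)$-objects, so the $\omega$-weight space of $G_0(A_1 \ootimes A_2)$ is spanned by $[(S_1,S_2)]$ with $S_i$ in weight $\omega'$, $\omega''$ and $\omega' + \omega'' = \omega$; under the identification this is exactly $\bigoplus_{\omega'+\omega''=\omega} (G_0(A_1))_{\omega'} \otimes (G_0(A_2))_{\omega''}$, which is the $\omega$-weight space of the tensor product representation of $\Udot_q(\gl(1|1)^-)$. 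In particular the $q^{H_1}, q^{H_2}$-actions match, and the $\Z_2$-grading (even iff $\omega_2$ even) is the one induced by the tensor product. The one subtlety here is the super sign convention: the $\Z_2$-grading on $G_0(A_1 \ootimes A_2)$ comes from $\omega_2 = \omega'_2 + \omega''_2$, so the parity of $[(S_1,S_2)]$ is the sum of parities of $[S_1]$ and $[S_2]$, matching the parity in $G_0(A_1) \otimes G_0(A_2)$; I would note this explicitly since signs in $U_q(\gl(1|1))$ depend on it.

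The main work — and the step I expect to be the real obstacle — is checking that the identification intertwines $[F]$ on the left with the coproduct action $\Delta(F) = F \otimes q^{-H_2} + 1 \otimes F$ (or whatever normalization Appendix~\ref{sec:Uqgl11Review} fixes) on the right. For this I would compute $[{_{\omega-\alpha}}F_\omega]$ directly from the explicit description of ${_{\omega-\alpha}}F_\omega$ as a mapping cone of the degree-zero map $u(T,S)$ given in Section~\ref{sec:BigradedTensor}: as a non-differential bimodule it is the direct sum of the term $q^{\omega'_1 + \omega'_2}\big((A_1)_{\omega'} \otimes {_{\omega''-\alpha}}(F_2)_{\omega''}\big) \otimes (\cdots) [\omega'_2]$ and the term $\big({_{\omega'}}(F_1)_{\omega'+\alpha} \otimes (A_2)_{\omega''-\alpha}\big) \otimes (\cdots)$. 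Taking $q$-graded Euler characteristics, the first summand contributes a factor picking out $[F_2]$ on the second tensor factor together with the scalar $q^{\omega'_1 + \omega'_2} \cdot (-1)^{\omega'_2}$ coming from the grading shifts, which is exactly the matrix of $q^{-H_2}$ (up to sign conventions; the $(-1)^{\omega'_2}$ is the super sign built into the comultiplication on $V \otimes V$), acting on the first tensor factor while $F$ acts on the second; the second summand contributes $[F_1]$ on the first factor and the identity on the second. Summing, the matrix of $[{_{\omega-\alpha}}F_\omega]$ is precisely the matrix of $\Delta(F)$ restricted to the $\omega$-weight space, and summing over $\omega$ gives $[F] = \Delta(F)$ under the identification. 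I would be careful to check that the grading-shift bookkeeping $[\omega'_2]$ versus $[\omega'_2 - 1]$ in the cone versus its summands produces the correct sign, and that the $q$-powers $q^{\omega'_1+\omega'_2}$ match the exponent of $q^{-H_2}$ fixed in Appendix~\ref{sec:Uqgl11Review}; these are routine once the conventions are pinned down, but they are exactly where an error would hide, so this comparison of scalars is the crux of the proof.
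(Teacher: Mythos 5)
Your proposal is correct and follows essentially the same route as the paper's proof: identify the object bases, match weight spaces, parities, and the $q^{H_i}$-actions, then take the $q$-graded Euler characteristic of the two summands of the cone defining ${_{\omega-\alpha}}F_{\omega}$ to recover $[F_1]\otimes 1 + q^{H_1+H_2}\otimes [F_2]$. The only slip is labeling the scalar $q^{\omega'_1+\omega'_2}(-1)^{\omega'_2}$ as the matrix of $q^{-H_2}$; it is the matrix of $q^{H_1+H_2}$ together with the super sign from commuting the odd operator $[F_2]$ past $[S_1]$, which matches the convention $\Delta(F)=F\otimes 1+q^{H_1+H_2}\otimes F$ fixed in Appendix~\ref{sec:Uqgl11Review}.
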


\begin{proof}
It is clear that $G_0(A_1 \ootimes A_2)$ and $G_0(A_1) \otimes G_0(A_2)$ agree as super $\C_q$-vector spaces and that the actions of $q^{H_1}$ and $q^{H_2}$ agree; we will show that $[F] = [F_1] \otimes 1 + q^{H_1 + H_2} \otimes [F_2]$ as endomorphisms of this super vector space, where $[F]$ is the map on $G_0$ induced by the dg bimodule over $A_1 \ootimes A_2$ defined by the tensor product construction.

First, write $F_1 = F'_1 \otimes_{I_1} A_1$ and $F_2 = F'_2 \otimes_{I_2} A_2$. We have
\begin{align*}
& (F_1 \otimes A_2)  \otimes_{A_1 \otimes A_2} (A_1 \ootimes A_2) \\
&= (F'_1 \otimes I_2) \otimes_{I_1 \otimes I_2} (A_1 \otimes A_2) \otimes_{A_1 \otimes A_2} (A_1 \ootimes A_2) \\
&= (F'_1 \otimes I_2) \otimes_{I_1 \otimes I_2} (A_1 \ootimes A_2)
\end{align*}
and
\begin{align*}
& (A_1 \otimes F_2) \otimes_{A_1 \otimes A_2} (A_1 \ootimes A_2) \\
&= (I_1 \otimes F_2) \otimes_{I_1 \otimes I_2} (A_1 \otimes A_2) \otimes_{A_1 \otimes A_2} (A_1 \ootimes A_2) \\
&= (I_1 \otimes F_2) \otimes_{I_1 \otimes I_2} (A_1 \ootimes A_2);
\end{align*}
note that $I_1 \otimes I_2$ is the non-full subcategory of $A_1 \ootimes A_2$ containing all objects but only identity morphisms. 

Recall that $F = \oplus_{\omega \in \Z^2} \left( {_{\omega - \alpha}}F_{\omega} \right)$; for $\omega \in \Z^2$ we have
\[
{_{\omega - \alpha}}F_{\omega} = {_{\omega - \alpha}}(F')_{\omega} \otimes_{I_1 \otimes I_2} (A_1 \ootimes A_2)
\]
as $(I_1 \otimes I_2, I_1 \otimes I_2)$-bimodules, where
\[
{_{\omega - \alpha}}(F')_{\omega} = \bigoplus_{\omega' + \omega'' = \omega} \Big(q^{\omega'_1 + \omega'_2} ((I_1)_{\omega'} \otimes {_{\omega''- \alpha}}(F'_2)_{\omega''}) [\omega'_2] \Big) \oplus \Big({_{\omega' - \alpha}}(F'_1)_{\omega'} \otimes (I_2)_{\omega''}\Big)
\]
(ignoring the differential). For objects $S = (S_1, S_2)$ and $T = (T_1, T_2)$ of $(I_1 \otimes I_2)_{\omega}$ with $S_1 \in (I_1)_{\omega'}$, $S_2 \in (I_2)_{\omega''}$, and $\omega = \omega' + \omega''$, we have
\begin{align*}
{_{\omega - \alpha}}(F')_{\omega}(T,S) &= q^{\omega'_1 + \omega'_2} \Big( (I_1)_{\omega'}(T_1, S_1) \otimes {_{\omega'' - \alpha}}(F'_2)_{\omega''} (T_2, S_2) \Big) [\omega'_2] \\
& \oplus \Big( {_{\omega' - \alpha}}(F'_1)_{\omega'}(T_1, S_1) \otimes (I_2)_{\omega''}(T_2,S_2) \Big).
\end{align*}
Taking $\chi_q$, the entry of $[F]$ in row $[T]$ and column $[S]$ equals
\[
(-1)^{\omega'_2} q^{\omega'_1 + \omega'_2} \delta_{T_1 = S_1} \chi_q( {_{\omega''-\alpha}}(F'_2)_{\omega''} (T_2, S_2) ) + \delta_{T_2 = S_2} \chi_q ( {_{\omega'-\alpha}}(F'_1)_{\omega'} (T_1, S_1) ).
\]
This expression is also the coefficient of $[T]$ in $([F_1] \otimes 1 + q^{H_1 + H_2} \otimes [F_2])([S])$; indeed, $q^{H_1 + H_2}$ acts on $[S_1]$ to give $q^{\omega'_1 + \omega'_2} [S_1]$, while commuting $[F_2]$ past $[S_1]$ picks up a factor of $(-1)^{\omega'_2}$ since $[F_2]$ is odd and $[S_1]$ has parity given by the sign of $\omega'_2$.
\end{proof}

\subsection{Morphisms}\label{sec:2RepMorphisms}

\subsubsection{Dg bimodule 1-morphisms of 2-representations}

Following \cite{ManionRouquier} and adding bigradings, dg bimodule 1-morphisms between bimodule 2-representations of $\U^-$ can be defined as follows.

\begin{definition}[Section 5.1.1 of \cite{ManionRouquier}]
Given bimodule 2-representations $(A,F,\tau)$ and $(A',F',\tau')$ of $\U^-$, a dg bimodule 1-morphism from $(A,F,\tau)$ to $(A',F',\tau')$ is, for each $\omega \in \Z^2$, a dg $(A'_{\omega},A_{\omega})$-bimodule $P_{\omega}$ equipped with a closed degree-zero isomorphism of $(A'_{\omega-\alpha},A_{\omega})$-bimodules
\[
\varphi_{\omega}\co P_{\omega - \alpha} \otimes_{A_{\omega - \alpha}} {_{\omega - \alpha}}F_{\omega} \xrightarrow{\cong} {_{\omega - \alpha}}F'_{\omega} \otimes_{A'_{\omega}} P_{\omega}
\]
such that
\begin{align*}
&(\tau'_{\omega} \otimes \id_{P_{\omega}}) \circ (\id_{{_{\omega - 2\alpha}}F'_{\omega - \alpha}} \otimes \varphi_{\omega}) \circ (\varphi_{\omega - \alpha} \otimes \id_{{_{\omega - \alpha}}F_{\omega}}) \\
&= (\id_{{_{\omega - 2\alpha}}F'_{\omega - \alpha}} \otimes \varphi_{\omega}) \circ (\varphi_{\omega - \alpha} \otimes \id_{{_{\omega - \alpha}}F_{\omega}}) \circ (\id_{P_{\omega - 2\alpha}} \otimes \tau_{\omega})
\end{align*}
as maps from ${_{\omega - 2\alpha}}(PF^2)_{\omega}$ to ${_{\omega - 2\alpha}}((F')^2 P)_{\omega}$.
\end{definition}
If $(A,F,\tau)$ and $(A',F',\tau')$ are right finite, we can equivalently define a 1-morphism from $(A,F,\tau)$ to $(A',F',\tau')$ using a closed isomorphism from ${_{\omega + \alpha}}(F')^{\vee}_{\omega} \otimes_{A'_{\omega}} P_{\omega}$ to $P_{\omega + \alpha} \otimes_{A_{\omega + \alpha}} {_{\omega + \alpha}}F^{\vee}_{\omega}$; we can also reverse the direction and define 1-morphisms in terms of
\[
\varphi_{\omega}\co P_{\omega + \alpha} \otimes_{A_{\omega + \alpha}} {_{\omega + \alpha}}F^{\vee}_{\omega} \xrightarrow{\cong} {_{\omega + \alpha}}(F')^{\vee}_{\omega} \otimes_{A'_{\omega}} P_{\omega}
\]
satisfying
\begin{align*}
&(\tau'_{\omega} \otimes \id_{P_{\omega}}) \circ (\id_{{_{\omega + 2\alpha}}(F')^{\vee}_{\omega + \alpha}} \otimes \varphi_{\omega}) \circ (\varphi_{\omega + \alpha} \otimes \id_{{_{\omega + \alpha}}F^{\vee}_{\omega}}) \\
&= (\id_{{_{\omega + 2\alpha}}(F')^{\vee}_{\omega + \alpha}} \otimes \varphi_{\omega}) \circ (\varphi_{\omega + \alpha} \otimes \id_{{_{\omega + \alpha}}F^{\vee}_{\omega}}) \circ (\id_{P_{\omega + 2\alpha}} \otimes \tau_{\omega}).
\end{align*}

\subsubsection{AD and DA bimodule 1-morphisms of 2-representations}

We relax some equalities and isomorphisms to homotopies and homotopy equivalences here, as required to accommodate the examples we will consider below. If $(A,F,\tau)$ is a bimodule 2-representation of $\U^-$, we say that $(A,F,\tau)$ is right bounded if $F = N \otimes_I A$ for some right bounded AD bimodule $N$ (not to be confused with right finiteness for $F$ which asks for finite generation of $N$).

\begin{remark}
Below we will identify right finite dg bimodules $F$ with their underlying finitely generated AD bimodules, rather than maintaining a notational distinction as above.
\end{remark}

\begin{definition}\label{def:AD1Mor}
Assume that $(A,F,\tau)$, $(A',F',\tau')$ are right finite, right bounded bimodule 2-representations of $\U^-$. An AD bimodule 1-morphism of 2-representations $(P,\varphi)$ from $(A,F,\tau)$ to $(A',F',\tau')$ consists of, for $\omega \in \Z^2$, finitely generated right bounded AD bimodules $P_{\omega}$ over $(A'_{\omega},A_{\omega})$ together with homotopy equivalences of AD bimodules
\[
\varphi_{\omega}\co P_{\omega - \alpha} \boxtimes_{A_{\omega - \alpha}} {_{\omega - \alpha}}F_{\omega} \xrightarrow{\cong} {_{\omega - \alpha}}F'_{\omega} \boxtimes_{A'_{\omega}} P_{\omega}
\]
satisfying
\begin{align*}
&(\tau'_{\omega} \boxtimes \id_{P_{\omega}}) \circ (\id_{{_{\omega - 2\alpha}}F'_{\omega - \alpha}} \boxtimes \varphi_{\omega}) \circ (\varphi_{\omega - \alpha} \boxtimes \id_{{_{\omega - \alpha}}F_{\omega}}) \\
&= (\id_{{_{\omega - 2\alpha}}F'_{\omega - \alpha}} \boxtimes \varphi_{\omega}) \circ (\varphi_{\omega - \alpha} \boxtimes \id_{{_{\omega - \alpha}}F_{\omega}}) \circ (\id_{P_{\omega - 2\alpha}} \boxtimes \tau_{\omega}).
\end{align*}
\end{definition}

We have a similar definition using DA bimodules.

\begin{definition}\label{def:DA1Mor}
Assume that $(A,F,\tau)$, $(A',F',\tau')$ are right finite, right bounded bimodule 2-representations of $\U^-$. A DA bimodule 1-morphism of 2-representations $(P,\varphi)$ from $(A,F,\tau)$ to $(A',F',\tau')$ consists of, for $\omega \in \Z^2$, finitely generated left bounded DA bimodules $P_{\omega}$ over $(A'_{\omega},A_{\omega})$ together with $A_{\infty}$ homotopy equivalences
\[
\varphi_{\omega}\co P_{\omega + \alpha} \boxtimes_{A_{\omega + \alpha}} {_{\omega + \alpha}}F^{\vee}_{\omega} \xrightarrow{\sim} {_{\omega + \alpha}}(F')^{\vee}_{\omega} \boxtimes_{A'_{\omega}} P_{\omega}
\]
satisfying
\begin{align*}
&(\tau'_{\omega} \boxtimes \id_{P_{\omega}}) \circ (\id_{{_{\omega + 2\alpha}}(F')^{\vee}_{\omega + \alpha}} \boxtimes \varphi_{\omega}) \circ (\varphi_{\omega + \alpha} \boxtimes \id_{{_{\omega + \alpha}}F^{\vee}_{\omega}}) \\
&= (\id_{{_{\omega + 2\alpha}}(F')^{\vee}_{\omega + \alpha}} \boxtimes \varphi_{\omega}) \circ (\varphi_{\omega + \alpha} \boxtimes \id_{{_{\omega + \alpha}}F^{\vee}_{\omega}}) \circ (\id_{P_{\omega + 2\alpha}} \boxtimes \tau_{\omega}).
\end{align*}
\end{definition}

\begin{remark}
We require finite generation of $P$ in Definition~\ref{def:AD1Mor} for duality $P \leftrightarrow P^{\vee}$ to be well-behaved, and similarly in Definition~\ref{def:DA1Mor}, but the above definitions make sense without these finiteness assumptions. Some DA and AD bimodules arising from Heegaard diagrams with basepoints in this paper are not finitely generated; however, when discussing 1-morphism structure, we will work primarily with homotopy-equivalent versions that are finitely generated.
\end{remark}

If $(P,\varphi)$ is a DA bimodule 1-morphism of 2-representations from $(A,F,\tau)$ to $(A',F',\tau')$, then for each $\omega \in \Z^2$ we have a finitely generated right bounded AD bimodule ${^{\vee}}P_{\omega}$. Dualizing the map $\varphi_{\omega}$, we get
\[
\varphi'_{\omega + \alpha}\co {^{\vee}}P_{\omega} \boxtimes_{A'_{\omega}} {_{\omega}}F'_{\omega + \alpha} \xrightarrow{\sim} {_{\omega}}F_{\omega + \alpha} \boxtimes_{A_{\omega + \alpha}} {^{\vee}}P_{\omega + \alpha}
\]
giving us an AD bimodule 1-morphism $({^{\vee}}P, \varphi')$ from $(A',F',\tau')$ to $(A,F,\tau)$. Similarly, we can dualize AD bimodule 1-morphisms to get DA bimodule 1-morphisms.

\begin{example}
Given $(A,F,\tau)$, we upgrade the identity AD and DA bimodules over $A$ to identity 1-morphisms of 2-representations by taking $\varphi_{\omega} = \id$ for all $\omega$.
\end{example}

Suppose that $(A,F,\tau) \xrightarrow{(P,\varphi)} (A',F',\tau') \xrightarrow{(P',\varphi')} (A'',F'',\tau'')$ are AD bimodule 1-morphisms. Define their composition to be $P' \boxtimes P$ equipped with the maps
\[
(\varphi_{\omega} \boxtimes \id_{P_{\omega}}) \circ (\id_{P'_{\omega - \alpha}} \boxtimes \varphi'_{\omega})
\]
for $\omega \in \Z^2$; one can check that the square for compatibility with $\tau$ commutes. Similarly, the composition of DA bimodule 1-morphisms $(A,F,\tau) \xrightarrow{(P,\varphi)} (A',F',\tau') \xrightarrow{(P',\varphi')} (A'',F'',\tau'')$ is defined to be $P' \boxtimes P$ equipped with the maps
\[
(\varphi_{\omega} \boxtimes \id_{P_{\omega}}) \circ (\id_{P'_{\omega + \alpha}} \boxtimes \varphi'_{\omega})
\]
for $\omega \in \Z^2$. The dual of a composition of DA bimodule 1-morphisms can be naturally identified with the composition of duals (as AD bimodule 1-morphisms) after reversing the order, and vice-versa.

\subsubsection{2-morphisms}

\begin{definition}
Assume that $(A,F,\tau)$, $(A',F',\tau')$ are right finite, right bounded bimodule 2-representations of $\U^-$; let $(P,\varphi), (P',\varphi')$ be AD bimodule 1-morphisms of 2-representations from $(A,F,\tau)$ to $(A',F',\tau')$. A 2-morphism from $P$ to $P'$ consists of, for $\omega \in \Z^2$, AD bimodule morphisms $f_{\omega}\co P_{\omega} \to P'_{\omega}$ such that
\[
(\id_{{_{\omega - \alpha}}F'_{\omega}} \boxtimes f_{\omega}) \circ \varphi_{\omega} \sim \varphi'_{\omega} \circ (f_{\omega - \alpha} \boxtimes \id_{{_{\omega - \alpha}}F_{\omega}}),
\]
where $\sim$ denotes homotopy of AD bimodule morphisms.
\end{definition}

Given $(A,F,\tau)$ and $(A',F',\tau')$, the AD bimodule 1-morphisms from $(A,F,\tau)$ to $(A',F',\tau')$ and the 2-morphisms between them form a dg category; homotopy of 2-morphisms is defined in terms of this dg category, as are homotopy equivalence and isomorphism of 1-morphisms. Compositions $\boxtimes$ of AD bimodule 1-morphisms are associative up to canonical isomorphism of 1-morphisms, and the composition of an AD bimodule 1-morphism $(P,\varphi)$ with the identity 1-morphism on either side is canonically isomorphic (as 1-morphisms) to $(P,\varphi)$.

\begin{definition}
The $k$-linear $\Z^2$-graded bicategory $2\Rep(\U^-)^{*,*}$ is defined as follows.
\begin{itemize}
\item Objects of $2\Rep(\U^-)^{*,*}$ are right finite, right bounded bimodule 2-representations $(A,F,\tau)$ of $\U^-$.
\item For two objects $(A,F,\tau)$ and $(A',F',\tau')$ of $2\Rep(\U^-)^{*,*}$, the $k$-linear $\Z^2$-graded category 
\[
\Hom_{2\Rep(\U^-)^{*,*}}((A,F,\tau),(A',F',\tau'))
\]
is the bigraded homotopy category $H^{*,*}$ of the dg category with
\begin{itemize}
\item objects: finitely generated right bounded AD bimodule 1-morphisms
\item morphism complexes of 2-morphisms between AD bimodule 1-morphisms.
\end{itemize}
\item Identity 1-morphisms are identity AD bimodules.
\item Composition functors are defined as above on objects and send $(f,g)$ to the homotopy class $[f \boxtimes g]$; by \cite[Lemma 2.3.3]{LOTBimodules}, this homotopy class is well-defined, and one can check that $[f \boxtimes g]$ is the homotopy class of a 2-morphism.
\item Unitors and associators are the canonical isomorphisms from the paragraph above this definition.
\end{itemize}
\end{definition}

The superscript $*,*$ is just a reminder that there is a bigrading. Well-definedness of $2\Rep(\U^-)^{*,*}$ follows from the results of \cite[Section 2.3.2]{LOTBimodules}, especially \cite[Corollary 2.3.5]{LOTBimodules}. 

All of the above has an analogue for DA bimodule 1-morphisms. In particular, we can define 2-morphisms of DA bimodules as follows.
\begin{definition}
Assume that $(A,F,\tau)$, $(A',F',\tau')$ are right finite, right bounded bimodule 2-representations of $\U^-$; let $(P,\varphi), (P',\varphi')$ be DA bimodule 1-morphisms of 2-representations from $(A,F,\tau)$ to $(A',F',\tau')$. A 2-morphism from $P$ to $P$ consists of, for $\omega \in \Z^2$, DA bimodule morphisms $f_{\omega}\co P_{\omega} \to P'_{\omega}$ such that
\[
(\id_{{_{\omega + \alpha}}(F')^{\vee}_{\omega}} \boxtimes f_{\omega}) \circ \varphi_{\omega} \sim \varphi'_{\omega} \circ (f_{\omega + \alpha} \boxtimes \id_{{_{\omega + \alpha}}F^{\vee}_{\omega}}).
\]

\end{definition}

Duality gives an equivalence (reversing the direction of both 1-morphisms and 2-morphisms while preserving bidegrees of 2-morphisms) between $2\Rep(\U^-)^{*,*}$ and the analogous bicategory defined using DA 1-morphisms rather than AD 1-morphisms.

\begin{remark}\label{rem:Strong2Mor}
A limitation of the above definitions is that one cannot naturally define the structure of a 1-morphism on the mapping cone of a 2-morphism. To enable the definition of mapping cones, we could (say in the DA bimodule setting) define a strong 2-morphism from $(P,\varphi)$ to $(P',\varphi')$ to be the data of:
\begin{itemize}
\item For $\omega \in \Z^2$, DA bimodule morphisms $f_{\omega}\co P_{\omega} \to P'_{\omega}$;
\item For $\omega \in \Z^2$, DA bimodule morphisms $h_{\omega}\co P_{\omega + \alpha} \boxtimes {_{\omega + \alpha}}F^{\vee}_{\omega} \to {_{\omega + \alpha}}(F')^{\vee}_{\omega} \boxtimes P'_{\omega}$ satisfying
\[
d(h) = (\id_{{_{\omega + \alpha}}(F')^{\vee}_{\omega}} \boxtimes f_{\omega}) \circ \varphi_{\omega} - \varphi'_{\omega} \circ (f_{\omega + \alpha} \boxtimes \id_{{_{\omega + \alpha}}F^{\vee}_{\omega}})
\]
and
\begin{align*}
&\left(\tau'_{\omega} \boxtimes \id_{P'_{\omega}}\right) \circ \bigg( \left( \id_{{_{\omega + 2\alpha}}(F')^{\vee}_{\omega + \alpha}} \boxtimes h_{\omega} \right) \circ \left(\varphi_{\omega + \alpha} \boxtimes \id_{{_{\omega + \alpha}}F^{\vee}_{\omega}} \right) \\
&\qquad + \left( \id_{{_{\omega + 2\alpha}}(F')^{\vee}_{\omega+\alpha}} \boxtimes \varphi'_{\omega} \right) \circ \left( h_{\omega + \alpha} \boxtimes \id_{{_{\omega + \alpha}}F^{\vee}_{\omega}} \right) \bigg) \\
&= \bigg( \left( \id_{{_{\omega + 2\alpha}}(F')^{\vee}_{\omega + \alpha}} \boxtimes h_{\omega} \right) \circ \left(\varphi_{\omega + \alpha} \boxtimes \id_{{_{\omega + \alpha}}F^{\vee}_{\omega}} \right) \\
&\qquad + \left( \id_{{_{\omega + 2\alpha}}(F')^{\vee}_{\omega+\alpha}} \boxtimes \varphi'_{\omega} \right) \circ \left( h_{\omega + \alpha} \boxtimes \id_{{_{\omega + \alpha}}F^{\vee}_{\omega}} \right) \bigg) \circ \left( \id_{P_{\omega + 2\alpha}} \boxtimes \tau_{\omega} \right).
\end{align*}
\end{itemize}
One can check that if $(f,h)$ is a strong 2-morphism from $(P,\varphi)$ to $(P',\varphi')$, then the matrix $\begin{bmatrix} \varphi & 0 \\ h & \varphi' \end{bmatrix}$ defines a valid 1-morphism structure for the mapping cone of $f$.
\end{remark}

\section{A family of 2-representations}\label{sec:OurExamples}

\subsection{Definitions}

\subsubsection{Fundamental examples}\label{sec:Fundamental2RepExamples}

\begin{definition}
For $K \geq 1$ we define an $\F_2$-linear right finite, right bounded bimodule 2-representation $(\A_K, F, \tau)$ of $\U^-$ by:
\begin{itemize}
\item $(\A_K)_{\varepsilon_1 + (K-1)\varepsilon_2} = \F_2[e_1,\ldots,e_{K-1}]$ where $\deg^q(e_i) = -2i$ and $\deg^h(e_i) = 2i$, with no differential;
\item $(\A_K)_{K \varepsilon_2} = \F_2[e_1, \ldots, e_K]$ where $\deg^q(e_i) = -2i$ and $\deg^h(e_i) = 2i$, with no differential;
\item ${_{K\varepsilon_2}}F_{\varepsilon_1 + (K-1)\varepsilon_2} = \F_2[e_1,\ldots,e_{K-1}]$ as a bimodule over $\F_2[e_1,\ldots,e_{K-1}]$, where the left action of $e_K$ is zero;
\item all other summands of $F$ are zero, and $\tau = 0$.
\end{itemize}
\end{definition}

We choose monomials in the variables $e_i$ as our preferred homogeneous basis for morphism spaces in $\A_K$; we define an augmentation functor $\epsilon$ from $\A_K$ to its idempotent ring by sending all $e_i$ variables to zero.

We will refer to the unique object of $(\A_K)_{\varepsilon_1 + (K-1)\varepsilon_2}$ (respectively $(\A_K)_{K\varepsilon_2}$) as $\u$ for ``unoccupied'' (respectively $\o$ for ``occupied.'') This language of occupancy is motivated by how the algebras relate to the strands algebra construction (see Section~\ref{sec:ExamplesStrandsInterp}) and, correspondingly, holomorphic disk counts in Heegaard diagrams (see Section~\ref{sec:ExamplesHDInterp}).

\begin{remark}\label{rem:EtaForAK}
For the 2-representation $(\A_K, F, \tau)$, the right dual $F^{\vee}$ of $F$ is given by
\[
{_{\varepsilon_1 + (K-1)\varepsilon_2}}F^{\vee}_{K\varepsilon_2} = \F_2[e_1,\ldots,e_{K-1}]
\]
with all other summands zero; the right action of $e_K$ is zero and the degrees of $e_i$ are the same as above. Let $\xi_{\o,\u}$ and $\xi_{\u,\o}$ denote the generators of $F$ and $F^{\vee}$ over $\F_2[e_1,\ldots,e_{K-1}]$ respectively. The unit $\eta$ of the adjunction $(F^{\vee} \otimes_{\A_K} -) \dashv (F \otimes_{\A_K} -)$ of functors between left dg module categories is induced from the dg bimodule map $\eta \colon \id_{\A_K} \to F \otimes_{\A_K} F^{\vee}$ sending $\id_{\o} \in \id_{\A_K}$ to $\xi_{\o,\u} \otimes \xi_{\u,\o} \in F \otimes_{\A_K} F^{\vee}$ and acting as the identity on $\F_2[e_1,\ldots,e_{K-1}]$; the map $\eta$ sends $\id_{\u} \in \id_{\A_K}$ to zero.
\end{remark}

\begin{example}
If $K = 1$, then $\A_K = \A_1$ is the right finite bimodule 2-representation of $\U^-$ with:
\begin{itemize}
\item $(\A_1)_{\varepsilon_1} = \F_2$;
\item $(\A_1)_{\varepsilon_2} = \F_2[U]$ where $\deg^q(U) = -2$ and $\deg^h(U) = 2$, with no differential;
\item ${_{\varepsilon_2}}F_{\varepsilon_1} = \F_2$, where the left action of $U$ is zero;
\item all other summands of $F$ are zero, and $\tau = 0$.
\end{itemize}
\end{example}

\begin{example}
If $K = 2$, then $\A_K = \A_2$ is the right finite bimodule 2-representation of $\U^-$ with:
\begin{itemize}
\item $(\A_2)_{\varepsilon_1 + \varepsilon_2} = \F_2[e_1]$ where $\deg^q(e_1) = -2$ and $\deg^h(e_1) = 2$;
\item $(\A_2)_{2\varepsilon_2} = \F_2[e_1, e_2]$ where $\deg^q(e_i) = -2i$ and $\deg^h(e_i) = 2i$, with no differential;
\item ${_{2\varepsilon_2}}F_{\varepsilon_1 + \varepsilon_2} = \F_2[e_1]$, where the left action of $e_2$ is zero;
\item all other summands of $F$ are zero, and $\tau = 0$.
\end{itemize}
\end{example}

We describe the DA bimodule $F^{\vee} = {_{\varepsilon_1 + (K-1)\varepsilon_2}}F^{\vee}_{K\varepsilon_2}$ over $\A_K$ in matrix notation.

\begin{example}
The primary matrix for the bimodule $F^{\vee}$ over $\A_K$ is
\[
\kbordermatrix{
& \u & \o \\
\u & & \xi_{\u,\o} \\
\o & &
};
\]
the one generator $\xi_{\u,\o}$ has $q$-degree and homological degree both equal to zero. The matrix for the right action of $e_i$ is 
\[
\kbordermatrix{
& \xi_{\u,\o} \\
\xi_{\u,\o} & e_i
}
\]
for $1 \leq i \leq K-1$; the matrix for the right action of $e_K$ is zero.
\end{example}

By construction, $(G_0(\A_K), [F])$ can be identified with $\wedge_q^k V$ where $V$ is the vector representation of $U_q(\gl(1|1))$ (restricted to a representation of the subalgebra generated by $F$, $q^{H_1}$, and $q^{H_2}$, or alternatively a representation of $\Udot_q(\gl(1|1)^-)$); see Appendix~\ref{sec:Uqgl11Review}. The basis elements $[S_{\u}]$ and $[S_{\o}]$ of $G_0(\A_K)$ get identified with the basis elements $\ket{0} \wedge (\ket{1})^{\wedge(k-1)}$ and $(\ket{1})^{\wedge k}$ of $\wedge_q^k V$ respectively.

\subsubsection{Tensor products}
Write
\[
\A_{K_1,\ldots,K_n} \coloneqq \A_{K_1} \ootimes \cdots \ootimes \A_{K_n}
\]
as 2-representations of $\U^-$.
\begin{corollary}\label{cor:GeneralKDecat}
We have an identification
\[
G_0\left(\A_{K_1, \ldots, K_n}\right) \cong \wedge_q^{K_1} V \otimes \cdots \otimes \wedge_q^{K_n} V
\]
as representations of $\Udot_q(\gl(1|1)^-)$.
\end{corollary}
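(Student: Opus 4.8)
The plan is to deduce Corollary~\ref{cor:GeneralKDecat} directly from the results already established in the excerpt, with essentially no new computation. The key ingredients are: (1) the identification $(G_0(\A_K), [F]) \cong \wedge_q^K V$ as representations of $\Udot_q(\gl(1|1)^-)$, stated after the definition of the bimodule $F^{\vee}$ over $\A_K$; (2) the fact (Theorem~\ref{thm:IntroCat}, proved as the Proposition in Section~\ref{sec:BigradedTensorDecat}) that for right finite bimodule 2-representations $(A_1, F_1, \tau_1)$ and $(A_2, F_2, \tau_2)$ of $\U^-$ we have a natural identification $G_0(A_1 \ootimes A_2) \cong G_0(A_1) \otimes G_0(A_2)$ as modules over $\Udot_q(\gl(1|1)^-)$; and (3) the fact that each $\A_K$ is itself a right finite, right bounded bimodule 2-representation of $\U^-$, as established in Section~\ref{sec:Fundamental2RepExamples}, so that iterated tensor products are again right finite (and the decategorification proposition applies at each stage).

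First I would note that $\A_{K_1,\ldots,K_n}$ is by definition the iterated bigraded tensor product $\A_{K_1} \ootimes \cdots \ootimes \A_{K_n}$, which makes sense because $\ootimes$ is associative up to canonical isomorphism (as guaranteed by the construction in Section~\ref{sec:BigradedTensor}, ultimately resting on \cite{ManionRouquier}) and because each factor, hence each partial tensor product, is right finite and right bounded. Then I would apply the decategorification proposition inductively: for $n = 1$ the statement is exactly ingredient (1); assuming the identification $G_0(\A_{K_1,\ldots,K_{n-1}}) \cong \wedge_q^{K_1} V \otimes \cdots \otimes \wedge_q^{K_{n-1}} V$ as $\Udot_q(\gl(1|1)^-)$-modules, apply ingredient (2) to the pair $(\A_{K_1,\ldots,K_{n-1}}, \A_{K_n})$ to get
\[
G_0(\A_{K_1,\ldots,K_n}) \cong G_0(\A_{K_1,\ldots,K_{n-1}}) \otimes G_0(\A_{K_n}) \cong \left( \wedge_q^{K_1} V \otimes \cdots \otimes \wedge_q^{K_{n-1}} V \right) \otimes \wedge_q^{K_n} V,
\]
which is the desired identification at stage $n$. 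One should check that the associativity isomorphisms for $\ootimes$ on the algebra side and for $\otimes$ of representations on the decategorified side are compatible under these identifications, so that the induction is coherent; this follows from naturality of the isomorphism in the decategorification proposition together with the known compatibility of $\ootimes$ with associators.

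The only genuine subtlety — and thus the ``main obstacle,'' though it is a mild one — is bookkeeping about super signs and the coproduct of $\Udot_q(\gl(1|1)^-)$: the decategorification proposition shows $[F] = [F_1] \otimes 1 + q^{H_1 + H_2} \otimes [F_2]$, which is precisely the coproduct of the generator $F$ in $U_q(\gl(1|1))$ (restricted to the negative half with Cartan), so one must make sure the coproduct used to define the tensor product representation $\wedge_q^{K_1} V \otimes \cdots \otimes \wedge_q^{K_n} V$ on the representation-theory side matches this, including the $\Z_2$-grading convention (an object of $A_\omega$ is even or odd according to the parity of $\omega_2$) that governs the Koszul sign in $[F_2]$ commuting past a vector in the first factor. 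Since this is exactly the convention fixed in Appendix~\ref{sec:Uqgl11Review} and used in Section~\ref{sec:BigradedTensorDecat}, the verification is routine, and I would simply remark that the proposition of Section~\ref{sec:BigradedTensorDecat}, applied inductively, yields the corollary.
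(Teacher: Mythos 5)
Your proposal is correct and matches the paper's (implicit) argument exactly: the corollary is stated without further proof precisely because it follows by iterating the decategorification proposition of Section~\ref{sec:BigradedTensorDecat} together with the identification $(G_0(\A_K),[F]) \cong \wedge_q^K V$ from Section~\ref{sec:Fundamental2RepExamples}. Your attention to the coproduct and parity conventions is the right thing to note, and no further work is needed.
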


In particular, $G_0 \left( \A_{1,\ldots,1} \right)$ is identified with $V^{\otimes n}$. We will often write objects of $\A_{K_1,\ldots,K_n}$ as sequences of letters $\u$ (``unoccupied'') and $\o$ (``occupied'').

\begin{example}
As a dg category, the 2-representation $\A_{K_1,K_2}$ of $\U^-$ can be described as follows:
\begin{itemize}
\item $(\A_{K_1,K_2})_{2\varepsilon_1 + (K_1+K_2-2)\varepsilon_2} = \F_2[e_{1,1}, \ldots, e_{1,K_1-1}, e_{2,1}, \ldots, e_{2,K_2-1}]$
 with one object $\uu$; we have
\[
\deg^q(e_{j,i}) = -2i, \qquad \deg^h(e_{j,i}) = 2i.
\]
\item $(\A_{K_1,K_2})_{\varepsilon_1 + (K_1+K_2-1)\varepsilon_2}$ has two objects $\ou$ and $\uo$ with morphisms generated by endomorphisms 
\[
e_{1,1}, \ldots, e_{1,K_1}, e_{2,1}, \ldots, e_{2,K_2-1}
\]
of $\ou$, endomorphisms 
\[
e_{1,1}, \ldots, e_{1,K_1-1}, e_{2,1}, \ldots, e_{2,K_2}
\]
of $\uo$, and a morphism $\lambda$ from $\ou$ to $\uo$, subject to the relations
\begin{itemize}
\item $e_{j,i} g = g e_{j,i}$ for $j=1,2$, $1 \leq i \leq K_j - 1$, and all morphisms $g$,
\item $e_{2,K_2} \lambda = 0$ and $\lambda e_{1,K_1} = 0$.
\end{itemize}
We have $\deg^q(e_{j,i}) = -2i$, $\deg^h(e_{j,i}) = 2i$, $\deg^q(\lambda) = K_1$, and $\deg^h(\lambda) = 1-K_1$.
\item $(\A_{K_1,K_2})_{(K_1+K_2)\varepsilon_2} = \F_2[e_{1,1}, \ldots, e_{1,K_1}, e_{2,1}, \ldots, e_{2,K_2}]$ with one object $\oo$; we have 
\[
\deg^q(e_{j,i}) = -2i, \qquad \deg^h(e_{j,i}) = 2i.
\]
\end{itemize}
\end{example}

Indeed, the morphism $\lambda$ is $\xi_{\u,\o} \otimes \xi_{\o,\u}$ in the summand
\[
\Big( {_{\varepsilon_1 + (K_1-1)\varepsilon_2}}(\widetilde{F}_1^{\vee})_{K_1 \varepsilon_2} \Big) \otimes_{H_1} \Big( {_{K_2 \varepsilon_2}}(F_2)_{\varepsilon_1 + (K_2-1)\varepsilon_2} \Big) \subset {_{\varepsilon_1 + (K_1 + K_2 - 1)\varepsilon_2}}(\A_{K_1} \ootimes \A_{K_2})_{\varepsilon_1 + (K_1 + K_2 - 1)\varepsilon_2};
\]
note that ${_{\varepsilon_1 + (K_1-1)\varepsilon_2}}(\widetilde{F}_1^{\vee})_{K_1\varepsilon_2} := q^{K_1} {_{\varepsilon_1 + (K_1-1)\varepsilon_2}}(F_1^{\vee})_{K_1\varepsilon_2} [K_1-1]$,
explaining the $q$- and $h$-degrees of $\lambda$.

Next, to compute the bimodule $F$ over $\A_{K_1,K_2}$ (and thereby compute $F^{\vee}$), we need to compute the map $u$ on the generator $\xi_{\oo,\ou} := \id_{\o} \otimes \xi_{\o,\u}$ of $\A_{K_1} \otimes F_2 \subset F$ ($u$ is zero on $\id_{\u} \otimes \xi_{\o,\u}$). This map $u$ applies the unit $\eta$ from Remark~\ref{rem:EtaForAK} to the left of $\id_{\o} \otimes \xi_{\o,\u}$ to get an element of $((F_1 \otimes \A_{K_2}) \otimes (F_1^{\vee} \otimes \A_{K_2})) \otimes (\A_{K_1} \otimes F_2)$, then reparenthesizes and reinterprets the result as an element of $(F_1 \otimes \A_{K_2}) \otimes (\A_{K_1} \ootimes \A_{K_2})$. In the first step we get $(\xi_{\o,\u} \otimes \id_{\o}) \otimes (\xi_{\u,\o} \otimes \id_{\o}) \otimes (\id_{\o} \otimes \xi_{\o,\u})$. The second and third tensor factors then combine to become $\xi_{\u,\o} \otimes \xi_{\o,\u} = \lambda$ in $\widetilde{F}_1^{\vee} \otimes_{H_1} F_2 \subset \A_{K_1} \ootimes \A_{K_2}$, and we can write $\xi_{\oo,\uo} := \xi_{\o,\u} \otimes \id_{\o}$ for the first tensor factor. Let $\xi_{\ou,\oo}$ and $\xi_{\uo,\oo}$ be the generators of $F^{\vee}$ dual to the generators $\xi_{\oo,\ou}$ and $\xi_{\oo,\uo}$ of $F$ respectively. It follows that in the differential on $F^{\vee}$, $d(\xi_{\uo,\oo})$ is given by $\lambda \cdot \xi_{\ou,\oo}$. One can also compute the rest of the structure of $F^{\vee}$, resulting in the matrices of the following example.

\begin{example}
The DA bimodule $F^{\vee}$ over $\A_{K_1,K_2}$ can be described in matrix notation as follows. The primary matrix for ${_{2\varepsilon_1 + (K_1+K_2-2)\varepsilon_2}}(F^{\vee})_{\varepsilon_1 + (K_1+K_2-1)\varepsilon_2}$ is
\[
\kbordermatrix{
& \ou & \uo \\
\uu & \xi_{\uu,\ou} & \xi_{\uu,\uo}
};
\]
we have $\deg^q(\xi_{\uu,\ou}) = \deg^h(\xi_{\uu,\ou}) = 0$ as well as $\deg^q(\xi_{\uu,\uo}) = -K_1$ and $\deg^h(\xi_{\uu,\uo}) = K_1 - 1$.

The matrix for the right action of $\lambda$ is
\[
\kbordermatrix{
& \xi_{\uu,\ou} & \xi_{\uu,\uo} \\
\xi_{\uu,\ou} & 0 & 1 \\
\xi_{\uu,\uo} & 0 & 0
}.
\]
For $j = 1,2$ and $1 \leq i \leq K_j - 1$, the right action by $e_{j,i}$ has matrix
\[
\kbordermatrix{
& \xi_{\uu,\ou} & \xi_{\uu,\uo} \\
\xi_{\uu,\ou} & e_{j,i} & 0 \\
\xi_{\uu,\uo} & 0 & e_{j,i}
}.
\]
The differential and the right action of $e_{1,K_1}$ and $e_{2,K_2}$ are zero.

The primary matrix for ${_{\varepsilon_2+(K_1+K_2-1)}}(F^{\vee})_{(K_1+K_2)\varepsilon_2}$ is
\[
\kbordermatrix{
& \oo \\
\ou & \xi_{\ou,\oo} \\
\uo & \xi_{\uo,\oo} \\
};
\]
we have $\deg^q(\xi_{\ou,\oo}) = -K_1$, $\deg^h(\xi_{\ou,\oo}) = K_1$, $\deg^q(\xi_{\uo,\oo}) = 0$, and $\deg^h(\xi_{\uo,\oo}) = 0$.

The differential has matrix
\[
\kbordermatrix{
& \xi_{\ou,\oo} & \xi_{\uo,\oo} \\
\xi_{\ou,\oo} & 0 & \lambda \\
\xi_{\uo,\oo} & 0 & 0
},
\]
the right action by $e_{1,K_1}$ has matrix
\[
\kbordermatrix{
& \xi_{\ou,\oo} & \xi_{\uo,\oo} \\
\xi_{\ou,\oo} & e_{1,K_1} & 0 \\
\xi_{\uo,\oo} & 0 & 0
},
\]
the right action by $e_{2,K_2}$ has matrix
\[
\kbordermatrix{
& \xi_{\ou,\oo} & \xi_{\uo,\oo} \\
\xi_{\ou,\oo} & 0 & 0 \\
\xi_{\uo,\oo} & 0 & e_{2,K_2}
},
\]
and the right action by $e_{j,i}$ for $j = 1,2$, $1 \leq i \leq K_j - 1$ has matrix
\[
\kbordermatrix{
& \xi_{\ou,\oo} & \xi_{\uo,\oo} \\
\xi_{\ou,\oo} & e_{j,i} & 0 \\
\xi_{\uo,\oo} & 0 & e_{j,i}
}.
\]
\end{example}

\begin{example}
The endomorphism $\tau$ of the bimodule ${_{2\varepsilon_1 + (K_1+K_2-2)\varepsilon_2}}(F^{\vee})^2_{(K_1+K_2)\varepsilon_2}$ has matrix
\[
\kbordermatrix{
&\xi_{\uu,\ou} \xi_{\ou,\oo} & \xi_{\uu,\uo} \xi_{\uo,\oo} \\
\xi_{\uu,\ou} \xi_{\ou,\oo} & 0 & 0 \\
\xi_{\uu,\uo} \xi_{\uo,\oo} & 1 & 0
}.
\]
\end{example}

\begin{figure}
\includegraphics[scale=0.4]{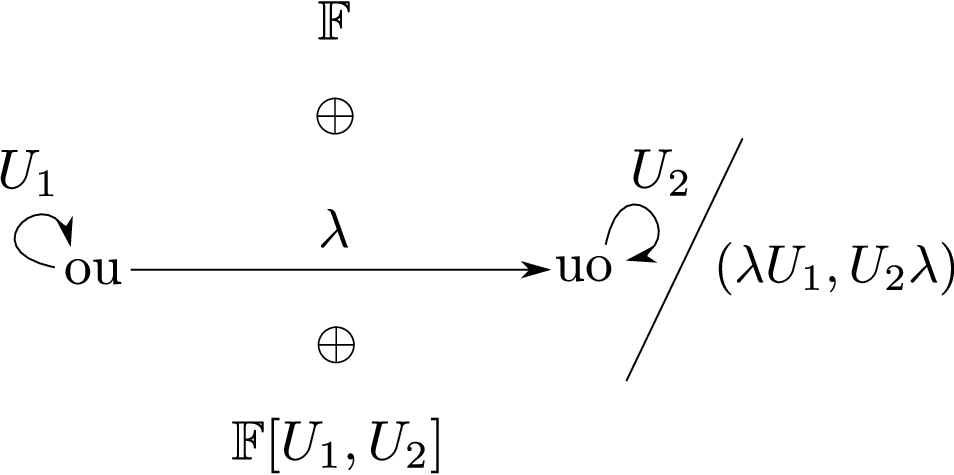}
\caption{The dg category $\A_{1,1}$.}
\label{fig:A11Quiver}
\end{figure}

For convenience, we specialize to the case $K_1 = K_2 = 1$ which will be especially important below. In this case we write $U_1$ rather than $e_{1,1}$ and $U_2$ rather than $e_{2,1}$.

\begin{example}
The dg category $\A_{1,1}$ is shown in Figure~\ref{fig:A11Quiver}; arrows point from source to target. The differential on $\A_{1,1}$ is zero; the gradings are given by
\begin{itemize}
\item $\deg^q(U_1) = -2$, $\deg^h(U_1) = 2$,
\item $\deg^q(\lambda) = 1$, $\deg^h(\lambda) = 0$, 
\item $\deg^q(U_2) = -2$, $\deg^h(U_2) = 2$.
\end{itemize}

The primary matrix for ${_{2\varepsilon_1}}(F^{\vee})_{\varepsilon_1 + \varepsilon_2}$ is
\[
\kbordermatrix{
& \ou & \uo \\
\uu & \xi_{\uu,\ou} & \xi_{\uu,\uo}
};
\]
the generators $\xi_{\uu,\ou}$ and $\xi_{\uu,\uo}$ have $q$-degree $0$ and $-1$ respectively, and both generators have homological degree $0$. The matrix for the right action of $\lambda$ is
\[
\kbordermatrix{
& \xi_{\uu,\ou} & \xi_{\uu,\uo} \\
\xi_{\uu,\ou} & 0 & 1 \\
\xi_{\uu,\uo} & 0 & 0
};
\]
the differential and the right action of $U_1$ and $U_2$ are zero (so that the above matrix is equivalently the secondary matrix for ${_{2\varepsilon_1}}(F^{\vee})_{\varepsilon_1 + \varepsilon_2}$).

The primary matrix for ${_{\varepsilon_1 + \varepsilon_2}}(F^{\vee})_{2\varepsilon_2}$ is
\[
\kbordermatrix{
& \oo \\
\ou & \xi_{\ou,\oo} \\
\uo & \xi_{\uo,\oo} \\
};
\]
the generator $\xi_{\ou,\oo}$ has $q$-degree $-1$ and homological degree $1$, while $\xi_{\uo,\oo}$ has $q$-degree $0$ and homological degree $0$. The differential has matrix
\[
\kbordermatrix{
& \xi_{\ou,\oo} & \xi_{\uo,\oo} \\
\xi_{\ou,\oo} & 0 & \lambda \\
\xi_{\uo,\oo} & 0 & 0
},
\]
the right action by $U_1$ has matrix
\[
\kbordermatrix{
& \xi_{\ou,\oo} & \xi_{\uo,\oo} \\
\xi_{\ou,\oo} & U_1 & 0 \\
\xi_{\uo,\oo} & 0 & 0
},
\]
and the right action by $U_2$ has matrix
\[
\kbordermatrix{
& \xi_{\ou,\oo} & \xi_{\uo,\oo} \\
\xi_{\ou,\oo} & 0 & 0 \\
\xi_{\uo,\oo} & 0 & U_2
}.
\]
Equivalently, the secondary matrix for ${_{\varepsilon_1 + \varepsilon_2}}(F^{\vee})_{2\varepsilon_2}$ is
\[
\kbordermatrix{
& \xi_{\ou,\oo} & \xi_{\uo,\oo} \\
\xi_{\ou,\oo} & U_1^{k+1} \otimes U_1^{k+1} & \lambda \\
\xi_{\uo,\oo} & 0 & U_2^{k+1} \otimes U_2^{k+1}
}
\]
(where $k$ runs over all nonnegative integers in each entry with a $k$ index).

The endomorphism $\tau$ of ${_{2\varepsilon_1}}(F^{\vee})^2_{2\varepsilon_2}$ has matrix
\[
\kbordermatrix{
&\xi_{\uu,\ou} \xi_{\ou,\oo} & \xi_{\uu,\uo} \xi_{\uo,\oo} \\
\xi_{\uu,\ou} \xi_{\ou,\oo} & 0 & 0 \\
\xi_{\uu,\uo} \xi_{\uo,\oo} & 1 & 0
}.
\]
\end{example}

\subsection{Strands interpretation}\label{sec:ExamplesStrandsInterp}

\subsubsection{Strands algebras and chord diagrams}

\begin{definition}[cf. Section 7.2.4 of \cite{ManionRouquier}, Definition 2.1.1 of \cite{Zarev}] A chord diagram\footnote{Terminology following e.g. \cite{ACPRS}.} $\Zc = (\Zc, \mathbf{a}, \mu)$ is a finite collection $\Zc$ of oriented circles and intervals, equipped with a two-to-one matching $\mu$ on a finite subset $\mathbf{a}$ of points in the interior of $\Zc$. 
\end{definition}
Zarev allows only intervals in $\Zc$, not circles. We will draw the intervals and circles in black while indicating the matching $\mu$ with red arcs between points of $\mathbf{a}$ (endpoints of the red arcs). Figures~\ref{fig:Z1} and \ref{fig:Zstn} show some examples of chord diagrams.

Differential (ungraded) strands categories for general chord diagrams $\Zc$ are defined in \cite{ManionRouquier}.
\begin{itemize}
\item Let $N = |\mathbf{a}|/2$.
\item Let $\mathbf{a}/\sim$ be the set of two-element equivalence classes of points of $\mathbf{a}$ determined by the matching.
\item Let $Z$ be the singular curve associated to $\Zc$ in \cite[Section 7.2.4]{ManionRouquier} (we can view $\mathbf{a}/\sim$ as a subset of $Z$).
\item Let $\Sc(Z)$ be the strands category of $Z$ defined in \cite[Definition 7.4.22]{ManionRouquier}.
\end{itemize}

\begin{definition}
Let $\A(\Zc) = \Sc_{\mathbf{a}/\sim}(Z)$, the full differential subcategory of $\Sc(Z)$ on objects contained in the finite set $\mathbf{a}/\sim$. We can write
\[
\A(\Zc) \coloneqq \bigoplus_{k=0}^N \A(\Zc,k)
\]
where $\A(\Zc,k)$ is the full subcategory of $\A(\Zc)$ on objects $S$ with $|S|=k$.
\end{definition}

Furthermore, \cite{ManionRouquier} defines (ungraded) bimodule 2-representations $(\A(\Zc), L_{\xi_+}, \tau)$ and $(\A(\Zc), R_{\xi_-}, \tau)$ of $\U$ for each chord diagram $\Zc$ equipped with a chosen interval component as follows. By \cite[Section 8.1.6]{ManionRouquier}, the chosen interval component of $\Zc$ gives injective morphisms of curves $\xi_+\co \R_{>0} \to Z$ and $\xi_-\co \R_{<0} \to Z$ which are outgoing and incoming for $Z$ respectively (as defined in \cite[Section 8.1.1]{ManionRouquier}). 

By \cite[Proposition 8.1.3]{ManionRouquier}, taking $M = \mathbf{a} / \sim$, we get a left finite 2-representation $(\A(\Zc), L_{\xi_+}, \tau)$ of $\U$. Its left dual $(\A(\Zc), {^{\vee}}L_{\xi_+}, \tau)$ is thus a right finite 2-representation of $\U$ which agrees with $(\A(\Zc), R_{\xi_-}, \tau)$ by \cite[Proposition 8.1.15]{ManionRouquier}. It follows from \cite[Proposition 8.1.10]{ManionRouquier} (resp. \cite[Section 8.1.5]{ManionRouquier}) that $L_{\xi_+}$ is left bounded (resp. $R_{\xi_-}$ is right bounded).

\subsubsection{Fundamental examples}

\begin{figure}
\includegraphics[scale=0.6]{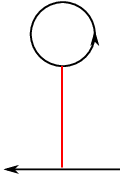}
\caption{The chord diagram $\Zc_1$.}
\label{fig:Z1}
\end{figure}

\begin{example}
The differential category $\A(\Zc_1)$ agrees with $\A_1$ (ignoring gradings), where $\Zc_1$ is shown in Figure~\ref{fig:Z1}; we have $\A(\Zc_1,0) = (\A_1)_{\varepsilon_1}$ and $\A(\Zc_1,1) = (\A_1)_{\varepsilon_2}$.  

\end{example}

Basis elements of $\A(\Zc_1)$ over $\F_2$ can be represented by strands pictures in the disjoint union of a rectangle and a cylinder; the element $U^i$ of $\A(\Zc_1,1) = \F_2[U]$ wraps around the cylinder $i$ times, while $1 \in \A(\Zc_1,1) = \F_2$ is a pair of dotted lines corresponding to the two matched points in the chord diagram $\Zc_1$ (see Figure~\ref{fig:LStrandsExamples}).

\begin{figure}
\includegraphics[scale=0.6]{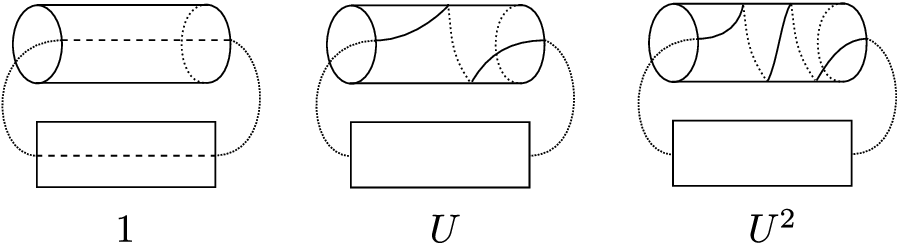}
\caption{Elements $1$, $U$, and $U^2$ of $\A(\Zc_1,1) = \F_2[U]$.}
\label{fig:LStrandsExamples}
\end{figure}

The bimodule $R_{\xi_-}$ over $\A(\Zc_1)$ can be viewed as a bimodule over $(\A(\Zc_1,1), \A(\Zc_1,0))$ with a single basis element over $\F_2$, shown on the left of Figure~\ref{fig:LStrandsBimodExamples}. The left action of $U \in \A(\Zc_1,1) = \F_2[U]$ on this basis element is zero. We can thus identify $R_{\xi_-}$ with the bimodule $F = {_{\varepsilon_2}}F_{\varepsilon_1}$ over $\A_1$ (ignoring the gradings on the latter bimodule).

Similarly, the only nonzero summand of the bimodule $L_{\xi_+}$ over $\A(\Zc_1)$ is a bimodule over $(\A(\Zc_1,0), \A(\Zc_1,1))$ with a single basis element over $\F_2$, shown on the right of Figure~\ref{fig:LStrandsBimodExamples}. The right action of $U \in \A(\Zc_1,1) = \F_2[U]$ on this basis element is zero. We can thus identify $L_{\xi_+}$ with the bimodule $F^{\vee} = {_{\varepsilon_1}}F^{\vee}_{\varepsilon_2}$ over $\A_1$ (ignoring gradings again).

\begin{figure}
\includegraphics[scale=0.6]{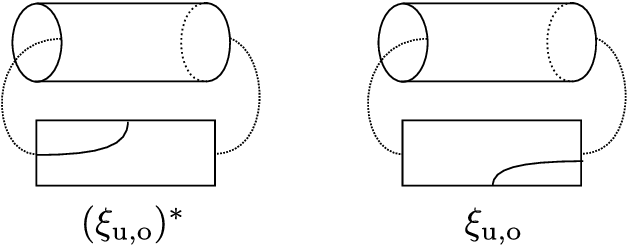}
\caption{The unique basis elements of the bimodules $F = R_{\xi_-}$ and $F^{\vee} = L_{\xi^+}$ over $\A_1$.}
\label{fig:LStrandsBimodExamples}
\end{figure}

\begin{corollary}
The 2-representation of $\U$ underlying the 2-representation $(\A_1,F,0)$ of $\U^-$ agrees with $(\A(\Zc_1), R_{\xi_-}, 0)$, and its dual agrees with $(\A(\Zc_1), L_{\xi_+}, 0)$.
\end{corollary}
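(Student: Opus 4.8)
The statement is an assembly of the identifications established in the preceding discussion, so the plan is simply to collect them and to check that the one remaining piece of 2-representation data — the 2-morphism $\tau$ — matches on both sides. Recall that, after forgetting gradings and weight decompositions, a bimodule 2-representation of $\U$ consists of a dg category, a dg bimodule for the generating 1-morphism, and an endomorphism $\tau$ of its two-fold composite satisfying the three defining relations. First I would invoke the identification $\A(\Zc_1) = \A_1$ of dg categories (ignoring gradings) from the Example above, together with the identifications of $R_{\xi_-}$ with $F = {_{\varepsilon_2}}F_{\varepsilon_1}$ and of $L_{\xi_+}$ with $F^\vee = {_{\varepsilon_1}}F^\vee_{\varepsilon_2}$ over $\A_1$ (again ignoring gradings) recorded just above, which were read off from the strands pictures in Figures~\ref{fig:LStrandsExamples} and \ref{fig:LStrandsBimodExamples}.

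It then remains to see that the 2-morphisms agree. On the $\A_1$ side, $\tau = 0$ by definition. On the $\A(\Zc_1)$ side, the only lattice points carrying nonzero pieces of $\A_1$ are $\varepsilon_1$ and $\varepsilon_2$, while the generating 1-morphism $f$ sends $\varepsilon_1 \mapsto \varepsilon_2$ and $\varepsilon_2 \mapsto 2\varepsilon_2 - \varepsilon_1$; since $(\A_1)_{2\varepsilon_2 - \varepsilon_1} = 0$, the two-fold composite of $F$ (equivalently of $R_{\xi_-}$) is the zero bimodule, its only endomorphism is $0$, and the three $\tau$-relations hold vacuously. Hence $\tau = 0$ on both sides, and the 2-representations of $\U$ agree.

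For the duality claim, the setup already records that the left dual $(\A(\Zc_1), {^\vee}L_{\xi_+}, \tau)$ coincides with $(\A(\Zc_1), R_{\xi_-}, \tau)$, i.e. that $L_{\xi_+}$ and $R_{\xi_-}$ are dual under the identifications above; on the $\A_1$ side $F^\vee$ is by construction the dual of $F$ in the sense of Section~\ref{sec:DualsAD}. So the dual of the 2-representation of $\U$ underlying $(\A_1, F, 0)$ is $(\A_1, F^\vee, 0)$, which matches $(\A(\Zc_1), L_{\xi_+}, 0)$ under the steps above. The only point requiring genuine care — and the closest thing to an obstacle here — is the bookkeeping of conventions: one must confirm that the chosen orientation of $\Zc_1$ and its chosen interval component, together with the incoming/outgoing roles of $\xi_-$ and $\xi_+$ and the left-versus-right dual conventions, are consistent with the identifications $R_{\xi_-} \leftrightarrow F$ and $L_{\xi_+} \leftrightarrow F^\vee$ rather than with the opposite pairing. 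Once the generators and the (vanishing) left $U$-actions in the strands pictures are matched against the matrix descriptions of $F$ and $F^\vee$, no further computation is needed.
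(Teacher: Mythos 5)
Your proposal is correct and follows essentially the same route as the paper, which states this corollary without further argument as an immediate consequence of the generator-by-generator identifications $\A(\Zc_1) = \A_1$, $R_{\xi_-} = F$, and $L_{\xi_+} = F^{\vee}$ recorded just above it. Your added check that $\tau$ matches because the two-fold composite of $F$ (equivalently of $R_{\xi_-}$) vanishes for weight (respectively strand-count) reasons is a correct and slightly more explicit version of what the paper leaves implicit.
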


\subsubsection{Tensor products}

\begin{figure}
\includegraphics[scale=0.6]{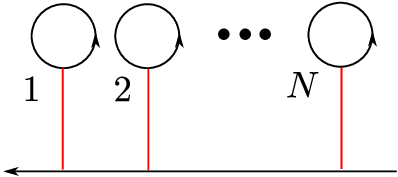}
\caption{The chord diagram $\Zc_{1,\ldots,1}^{\st}$.}
\label{fig:Zstn}
\end{figure}

The following proposition follows from \cite[Theorem 8.3.1]{ManionRouquier}.
\begin{proposition}
We have a canonical isomorphism
\[
\A_{1,\ldots,1} \cong \A(\Zc^{\st}_{1,\ldots,1})
\]
as 2-representations of $\U$, where $\Zc^{\st}_{1,\ldots,1}$ is the chord diagram shown in Figure~\ref{fig:Zstn}.
\end{proposition}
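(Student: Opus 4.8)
The plan is to deduce the proposition directly from the main gluing theorem of \cite{ManionRouquier}, namely \cite[Theorem 8.3.1]{ManionRouquier}, together with the identification already recorded above that $(\A_1, F, 0)$, as a 2-representation of $\U$, agrees with $(\A(\Zc_1), R_{\xi_-}, 0)$. Since the bigraded tensor product $\ootimes$ refines the tensor product $\ootimes$ of 2-representations of $\U$ from \cite{ManionRouquier}, and $\A_{1,\ldots,1} = \A_1 \ootimes \cdots \ootimes \A_1$ by definition, it suffices to work at the level of $\U$-representations: the claim is that $\A(\Zc_1) \ootimes \cdots \ootimes \A(\Zc_1)$ is canonically isomorphic to $\A(\Zc^{\st}_{1,\ldots,1})$ compatibly with the $F$-bimodules and the (vanishing) $\tau$'s.

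First I would recall the precise statement of \cite[Theorem 8.3.1]{ManionRouquier}: it expresses the strands 2-representation $(\A(\Zc' \cup_{\partial} \Zc''), \ldots)$ associated to a suitable gluing of chord diagrams as the tensor product $\ootimes$ of the strands 2-representations of the pieces, using the chosen interval components to form the outgoing/incoming morphisms of curves $\xi_\pm$. Next I would observe that the chord diagram $\Zc^{\st}_{1,\ldots,1}$ of Figure~\ref{fig:Zstn} is exactly an $n$-fold gluing of copies of $\Zc_1$ of the type to which that theorem applies: one checks from the figure that $\Zc^{\st}_{1,\ldots,1}$ is obtained by concatenating $n$ copies of $\Zc_1$ along their chosen interval components in the manner prescribed in \cite{ManionRouquier}, so that the singular curve $Z^{\st}_{1,\ldots,1}$ is the corresponding gluing of the curves $Z_1$. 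Then, applying \cite[Theorem 8.3.1]{ManionRouquier} inductively (using associativity of $\ootimes$, which holds on the nose for the $\U$-level construction), one gets a canonical isomorphism $\A(\Zc^{\st}_{1,\ldots,1}) \cong \A(\Zc_1) \ootimes \cdots \ootimes \A(\Zc_1)$ that also matches the bimodules $R_{\xi_-}$ with the bimodule $X = F$ produced by $\ootimes$ and the endomorphisms $\tau$ (all zero here). Combining with the identification $\A_1 \cong \A(\Zc_1)$ gives the proposition.

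The routine but necessary verification is the combinatorial matching in the second step: one must confirm that the chord diagram drawn in Figure~\ref{fig:Zstn} really is the iterated glued diagram, i.e.\ that the positions of the matched points and the red matching arcs agree with the gluing recipe of \cite{ManionRouquier}, including the placement of the chosen interval components at each stage. This is the step I expect to be the main (though not deep) obstacle, since it requires carefully reconciling the conventions of the present paper with those of \cite{ManionRouquier}; once the diagrammatic identification is in hand, everything else is a direct citation. A secondary point to check is that the grading-free statement is all that is being asserted here (the proposition is about 2-representations of $\U$, not $\U^-$), so no compatibility of bigradings needs to be verified at this stage — the bigrading on $\A_{1,\ldots,1}$ comes separately from the bigraded $\ootimes$ of Section~\ref{sec:BigradedTensor}.
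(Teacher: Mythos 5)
Your approach matches the paper's exactly: the proposition is stated there with no proof beyond the remark that it "follows from \cite[Theorem 8.3.1]{ManionRouquier}," which is precisely the citation you build your argument around, combined with the identification $\A_1 \cong \A(\Zc_1)$ recorded earlier in Section~\ref{sec:ExamplesStrandsInterp}. Your elaboration of the diagrammatic matching and the remark that only the ungraded ($\U$-level) statement is being asserted are both correct and consistent with the paper's treatment.
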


The dg category $\A(\Zc^{\st}_{1,\ldots,1})$ comes from a pointed strands category as in \cite[Definition 7.4.22]{ManionRouquier}, so it comes with a preferred choice of basis (basis elements correspond to strands pictures as in Figure~\ref{fig:LStrandsExamples}). We define an augmentation functor $\epsilon$ from $\A(\Zc^{\st}_{1,\ldots,1})$ to its idempotent ring by sending all non-identity basis elements to zero; on $\A(\Zc^{\st}_1) = \A_1$, this basis and augmentation agree with the ones defined in Section~\ref{sec:Fundamental2RepExamples}. 

\begin{figure}
\includegraphics[scale=0.6]{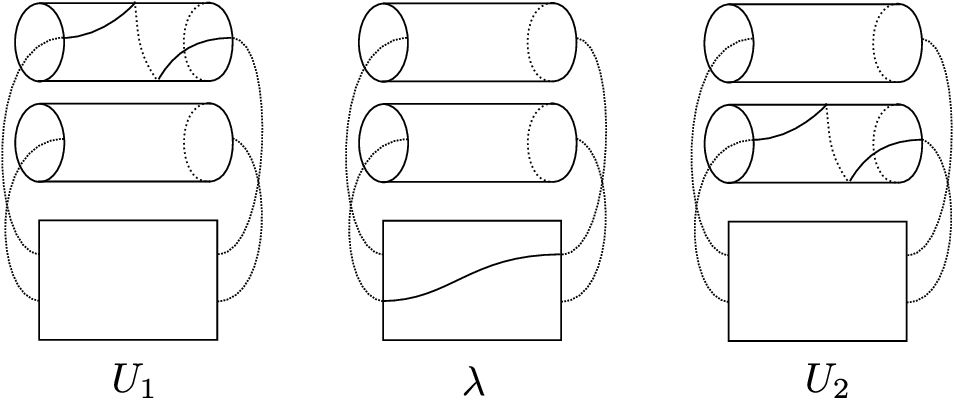}
\caption{Generators $U_1, \lambda$, and $U_2$ of $\A(\Zc_{1,1}^{\st},1)$.}
\label{fig:LLStrandsExamples}
\end{figure}

Figure~\ref{fig:LLStrandsExamples} shows the generators $U_1$, $\lambda$, and $U_2$ of $(\A_{1,1})_{\varepsilon_1 + \varepsilon_2} \cong \A(\Zc_{1,1}^{\st},1)$. Similarly, Figure~\ref{fig:LLStrandsBimodExamples} shows the two generators $\xi_{\uu,\uo}$ and $\xi_{\uu,\ou}$ of ${_{2\varepsilon_1}}F^{\vee}_{\varepsilon_1 + \varepsilon_2}$, and Figure~\ref{fig:LLMoreStrandsBimodExamples} shows the two generators $\xi_{\ou,\oo}$ and $\xi_{\uo,\oo}$ of ${_{\varepsilon_1 + \varepsilon_2}}F^{\vee}_{2\varepsilon_2}$.

\begin{figure}
\includegraphics[scale=0.6]{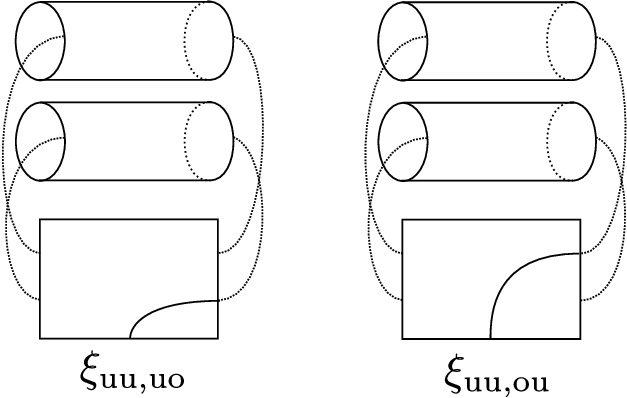}
\caption{Generators $\xi_{\uu,\uo}$ and $\xi_{\uu,\ou}$ of the bimodule ${_{2\varepsilon_1}}F^{\vee}_{\varepsilon_1 + \varepsilon_2}$ as a left module.}
\label{fig:LLStrandsBimodExamples}
\end{figure}

\begin{figure}
\includegraphics[scale=0.6]{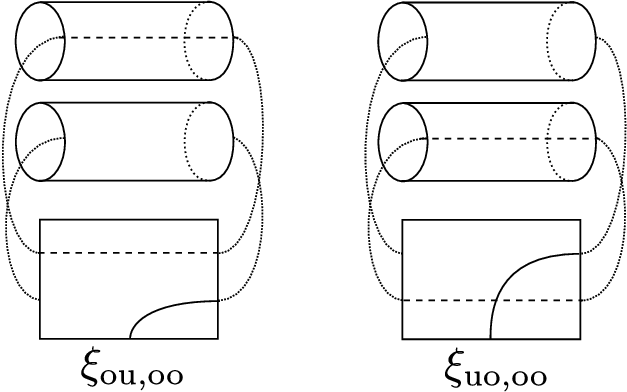}
\caption{Generators $\xi_{\ou,\oo}$ and $\xi_{\uo,\oo}$ of the bimodule ${_{\varepsilon_1+\varepsilon_2}}F^{\vee}_{2\varepsilon_2}$ as a left module.}
\label{fig:LLMoreStrandsBimodExamples}
\end{figure}

\subsection{Heegaard diagram interpretation}\label{sec:ExamplesHDInterp}

By ideas resembling those found in \cite{EPV,Hypertoric1}, the strands (or tensor-product) bimodule $F^{\vee}$ over $\A_{1,\ldots,1}$ should also have a Heegaard diagram interpretation (the same should be true for $F$ but for simplicity we will work with DA bimodules when possible). Heegaard Floer bimodules based on holomorphic curve counts have not yet been defined for any general family of Heegaard diagrams that includes the diagrams in question, so we will just use these diagrams as heuristic motivation for the above definitions; despite this, we will speak as if the holomorphic-curve bimodules are well-defined and the below computations are accurate. We require sets of intersection points in the Heegaard diagram of Figure~\ref{fig:EDual2Strands} to contain exactly one point on the blue curve, and we do not count holomorphic curves with nonzero multiplicity on the left and right vertical portions of the boundary of the diagram.

\begin{figure}
\includegraphics[scale=0.4]{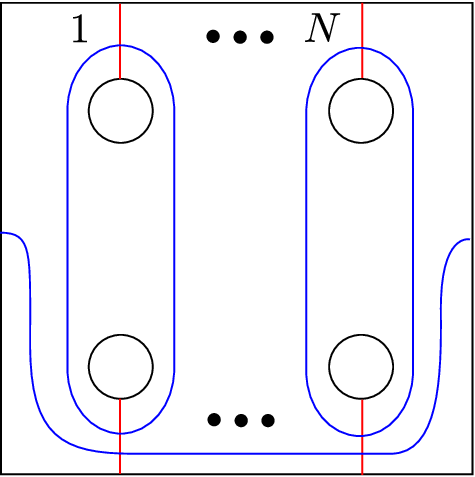}
\caption{Heegaard diagram for the bimodule $F^{\vee}$ over $\A_{1,\ldots,1}$.}
\label{fig:EDual2Strands}
\end{figure}

In particular, the DA bimodule $F^{\vee}$ over $\A_1$ comes from the Heegaard diagram in Figure~\ref{fig:EDual1Strand}. Figure~\ref{fig:EDual1StrandGen} shows the one generator $\xi_{\u,\o}$ of the bimodule as a set of intersection points in the Heegaard diagram from Figure~\ref{fig:EDual1Strand}. Figure~\ref{fig:EDual2StrandsGens} shows generators for the top summand ${_{2\varepsilon_1}}(F^{\vee})_{\varepsilon_1 + \varepsilon_2}$ of $F^{\vee}$ as sets of intersection points, together with the domain giving rise to the right action of $\lambda$. Figure~\ref{fig:EDual2StrandsGensLower} shows the generators for the bottom summand ${_{\varepsilon_1 + \varepsilon_2}}(F^{\vee})_{2\varepsilon_2}$ of $F^{\vee}$, and Figure~\ref{fig:EDual2StrandsDomainsLower} shows the domains for the secondary matrix of ${_{\varepsilon_1 + \varepsilon_2}}(F^{\vee})_{2\varepsilon_2}$ (the entries $U_i^{k+1} \otimes U_i^{k+1}$ correspond to domains of multiplicity $k+1$, which are shown with darker shading in Figure~\ref{fig:EDual2StrandsDomainsLower}). 

\begin{figure}
\includegraphics[scale=0.3]{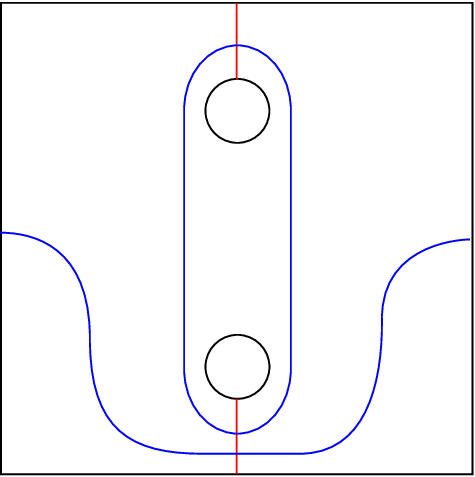}
\caption{Heegaard diagram for bimodule $F^{\vee}$ over $\A_1$.}
\label{fig:EDual1Strand}
\end{figure}

\begin{figure}
\includegraphics[scale=0.65]{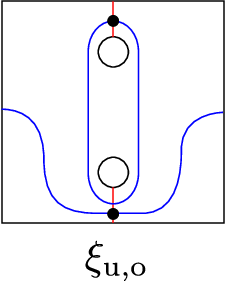}
\caption{Basis element $\xi_{\u,\o}$ of $F^{\vee}$ in terms of the Heegaard diagram.}
\label{fig:EDual1StrandGen}
\end{figure}

\begin{figure}
\includegraphics[scale=0.6]{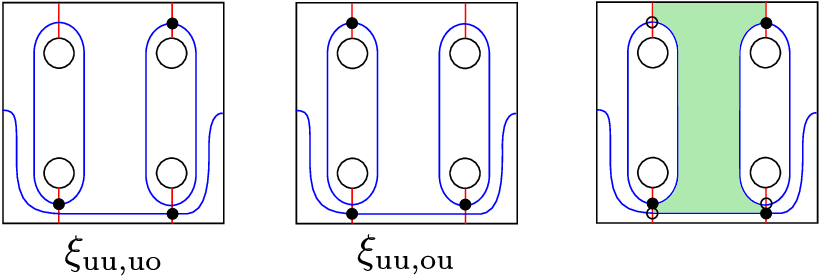}
\caption{Generators for ${_{2\varepsilon_1}}F^{\vee}_{\varepsilon_1 + \varepsilon_2}$ and domain for the right action of $\lambda$.}
\label{fig:EDual2StrandsGens}
\end{figure}

\begin{figure}
\includegraphics[scale=0.6]{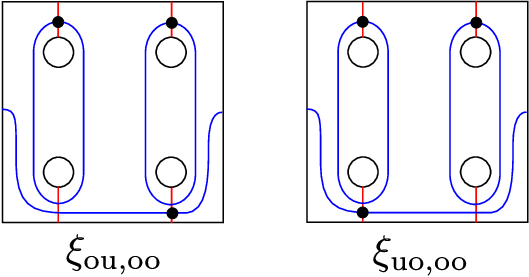}
\caption{Generators for ${_{\varepsilon_1 + \varepsilon_2}}F^{\vee}_{2\varepsilon_2}$.}
\label{fig:EDual2StrandsGensLower}
\end{figure}

\begin{figure}
\includegraphics[scale=0.6]{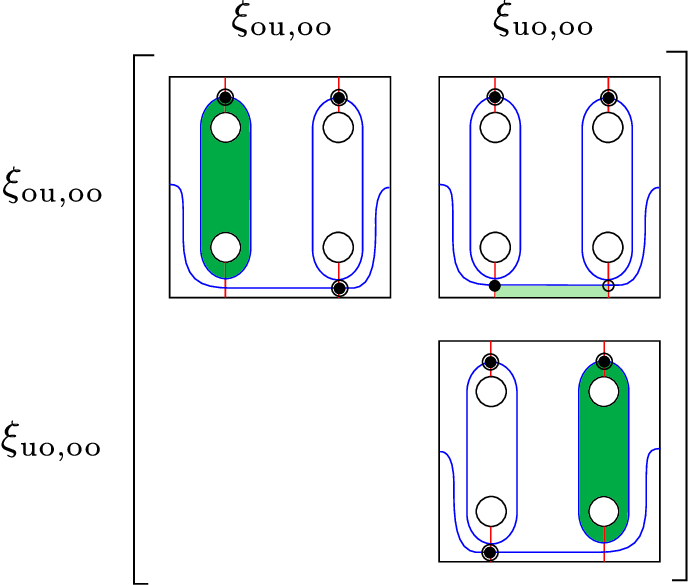}
\caption{Domains for ${_{\varepsilon_1 + \varepsilon_2}}F^{\vee}_{2\varepsilon_2}$.}
\label{fig:EDual2StrandsDomainsLower}
\end{figure}

\begin{convention}
We will always draw domains in the presence of a starting set of intersection points, drawn with solid dots, and an ending set of intersection points, drawn with open dots. Orientation conventions are fixed by requiring that in a simple bigon with positive multiplicity, a positively oriented path around the boundary starting and ending at the solid dot should traverse an $\alpha$ curve (red) before reaching the open dot and a $\beta$ curve afterward; this is the usual convention in Heegaard Floer homology.

\end{convention}

\begin{remark}
The Heegaard diagram of Figure~\ref{fig:EDual2Strands} should encode the right holomorphic curve counts for the bimodule $F^{\vee}$, but it does not seem to represent a 3-dimensional cobordism under the usual ways of building cobordisms from Heegaard diagrams in bordered Heegaard Floer homology. However, as shown in Figure~\ref{fig:ECobordisms}, there are at least two alternate Heegaard diagrams whose holomorphic curve counts (disallowing multiplicity on the left and right boundary) should give the same bimodule $F^{\vee}$. The diagram on the left does not differ from Figure~\ref{fig:EDual2Strands} in any domain that stays away from the left and right boundary. The diagram on the right is such that the bordered sutured cobordism on the left is obtained by a product decomposition (\cite[Definition 9.11]{Juhasz}) from the bordered sutured cobordism on the right. Diagrammatically, one can cut the diagram on the right along an interval stretching from the left boundary to the right boundary, disjoint from $\alpha$ and $\beta$ curves, to get the diagram on the left; the holomorphic geometry of curves away from the left and right boundary is unaffected.
\end{remark}

\begin{figure}
\includegraphics[scale=0.8]{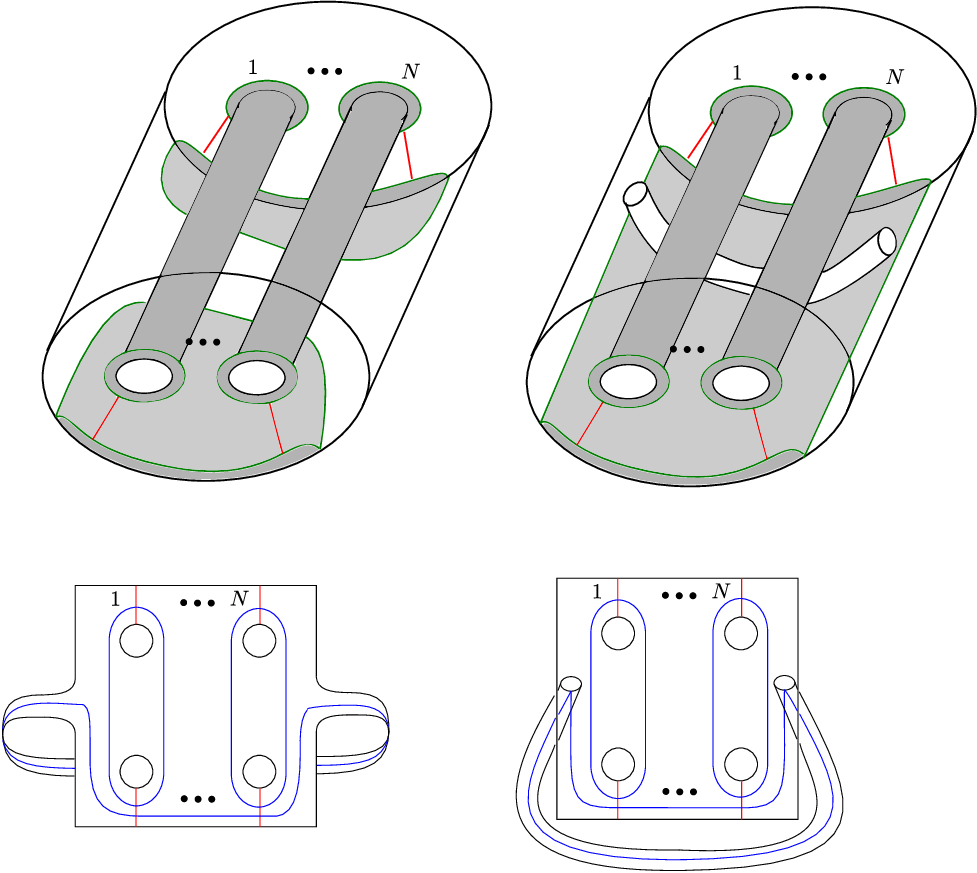}
\caption{Bordered sutured cobordisms represented by variants of the Heegaard diagram from Figure~\ref{fig:EDual2Strands}.}
\label{fig:ECobordisms}
\end{figure}

\section{Bimodule for the \texorpdfstring{$\lambda$}{lambda}-shaped trivalent vertex}\label{sec:EasyVertex}

\subsection{Definition of the bimodule}

\begin{figure}
\includegraphics[scale=0.4]{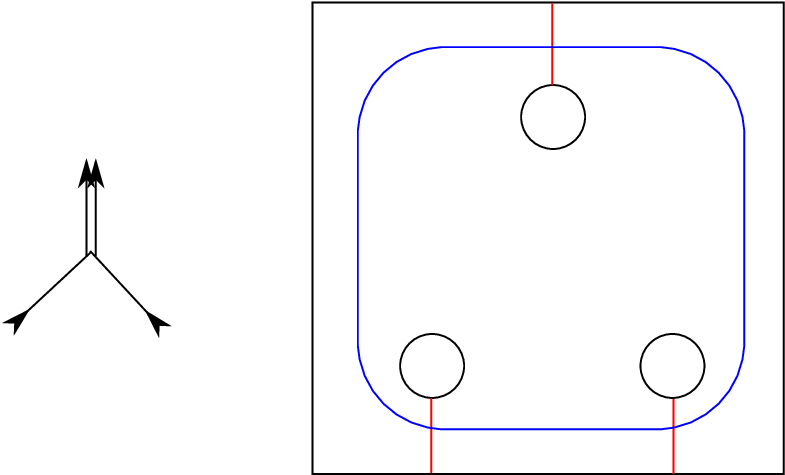}
\caption{Heegaard diagram for the ``$\lambda$-shaped'' trivalent vertex}
\label{fig:EasyVertex}
\end{figure}

In this section we will define a bimodule for one of the two types of trivalent vertex, based on the Heegaard diagram in Figure~\ref{fig:EasyVertex}.

At the decategorified level, one can view maps for trivalent vertices as arising from skew Howe duality. The morphism $1_{2,0} E_{\gl(2)} 1_{1,1}$ of the idempotented quantum group $\Udot_q(\gl(2))$ induces a $U_q(\gl(1|1))$-linear map $V^{\otimes 2} \to \wedge_q^2 V$; see Appendix~\ref{sec:AppendixSkewHoweMaps}. The morphism $1_{0,2}F_{\gl(2)}1_{1,1}$ of $\Udot(\gl(2))$ induces the same map $V^{\otimes 2} \to \wedge_q^2 V$. This map respects the $\gl(1|1)$ weight spaces of its domain and codomain; we will refer to it simply as $1_{2,0} E_{\gl(2)} 1_{1,1}$ (or $1_{0,2}F_{\gl(2)}1_{1,1}$). 

The nonzero $\gl(1|1)$ weight spaces of $\wedge_q^2 V$ are $\varepsilon_1 + \varepsilon_2$ and $2\varepsilon_2$, while the nonzero weight spaces of $V^{\otimes 2}$ are $2\varepsilon_1$, $\varepsilon_1 + \varepsilon_2$, and $2\varepsilon_2$. We will refer to these weight spaces as the ``upper,'' ``middle,'' and ``lower'' weight spaces respectively. The codomain $\wedge_q^2 V$ of the map $1_{2,0} E_{\gl(2)} 1_{1,1}$ is zero in the upper weight space, so $1_{2,0} E_{\gl(2)} 1_{1,1}$ is the direct sum of maps on the middle and lower weight spaces. 

We will define DA bimodules $\Lambda_{\midd}$ and $\Lambda_{\low}$ whose left duals categorify $1_{2,0} E_{\gl(2)} 1_{1,1}$ acting on these middle and lower weight spaces. The bimodules are obtained by heuristically counting holomorphic disks using the Heegaard diagram shown in Figure~\ref{fig:EasyVertex}; conjecturally, these disk counts are accurate once the analytic theory has been defined.

Neither of these bimodules is $A_{\infty}$; the bimodule in the middle weight space has a differential, while in the lower weight space it is an ordinary bimodule. We will set $\Lambda \coloneqq \Lambda_{\midd} \oplus \Lambda_{\low}$. We define $\Lambda_{\upp}$ to be zero; more generally, we set $\Lambda_{a\varepsilon_1 + b\varepsilon_2} = 0$ unless $(a,b)$ equals $(1,1)$ or $(0,2)$.

\begin{definition}
The dg bimodule $\Lambda_{\midd}$ over $((\A_{1,1})_{\varepsilon_1+\varepsilon_2}, (\A_2)_{\varepsilon_1+\varepsilon_2})$ has primary matrix
\[
\kbordermatrix{ 
& \u \\
\ou & X \\
\uo & Y
}.
\]
We set $\deg^q(X) = -1$, $\deg^h(X) = 1$, $\deg^q(Y) = 0$, and $\deg^h(Y) = 0$. The differential is given by the matrix
\[
\kbordermatrix{
& X & Y \\
X & 0 & \lambda \\
Y & 0 & 0
},
\]
and the right action of $e_1$ is given by the matrix
\[
\kbordermatrix{
& X & Y \\
X & U_1 & 0 \\
Y & 0 & U_2
}.
\]
Since the above matrices commute and are compatible with the gradings, this dg bimodule is well-defined.
\end{definition}

\begin{definition}
The (ordinary) bimodule $\Lambda_{\low}$ over $((\A_{1,1})_{2\varepsilon_2}, (\A_2)_{2\varepsilon_2})$ has primary matrix
\[
\kbordermatrix{
& \o \\
\oo & Z
}.
\]
We set $\deg^q(Z) = \deg^h(Z) = 0$.

The matrix for right action of $e_1$ is
\[
\kbordermatrix{
& Z \\
Z & U_1 + U_2
}
\]
and the matrix for right action of $e_2$ is
\[
\kbordermatrix{
& Z \\
Z & U_1 U_2
}.
\]
The matrices for the actions of $e_1$ and $e_2$ commute and are compatible with the gradings, so the bimodule is well-defined.
\end{definition}

\begin{figure}
\includegraphics[scale=0.6]{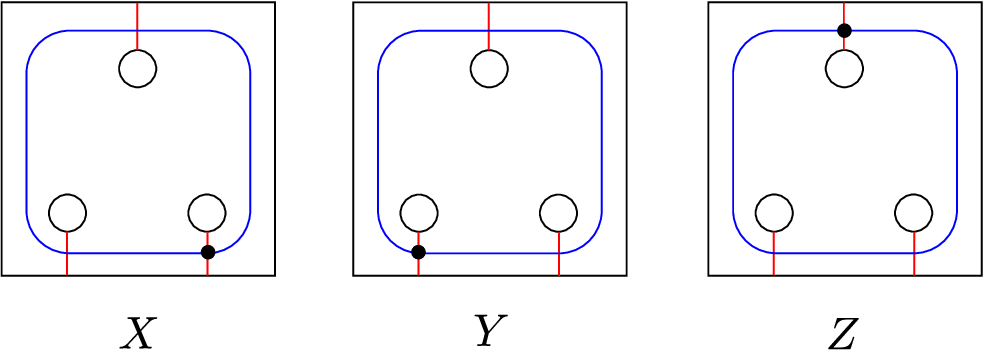}
\caption{Generators of $\Lambda$ in terms of intersection points.}
\label{fig:EasyVertexGens}
\end{figure}

\begin{figure}
\includegraphics[scale=0.6]{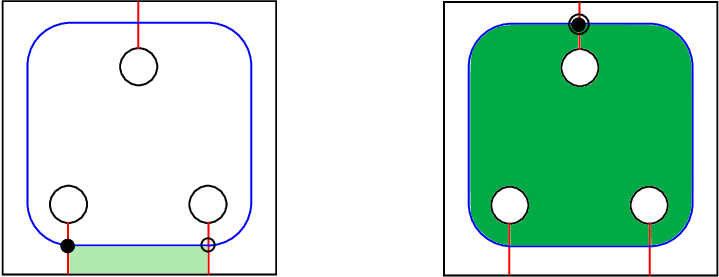}
\caption{Domains giving rise to the differential and right action of $e_2^k$ on $\Lambda$.}
\label{fig:EasyVertexDomains}
\end{figure}

Figure~\ref{fig:EasyVertexGens} shows the correspondence between generators of $\Lambda$ and sets of intersection points in the Heegaard diagram of Figure~\ref{fig:EasyVertex}. Figure~\ref{fig:EasyVertexDomains} shows the domains in the Heegaard diagram corresponding to the differential on $\Lambda_{\midd}$ and the right action of $e_2$ on $\Lambda_{\low}$. The given formula for the right action of $e_1$ is necessary for the theory to work, but it does not appear to come from the Heegaard diagram, at least in currently-known versions of Heegaard Floer homology.

\subsection{Decategorification of \texorpdfstring{$\Lambda$}{Lambda}}

\begin{proposition}
The DA bimodule $\Lambda$ categorifies the map from $K_0(\A_2)$ to $K_0(\A_{1,1})$ with matrix
\[
\kbordermatrix{
& [P_{\u}] & [P_{\o}] & \\
{[P_{\uu}]} & 0 & 0  \\
{[P_{\ou}]} & -q^{-1} & 0 \\
{[P_{\uo}]} & 1 & 0 \\
{[P_{\oo}]} & 0 & 1 \\
}.
\]
Equivalently, the AD bimodule ${^{\vee}}\Lambda$ categorifies the map from $G_0(\A_{1,1})$ to $G_0(\A_2)$ with matrix
\[
\kbordermatrix{
& {[S_{\uu}]} & {[S_{\ou}]} & {[S_{\uo}]} & {[S_{\oo}]} \\
{[S_{\u}]} & 0 & -q & 1 & 0 \\
{[S_{\o}]} & 0 & 0 & 0 & 1
}.
\]
This latter map can be identified with $1_{2,0} E_{\gl(2)} 1_{1,1}$ (or equivalently $1_{0,2} F_{\gl(2)} 1_{1,1}$) mapping from $V^{\otimes 2}$ to $\wedge_q^2 V$ as in Appendix~\ref{sec:AppendixSkewHoweMaps}.
\end{proposition}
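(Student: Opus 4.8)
The plan is to compute the decategorification of $\Lambda$ directly from the matrix-notation data using the procedure from Section~\ref{sec:SplitGG}, and then to identify the resulting linear map with the skew Howe map from Appendix~\ref{sec:AppendixSkewHoweMaps}. Recall that to decategorify a finitely generated DA bimodule given by primary and secondary matrices, one simply discards the secondary matrix and replaces each set in the primary matrix by the sum $\sum (-1)^{\deg^h} q^{\deg^q}$ over its elements. So first I would read off the primary matrices. In weight $\varepsilon_1 + \varepsilon_2$, the bimodule $\Lambda_{\midd}$ has primary matrix with one column (object $\u$ of $\A_2$) and two nonzero rows: row $\ou$ containing $X$ with $\deg^q(X) = -1$, $\deg^h(X) = 1$, contributing $-q^{-1}$; and row $\uo$ containing $Y$ with both degrees zero, contributing $1$. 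In weight $2\varepsilon_2$, the bimodule $\Lambda_{\low}$ has primary matrix with one column (object $\o$ of $\A_2$) and one nonzero row $\oo$ containing $Z$ with both degrees zero, contributing $1$. All other weight spaces of $\Lambda$ are zero. Assembling these over all weights gives exactly the claimed matrix for $[\Lambda]\colon K_0(\A_2) \to K_0(\A_{1,1})$.

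Next I would obtain the dual statement. By the last procedure of Section~\ref{sec:SplitGG}, to decategorify ${^{\vee}}\Lambda$ one discards the secondary matrix, replaces each element of the primary matrix of $\Lambda$ by $(-1)^{\deg^h} q^{-\deg^q}$ (note the sign change on $q$-degree, since duality reverses the $q$-grading), and takes the transpose. Transposing the matrix above and flipping the sign of the exponent of $q$ turns the entry $-q^{-1}$ in row $[P_{\ou}]$, column $[P_{\u}]$ into the entry $-q$ in row $[S_{\u}]$, column $[S_{\ou}]$; the entries $1$ are unaffected by the $q$-flip. This produces precisely the asserted matrix for ${^{\vee}}\Lambda\colon G_0(\A_{1,1}) \to G_0(\A_2)$. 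At this stage it is worth double-checking consistency with the grading conventions: duality reverses both degrees, so no extra homological sign appears beyond what is already recorded, and the bidegrees of $X$, $Y$, $Z$ were chosen (in the definitions of $\Lambda_{\midd}$, $\Lambda_{\low}$) precisely so that the differential and $e_i$-actions have degree zero, which is consistent with $\Lambda$ being a genuine bigraded bimodule; I would note this as the reason the Euler characteristics come out as stated rather than re-derive it.

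The remaining, and genuinely substantive, step is the identification of this $2 \times 4$ matrix with the skew Howe map $1_{2,0} E_{\gl(2)} 1_{1,1}\colon V^{\otimes 2} \to \wedge_q^2 V$ (equivalently $1_{0,2} F_{\gl(2)} 1_{1,1}$) from Appendix~\ref{sec:AppendixSkewHoweMaps}. Here I would invoke the explicit bases: under the identifications of Section~\ref{sec:Fundamental2RepExamples} and Corollary~\ref{cor:GeneralKDecat}, $[S_{\uu}], [S_{\ou}], [S_{\uo}], [S_{\oo}]$ correspond to the tensor-product basis of $V^{\otimes 2}$ (in the respective $\gl(1|1)$ weight spaces $2\varepsilon_1$, $\varepsilon_1 + \varepsilon_2$, $\varepsilon_1 + \varepsilon_2$, $2\varepsilon_2$), while $[S_{\u}], [S_{\o}]$ correspond to the basis $\ket{0}\wedge\ket{1}$, $\ket{1}\wedge\ket{1}$ of $\wedge_q^2 V$. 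Then I would compare column-by-column with the formula for $E_{\gl(2)}$ (or $F_{\gl(2)}$) acting on $\wedge_q^2(\C_q^{1|1}\otimes\C_q^2)$ recorded in the appendix: the map kills the upper weight space (column $[S_{\uu}]$ is zero), sends the lower weight space isomorphically (column $[S_{\oo}]$ is the basis vector $\ket{1}\wedge\ket{1}$), and on the middle weight space sends the two tensor-product basis vectors to $-q$ and $1$ times $\ket{0}\wedge\ket{1}$ respectively. This matches the appendix's formula, so the identification holds. The main obstacle is purely bookkeeping: getting the quantum-grading normalizations and the sign on the $-q$ entry to agree exactly with the conventions fixed in Appendix~\ref{sec:AppendixSkewHoweMaps} (in particular which of $E$ or $F$, and which normalization of the wedge basis, the appendix uses); once those conventions are pinned down the comparison is immediate, so I would organize the proof so that this convention-matching is the only place any real checking occurs.
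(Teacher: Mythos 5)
Your proposal is correct and is exactly the computation the paper intends: the proposition is stated without explicit proof because it follows immediately from the decategorification procedures of Section~\ref{sec:SplitGG} applied to the primary matrices of $\Lambda_{\midd}$ and $\Lambda_{\low}$ with the recorded bidegrees of $X$, $Y$, $Z$, followed by comparison with the $K=2$ case of the formulas for $1_{0,K}F_{\gl(2)}1_{1,K-1}$ and $1_{K,0}E_{\gl(2)}1_{K-1,1}$ in Appendix~\ref{sec:AppendixSkewHoweMaps}. Your bookkeeping (the $-q^{-1}$ from $X$, the $q \mapsto q^{-1}$ flip and transpose for the dual, and the matching of $-q$ against $(-1)^{K-1}q^{K-1}$) all checks out.
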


\subsection{2-representation morphism structure}\label{sec:Lambda2RepMorphism}

We equip $\Lambda = \Lambda_{\midd} \oplus \Lambda_{\low}$ with the structure of a DA bimodule $1$-morphism of $2$-representations of $\U^-$. We need a homotopy equivalence
\[
\varphi\co \Lambda_{\midd} \boxtimes F^{\vee} \to F^{\vee} \boxtimes \Lambda_{\low}
\]
as well as a homotopy equivalence
\[
\varphi\co 0 \to F^{\vee} \boxtimes \Lambda_{\midd}
\]
(i.e. we need $F^{\vee} \boxtimes \Lambda_{\midd}$ to be contractible).

The following three propositions follow from Procedure~\ref{proc:BoxTensorBimods}.

\begin{proposition}\label{prop:EDualAfterMiddleEasy}

The dg bimodule $F^{\vee} \boxtimes \Lambda_{\midd}$ has primary matrix
\[
\kbordermatrix{
& \u \\
\uu & \xi_{\uu,\ou} X \quad \xi_{\uu,\uo} Y
}
\]
and differential
\[
\kbordermatrix{
& \xi_{\uu,\ou} X & \xi_{\uu,\uo} Y \\
\xi_{\uu,\ou} X & 0 & 1 \\
\xi_{\uu,\uo} Y & 0 & 0 
}.
\]
The right action of $e_1$ on $F^{\vee} \boxtimes \Lambda_{\midd}$ is zero.
\end{proposition}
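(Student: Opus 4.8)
The statement is a routine application of Procedure~\ref{proc:BoxTensorBimods}, so the plan is simply to run that procedure and read off the answer. First I would isolate the relevant weight-space summand: since $\Lambda_{\midd}$ lands in the weight-$(\varepsilon_1+\varepsilon_2)$ part of $\A_{1,1}$, and $\Lambda_{a\varepsilon_1+b\varepsilon_2}=0$ unless $(a,b)\in\{(1,1),(0,2)\}$, the only summand of the strands bimodule $F^\vee$ over $\A_{1,1}$ that box-composes nontrivially with $\Lambda_{\midd}$ on the left is ${_{2\varepsilon_1}}(F^\vee)_{\varepsilon_1+\varepsilon_2}$, a dg bimodule over $((\A_{1,1})_{2\varepsilon_1},(\A_{1,1})_{\varepsilon_1+\varepsilon_2})$ with $(\A_{1,1})_{2\varepsilon_1}=\F_2$. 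Both bimodules are finitely generated with nilpotent differentials, hence left bounded, so $F^\vee\boxtimes\Lambda_{\midd}$ is well-defined.

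Next I would compute the primary matrix by multiplying the one-row primary matrix of ${_{2\varepsilon_1}}(F^\vee)_{\varepsilon_1+\varepsilon_2}$ (entries $\xi_{\uu,\ou}$ in column $\ou$, $\xi_{\uu,\uo}$ in column $\uo$, row $\uu$) with the one-column primary matrix of $\Lambda_{\midd}$ (entries $X$ in row $\ou$, $Y$ in row $\uo$, column $\u$); the Cartesian-product rule gives the single entry $\{\xi_{\uu,\ou}X,\ \xi_{\uu,\uo}Y\}$ in row $\uu$, column $\u$. I would record the bidegrees, $(\deg^q,\deg^h)=(-1,1)$ for $\xi_{\uu,\ou}X$ and $(-1,0)$ for $\xi_{\uu,\uo}Y$, so that a differential of bidegree $(0,1)$ between them is grading-compatible.

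For the secondary matrix I would feed into the procedure the fact that the secondary matrix of ${_{2\varepsilon_1}}(F^\vee)_{\varepsilon_1+\varepsilon_2}$ consists of the single term $\id_\uu\otimes(\lambda)$ (the $\lambda$-action), since $U_1$, $U_2$, and the differential all act as zero; in particular $\delta^1_i=0$ on this bimodule for $i\neq 2$. Because the only available $F^\vee$-term carries exactly one algebra input $\lambda$, only $i=2$ contributes to $F^\vee\boxtimes\Lambda_{\midd}$ — so the box tensor has no $\delta^1_i$ for $i>2$ and is a genuine dg bimodule — and each contribution requires a secondary-matrix term of $\Lambda_{\midd}$ whose output in $(\A_{1,1})_{\varepsilon_1+\varepsilon_2}$ has $\lambda$ as a basis summand. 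The only such term is the differential $\delta^1_1(Y)=\lambda\otimes X$, which produces $\delta^1_1(\xi_{\uu,\uo}Y)=\id_\uu\otimes\xi_{\uu,\ou}X$, i.e.\ the stated differential matrix. The remaining $\Lambda_{\midd}$-terms $\delta^1_2(X,e_1^k)=U_1^k\otimes X$ and $\delta^1_2(Y,e_1^k)=U_2^k\otimes Y$ have outputs $U_1^k,U_2^k$, which do not have $\lambda$ as a basis summand, so they contribute nothing; hence the right $e_1$-action on $F^\vee\boxtimes\Lambda_{\midd}$ vanishes.

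There is no genuine obstacle here: the argument is mechanical bookkeeping. The only points requiring care are (i) selecting the correct weight-space summand ${_{2\varepsilon_1}}(F^\vee)_{\varepsilon_1+\varepsilon_2}$ and tracking the left/right idempotents, and (ii) applying the conventions of Procedure~\ref{proc:BoxTensorBimods} consistently (which tensor factor sits on which side, which index ranges over rows versus columns). The real content is just the two observations that $U_1$ and $U_2$ act as zero on $F^\vee$ and that $(\A_{1,1})_{2\varepsilon_1}=\F_2$; these are exactly what collapse the computation to the short answer in the statement, and in particular the identity entry in the differential exhibits $(\xi_{\uu,\ou}X,\xi_{\uu,\uo}Y)$ as a canceling pair, which is what the subsequent contractibility claim will use.
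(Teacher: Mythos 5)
Your computation is correct and matches the paper, which simply asserts that this proposition (together with its two companions) follows from Procedure~\ref{proc:BoxTensorBimods}; you have carried out exactly that procedure, correctly isolating the summand ${_{2\varepsilon_1}}(F^{\vee})_{\varepsilon_1+\varepsilon_2}$, matching the single $\lambda$-input of $F^{\vee}$ against the $\lambda$-output of the differential on $\Lambda_{\midd}$, and observing that the vanishing of the $U_1,U_2$-actions on $F^{\vee}$ kills the $e_1$-action on the box tensor product. The degree bookkeeping and the remark about the canceling pair are likewise accurate.
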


\begin{proposition}\label{prop:EDualAfterLowerEasy}
The dg bimodule $F^{\vee} \boxtimes \Lambda_{\low}$ has primary matrix
\[
\kbordermatrix{
& \o \\
\ou & \xi_{\ou,\oo} Z \\
\uo & \xi_{\uo,\oo} Z
}
\]
and differential
\[
\kbordermatrix{
& \xi_{\ou,\oo} Z & \xi_{\uo,\oo} Z \\
\xi_{\ou,\oo} Z & 0 & \lambda \\
\xi_{\uo,\oo} Z & 0 & 0
}.
\]
The matrix for right action of $e_1$ is
\[
\kbordermatrix{
& \xi_{\ou,\oo} Z & \xi_{\uo,\oo} Z \\
\xi_{\ou,\oo} Z & U_1 & 0 \\
\xi_{\uo,\oo} Z & 0 & U_2
};
\]
the matrix for right action of $e_2$ is zero.

\end{proposition}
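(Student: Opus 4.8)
The plan is to compute $F^{\vee} \boxtimes \Lambda_{\low}$ directly from Procedure~\ref{proc:BoxTensorBimods}, using the matrix descriptions of $F^{\vee}$ over $\A_{1,1}$ --- only its summand ${_{\varepsilon_1+\varepsilon_2}}(F^{\vee})_{2\varepsilon_2}$ is relevant, since $\Lambda_{\low}$ is concentrated in the $2\varepsilon_2$ weight --- and of the ordinary bimodule $\Lambda_{\low}$. First I would multiply the two primary matrices: the one-column primary matrix of ${_{\varepsilon_1+\varepsilon_2}}(F^{\vee})_{2\varepsilon_2}$ has entries $\xi_{\ou,\oo}$ and $\xi_{\uo,\oo}$, while $\Lambda_{\low}$ contributes the single generator $Z$, so the Cartesian-product entries are $\xi_{\ou,\oo}Z$ and $\xi_{\uo,\oo}Z$, with bidegrees obtained by adding those of the factors; this is the claimed primary matrix. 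Since $\boxtimes$ is compatible with bigradings, every structure map produced below automatically has bidegree zero, so no separate grading check is needed.

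The main structural observation is that the secondary matrix of ${_{\varepsilon_1+\varepsilon_2}}(F^{\vee})_{2\varepsilon_2}$ consists of a single no-input term --- the differential term $\lambda$ relating $\xi_{\ou,\oo}$ and $\xi_{\uo,\oo}$ --- together with the one-input diagonal terms $U_1^{k+1}\otimes(U_1^{k+1})$ on $\xi_{\ou,\oo}$ and $U_2^{k+1}\otimes(U_2^{k+1})$ on $\xi_{\uo,\oo}$; in particular there is no term with two or more algebra inputs. Because $\Lambda_{\low}$ is an ordinary bimodule, each factor it supplies in Procedure~\ref{proc:BoxTensorBimods} carries exactly one algebra input, so the number of algebra inputs in any term of the secondary matrix of $F^{\vee}\boxtimes\Lambda_{\low}$ equals the number in the matching term of $F^{\vee}$. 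Hence $F^{\vee}\boxtimes\Lambda_{\low}$ has $\delta^1_j = 0$ for $j > 2$, i.e. it is again an honest dg bimodule, and its differential is obtained by transcribing the no-input part of the secondary matrix of $F^{\vee}$ (appending $Z$ to the generators), reproducing exactly the claimed differential matrix (the entry $\lambda$ in row $\xi_{\ou,\oo}Z$, column $\xi_{\uo,\oo}Z$).

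For the right action I would match each one-input term $U_i^{k+1}\otimes(U_i^{k+1})$ of $F^{\vee}$ against a term of the secondary matrix of $\Lambda_{\low}$, whose generic form is $(U_1+U_2)^a(U_1U_2)^b \otimes (e_1^a e_2^b)$ (the output of $e_1^a e_2^b$ acting on $Z$): a contribution occurs exactly when the monomial $U_i^{k+1}$ appears, over a field of characteristic $2$, in the expansion of $(U_1+U_2)^a(U_1U_2)^b$. Expanding this product, a pure power of a single $U_i$ occurs only if $b=0$, and then $U_1^a$ and $U_2^a$ each appear in $(U_1+U_2)^a$ with coefficient $1$; so the unique match for $U_i^{k+1}$ is $a=k+1$, $b=0$, coming from the input $e_1^{k+1}$. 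This produces the diagonal secondary-matrix entries $U_1^{k+1}\otimes(e_1^{k+1})$ on $\xi_{\ou,\oo}Z$ and $U_2^{k+1}\otimes(e_1^{k+1})$ on $\xi_{\uo,\oo}Z$ for all $k\geq 0$, which is precisely the right action of $e_1$ by $U_1$ on $\xi_{\ou,\oo}Z$ and by $U_2$ on $\xi_{\uo,\oo}Z$ (the higher powers $e_1^{k+1}$ acting by $U_i^{k+1}$ being just the multiplicativity of this action); and since the output $U_1U_2$ of $e_2$ is not a pure power of a single $U_i$, nothing matches it and $e_2$ acts by zero. Assembling the primary matrix, the differential matrix, and the two action matrices completes the computation.

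There is no genuine obstacle here --- it is a finite unwinding of Procedure~\ref{proc:BoxTensorBimods}, which is why the author states that this proposition "follows from" that procedure --- but the one point deserving care is the characteristic-$2$ binomial bookkeeping in the previous paragraph: it is exactly the fact that a pure power $U_i^{m}$ arises only from $(U_1+U_2)^m$, and never from a product involving $U_1U_2$, that forces the $e_2$-action to vanish and keeps the $e_1$-action supported on the diagonal generators $\xi_{\ou,\oo}Z$ and $\xi_{\uo,\oo}Z$.
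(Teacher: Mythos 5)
Your proposal is correct and follows exactly the route the paper intends: the paper offers no written argument beyond the remark that Propositions~\ref{prop:EDualAfterMiddleEasy}--\ref{prop:EDualBeforeMiddleEasy} follow from Procedure~\ref{proc:BoxTensorBimods}, and your computation is a faithful unwinding of that procedure, including the key matching step that a pure power $U_i^{k+1}$ from the secondary matrix of ${_{\varepsilon_1+\varepsilon_2}}(F^{\vee})_{2\varepsilon_2}$ can only arise from the input $e_1^{k+1}$ acting on $Z$, which forces the $e_2$-action to vanish.
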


\begin{proposition}\label{prop:EDualBeforeMiddleEasy}
The dg bimodule $\Lambda_{\midd} \boxtimes F^{\vee}$ has primary matrix
\[
\kbordermatrix{
& \o \\
\ou & X \xi_{\u,\o} \\
\uo & Y \xi_{\u,\o}
}
\]
and differential
\[
\kbordermatrix{
& X \xi_{\u,\o} & Y \xi_{\u,\o} \\
X \xi_{\u,\o} & 0 & \lambda \\
Y \xi_{\u,\o} & 0 & 0
}.
\]
The matrix for the right action of $e_1$ is
\[
\kbordermatrix{
& X \xi_{\u,\o} & Y \xi_{\u,\o} \\
X \xi_{\u,\o} & U_1 & 0 \\
Y \xi_{\u,\o} & 0 & U_2
};
\]
the matrix for the right action of $e_2$ is zero.
\end{proposition}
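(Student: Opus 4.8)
The plan is to obtain $\Lambda_{\midd}\boxtimes F^{\vee}$ by a direct application of Procedure~\ref{proc:BoxTensorBimods}, using the matrix presentation of $\Lambda_{\midd}$ from this section together with the matrix description of the DA bimodule $F^{\vee}$ over $\A_2$ from Section~\ref{sec:Fundamental2RepExamples}. Only the weight piece ${_{\varepsilon_1+\varepsilon_2}}F^{\vee}_{2\varepsilon_2}$ of $F^{\vee}$, with its single generator $\xi_{\u,\o}$, is relevant, since $\Lambda_{\midd}$ has right algebra $(\A_2)_{\varepsilon_1+\varepsilon_2}$; matching weights then shows that the composite is a bimodule over $((\A_{1,1})_{\varepsilon_1+\varepsilon_2}, (\A_2)_{2\varepsilon_2})$, as in the statement. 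Both $\Lambda_{\midd}$ and $F^{\vee}$ have vanishing higher actions $\delta^1_i$ for $i>2$ and are left bounded, so the box tensor product is well-defined, and I will read off its primary and secondary matrices term by term.

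First I would record the primary matrix as the product of the one-column primary matrix of $\Lambda_{\midd}$ with the $1\times 1$ primary matrix of $F^{\vee}$: this gives generators $X\xi_{\u,\o}$ in row $\ou$ and $Y\xi_{\u,\o}$ in row $\uo$ over the single column $\o$, with bidegrees obtained by adding those of $X$ (resp.\ $Y$) and of $\xi_{\u,\o}$. Next I would run through the cases of Procedure~\ref{proc:BoxTensorBimods} for the secondary matrix. The $i=1$ contributions come from the $\delta^1_1$ entry $\lambda$ of $\Lambda_{\midd}$ (in its column $Y$, row $X$), producing the stated differential $\delta^1_1(Y\xi_{\u,\o}) = \lambda\otimes X\xi_{\u,\o}$. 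The $i=2$ contributions pair a $\delta^1_2$ term of $\Lambda_{\midd}$ — these are $U_1^m\otimes(e_1^m)$ on $X$ and $U_2^m\otimes(e_1^m)$ on $Y$ for $m\ge 1$, i.e.\ the right action of $e_1$ — with a $\delta^1_2$ term $e_1^{m'}\otimes(e_1^{m'})$ of $F^{\vee}$, subject to $e_1^m$ appearing in the monomial expansion of $e_1^{m'}$; over $\F_2$ this forces $m=m'$, yielding exactly the right action of $e_1$ by $U_1$ on $X\xi_{\u,\o}$ and by $U_2$ on $Y\xi_{\u,\o}$. There are no contributions for $i\ge 3$, since $\Lambda_{\midd}$ has no higher $\delta^1_i$ (so in particular $\Lambda_{\midd}\boxtimes F^{\vee}$ is again a dg bimodule), and the right action of $e_2$ vanishes because $e_2$ never occurs as an algebra input in the secondary matrix of $F^{\vee}$ over $\A_2$.

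This computation is essentially routine, so there is no serious obstacle; the only point that warrants any care is the bookkeeping of the infinite families of $\delta^1_2$ entries $e_1^m\otimes(e_1^m)$ and $U_i^m\otimes(e_1^m)$, where one should verify that for each fixed algebra input exactly one term of each factor can be paired, so that all resulting matrix entries and sums are finite and Procedure~\ref{proc:BoxTensorBimods} genuinely applies. The computations for Propositions~\ref{prop:EDualAfterMiddleEasy} and~\ref{prop:EDualAfterLowerEasy} proceed in exactly the same way.
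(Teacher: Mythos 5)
Your proposal is correct and follows exactly the paper's own route: the paper simply states that Propositions~\ref{prop:EDualAfterMiddleEasy}--\ref{prop:EDualBeforeMiddleEasy} follow from Procedure~\ref{proc:BoxTensorBimods}, and your term-by-term application of that procedure (the $i=1$ term giving the differential $\lambda$, the matched pairs $U_i^m\otimes(e_1^m)$ with $e_1^{m}\otimes(e_1^{m})$ giving the $e_1$-action, and the vanishing $e_2$-action on $\xi_{\u,\o}$) fills in the intended details correctly, including the weight bookkeeping identifying the relevant summand of $F^{\vee}$.
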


\begin{definition}\label{def:YEasy2RepAlpha}
We will write $\Lambda$ for the 1-morphism between 2-representations of $\U^-$ given by $(\Lambda, \varphi)$, where 
\[
\varphi\co \Lambda \boxtimes F^{\vee} \to F^{\vee} \boxtimes \Lambda
\]
is zero as a map from $0$ to $F^{\vee} \boxtimes \Lambda_{\midd}$ and is given by the matrix
\[
\kbordermatrix{
& X \xi_{\u,\o} & Y \xi_{\u,\o} \\
\xi_{\ou,\oo} Z & 1 & 0 \\
\xi_{\uo,\oo} Z & 0 & 1
}
\]
as a map from $\Lambda_{\midd} \boxtimes F^{\vee}$ to $F^{\vee} \boxtimes \Lambda_{\low}$. By Propositions~\ref{prop:EDualAfterLowerEasy} and \ref{prop:EDualBeforeMiddleEasy}, $\varphi$ is a closed morphism of dg bimodules. Note that $\varphi$ is not an isomorphism, but it is a homotopy equivalence by Proposition~\ref{prop:EDualAfterMiddleEasy}.

\end{definition}

Because the square
\[
\xymatrix{ 
0 = \Lambda_{\upp} \boxtimes (F^{\vee})^{\boxtimes 2} \ar[rr]^{\varphi \boxtimes \id_{F^{\vee}}} \ar[d]_{\id_{\Lambda_{\upp}} \boxtimes \tau} && F^{\vee} \boxtimes \Lambda_{\midd} \boxtimes F^{\vee} \ar[rr]^{\id_{F^{\vee}} \boxtimes \varphi} && (F^{\vee})^{\boxtimes 2} \boxtimes \Lambda_{\low} \ar[d]^{\tau \boxtimes \id_{\Lambda_{\low}}} \\
0 = \Lambda_{\upp} \boxtimes (F^{\vee})^{\boxtimes 2} \ar[rr]^{\varphi \boxtimes \id_{F^{\vee}}} && F^{\vee} \boxtimes \Lambda_{\midd} \boxtimes F^{\vee} \ar[rr]^{\id_{F^{\vee}} \boxtimes \varphi} && (F^{\vee})^{\boxtimes 2} \boxtimes \Lambda_{\low}
}
\]
automatically commutes ($\Lambda_{\upp} = 0$), $(\Lambda,\varphi)$ is a valid $1$-morphism.

\section{Bimodule for the Y-shaped trivalent vertex}\label{sec:HardVertex}

Now we define a bimodule for the other type of trivalent vertex, based on the Heegaard diagram in Figure~\ref{fig:HardVertex} and categorifying the map $1_{1,1} F_{\gl(2)} 1_{2,0} = 1_{1,1} E_{\gl(2)} 1_{0,2}$ from $\wedge_q^2 V$ to $V^{\otimes 2}$ arising from skew Howe duality (see Appendix~\ref{sec:AppendixSkewHoweMaps}). As in Section~\ref{sec:EasyVertex}, the bimodule is defined to be zero in the upper weight space $2\varepsilon_1$ (see the beginning of Section~\ref{sec:EasyVertex}); we describe the middle ($\varepsilon_1 + \varepsilon_2$) and bottom ($2\varepsilon_2$) weight spaces below.
\begin{figure}
\includegraphics[scale=0.4]{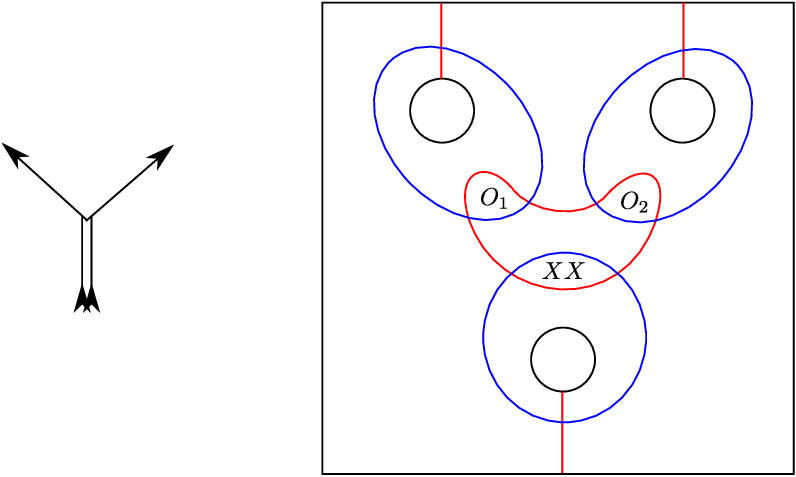}
\caption{Heegaard diagram for the ``Y-shaped'' trivalent vertex}
\label{fig:HardVertex}
\end{figure}

\subsection{Middle weight space, fully unsimplified version}

We first define a subsidiary DA bimodule $Y'_{\midd}$, after introducing a useful convention for secondary matrices.

\begin{convention}
Any unspecified indices appearing in exponents of secondary matrix entries, for example the indices $k$ and $l$ below, are assumed to range over all nonnegative values. For example, the top-left entry of the secondary matrix in Definition~\ref{def:YPrimeHard} is the infinite sum $w_1 \otimes U_1 + w_1^2 \otimes U_1^2 + \cdots$. We start the indexing at $k+1$ rather than $k$ because we do not include $\id \otimes \id$ terms in secondary matrices. Sometimes, to simplify notation, we will index a secondary matrix entry in such a way that $\id \otimes \id$ is a term, but such $\id \otimes \id$ terms should be implicitly omitted.
\end{convention}

\begin{definition}\label{def:YPrimeHard} The DA bimodule $Y'_{\midd}$ over 
\[
((\A_2)_{\varepsilon_1 + \varepsilon_2} \otimes \F_2[w_1,w_2], (\A_{1,1})_{\varepsilon_1 + \varepsilon_2} \otimes \F_2[w_1, w_2])
\]
has primary matrix
\[
\kbordermatrix{ 
& \ou & \uo \\ 
\u & A \quad A' & B \quad B'
}
\]
and secondary matrix
\[
\kbordermatrix{
& A & A' & B & B' \\
A & w_1^{k+1} \otimes U_1^{k+1} & 0 & w_2^k \otimes (U_2^{k+1},\lambda) & w_1^l w_2^k \otimes (U_2^{k+1}, \lambda, U_1^{l+1}) \\
A' & w_2 & w_1^{k+1} \otimes U_1^{k+1} & 1 \otimes \lambda & w_1^k \otimes (\lambda, U_1^{k+1}) \\
B & 0 & 0 & w_2^{k+1} \otimes U_2^{k+1} & 0 \\
B' & 0 & 0 & w_1 & w_2^{k+1} \otimes U_2^{k+1}
}.
\]
All generators are also $w_1$- and $w_2$-equivariant in the sense that all entries of the above matrix representing $\delta^1_i$ terms for $i > 1$ should be multiplied by $w_1^p w_2^q$ on both sides (in each entry, for higher terms, with the output exponents given by the sum of the input exponents on both $w_1$ and $w_2$). Equivalently, we could treat $\F_2[w_1,w_2]$ rather than $\F_2$ as the ground ring, in which case we do not require these entries.

We define gradings as follows:
\begin{itemize}
\item $\deg^q(A) = -2$, $\deg^h(A) = 2$
\item $\deg^q(A') = 0$, $\deg^h(A') = 1$
\item $\deg^q(B) = -1$, $\deg^h(B) = 1$
\item $\deg^q(B') = 1$, $\deg^h(B') = 0$
\item $\deg^q(w_1) = -2$, $\deg^h(w_1) = 2$
\item $\deg^q(w_2) = -2$ $\deg^h(w_2) = 2$.
\end{itemize}

\end{definition}

\begin{figure}
\includegraphics[scale=0.6]{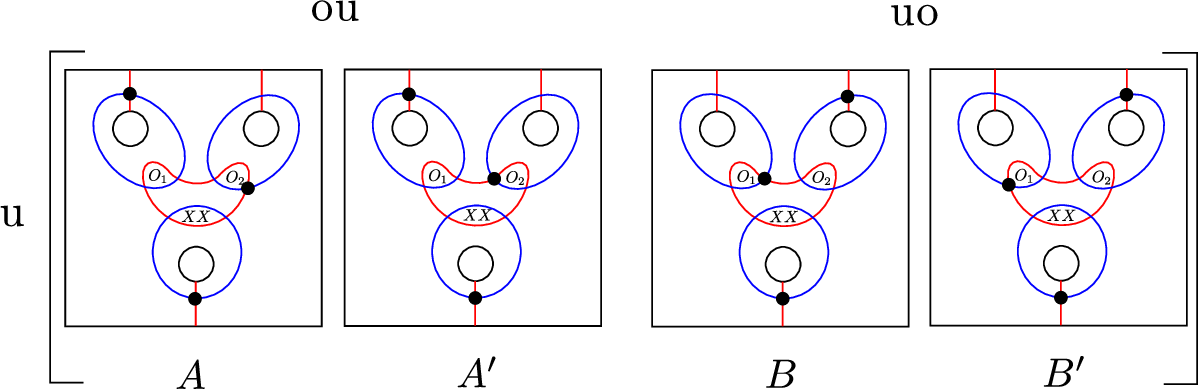}
\caption{Generators of $Y'_{\midd}$ in terms of intersection points}
\label{fig:HardVertexGens}
\end{figure}

The correspondence between sets of intersection points in the Heegaard diagram of Figure~\ref{fig:HardVertex} and generators of $Y'_{\midd}$ (primary matrix entries) is shown in Figure~\ref{fig:HardVertexGens}. Not all sets of intersection points in the diagram appear in Figure~\ref{fig:HardVertexGens}; the rest contribute to the summand $Y'_{\low}$ of $Y'$ in the lower weight space and appear in Figure~\ref{fig:HardVertexGensLower}. The domains giving rise to secondary matrix entries are shown in Figure~\ref{fig:HardVertexDomains}. 

\begin{figure}
\includegraphics[scale=0.6]{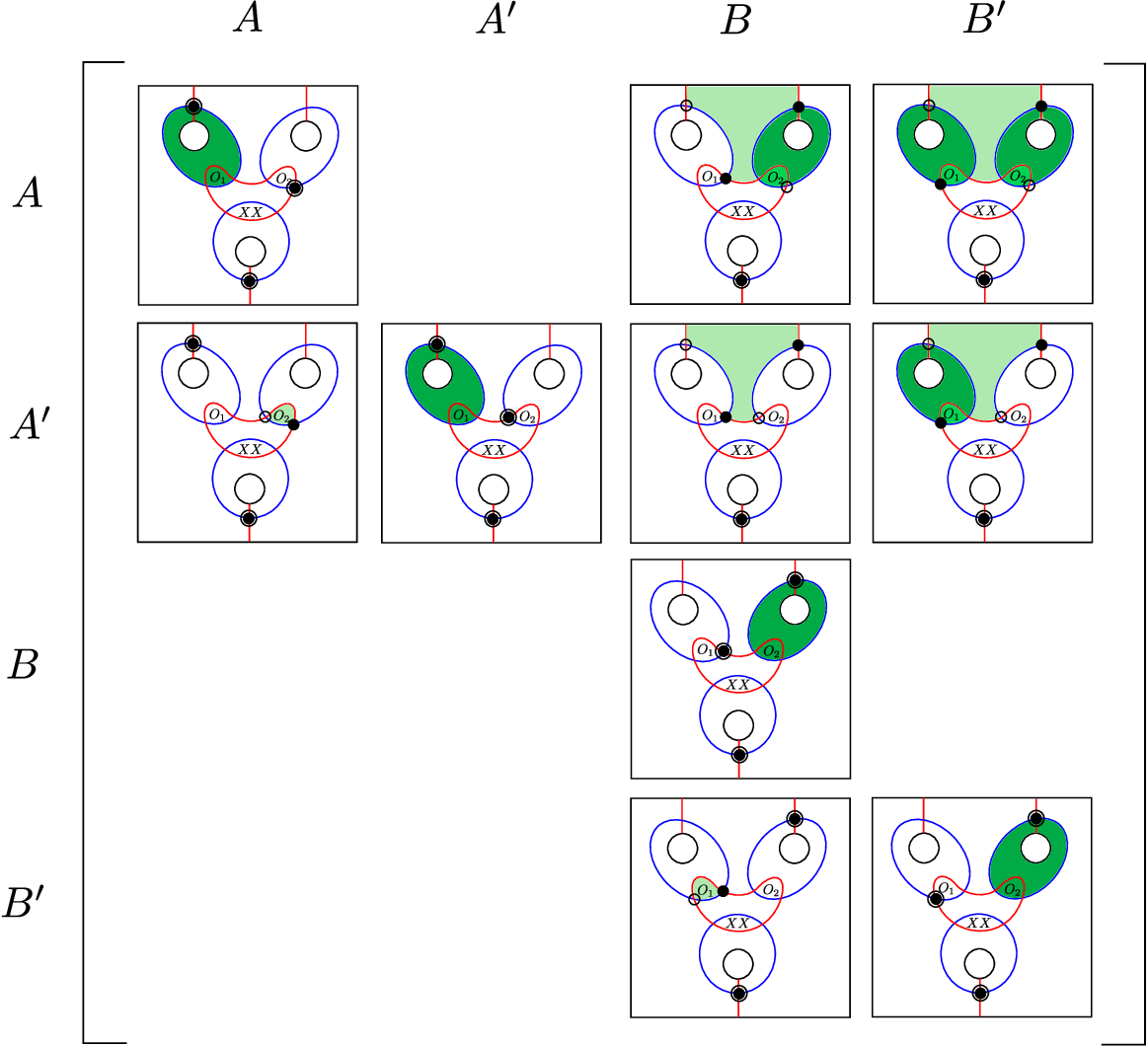}
\caption{Domains giving rise to the secondary matrix for $Y'_{\midd}$.}
\label{fig:HardVertexDomains}
\end{figure}

\begin{proposition}\label{prop:HardFullyUnsimplifiedMiddleWellDefined}
The DA bimodule $Y'_{\midd}$ is well-defined.
\end{proposition}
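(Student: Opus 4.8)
The plan is to verify the DA bimodule relations for $Y'_{\midd}$ directly using Procedure~\ref{proc:DAWellDef}, exploiting the fact that $\A_2$ and $\A_{1,1}$ have no differential in the relevant weight spaces (so the two ``differential matrices'' in the procedure vanish) and that the algebra relations are mild. Concretely, I would first square the secondary matrix, keeping careful track of the infinite geometric-series structure of the entries; since $\delta^1_i$ for $i>1$ is built from the $w$-equivariant templates $w_1^{k+1}\otimes U_1^{k+1}$, $w_2^{k+1}\otimes U_2^{k+1}$, $w_2^k\otimes(U_2^{k+1},\lambda)$, $1\otimes\lambda$, $w_1^k\otimes(\lambda,U_1^{k+1})$, $w_1^l w_2^k\otimes(U_2^{k+1},\lambda,U_1^{l+1})$, the products that arise are constrained: concatenation of output algebra elements in $\F_2[w_1,w_2]$ just adds exponents, and concatenation of input sequences must produce a sequence with no identity entries. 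The only interesting products are those that land on a new valid template, and I would organize the check entry-by-entry of the $4\times 4$ matrix (indexed by $A,A',B,B'$), noting that the strictly-upper-triangular $\lambda$-type entries from $A,A'$ to $B,B'$ interact with the diagonal $U$-power entries.

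Second, I would compute the ``multiplication matrix'': the only place the algebra multiplication of $\A_{1,1}$ enters a secondary-matrix input sequence nontrivially is where consecutive inputs could come from a product $b_1\cdot b_2$. In $(\A_{1,1})_{\varepsilon_1+\varepsilon_2}$ the relations are $U_1,U_2,\lambda$ all commuting-appropriate with $e_{2,1}\lambda=0$, $\lambda e_{1,1}=0$ type constraints (here $U_2\lambda=0$ and $\lambda U_1=0$), so the templates $(U_2^{k+1},\lambda)$ and $(\lambda,U_1^{k+1})$ and $(U_2^{k+1},\lambda,U_1^{l+1})$ are exactly the ``already-fully-factored'' sequences and no further factoring of a single input into a product of two basis elements produces a new sequence, except splitting a $U_1^{m}$ or $U_2^{m}$ input as $U_1^a\cdot U_1^{m-a}$, which feeds back into the same geometric family. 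I expect the multiplication matrix to cancel precisely the ``diagonal-times-diagonal'' and ``diagonal-times-$\lambda$-template'' terms of the squared secondary matrix, which is where the $k+1$ indexing (omitting $\id\otimes\id$) is doing real bookkeeping work.

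Third, because everything is over $\F_2$, ``sum is zero'' means ``every term appears an even number of times,'' so the verification reduces to a combinatorial matching: each term of the squared secondary matrix must be paired with exactly one term of the multiplication matrix. I would exhibit this pairing template-by-template. The main obstacle, and the step deserving the most care, is the bottom-left-to-top-right entries involving the three-input template $w_1^l w_2^k\otimes(U_2^{k+1},\lambda,U_1^{l+1})$ in row $A$, column $B'$: it can be produced both by composing $A\to A'$ (via $1\otimes\lambda$ — wait, that changes the algebra output) and by composing through $B$ or through the $U$-power self-loops, so one must check that all the ways of building a sequence ending in $(\ldots,U_1^{l+1})$ or beginning with $(U_2^{k+1},\ldots)$ via $\delta^1_j\circ\delta^1_{i}$ are accounted for, and that the multiplication matrix contributions (from factoring the $U_1^{l+1}$ or $U_2^{k+1}$ blocks) match them. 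I would also double-check the $w$-equivariance bookkeeping, namely that output $w$-exponents equal the sum of input $w$-exponents in every composite, which is automatic from the template definitions but worth stating.

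Finally, I would remark that the grading compatibility is immediate from the stated degrees — one checks each nonzero secondary-matrix entry $a'\otimes(a_1,\dots,a_{i-1})$ in row $y$, column $x$ satisfies $\deg^q(x) = \deg^q(a') + \sum\deg^q(a_j) + \deg^q(y)$ and similarly for $\deg^h$ with the $[1]$ shifts in $\delta^1_i\colon M\otimes(A[1])^{\otimes(i-1)}\to A'[1]\otimes M$ — so that the maps $\delta^1_i$ have bidegree zero, and conclude that $Y'_{\midd}$ is a well-defined DA bimodule. Since the actual term-matching is routine given the structure above, I would present it compactly (perhaps as a table of which templates compose to which) rather than writing out every infinite sum.
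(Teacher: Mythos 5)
Your proposal is correct and follows essentially the same route as the paper: the paper's proof is exactly Procedure~\ref{proc:DAWellDef} carried out with $\F_2[w_1,w_2]$ as ground ring — it writes out the squared secondary matrix explicitly and observes that it coincides term-for-term with the multiplication matrix (the differential matrices vanishing since the algebras have no differential), so the sum is zero over $\F_2$. The only difference is presentational: the paper displays the full $4\times 4$ squared matrix rather than describing the template-matching, but the cancellation pattern you predict (diagonal $U$-power compositions and compositions through $B'$ matching the factorizations of $U_1^{l+1}$ and $U_2^{k+1}$ inside the longer input sequences) is exactly what the displayed computation shows.
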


\begin{proof}
Treating $\F_2[w_1,w_2]$ as the ground ring, the squared secondary matrix is
\[
\resizebox{\textwidth}{!}{
\kbordermatrix{
& A & A' & B & B' \\
A & w_1^{k+1} \otimes U_1^{k+1} & 0 & w_2^k \otimes (U_2^{k+1},\lambda) & w_1^l w_2^k \otimes (U_2^{k+1}, \lambda, U_1^{l+1}) \\
A' & w_2 & w_1^{k+1} \otimes U_1^{k+1} & 1 \otimes \lambda & w_1^k \otimes (\lambda, U_1^{k+1}) \\
B & 0 & 0 & w_2^{k+1} \otimes U_2^{k+1} & 0 \\
B' & 0 & 0 & w_1 & w_2^{k+1} \otimes U_2^{k+1}
}
\kbordermatrix{
& A & A' & B & B' \\
A & w_1^{k+1} \otimes U_1^{k+1} & 0 & w_2^k \otimes (U_2^{k+1},\lambda) & w_1^l w_2^k \otimes (U_2^{k+1}, \lambda, U_1^{l+1}) \\
A' & w_2 & w_1^{k+1} \otimes U_1^{k+1} & 1 \otimes \lambda & w_1^k \otimes (\lambda, U_1^{k+1}) \\
B & 0 & 0 & w_2^{k+1} \otimes U_2^{k+1} & 0 \\
B' & 0 & 0 & w_1 & w_2^{k+1} \otimes U_2^{k+1}
}
}
\]
which equals
\[
\resizebox{\textwidth}{!}{
\kbordermatrix{
& A & A' & B & B' \\
A & w_1^{k+p+2} \otimes (U_1^{k+1}, U_1^{p+1}) & 0 & w_2^{k+p+1} \otimes (U_2^{k+1},U_2^{p+1},\lambda) &  \begin{matrix} w_1^{k+p+1} w_2^q \otimes (U_2^{q+1}, \lambda, U_1^{k+1}, U_1^{p+1}) \\ + w_1^k w_2^{l+p+1} \otimes (U_2^{l+1}, U_2^{p+1}, \lambda, U_1^{k+1}) \end{matrix} \\
A' & 0 & w_1^{k+p+2} \otimes (U_1^{k+1}, U_1^{p+1}) & 0 & w_1^{k+p+1} \otimes (\lambda, U_1^{k+1}, U_1^{p+1}) \\
B & 0 & 0 & w_2^{k+p+2} \otimes (U_2^{k+1},U_2^{p+1}) & 0 \\
B' & 0 & 0 & 0 & w_2^{k+p+2} \otimes (U_2^{k+1},U_2^{p+1})
}.
}
\]
This matrix is also the matrix of multiplication terms for $Y'_{\midd}$, so $Y'_{\midd}$ satisfies the DA bimodule structure relations.
\end{proof}

From $Y'_{\midd}$, we can get an infinitely generated DA bimodule over $((\A_2)_{\varepsilon_1 + \varepsilon_2}, (\A_{1,1})_{\varepsilon_1 + \varepsilon_2})$ by restricting both the left and right actions on $Y'_{\midd}$ via the inclusions of $\A_2$ and $\A_{1,1}$ into $\A_2 \otimes \F_2[w_1,w_2]$ and $\A_{1,1} \otimes \F_2[w_1, w_2]$. The primary matrix of the result is
\[
\kbordermatrix{ 
& \ou & \uo \\ 
\u & w_1^k w_2^l A \quad w_1^k w_2^l A' & w_1^k w_2^l B \quad w_1^k w_2^l B'
}
\]
(letting $k$ and $l$ range over all nonnegative integers). The secondary matrix is
\[
\resizebox{\textwidth}{!}{
\kbordermatrix{
& w_1^k w_2^l A & w_1^k w_2^l A' & w_1^k w_2^l B & w_1^k w_2^l B' \\
A & w_1^{k+p+1} w_2^l \otimes U_1^{p+1} & 0 &  w_1^k w_2^{l+p} \otimes (U_2^{p+1},\lambda) & w_1^{k+q} w_2^{l+p} \otimes (U_2^{p+1}, \lambda, U_1^{q+1}) \\
A' & w_1^k w_2^{l+1} & w_1^{k+p+1} w_2^l \otimes U_1^{p+1} & w_1^k w_2^l \otimes \lambda & w_1^{k+p} w_2^l \otimes (\lambda, U_1^{p+1}) \\
B & 0 & 0 & w_1^k w_2^{l+p+1} \otimes U_2^{p+1} &  0 \\
B' & 0 & 0 & w_1^{k+1} w_2^l & w_1^k w_2^{l+p+1} \otimes U_2^{p+1}
};
}
\]
multiplication by $w_1$ or $w_2$ on the ``output side'' of a secondary matrix entry should not be viewed as multiplication by an algebra element, but rather as specifying in which row of the (infinite) secondary matrix the entry belongs.

Motivated by the two-term complexes appearing in \cite[proof of Theorem 4.1]{OSzCube} (see also e.g. \cite[Theorem 2.3]{ManolescuCube}), the DA bimodule $Y_{\fu,\midd}$ is defined to be 
\[
\xymatrix{
q^{-2} Y'_{\midd}[-1] \ar@/^1.5pc/[rr]^{\Xi} & \oplus & Y'_{\midd}
}
\]
where $\Xi$ is the DA bimodule endomorphism of $Y'_{\midd}$ given by
\[
\kbordermatrix{
& A & A' & B &  B' \\
A & w_1 + w_2 + e_1 & 0 & 0 & 0 \\
A' & 0 & w_1 + w_2 + e_1 & 0 & 0 \\
B & 0 & 0 & w_1 + w_2 + e_1 & 0 \\
B' & 0 & 0 & 0 & w_1 + w_2 + e_1
}.
\]

\begin{proposition}
The DA bimodule endomorphism $\Xi$ is closed.
\end{proposition}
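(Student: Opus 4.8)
The plan is to verify directly that $d(\Xi) = 0$ in the morphism complex of DA bimodules from $Y'_{\midd}$ to itself, using the matrix-level formula for $d$ given in the excerpt (two matrices from composing $\Xi$ with the secondary matrices of $Y'_{\midd}$ on either side, plus a multiplication matrix and two differential matrices built from the matrix for $\Xi$ itself). Since $\A_{1,1}$ and $\A_2$ have no differential and the ground ring $\F_2[w_1,w_2]$ is central, the two "differential matrices" obtained from $\partial(w_1 + w_2 + e_1)$ and from differentials of algebra inputs in $\Xi$ both vanish, so $d(\Xi)$ reduces to the commutator-type expression coming from the composition terms together with the multiplication matrix. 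Because $\Xi$ is given by a single central scalar $(w_1 + w_2 + e_1)$ times the identity on each generator, the first step is to observe that the two composition terms $(\mu' \otimes \id)\circ(\id \otimes \delta^1)\circ \Xi$ and $(\mu' \otimes \id)\circ(\id \otimes \Xi)\circ \delta^1$ are obtained from the squared-secondary-matrix computation of Proposition~\ref{prop:HardFullyUnsimplifiedMiddleWellDefined} by inserting the scalar $(w_1+w_2+e_1)$ on the output side; over $\F_2$ these two contributions will differ precisely by the multiplication terms that arise when the scalar $e_1$ interacts with the $U_1$, $U_2$, and $\lambda$ algebra inputs of the secondary matrix entries.

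The key computation, then, is to track what happens when the central algebra element $e_1$ of $\A_2$ is pushed through a secondary-matrix entry of $Y'_{\midd}$: since $e_1$ acts on the left via $\A_2$ but the secondary matrix records the right action of $\A_{1,1}$-elements, the difference between "$e_1$ applied after $\delta^1$" and "$e_1$ applied before $\delta^1$" is governed by the relations $U_1 \mapsto e_1$, $U_2 \mapsto e_1$ for the two parts of the domain (reflecting that $e_1$ on $\A_2$ corresponds to $U_1 + U_2$ under $\Lambda_{\low}$-type identifications, and to $U_1$ or $U_2$ separately on the two summands), while $w_1 + w_2$ is literally central and contributes nothing to the commutator. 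One then checks entry by entry — there are only four generators $A, A', B, B'$ and the nonzero secondary entries are the ones displayed — that the multiplication matrix exactly accounts for this discrepancy. Concretely, for an entry like $w_2^k \otimes (U_2^{k+1}, \lambda)$ in row $A$, column $B$, commuting $e_1$ past it produces the two ways of replacing an input $U_2^{j}$ by $U_2^{j-1} U_2$ and then $U_2 \mapsto$ (contribution of $e_1$), and the multiplication matrix adds precisely the terms $U_2^{k+1} = U_2^k \cdot U_2$, etc.; the same bookkeeping handles the three-input entry $w_1^l w_2^k \otimes (U_2^{k+1}, \lambda, U_1^{l+1})$.

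I expect the main obstacle to be purely organizational rather than conceptual: one must correctly enumerate all the places where the multiplication matrix receives a contribution (every exponent $U_1^{m}$ with $m \geq 2$ and every $U_2^{m}$ with $m \geq 2$ in every secondary entry, split as $U_i^{m-1} \cdot U_i$ and also lower splittings), and confirm that over $\F_2$ these sum up, telescoping, to exactly match the $e_1$-part of the two composition terms, while the $w_1$- and $w_2$-parts cancel between the two composition terms since they are honestly central. A cleaner way to organize this, which I would adopt if the direct check becomes unwieldy, is to note that $\Xi$ is multiplication by the image in $Z(\A_2)$ of a central element under the left action, and that for a DA bimodule the obstruction $d(\Xi)$ measures the failure of this central element to also be central for the induced right action; the displayed formula for $\Xi$ with the combination $w_1 + w_2 + e_1$ is chosen exactly so that this failure vanishes, mirroring the role of $U_1 + U_2$ in the definition of $\Lambda_{\low}$. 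Either way the verification is a finite check against the secondary matrix already written down in Definition~\ref{def:YPrimeHard}, so no new ideas are needed beyond careful bookkeeping.
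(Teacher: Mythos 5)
Your conclusion is true, but the mechanism you describe for verifying it is not how $d(\Xi)$ is actually computed, and the intermediate claims are false. In the morphism complex, the ``multiplication matrix'' and the two ``differential matrices'' contributing to $d(\Xi)$ are derived from the matrix for $\Xi$ itself, not from the secondary matrix of $Y'_{\midd}$. Since every entry of $\Xi$ has the form $(w_1+w_2+e_1)\otimes()$ with no algebra inputs, and neither algebra carries a differential, all three of those matrices vanish identically. There is no ``discrepancy between $e_1$ applied before and after $\delta^1$'' to be cancelled by splittings $U_i^m = U_i^{m-1}\cdot U_i$ of the secondary entries; that bookkeeping belongs to the proof that $Y'_{\midd}$ itself satisfies the DA relations (Proposition~\ref{prop:HardFullyUnsimplifiedMiddleWellDefined}), not to the computation of $d(\Xi)$, and no relation of the form ``$U_i\mapsto e_1$'' enters anywhere.

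What remains of $d(\Xi)$ is exactly the two products of the matrix for $\Xi$ with the secondary matrix of $Y'_{\midd}$, in the two orders. Multiplying a term $a'\otimes(a_1,\ldots,a_{i-1})$ against $(w_1+w_2+e_1)\otimes()$ just multiplies the output $a'$ by $w_1+w_2+e_1$ on one side or the other, and the output algebra $(\A_2)_{\varepsilon_1+\varepsilon_2}\otimes\F_2[w_1,w_2]=\F_2[e_1,w_1,w_2]$ is commutative, so the two products coincide and their sum is zero over $\F_2$. That one observation is the paper's entire proof. Your closing heuristic is also off: left multiplication by \emph{any} central cycle of the output algebra gives a closed endomorphism of any DA bimodule, so closedness does not single out the combination $w_1+w_2+e_1$ --- that choice matters for the homology of the mapping cone $Y_{\fu,\midd}$ (Proposition~\ref{prop:HardFullyUnsimplifiedMiddleHomology}), not for $d(\Xi)=0$.
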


\begin{proof}

The product of the matrix for the endomorphism with the secondary matrix for $Y'_{\midd}$, in either order, equals the secondary matrix for $Y'_{\midd}$ with the output of each entry multiplied by $w_1 + w_2 + e_1$.

\end{proof}

As with $Y'_{\midd}$, we can view $Y_{\fu,\midd}$ as an infinitely generated DA bimodule over 
\[
((\A_2)_{\varepsilon_1+\varepsilon_2}, (\A_{1,1})_{\varepsilon_1 + \varepsilon_2}).
\]
We will write the generators of the two summands of $Y_{\fu,\midd}$ as 
\[
\{A_1, A'_1, B_1, B'_1\}, \quad \{A_2, A'_2, B_2, B'_2\}
\]
respectively, where $\Xi$ maps the ``$2$'' summand to the ``$1$'' summand.

\begin{proposition}\label{prop:HardFullyUnsimplifiedMiddleHomology}
As a left dg module over $(\A_2)_{\varepsilon_1 + \varepsilon_2} = \F_2[e_1]$, the homology of $Y_{\fu,\midd}$ is a free module of rank two generated by $A'_1 = w_1^0 w_2^0 A'_1$ and $B'_1$.
\end{proposition}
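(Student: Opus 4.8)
The plan is to compute the underlying left dg $\F_2[e_1]$-module of $Y_{\fu,\midd}$ explicitly, determine its homology via the long exact sequence of the mapping cone defining $Y_{\fu,\midd}$, and then read off the free generators.

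First I would pin down the left dg module structure. For a DA bimodule the underlying left dg module has differential built only from $\delta^1_1$ (and from $\partial'$, which vanishes here); the higher structure maps $\delta^1_i$ with $i \geq 2$ record the right $A_\infty$-action and contribute nothing. Reading off the entries of the secondary matrix of $Y'_{\midd}$ with empty algebra-input sequence, the underlying left module of $Y'_{\midd}$, regarded over $\F_2[e_1,w_1,w_2]$, is free on $A,A',B,B'$ with $\partial A = w_2 A'$, $\partial B = w_1 B'$, and $\partial A' = \partial B' = 0$. Restricting scalars to $\F_2[e_1]$ turns $w_1,w_2$ into honest module endomorphisms but does not alter the underlying complex, so the homology is the same whether computed over $\F_2[e_1,w_1,w_2]$ or over $\F_2[e_1]$. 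Since $w_1$ and $w_2$ are non-zero-divisors, $\partial$ is injective on the submodules generated by $A$ and by $B$, whence
\[
H_*(Y'_{\midd}) \;\cong\; \bigl(\F_2[e_1,w_1,w_2]/(w_2)\bigr)\cdot A' \;\oplus\; \bigl(\F_2[e_1,w_1,w_2]/(w_1)\bigr)\cdot B',
\]
with $w_2$ acting as $0$ on the first summand and $w_1$ acting as $0$ on the second; as an $\F_2[e_1]$-module each summand is free.

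Next I would invoke the description $Y_{\fu,\midd} = \operatorname{Cone}\bigl(\Xi\colon q^{-2}Y'_{\midd}[-1] \to Y'_{\midd}\bigr)$. The matrix for $\Xi$ is diagonal with all entries $w_1+w_2+e_1$ and empty algebra input, so $\Xi$ is strict and (by the proposition above) closed; hence on underlying left modules it is multiplication by $w_1+w_2+e_1$, and the long exact sequence of the cone reads, up to the overall grading shift (which is immaterial),
\[
\cdots \to H_*(Y'_{\midd}) \xrightarrow{\;w_1+w_2+e_1\;} H_*(Y'_{\midd}) \to H_*(Y_{\fu,\midd}) \to \cdots.
\]
On the $A'$-summand $\Xi_*$ is multiplication by $w_1+e_1$ and on the $B'$-summand it is multiplication by $w_2+e_1$; both are non-zero-divisors in the respective polynomial rings, so $\Xi_*$ is injective and the sequence breaks into short exact pieces, giving $H_*(Y_{\fu,\midd}) \cong \operatorname{coker}(\Xi_*)$, realized inside the ``$1$'' summand $Y'_{\midd}$ of the cone. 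Finally $\F_2[e_1,w_1]/(w_1+e_1) \cong \F_2[e_1]$ via $w_1 \mapsto e_1$ and likewise $\F_2[e_1,w_2]/(w_2+e_1) \cong \F_2[e_1]$, and under these isomorphisms the images of the ``bare'' generators $A'_1 = w_1^0 w_2^0 A'_1$ and $B'_1$ become free $\F_2[e_1]$-module generators of the two cokernel summands. Hence $H_*(Y_{\fu,\midd})$ is free of rank two on $A'_1$ and $B'_1$.

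The step I expect to be the main obstacle is the bookkeeping in the first paragraph: correctly distinguishing genuine $\delta^1_1$ terms from higher DA terms and from the implicitly-suppressed $\id\otimes\id$ terms in the secondary matrix, and checking that passing to the infinitely-generated, scalar-restricted presentation really does convert ``output-side'' multiplication by $w_1,w_2$ into module endomorphisms without changing the homology. Once the left dg module is written down correctly, everything downstream is elementary commutative algebra over $\F_2[e_1,w_1,w_2]$.
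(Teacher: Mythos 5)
Your proof is correct, and it reaches the result by a slightly different route than the paper. The paper works directly with the total complex $Y_{\fu,\midd}$: it identifies the cycles, observes that the boundaries impose the relations $w_2 A'_1 \sim 0$ (from $\partial A_1$) and $(w_1+w_2+e_1)A'_1 \sim 0$ (from $\partial A'_2$), hence $w_2 = w_1 + e_1$ and $w_1^{k+1}A'_1 = e_1 w_1^k A'_1$ in homology, and finally checks that the $\F_2[e_1]$-span of $A'_1, B'_1$ meets the image of $\partial$ trivially. You instead first compute $H_*(Y'_{\midd}) \cong \F_2[e_1,w_1]\cdot A' \oplus \F_2[e_1,w_2]\cdot B'$ and then run the long exact sequence of the cone, using that $\Xi_*$ acts by the non-zero-divisors $w_1+e_1$ and $w_2+e_1$ on the two summands to identify $H_*(Y_{\fu,\midd})$ with $\operatorname{coker}(\Xi_*)$. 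The underlying algebra is identical (kill $w_2$ and $w_1+w_2+e_1$ on $A'$, kill $w_1$ and $w_1+w_2+e_1$ on $B'$), but your organization buys two things: it packages the ``free of rank two'' conclusion as an immediate consequence of $\F_2[e_1,w_1]/(w_1+e_1) \cong \F_2[e_1]$ rather than requiring a separate check that the span of $A'_1, B'_1$ avoids the boundaries, and it sidesteps having to describe the full kernel of the differential on the total complex (which contains additional cycles such as $\partial A_2 = (w_1+w_2+e_1)A_1 + w_2 A'_2$ beyond the $w_1^k w_2^l A'_1$ and $w_1^k w_2^l B'_1$ that the paper lists; these are boundaries, so the paper's conclusion is unaffected, but your route never has to confront them). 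Your preliminary bookkeeping — that only the $\delta^1_1$ entries $w_2$ (from $A$ to $A'$) and $w_1$ (from $B$ to $B'$) contribute to the underlying left dg module, and that restriction of scalars from $\F_2[e_1,w_1,w_2]$ to $\F_2[e_1]$ does not change the complex — is also correct.
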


\begin{proof}
As a non-differential left module over $\F_2[e_1]$, $Y_{\fu,\midd}$ is free with one generator for each pair of a monomial $w_1^k w_2^l$ ($k,l \geq 0$) and an element of $\{A_i,A'_i,B_i,B'_i : i = 1,2\}$. The kernel of the differential on $Y_{\fu,\midd}$ consists of the summands corresponding to $w_1^k w_2^l A'_1$ and $w_1^k w_2^l B'_1$. In the homology of $Y_{\fu,\midd}$, we have $w_2 = w_1 + e_1$ (equivalently, $w_1 = w_2 + e_1$), so the homology is generated by elements $w_1^k A'_1$ and $w_2^l B'$. Furthermore, $w_1^{k+1} A'_1 = e_1 w_1^k A'_1$ and $w_2^{l+1} B'_1 = e_1 w_2^l B'_1$ modulo the image of the differential, so the homology is generated by the two elements $A'_1$ and $B'_1$. The image of the differential intersected with the $\F_2[e_1]$-submodule generated by $A'_1$ and $B'_1$ is trivial, proving the proposition.
\end{proof}

\subsection{Middle weight space, half-simplified version}

We now simplify $Y_{\fu,\midd}$, resulting in a half-simplified version $Y_{\hs,\midd}$. The equivariance over $\F_2[w_1,w_2]$ will go away, and we will treat $Y_{\hs,\midd}$ as an infinitely generated DA bimodule over $((\A_2)_{\varepsilon_1 + \varepsilon_2}, (\A_{1,1})_{\varepsilon_1 + \varepsilon_2})$. We also streamline notation by relabeling $A'_1, B'_1$ as $A',B'$.

\begin{definition}
The DA bimodule $Y_{\hs,\midd}$ has primary matrix
\[
\kbordermatrix{ 
& \ou & \uo \\ 
\u & w_1^k A \quad w_1^k A' & w_2^l B \quad w_2^l B'
}
\]
(where $k$ ranges over nonnegative integers) and secondary matrix
\[
\resizebox{\textwidth}{!}{
\kbordermatrix{
& w_1^k A & w_1^k A' & w_2^l B & w_2^l B' \\
A & w_1^{k+p+1} \otimes U_1^{p+1} & 0 & (w_1 + e_1)^{l+p} \otimes (U_2^{p+1},\lambda) & w_1^q (w_1 + e_1)^{l+p} \otimes (U_2^{p+1}, \lambda, U_1^{q+1}) \\
A' & w_1^k(w_1 + e_1) & w_1^{k+p+1} \otimes U_1^{p+1} & (w_1 + e_1)^l \otimes \lambda & w_1^p (w_1 + e_1)^l \otimes (\lambda, U_1^{p+1}) \\
B & 0 & 0 & w_2^{l+p+1} \otimes U_2^{p+1} & 0 \\
B' & 0 & 0 & w_2^l(w_2 + e_1) & w_2^{l+p+1} \otimes U_2^{p+1}
}.
}
\]
The $q$- and $h$-degrees of the generators are the same as they were in $Y_{\fu,\midd}$.
\end{definition}

One can check that $Y_{\hs,\midd}$ can be obtained from $Y_{\fu,\midd}$ by the simplification procedure from Section~\ref{sec:PrelimSimplifying}, so $Y_{\hs,\midd}$ is well-defined and homotopy equivalent to $Y_{\fu,\midd}$. We get the following corollary from Proposition~\ref{prop:HardFullyUnsimplifiedMiddleHomology}.

\begin{corollary}\label{prop:HardHalfSimplifiedMiddleHomology}
As a left dg module over $\F_2[e_1]$, the homology of $Y_{\hs,\midd}$ is a free module of rank two generated by $A' = w_1^0 A'$ and $B' = w_2^0 B'$.
\end{corollary}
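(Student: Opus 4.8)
The statement to prove is Corollary~\ref{prop:HardHalfSimplifiedMiddleHomology}: as a left dg module over $\F_2[e_1]$, the homology of $Y_{\hs,\midd}$ is a free module of rank two generated by $A'$ and $B'$.

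The plan is to deduce this directly from Proposition~\ref{prop:HardFullyUnsimplifiedMiddleHomology} together with the fact, already asserted in the paragraph immediately preceding the corollary, that $Y_{\hs,\midd}$ is obtained from $Y_{\fu,\midd}$ by the simplification procedure of Section~\ref{sec:PrelimSimplifying}, hence is homotopy equivalent to $Y_{\fu,\midd}$ as a DA bimodule. First I would recall that a homotopy equivalence of DA bimodules over $((\A_2)_{\varepsilon_1+\varepsilon_2},(\A_{1,1})_{\varepsilon_1+\varepsilon_2})$ induces a homotopy equivalence of the underlying left dg modules over $(\A_2)_{\varepsilon_1+\varepsilon_2} = \F_2[e_1]$ (the left action has no higher terms since these are DA bimodules, so this is just restriction of scalars along $\F_2[e_1] \hookrightarrow (\A_2)_{\varepsilon_1+\varepsilon_2}$ applied to a genuine chain homotopy equivalence). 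Therefore $Y_{\hs,\midd}$ and $Y_{\fu,\midd}$ have isomorphic homology as $\F_2[e_1]$-modules. By Proposition~\ref{prop:HardFullyUnsimplifiedMiddleHomology} this homology is free of rank two, so the corollary's rank and freeness claims are immediate.

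The one point requiring a little care is the identification of the specific generators: I want to name the two homology classes $A' = w_1^0 A'$ and $B' = w_2^0 B'$ in $Y_{\hs,\midd}$ and check these really do generate. Here I would observe that the simplification procedure cancels, at each stage, a pair $(x,y)$ where some secondary-matrix entry is an identity morphism of $(\A_2)_{\varepsilon_1+\varepsilon_2}$; tracing through the construction of $Y_{\hs,\midd}$ from $Y_{\fu,\midd}$, the cancellations eliminate exactly the generators $A_i, B_i$ (and the ``$2$''-indexed copies $A'_2, B'_2$) of $Y_{\fu,\midd}$, while the generators surviving to $Y_{\hs,\midd}$ are the $w_1^k A$, $w_1^k A'$, $w_2^k B$, $w_2^k B'$ as listed in the primary matrix. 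In particular the classes that Proposition~\ref{prop:HardFullyUnsimplifiedMiddleHomology} identifies as generating $H_*(Y_{\fu,\midd})$, namely $A'_1$ and $B'_1$, correspond under the relabeling $A'_1, B'_1 \rightsquigarrow A', B'$ precisely to the elements $w_1^0 A'$ and $w_2^0 B'$ of $Y_{\hs,\midd}$; since the homotopy equivalence carries generators of one homology to generators of the other, $A'$ and $B'$ generate $H_*(Y_{\hs,\midd})$ freely over $\F_2[e_1]$.

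I do not expect a serious obstacle here: the corollary is a formal consequence of the preceding proposition plus the homotopy invariance of homology under DA bimodule homotopy equivalence, and the only mild bookkeeping is matching the named generators across the simplification and relabeling. If one wanted a self-contained argument avoiding reference to $Y_{\fu,\midd}$ entirely, the alternative would be to repeat for $Y_{\hs,\midd}$ the computation in the proof of Proposition~\ref{prop:HardFullyUnsimplifiedMiddleHomology}: note that $\delta^1_1$ (read off from the $1\otimes(\cdots)$ and, after restricting scalars, the $e_1^j \otimes (\cdots)$ terms — here the relevant strict differential comes from $w_1^k A' \mapsto w_1^k(w_1+e_1)A$ type entries, i.e. the entries with empty algebra-input sequence) kills the $A$- and $B$-columns, that modulo its image one has $w_1^{k+1}A' = e_1 w_1^k A'$ and $w_2^{k+1}B' = e_1 w_2^k B'$, and that the image of the differential meets the $\F_2[e_1]$-span of $A', B'$ trivially; this is the safer route if the claimed homotopy equivalence's effect on underlying left modules is not spelled out elsewhere, but given the statement already in the excerpt, the short deduction above suffices.
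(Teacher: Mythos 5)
Your proposal is correct and follows the same route as the paper, which gives no separate proof but simply deduces the corollary from Proposition~\ref{prop:HardFullyUnsimplifiedMiddleHomology} via the homotopy equivalence $Y_{\hs,\midd} \simeq Y_{\fu,\midd}$ coming from the simplification procedure, with the generators $A'_1, B'_1$ relabeled as $A', B'$. The only imprecision is your parenthetical description of exactly which generators are cancelled (the cancelling pairs are of the form $(w_1^k w_2^l A_i, w_1^k w_2^{l+1} A'_i)$ etc., so many $A'$-type generators are also removed), but you correctly flag this as bookkeeping and it does not affect the argument.
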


\subsection{Middle weight space, fully simplified version}\label{sec:HardVertexMiddleWSSimpl}

We now simplify the bimodule $Y_{\hs,\midd}$ in the middle weight space even further, giving a finitely generated DA bimodule $Y_{\fs,\midd}$ over $((\A_2)_{\varepsilon_1 + \varepsilon_2}, (\A_{1,1})_{\varepsilon_1 + \varepsilon_2})$. The primary and secondary matrices of the result $Y_{\fs,\midd}$ are described below. One can check that these matrices arise from the simplification procedure of Section~\ref{sec:PrelimSimplifying}; note that we are canceling infinitely many disjoint pairs $(w_1^k A, w_1^{k+1} A')$ and $(w_2^l B, w_2^{l+1} B')$.

\begin{definition}
The DA bimodule $Y_{\fs,\midd}$ has primary matrix
\[
\kbordermatrix{ 
& \ou & \uo \\ 
\u & A' & B'
}
\]
and secondary matrix
\[
\kbordermatrix{
& A' & B' \\
A' & e_1^{k+1} \otimes U_1^{k+1} & \begin{matrix} e_1^k \otimes (\lambda, U_1^{k+1}) \\+ e_1^k \otimes (U_2^{k+1},\lambda)\end{matrix} \\ 
B' & 0 & e_1^{k+1} \otimes U_2^{k+1}
}.
\]
As before, we have $\deg^q(A') = 0$, $\deg^h(A') = 1$, $\deg^q(B') = 1$, and $\deg^h(B') = 0$.
\end{definition}

\subsection{Lower weight space}
Next we define the bimodule $Y'_{\low}$ in the lower weight space. Unlike with $Y'_{\midd}$ in the middle weight space, $Y'_{\low}$ will be an ordinary dg bimodule with no higher $A_{\infty}$ actions.

\begin{definition}
The DA bimodule $Y'_{\low}$ over 
\[
((\A_2)_{2\varepsilon_2} \otimes \F_2[w_1,w_2], (\A_{1,1})_{2\varepsilon_2} \otimes \F_2[w_1,w_2])
\]
has primary matrix
\[
\kbordermatrix{
& \oo \\
\o & C \quad C'
}.
\]
We set
\begin{itemize}
\item $\deg^q(C) = -3$, $\deg^h(C) = 3$,
\item $\deg^q(C') = 1$, $\deg^h(C') = 0$.
\end{itemize}
The secondary matrix is
\[
\kbordermatrix{
& C & C' \\
C & w_1^k w_2^l \otimes U_1^k U_2^l & 0 \\ 
C' & w_1 w_2 + e_2 & w_1^k w_2^l \otimes U_1^k U_2^l
}
\]
(recall that we implicitly exclude $\id \otimes \id$ if it appears in a secondary matrix entry). Equivalently, the differential has matrix
\[
\kbordermatrix{
& C & C' \\
C & 0 & 0 \\ 
C' & w_1 w_2 + e_2 & 0
}
\]
and right multiplication by $U_i$ has matrix
\[
\kbordermatrix{
& C & C' \\
C & w_i & 0 \\ 
C' & 0 & w_i
}
\]
for $i \in \{1,2\}$. The matrix entries maps are assumed to be $w_1$ and $w_2$-equivariant as in Definition~\ref{def:YPrimeHard}; equivalently, right multiplication by $w_i$ has the same matrix as right multiplication by $U_i$.
\end{definition}

\begin{proposition}
The DA bimodule $Y'_{\low}$ is well-defined.
\end{proposition}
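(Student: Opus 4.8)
The plan is to verify the DA bimodule relations for $Y'_{\low}$ directly, exploiting the fact that it has no higher $A_\infty$ actions: since $\delta^1_i = 0$ for $i > 2$, by the remark following Procedure~\ref{proc:DAWellDef} it suffices to check three things, namely that the differential matrix squares to zero, that each right-multiplication matrix commutes with the differential matrix, and that the relations among the multiplicative generators $U_1, U_2, w_1, w_2$ of the algebra $(\A_2)_{2\varepsilon_2} \otimes \F_2[w_1,w_2]$ are respected by the corresponding right-action matrices. Treating $\F_2[w_1,w_2]$ as the ground ring, as in the proof of Proposition~\ref{prop:HardFullyUnsimplifiedMiddleWellDefined}, the generators of the relevant algebra are $e_1, e_2$ (the polynomial generators of $(\A_2)_{2\varepsilon_2} = \F_2[e_1,e_2]$) acting via the relabeling $U_1 + U_2 \leftrightarrow e_1$-type identities — but here note $(\A_2)_{2\varepsilon_2}$ acts on the left, and on the right side we have the algebra $(\A_{1,1})_{2\varepsilon_2} \otimes \F_2[w_1,w_2] = \F_2[U_1,U_2,w_1,w_2]$, all of whose generators commute, so the only relations to check are commutativity of the four right-action matrices with each other.

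First I would record the matrices: the differential $d$ has a single nonzero entry $w_1 w_2 + e_2$ in position $(C',C)$, so $d^2$ is manifestly zero because composing two strictly lower-triangular $2\times 2$ matrices of this shape vanishes. Next, right multiplication by $U_i$ (equivalently by $w_i$) is the scalar matrix $w_i \cdot \mathrm{id}$ on the two-dimensional space spanned by $C, C'$; a scalar matrix commutes with everything, so it commutes with $d$ and with the other right-action matrices. This takes care of commutativity among the right actions and compatibility with the differential. The one genuinely substantive point is the compatibility of the left action: $d$ has entry $w_1 w_2 + e_2$, where $e_2$ is a left-algebra element and $w_1 w_2$ lives in the ground ring, and one must check that $\delta^1_1$ is a bimodule map, i.e. that the left $(\A_2)_{2\varepsilon_2}$-action on $\A_2 \otimes_{I} Y'_{\low}$ is compatible — but since $e_1, e_2$ act centrally on the polynomial ring and the output term $e_2$ simply records left multiplication by $e_2 \in (\A_2)_{2\varepsilon_2}$, this is automatic; the secondary-matrix formalism of Definition~\ref{def:DABimod} and Procedure~\ref{proc:DAWellDef} already encodes the left action through the output algebra elements, so no separate check is needed beyond the squared-secondary-matrix computation.

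Concretely, then, following Procedure~\ref{proc:DAWellDef} with $\F_2[w_1,w_2]$ as ground ring, I would compute the square of the secondary matrix
\[
\kbordermatrix{
& C & C' \\
C & w_1^k w_2^l \otimes U_1^k U_2^l & 0 \\
C' & w_1 w_2 + e_2 & w_1^k w_2^l \otimes U_1^k U_2^l
},
\]
observe that it equals the multiplication matrix for $Y'_{\low}$ (the new inputs produced by expanding $U_1^k U_2^l \cdot U_1^p U_2^q$ and $(w_1 w_2 + e_2) \cdot U_1^k U_2^l$ as products of generators reproduce exactly the off-diagonal and diagonal contributions), and note that both differential matrices vanish since $A = \A_2 \otimes \F_2[w_1,w_2]$ has zero differential and its generators have zero differential. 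Hence the sum of the squared secondary matrix, the multiplication matrix, and the two (vanishing) differential matrices is zero, which is the DA bimodule relation. The main obstacle, such as it is, is purely bookkeeping: making sure the infinite sums indexed by $k,l$ in each entry are manipulated correctly when multiplied — i.e. that the index shifts line up so that the squared secondary matrix genuinely reproduces the multiplication matrix rather than something off by a reindexing — and confirming that the degree conventions ($\deg^q(C) = -3$, $\deg^h(C) = 3$, $\deg^q(C') = 1$, $\deg^h(C') = 0$, together with $\deg^q(w_i) = \deg^q(U_i) = -2$, $\deg^h = 2$, $\deg^q(e_2) = -4$, $\deg^h(e_2) = 4$, $\deg(\lambda)$ not entering here) make every secondary-matrix entry bidegree zero, so that $\delta^1_i$ has bidegree zero as required.
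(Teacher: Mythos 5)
Your proposal is correct and takes essentially the same approach as the paper, which simply observes that the matrices for the differential and the right actions of the algebra generators commute with one another and that together they encode the given secondary matrix; you have just spelled out the (routine) details, including the degree check. (One trivial slip: in your last paragraph the right-hand algebra is $\A_{1,1}\otimes\F_2[w_1,w_2]$, not $\A_2\otimes\F_2[w_1,w_2]$, but both have zero differential so the conclusion is unaffected.)
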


\begin{proof}
The above matrices for the differential and the action of algebra generators commute with each other. One can check that these matrices together amount to the same structure as the given secondary matrix.
\end{proof}

\begin{figure}
\includegraphics[scale=0.6]{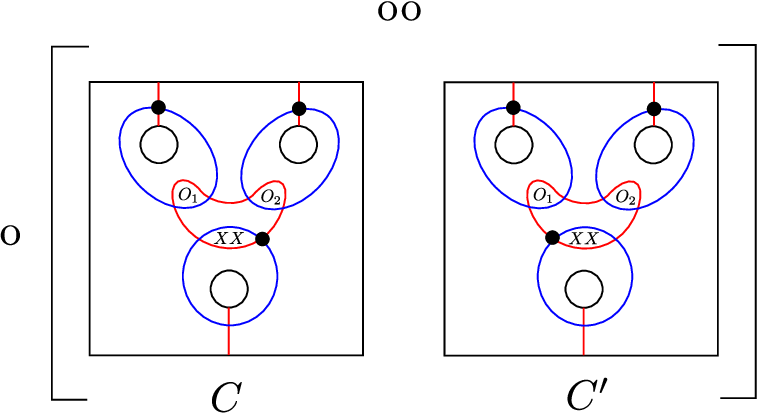}
\caption{Generators of $Y'_{\low}$ in terms of intersection points.}
\label{fig:HardVertexGensLower}
\end{figure}

\begin{figure}
\includegraphics[scale=0.6]{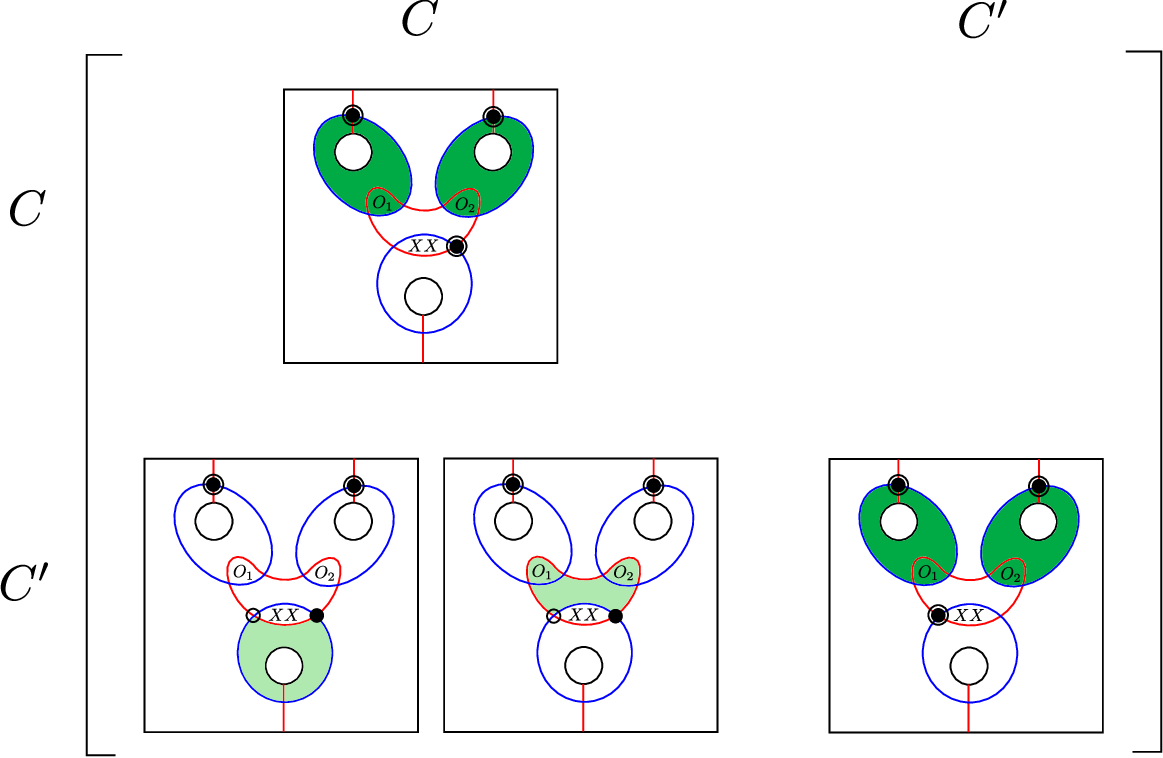}
\caption{Domains for the secondary matrix of $Y'_{\low}$.}
\label{fig:HardVertexDomainsLower}
\end{figure}

Figure~\ref{fig:HardVertexGensLower} shows the generators $C$ and $C'$ as intersection points in the Heegaard diagram of Figure~\ref{fig:HardVertex}; Figure~\ref{fig:HardVertexDomainsLower} shows the domains giving rise to the secondary matrix.

The bimodule $Y_{\fu,\low}$ is defined to be
\[
\xymatrix{
q^{-2} Y'_{\low}[-1] \ar@/^1.5pc/[rr]^{\Xi} & \oplus & Y'_{\low}
}
\] 
where $\Xi$ is the endomorphism of $Y'_{\low}$ with matrix
\[
\kbordermatrix{
& C & C' \\
C & w_1 + w_2 + e_1 & 0 \\ 
C' & 0 & w_1 + w_2 + e_1
}
\]

Thus, the secondary matrix of $Y_{\fu,\low}$ can be written as
\[
\kbordermatrix{
& C_2 & C'_2 & C_1 & C'_1 \\
C_2 & w_1^k w_2^l \otimes U_1^k U_2^l & 0 & 0 & 0 \\
C'_2 & w_1 w_2 + e_2 & w_1^k w_2^l \otimes U_1^k U_2^l & 0 & 0 \\
C_1 & w_1 + w_2 + e_1 & 0 & w_1^k w_2^l \otimes U_1^k U_2^l & 0 \\
C'_1 & 0 & w_1 + w_2 + e_1 &  w_1 w_2 + e_2 & w_1^k w_2^l \otimes U_1^k U_2^l \\
}
\]

Equivalently, the differential has matrix
\[
\kbordermatrix{
& C_2 & C'_2 & C_1 & C'_1 \\
C_2 & 0 & 0 & 0 & 0 \\
C'_2 & w_1 w_2 + e_2 & 0 & 0 & 0 \\
C_1 & w_1 + w_2 + e_1 & 0 & 0 & 0 \\
C'_1 & 0 & w_1 + w_2 + e_1 & w_1 w_2 + e_2 & 0 \\
}
\]
and right multiplication by $U_i$ or $w_i$ has matrix
\[
\kbordermatrix{
& C_2 & C'_2 & C_1 & C'_1 \\
C_2 & w_i & 0 & 0 & 0 \\
C'_2 & 0 & w_i & 0 & 0 \\
C_1 & 0 & 0 & w_i & 0 \\
C'_1 & 0 & 0 & 0 & w_i \\
}
\]
for $i \in \{1,2\}$. As with $Y'_{\midd}$ and $Y_{\fu,\midd}$, we can view $Y'_{\low}$ and $Y_{\fu,\low}$ as infinitely generated DA bimodules over $((\A_2)_{2\varepsilon_2}, (\A_{1,1})_{2\varepsilon_2})$. We will take this perspective and write the generators of $Y_{\fu,\low}$ as
\[
\{C_1, C'_1, C_2, C'_2\}
\]
times monomials in $w_1$ and $w_2$.

\begin{proposition}
As a left dg module over $(\A_2)_{2\varepsilon_2} = \F_2[e_1,e_2]$, $H_*(Y_{\fu,\low})$ is free of rank two. One can take $\{w_1 C'_1, C'_1\}$ to generate the homology.
\end{proposition}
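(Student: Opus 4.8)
The plan is to compute $H_*(Y_{\fu,\low})$ by recognizing the underlying left complex as a Koszul complex over a polynomial ring. First I would set $R \coloneqq (\A_2)_{2\varepsilon_2} = \F_2[e_1,e_2]$ and unwind the matrix notation. As a non-differential left $R$-module, $Y_{\fu,\low}$ is free on $\{w_1^k w_2^l C_2,\, w_1^k w_2^l C'_2,\, w_1^k w_2^l C_1,\, w_1^k w_2^l C'_1 : k,l \ge 0\}$; collecting the monomials in $w_1,w_2$, this is a free module over $R[w_1,w_2]$ on the four generators $C_2, C'_2, C_1, C'_1$ (after restriction to $(\A_{1,1})_{2\varepsilon_2}$, multiplication by $w_i$ on the output side of a secondary-matrix entry is merely a relabeling of generators, so $w_1, w_2$ act on the generator labels like honest polynomial variables). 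Reading off the differential of $Y_{\fu,\low}$ — combining the internal differential $\partial C' = (w_1 w_2 + e_2) C$ of each copy of $Y'_{\low}$ with $\Xi$, which is multiplication by $w_1 + w_2 + e_1$ from the second copy to the first — and writing $f_1 \coloneqq w_1 w_2 + e_2$, $f_2 \coloneqq w_1 + w_2 + e_1$, one finds
\[
\partial C'_1 = 0, \qquad \partial C_1 = f_1 C'_1, \qquad \partial C'_2 = f_2 C'_1, \qquad \partial C_2 = f_1 C'_2 + f_2 C_1,
\]
with all right-hand sides $R[w_1,w_2]$-linear in the generators.

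Thus the complex
\[
R[w_1,w_2]\,C_2 \longrightarrow R[w_1,w_2]\,C_1 \oplus R[w_1,w_2]\,C'_2 \longrightarrow R[w_1,w_2]\,C'_1
\]
is exactly the Koszul complex on the pair $(f_1, f_2)$ in $R[w_1,w_2] = \F_2[e_1,e_2,w_1,w_2]$ (signs being irrelevant in characteristic $2$). The next step is to verify that $(f_1,f_2)$ is a regular sequence: $f_1$ is a nonzero element of an integral domain, hence a nonzerodivisor, and $R[w_1,w_2]/(f_1) \cong \F_2[e_1,w_1,w_2]$ (eliminating $e_2 = w_1 w_2$), in which the image of $f_2$ is again a nonzero element of a domain. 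Hence the Koszul complex is a free resolution of the quotient, so $H_*(Y_{\fu,\low})$ is concentrated in the rightmost term and equals $R[w_1,w_2]/(f_1,f_2)$ as a left $R$-module.

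Finally I would compute this quotient directly. Using $f_2 = 0$ to substitute $w_2 = w_1 + e_1$ and then $f_1 = 0$,
\[
R[w_1,w_2]/(f_1,f_2) \;\cong\; R[w_1]\big/\big(w_1(w_1+e_1)+e_2\big) \;=\; R[w_1]\big/\big(w_1^2 + e_1 w_1 + e_2\big),
\]
which is free as an $R$-module with basis $\{1, w_1\}$. Tracing the identifications backwards, $1$ comes from the cycle $C'_1$ and $w_1$ from $w_1 C'_1$, so $H_*(Y_{\fu,\low})$ is free of rank two over $(\A_2)_{2\varepsilon_2}$ with basis $\{w_1 C'_1, C'_1\}$, as claimed.

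I expect the only delicate point to be the first step: carefully passing from the DA-bimodule matrix notation (with the $w_1, w_2$-equivariance convention, and the restriction of the right action from $\A_2 \otimes \F_2[w_1,w_2]$ to $\A_{1,1}$) to the honest complex of free $R[w_1,w_2]$-modules, making sure no grading shift sneaks into the differential and that $\Xi$ contributes precisely the off-diagonal term $f_2$ linking the two copies of $Y'_{\low}$. Once the complex is correctly identified, the rest is the standard fact that the Koszul complex on a regular sequence resolves the corresponding quotient, plus an elementary polynomial computation. Alternatively, one can avoid invoking Koszul complexes and argue by hand: $C'_1$ and $w_1 C'_1$ are cycles with no $R$-linear relations; every cycle in the $C_1, C'_2$ term is a $\partial C_2$-boundary because $f_1, f_2$ generate a height-two ideal in the domain $R[w_1,w_2]$; the $C_2$ term has no nonzero cycles for the same reason; and modulo boundaries every element of the $C'_1$ term reduces to an $R$-combination of $C'_1$ and $w_1 C'_1$ using $w_1 w_2 \equiv e_2$ and $w_2 \equiv w_1 + e_1$.
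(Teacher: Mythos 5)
Your proof is correct, and it takes a genuinely different route from the paper's. The paper argues directly: it identifies the cycles as the elements $w_1^k w_2^l C'_1$, uses the boundary relations $w_2 C'_1 = (w_1+e_1)C'_1$ and $w_1 w_2 C'_1 = e_2 C'_1$ to cut the generating set down to $\{C'_1, w_1 C'_1\}$, and then asserts that the $\F_2[e_1,e_2]$-submodule these generate meets the image of the differential trivially. You instead recognize the whole left complex as the Koszul complex on $(f_1,f_2)=(w_1w_2+e_2,\,w_1+w_2+e_1)$ in $\F_2[e_1,e_2,w_1,w_2]$, verify regularity, and identify the homology with the quotient ring $\F_2[e_1,e_2,w_1]/(w_1^2+e_1w_1+e_2)$, which is visibly free of rank two over $\F_2[e_1,e_2]$ with basis $\{1,w_1\}$. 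Your reading of the differential from the matrix notation is correct (columns are sources, and the $w$-equivariance convention does make $w_1,w_2$ act as honest polynomial variables on the generator labels), so the identification with the Koszul complex goes through. What your approach buys: the regular-sequence argument makes explicit why there are no cycles in the $C_2$ and $C_1\oplus C'_2$ spots beyond boundaries (the paper's claim that the kernel is spanned by the $C'_1$ terms is, read literally, slightly too strong --- e.g.\ $f_2 C_1 + f_1 C'_2 = \partial C_2$ is a cycle --- though harmless since those cycles are boundaries), and it makes the final "no relations among $C'_1, w_1C'_1$" step automatic rather than something to check by hand. One small slip in your prose: you write the internal differential of $Y'_{\low}$ as $\partial C' = (w_1w_2+e_2)C$, whereas it goes the other way, $\partial C = (w_1w_2+e_2)C'$; your displayed formulas $\partial C_1 = f_1 C'_1$, etc., are the correct ones and are what the rest of the argument uses, so nothing downstream is affected.
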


\begin{proof}
The kernel of the differential is spanned by the elements $w_1^k w_2^l C'_1$ and the image of $d$. In homology, we have the relation $w_2 C'_1 = (w_1 + e_1)C'_1$, so the homology is generated by elements $w_1^k C'_1$. We also have the relation $(w_1^2 + e_1 w_1)C'_1 = w_1 w_2 C'_1 = e_2 C'_1$, so the homology is generated by the two elements $C'_1$ and $w_1 C'_1$. The submodule generated by these two elements has zero intersection with the image of the differential, so we can identify the homology with this submodule.
\end{proof}

The left submodule of $Y_{\fu,\low}$ generated by $\{w_1 C'_1, C'_1 \}$ is not closed under right multiplication. However, the left submodule generated by the remaining generators is closed under right multiplication, and we can view $H_*(Y_{\fu,\low})$ as the quotient of $Y_{\fu,\low}$ by this sub-bimodule. It follows that $Y_{\fu,\low}$ is a formal dg bimodule, since the quotient map is a quasi-isomorphism to its homology.

\begin{proposition}\label{prop:ActionsOnHomologyLower}
The right action of $U_1$ on $H_*(Y_{\fu,\low})$ is given by
\[
\kbordermatrix{
& w_1 C'_1 & C'_1 \\
w_1 C'_1  & e_1 & 1 \\
C'_1 & e_2 & 0
},
\]
and the right action of $U_2$ is given by
\[
\kbordermatrix{
& w_1 C'_1 & C'_1 \\
w_1 C'_1 & 0 & 1 \\
C'_1 & e_2 & e_1
}.
\]
\end{proposition}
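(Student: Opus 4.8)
The plan is to compute directly with the quotient description of $H_*(Y_{\fu,\low})$ established just above. We know $Y_{\fu,\low}$ is formal, with $H_*(Y_{\fu,\low})$ the quotient bimodule of $Y_{\fu,\low}$ by the sub-bimodule generated by $\{C_1, C'_2, C_2\}$ (times monomials in $w_1, w_2$), and that as a left $\F_2[e_1,e_2]$-module the quotient is free of rank two on $\{w_1 C'_1, C'_1\}$. So I only need to compute the images of the two right multiplications $U_1$ and $U_2$ on these two generators, reduce modulo the sub-bimodule, and re-express the answers in the basis $\{w_1 C'_1, C'_1\}$ using the homology relations $w_2 C'_1 = (w_1 + e_1)C'_1$ and $w_1 w_2 C'_1 = e_2 C'_1$ (equivalently $w_1^2 C'_1 = e_1 w_1 C'_1 + e_2 C'_1$).

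First I would carry this out for $U_1$. On $Y_{\fu,\low}$ the right action of $U_1$ sends $C'_1 \mapsto w_1 C'_1$ (from the matrix for right multiplication by $U_i$), which is already in our chosen basis, giving the second column $(w_1 C'_1, C'_1) \mapsto (1, 0)$ in the stated matrix. For the generator $w_1 C'_1$, right multiplication by $U_1$ gives $w_1^2 C'_1$; applying the homology relation $w_1^2 C'_1 = e_1 w_1 C'_1 + e_2 C'_1$ yields $e_1 (w_1 C'_1) + e_2 (C'_1)$, i.e. the first column $(e_1, e_2)$. This matches the asserted matrix for $U_1$. Then I would repeat for $U_2$: here $C'_1 \mapsto w_2 C'_1 = (w_1 + e_1) C'_1 = (w_1 C'_1) + e_1 (C'_1)$, giving the column $(1, e_1)$, and $w_1 C'_1 \mapsto w_1 w_2 C'_1 = e_2 C'_1$, giving the column $(0, e_2)$; this is the stated matrix for $U_2$.

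There is essentially no obstacle here — the proposition is a short bookkeeping verification once the quotient-bimodule structure and the two homology relations are in hand. The one point requiring a moment of care is making sure that the reductions are performed \emph{in the quotient} (so that contributions landing in the span of $\{C_1, C'_2, C_2\}$, times monomials, are correctly discarded) rather than in $Y_{\fu,\low}$ itself; since right multiplication by $U_1$ and $U_2$ visibly preserves the sub-bimodule generated by those three generators (each $U_i$ acts diagonally by $w_i$), the quotient action is well-defined and the computation above is valid. I would also note in passing the consistency check that $U_1$ and $U_2$ commute as operators on $H_*(Y_{\fu,\low})$ and that each satisfies the relations of $(\A_2)_{2\varepsilon_2} = \F_2[e_1,e_2]$ acting on the left (indeed one can check $U_1 U_2$ acts by $\left[\begin{smallmatrix} e_2 & e_1 \\ e_1 e_2 & e_2 \end{smallmatrix}\right]$, symmetric in the two, and $U_1 + U_2$, $U_1 U_2$ recover multiplication by $e_1$, $e_2$ as they must), which confirms the matrices are internally consistent.
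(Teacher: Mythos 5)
Your computation is correct and is essentially the paper's own (implicit) argument: the paper states the proposition without proof as the immediate consequence of the quotient description of $H_*(Y_{\fu,\low})$ together with the two homology relations $w_2 C'_1 = (w_1+e_1)C'_1$ and $w_1w_2C'_1 = e_2C'_1$, which is exactly the reduction you perform. One small slip in your parenthetical consistency check: with the stated matrices, $U_1U_2$ acts by $e_2$ times the identity (and $U_1+U_2$ by $e_1$ times the identity), not by $\left[\begin{smallmatrix} e_2 & e_1 \\ e_1e_2 & e_2 \end{smallmatrix}\right]$; this does not affect the proof.
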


We can use the above matrices to define the fully-simplified bimodule $Y_{\fs,\low}$; we will not need a half-simplified version. As above, we streamline notation by relabeling $C'_1$ as $C'$.
\begin{definition}
The (ordinary) bimodule $Y_{\fs,\low}$ over $((\A_2)_{2\varepsilon_2}, (\A_{1,1})_{2\varepsilon_2})$ has primary matrix
\[
\kbordermatrix{
& \oo \\
\o & w_1 C' \quad C'
}
\]
and right actions of $U_1$ and $U_2$ given by the matrices in Proposition~\ref{prop:ActionsOnHomologyLower}. We have 
\[
\deg^q(w_1 C') = -1, \quad \deg^h(w_1 C') = 2, \quad \deg^q(C') = 1, \quad \deg^h(C') = 0.
\]
\end{definition}

\subsection{Decategorification of \texorpdfstring{$Y$}{Y}}

\begin{proposition}

The AD bimodule ${^{\vee}}Y$ categorifies the map from $G_0(\A_2)$ to $G_0(\A_{1,1})$ with matrix
\[
\kbordermatrix{
& {[S_{\u}]} & {[S_{\o}]} \\
{[S_{\uu}]} & 0 & 0 \\
{[S_{\ou}]} & -1 & 0 \\
{[S_{\uo}]} & q^{-1} & 0 \\
{[S_{\oo}]} & 0 & q + q^{-1}
}.
\]
This map can be identified with $1_{1,1} F_{\gl(2)} 1_{2,0}$ (or equivalently with $1_{1,1} E_{\gl(2)} 1_{0,2}$) mapping from $\wedge_q^2 V$ to $V^{\otimes 2}$.
\end{proposition}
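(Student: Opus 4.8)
The plan is to reduce to the finitely generated simplified models and then apply the decategorification rule of Section~\ref{sec:SplitGG}. Recall that $Y$ vanishes in the upper weight space, that $Y_{\midd}$ is homotopy equivalent to the finitely generated DA bimodule $Y_{\fs,\midd}$ over $((\A_2)_{\varepsilon_1+\varepsilon_2},(\A_{1,1})_{\varepsilon_1+\varepsilon_2})$ by the simplifications leading up to Section~\ref{sec:HardVertexMiddleWSSimpl}, and that $Y_{\low}$ is quasi-isomorphic to the finitely generated bimodule $Y_{\fs,\low}$ over $((\A_2)_{2\varepsilon_2},(\A_{1,1})_{2\varepsilon_2})$ via the quotient to homology (using Proposition~\ref{prop:ActionsOnHomologyLower}). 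Since homotopy equivalence and quasi-isomorphism of finitely generated DA or AD bimodules preserve the $q$-graded Euler characteristics of the relevant morphism spaces, it suffices to compute $[{^{\vee}}Y_{\fs,\midd}]$ and $[{^{\vee}}Y_{\fs,\low}]$; one should note explicitly that the infinitely generated intermediaries $Y'_{\midd},Y_{\fu,\midd},Y_{\hs,\midd}$ and $Y'_{\low},Y_{\fu,\low}$ drop out, so that the decategorification is legitimately taken on the finitely generated ends.

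Next I would apply the rule for decategorifying the dual of a DA bimodule in matrix notation: discard the secondary matrix, transpose the primary matrix, and replace each generator $x$ (with the gradings of the \emph{undualized} bimodule) by $(-1)^{\deg^h(x)}\,q^{-\deg^q(x)}$. For $Y_{\fs,\midd}$ the primary matrix has the generator $A'$ ($\deg^q=0$, $\deg^h=1$, contributing $-1$) in row $\u$, column $\ou$, and the generator $B'$ ($\deg^q=1$, $\deg^h=0$, contributing $q^{-1}$) in row $\u$, column $\uo$. For $Y_{\fs,\low}$ the primary matrix has, in row $\o$, column $\oo$, the two-element set $\{w_1C',C'\}$ with $\deg^q(w_1C')=-1$, $\deg^h(w_1C')=2$ (contributing $q$) and $\deg^q(C')=1$, $\deg^h(C')=0$ (contributing $q^{-1}$), for a total of $q+q^{-1}$; and in the upper weight space $Y$ is zero, so the row $[S_{\uu}]$ vanishes. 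Transposing and assembling over the weight-space decomposition, the matrix of $[{^{\vee}}Y]\co G_0(\A_2)\to G_0(\A_{1,1})$ in the bases $\{[S_{\u}],[S_{\o}]\}$ and $\{[S_{\uu}],[S_{\ou}],[S_{\uo}],[S_{\oo}]\}$ comes out to be exactly the asserted matrix.

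For the identification with the skew Howe map I would invoke Corollary~\ref{cor:GeneralKDecat} to identify $G_0(\A_2)\cong\wedge_q^2 V$ and $G_0(\A_{1,1})\cong V^{\otimes 2}$ as $\Udot_q(\gl(1|1)^-)$-modules, with $[S_{\u}],[S_{\o}]$ and $[S_{\uu}],[S_{\ou}],[S_{\uo}],[S_{\oo}]$ going to the standard bases and respecting the $\gl(1|1)$-weight-space decompositions; then I would compare the matrix above with the explicit matrix of $1_{1,1} F_{\gl(2)} 1_{2,0}$ (equivalently $1_{1,1} E_{\gl(2)} 1_{0,2}$) acting $\wedge_q^2 V\to V^{\otimes 2}$ recorded in Appendix~\ref{sec:AppendixSkewHoweMaps}. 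They match entry by entry, which gives the stated identification.

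The computation itself is routine, so the main obstacle is really the grading bookkeeping: duality reverses both $\deg^q$ and $\deg^h$, so each generator contributes $(-1)^{\deg^h}q^{-\deg^q}$ read off the \emph{non}-dualized bimodule, and it is easy to drop a sign or invert a $q$ in the wrong place; one must also make sure the weight-space labelling used here is the one fixed for the skew Howe maps in the appendix. A secondary point is to confirm that $Y_{\low}$ is honestly quasi-isomorphic to $Y_{\fs,\low}$ — via the formality statement that the quotient of $Y_{\fu,\low}$ onto its homology is a quasi-isomorphism of bimodules — so that the Euler characteristic is well defined in the lower weight space, where the unsimplified bimodules are only infinitely generated.
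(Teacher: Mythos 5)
Your computation is correct and is exactly the argument the paper leaves implicit (the proposition is stated without proof there, as an immediate application of the matrix-notation decategorification rule from Section~\ref{sec:SplitGG} to the primary matrices of $Y_{\fs,\midd}$ and $Y_{\fs,\low}$, compared against the $K=2$ specialization of the formula for $1_{K-1,1}F_{\gl(2)}1_{K,0}$ in Appendix~\ref{sec:AppendixSkewHoweMaps}). Your added care about reducing to the finitely generated models --- noting that the infinitely generated intermediaries drop out and that cancellation of a pair connected by an identity-morphism differential does not change the $q$-graded Euler characteristic --- is a reasonable supplement but not a departure from the paper's route.
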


\subsection{2-representation morphism structure}

We will give
\[
Y_{\fs} \coloneqq Y_{\fs,\midd} \oplus Y_{\fs,\low}
\]
the structure of a $1$-morphism of $2$-representations of $\U^-$ by defining an isomorphism
\[
\varphi \co Y_{\fs} \boxtimes F^{\vee} \to F^{\vee} \boxtimes Y_{\fs}.
\]
The two bimodules in question are only nonzero as bimodules over $((\A_2)_{\varepsilon_1 + \varepsilon_2}, (\A_{1,1})_{2\varepsilon_2})$, so we need to give an isomorphism
\[
\varphi \co Y_{\fs,\midd} \boxtimes F^{\vee} \to F^{\vee} \boxtimes Y_{\fs,\low}.
\]

Using the matrix-based formulas for the box tensor product of DA bimodules in Section~\ref{sec:BoxTensor}, we can describe both $Y_{\fs,\midd} \boxtimes F^{\vee}$ and $F^{\vee} \boxtimes Y_{\fs,\low}$.

\begin{proposition}

The (a priori DA) bimodule $Y_{\fs,\midd} \boxtimes F^{\vee}$ has primary matrix
\[
\kbordermatrix{ 
& \oo \\ 
\u & A' \xi_{\ou,\oo} \quad B' \xi_{\uo,\oo}
}
\]
and secondary matrix
\[
\kbordermatrix{
& A' \xi_{\ou,\oo} & B' \xi_{\uo,\oo} \\
A' \xi_{\ou,\oo} & e_1^{k+1} \otimes U_1^{k+1} & \begin{matrix} e_1^k \otimes U_1^{k+1} \\+ e_1^k \otimes U_2^{k+1} \end{matrix} \\ 
B' \xi_{\uo,\oo} & 0 & e_1^{k+1} \otimes U_2^{k+1}
}.
\]
\end{proposition}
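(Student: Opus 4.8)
The plan is to compute $Y_{\fs,\midd} \boxtimes F^{\vee}$ directly using Procedure~\ref{proc:BoxTensorBimods}, since both factors are already given explicitly in matrix notation and the computation is entirely mechanical. First I would determine the primary matrix: the primary matrix for $Y_{\fs,\midd}$ (over the weight-$(\varepsilon_1+\varepsilon_2)$ pieces) has entries $A'$ in column $\ou$ and $B'$ in column $\uo$ with source object $\u$; the relevant block of the primary matrix for $F^{\vee}$ over $\A_{1,1}$ is ${_{\varepsilon_1+\varepsilon_2}}(F^{\vee})_{2\varepsilon_2}$, which has $\xi_{\ou,\oo}$ in row $\ou$ and $\xi_{\uo,\oo}$ in row $\uo$, with target object $\oo$. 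Multiplying these, the only surviving generators are $A' \xi_{\ou,\oo}$ and $B' \xi_{\uo,\oo}$, both in column $\oo$ and row $\u$, matching the claimed primary matrix. The $q$- and $h$-degrees are additive under $\boxtimes$ (using the given degrees $\deg^q(A') = 0, \deg^h(A')=1, \deg^q(\xi_{\ou,\oo}) = -1, \deg^h(\xi_{\ou,\oo})=1$, etc.), though these are not recorded in the statement so no check is needed.

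Next I would compute the secondary matrix via Procedure~\ref{proc:BoxTensorBimods}. For each entry, one runs over sequences $y = y_1, \ldots, y_i = y'$ of generators of $F^{\vee}$, a term $a' \otimes (a'_1, \ldots, a'_{i-1})$ of the secondary matrix of $Y_{\fs,\midd}$, and matching terms of the secondary matrix of $F^{\vee}$. The secondary matrix of $Y_{\fs,\midd}$ has: in the $(A',A')$ slot, $e_1^{k+1} \otimes U_1^{k+1}$; in the $(A',B')$ slot, $e_1^k \otimes (\lambda, U_1^{k+1}) + e_1^k \otimes (U_2^{k+1},\lambda)$; in the $(B',B')$ slot, $e_1^{k+1} \otimes U_2^{k+1}$. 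For $F^{\vee}$ over $\A_{1,1}$ restricted to this block, the relevant secondary matrix is the one given in the final $\A_{1,1}$ example: $(\xi_{\ou,\oo},\xi_{\ou,\oo})$ entry $U_1^{m+1} \otimes U_1^{m+1}$, $(\xi_{\ou,\oo},\xi_{\uo,\oo})$ entry $\lambda$, and $(\xi_{\uo,\oo},\xi_{\uo,\oo})$ entry $U_2^{m+1} \otimes U_2^{m+1}$. The key point is that the algebra inputs $U_1, U_2$ in the $Y_{\fs,\midd}$ secondary matrix must be matched against output terms of the $F^{\vee}$ secondary matrix: $U_1^{k+1}$ matches $U_1^{k+1}$ from the $\xi_{\ou,\oo}$-diagonal with the right power of $e_1$, $U_2^{k+1}$ matches $U_2^{k+1}$ from the $\xi_{\uo,\oo}$-diagonal, and the single $\lambda$ input in $(U_2^{k+1},\lambda)$ matches the $\lambda$-term in the $(\xi_{\ou,\oo},\xi_{\uo,\oo})$ slot. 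Tracking the bookkeeping (the resulting inputs become the residual algebra inputs $U_i^{k+1}$ on the right factor, the $e_1$ powers carry through from the left factor) gives the claimed secondary matrix: diagonal entries $e_1^{k+1} \otimes U_1^{k+1}$ and $e_1^{k+1} \otimes U_2^{k+1}$, and the off-diagonal entry $e_1^k \otimes U_1^{k+1} + e_1^k \otimes U_2^{k+1}$.

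The main subtlety — and the step I would be most careful about — is the middle off-diagonal entry, where the $(\lambda, U_1^{k+1})$ term and the $(U_2^{k+1}, \lambda)$ term of $Y_{\fs,\midd}$'s secondary matrix get processed differently. For the $(\lambda, U_1^{k+1})$ term: the $\lambda$ input must be absorbed by the $\lambda$-term connecting $\xi_{\ou,\oo}$ to $\xi_{\uo,\oo}$ in $F^{\vee}$ (which is an $i=1$-type term with no algebra output beyond $\lambda$ itself being the differential output—actually $\lambda$ appears as an output $\lambda \otimes ()$ in the $F^{\vee}$ secondary matrix), and then $U_1^{k+1}$ is matched against a $U_1^{k+1} \otimes U_1^{k+1}$ term on the $\xi_{\ou,\oo}$ self-loop. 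For the $(U_2^{k+1}, \lambda)$ term, $U_2^{k+1}$ is matched first against $U_2^{k+1} \otimes U_2^{k+1}$ on the $\xi_{\uo,\oo}$ self-loop and then $\lambda$ against the connecting term. I would carefully verify that both routes land in the $A' \xi_{\ou,\oo} \to B' \xi_{\uo,\oo}$ slot and produce, respectively, $e_1^k \otimes U_1^{k+1}$ and $e_1^k \otimes U_2^{k+1}$, so that the stated sum is correct. Once the primary and secondary matrices are verified, the proposition follows immediately from Procedure~\ref{proc:BoxTensorBimods}, and no separate well-definedness check is needed since box tensor products of (left bounded) DA bimodules are automatically well-defined by Section~\ref{sec:BoxTensor}.
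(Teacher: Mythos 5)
Your proposal is correct and matches the paper's (essentially unstated) proof, which simply invokes the box tensor product procedure of Section~\ref{sec:BoxTensor}; your explicit tracking of the two routes for the off-diagonal entry — $(\lambda, U_1^{k+1})$ matched through the $\lambda$-term then the $\xi_{\ou,\oo}$ self-loop, and $(U_2^{k+1},\lambda)$ matched through the $\xi_{\uo,\oo}$ self-loop then the $\lambda$-term — is exactly the right bookkeeping. Just be careful with the row/column convention when you say the contributions land in the ``$A'\xi_{\ou,\oo} \to B'\xi_{\uo,\oo}$ slot'': the nonzero off-diagonal entry sits in row $A'\xi_{\ou,\oo}$ and column $B'\xi_{\uo,\oo}$, i.e.\ it records terms of $\delta^1$ applied to $B'\xi_{\uo,\oo}$ with output generator $A'\xi_{\ou,\oo}$.
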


This bimodule is no longer $A_{\infty}$, and it has no differential. The right action of $U_1$ is given by the matrix
\[
\kbordermatrix{
& A' \xi_{\ou,\oo} & B' \xi_{\uo,\oo} \\
A' \xi_{\ou,\oo} & e_1 & 1 \\ 
B' \xi_{\uo,\oo} & 0 & 0
},
\]
and the right action of $U_2$ is given by the matrix
\[
\kbordermatrix{
& A' \xi_{\ou,\oo} & B' \xi_{\uo,\oo} \\
A' \xi_{\ou,\oo} & 0 & 1 \\ 
B' \xi_{\uo,\oo} & 0 & e_1
}.
\]

\begin{proposition}
The DA (or ordinary) bimodule $F^{\vee} \boxtimes Y_{\fs,\low}$ has primary matrix
\[
\kbordermatrix{
& \oo \\
\u & \xi_{\u,\o} (w_1 C') \quad \xi_{\u,\o} C'
}.
\]
The differential is zero; the matrix for the right action of $U_1$ is
\[
\kbordermatrix{
& \xi_{\u,\o} w_1 C' & \xi_{\u,\o} C' \\
\xi_{\u,\o} w_1 C' & e_1 & 1 \\
\xi_{\u,\o} C' & 0 & 0
},
\]
and the matrix for the right action of $U_2$ is
\[
\kbordermatrix{
& \xi_{\u,\o} w_1 C' & \xi_{\u,\o} C' \\
\xi_{\u,\o} w_1 C' & 0 & 1 \\
\xi_{\u,\o} C' & 0 & e_1
}.
\]
\end{proposition}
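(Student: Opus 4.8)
The plan is to compute $F^{\vee} \boxtimes Y_{\fs,\low}$ by a direct application of Procedure~\ref{proc:BoxTensorBimods}, in exact parallel with the proof of the preceding proposition on $Y_{\fs,\midd} \boxtimes F^{\vee}$. Here the left-hand factor $F^{\vee}$ is the DA bimodule over $\A_2$ from Section~\ref{sec:Fundamental2RepExamples}: as a bimodule over $((\A_2)_{\varepsilon_1+\varepsilon_2},(\A_2)_{2\varepsilon_2})$ it has the single generator $\xi_{\u,\o}$ (of $q$- and $h$-degree $0$), vanishing differential, right action of $e_1$ equal to multiplication by $e_1$, and right action of $e_2$ equal to zero; in particular $\delta^1_i = 0$ for $i > 2$, and after adjoining the identity term as in the remark following Procedure~\ref{proc:DAWellDef} its secondary matrix has the single entry $\id_{\u}\otimes(\id_{\o}) + \sum_{m \geq 1} e_1^m \otimes (e_1^m)$. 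The right-hand factor is the ordinary bimodule $Y_{\fs,\low}$ over $((\A_2)_{2\varepsilon_2},(\A_{1,1})_{2\varepsilon_2})$ with generators $w_1 C'$ and $C'$, vanishing differential, and right actions of $U_1,U_2$ recorded in Proposition~\ref{prop:ActionsOnHomologyLower}. Both factors are left bounded ($Y_{\fs,\low}$ is finitely generated and $F^{\vee}$ has $\delta^j_i$ supported on $j = i-1$), so the box tensor product is well-defined.

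First I would carry out the primary-matrix step: multiplying the one-entry primary matrix of $F^{\vee}$ by the one-entry, two-element primary matrix of $Y_{\fs,\low}$ gives the single entry $\{\xi_{\u,\o}(w_1 C'),\ \xi_{\u,\o}C'\}$ in row $\u$ and column $\oo$, which is the claimed primary matrix; since $\boxtimes$ introduces no degree shift, the $q$- and $h$-degrees of the two generators are simply the sums of those of the factors.

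Next I would compute the secondary matrix. Because $F^{\vee}$ over $\A_2$ has vanishing differential and vanishing $\delta^1_i$ for $i > 2$, in the sum of Procedure~\ref{proc:BoxTensorBimods} only the index $i = 2$ contributes, and each contribution feeds a single $(\A_2)_{2\varepsilon_2}$-output $b'_1 \in \F_2[e_1,e_2]$ of a term of the secondary matrix of $Y_{\fs,\low}$ into the right action of $F^{\vee}$; the matching requires that $b'_1$ contain a monomial $e_1^m$ with $m \geq 0$ (the case $m = 0$ coming from the adjoined identity term $\id_{\u}\otimes(\id_{\o})$). Thus the secondary matrix of $F^{\vee}\boxtimes Y_{\fs,\low}$ is obtained from that of $Y_{\fs,\low}$ by applying the $\F_2$-algebra homomorphism $\F_2[e_1,e_2]\to\F_2[e_1] = (\A_2)_{\varepsilon_1+\varepsilon_2}$ fixing $e_1$ and sending $e_2$ to $0$ to each output coefficient. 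Applying this substitution to the $U_1$- and $U_2$-action matrices of Proposition~\ref{prop:ActionsOnHomologyLower} (replacing $e_2$ by $0$ throughout) yields precisely the claimed matrices for the right actions of $U_1$ and $U_2$, and likewise shows the differential vanishes; no longer-input $A_{\infty}$ terms can appear, since producing one would require $F^{\vee}$ to absorb two or more $(\A_2)_{2\varepsilon_2}$-outputs, which its vanishing higher $\delta$ forbids, so the box tensor product is an ordinary bimodule as asserted. As a consistency check I would confirm that the substituted $U_1$- and $U_2$-matrices commute (both products vanish in characteristic $2$), as they must over $(\A_{1,1})_{2\varepsilon_2} = \F_2[U_1,U_2]$.

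The computation is routine; the one place that needs care — and the likely source of an error — is the bookkeeping of identity terms. One must remember to include the diagonal term $\id_{\u}\otimes(\id_{\o})$ in the secondary matrix of $F^{\vee}$ when running the box tensor procedure (equivalently, that the homomorphism $e_2 \mapsto 0$ sends $1$ to $1$), since without it the terms of the action of $Y_{\fs,\low}$ with trivial $(\A_2)_{2\varepsilon_2}$-coefficient — such as $\delta^1_2(C',U_1) = 1\otimes(w_1 C')$ — would be dropped, and the resulting $U_1$ and $U_2$ matrices would be incorrect.
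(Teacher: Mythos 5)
Your computation is correct and is exactly what the paper does: the paper proves this proposition (and its companions) by a direct application of Procedure~\ref{proc:BoxTensorBimods}, and your execution — reducing the box tensor with $F^{\vee}$ over $\A_2$ to the substitution $e_2\mapsto 0$, $e_1\mapsto e_1$ on the output coefficients of $Y_{\fs,\low}$, with the identity term of $F^{\vee}$ accounting for the unit entries — reproduces the stated matrices. Your closing remark about not dropping the $\id_{\u}\otimes(\id_{\o})$ term is precisely the right point of care.
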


\begin{definition}\label{def:YHardFS2RepAlpha}
We will write $Y_{\fs}$ for the 1-morphism between 2-representations of $\U^-$ given by $(Y_{\fs}, \varphi)$, where 
\[
\varphi\co Y_{\fs} \boxtimes F^{\vee} \to F^{\vee} \boxtimes Y_{\fs}
\]
is given by the matrix
\[
\kbordermatrix{
& A' \xi_{\ou,\oo} & B' \xi_{\uo,\oo} \\
\xi_{\u,\o} w_1 C' & 1 & 0 \\ 
\xi_{\u,\o} C' & 0 & 1
}
\]
as a map from $Y_{\fs,\midd} \boxtimes F^{\vee}$ to $F^{\vee} \boxtimes Y_{\fs,\low}$ ($\varphi$ is zero as a map between the other summands).
\end{definition}

As in Section~\ref{sec:Lambda2RepMorphism}, the square
\[
\xymatrix{ 
0 = Y_{\upp,\fs} \boxtimes (F^{\vee})^{\boxtimes 2} \ar[rr]^{\varphi \boxtimes \id_{F^{\vee}}} \ar[d]_{\id_{Y_{\upp,\fs}} \boxtimes \tau} && F^{\vee} \boxtimes Y_{\midd,\fs} \boxtimes F^{\vee} \ar[rr]^{\id_{F^{\vee}} \boxtimes \varphi} && (F^{\vee})^{\boxtimes 2} \boxtimes Y_{\low,\fs} \ar[d]^{\tau \boxtimes \id_{Y_{\low,\fs}}} \\
0 = Y_{\upp,\fs} \boxtimes (F^{\vee})^{\boxtimes 2} \ar[rr]^{\varphi \boxtimes \id_{F^{\vee}}} && F^{\vee} \boxtimes Y_{\midd,\fs} \boxtimes F^{\vee} \ar[rr]^{\id_{F^{\vee}} \boxtimes \varphi} && (F^{\vee})^{\boxtimes 2} \boxtimes Y_{\low,\fs}
}
\]
automatically commutes ($Y_{\upp,\fs} = 0$), so $(Y_{\fs},\varphi)$ is a valid $1$-morphism.

\section{Bimodules for compositions of two trivalent vertices}\label{sec:SimpleWebs}

\subsection{The relevant Heegaard diagrams}

Figure~\ref{fig:SingularCrossing} shows two webs, a ``singular crossing'' composed of two trivalent vertices and the other possible composition of these vertices, together with two Heegaard diagrams. These diagrams are obtained from the two possible ways of gluing the diagram of Figure~\ref{fig:HardVertex} to the diagram of Figure~\ref{fig:EasyVertex} vertically, after some handleslides and destabilizations. In this section, we take tensor products of the bimodules for the hard and easy trivalent vertices in both directions to get bimodules for the singular crossing and other composite web.

\begin{figure}
\includegraphics[scale=0.4]{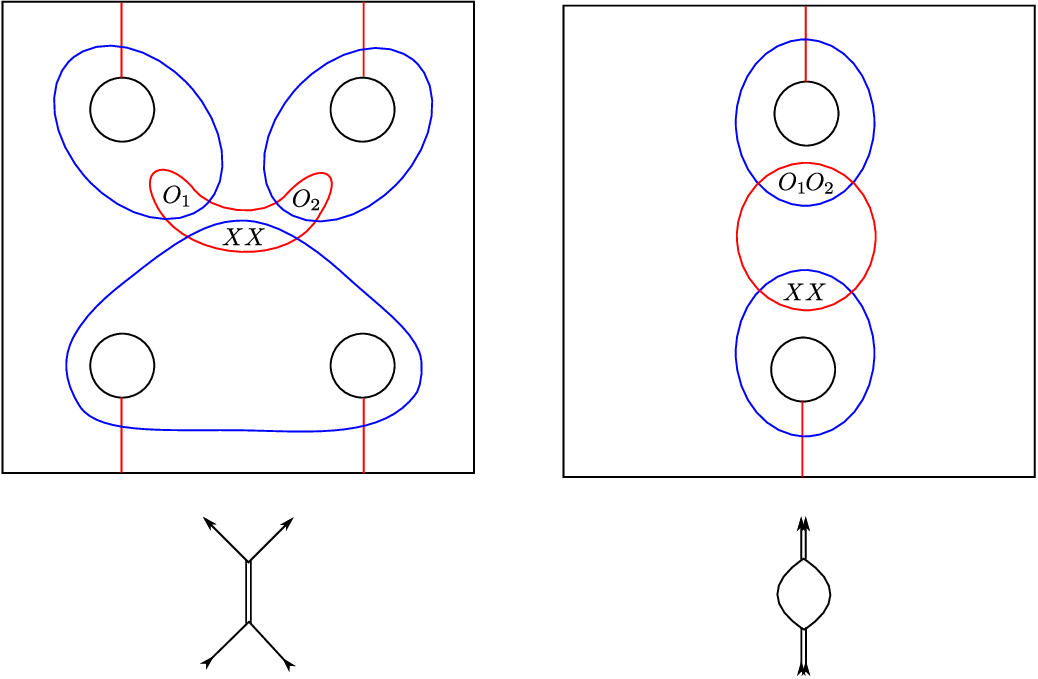}
\caption{Heegaard diagrams for compositions of two trivalent vertices}
\label{fig:SingularCrossing}
\end{figure}

\begin{remark}
Like the diagram of Figure~\ref{fig:HardVertex}, the diagrams in Figure~\ref{fig:SingularCrossing} have regions with multiple $X$ or $O$ basepoints, which are typically not allowed for the multi-pointed diagrams used in knot and link Floer homology (see e.g. \cite{HFLOrig}). The diagram for a singular crossing, however, is more-or-less standard; Figure~\ref{fig:OSSzDiag} shows a local version of the Heegaard diagram for a singular crossing defined in \cite{OSSz} (see also \cite{ManionDiagrams, ManionSingular}). It is reasonable to suppose that the two diagrams of Figure~\ref{fig:SingularCrossing} represent the complements of their corresponding webs in $D^2 \times I$ with certain sutured structures on the boundary (see Figure~\ref{fig:Cobordisms} and Remark~\ref{rem:WeightedSutures} in the introduction), although it would be good to have a general theory of such generalized diagrams and the sutured 3-manifolds or cobordisms they represent.
\end{remark}

\begin{remark}
Bimodules constructed literally from the Heegaard diagrams in Figure~\ref{fig:SingularCrossing} would be more like the fully unsimplified bimodules considered in Section~\ref{sec:HardVertex}, due to the presence of $O$ basepoints in the diagram. We will focus on the fully simplified bimodules instead.
\end{remark}

\begin{figure}
\includegraphics[scale=0.6]{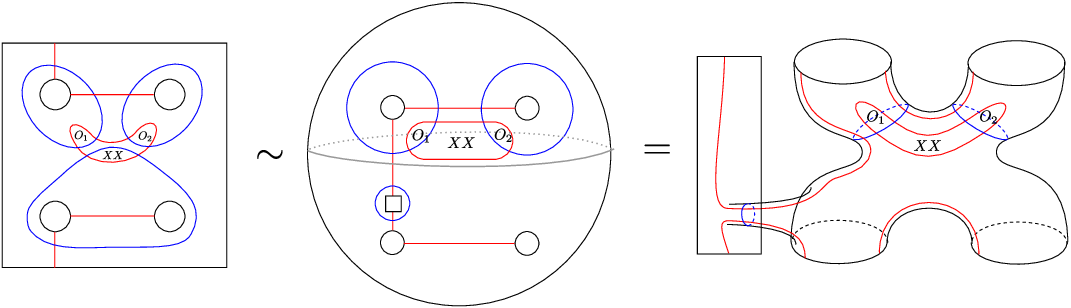}
\caption{Ozsv{\'a}th--Stipsicz--Szab{\'o} diagram for a singular crossing (the leftmost two figures are related by handleslides of $\beta$ circles).}
\label{fig:OSSzDiag}
\end{figure}

\subsection{Singular crossing}\label{sec:SingularCrossing}

\begin{definition}
Let 
\[
X_{\midd} \coloneqq \Lambda_{\midd} \boxtimes Y_{\fs,\midd}
\]
and
\[
X_{\low} \coloneqq \Lambda_{\low} \boxtimes Y_{\fs,\low}.
\]
\end{definition}

\begin{proposition}
The DA bimodule $X_{\midd}$ has primary matrix 
\[
\kbordermatrix{
& \ou & \uo \\
\ou & XA' & XB'\\
\uo & YA' & YB'
}
\]
and secondary matrix
\[
\kbordermatrix{
& XA' & YA' & XB' & YB' \\
XA' & U_1^{k+1} \otimes U_1^{k+1} & \lambda & \begin{matrix} U_1^k \otimes (\lambda, U_1^{k+1}) \\+ U_1^k \otimes (U_2^{k+1}, \lambda)\end{matrix} & 0 \\
YA' & 0 & U_2^{k+1} \otimes U_1^{k+1} & 0 & \begin{matrix} U_2^k \otimes (U_2^{k+1}, \lambda) \\+ U_2^k \otimes (\lambda, U_1^{k+1})\end{matrix} \\
XB' & 0 & 0 & U_1^{k+1} \otimes U_2^{k+1} & \lambda \\
YB' & 0 & 0 & 0 & U_2^{k+1} \otimes U_2^{k+1}
}.
\]
We have
\begin{itemize}
\item $\deg^q(XA') = -1$, $\deg^h(XA') = 2$,
\item $\deg^q(YA') = 0$, $\deg^h(YA') = 1$,
\item $\deg^q(XB') = 0$, $\deg^h(XB') = 1$,
\item $\deg^q(YB') = 1$, $\deg^h(YB') = 0$.
\end{itemize}
\end{proposition}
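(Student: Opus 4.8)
The plan is to compute $X_{\midd} = \Lambda_{\midd} \boxtimes Y_{\fs,\midd}$ directly using Procedure~\ref{proc:BoxTensorBimods}, since both factors are given explicitly in matrix notation. First I would recall that $\Lambda_{\midd}$ is a DA bimodule over $((\A_{1,1})_{\varepsilon_1 + \varepsilon_2}, (\A_2)_{\varepsilon_1 + \varepsilon_2})$ with generators $X, Y$ (in the $\ou, \uo$ rows respectively) and $\delta^1_i = 0$ for $i > 2$, so its secondary matrix consists of the differential entry $X \to \lambda \otimes Y$ (wait — as a DA bimodule over these algebras, the left algebra is $\A_{1,1}$, so the output sits in $\A_{1,1}$; the differential has $X \xmapsto{} \lambda Y$ meaning the entry is $\lambda \otimes ()$ in row $Y$, column $X$), together with the $e_1$-action entries $U_1 \otimes e_1$ in row $X$ column $X$ and $U_2 \otimes e_1$ in row $Y$ column $Y$, and their powers. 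Then $Y_{\fs,\midd}$ is the DA bimodule over $((\A_2)_{\varepsilon_1 + \varepsilon_2}, (\A_{1,1})_{\varepsilon_1 + \varepsilon_2})$ with generators $A', B'$ and the secondary matrix displayed in Section~\ref{sec:HardVertexMiddleWSSimpl}, which has genuine higher $\delta^1_i$ terms (the entries with $(\lambda, U_1^{k+1})$ and $(U_2^{k+1},\lambda)$).

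The main computation is then bookkeeping: the primary matrix of $X_{\midd}$ is the product of the primary matrices, giving the four generators $XA', XB', YA', YB'$ as claimed, with the row/column indexing $\{\ou, \uo\} \times \{\ou, \uo\}$ correct because $X$ sits over $\ou$, $Y$ over $\uo$ (for the $\A_{1,1}$ side) while $A'$ corresponds to $\ou$ and $B'$ to $\uo$ (for the $\A_2$ side). For the secondary matrix I would run through the recipe: for each term $a' \otimes (a_1, \ldots, a_{i-1})$ in the secondary matrix of $\Lambda_{\midd}$ (the \emph{outer} factor, whose outputs land in $\A_{1,1}$) and each chain of secondary-matrix terms of $Y_{\fs,\midd}$ whose outputs (elements of $\A_2$, i.e. powers of $e_1$) match the inputs $a_1, \ldots, a_{i-1}$, concatenate the $Y_{\fs,\midd}$-inputs to produce the new input sequence with the $\Lambda_{\midd}$-output $a'$ on the left. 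Concretely: the diagonal entries $U_1^{k+1} \otimes U_1^{k+1}$ etc. come from pairing the $e_1$-power action terms of $\Lambda_{\midd}$ with the corresponding $e_1$-power terms of $Y_{\fs,\midd}$; the off-diagonal $\lambda$ entries ($XA' \to YA'$ and $XB' \to YB'$) come from the $\lambda \otimes ()$ differential of $\Lambda_{\midd}$ paired with the identity on $Y_{\fs,\midd}$; and the mixed entries like $U_1^k \otimes (\lambda, U_1^{k+1}) + U_1^k \otimes (U_2^{k+1}, \lambda)$ in row $XA'$, column $XB'$ come from feeding the $e_1^k \otimes (\lambda, U_1^{k+1}) + e_1^k \otimes (U_2^{k+1}, \lambda)$ term of $Y_{\fs,\midd}$ through the $e_1$-equivariant structure of $\Lambda_{\midd}$, which multiplies the output $e_1^k$ against the appropriate $U$-power from $\Lambda_{\midd}$'s action (giving $U_1^k$ when starting from $X$, etc.). The gradings follow by adding $\deg^q$ and $\deg^h$ of the two tensor factors: e.g. $\deg^q(XA') = \deg^q(X) + \deg^q(A') = -1 + 0 = -1$ and $\deg^h(XA') = 1 + 1 = 2$, and similarly for the others.

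Well-definedness of $X_{\midd}$ as a DA bimodule is automatic: by the discussion in Section~\ref{sec:BoxTensor}, box tensor products of left-bounded DA bimodules are well-defined DA bimodules (both $\Lambda_{\midd}$ and $Y_{\fs,\midd}$ are finitely generated and one checks left boundedness — $\Lambda_{\midd}$ has $\delta^1_i = 0$ for $i>2$ so it is trivially left bounded, and $Y_{\fs,\midd}$ is left bounded since each fixed-length input contributes only finitely many output tensor factors). So I would not re-verify the structure equations by hand; instead I would either cite Procedure~\ref{proc:BoxTensorBimods} directly or, as a sanity check, square the displayed secondary matrix and confirm it cancels against the multiplication matrix (the relations in $(\A_2)_{\varepsilon_1+\varepsilon_2} = \F_2[e_1]$ are commutativity of polynomial generators, which is immediate). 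The one genuinely error-prone step — and hence the part I would be most careful about — is tracking which algebra's elements appear in the tensor slots and getting the order of concatenation right in the mixed entries, since $\Lambda_{\midd}$ contributes the \emph{left} (output) algebra element while $Y_{\fs,\midd}$ contributes the \emph{input} sequence, and the $e_1$-equivariance of $\Lambda_{\midd}$ is what converts $Y_{\fs,\midd}$'s $e_1$-outputs into $U_1$- or $U_2$-weighted inputs depending on the source generator. Everything else is routine.
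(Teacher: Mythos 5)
Your proposal is correct and takes essentially the same approach as the paper, whose entire proof is the remark that both propositions follow by applying the definitions of Section~\ref{sec:BoxTensor}; your detailed run through Procedure~\ref{proc:BoxTensorBimods} reproduces all the stated entries and gradings. One small slip in your parenthetical: the differential of $\Lambda_{\midd}$ sends $Y$ to $\lambda X$ (the entry $\lambda$ sits in row $X$, column $Y$), not $X$ to $\lambda Y$, but this does not propagate --- the $\lambda$ entries in your final secondary matrix are placed correctly.
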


\begin{proposition}
The (ordinary) bimodule $X_{\low}$ has primary matrix
\[
\kbordermatrix{
& \oo \\
\oo & Z(w_1 C') \quad ZC'  \\
}.
\]
The right action of $U_1$ is given by the matrix
\[
\kbordermatrix{
& Z(w_1 C') & ZC' \\
Z(w_1 C') & U_1 + U_2 & 1 \\
ZC' & U_1 U_2 & 0
}
\]
and the right action of $U_2$ is given by the matrix\[
\kbordermatrix{
& Z(w_1 C') & ZC' \\
Z(w_1 C')  & 0 & 1 \\
ZC' & U_1 U_2 & U_1 + U_2
}.
\]
We have
\begin{itemize}
\item $\deg^q(Z (w_1 C')) = -1$, $\deg^h(Z (w_1 C')) = 2$,
\item $\deg^q(ZC') = 1$, $\deg^h(ZC') = 0$.
\end{itemize}
\end{proposition}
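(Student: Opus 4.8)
The bimodule $X_{\low}$ is defined as the box tensor product $\Lambda_{\low} \boxtimes Y_{\fs,\low}$, so the plan is simply to evaluate this product using Procedure~\ref{proc:BoxTensorBimods}. Both factors are ordinary bimodules with vanishing differential, and the first thing to observe is that the result is again ordinary with no higher $A_\infty$ terms: every entry of the secondary matrix of $\Lambda_{\low}$ has the form $a'' \otimes (a')$ with a single algebra input $a'$, and every secondary-matrix entry of $Y_{\fs,\low}$ likewise carries a single algebra input, so the box-tensor recipe concatenates only length-one input sequences and produces only $\delta^1_2$ terms. Hence it suffices to record the differential of $X_{\low}$ (which is forced to vanish, since both factors have zero differential) together with the right actions of the multiplicative generators $U_1, U_2$ of $(\A_{1,1})_{2\varepsilon_2} = \F_2[U_1, U_2]$.

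Next I would write down the primary matrix: the primary matrix of $\Lambda_{\low}$ has the single generator $Z$, that of $Y_{\fs,\low}$ has generators $w_1 C'$ and $C'$, and their product has the two generators $Z(w_1 C')$ and $ZC'$ over the object $\oo$, with bidegrees obtained by adding. Since $\deg^q(Z) = \deg^h(Z) = 0$, this gives $\deg^q(Z(w_1 C')) = -1$, $\deg^h(Z(w_1 C')) = 2$, $\deg^q(ZC') = 1$, and $\deg^h(ZC') = 0$, as claimed.

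For the right actions, the key point is that $\Lambda_{\low}$ has no $A_\infty$ terms beyond $\delta^1_2$, so the box-tensor formula collapses to the rule: if $\delta^1_2(y, U_i) = \sum_j a_j \otimes y_j$ is the right action of $U_i$ on $Y_{\fs,\low}$ (with $a_j$ a monomial in $e_1, e_2$), then $\delta^1_2(Zy, U_i) = \sum_j \bar a_j \otimes Zy_j$, where $\bar a_j$ is the image of $a_j$ under the substitution $e_1 \mapsto U_1 + U_2$, $e_2 \mapsto U_1 U_2$ dictated by the right $\A_2$-action on $\Lambda_{\low}$ (with identity morphisms sent to identity morphisms). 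Substituting the matrices of Proposition~\ref{prop:ActionsOnHomologyLower}, namely $w_1 C' \cdot U_1 = e_1(w_1 C') + e_2 C'$, $C' \cdot U_1 = w_1 C'$, $w_1 C' \cdot U_2 = e_2 C'$, $C' \cdot U_2 = w_1 C' + e_1 C'$, then yields precisely the two action matrices in the statement.

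Well-definedness of $X_{\low}$ follows either from the general fact that box tensor products of left-bounded DA bimodules are well-defined (\cite[Proposition 2.3.10]{LOTBimodules}) or, directly, by checking that the $U_1$- and $U_2$-action matrices commute, which over $\F_2$ reduces to $(U_1 + U_2)(U_1 U_2) = (U_1 U_2)(U_1 + U_2)$ plus a one-line check on the off-diagonal entries. The only real subtlety, and hence the ``hard part'' (though it is light), is this substitution: because the right $\A_2$-action on $\Lambda_{\low}$ is not free, the $\A_2$-valued outputs of $Y_{\fs,\low}$'s right $\A_{1,1}$-action must be re-expressed as symmetric polynomials in $U_1, U_2$ before being absorbed, and the ``$1$'' entries appearing in the action matrices of $Y_{\fs,\low}$ must be tracked as identity morphisms feeding into the strict unitality of $\Lambda_{\low}$ rather than as the scalar $1$. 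With that bookkeeping in place the verification is immediate; this is in contrast to the preceding $X_{\midd}$ computation, where the tensor with $Y_{\fs,\midd}$ retains genuine higher $A_\infty$ terms.
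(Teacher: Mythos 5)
Your proposal is correct and follows exactly the route the paper intends: the paper's proof of this proposition is simply the remark that it follows by applying the box tensor product definitions of Section~\ref{sec:BoxTensor}, and your computation carries that out in detail, correctly handling the substitution $e_1 \mapsto U_1+U_2$, $e_2 \mapsto U_1U_2$ and the identity-morphism outputs via strict unitality. The degree bookkeeping and the commutativity check for well-definedness are likewise fine.
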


Both propositions can be proven by applying the definitions of Section~\ref{sec:BoxTensor}.

\begin{corollary}
The DA bimodule $X$ categorifies the map $K_0(\A_{1,1}) \to K_0(\A_{1,1})$ with matrix
\[
\kbordermatrix{
& [P_{\uu}] & [P_{\ou}] & [P_{\uo}] & [P_{\oo}] \\
{[P_{\uu}]} & 0 & 0 & 0 & 0 \\
{[P_{\ou}]} & 0 & q^{-1} & -1 & 0 \\
{[P_{\uo}]} & 0 & -1 & q & 0 \\
{[P_{\oo}]} & 0 & 0 & 0 & q + q^{-1}\\
}.
\]
Equivalently, the left dual ${^{\vee}}X$ of $X$ categorifies the map $G_0(\A_{1,1}) \to G_0(\A_{1,1})$ with matrix
\[
\kbordermatrix{
& {[S_{\uu}]} & {[S_{\ou}]} & {[S_{\uo}]} & {[S_{\oo}]} \\
{[S_{\uu}]} & 0 & 0 & 0 & 0 \\
{[S_{\ou}]} & 0 & q & -1 & 0 \\
{[S_{\uo}]} & 0 & -1 & q^{-1} & 0 \\
{[S_{\oo}]} & 0 & 0 & 0 & q + q^{-1} \\
}.
\]
Since the decategorification of a box tensor product is the product of decategorifications (and box tensor products are compatible with opposite bimodules), this endomorphism agrees with $1_{1,1} F_{\gl(2)} 1_{2,0} E_{\gl(2)} 1_{1,1}$ (or equivalently $1_{1,1} E_{\gl(2)} 1_{0,2} F_{\gl(2)} 1_{1,1}$) acting on $V ^{\otimes 2}$, a relation that can also be checked directly (see Appendix~\ref{sec:SingularNonsingular}).
\end{corollary}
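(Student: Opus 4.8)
The plan is to reduce the statement to the two decategorification propositions already proved for the trivalent-vertex bimodules $\Lambda$ and $Y$, together with the compatibility of decategorification with box tensor products. Write $X = X_{\midd} \oplus X_{\low}$ (with $X_{\upp}=0$); by definition $X_{\midd} = \Lambda_{\midd} \boxtimes Y_{\fs,\midd}$ and $X_{\low} = \Lambda_{\low} \boxtimes Y_{\fs,\low}$, so $X = \Lambda \boxtimes Y_{\fs}$ as a DA bimodule over $\A_{1,1}$, the intermediate algebra being $\A_2$ and only the matching weight summands contributing.

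First I would compute $[X]$ directly from the primary matrices. By the decategorification procedure of Section~\ref{sec:SplitGG}, one discards the secondary matrices and replaces each generator by $(-1)^{\deg^h} q^{\deg^q}$. Reading off the stated gradings of $X_{\midd}$, the block of $[X]$ indexed by $\{\ou,\uo\}$ is $\left(\begin{smallmatrix} q^{-1} & -1 \\ -1 & q \end{smallmatrix}\right)$; the $\oo$-entry, from $X_{\low}$, is $q^{-1}+q$; and the $\uu$ row and column vanish since $X_{\upp}=0$. This is exactly the first asserted matrix. The matrix for $ {}^{\vee}X$ is then obtained from it by transposing and substituting $q \mapsto q^{-1}$, as in Section~\ref{sec:SplitGG}; since the matrix is symmetric this produces the second asserted matrix.

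To identify this with the $\gl(2)$ morphism --- and as an independent check on the computation just made --- I would also obtain $[{}^{\vee}X]$ via multiplicativity. Duality is compatible with box tensor products after reversing the order (Section~\ref{sec:DualsAD}), so ${}^{\vee}X \cong {}^{\vee}Y_{\fs} \boxtimes {}^{\vee}\Lambda$; since $Y_{\fs}$ is homotopy equivalent to $Y$ and homotopy equivalences preserve the graded Euler characteristics of the morphism spaces, $[{}^{\vee}Y_{\fs}] = [{}^{\vee}Y]$; and box tensor product of AD bimodules induces composition of maps on split Grothendieck groups. Hence $[{}^{\vee}X] = [{}^{\vee}Y] \circ [{}^{\vee}\Lambda]$ as maps $G_0(\A_{1,1}) \to G_0(\A_2) \to G_0(\A_{1,1})$. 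Substituting the $2 \times 4$ matrix for $[{}^{\vee}\Lambda]$ and the $4 \times 2$ matrix for $[{}^{\vee}Y]$ from the decategorification propositions of Sections~\ref{sec:EasyVertex} and \ref{sec:HardVertex} and multiplying recovers the same $4 \times 4$ matrix. Moreover those propositions identify $[{}^{\vee}\Lambda]$ with $1_{2,0} E_{\gl(2)} 1_{1,1}$ and $[{}^{\vee}Y]$ with $1_{1,1} F_{\gl(2)} 1_{2,0}$, so the composite is $1_{1,1} F_{\gl(2)} 1_{2,0} E_{\gl(2)} 1_{1,1}$ (equivalently $1_{1,1} E_{\gl(2)} 1_{0,2} F_{\gl(2)} 1_{1,1}$) acting on $V^{\otimes 2}$; the explicit value of this composite is computed in Appendix~\ref{sec:SingularNonsingular}, which closes the argument.

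I do not expect a genuine obstacle here: the content is bookkeeping. The only points needing care are tracking the weight-space decomposition --- in particular that the upper weight space contributes nothing --- the fact that passing among $Y$ and its half- and fully-simplified forms leaves Euler characteristics unchanged, and the order-reversal in ${}^{\vee}(\Lambda \boxtimes Y_{\fs}) \cong {}^{\vee}Y_{\fs} \boxtimes {}^{\vee}\Lambda$. The agreement of the two routes to $[{}^{\vee}X]$ serves as a built-in consistency check.
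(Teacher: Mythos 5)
Your proposal is correct and follows essentially the same route as the paper: the paper also reads the matrix off the primary matrices and gradings of $X_{\midd}$ and $X_{\low}$ computed just above the corollary, and invokes multiplicativity of decategorification under $\boxtimes$ (with order reversal under duality) to identify the result with $1_{1,1} F_{\gl(2)} 1_{2,0} E_{\gl(2)} 1_{1,1}$ via the decategorifications of ${^{\vee}}\Lambda$ and ${^{\vee}}Y$. Your double computation of $[{^{\vee}}X]$ is a reasonable consistency check but adds nothing beyond what the paper already asserts.
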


We now consider the 1-morphism structure on $X$.
\begin{definition}
We write $X = (X,\varphi)$ for the $1$-morphism of $2$-representations from $\A_{1,1}$ to itself given by the composition of the $1$-morphisms 
\[
Y_{\fs}\co \A_{1,1} \to \A_2
\]
and
\[
\Lambda \co \A_2 \to \A_{1,1}.
\]
\end{definition}

The map $\varphi$ is the composition of $\id_{\Lambda} \boxtimes \varphi_{Y}$ and $\varphi_{\Lambda} \boxtimes \id_{Y_{\fs}}$ from Definitions~\ref{def:YHardFS2RepAlpha} and \ref{def:YEasy2RepAlpha} under the natural identification of $(\Lambda \boxtimes F^{\vee}) \boxtimes Y_{\fs}$ with $\Lambda \boxtimes (F^{\vee} \boxtimes Y_{\fs})$. The map $\id_{\Lambda} \boxtimes \varphi_{Y}$ has matrix
\[
\kbordermatrix{
& XA' \xi_{\ou,\oo} & YA' \xi_{\ou,\oo} & XB' \xi_{\uo,\oo} & YB' \xi_{\uo,\oo} \\
X \xi_{\u,\o} (w_1 C') & 1 & 0 & 0 & 0\\ 
Y \xi_{\u,\o} (w_1 C') & 0 & 1 & 0 & 0 \\
X \xi_{\u,\o} C' & 0 & 0 & 1 & 0 \\
Y \xi_{\u,\o} C' & 0 & 0 & 0 & 1
}
\]
and the map $\varphi_{\Lambda} \boxtimes \id_{Y_{\fs}}$ has matrix
\[
\kbordermatrix{
& X\xi_{\u,\o} (w_1 C') & Y\xi_{\u,\o} (w_1 C') & X \xi_{\u,\o} C' & Y \xi_{\u,\o} C' \\
\xi_{\ou,\oo} Z(w_1 C') & 1 & 0 & 0 & 0\\ 
\xi_{\uo,\oo} Z(w_1 C') & 0 & 1 & 0 & 0 \\
\xi_{\ou,\oo} ZC' & 0 & 0 & 1 & 0 \\
\xi_{\uo,\oo} ZC' & 0 & 0 & 0 & 1
}.
\]
Thus, the map $\varphi$ for $X$ is given explicitly by the matrix
\[
\kbordermatrix{
& XA' \xi_{\ou,\oo} & YA' \xi_{\ou,\oo} & XB' \xi_{\uo,\oo} & YB' \xi_{\uo,\oo} \\
\xi_{\ou,\oo} Z(w_1 C') & 1 & 0 & 0 & 0\\  
\xi_{\uo,\oo} Z(w_1 C') & 0 & 1 & 0 & 0 \\
\xi_{\ou,\oo} ZC' & 0 & 0 & 1 & 0 \\
\xi_{\uo,\oo} ZC' & 0 & 0 & 0 & 1
}.
\]

It follows from well-definedness of composition of $1$-morphisms between $2$-representations that the square
\[
\xymatrix{ 
X_{\upp} \boxtimes (F^{\vee})^{\boxtimes 2} \ar[rr]^{\varphi \boxtimes \id_{F^{\vee}}} \ar[d]_{\id_{X_{\upp}} \boxtimes \tau} && F^{\vee} \boxtimes X_{\midd} \boxtimes F^{\vee} \ar[rr]^{\id_{F^{\vee}} \boxtimes \varphi} && (F^{\vee})^{\boxtimes 2} \boxtimes X_{\low} \ar[d]^{\tau \boxtimes \id_{X_{\low}}} \\
X_{\upp} \boxtimes (F^{\vee})^{\boxtimes 2} \ar[rr]^{\varphi \boxtimes \id_{F^{\vee}}} && F^{\vee} \boxtimes X_{\midd} \boxtimes F^{\vee} \ar[rr]^{\id_{F^{\vee}} \boxtimes \varphi} && (F^{\vee})^{\boxtimes 2} \boxtimes X_{\low}
}
\]
commutes; alternatively, commutativity is also immediate because $X_{\upp} = 0$.

\subsection{The other composition}\label{sec:Bubble}

\begin{definition}
Let 
\[
\Phi_{\midd} \coloneqq Y_{\fs,\midd} \boxtimes \Lambda_{\midd}
\]
and 
\[
\Phi_{\low} \coloneqq Y_{\fs,\low} \boxtimes \Lambda_{\low}.
\]
\end{definition}

\begin{proposition}\label{prop:BubbleIsTwiceIdMiddle}
The (ordinary) bimodule $\Phi_{\midd}$ has primary matrix
\[
\kbordermatrix{ & \u \\
\u & A'X \quad B'Y
}
\]
and right action of $e_1$ given by
\[
\kbordermatrix{ & A'X & B'Y \\
A'X & e_1 & 0 \\
B'Y & 0 & e_1
}.
\]
We have $\deg^q(A'X) = -1$, $\deg^h(A'X) = 2$, $\deg^q(B'Y) = 1$, and $\deg^h(B'Y) = 0$.
\end{proposition}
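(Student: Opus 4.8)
The plan is to compute $\Phi_{\midd} = Y_{\fs,\midd} \boxtimes \Lambda_{\midd}$ directly from the matrix-based box tensor product formula of Procedure~\ref{proc:BoxTensorBimods}. First I would record the primary matrices: $Y_{\fs,\midd}$ has generators $A'$ (from $\u$ to $\ou$) and $B'$ (from $\u$ to $\uo$), while $\Lambda_{\midd}$ has generators $X$ (from $\ou$ to $\ou$, in the notation of the $\A_{1,1}$-vs-$\A_2$ middle weight space, i.e.\ $X$ lives over $((\A_{1,1})_{\varepsilon_1+\varepsilon_2},(\A_2)_{\varepsilon_1+\varepsilon_2})$ with source column $\u$) and $Y$. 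Since $Y_{\fs,\midd}$ is a DA bimodule over $((\A_2)_{\varepsilon_1+\varepsilon_2},(\A_{1,1})_{\varepsilon_1+\varepsilon_2})$ with right side $\A_{1,1}$ having the single object in weight $\varepsilon_1+\varepsilon_2$ only through its two-object piece, one checks that the composition lands over $((\A_{1,1})_{\varepsilon_1+\varepsilon_2},(\A_2)_{\varepsilon_1+\varepsilon_2})$ — wait, more carefully: $Y_{\fs,\midd}$ maps $\A_{1,1}$-side objects $\ou,\uo$ to $\A_2$-side object $\u$, and $\Lambda_{\midd}$ maps $\A_2$-side object $\u$ to $\A_{1,1}$-side objects $\ou,\uo$; composing $\Lambda_{\midd} \boxtimes Y_{\fs,\midd}$ is the singular crossing $X_{\midd}$ already treated, so $\Phi_{\midd} = Y_{\fs,\midd} \boxtimes \Lambda_{\midd}$ is the composition in the other order, going from $\A_2$-side to $\A_2$-side, i.e.\ over $((\A_2)_{\varepsilon_1+\varepsilon_2},(\A_2)_{\varepsilon_1+\varepsilon_2}) = (\F_2[e_1],\F_2[e_1])$, with the single generator pair $A'X, B'Y$ on the one object $\u$. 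This matches the claimed primary matrix.

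Next I would compute the secondary matrix. The secondary matrix of $Y_{\fs,\midd}$ has entries $e_1^{k+1}\otimes U_1^{k+1}$ (at $A',A'$), $e_1^k\otimes(\lambda,U_1^{k+1}) + e_1^k\otimes(U_2^{k+1},\lambda)$ (at $A',B'$), and $e_1^{k+1}\otimes U_2^{k+1}$ (at $B',B'$), where the left outputs are in $\A_2 = \F_2[e_1]$ and the right inputs are sequences in $\A_{1,1}$. Feeding these algebra-input sequences in $\A_{1,1}$ into $\Lambda_{\midd}$ — whose only nonzero $\delta^1$ data are the differential $X \mapsto \lambda \otimes Y$ (i.e.\ the secondary matrix entry $\lambda$ at row $Y$, column $X$) and the right actions $X\cdot e_1 = U_1\otimes X$-type data, namely $e_1$ acts as $U_1$ on $X$ and as $U_2$ on $Y$ — I would trace through which composite strings survive. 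The key observations: $\delta^1_1$ of $\Lambda_{\midd}$ (its internal differential) together with the $U_1,U_2$ actions must be threaded into the input slots of $Y_{\fs,\midd}$'s $\delta^1$ terms; since the right-hand algebra $\A_2$ acts on $\Lambda_{\midd}$ only via $e_1 \mapsto U_1$ or $U_2$, and $Y_{\fs,\midd}$'s secondary matrix already carries powers of $U_1$ and $U_2$ as inputs, the compositions collapse to powers of $e_1$ on the output side of $\Phi_{\midd}$. I would verify that the cross-term $e_1^k\otimes(\lambda,U_1^{k+1}) + e_1^k\otimes(U_2^{k+1},\lambda)$ feeding into $\Lambda_{\midd}$ — where the $\lambda$ input triggers the differential $X\to \lambda\otimes Y$ — produces exactly a cancellation or a diagonal contribution, and in fact (as with $X_{\midd}$) the off-diagonal terms pair up so that only the diagonal right action of $e_1$ survives, giving the stated action matrix $\mathrm{diag}(e_1,e_1)$ with no higher terms and no differential.

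The gradings follow by adding shifts: $\deg^q(A'X) = \deg^q(A') + \deg^q(X) = 0 + (-1) = -1$ and $\deg^h(A'X) = 1 + 1 = 2$; $\deg^q(B'Y) = \deg^q(B') + \deg^q(Y) = 1 + 0 = 1$ and $\deg^h(B'Y) = 0 + 0 = 0$, matching the claim. The statement that $\Phi_{\midd}$ is an ordinary bimodule (no higher $\delta^1_i$) is the substantive point; I expect the main obstacle to be bookkeeping the infinite families of input sequences in Procedure~\ref{proc:BoxTensorBimods} and confirming that all the would-be $A_\infty$ terms with $i>2$ vanish — concretely, that every time an input string of $Y_{\fs,\midd}$ contains a $\lambda$ (which would make $\Lambda_{\midd}$'s differential fire and potentially generate a longer output string) the contributions either cancel in pairs over $\F_2$ or telescope to the single stated $e_1$-action. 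This is the same cancellation phenomenon already exploited for $X_{\midd}$ in Section~\ref{sec:SingularCrossing}, so I would organize the argument to parallel that computation, or simply cite that both propositions ``can be proven by applying the definitions of Section~\ref{sec:BoxTensor}'' as is done for $X_{\midd}$ and $X_{\low}$.
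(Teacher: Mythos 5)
Your proposal is correct and follows the paper's own route: the paper simply invokes the box tensor product procedure of Section~\ref{sec:BoxTensor} (showing one sample matrix computation, for the $e_2$-action on $\Phi_{\low}$), and your identification of the mod-2 cancellation of the two cross-terms $e_1^k\otimes(\lambda,U_1^{k+1})$ and $e_1^k\otimes(U_2^{k+1},\lambda)$ is exactly the substantive point for the middle summand. One small correction: the differential on $\Lambda_{\midd}$ is $Y\mapsto\lambda\otimes X$ (the $\lambda$ sits in row $X$, column $Y$ of its secondary matrix), not $X\mapsto\lambda\otimes Y$; with this fixed, the two cross-terms thread through $\Lambda_{\midd}$ via $y_2=X$ and $y_2=Y$ respectively, each contributing $e_1^k\otimes(e_1^{k+1})$ to the $(A'X,B'Y)$ entry, and these cancel over $\F_2$ exactly as you predict, leaving the diagonal $e_1$-action and no higher terms.
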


\begin{proposition}\label{prop:BubbleIsTwiceIdLower}
The (ordinary) bimodule $\Phi_{\low}$ has primary matrix
\[
\kbordermatrix{ & \o \\
\o & (w_1 C') Z \quad C' Z
},
\]
right action of $e_1$ given by
\[
\kbordermatrix{ 
& (w_1 C') Z & C' Z \\
(w_1 C') Z  & e_1 & 0 \\
C' Z & 0 & e_1
},
\]
and right action of $e_2$ given by 
\[
\kbordermatrix{ 
& (w_1 C') Z & C' Z \\
(w_1 C') Z  & e_2 & 0 \\
C' Z & 0 & e_2
}.
\]
We have $\deg^q((w_1 C') Z) = -1$, $\deg^h((w_1 C')Z) = 2$, $\deg^q(C' Z) = 1$, and $\deg^h(C' Z) = 0$, .
\end{proposition}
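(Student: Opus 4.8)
The plan is to obtain $\Phi_{\low} = Y_{\fs,\low} \boxtimes \Lambda_{\low}$ directly from Procedure~\ref{proc:BoxTensorBimods}, exactly as in the proofs of the preceding two propositions. First note that both factors are ordinary, finitely generated bimodules with vanishing differential: $Y_{\fs,\low}$ over $((\A_2)_{2\varepsilon_2}, (\A_{1,1})_{2\varepsilon_2})$ with generators $w_1 C', C'$ and right action of $U_1, U_2$ as in Proposition~\ref{prop:ActionsOnHomologyLower}, and $\Lambda_{\low}$ over $((\A_{1,1})_{2\varepsilon_2}, (\A_2)_{2\varepsilon_2})$ with generator $Z$ and right action of $e_1, e_2$ as in its definition. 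In particular both are left bounded, so the box tensor product is well-defined, and since neither factor has higher $\delta^1_i$ terms or a differential, inspection of the box tensor formula shows that $\Phi_{\low}$ again has $\delta^1_i = 0$ for $i > 2$ and no differential, so it is the ordinary bimodule over $((\A_2)_{2\varepsilon_2}, (\A_2)_{2\varepsilon_2})$ determined by its primary matrix together with the right actions of the generators $e_1, e_2$.

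For the primary matrix, Procedure~\ref{proc:BoxTensorBimods} produces the Cartesian product $\{w_1 C', C'\} \times \{Z\} = \{(w_1 C')Z, C'Z\}$, with gradings adding; combining $\deg^q(Z) = \deg^h(Z) = 0$ with the gradings of $w_1 C'$ and $C'$ from the definition of $Y_{\fs,\low}$ gives $\deg^q((w_1 C')Z) = -1$, $\deg^h((w_1 C')Z) = 2$, $\deg^q(C'Z) = 1$, and $\deg^h(C'Z) = 0$, as claimed.

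For the right action, the box tensor formula says that the action of $e_i \in (\A_2)_{2\varepsilon_2}$ on $\Phi_{\low}$ is computed by first applying the right action of $e_i$ on the $\Lambda_{\low}$-factor to $Z$ and then feeding the resulting monomials in $U_1, U_2$ into the right action on $Y_{\fs,\low}$. From the definition of $\Lambda_{\low}$, $e_1$ sends $Z$ to $(U_1 + U_2)Z$ and $e_2$ sends $Z$ to $(U_1 U_2)Z$. Writing $\rho$ for the right $(\A_{1,1})_{2\varepsilon_2}$-action on $Y_{\fs,\low}$ in the ordered basis $(w_1 C', C')$, one must therefore evaluate $\rho(U_1) + \rho(U_2)$ and the composite $\rho(U_1)\rho(U_2)$ (equivalently $\rho(U_2)\rho(U_1)$, by commutativity of $\A_{1,1}$). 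A one-line $\F_2$-computation with the matrices of Proposition~\ref{prop:ActionsOnHomologyLower} — in which the off-diagonal $1$'s and the $e_2$'s cancel in pairs — gives $\rho(U_1) + \rho(U_2) = e_1 \cdot \mathrm{Id}$ and $\rho(U_1)\rho(U_2) = e_2 \cdot \mathrm{Id}$, which are precisely the two matrices in the statement. These commute, are compatible with the gradings, and generate the $\F_2[e_1,e_2]$-action, so (as is automatic from the box tensor product being well-defined) $\Phi_{\low}$ is the asserted bimodule.

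There is no genuine obstacle here; the only point needing care is the bookkeeping in the box tensor formula — matching the monomials produced by $\Lambda_{\low}$'s right action against the basis elements of $\A_{1,1}$ that are inserted into $Y_{\fs,\low}$'s action — together with the observation that the (a priori infinitely many) secondary-matrix entries of $\Phi_{\low}$ involving higher powers $e_1^p e_2^q$ are then forced by associativity of the right action once the two generator actions are known. The characteristic-$2$ cancellations $\rho(U_1) + \rho(U_2) = e_1\,\mathrm{Id}$ and $\rho(U_1)\rho(U_2) = e_2\,\mathrm{Id}$ are exactly what make the two trivalent-vertex bimodules compose, in this order, to the diagonal ``bubble'' bimodule, and this is the computation to highlight.
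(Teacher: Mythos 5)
Your proposal is correct and follows essentially the same route as the paper: apply Procedure~\ref{proc:BoxTensorBimods} to $Y_{\fs,\low}\boxtimes\Lambda_{\low}$, note that $e_1,e_2$ act on $\Lambda_{\low}$ by $U_1+U_2$ and $U_1U_2$, and then verify that $\rho(U_1)+\rho(U_2)=e_1\,\mathrm{Id}$ and $\rho(U_1)\rho(U_2)=e_2\,\mathrm{Id}$ using the matrices of Proposition~\ref{prop:ActionsOnHomologyLower}; the latter product is exactly the matrix computation the paper displays.
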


\begin{proof}
These propositions follow from Section~\ref{sec:BoxTensor}. For example, for the right action of $e_2$ on $\Phi_{\low}$, note that
\[
\kbordermatrix{ 
& (w_1 C') Z & C' Z \\
(w_1 C') Z & e_1 & 1 \\
C' Z & e_2 & 0
}
\kbordermatrix{ 
& (w_1 C') Z & C' Z \\
(w_1 C') Z & 0 & 1 \\
C' Z & e_2 & e_1
}
\]
equals
\[
\kbordermatrix{ 
& (w_1 C') Z & C' Z \\
(w_1 C') Z & e_2 & 0 \\
C' Z & 0 & e_2
}.
\]
\end{proof}

\begin{remark}
Proposition \ref{prop:BubbleIsTwiceIdMiddle} implies that $\Phi_{\midd} \cong (q + q^{-1}h^2)\id_{(\A_2)_{\varepsilon_1 + \varepsilon_2}}$, where $h^2$ indicates a shift upward by two in the homological grading. Similarly, Proposition \ref{prop:BubbleIsTwiceIdLower} implies that $\Phi_{\low} \cong (q + q^{-1}h^2)\id_{(\A_2)_{2\varepsilon_2}}$. 
\end{remark}

\begin{corollary}
The DA bimodule $\Phi$ categorifies the map from $K_0(\A_2)$ to $K_0(\A_2)$ with matrix
\[
\kbordermatrix{
& [P_{\u}] & [P_{\o}] \\
{[P_{\u}]} & q + q^{-1} & 0 \\
{[P_{\o}]} & 0 & q + q^{-1} \\
};
\]
equivalently, ${^{\vee}}\Phi$ categorifies the map from $G_0(\A_2)$ to $G_0(\A_2)$ with matrix
\[
\kbordermatrix{
& {[S_{\u}]} & {[S_{\o}]} \\
{[S_{\u}]} & q + q^{-1} & 0 \\
{[S_{\o}]} & 0 & q + q^{-1} \\
};
\]
Since the decategorification of a box tensor product is the product of decategorifications, this endomorphism is equal to $1_{2,0} E_{\gl(2)} 1_{1,1} F_{\gl(2)} 1_{2,0}$ (or to $1_{0,2} F_{\gl(2)} 1_{1,1} E_{\gl(2)} 1_{0,2}$) acting on $\wedge_q^2 V$, a relation that can also be checked directly.
\end{corollary}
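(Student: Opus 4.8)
The plan is to read the decategorification straight off the primary matrices of $\Phi_{\midd}$ and $\Phi_{\low}$ computed in the two propositions above, and then to identify the resulting map with the claimed composite of skew Howe maps by functoriality of decategorification under $\boxtimes$.

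First I would apply the decategorification procedure of Section~\ref{sec:SplitGG}: discard the secondary matrices and replace each set of generators in the primary matrix by $\sum (-1)^{\deg^h} q^{\deg^q}$ over its elements. Since $\Phi_{a\varepsilon_1 + b\varepsilon_2} = 0$ unless $(a,b) \in \{(1,1),(0,2)\}$, the only contributions come from $\Phi_{\midd}$ and $\Phi_{\low}$. For $\Phi_{\midd}$ the unique nonzero primary-matrix entry (row $\u$, column $\u$) contains $A'X$ with $(\deg^q,\deg^h) = (-1,2)$ and $B'Y$ with $(\deg^q,\deg^h) = (1,0)$, giving $q$-graded Euler characteristic $q^{-1} + q$. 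For $\Phi_{\low}$ the entry (row $\o$, column $\o$) contains $(w_1 C')Z$ with $(\deg^q,\deg^h) = (-1,2)$ and $C'Z$ with $(\deg^q,\deg^h) = (1,0)$, again giving $q^{-1} + q$. This yields the asserted matrix for $[\Phi]\colon K_0(\A_2) \to K_0(\A_2)$; since passing to ${^{\vee}}\Phi$ replaces $q^{\deg^q}$ by $q^{-\deg^q}$ (leaving each diagonal entry $q+q^{-1}$ invariant) and transposes a diagonal matrix trivially, the procedure for ${^{\vee}}\Phi$ gives the asserted matrix for $[{^{\vee}}\Phi]\colon G_0(\A_2) \to G_0(\A_2)$.

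It then remains to identify $[{^{\vee}}\Phi]$ with $1_{2,0} E_{\gl(2)} 1_{1,1} F_{\gl(2)} 1_{2,0}$ acting on $\wedge_q^2 V$. Here I would use $\Phi = Y_{\fs} \boxtimes \Lambda$ as a DA bimodule over $(\A_2,\A_2)$ (i.e. $\Phi_{\midd} = Y_{\fs,\midd}\boxtimes\Lambda_{\midd}$ and $\Phi_{\low} = Y_{\fs,\low}\boxtimes\Lambda_{\low}$), together with the facts recalled in the excerpt that decategorification takes box tensor products of AD bimodules to composites of the induced maps and is compatible with duals up to reversing order, so that ${^{\vee}}\Phi \cong {^{\vee}}\Lambda \boxtimes {^{\vee}}Y_{\fs}$ and $[{^{\vee}}\Phi] = [{^{\vee}}\Lambda]\circ[{^{\vee}}Y_{\fs}]$. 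Since $Y_{\fs}$ is homotopy equivalent to $Y$, I may substitute the already-computed $[{^{\vee}}\Lambda] = 1_{2,0}E_{\gl(2)}1_{1,1}$ and $[{^{\vee}}Y] = 1_{1,1}F_{\gl(2)}1_{2,0}$ and merge the idempotents via $1_{1,1}^2 = 1_{1,1}$, obtaining $1_{2,0}E_{\gl(2)}1_{1,1}F_{\gl(2)}1_{2,0}$. As a check that this really is multiplication by $q+q^{-1}$, note that $E_{\gl(2)}1_{2,0} = 0$ because the $\gl(2)$-weight $(3,-1)$ subspace of $\wedge_q^2(\C_q^{1|1}\otimes\C_q^2)$ is zero, so the $\mathfrak{sl}_2$-relation in $\Udot_q(\gl(2))$ gives $1_{2,0}E_{\gl(2)}1_{1,1}F_{\gl(2)}1_{2,0} = [2]\,1_{2,0} = (q+q^{-1})1_{2,0}$; this is also visible by directly multiplying the matrices for $[{^{\vee}}\Lambda]$ and $[{^{\vee}}Y]$ in the relevant weight spaces, as in Appendix~\ref{sec:SingularNonsingular}.

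There is no real obstacle: the computation is routine once the two propositions above are in hand. The only thing that needs care is bookkeeping — keeping the order of composition straight under the order-reversal in ${^{\vee}}(Y_{\fs}\boxtimes\Lambda)\cong{^{\vee}}\Lambda\boxtimes{^{\vee}}Y_{\fs}$, tracking domain and codomain correctly for the induced maps on $G_0$ versus $K_0$, and recording the grading data of the relabeled generators of $Y_{\fs,\low}$ (in particular $w_1 C'$ and $C'$, with $(\deg^q,\deg^h)$ equal to $(-1,2)$ and $(1,0)$ respectively) before taking Euler characteristics.
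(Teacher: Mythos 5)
Your proposal is correct and follows exactly the route the paper intends: the corollary is justified in the paper only by reading Euler characteristics off the primary matrices of $\Phi_{\midd}$ and $\Phi_{\low}$ (using the degree data $(-1,2)$ and $(1,0)$ for the two generators in each summand) together with the fact that decategorification turns $\boxtimes$ into composition, so $[{^{\vee}}\Phi]=[{^{\vee}}\Lambda]\circ[{^{\vee}}Y]$. Your consistency check via $E_{\gl(2)}1_{2,0}=0$ and the $\mathfrak{sl}_2$ commutator giving $[2]\,1_{2,0}$ agrees with the direct matrix multiplication in Appendix~\ref{sec:AppendixSkewHoweMaps}.
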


\begin{definition}
We write $\Phi = (\Phi,\varphi)$ for the $1$-morphism of $2$-representations from $\A_2$ to itself given by the composition of the $1$-morphisms 
\[
\Lambda \co \A_2 \to \A_{1,1}
\]
and
\[
Y_{\fs}\co \A_{1,1} \to \A_2.
\] 
\end{definition}

The map $\varphi$ is the composition of $\id_{Y_{\fs}} \boxtimes \varphi_{\Lambda}$ and $\varphi_{Y} \boxtimes \id_{\Lambda}$ from Definitions~\ref{def:YEasy2RepAlpha} and \ref{def:YHardFS2RepAlpha} under the natural identification of $(Y_{\fs} \boxtimes F^{\vee}) \boxtimes \Lambda$ with $Y_{\fs} \boxtimes (F^{\vee} \boxtimes \Lambda)$. The map $\id_{Y_{\fs}} \boxtimes \varphi_{\Lambda}$ has matrix
\[
\kbordermatrix{
& A'X\xi_{\u,\o} & B'Y \xi_{\u,\o} \\
A'\xi_{\ou,\oo} Z & 1 & 0 \\
B'\xi_{\uo,\oo} Z & 0 & 1
}
\]
and the map $\varphi_{Y} \boxtimes \id_{\Lambda}$ has matrix
\[
\kbordermatrix{
& A' \xi_{\ou,\oo} Z & B' \xi_{\uo,\oo} Z \\
\xi_{\u,\o} (w_1 C')Z & 1 & 0 \\
\xi_{\u,\o} C' Z & 0 & 1
}.
\]
Thus, the map $\varphi$ for $\Phi$ is given explicitly by the matrix
\[
\kbordermatrix{
& A'X\xi_{\u,\o} & B'Y\xi_{\u,\o} \\
\xi_{\u,\o} (w_1 C')Z  & 1 & 0 \\
\xi_{\u,\o} C'Z & 0 & 1
}.
\]

\begin{remark}
The above computation implies that $\Phi \cong (q + q^{-1} h^2) \id_{\A_2}$ as 2-representations of $\U^-$.
\end{remark}

\section{A skew Howe 2-action}\label{sec:SkewHowe2Action}

\subsection{A categorified quantum group}\label{sec:CatQGDef}

We recall a graded 2-category $\Sc(2,2)^*$ defined in Mackaay--Sto{\v{s}}i\'c--Vaz \cite{MSVSchur} as a quotient of the categorified quantum group $\dot{\U}_q(\gl(2))^*$ (the details of which are reviewed in \cite{MSVSchur}). While $\Q$ coefficients are assumed in \cite{MSVSchur}, the definitions of $\dot{\U}_q(\gl(2))^*$ and $\Sc(2,2)^*$ make sense over $\Z$; we will work with their reductions over $\F_2$.

\begin{remark}
The $*$ notation indicates that the morphism spaces consist of arbitrary-degree morphisms, not necessarily degree zero. In contrast with \cite{MSVSchur}, we will not take the objects of $\dot{\U}_q(\gl(2))^*$ or $\Sc(2,2)^*$ to be closed under grading shifts, although if one preferred, one could include them without issue.
\end{remark}

\begin{definition}
Let the graded $\F_2$-linear 2-category $\Sc(2,2)^*$ be the quotient of $\dot{\U}_q(\gl(2))^*$ (with no grading shifts on objects as remarked above) by the ideal generated by the identity 2-morphism on $\One_{\lambda}$ for all $\lambda \in \Z^2$ except for $\lambda \in \{(2,0), (1,1), (0,2)\}$.
\end{definition}

\begin{remark}
The $\gl(2)$ weights $\lambda$ appearing here should not be confused with $\gl(1|1)$ weights $\omega$.
\end{remark}

We will write $\Sc(2,2)^{\ungr}$ for $\Sc(2,2)$ with its grading ignored; we will be most interested in a bigraded lift $\Sc(2,2)^{*,*}$ of $\Sc(2,2)^{\ungr}$. Concretely, we can take $\Sc(2,2)^{*,*}$ to have objects $\One_{2,0}$, $\One_{1,1}$, and $\One_{0,2}$. A generating set of 1-morphisms is given by
\begin{itemize}
\item $\One_{1,1} \xleftarrow{\One_{1,1} \E \One_{0,2}} \One_{0,2}$,
\item $\One_{2,0} \xleftarrow{\One_{2,0} \E \One_{1,1}} \One_{1,1}$,
\item $\One_{1,1} \xleftarrow{\One_{1,1} \Fc \One_{2,0}} \One_{2,0}$,
\item $\One_{0,2} \xleftarrow{\One_{0,2} \Fc \One_{1,1}} \One_{1,1}$.
\end{itemize}

We will draw 1-morphisms as sequences of vertical strings with the regions between them labeled by $\gl(2)$ weights $\lambda \in \Z^2$; $\E$ morphisms will point upwards and $\Fc$ morphisms will point downwards.

The 2-morphisms in $\Sc(2,2)^{*,*}$ are generated by the following string diagrams, read from bottom to top:
\begin{itemize}
\item \includegraphics[scale=0.4]{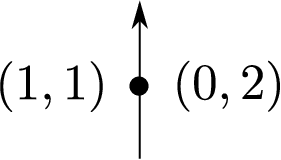} with $\deg^q = -2$ and $\deg^h = 2$
\item \includegraphics[scale=0.4]{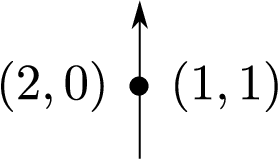} with $\deg^q = -2$ and $\deg^h = 2$
\item \includegraphics[scale=0.4]{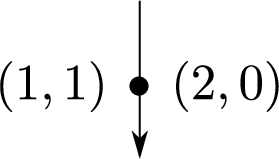} with $\deg^q = -2$ and $\deg^h = 2$
\item \includegraphics[scale=0.4]{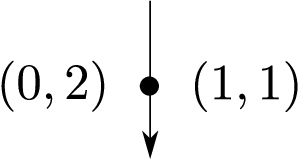} with $\deg^q = -2$ and $\deg^h = 2$
\item \includegraphics[scale=0.4]{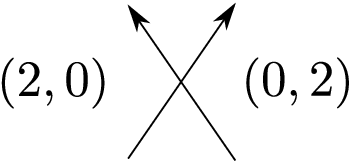} with $\deg^q = 2$ and $\deg^h = -2$
\item \includegraphics[scale=0.4]{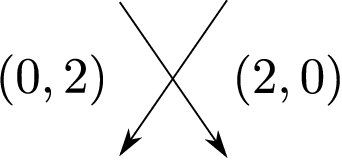} with $\deg^q = 2$ and $\deg^h = -2$
\item \includegraphics[scale=0.4]{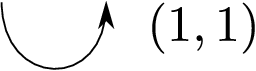} with $\deg^q = -1$ and $\deg^h = 0$
\item \includegraphics[scale=0.4]{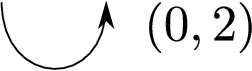} with $\deg^q = 1$ and $\deg^h = -2$
\item \includegraphics[scale=0.4]{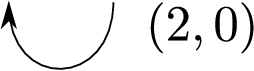} with $\deg^q = 1$ and $\deg^h = -2$
\item \includegraphics[scale=0.4]{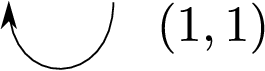} with $\deg^q = -1$ and $\deg^h = 0$
\item \includegraphics[scale=0.4]{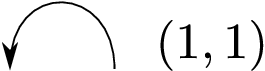} with $\deg^q = -1$ and $\deg^h = 2$
\item \includegraphics[scale=0.4]{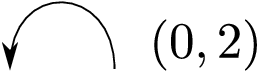} with $\deg^q = 1$ and $\deg^h = 0$
\item \includegraphics[scale=0.4]{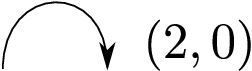} with $\deg^q = 1$ and $\deg^h = 0$
\item \includegraphics[scale=0.4]{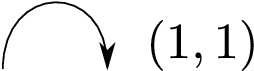} with $\deg^q = -1$ and $\deg^h = 2$
\end{itemize}

The relations imposed on the 2-morphisms are:
\begin{enumerate}
\item\label{it:Biadjointness1} \includegraphics[scale=0.4]{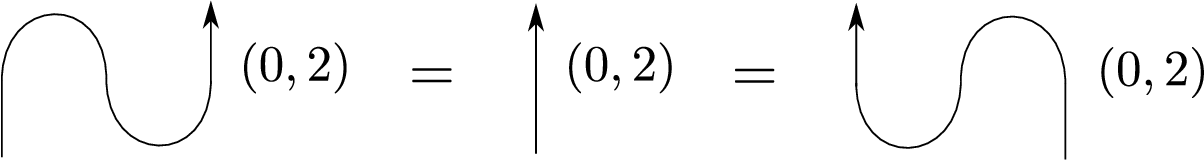}
\item\label{it:Biadjointness2} \includegraphics[scale=0.4]{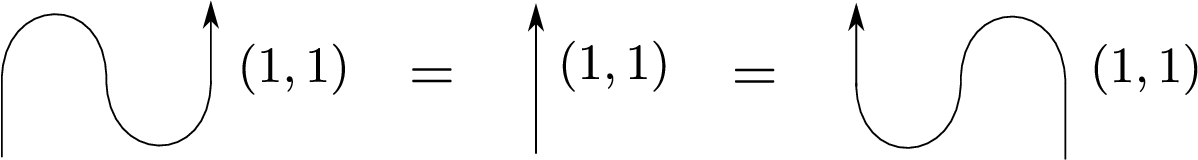}
\item\label{it:Biadjointness3} \includegraphics[scale=0.4]{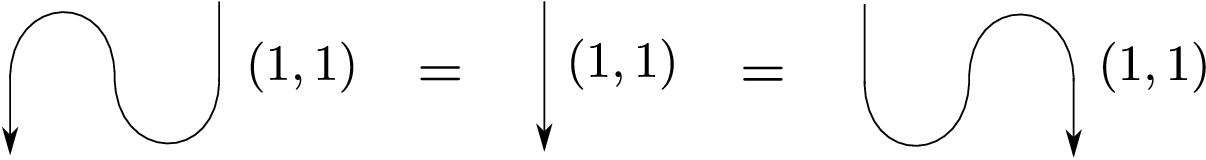}
\item\label{it:Biadjointness4} \includegraphics[scale=0.4]{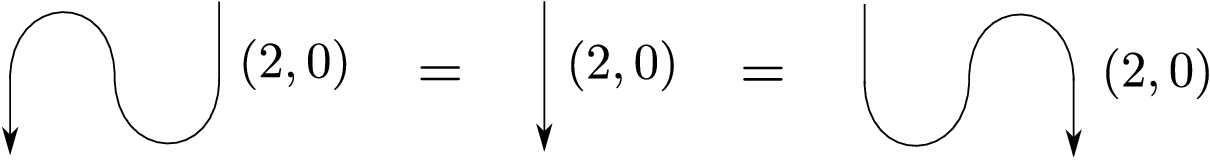}
\item\label{it:DotCyclic1} \includegraphics[scale=0.4]{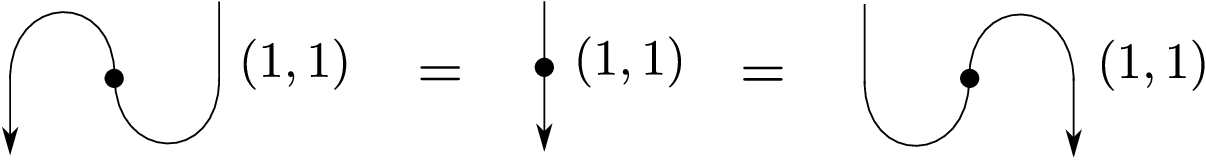}
\item\label{it:DotCyclic2} \includegraphics[scale=0.4]{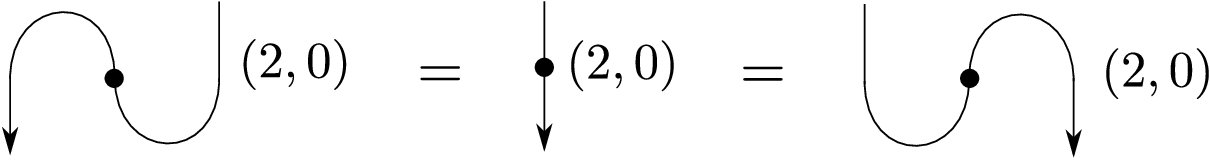}
\item\label{it:CrossingCyclic} \includegraphics[scale=0.64]{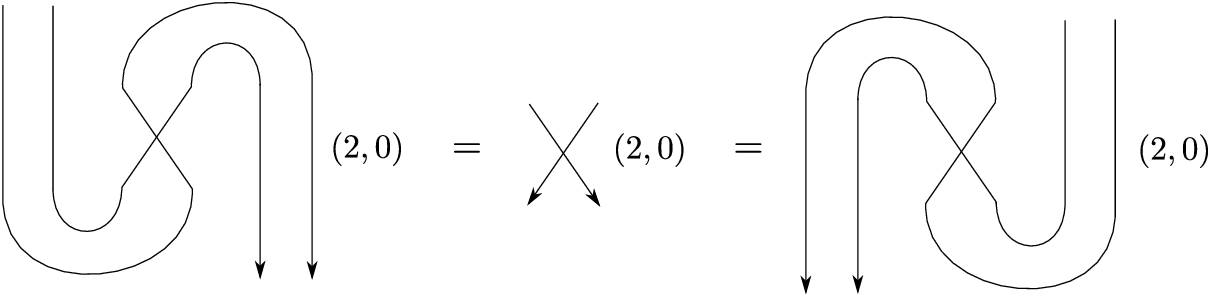}
\item\label{it:NegativeBubble1} \includegraphics[scale=0.4]{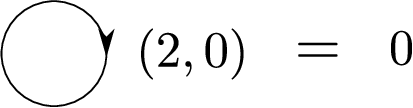}
\item\label{it:NegativeBubble2} \includegraphics[scale=0.4]{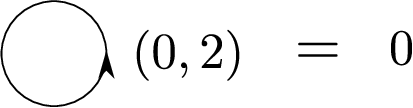}
\item\label{it:Deg0Bubble1} \includegraphics[scale=0.4]{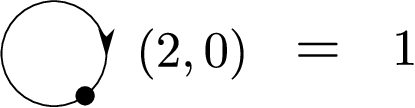}
\item\label{it:Deg0Bubble2} \includegraphics[scale=0.4]{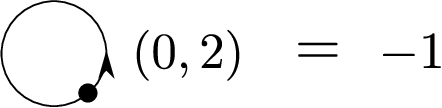}
\item\label{it:ExtendedSl2_1} \includegraphics[scale=0.4]{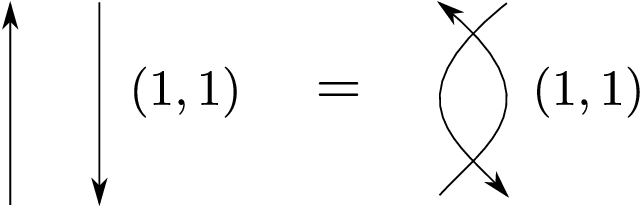}
\item\label{it:ExtendedSl2_2} \includegraphics[scale=0.4]{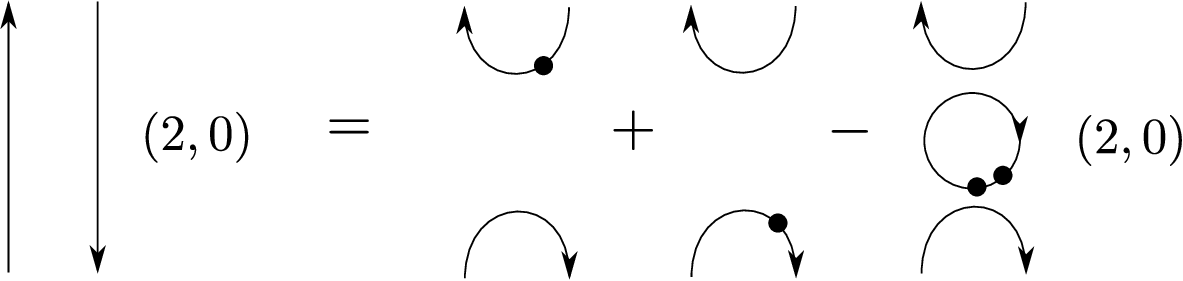}
\item\label{it:ExtendedSl2_3} \includegraphics[scale=0.4]{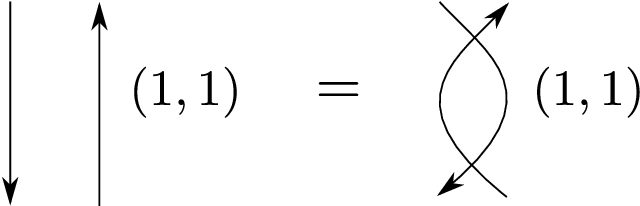}
\item\label{it:ExtendedSl2_4} \includegraphics[scale=0.4]{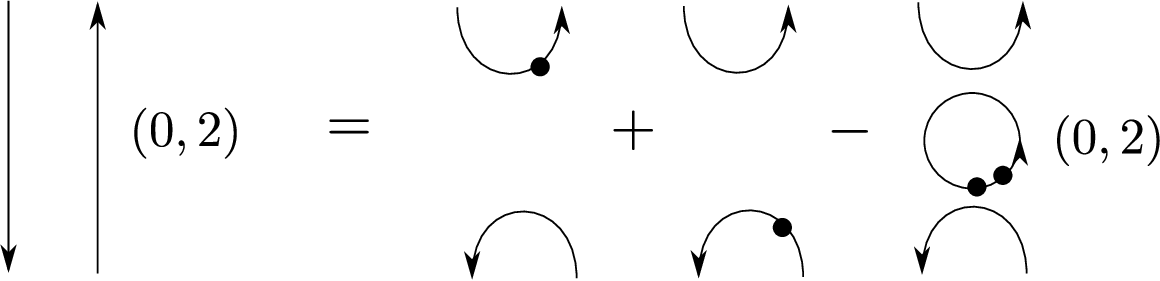}
\item\label{it:NilHecke1} \includegraphics[scale=0.4]{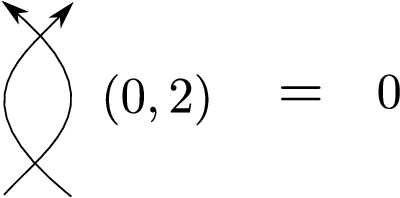}
\item\label{it:NilHecke2} \includegraphics[scale=0.4]{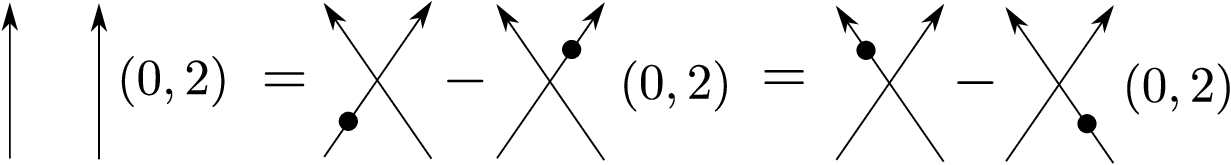}
\end{enumerate}
(we include signs in various places but they are unnecessary over $\F_2$). In items \eqref{it:ExtendedSl2_1} and \eqref{it:ExtendedSl2_3}, the sideways crossings are defined by

\begin{center}
\includegraphics[scale=0.4]{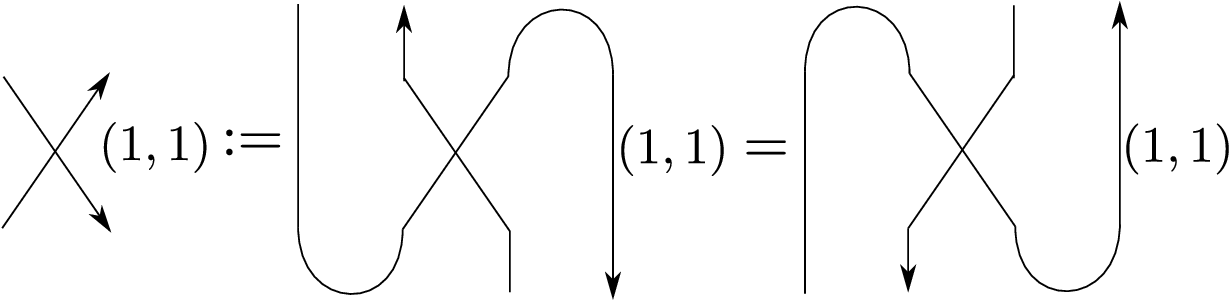} which has $\deg^q = 0$ and $\deg^h = 0$
\end{center}

\noindent and

\begin{center}
\includegraphics[scale=0.4]{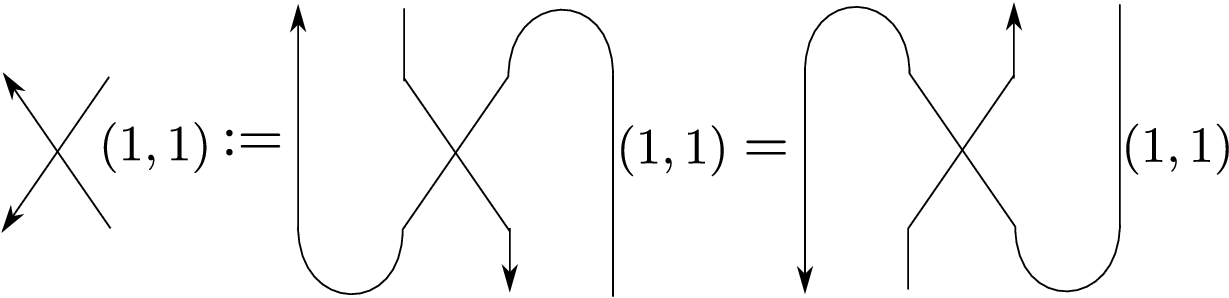} which has $\deg^q = 0$ and $\deg^h = 0$.
\end{center}

We will define a functor from $\Sc(2,2)^{*,*}$ into $2\Rep(\U^-)^{*,*}$.
On objects, it sends
\begin{itemize}
\item $\One_{2,0} \mapsto \A_2$
\item $\One_{1,1} \mapsto \A_{1,1}$
\item $\One_{0,2} \mapsto \A_2$
\end{itemize}
where we view $\A_2$ and $\A_{1,1}$ as 2-representations $(A,F,\tau)$ of $\U^-$.

\subsection{Bimodules for 1-morphisms}

We send
\begin{itemize}
\item $\One_{1,1} \E \One_{0,2} \mapsto {^{\vee}}Y$,
\item $\One_{2,0} \E \One_{1,1} \mapsto {^{\vee}}\Lambda$,
\item $\One_{1,1} \Fc \One_{2,0} \mapsto {^{\vee}}Y$,
\item $\One_{0,2} \Fc \One_{1,1} \mapsto {^{\vee}}\Lambda$,
\end{itemize}
where we view ${^{\vee}}Y$ and ${^{\vee}}\Lambda$ as AD bimodule 1-morphisms of 2-representations of $\U^-$.

\subsection{Bimodule maps for 2-morphisms}\label{sec:BimodMapsFor2Mors}

\subsubsection{Dots}

\begin{itemize}
\item For the 2-morphism \includegraphics[scale=0.4]{dot1.eps} in $\Sc(2,2)^{*,*}$, we define an endomorphism of ${^{\vee}}Y$ as the dual of the endomorphism $\delta^{\up}_{0,2}$ of $Y$ with matrix
\[
\kbordermatrix{
& A' & B' \\
A' & e_1 & 1 \otimes \lambda \\
B' & 0 & 0
}
\]
on the summand $Y_{\midd}$ and matrix
\[
\kbordermatrix{
& w_1 C' & C' \\
w_1 C' & e_1 & 1\\
C' & e_2 & 0
}
\]
on the summand $Y_{\low}$. 

\item For the 2-morphism \includegraphics[scale=0.4]{dot2.eps} in $\Sc(2,2)^{*,*}$, we define an endomorphism of ${^{\vee}}\Lambda$ as the dual of the endomorphism $\delta^{\up}_{1,1}$ of $\Lambda$ with matrix
\[
\kbordermatrix{
& X & Y \\
X & 0 & 0 \\
Y & 0 & U_2
}
\]
on the summand $\Lambda_{\midd}$ and matrix
\[
\kbordermatrix{
& Z \\
Z & U_2 \\
}
\]
on the summand $\Lambda_{\low}$. 

\item For the 2-morphism \includegraphics[scale=0.4]{dot3.eps} in $\Sc(2,2)^{*,*}$, we define an endomorphism of ${^{\vee}}Y$ as the dual of the endomorphism $\delta^{\down}_{2,0}$ of $Y$ with matrix
\[
\kbordermatrix{
& A' & B' \\
A' & 0 & 1 \otimes \lambda \\
B' & 0 & e_1
}
\]
on the summand $Y_{\midd}$ and
\[
\kbordermatrix{
& w_1 C' & C' \\
w_1 C' & 0 & 1 \\
C' & e_2 & e_1
}
\]
on the summand $Y_{\low}$.

\item For the 2-morphism \includegraphics[scale=0.4]{dot4.eps} in $\Sc(2,2)^{*,*}$, we define an endomorphism of ${^{\vee}}\Lambda$ as the dual of the endomorphism $\delta^{\down}_{1,1}$ of $\Lambda$ with matrix
\[
\kbordermatrix{
& X & Y \\
X & U_1 & 0 \\
Y & 0 & 0
}
\]
on the summand $\Lambda_{\midd}$ and matrix
\[
\kbordermatrix{
& Z \\
Z & U_1 \\
}
\]
on the summand $\Lambda_{\low}$.
\end{itemize}

\begin{proposition}
The four maps defined above are 2-morphisms.
\end{proposition}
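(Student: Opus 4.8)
The plan is to verify, for each of the four endomorphisms, two things: first, that the corresponding endomorphism of $Y_{\fs}$ or $\Lambda$ (before dualizing) is a \emph{closed} DA bimodule morphism of bidegree $(\deg^q,\deg^h)=(-2,2)$; and second, that it commutes, up to homotopy (in fact, as it will turn out, strictly), with the $1$-morphism structure maps $\varphi$ from Definitions~\ref{def:YEasy2RepAlpha} and \ref{def:YHardFS2RepAlpha}. Granting these, duality does the rest: since $P \mapsto {^{\vee}}P$ is contravariantly functorial on morphisms, sends closed morphisms to closed morphisms, commutes with box tensor products and preserves bidegree (Section~\ref{sec:DualsAD}), and since the structure map of the dual $1$-morphism is the dual of $\varphi$, the dual of a closed DA bimodule morphism satisfying the compatibility square is a closed AD bimodule morphism satisfying the (AD-version) compatibility square. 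So it suffices to carry out the computations on the DA side, for $Y_{\fs}$ and $\Lambda$.

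For the two endomorphisms $\delta^{\up}_{1,1},\delta^{\down}_{1,1}$ of $\Lambda = \Lambda_{\midd}\oplus\Lambda_{\low}$, the relevant bimodules have only $\delta^1_1$ and $\delta^1_2$, so closedness reduces to checking that the displayed matrices commute with the differential matrix of $\Lambda_{\midd}$ and with the right-action matrices (of $e_1$ on $\Lambda_{\midd}$ and of $e_1,e_2$ on $\Lambda_{\low}$); these are small matrix identities over $\F_2[U_1,U_2]$, and the bidegree is read off from the gradings of $X,Y,Z,U_1,U_2$. For the two endomorphisms $\delta^{\up}_{0,2},\delta^{\down}_{2,0}$ of $Y_{\fs}$, the summand $Y_{\fs,\low}$ is again an ordinary bimodule, handled the same way, while the summand $Y_{\fs,\midd}$ is a genuine DA bimodule carrying the infinite families $e_1^{k+1}\otimes U_i^{k+1}$ and $e_1^k\otimes(\lambda,U_1^{k+1})$, $e_1^k\otimes(U_2^{k+1},\lambda)$ in its secondary matrix. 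Here I would compute $d(f)$ by the morphism-differential recipe of Section~\ref{sec:BorderedAlg}: form the two products of the morphism matrix with the secondary matrix of $Y_{\fs,\midd}$ (in the two composable orders), add the multiplication and differential matrices derived from the morphism matrix, and check that the total vanishes; the entries telescope (a geometric-series cancellation in the powers of $e_1$) and the sum is zero over $\F_2$.

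For compatibility with $\varphi$, recall that in Definitions~\ref{def:YEasy2RepAlpha} and \ref{def:YHardFS2RepAlpha} the structure maps $\varphi$ for $\Lambda$ and for $Y_{\fs}$ are identity matrices between the relevant box tensor products, and that both $\Lambda$ and $Y_{\fs}$ are supported only in the middle ($\varepsilon_1+\varepsilon_2$) and lower ($2\varepsilon_2$) $\gl(1|1)$ weight spaces. So the square to check, on the DA side, is $(\id_{F^{\vee}}\boxtimes g_{\omega})\circ\varphi_{\omega} \sim \varphi_{\omega}\circ(g_{\omega+\alpha}\boxtimes\id_{F^{\vee}})$ for $g \in \{\delta^{\up}_{0,2},\delta^{\down}_{2,0},\delta^{\up}_{1,1},\delta^{\down}_{1,1}\}$, with only $\omega = 2\varepsilon_2$ contributing nontrivially. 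I would compute $g\boxtimes\id_{F^{\vee}}$ and $\id_{F^{\vee}}\boxtimes g$ using Procedure~\ref{proc:BoxTensorMorphisms}; because the relevant summands of $F^{\vee}$ each have a single $\F_2$-generator with no structure beyond the differential $\lambda$ and the right actions of $U_1,U_2$, these box tensor products simply carry $g$ along without producing new terms, and the two composites coincide on the nose once $\varphi$ (the identity matrix) is used to identify source and target.

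I expect the main obstacle to be the closedness check for $\delta^{\up}_{0,2}$ and $\delta^{\down}_{2,0}$ on $Y_{\fs,\midd}$: there one must track how the morphism's higher ($\delta^1_2$-type) term involving $\lambda$ interacts with the infinitely many higher $A_{\infty}$ structure maps of $Y_{\fs,\midd}$, together with the multiplication- and differential-matrix contributions coming from products of algebra generators. That cancellation is the one genuinely computational point; everything else — the $\Lambda$ and $Y_{\fs,\low}$ computations, the bidegree bookkeeping, the duality argument, and the $\varphi$-compatibility — is routine once the matrix formalism is in place.
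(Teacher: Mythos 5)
Your proposal matches the paper's proof in its essentials: the paper verifies precisely the compatibility squares $(\id_{F^{\vee}}\boxtimes g)\circ\varphi = \varphi\circ(g\boxtimes\id_{F^{\vee}})$ on the DA side by computing the box-tensor matrices via Procedure~\ref{proc:BoxTensorMorphisms} and observing that, with $\varphi$ the identity matrix, the two edges agree on the nose (the only nontrivial square being the middle-to-lower one, the others vanishing since $Y_{\upp}=\Lambda_{\upp}=0$); closedness is not re-verified in that proof but is justified, as you note, by the maps being sums of ways of inserting multiplication by $U_i$ into the $A_{\infty}$ action. One caution: the box tensor products do transform terms rather than merely ``carrying $g$ along'' --- for instance the $1\otimes\lambda$ entry of $\delta^{\up}_{0,2}$ on $Y_{\fs,\midd}$ becomes a plain $1$ after $\boxtimes\,\id_{F^{\vee}}$, and the $e_2$ entry in the lower summand is killed on the right edge because $e_2$ acts by zero on ${_{\varepsilon_1+\varepsilon_2}}(F^{\vee})_{2\varepsilon_2}$ over $\A_2$ --- and these transformations are exactly what makes the two edges coincide, so they must be carried out rather than assumed away.
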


\begin{proof}
\begin{itemize}
\item To see that $\delta^{\up}_{0,2}$ is a 2-morphism, we want the square
\[
\xymatrix{
YF^{\vee} \ar[d]_-{\delta^{\up}_{0,2} \boxtimes \id_{F^{\vee}}} \ar[rr]^-{\varphi_Y} && F^{\vee} Y \ar[d]^-{\id_{F^{\vee}} \boxtimes \delta^{\up}_{0,2}} \\
YF^{\vee} \ar[rr]_-{\varphi_Y} && F^{\vee} Y
}
\]
to commute up to homotopy. In the only nonzero case, the top and bottom edges of the square are
\[
\kbordermatrix{
& A' \xi_{\ou,\oo} & B' \xi_{\uo,\oo} \\
\xi_{\u,\o} w_1 C' & 1 & 0 \\ 
\xi_{\u,\o} C' & 0 & 1
};
\]
the left edge is
\[
\kbordermatrix{
& A' \xi_{\ou,\oo} & B' \xi_{\uo,\oo} \\
A' \xi_{\ou,\oo} & e_1 & 1 \\ 
B' \xi_{\uo,\oo} & 0 & 0
}
\]
and the right edge is
\[
\kbordermatrix{
& \xi_{\u,\o} w_1 C' & \xi_{\u,\o} C' \\
\xi_{\u,\o} w_1 C' & e_1 & 1 \\ 
\xi_{\u,\o} C' & 0 & 0
}.
\]
Thus, the square commutes.

\item To see that $\delta^{\up}_{1,1}$ is a 2-morphism, we want the square
\[
\xymatrix{
\Lambda F^{\vee} \ar[d]_-{\delta^{\up}_{1,1} \boxtimes \id_{F^{\vee}}} \ar[rr]^-{\varphi_{\Lambda}} && F^{\vee} \Lambda \ar[d]^-{\id_{F^{\vee}} \boxtimes \delta^{\up}_{1,1}} \\
\Lambda F^{\vee} \ar[rr]_-{\varphi_{\Lambda}} && F^{\vee} \Lambda
}
\]
to commute up to homotopy. Recall that the top and bottom edges of the square are
\[
\kbordermatrix{
& X \xi_{\o,\u} & Y \xi_{\o,\u} \\
\xi_{\ou,\oo} Z & 1 & 0 \\
\xi_{\uo,\oo} Z & 0 & 1
};
\]
the left edge is
\[
\kbordermatrix{
& X \xi_{\o,\u} & Y \xi_{\o,\u} \\
X \xi_{\o,\u} & 0 & 0 \\
Y \xi_{\o,\u} & 0 & U_2
}
\]
and the right edge is
\[
\kbordermatrix{
& \xi_{\ou,\oo} Z & \xi_{\uo,\oo} Z \\
\xi_{\ou,\oo} Z & 0 & 0 \\
\xi_{\uo,\oo} Z & 0 & U_2
}.
\]
Thus, the square commutes.

\item For $\delta^{\down}_{2,0}$, one can check similarly that the square
\[
\xymatrix{
YF^{\vee} \ar[d]_-{\delta^{\down}_{2,0} \boxtimes \id_{F^{\vee}}} \ar[rr]^-{\varphi_Y} && F^{\vee} Y \ar[d]^-{\id_{F^{\vee}}  \boxtimes \delta^{\down}_{2,0}} \\
YF^{\vee} \ar[rr]_-{\varphi_Y} && F^{\vee} Y
}
\]
commutes, so $\delta^{\down}_{2,0}$ is a 2-morphism.

\item For $\delta^{\down}_{1,1}$, one can check similarly that the square
\[
\xymatrix{
\Lambda F^{\vee} \ar[d]_-{\delta^{\down}_{1,1} \boxtimes \id_{F^{\vee}}} \ar[rr]^-{\varphi_{\Lambda}} && F^{\vee} \Lambda \ar[d]^-{\id_{F^{\vee}} \boxtimes \delta^{\down}_{1,1}} \\
\Lambda F^{\vee} \ar[rr]_-{\varphi_{\Lambda}} && F^{\vee} \Lambda
}
\]
commutes, so $\delta^{\down}_{1,1}$ is a 2-morphism.
\end{itemize}
\end{proof}

\begin{remark}
The map $\delta^{\up}_{0,2}$ can be thought of as ``right multiplication by $U_1$'' on $Y$; more precisely, to get the above matrices, one sums all ways of inserting $U_1$ as an algebra input in some slot for the right $A_{\infty}$ action on $Y$. The other maps defined above have similar interpretations:
\begin{itemize}
\item $\delta^{\up}_{1,1}$ comes from left multiplication by $U_2$.
\item $\delta^{\down}_{2,0}$ comes from right multiplication by $U_2$.
\item $\delta^{\down}_{1,1}$ comes from left multiplication by $U_1$.
\end{itemize}
\end{remark}

\subsubsection{Crossings}

For the 2-morphism \includegraphics[scale=0.4]{crossing1.eps} in $\Sc(2,2)^{*,*}$, we define an endomorphism of ${^{\vee}}\Phi$ as the dual of the endomorphism $\chi$ of $\Phi$ with matrix
\[
\kbordermatrix{
& A'X & B'Y \\
A'X & 0 & 0 \\
B'Y & 1 & 0
}
\]
in the middle summand and
\[
\kbordermatrix{
& w_1 C' Z & C' Z \\
w_1 C' Z & 0 & 0  \\
C' Z & 1 & 0
}
\]
in the lower summand; it is clear that $\chi$ is a 2-morphism.

To the 2-morphism \includegraphics[scale=0.4]{crossing2.eps} in $\Sc(2,2)^{*,*}$, we assign the same endomorphism of ${^{\vee}}\Phi$ as for \includegraphics[scale=0.4]{crossing1.eps} (namely the dual of $\chi$).

\subsubsection{Cups and caps}

\begin{itemize}
\item For the 2-morphism \includegraphics[scale=0.4]{cup1.eps} in $\Sc(2,2)^{*,*}$, we define a morphism from $\id$ to ${^{\vee}}X$ as the dual of the morphism
\[
\varepsilon'\co X \to \id
\]
with matrix 
\[
\kbordermatrix{
& XA' & YA' & XB' & YB' \\
\Ib_{\ou} & U_1 & 0 & 1 \otimes \lambda & 0 \\
\Ib_{\uo} & 0 & 0 & 0 & 1
}
\]
in the upper summand and 
\[
\kbordermatrix{
& Z(w_1 C') & ZC' \\
\Ib_{\oo} & U_1 & 1
}
\] 
in the lower summand. 

\item For the 2-morphism \includegraphics[scale=0.4]{cup2.eps} in $\Sc(2,2)^{*,*}$, we define a morphism from $\id$ to ${^{\vee}}\Phi$ as the dual of the morphism
\[
\varepsilon\co \Phi \to \id
\]
with matrix 
\[
\kbordermatrix{
& A'X & B'Y \\
\Ib_{\u} & 1 & 0
}
\]
in the upper summand and 
\[
\kbordermatrix{
& w_1 C' Z & C'Z \\
\Ib_{\o} & 1 & 0
}
\]
in the lower summand.

\item The 2-morphism \includegraphics[scale=0.4]{cup3.eps} gets assigned the dual of $\varepsilon\co \Phi \to \id$.
\item The 2-morphism \includegraphics[scale=0.4]{cup4.eps} gets assigned the dual of $\varepsilon'\co X \to \id$.

\item For the 2-morphism \includegraphics[scale=0.4]{cap1.eps} in $\Sc(2,2)^{*,*}$, we define a morphism from $X^{\vee}$ to $\id$ as the dual of the morphism
\[
\eta\co \id \to X
\]
with matrix
\[
\kbordermatrix{
& \Ib_{\ou} & \Ib_{\uo} \\ 
XA' & 1 & 0  \\
YA' & 0 & 1 \otimes \lambda \\
XB' & 0 & 0 \\
YB' & 0 & U_2
}
\]
in the upper summand and 
\[
\kbordermatrix{
& \Ib_{\oo} \\
Z(w_1 C') & 1 \\
ZC' & U_2
}
\] 
in the lower summand.

\item For the 2-morphism \includegraphics[scale=0.4]{cap2.eps} in $\Sc(2,2)^{*,*}$, we define a morphism from ${^{\vee}}\Phi$ to $\id$ as the dual of the morphism
\[
\eta'\co \id \to \Phi
\]
with matrix 
\[
\kbordermatrix{
& \Ib_{\u} \\
A'X & 0 \\
B'Y & 1
}
\] 
in the upper summand and 
\[
\kbordermatrix{
& \Ib_{\o} \\
(w_1 C')Z & 0 \\
C' Z & 1
}
\]
in the lower summand.

\item The 2-morphism \includegraphics[scale=0.4]{cap3.eps} gets assigned the dual of $\eta'\co \id \to \Phi$.
\item The 2-morphism \includegraphics[scale=0.4]{cap4.eps} gets assigned the dual of $\eta\co \id \to X$.
\end{itemize}

We first check that $\eta$ is a closed morphism of DA bimodules.

\begin{proposition}
The map $\eta$ is a closed morphism of DA bimodules in the upper summand.
\end{proposition}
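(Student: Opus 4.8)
The plan is to verify directly that $d(\eta) = 0$, where $d$ is the differential on the chain complex of DA bimodule morphisms; ``in the upper summand'' restricts everything to the weight-$(\varepsilon_1+\varepsilon_2)$ pieces, so the source is the identity bimodule over $(\A_{1,1})_{\varepsilon_1+\varepsilon_2}$ with generators $\Ib_{\ou}, \Ib_{\uo}$ and the target is $X_{\midd}$ with generators $XA', YA', XB', YB'$. I would compute $d(\eta)$ with the matrix procedure for the differential of a morphism, whose ``multiplication matrix'' and ``differential matrices'' are formed as in Procedure~\ref{proc:DAWellDef}. Because $\A_{1,1}$ has no internal differential, the two differential matrices vanish; and because the only algebra element occurring as an input in the matrix of $\eta$ is $\lambda$ (in the entry $1 \otimes \lambda$), and $\lambda$ is not a product of two non-identity basis elements of $\A_{1,1}$, the multiplication matrix vanishes too. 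So $d(\eta)$ reduces to the sum of two pieces: post-composing $\eta$ with the structure maps $\delta^1_i$ of $X_{\midd}$, and pre-composing $\eta$ with the unique nonzero structure map $\delta^1_2 = \id$ of the identity bimodule.

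Next I would record the relevant data. The morphism $\eta$ has $\eta_1(\Ib_{\ou}) = 1 \otimes XA'$, $\eta_1(\Ib_{\uo}) = U_2 \otimes YB'$, and $\eta_2(\Ib_{\uo} \otimes \lambda) = 1 \otimes YA'$, with all higher $\eta_i$ zero. The bimodule $X_{\midd}$ has a genuine differential $\delta^1_1$, namely $\delta^1_1(YA') = \lambda \otimes XA'$ and $\delta^1_1(YB') = \lambda \otimes XB'$, together with the diagonal $\delta^1_2$-families of powers of $U_1$ and $U_2$ in its secondary matrix and the off-diagonal $\delta^1_3$-families $U_1^k \otimes (\lambda, U_1^{k+1})$ and $U_1^k \otimes (U_2^{k+1},\lambda)$ (from $XB'$ to $XA'$), $U_2^k \otimes (U_2^{k+1},\lambda)$ and $U_2^k \otimes (\lambda, U_1^{k+1})$ (from $YB'$ to $YA'$). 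Since $\eta_j = 0$ for $j \geq 3$, $\delta^1_{\id}$ has only its $j = 2$ term, and $\delta^1_{X,j} = 0$ for $j \geq 4$, the component $d(\eta)_i$ can be nonzero only for $i \leq 4$; moreover the $i = 4$ component vanishes for arity reasons, since the one surviving splitting would require a $\delta^1_3$-term out of $YA'$, which is absent. So the claim reduces to a finite check of $d(\eta)_i$ for $i \in \{1,2,3\}$ on $\Ib_{\ou}$ and $\Ib_{\uo}$ against each composable input sequence — a string of powers of $U_1$ or of $U_2$, possibly with one $\lambda$ inserted.

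The point that needs care, and which I expect to be the main obstacle, is that $X_{\midd}$ is genuinely an $A_\infty$ (not dg) bimodule and $\eta$ has a genuine higher term $\eta_2$, so this is not merely a chain-map check between ordinary complexes. At $i = 1$ on $\Ib_{\uo}$, the composite of $\eta_1(\Ib_{\uo}) = U_2 \otimes YB'$ with $\delta^1_1(YB') = \lambda \otimes XB'$ vanishes because $U_2 \lambda = 0$ in $\A_{1,1}$; at $i = 2$ on $\Ib_{\uo} \otimes \lambda$, the composite of $\eta_2(\Ib_{\uo}, \lambda) = 1 \otimes YA'$ with $\delta^1_1(YA') = \lambda \otimes XA'$ cancels the composite of $\delta^1_{\id,2}(\Ib_{\uo}, \lambda) = \lambda \otimes \Ib_{\ou}$ with $\eta_1(\Ib_{\ou})$; and at $i = 2, 3$ on the $U_i$-strings, for each fixed index $k$ the contributions coming from $\eta_1$ or $\eta_2$ followed by the $\delta^1_2$- and $\delta^1_3$-families of $X_{\midd}$ cancel, term by term, against each other and against the pre-composition $\eta \circ \delta^1_{\id}$, the relations $U_2 \lambda = 0$ and $\lambda U_1 = 0$ being exactly what make the stray terms vanish so the matching works. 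No infinite sums arise: each power index $k$ contributes a single canceling pair. Once $d(\eta) = 0$ is checked here (and, more easily, in the lower summand, where $X_{\low}$ is an ordinary bimodule), it follows from the identity ${^{\vee}}(d(f)) = d({^{\vee}}f)$ that the dual morphism ${^{\vee}}\eta \colon X^{\vee} \to \id$ is closed as well.
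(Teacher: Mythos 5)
Your proposal is correct and is essentially the paper's argument: the paper also observes that the multiplication and differential matrices vanish because $\lambda$ cannot be factored and the algebra has no differential, and then verifies that post-composing $\eta$ with the secondary matrix of $X_{\midd}$ equals pre-composing it with the secondary matrix of $\mathbb{I}_{(\A_{1,1})_{\varepsilon_1+\varepsilon_2}}$ (both products equal the matrix with entries $U_1^{k+1}\otimes U_1^{k+1}$, $\lambda\otimes\lambda$, $U_2^{k+1}\otimes(U_2^{k+1},\lambda)$, $U_2^{k+2}\otimes U_2^{k+1}$). Your arity-by-arity cancellations, including the use of $U_2\lambda=0$ and the mod-$2$ cancellation of the two $(\lambda,U_1^{k+1})$ contributions into $YA'$, are exactly the entries of that matrix identity unpacked.
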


\begin{proof}
Since the only $A_{\infty}$ input to $\eta$ is $\lambda$ which cannot be factored, we need to check that the matrix for $\eta$ intertwines the secondary matrices for the DA bimodules $X_{\midd}$ and $(\mathbb{I}_{\A_{1,1}})_{\midd}$. The product

\[
\kbordermatrix{
& XA' & YA' & XB' & YB' \\
XA' & U_1^{k+1} \otimes U_1^{k+1} & \lambda & \begin{matrix} U_1^k \otimes (\lambda, U_1^{k+1}) \\+ U_1^k \otimes (U_2^{k+1}, \lambda)\end{matrix} & 0 \\
YA' & 0 & U_2^{k+1} \otimes U_1^{k+1} & 0 & \begin{matrix} U_2^k \otimes (U_2^{k+1}, \lambda) \\+ U_2^k \otimes (\lambda, U_1^{k+1})\end{matrix} \\
XB' & 0 & 0 & U_1^{k+1} \otimes U_2^{k+1} & \lambda \\
YB' & 0 & 0 & 0 & U_2^{k+1} \otimes U_2^{k+1}
}
\cdot \kbordermatrix{
& \Ib_{\ou} & \Ib_{\uo} \\ 
XA' & 1 & 0  \\
YA' & 0 & 1 \otimes \lambda \\
XB' & 0 & 0 \\
YB' & 0 & U_2
}
\]
equals
\[
\kbordermatrix{
& \Ib_{\ou} & \Ib_{\uo} \\ 
XA' & U_1^{k+1} \otimes U_1^{k+1} & \lambda \otimes \lambda \\
YA' & 0 & U_2^{k+1} \otimes (U_2^{k+1}, \lambda) \\
XB' & 0 & 0 \\
YB' & 0 & U_2^{k+2} \otimes U_2^{k+1}
}.
\]
The product
\[
\kbordermatrix{
& \Ib_{\ou} & \Ib_{\uo} \\ 
XA' & 1 & 0  \\
YA' & 0 & 1 \otimes \lambda \\
XB' & 0 & 0 \\
YB' & 0 & U_2
}
\cdot \kbordermatrix{
& \Ib_{\ou} & \Ib_{\uo} \\
\Ib_{\ou} & U_1^{k+1} \otimes U_1^{k+1} & \lambda \otimes \lambda \\
\Ib_{\uo} & 0 & U_2^{k+1} \otimes U_2^{k+1}
}
\]
gives the same result.
\end{proof}

\begin{proposition}
The map $\eta$ is a valid homomorphism of (ordinary) bimodules, and thus a closed morphism of DA bimodules, in the lower summand.
\end{proposition}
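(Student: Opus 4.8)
The plan is to reduce the statement to a direct matrix check over the commutative polynomial ring $(\A_{1,1})_{2\varepsilon_2} = \F_2[U_1,U_2]$. In the lower weight space both $X_{\low} = \Lambda_{\low} \boxtimes Y_{\fs,\low}$ and $(\mathbb{I}_{\A_{1,1}})_{\low}$ are ordinary dg bimodules over $\F_2[U_1,U_2]$ with zero differential and no higher $A_\infty$ action terms (unlike the upper-summand case treated in the previous proposition, there are no $\lambda$-inputs to worry about). Consequently a strict DA bimodule morphism between them is closed if and only if its matrix is the matrix of an honest $(\F_2[U_1,U_2],\F_2[U_1,U_2])$-bimodule homomorphism, so it suffices to verify this for the matrix defining $\eta$ in the lower summand.

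Left linearity is automatic: the left $\F_2[U_1,U_2]$-action on both bimodules is induced by multiplication on the left tensor factor of $\A_{1,1} \otimes_{I} (-)$, while the matrix of $\eta$ records a right $\F_2[U_1,U_2]$-action on that factor, so $\eta$ commutes with the left action for free. For the right action, since $\F_2[U_1,U_2]$ is generated by $U_1$ and $U_2$ subject only to $U_1 U_2 = U_2 U_1$, it is enough to check that the matrix of $\eta$ intertwines right multiplication by $U_1$ and right multiplication by $U_2$; that is, that $\eta$'s matrix times the ($1\times 1$) right-action matrix $[U_i]$ on $(\mathbb{I}_{\A_{1,1}})_{\low}$ equals the right-action matrix of $U_i$ on $X_{\low}$ (from the formulas for $X_{\low}$ given above) times $\eta$'s matrix, for $i = 1,2$.

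Each of these two identities is a short $2\times 2$-by-$2\times 1$ matrix computation over $\F_2[U_1,U_2]$: one expands $\eta(\Ib_{\oo}) \cdot U_i$ using the stated right-action matrices on $Z(w_1 C')$ and $ZC'$ and compares with $U_i \cdot \eta(\Ib_{\oo})$, finding agreement after the repeated monomial terms cancel in characteristic $2$ (for example, the coefficient of $Z(w_1 C')$ in $\eta(\Ib_{\oo}) \cdot U_1$ comes out as $U_1 + U_2 + U_2 = U_1$). Along the way I would record that $\eta$ is homogeneous of bidegree $(-1,2)$, as the cap $2$-morphism requires, by checking that both nonzero matrix entries $\Ib_{\oo} \mapsto Z(w_1 C')$ and $\Ib_{\oo} \mapsto U_2 \otimes ZC'$ carry that bidegree, using $\deg^q(Z(w_1 C')) = -1$, $\deg^h(Z(w_1 C')) = 2$, $\deg^q(ZC') = 1$, $\deg^h(ZC') = 0$, $\deg^q(U_2) = -2$, and $\deg^h(U_2) = 2$. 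There is no genuine obstacle here; the only care needed is in bookkeeping the box-tensor-product right action correctly and remembering that all signs vanish over $\F_2$, after which the verification is mechanical.
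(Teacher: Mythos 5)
Your proposal is correct and follows essentially the same route as the paper: the paper's proof likewise verifies that the matrix of $\eta$ in the lower summand intertwines the right-action matrices of $U_1$ and $U_2$ on $(\mathbb{I}_{\A_{1,1}})_{\low}$ and $X_{\low}$ by the same two short matrix computations over $\F_2[U_1,U_2]$ (left-linearity being automatic in the DA formalism). Your added degree bookkeeping is consistent with the bidegree $(-1,2)$ assigned to the corresponding cap 2-morphism.
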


\begin{proof}
To see that $\eta$ respects the right action of $U_1$, note that
\[
\kbordermatrix{
& Z(w_1 C') & ZC' \\
Z(w_1 C') & U_1 + U_2 & 1 \\
ZC' & U_1 U_2 & 0
}
\kbordermatrix{
& \Ib_{\oo} \\
Z(w_1 C') & 1 \\
ZC' & U_2
}
\]
equals
\[
\kbordermatrix{
& \Ib_{\oo} \\
Z(w_1 C') & U_1 \\
ZC' & U_1 U_2
},
\]
which also equals
\[
\kbordermatrix{
& \Ib_{\oo} \\
Z(w_1 C') & 1 \\
ZC' & U_2
}
\kbordermatrix{
& \Ib_{\oo} \\
\Ib_{\oo} & U_1 
}.
\]
Thus, $\eta$ respects the right action of $U_1$.

To see that $\eta$ respects the right action of $U_2$, note that
\[
\kbordermatrix{
& Z(w_1 C') & ZC' \\
Z(w_1 C') & 0 & 1 \\
ZC' & U_1 U_2 & U_1 + U_2
}
\kbordermatrix{
& \Ib_{\oo} \\
Z(w_1 C') & 1 \\
ZC' & U_2
}
\]
equals
\[
\kbordermatrix{
& \Ib_{\oo} \\
Z(w_1 C') & U_2 \\
ZC' & U_2^2
},
\]
which also equals
\[
\kbordermatrix{
& \Ib_{\oo} \\
Z(w_1 C') & 1 \\
ZC' & U_2
}
\kbordermatrix{
& \Ib_{\oo} \\
\Ib_{\oo} & U_2 
}
\]
Thus, $\eta$ is a bimodule homomorphism.
\end{proof}

The analogous result for $\varepsilon$ is clear.

\begin{remark}\label{rem:UsingSymmetry}
The matrices for $\eta'$ and $\varepsilon'$ are obtained from the matrices for $\varepsilon$ and $\eta$ respectively by:
\begin{enumerate}
\item\label{it:Symm1} transposing the matrices ``along their anti-diagonals,'' while preserving (not reversing) the ordering of basis elements;
\item\label{it:Symm2} exchanging $U_1$ and $U_2$ (as well as $w_1$ and $w_2$) everywhere they appear;
\item\label{it:Symm3} reversing the order of all higher sequences of $A_{\infty}$ inputs
\end{enumerate}
(the last item does not arise for the maps under consideration, and neither do $w_i$ elements). Closedness of these morphisms follows from closedness of $\varepsilon$ and $\eta$ because the secondary matrices for each summand of $\Lambda$ and $Y_{\fs}$ are symmetric under the simultaneous application of the above items. 

Indeed, $\Lambda$ and $Y$ are based on Heegaard diagrams that are symmetric under the composition of the following operations: reflect the diagram left-to-right in the plane, reverse the orientation on the Heegaard surface, and interchange $w_1$ and $w_2$ if the diagram has them. The algebraic symmetry in the secondary matrices corresponds to this diagrammatic symmetry. More specifically, the diagrammatic symmetry is preserved in the secondary matrices since we chose an ordered basis for the row and column indices such that left-to-right reflection in the plane on the row and column indices (viewed as sets of intersection points in the Heegaard diagram) corresponds to order reversal of the ordered basis.
\end{remark}

We now check that the maps for cups and caps are 2-morphisms.

\begin{proposition}\label{prop:FirstEtaIs2Mor}
The map $\varepsilon$ is a $2$-morphism.
\end{proposition}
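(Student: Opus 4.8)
The plan is to verify the single compatibility square demanded by the definition of a $2$-morphism, following the template already used above for the dot $2$-morphisms $\delta^{\up}_{0,2}$, $\delta^{\up}_{1,1}$, $\delta^{\down}_{2,0}$, $\delta^{\down}_{1,1}$. Since $\varepsilon$ is a morphism from the $1$-morphism $\Phi\co\A_2\to\A_2$ to the identity $1$-morphism on $\A_2$, whose structure map is $\varphi_{\id}=\id$, being a $2$-morphism amounts to the commutativity up to homotopy, for each $\omega\in\Z^2$, of the square with edges $\varphi_{\Phi}$, the canonical identification, $\varepsilon\boxtimes\id_{F^{\vee}}$, and $\id_{F^{\vee}}\boxtimes\varepsilon$.

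First I would pin down the weight spaces in which the square is not automatically trivial. The bimodules $\Phi_{\omega}$, the identity bimodule $(\mathbb{I}_{\A_2})_{\omega}$, and the category $(\A_2)_{\omega}$ all vanish unless $\omega\in\{\varepsilon_1+\varepsilon_2,\,2\varepsilon_2\}$, and $(\varepsilon_1+\varepsilon_2)+\alpha=2\varepsilon_1$ is not a weight of $\A_2$, so $F^{\vee}$ vanishes in weight $\varepsilon_1+\varepsilon_2$. Hence the only case to check is $\omega=2\varepsilon_2$, where, after the canonical identifications $(\mathbb{I}_{\A_2})_{\varepsilon_1+\varepsilon_2}\boxtimes F^{\vee}\cong F^{\vee}\cong F^{\vee}\boxtimes(\mathbb{I}_{\A_2})_{2\varepsilon_2}$, the square becomes
\[
\xymatrix{
\Phi_{\midd}\boxtimes F^{\vee}\ar[d]_-{\varepsilon\,\boxtimes\,\id_{F^{\vee}}}\ar[rr]^-{\varphi_{\Phi}}&&F^{\vee}\boxtimes\Phi_{\low}\ar[d]^-{\id_{F^{\vee}}\,\boxtimes\,\varepsilon}\\
F^{\vee}\ar@{=}[rr]&&F^{\vee}.
}
\]

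Then I would compute both composites explicitly. Both $\varepsilon$ (given by $A'X\mapsto\Ib_{\u}$, $B'Y\mapsto 0$ on $\Phi_{\midd}$ and $w_1C'Z\mapsto\Ib_{\o}$, $C'Z\mapsto 0$ on $\Phi_{\low}$) and $\varphi_{\Phi}$ (given by $A'X\xi_{\u,\o}\mapsto\xi_{\u,\o}(w_1C')Z$ and $B'Y\xi_{\u,\o}\mapsto\xi_{\u,\o}C'Z$) are strict, and $F^{\vee}$ over $\A_2$ has vanishing higher actions, so $\varepsilon\boxtimes\id_{F^{\vee}}$ and $\id_{F^{\vee}}\boxtimes\varepsilon$ are again strict and are read off directly from Procedure~\ref{proc:BoxTensorMorphisms}. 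A one-line matrix computation then shows that both composites send $A'X\xi_{\u,\o}\mapsto\xi_{\u,\o}$ and $B'Y\xi_{\u,\o}\mapsto 0$, so the square commutes on the nose, and $\varepsilon$ is a $2$-morphism. I do not expect a genuine obstacle here: the content is entirely bookkeeping, and the only thing to take care of is tracking which summands and weight spaces are nonzero, exactly as in the analogous dot computations above.
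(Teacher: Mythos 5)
Your proposal is correct and follows essentially the same route as the paper: identify the single nontrivial square (middle-to-lower weight space, since $F^{\vee}$ over $\A_2$ is concentrated there), compute the three nontrivial edges in matrix form, and observe that both composites send $A'X\xi_{\u,\o}\mapsto\xi_{\u,\o}$ and $B'Y\xi_{\u,\o}\mapsto 0$, so the square commutes on the nose. The paper's proof is exactly this matrix bookkeeping, citing the computation of $\varphi_{\Phi}$ from Section~\ref{sec:Bubble} for the left edge.
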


\begin{proof}
For $\varepsilon$ to be a 2-morphism, the square
\[
\xymatrix{
\Phi_{\midd} \boxtimes F^{\vee}  \ar[d]_{\varphi} \ar[rr]^{\varepsilon \boxtimes \id_{F^{\vee}}} && F^{\vee} \ar[d]^{\id} \\
F^{\vee} \boxtimes \Phi_{\low} \ar[rr]_{\id_{F^{\vee}} \boxtimes \varepsilon} && F^{\vee}
}
\]
should commute, at least up to homotopy. 

The top edge of the square for $\varepsilon$ has matrix
\[
\kbordermatrix{
& A'X \xi_{\u,\o} & B'Y\xi_{\u,\o} \\
\xi_{\u,\o} & 1 & 0 \\
},
\]
and the bottom edge of this square has matrix
\[
\kbordermatrix{
& \xi_{\u,\o} (w_1 C')Z & \xi_{\u,\o} C' Z \\
\xi_{\u,\o} & 1 & 0
}.
\]
As computed in Section~\ref{sec:Bubble}, the left edge of the square has matrix
\[
\kbordermatrix{
& A'X \xi_{\u,\o} & B'Y\xi_{\u,\o} \\
\xi_{\u,\o} (w_1 C')Z & 1 & 0 \\
\xi_{\u,\o} C' Z & 0 & 1
}.
\]
Thus, the square for $\varepsilon$ commutes. 

\end{proof}

For $\eta$ to be a $2$-morphism, the squares
\[
\xymatrix{
F^{\vee} \ar[d]_{\id} \ar[rr]^-{\eta \boxtimes \id_{F^{\vee}}} && X_{\midd} \boxtimes F^{\vee} \ar[d]^{\varphi} \\
F^{\vee} \ar[rr]_-{\id_{F^{\vee}} \boxtimes \eta} && F^{\vee} \boxtimes X_{\low} 
}
\]
and
\[
\xymatrix{
F^{\vee} \ar[d]_{\id} \ar[rr]^-{\eta \boxtimes \id_{F^{\vee}}} && X_{\upp} \boxtimes F^{\vee} = 0 \ar[d]^{\varphi} \\
F^{\vee} \ar[rr]_-{\id_{F^{\vee}} \boxtimes \eta} && F^{\vee} \boxtimes X_{\midd}  
}
\]
should commute, at least up to homotopy.

\begin{proposition}\label{prop:FirstEpsIs2MorSquare1}
The first square for $\eta$ commutes.
\end{proposition}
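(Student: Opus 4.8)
The plan is to verify commutativity of the square directly in matrix notation, exactly as in the proof of Proposition~\ref{prop:FirstEtaIs2Mor}. First I would identify the four edges of the square as explicit matrices over the relevant algebra. The left edge is $\id$ on $F^{\vee}$, hence the identity matrix. The top edge $\eta \boxtimes \id_{F^{\vee}}$ is computed from the matrix for $\eta$ (in the upper summand of $X$) and the secondary matrix for $F^{\vee}$ via Procedure~\ref{proc:BoxTensorMorphisms}; since the only $A_\infty$ input appearing in $\eta$ is $\lambda$, which cannot be factored further, this box-tensor computation is mild. The bottom edge $\id_{F^{\vee}} \boxtimes \eta$ is computed similarly, now using $\eta$ in the lower summand of $X$. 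The right edge is the map $\varphi$ for $X$, which was written down explicitly in Section~\ref{sec:SingularCrossing} as an identity-like matrix on the relevant generators $XA'\xi_{\ou,\oo}$, $YA'\xi_{\ou,\oo}$, $XB'\xi_{\uo,\oo}$, $YB'\xi_{\uo,\oo}$.

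Having assembled the four matrices, the verification reduces to checking that the composite along the top and right equals the composite along the left and bottom (as morphisms of DA bimodules, i.e. as matrices, up to homotopy — but here I expect an honest equality, as in the earlier propositions of this section). Concretely, I would multiply the matrix for $\eta \boxtimes \id_{F^{\vee}}$ by the matrix for $\varphi_X$ and compare entry-by-entry with the matrix for $\id_{F^{\vee}} \boxtimes \eta$ (the left edge being the identity). Because $\varphi_X$ is essentially a permutation/identity matrix matching $XA'\xi_{\ou,\oo} \leftrightarrow \xi_{\ou,\oo} Z(w_1 C')$ etc., the right-hand composite just renames the generators appearing in $\eta$'s upper-summand matrix, and one checks this agrees with how $\eta$ acts in the lower summand after the box tensor with $F^{\vee}$ on the left. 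This is a short finite computation with small matrices.

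The main obstacle — insofar as there is one — is purely bookkeeping: one must be careful about the identification of generators of $X_{\midd} \boxtimes F^{\vee}$ and $F^{\vee} \boxtimes X_{\low}$ under $\varphi$, and about which slot the algebra element $\lambda$ lands in when forming the box tensor products, so that the entries $1\otimes\lambda$ in $\eta$ are tracked correctly through Procedure~\ref{proc:BoxTensorMorphisms}. Once the generators are labeled consistently (as they already are in the statements of the propositions computing $X_{\midd}\boxtimes F^{\vee}$, $F^{\vee}\boxtimes X_{\low}$, and $\varphi$ in Section~\ref{sec:SingularCrossing}), the two composites visibly coincide. I would also remark that the analogous square for $\varepsilon$ was handled this way in Proposition~\ref{prop:FirstEtaIs2Mor}, and that the second square for $\eta$ (the one involving $X_{\upp} = 0$) commutes trivially, so only this first square requires the matrix check. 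Therefore the square commutes and $\eta$ is a $2$-morphism in this summand.
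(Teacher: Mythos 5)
Your proposal matches the paper's proof: the paper writes down the matrices for $\eta \boxtimes \id_{F^{\vee}}$ and $\id_{F^{\vee}} \boxtimes \eta$ (via the box-tensor procedure, with the $1\otimes\lambda$ entry of $\eta$ pairing against the $\lambda$ differential term in ${_{\varepsilon_1+\varepsilon_2}}(F^{\vee})_{2\varepsilon_2}$), recalls that $\varphi_X$ from Section~\ref{sec:SingularCrossing} is the identity-like matrix on the four generators, and observes that the two composites agree on the nose. This is exactly the computation you describe, including the honest (not merely up-to-homotopy) equality.
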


\begin{proof}
The matrix for the top edge of the nontrivial square for $\eta$ is
\[
\kbordermatrix{
& \xi_{\ou,\oo} & \xi_{\uo,\oo} \\
XA'\xi_{\ou,\oo} & 1 & 0 \\
YA'\xi_{\ou,\oo} & 0 & 1 \\
XB'\xi_{\uo,\oo} & 0 & 0 \\
YB'\xi_{\uo,\oo} & 0 & U_2
},
\]
and the matrix for the bottom edge of the square is
\[
\kbordermatrix{
& \xi_{\ou,\oo} & \xi_{\uo,\oo} \\
\xi_{\ou,\oo} Z(w_1 C') & 1 & 0 \\
\xi_{\uo,\oo} Z(w_1 C') & 0 & 1 \\
\xi_{\ou,\oo} ZC' & 0 & 0 \\
\xi_{\uo,\oo} ZC' & 0 & U_2
}.
\]

As computed in Section~\ref{sec:SingularCrossing}, the matrix for the right edge of the square is 
\[
\kbordermatrix{
& XA' \xi_{\ou,\oo} & YA' \xi_{\ou,\oo} & XB' \xi_{\uo,\oo} & YB' \xi_{\uo,\oo} \\
\xi_{\ou,\oo} Z(w_1 C') & 1 & 0 & 0 & 0\\  
\xi_{\uo,\oo} Z(w_1 C') & 0 & 1 & 0 & 0 \\
\xi_{\ou,\oo} ZC' & 0 & 0 & 1 & 0 \\
\xi_{\uo,\oo} ZC' & 0 & 0 & 0 & 1
},
\]
so the first square for $\eta$ commutes.

\end{proof}

\begin{proposition}\label{prop:FirstEpsIs2MorSquare2}
The second square for $\eta$ commutes up to homotopy.
\end{proposition}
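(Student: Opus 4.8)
\textit{Plan.} The idea is to prove that the second square for $\eta$ commutes up to homotopy for the structural reason that its target bimodule is contractible; in fact both legs of the square will be individually null-homotopic, so there is nothing to compute.

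First I would record the two compositions. The composite along the top and right edges is $\varphi\circ(\eta\boxtimes\id_{F^\vee})$, and $\eta\boxtimes\id_{F^\vee}$ has target $X_{\upp}\boxtimes F^\vee$; since $X_{\upp}=0$, this composite is the zero morphism $F^\vee\to F^\vee\boxtimes X_{\midd}$. The composite along the left and bottom edges is $\id_{F^\vee}\boxtimes\eta$, using the component of $\eta$ with target $X_{\midd}$ (the matrix written ``in the upper summand'' in Section~\ref{sec:BimodMapsFor2Mors}); this is a closed morphism $F^\vee\to F^\vee\boxtimes X_{\midd}$. So it suffices to show $\id_{F^\vee}\boxtimes\eta$ is null-homotopic. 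For this I would unwind the target: by the definition $X_{\midd}\coloneqq\Lambda_{\midd}\boxtimes Y_{\fs,\midd}$ from Section~\ref{sec:SingularCrossing} and associativity of $\boxtimes$ (Section~\ref{sec:BoxTensor}), we have $F^\vee\boxtimes X_{\midd}\cong(F^\vee\boxtimes\Lambda_{\midd})\boxtimes Y_{\fs,\midd}$, where the copy of $F^\vee$ here is ${_{2\varepsilon_1}}(F^\vee)_{\varepsilon_1+\varepsilon_2}$ over $\A_{1,1}$. Thus $F^\vee\boxtimes\Lambda_{\midd}$ is exactly the bimodule computed in Proposition~\ref{prop:EDualAfterMiddleEasy}: its differential is an identity morphism of $\A_{1,1}$ on the nose, so it has a canceling pair and is contractible (this contractibility is precisely what makes $\varphi$ a homotopy equivalence in Definition~\ref{def:YEasy2RepAlpha}). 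Box-tensoring a contractible DA bimodule with $Y_{\fs,\midd}$ stays contractible, so $F^\vee\boxtimes X_{\midd}$ is contractible. Since any closed morphism into a contractible complex is null-homotopic (apply the contracting homotopy of the target), $\id_{F^\vee}\boxtimes\eta\simeq 0$; hence both legs of the square are homotopic to zero and the square commutes up to homotopy.

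The only point that requires any care is the $\gl(1|1)$-weight bookkeeping: one must check that the $F^\vee$ entering this square is the summand ${_{2\varepsilon_1}}(F^\vee)_{\varepsilon_1+\varepsilon_2}$ over $\A_{1,1}$ (so that $F^\vee\boxtimes\Lambda_{\midd}$ really is the computation of Proposition~\ref{prop:EDualAfterMiddleEasy}, over $\A_{1,1}$ and not some other algebra), and that the component of $\eta$ appearing on the bottom edge is the middle-weight component. Once these identifications are made the argument is purely formal, so I do not expect a genuine obstacle here — in contrast to Proposition~\ref{prop:FirstEpsIs2MorSquare1}, where an explicit matrix comparison was needed.
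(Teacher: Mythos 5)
Your argument is correct, but it takes a different route from the paper. Both proofs begin with the same reduction: since $X_{\upp}=0$ the top composite vanishes, so the square commutes up to homotopy iff $\id_{F^{\vee}}\boxtimes\eta$ is null-homotopic. The paper then writes down the matrix of $\id_{F^{\vee}}\boxtimes\eta$ and the secondary matrix of $F^{\vee}\boxtimes X_{\midd}$ and exhibits an explicit null-homotopy $h$ (a single nonzero entry from $\xi_{\uu,\uo}YA'$ to $\xi_{\uu,\ou}$). You instead argue structurally: $F^{\vee}\boxtimes X_{\midd}\cong(F^{\vee}\boxtimes\Lambda_{\midd})\boxtimes Y_{\fs,\midd}$ by associativity of $\boxtimes$, the factor $F^{\vee}\boxtimes\Lambda_{\midd}$ is contractible by Proposition~\ref{prop:EDualAfterMiddleEasy} (the same contractibility that makes $\varphi_\Lambda$ a homotopy equivalence in Definition~\ref{def:YEasy2RepAlpha}), contractibility is preserved under $\boxtimes$ since $\id_{M\boxtimes N}=\id_M\boxtimes\id_N$ and box tensor respects homotopy, and any closed morphism into a contractible bimodule is null-homotopic. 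Your weight-space bookkeeping is also right: the relevant summand is ${_{2\varepsilon_1}}(F^{\vee})_{\varepsilon_1+\varepsilon_2}$ over $\A_{1,1}$, which is exactly what Proposition~\ref{prop:EDualAfterMiddleEasy} computes. What your approach buys is conceptual clarity and no matrix computation; what it loses is the explicit homotopy $h$, which the paper uses later: Remark~\ref{rem:EtaStrong} promotes $\eta$ to a strong 2-morphism using that specific $h$, which is what allows the mapping cone for the positive crossing to carry 1-morphism structure in Remark~\ref{rem:UpgradingBrCobFunctor}. If you want your proof to feed into that remark, you would still need to extract a concrete $h$ (e.g.\ by pushing the contracting homotopy of $F^{\vee}\boxtimes\Lambda_{\midd}$ through the tensor product) and verify the extra strong-2-morphism condition for it.
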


\begin{proof}
The map $\id_{F^{\vee}} \boxtimes \eta$ on the bottom edge has matrix
\[
\kbordermatrix{
& \xi_{\uu,\ou} & \xi_{\uu,\uo} \\
\xi_{\uu,\ou} XA' & 1 & 0 \\
\xi_{\uu,\uo} YA' & 0 & 1 \otimes \lambda \\
\xi_{\uu,\ou} XB' & 0 & 0 \\
\xi_{\uu,\uo} YB' & 0 & 0
}.
\]

The secondary matrix for the bimodule $F^{\vee} \boxtimes X_{\midd}$ is
\[
\kbordermatrix{
& \xi_{\uu,\ou} XA' & \xi_{\uu,\uo} YA' & \xi_{\uu,\ou} XB' & \xi_{\uu,\uo} YB' \\
\xi_{\uu,\ou} XA' & 0 & 1 & \begin{matrix} 1 \otimes (\lambda, U_1) \\+ 1 \otimes (U_2, \lambda)\end{matrix} & 0 \\
\xi_{\uu,\uo} YA' & 0 & 0 & 0 & \begin{matrix} U_2^k \otimes (U_2^{k+1}, \lambda) \\+ U_2^k \otimes (\lambda, U_1^{k+1})\end{matrix} \\
\xi_{\uu,\ou} XB' & 0 & 0 & 0 & 1 \\
\xi_{\uu,\uo} YB' & 0 & 0 & 0 & 0
};
\]
recall that the secondary matrix for the relevant summand of $F^{\vee}$ is
\[
\kbordermatrix{
& \xi_{\uu,\ou} & \xi_{\uu,\uo} \\
\xi_{\uu,\ou} & 0 & 1 \otimes \lambda \\
\xi_{\uu,\uo} & 0 & 0
}.
\]

If we let $h$ have matrix
\[
\kbordermatrix{
& \xi_{\uu,\ou} & \xi_{\uu,\uo} \\
\xi_{\uu,\ou} XA' & 0 & 0 \\
\xi_{\uu,\uo} YA' & 1 & 0 \\
\xi_{\uu,\ou} XB' & 0 & 0 \\
\xi_{\uu,\uo} YB' & 0 & 0
},
\]
then $h$ is a null-homotopy of $\id_{F^{\vee}} \boxtimes \eta$.
\end{proof}

\begin{corollary}\label{cor:FirstEpsIs2Mor}
The map $\eta$ is a $2$-morphism.
\end{corollary}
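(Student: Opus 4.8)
The plan is simply to assemble Propositions~\ref{prop:FirstEpsIs2MorSquare1} and \ref{prop:FirstEpsIs2MorSquare2} and to check that these cover every case. By the definition of a $2$-morphism between DA bimodule $1$-morphisms of $2$-representations of $\U^-$, what must be shown is that for each $\omega \in \Z^2$ the square with vertical edges $\id$ (coming from the trivial $\varphi$ on the identity $1$-morphism) and $\varphi_X$, and with horizontal edges $\eta_{\omega + \alpha} \boxtimes \id_{F^{\vee}}$ and $\id_{F^{\vee}} \boxtimes \eta_{\omega}$, commutes up to homotopy of DA bimodule morphisms.

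First I would observe that $X$ is a $1$-morphism from $\A_{1,1}$ to $\A_{1,1}$ whose summands $X_{\omega}$ vanish except for $\omega \in \{\varepsilon_1 + \varepsilon_2,\, 2\varepsilon_2\}$ (with $X_{\upp} = 0$), and that the bimodule ${_{\omega + \alpha}}F^{\vee}_{\omega}$ over $\A_{1,1}$ is nonzero only for $\omega \in \{\varepsilon_1 + \varepsilon_2,\, 2\varepsilon_2\}$ (equivalently $\omega + \alpha \in \{2\varepsilon_1,\, \varepsilon_1 + \varepsilon_2\}$). For every other value of $\omega$, all bimodules and maps appearing in the square are zero, so the square commutes automatically. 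This leaves exactly two squares: the one with $\omega = 2\varepsilon_2$ and $\omega + \alpha = \varepsilon_1 + \varepsilon_2$, whose right edge is the map $\varphi$ for $X$ computed in Section~\ref{sec:SingularCrossing}; and the one with $\omega = \varepsilon_1 + \varepsilon_2$ and $\omega + \alpha = 2\varepsilon_1$, whose upper-right corner is $X_{\upp} \boxtimes F^{\vee} = 0$, so that the top edge $\eta_{2\varepsilon_1} \boxtimes \id_{F^{\vee}}$ is zero.

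Then I would invoke Proposition~\ref{prop:FirstEpsIs2MorSquare1} to conclude that the first of these squares commutes on the nose, and Proposition~\ref{prop:FirstEpsIs2MorSquare2} to conclude that the second commutes up to the explicit null-homotopy $h$ displayed there (which exhibits $\id_{F^{\vee}} \boxtimes \eta$ as null-homotopic, matching the vanishing of the opposite edge through $X_{\upp} = 0$). Since all squares commute up to homotopy, $\eta$ satisfies the defining condition of a $2$-morphism.

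The main obstacle is not in this corollary itself, which is purely a matter of bookkeeping once the two propositions are in hand; the genuinely nontrivial input is Proposition~\ref{prop:FirstEpsIs2MorSquare2}, where the relevant square fails to commute strictly and one must produce the correct null-homotopy compatible with the secondary matrix of $F^{\vee} \boxtimes X_{\midd}$. The only work left at the level of the corollary is confirming that the list of $\gl(1|1)$ weight spaces has been exhausted, which is immediate from the weight-space vanishing of $X$ and of $F^{\vee}$ over $\A_{1,1}$.
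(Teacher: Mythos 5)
Your proposal is correct and matches the paper's argument: the corollary is obtained exactly by combining Propositions~\ref{prop:FirstEpsIs2MorSquare1} and \ref{prop:FirstEpsIs2MorSquare2}, with all other weight-space squares vanishing identically. Your explicit bookkeeping of which $\omega$ give nonzero squares is the same (implicit) reduction the paper makes when it lists only the two squares for $\eta$.
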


\begin{remark}\label{rem:EtaStrong}
One can check that the homotopy $h$ in the proof of Proposition~\ref{prop:FirstEpsIs2MorSquare2} makes $\eta$ into a strong 2-morphism as in Remark~\ref{rem:Strong2Mor}.
\end{remark}

\begin{proposition}
The map $\eta'$ is a $2$-morphism.
\end{proposition}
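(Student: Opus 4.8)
The plan is to argue exactly as in Proposition~\ref{prop:FirstEtaIs2Mor} (for $\varepsilon$) and Corollary~\ref{cor:FirstEpsIs2Mor} (for $\eta$): unwind the definition of a DA bimodule $2$-morphism into the statement that one small square of DA bimodule morphisms commutes up to homotopy, and then verify it using the box-tensor rules of Procedure~\ref{proc:BoxTensorMorphisms}.

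First I would determine which instances of the defining equation for a $2$-morphism are nontrivial. Since $\eta'$ is a morphism $\id_{\A_2} \to \Phi$ of $1$-morphisms from $\A_2$ to itself, and $\A_2$ has objects only in the $\gl(1|1)$-weights $\varepsilon_1 + \varepsilon_2$ and $2\varepsilon_2$, the bimodule ${_{\omega+\alpha}}F^{\vee}_{\omega}$ over $\A_2$ is zero unless $\omega = 2\varepsilon_2$, where it is ${_{\varepsilon_1+\varepsilon_2}}F^{\vee}_{2\varepsilon_2}$; for every other $\omega$ the condition reads $0 \sim 0$. Thus it suffices to show that the square
\[
\xymatrix{
F^{\vee} \ar[d]_{\id} \ar[rr]^{\eta' \boxtimes \id_{F^{\vee}}} && \Phi_{\midd} \boxtimes F^{\vee} \ar[d]^{\varphi} \\
F^{\vee} \ar[rr]_{\id_{F^{\vee}} \boxtimes \eta'} && F^{\vee} \boxtimes \Phi_{\low}
}
\]
commutes up to homotopy, where the left edge is the identity (the $1$-morphism $\id_{\A_2}$ carries the structure map $\varphi = \id$) and the right edge $\varphi$ is the structure isomorphism for $\Phi$ already recorded in Section~\ref{sec:Bubble}.

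Next I would compute the two horizontal edges in matrix notation. Because $\eta'_{\midd}$ and $\eta'_{\low}$ are strict with constant matrix entries (the matrices defined in Section~\ref{sec:BimodMapsFor2Mors}) and $F^{\vee}$ over $\A_2$ has a single generator in the relevant weight, Procedure~\ref{proc:BoxTensorMorphisms} shows that $\eta' \boxtimes \id_{F^{\vee}}$ and $\id_{F^{\vee}} \boxtimes \eta'$ are again strict, with matrices read off directly from those of $\eta'_{\midd}$ and $\eta'_{\low}$. Multiplying matrices around the two sides of the square then yields the same map $\xi_{\u,\o} \mapsto \xi_{\u,\o} C' Z$, so the square commutes on the nose; in contrast with $\eta$, no correcting homotopy is needed and there is no ``second square'' to treat, since $\A_2$ has no ``upper'' weight space.

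The only step requiring any care is the box-tensor bookkeeping of Procedure~\ref{proc:BoxTensorMorphisms}, and even that is light here because all the relevant morphisms are strict, so I do not anticipate a genuine obstacle. Alternatively, one could invoke the symmetry of Remark~\ref{rem:UsingSymmetry} --- which already produced the matrices for $\eta'$ from those of $\varepsilon$ --- together with the invariance of the secondary matrices of the summands of $\Phi$ under the same operations, to deduce the result from Proposition~\ref{prop:FirstEtaIs2Mor}; but verifying that this symmetry is compatible with the structure maps $\varphi$ is about as much work as the direct computation, so I would present the latter.
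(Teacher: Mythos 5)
Your proposal is correct and follows the same overall strategy as the paper: reduce the $2$-morphism condition to the single nontrivial square (only $\omega = 2\varepsilon_2$ contributes, since $\A_2$ has no upper weight space) and verify that it commutes. The only difference is in how that square is handled: the paper deduces commutativity from the proof of Proposition~\ref{prop:FirstEtaIs2Mor} by applying the transpose/swap symmetry of Remark~\ref{rem:UsingSymmetry} to every matrix in that proof, whereas you carry out the (very short) direct computation via Procedure~\ref{proc:BoxTensorMorphisms}, correctly finding that both composites send $\xi_{\u,\o}$ to $\xi_{\u,\o} C'Z$ so the square commutes on the nose. Both routes are valid, and you explicitly note the symmetry alternative; your direct check is arguably more self-contained here since all the morphisms involved are strict with constant entries.
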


\begin{proof}
For $\eta'$ to be a $2$-morphism, the square
\[
\xymatrix{
F^{\vee} \ar[d]_{\id} \ar[rr]^{\eta' \boxtimes \id_{F^{\vee}}} && \Phi_{\midd} \boxtimes F^{\vee}  \ar[d]^{\varphi} \\
F^{\vee} \ar[rr]_{\id_{F^{\vee}} \boxtimes \eta'} && F^{\vee} \boxtimes \Phi_{\low} 
}
\]
should commute, at least up to homotopy. Commutativity follows from the proof of Proposition~\ref{prop:FirstEtaIs2Mor} by applying the modifications of items \eqref{it:Symm1}--\eqref{it:Symm3} in Remark~\ref{rem:UsingSymmetry} to all matrices in the proof.

\end{proof}

\begin{proposition}
The map $\varepsilon'$ is a $2$-morphism.
\end{proposition}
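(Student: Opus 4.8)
The plan is to unwind the definition of a $2$-morphism for $\varepsilon'\co X \to \id$, where $X$ carries the $1$-morphism structure $(X,\varphi)$ of Section~\ref{sec:SingularCrossing} and $\id = \mathbb{I}_{\A_{1,1}}$ carries the identity $1$-morphism structure, and then to verify the resulting square conditions. Closedness of $\varepsilon'$ as a DA bimodule morphism is already available: it follows from closedness of $\eta$ by the symmetry of Remark~\ref{rem:UsingSymmetry} (items \eqref{it:Symm1}--\eqref{it:Symm3}), exactly as for $\eta'$. So the only thing to establish is that, for each $\omega \in \Z^2$, the square built from $\varphi_\omega$ for $X$, the (identity) structure map for $\mathbb{I}_{\A_{1,1}}$, and $\varepsilon'$ boxed with $\id_{F^\vee}$ commutes up to homotopy of DA bimodule morphisms.

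First I would observe that, because $X$ is supported only in the middle ($\varepsilon_1+\varepsilon_2$) and lower ($2\varepsilon_2$) weight spaces with $X_{\upp}=0$, most of these squares are vacuous or trivial. In particular, the square relating the upper and middle weight spaces has top-left corner $X_{\upp}\boxtimes F^\vee = 0$, so both composites are the zero map and the square commutes automatically with no homotopy required; this is the place where the argument genuinely differs from, and is simpler than, the proof that $\eta$ is a $2$-morphism, where the analogous square required the null-homotopy $h$ of Proposition~\ref{prop:FirstEpsIs2MorSquare2}. The only square carrying content is the one relating the middle and lower weight spaces,
\[
\xymatrix{
X_{\midd}\boxtimes F^\vee \ar[d]_{\varphi} \ar[rr]^{\varepsilon'\boxtimes\id_{F^\vee}} && F^\vee \ar[d]^{\id} \\
F^\vee\boxtimes X_{\low} \ar[rr]_{\id_{F^\vee}\boxtimes\varepsilon'} && F^\vee,
}
\]
and I expect it to commute on the nose, just as the corresponding square for $\eta$ did in Proposition~\ref{prop:FirstEpsIs2MorSquare1}.

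To check this, I would proceed exactly as in that proof: using the matrix formulas of Section~\ref{sec:BoxTensor} and the explicit matrix for $\varphi$ recorded in Section~\ref{sec:SingularCrossing}, compute the matrices of the two composites $(\id_{F^\vee}\boxtimes\varepsilon')\circ\varphi$ and $\varepsilon'\boxtimes\id_{F^\vee}$ and verify that they agree. Alternatively, and more economically, one can invoke the symmetry of Remark~\ref{rem:UsingSymmetry} directly: the matrix of $\varepsilon'$ is obtained from that of $\eta$ by items \eqref{it:Symm1}--\eqref{it:Symm3} (anti-diagonal transpose, exchange of $U_1\leftrightarrow U_2$, reversal of higher $A_\infty$ input sequences), the secondary matrices of the summands of $X=\Lambda\boxtimes Y_{\fs}$ are invariant under the simultaneous application of these operations (the same fact used to prove $\eta'$ a $2$-morphism and to establish closedness of $\varepsilon'$), and the structure map $\varphi$ for $X$ — being the permutation matrix assembled from $\varphi_\Lambda$ and $\varphi_Y$ — is carried to itself. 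Since anti-diagonal transposition reverses the order of matrix multiplication, the equality of composites proved in Proposition~\ref{prop:FirstEpsIs2MorSquare1} is transformed into precisely the equality of composites required here. Together with the triviality of the upper square, this gives that $\varepsilon'$ is a $2$-morphism.

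The main obstacle I anticipate is bookkeeping rather than anything conceptual: when using the symmetry argument, one must apply the generator relabeling induced by the anti-diagonal transpose consistently to every matrix in the square — not just to $\varepsilon'$ in isolation — so that the transposed diagram really is the $\varepsilon'$-square and not a mismatched variant. Because $\varphi$ for $X$ is essentially an identity/permutation matrix, this should be routine, but it is the step where an indexing slip would be easiest to make; if one prefers to sidestep it, the direct matrix multiplication (of the same size and flavour as those already carried out for $\eta$) settles the matter unambiguously.
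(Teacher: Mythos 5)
Your proposal is correct and follows essentially the same route as the paper: the only nontrivial square is the middle-to-lower one, which the paper likewise handles by applying the symmetry operations \eqref{it:Symm1}--\eqref{it:Symm3} of Remark~\ref{rem:UsingSymmetry} to all matrices in the proof of Proposition~\ref{prop:FirstEpsIs2MorSquare1}, while the square with source $X_{\upp}\boxtimes F^{\vee}=0$ commutes automatically. Your observation that no null-homotopy is needed here (unlike for $\eta$) matches the paper exactly.
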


\begin{proof}
For $\varepsilon'$ to be a 2-morphism, the square
\[
\xymatrix{
X_{\midd} \boxtimes F^{\vee} \ar[d]_{\varphi} \ar[rr]^-{\varepsilon' \boxtimes \id_{F^{\vee}}} && F^{\vee} \ar[d]^{\id} \\
F^{\vee} \boxtimes X_{\low} \ar[rr]_-{\id_{F^{\vee}} \boxtimes \varepsilon'} && F^{\vee}
}
\]
should commute, at least up to homotopy; the square
\[
\xymatrix{
0 = X_{\upp} \boxtimes F^{\vee} \ar[d]_{\varphi} \ar[rr]^-{\varepsilon' \boxtimes \id_{F^{\vee}}} && F^{\vee} \ar[d]^{\id} \\
F^{\vee} \boxtimes X_{\midd} \ar[rr]_-{\id_{F^{\vee}} \boxtimes \varepsilon'} && F^{\vee} 
}
\]
automatically commutes. Commutativity of the first square follows from the proof of Proposition~\ref{prop:FirstEpsIs2MorSquare1} by applying the modifications of items \eqref{it:Symm1}--\eqref{it:Symm3} in Remark~\ref{rem:UsingSymmetry} to all matrices in the proof.
\end{proof}

\subsection{Checking the relations}

We begin by gathering some preliminary computations which can be obtained using Procedure~\ref{proc:BoxTensorMorphisms}.

\begin{proposition}\label{prop:HelperMorphisms}
The following string diagram 2-morphisms get assigned algebraic 2-morphisms as follows.
\begin{enumerate}
\item\label{it:IdEta} The 2-morphism \includegraphics[scale=0.4]{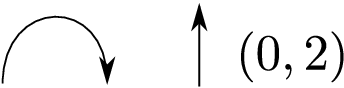} gets assigned the dual of the morphism from $Y$ to $Y\Lambda Y$ with matrix
\[
\kbordermatrix{
& A' & B' \\
A'XA' & 1 & 0 \\
B'YA' & 0 & 1 \otimes \lambda \\
A'XB' & 0 & 1 \\
B'YB' & 0 & e_1
}
\]
in the middle summand and
\[
\kbordermatrix{
& w_1 C' & C' \\
(w_1 C')Z(w_1 C') & 1 & 0 \\
C' Z (w_1 C') & 0 & 1 \\
(w_1 C')Z C' & 0 & 1 \\
C' Z C' & e_2 & e_1
}
\]
in the lower summand.

\item\label{it:EpsId} The 2-morphism \includegraphics[scale=0.4]{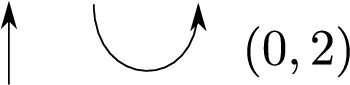} gets assigned the dual of the morphism from $Y \Lambda Y$ to $Y$ with matrix
\[
\kbordermatrix{
& A'XA' & B'YA' & A'XB' & B'YB' \\
A' & 1 & 0 & 0 & 0 \\
B' & 0 & 0 & 1 & 0
}
\]
in the middle summand and
\[
\kbordermatrix{
& (w_1 C')Z(w_1 C') & C' Z (w_1 C') & (w_1 C')Z C' & C' Z C' \\
w_1 C' & 1 & 0 & 0 & 0 \\
C' & 0 & 0 & 1 & 0
}
\]
in the lower summand.

\item\label{it:Eta'Id} The 2-morphism \includegraphics[scale=0.4]{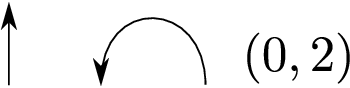} gets assigned the dual of the morphism from $Y$ to $Y\Lambda Y$ with matrix
\[
\kbordermatrix{
& A' & B' \\
A'XA' & 0 & 0 \\
B'YA' & 1 & 0 \\
A'XB' & 0 & 0 \\
B'YB' & 0 & 1
}
\]
in the middle summand and
\[
\kbordermatrix{
& w_1 C' & C' \\
(w_1 C')Z(w_1 C') & 0 & 0 \\
C' Z (w_1 C') & 1 & 0 \\
(w_1 C')Z C' & 0 & 0 \\
C' Z C' & 0 & 1
}
\]
in the lower summand.

\item\label{it:IdEps'} The 2-morphism \includegraphics[scale=0.4]{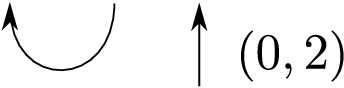} gets assigned the dual of the morphism from $Y \Lambda Y$ to $Y$ with matrix
\[
\kbordermatrix{
& A'XA' & B'YA' & A'XB' & B'YB' \\
A' & e_1 & 1 & 1 \otimes \lambda & 0 \\
B' & 0 & 0 & 0 & 1
}
\]
in the middle summand and
\[
\kbordermatrix{
& (w_1 C')Z(w_1 C') & C' Z (w_1 C') & (w_1 C')Z C' & C' Z C' \\
w_1 C' & e_1 & 1 & 1 & 0\\
C' & e_2 & 0 & 0 & 1
}
\]
in the lower summand.

\item\label{it:IdEta'} The 2-morphism \includegraphics[scale=0.4]{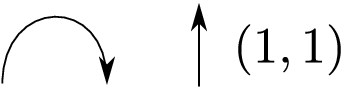} gets assigned the dual of the morphism from $\Lambda$ to $\Lambda Y \Lambda$ with matrix
\[
\kbordermatrix{
& X & Y \\
X A' X & 0 & 0 \\
Y A' X & 0 & 0 \\
X B' Y & 1 & 0 \\
Y B' Y & 0 & 1
}
\]
in the middle summand and
\[
\kbordermatrix{
& Z \\
Z(w_1 C')Z & 0 \\
ZC' Z & 1
}
\]
in the lower summand.

\item\label{it:Eps'Id} The 2-morphism \includegraphics[scale=0.4]{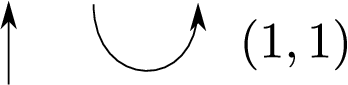} gets assigned the dual of the morphism from $\Lambda Y \Lambda$ to $\Lambda$ with matrix
\[
\kbordermatrix{
& X A' X & Y A' X & X B' Y & Y B' Y \\
X & U_1 & 0 & 1 & 0 \\
Y & 0 & 0 & 0 & 1
}
\]
in the middle summand and
\[
\kbordermatrix{
& Z(w_1 C')Z & ZC' Z \\
Z & U_1 & 1
}
\]
in the lower summand.

\item\label{it:EtaId} The 2-morphism \includegraphics[scale=0.4]{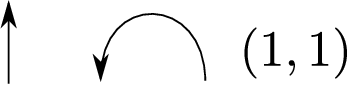} gets assigned the dual of the morphism from $\Lambda$ to $\Lambda Y \Lambda$ with matrix
\[
\kbordermatrix{
& X & Y \\
X A' X & 1 & 0 \\
Y A' X & 0 & 1 \\
X B' Y & 0 & 0 \\
Y B' Y & 0 & U_2
}
\]
in the middle summand and
\[
\kbordermatrix{
& Z \\
Z(w_1 C')Z & 1 \\
ZC' Z & U_2
}
\]
in the lower summand.

\item\label{it:IdEps} The 2-morphism \includegraphics[scale=0.4]{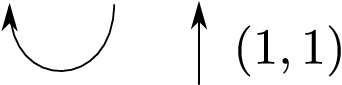} gets assigned the dual of the morphism from $\Lambda Y \Lambda$ to $\Lambda$ with matrix
\[
\kbordermatrix{
& X A' X & Y A' X & X B' Y & Y B' Y \\
X & 1 & 0 & 0 & 0 \\
Y & 0 & 1 & 0 & 0
}
\]
in the middle summand and
\[
\kbordermatrix{
& Z(w_1 C')Z & ZC' Z \\
Z & 1 & 0
}
\]
in the lower summand.

\item\label{it:IdEta'Again} The 2-morphism \includegraphics[scale=0.4]{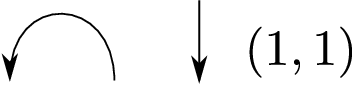} gets assigned the same morphism as in item \eqref{it:IdEta'}.

\item\label{it:Eps'IdAgain} The 2-morphism \includegraphics[scale=0.4]{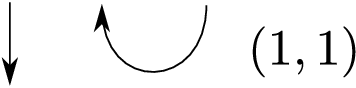} gets assigned the same morphism as in item \eqref{it:Eps'Id}.

\item\label{it:EtaIdAgain} The 2-morphism \includegraphics[scale=0.4]{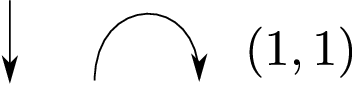} gets assigned the same morphism as in item \eqref{it:EtaId}.

\item\label{it:IdEpsAgain} The 2-morphism \includegraphics[scale=0.4]{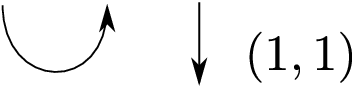} gets assigned the same morphism as in item \eqref{it:IdEps}.

\item\label{it:IdEtaAgain} The 2-morphism \includegraphics[scale=0.4]{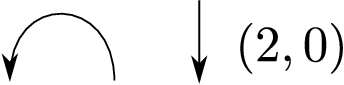} gets assigned the same morphism as in item \eqref{it:IdEta}.

\item\label{it:EpsIdAgain} The 2-morphism \includegraphics[scale=0.4]{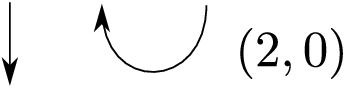} gets assigned the same morphism as in item \eqref{it:EpsId}.

\item\label{it:Eta'IdAgain} The 2-morphism \includegraphics[scale=0.4]{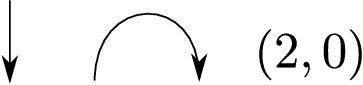} gets assigned the same morphism as in item \eqref{it:Eta'Id}.

\item\label{it:IdEps'Again} The 2-morphism \includegraphics[scale=0.4]{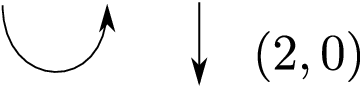} gets assigned the same morphism as in item \eqref{it:IdEps'}.

\item\label{it:UpDotId} The 2-morphism \includegraphics[scale=0.4]{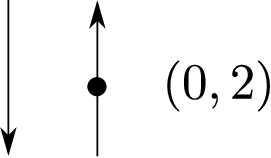} gets assigned the dual of the morphism from $Y \Lambda$ to $Y \Lambda$ with matrix
\[
\kbordermatrix{
& A'X & B'Y \\
A'X & e_1 & 1 \\
B'Y & 0 & 0
}
\]
in the middle summand and
\[
\kbordermatrix{
& (w_1 C') Z & C' Z \\
(w_1 C') Z & e_1 & 1 \\
C' Z & e_2 & 0
}
\]
in the lower summand.

\item\label{it:Extra} The 2-morphism \includegraphics[scale=0.4]{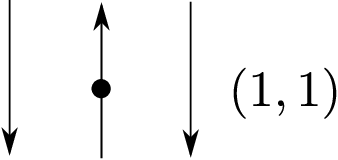} gets assigned the dual of the morphism from $\Lambda Y \Lambda$ to $\Lambda Y \Lambda$ with matrix
\[
\kbordermatrix{
& XA'X & YA'X & XB'Y & YB'Y \\
XA'X & U_1 & 0 & 1 & 0 \\
YA'X & 0 & U_2 & 0 & 1 \\
XB'Y & 0 & 0 & 0 & 0 \\
YB'Y & 0 & 0 & 0 & 0
}
\]
in the middle summand and
\[
\kbordermatrix{
& Z(w_1 C')Z & ZC' Z \\
Z(w_1 C')Z & U_1 + U_2 & 1 \\
ZC' Z & U_1 U_2 & 0
}
\]
in the lower summand.

\item\label{it:IdUpDot} The 2-morphism \includegraphics[scale=0.4]{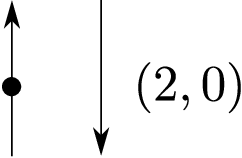} gets assigned the dual of the morphism from $Y\Lambda$ to $Y\Lambda$ with matrix
\[
\kbordermatrix{
& A'X & B'Y \\
A'X & 0 & 1 \\
B'Y & 0 & e_1
}
\]
in the middle summand and
\[
\kbordermatrix{
& (w_1 C')Z & C' Z \\
(w_1 C')Z & 0 & 1 \\
C' Z & e_2 & e_1
}
\]
in the lower summand.

\item\label{it:IdUpDotId} The 2-morphism \includegraphics[scale=0.4]{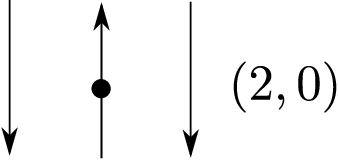} gets assigned the dual of the morphism from $Y \Lambda Y$ to $Y \Lambda Y$ with matrix
\[
\kbordermatrix{
& A'XA' & B'YA' & A'XB' & B'YB' \\
A'XA' & 0 & 1 & 0 & 0 \\
B'YA' & 0 & e_1 & 0 & 0 \\
A'XB' & 0 & 0 & 0 & 1 \\
B'YB' & 0 & 0 & 0 & e_1
}
\]
in the middle summand and
\[
\kbordermatrix{
& (w_1 C')Z(w_1 C') & C' Z (w_1 C') & (w_1 C')Z C' & C' Z C' \\
(w_1 C')Z(w_1 C') & 0 & 1 & 0 & 0 \\
C' Z (w_1 C') & e_2 & e_1 & 0 & 0 \\
(w_1 C')Z C' & 0 & 0 & 0 & 1 \\
C' Z C' & 0 & 0 & e_2 & e_1
}
\]
in the lower summand.

\item\label{it:DownDownCap} The 2-morphism \includegraphics[scale=0.4]{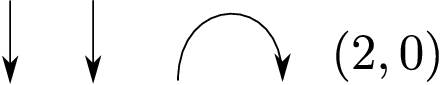} gets assigned the dual of the morphism from $Y \Lambda \to Y \Lambda Y \Lambda$ with matrix
\[
\kbordermatrix{
& A'X & B'Y \\
A'XA'X & 0 & 0 \\
B'YA'X & 1 & 0 \\
A'XB'Y & 0 & 0 \\
B'YB'Y & 0 & 1
}
\]
in the middle summand and 
\[
\kbordermatrix{
& (w_1 C')Z & C' Z \\
(w_1 C') Z (w_1 C') Z & 0 & 0 \\
C' Z (w_1 C') Z & 1 & 0 \\
(w_1 C') Z C' Z & 0 & 0 \\
C' Z C' Z & 0 & 1
}
\]
in the lower summand.

\item The 2-morphism \includegraphics[scale=0.4]{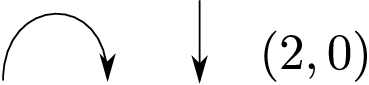} gets assigned the same morphism as in item \eqref{it:IdEta}.

\item\label{it:IdEtaId} The 2-morphism \includegraphics[scale=0.4]{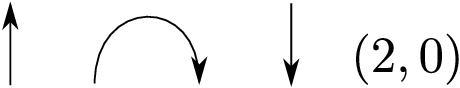} gets assigned the dual of the morphism from $Y \Lambda$ to $Y \Lambda Y \Lambda$ with matrix
\[
\kbordermatrix{
& A'X & B'Y \\
A'XA'X & 1 & 0 \\
B'YA'X & 0 & 1 \\
A'XB'Y & 0 & 1 \\
B'YB'Y & 0 & e_1
}
\]
in the middle summand and 
\[
\kbordermatrix{
& (w_1 C')Z & C' Z \\
(w_1 C') Z (w_1 C') Z & 1 & 0 \\
C' Z (w_1 C') Z & 0 & 1 \\
(w_1 C') Z C' Z & 0 & 1 \\
C' Z C' Z & e_2 & e_1
}
\]
in the lower summand.

\item\label{it:ComplicatedCap} The 2-morphism \includegraphics[scale=0.4]{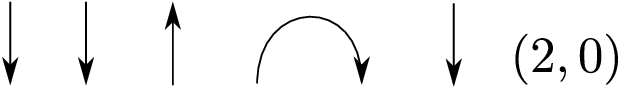} gets assigned the dual of the morphism from $Y \Lambda Y \Lambda$ to $Y \Lambda Y \Lambda Y \Lambda$ with matrix
\[
\kbordermatrix{
& A'XA'X & B'YA'X & A'XB'Y & B'YB'Y \\
A'XA'XA'X & 1 & 0 & 0 & 0 \\
B'YA'XA'X & 0 & 1 & 0 & 0 \\
A'XB'YA'X & 0 & 1 & 0 & 0 \\
B'YB'YA'X & 0 & e_1 & 0 & 0 \\
A'XA'XB'Y & 0 & 0 & 1 & 0 \\
B'YA'XB'Y & 0 & 0 & 0 & 1 \\
A'XB'YB'Y & 0 & 0 & 0 & 1 \\
B'YB'YB'Y & 0 & 0 & 0 & e_1
}
\]
in the middle summand and
\[
\kbordermatrix{
& (w_1 C') Z (w_1 C') Z & C' Z (w_1 C') Z & (w_1 C') Z C' Z & C' Z C' Z\\
(w_1 C') Z (w_1 C') Z (w_1 C') Z & 1 & 0 & 0 & 0 \\
C' Z (w_1 C') Z (w_1 C') Z & 0 & 1 & 0 & 0 \\
(w_1 C') Z C' Z (w_1 C') Z & 0 & 1 & 0 & 0 \\
C' Z C' Z (w_1 C') Z & e_2 & e_1 & 0 & 0 \\
(w_1 C') Z (w_1 C') Z C' Z & 0 & 0 & 1 & 0 \\
C' Z (w_1 C') Z C' Z & 0 & 0 & 0 & 1\\
(w_1 C') Z C' Z C' Z & 0 & 0 & 0 & 1 \\
C' Z C' Z C' Z & 0 & 0 & e_2 & e_1
}
\]
in the lower summand.

\item The 2-morphism \includegraphics[scale=0.4]{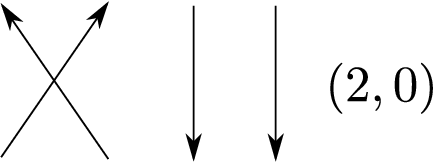} gets assigned the dual of the morphism from $Y \Lambda Y \Lambda$ to $Y \Lambda Y \Lambda$ with matrix
\[
\kbordermatrix{
& A'XA'X & B'YA'X & A'XB'Y & B'YB'Y \\
A'XA'X & 0 & 0 & 0 & 0 \\
B'YA'X & 0 & 0 & 0 & 0 \\
A'XB'Y & 1 & 0 & 0 & 0 \\
B'YB'Y & 0 & 1 & 0 & 0
}
\]
in the middle summand and
\[
\kbordermatrix{
& (w_1 C') Z (w_1 C') Z & C' Z (w_1 C') Z & (w_1 C') Z C' Z & C' Z C' Z\\
(w_1 C') Z (w_1 C') Z & 0 & 0 & 0 & 0 \\
C' Z (w_1 C') Z & 0 & 0 & 0 & 0 \\
(w_1 C') Z C' Z & 1 & 0 & 0 & 0 \\
C' Z C' Z & 0 & 1 & 0 & 0
}
\]
in the lower summand.

\item\label{it:BigCrossing} The 2-morphism \includegraphics[scale=0.4]{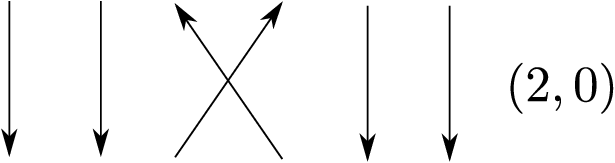} gets assigned the dual of the morphism from $Y \Lambda Y \Lambda Y \Lambda$ to $Y \Lambda Y \Lambda Y \Lambda$ with matrix
\[
\resizebox{\textwidth}{!}{
\kbordermatrix{
& A'XA'XA'X & B'YA'XA'X & A'XB'YA'X & B'YB'YA'X & A'XA'XB'Y & B'YA'XB'Y & A'XB'YB'Y & B'YB'YB'Y \\
A'XA'XA'X & 0 & 0 & 0 & 0 & 0 & 0 & 0 & 0 \\
B'YA'XA'X & 0 & 0 & 0 & 0 & 0 & 0 & 0 & 0 \\
A'XB'YA'X & 1 & 0 & 0 & 0 & 0 & 0 & 0 & 0 \\
B'YB'YA'X & 0 & 1 & 0 & 0 & 0 & 0 & 0 & 0 \\
A'XA'XB'Y & 0 & 0 & 0 & 0 & 0 & 0 & 0 & 0 \\
B'YA'XB'Y & 0 & 0 & 0 & 0 & 0 & 0 & 0 & 0 \\
A'XB'YB'Y & 0 & 0 & 0 & 0 & 1 & 0 & 0 & 0 \\
B'YB'YB'Y & 0 & 0 & 0 & 0 & 0 & 1 & 0 & 0
}
}
\]
in the middle summand and
\[
\resizebox{\textwidth}{!}{
\kbordermatrix{
& (w_1 C') Z (w_1 C') Z (w_1 C') Z & C' Z (w_1 C') Z (w_1 C') Z & (w_1 C') Z C' Z (w_1 C') Z & C' Z C' Z (w_1 C') Z & (w_1 C') Z (w_1 C') Z C' Z & C' Z (w_1 C') Z C' Z & (w_1 C') Z C' Z C' Z & C' Z C' Z C' Z \\
(w_1 C') Z (w_1 C') Z (w_1 C') Z & 0 & 0 & 0 & 0 & 0 & 0 & 0 & 0 \\
C' Z (w_1 C') Z (w_1 C') Z & 0 & 0 & 0 & 0 & 0 & 0 & 0 & 0 \\
(w_1 C') Z C' Z (w_1 C') Z & 1 & 0 & 0 & 0 & 0 & 0 & 0 & 0 \\
C' Z C' Z (w_1 C') Z & 0 & 1 & 0 & 0 & 0 & 0 & 0 & 0 \\
(w_1 C') Z (w_1 C') Z C' Z & 0 & 0 & 0 & 0 & 0 & 0 & 0 & 0 \\
C' Z (w_1 C') Z C' Z & 0 & 0 & 0 & 0 & 0 & 0 & 0 & 0 \\
(w_1 C') Z C' Z C' Z & 0 & 0 & 0 & 0 & 1 & 0 & 0 & 0 \\
C' Z C' Z C' Z & 0 & 0 & 0 & 0 & 0 & 1 & 0 & 0
}
}
\]
in the lower summand.

\item The 2-morphism \includegraphics[scale=0.4]{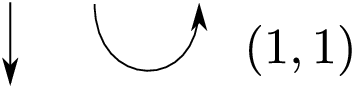} gets assigned the same morphism as in item \eqref{it:Eps'Id}.

\item\label{it:IdEps'Id} The 2-morphism \includegraphics[scale=0.4]{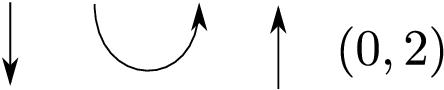} gets assigned the dual of the morphism from $Y \Lambda Y \Lambda$ to $Y \Lambda$ with matrix
\[
\kbordermatrix{
& A'XA'X & B'YA'X & A'XB'Y & B'YB'Y \\
A'X & e_1 & 1 & 1 & 0\\
B'Y & 0 & 0 & 0 & 1
}
\]
in the middle summand and
\[
\kbordermatrix{
& (w_1 C') Z (w_1 C') Z & C' Z (w_1 C') Z & (w_1 C') Z C' Z & C' Z C' Z\\
(w_1 C') Z & e_1 & 1 & 1 & 0 \\
C' Z & e_2 & 0 & 0 & 1
}
\]
in the lower summand. 

\item\label{it:ComplicatedCup} The 2-morphism \includegraphics[scale=0.4]{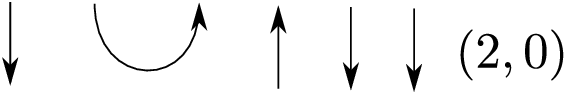} gets assigned the dual of the morphism from $Y \Lambda Y \Lambda Y \Lambda$ to $Y \Lambda Y \Lambda$ with matrix
\[
\resizebox{\textwidth}{!}{
\kbordermatrix{
& A'XA'XA'X & B'YA'XA'X & A'XB'YA'X & B'YB'YA'X & A'XA'XB'Y & B'YA'XB'Y & A'XB'YB'Y & B'YB'YB'Y \\
A'XA'X & e_1 & 0 & 1 & 0 & 1 & 0 & 0 & 0 \\
B'YA'X & 0 & e_1 & 0 & 1 & 0 & 1 & 0 & 0 \\
A'XB'Y & 0 & 0 & 0 & 0 & 0 & 0 & 1 & 0 \\
B'YB'Y & 0 & 0 & 0 & 0 & 0 & 0 & 0 & 1
}
}
\]
in the middle summand and
\[
\resizebox{\textwidth}{!}{
\kbordermatrix{
& (w_1 C') Z (w_1 C') Z (w_1 C') Z & C' Z (w_1 C') Z (w_1 C') Z & (w_1 C') Z C' Z (w_1 C') Z & C' Z C' Z (w_1 C') Z & (w_1 C') Z (w_1 C') Z C' Z & C' Z (w_1 C') Z C' Z & (w_1 C') Z C' Z C' Z & C' Z C' Z C' Z \\
(w_1 C') Z (w_1 C') Z & e_1 & 0 & 1 & 0 & 1 & 0 & 0 & 0 \\
C' Z (w_1 C') Z & 0 & e_1 & 0 & 1 & 0 & 1 & 0 & 0 \\
(w_1 C') Z C' Z & e_2 & 0 & 0 & 0 & 0 & 0 & 1 & 0 \\
C' Z C' Z & 0 & e_2 & 0 & 0 & 0 & 0 & 0 & 1
}
}
\]
in the lower summand.

\item\label{it:CupDownDown} The 2-morphism \includegraphics[scale=0.4]{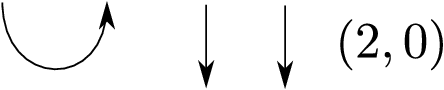} gets assigned the dual of the morphism from $Y \Lambda Y \Lambda$ to $Y \Lambda$ with matrix
\[
\kbordermatrix{
& A'XA'X & B'YA'X & A'XB'Y & B'YB'Y \\
A'X & 1 & 0 & 0 & 0 \\
B'Y & 0 & 1 & 0 & 0
}
\]
in the middle summand and
\[
\kbordermatrix{
& (w_1 C') Z (w_1 C') Z & C' Z (w_1 C') Z & (w_1 C') Z C' Z & C' Z C' Z\\
(w_1 C') Z & 1 & 0 & 0 & 0 \\
C' Z & 0 & 1 & 0 & 0
}
\]
in the lower summand. 

\item\label{it:CapDownDown} The 2-morphism \includegraphics[scale=0.4]{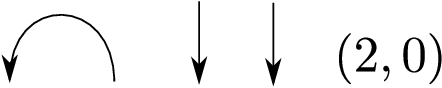} gets assigned the dual of the morphism from $Y \Lambda$ to $Y \Lambda Y \Lambda$ with matrix
\[
\kbordermatrix{
& A'X & B'Y \\
A'XA'X & 0 & 0 \\
B'YA'X & 0 & 0 \\
A'XB'Y & 1 & 0 \\
B'YB'Y & 0 & 1
}
\]
in the middle summand and 
\[
\kbordermatrix{
& (w_1 C')Z & C' Z \\
(w_1 C') Z (w_1 C') Z & 0 & 0 \\
C' Z (w_1 C') Z & 0 & 0 \\
(w_1 C') Z C' Z & 1 & 0 \\
C' Z C' Z & 0 & 1
}
\] 
in the lower summand.

\item The 2-morphism \includegraphics[scale=0.4]{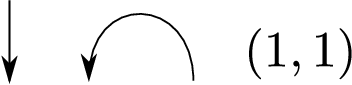} gets assigned the same morphism as in item \eqref{it:EtaId}.

\item The 2-morphism \includegraphics[scale=0.4]{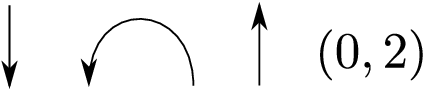} gets assigned the same morphism as in item \eqref{it:IdEtaId}.

\item\label{it:ComplicatedCap2} The 2-morphism \includegraphics[scale=0.4]{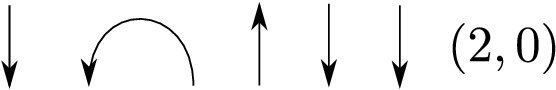} gets assigned the dual of the morphism from $Y \Lambda Y \Lambda$ to $Y \Lambda Y \Lambda Y \Lambda$ with matrix
\[
\kbordermatrix{
& A'XA'X & B'YA'X & A'XB'Y & B'YB'Y \\
A'XA'XA'X & 1 & 0 & 0 & 0 \\
B'YA'XA'X & 0 & 1 & 0 & 0 \\
A'XB'YA'X & 0 & 0 & 1 & 0 \\
B'YB'YA'X & 0 & 0 & 0 & 1 \\
A'XA'XB'Y & 0 & 0 & 1 & 0 \\
B'YA'XB'Y & 0 & 0 & 0 & 1 \\
A'XB'YB'Y & 0 & 0 & e_1 & 0 \\
B'YB'YB'Y & 0 & 0 & 0 & e_1
}
\]
in the middle summand and
\[
\kbordermatrix{
& (w_1 C') Z (w_1 C') Z & C' Z (w_1 C') Z & (w_1 C') Z C' Z & C' Z C' Z\\
(w_1 C') Z (w_1 C') Z (w_1 C') Z & 1 & 0 & 0 & 0 \\
C' Z (w_1 C') Z (w_1 C') Z & 0 & 1 & 0 & 0 \\
(w_1 C') Z C' Z (w_1 C') Z & 0 & 0 & 1 & 0 \\
C' Z C' Z (w_1 C') Z & 0 & 0 & 0 & 1 \\
(w_1 C') Z (w_1 C') Z C' Z & 0 & 0 & 1 & 0 \\
C' Z (w_1 C') Z C' Z & 0 & 0 & 0 & 1 \\
(w_1 C') Z C' Z C' Z & e_2 & 0 & e_1 & 0 \\
C' Z C' Z C' Z & 0 & e_2 & 0 & e_1
}
\]
in the lower summand.

\item The 2-morphism \includegraphics[scale=0.4]{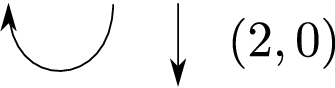} gets assigned the same morphism as in item \eqref{it:IdEps'}.

\item The 2-morphism \includegraphics[scale=0.4]{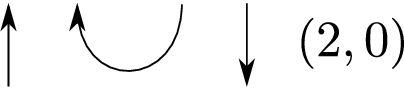} gets assigned the same morphism as in item \eqref{it:IdEps'Id}.

\item\label{it:ComplicatedCup2} The 2-morphism \includegraphics[scale=0.4]{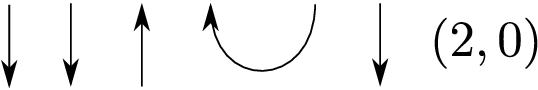} gets assigned the dual of the morphism from $Y \Lambda Y \Lambda Y \Lambda$ to $Y \Lambda Y \Lambda$ with matrix
\[
\resizebox{\textwidth}{!}{
\kbordermatrix{
& A'XA'XA'X & B'YA'XA'X & A'XB'YA'X & B'YB'YA'X & A'XA'XB'Y & B'YA'XB'Y & A'XB'YB'Y & B'YB'YB'Y \\
A'XA'X & e_1 & 1 & 1 & 0 & 0 & 0 & 0 & 0 \\
B'YA'X & 0 & 0 & 0 & 1 & 0 & 0 & 0 & 0 \\
A'XB'Y & 0 & 0 & 0 & 0 & e_1 & 1 & 1 & 0 \\
B'YB'Y & 0 & 0 & 0 & 0 & 0 & 0 & 0 & 1
}
}
\]
in the middle summand and
\[
\resizebox{\textwidth}{!}{
\kbordermatrix{
& (w_1 C') Z (w_1 C') Z (w_1 C') Z & C' Z (w_1 C') Z (w_1 C') Z & (w_1 C') Z C' Z (w_1 C') Z & C' Z C' Z (w_1 C') Z & (w_1 C') Z (w_1 C') Z C' Z & C' Z (w_1 C') Z C' Z & (w_1 C') Z C' Z C' Z & C' Z C' Z C' Z \\
(w_1 C') Z (w_1 C') Z & e_1 & 1 & 1 & 0 & 0 & 0 & 0 & 0 \\
C' Z (w_1 C') Z & e_2 & 0 & 0 & 1 & 0 & 0 & 0 & 0 \\
(w_1 C') Z C' Z & 0 & 0 & 0 & 0 & e_1 & 1 & 1 & 0 \\
C' Z C' Z & 0 & 0 & 0 & 0 & e_2 & 0 & 0 & 1
}
}
\]
in the lower summand.

\item\label{it:DownDownCup} The 2-morphism \includegraphics[scale=0.4]{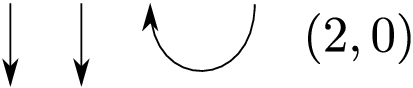} gets assigned the dual of the morphism from $Y \Lambda Y \Lambda$ to $Y \Lambda$ with matrix
\[
\kbordermatrix{
& A'XA'X & B'YA'X & A'XB'Y & B'YB'Y \\
A'X & 1 & 0 & 0 & 0 \\
B'Y & 0 & 0 & 1 & 0
}
\]
in the middle summand and
\[
\kbordermatrix{
& (w_1 C') Z (w_1 C') Z & C' Z (w_1 C') Z & (w_1 C') Z C' Z & C' Z C' Z\\
(w_1 C') Z & 1 & 0 & 0 & 0 \\
C' Z & 0 & 0 & 1 & 0
}
\]
in the lower summand. 

\item\label{it:EtaIdId} The 2-morphism \includegraphics[scale=0.4]{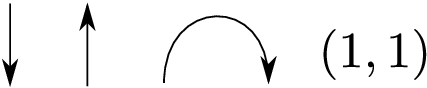} gets assigned the dual of the morphism from $\Lambda Y$ to $\Lambda Y \Lambda Y$ with matrix
\[
\kbordermatrix{
& XA' & YA' & XB' & YB' \\
XA'XA' & 1 & 0 & 0 & 0 \\
YA'XA' & 0 & 1 & 0 & 0 \\
XB'YA' & 0 & 0 & 0 & 0 \\
YB'YA' & 0 & U_2 & 0 & 0 \\
XA'XB' & 0 & 0 & 1 & 0 \\
YA'XB' & 0 & 0 & 0 & 1 \\
XB'YB' & 0 & 0 & 0 & 0 \\
YB'YB' & 0 & 0 & 0 & U_2
}
\]
in the middle summand and
\[
\kbordermatrix{
& Z (w_1 C') & Z C' \\
Z (w_1 C') Z (w_1 C') & 1 & 0 \\
Z C' Z (w_1 C') & U_2 & 0 \\
Z (w_1 C') Z C' & 0 & 1 \\
Z C' Z C' & 0 & U_2
}
\]
in the lower summand.

\item\label{it:IdCrossId} The 2-morphism \includegraphics[scale=0.4]{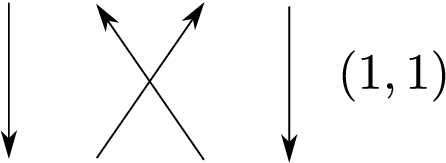} gets assigned the dual of the morphism from $\Lambda Y \Lambda Y$ to $\Lambda Y \Lambda Y$ with matrix
\[
\kbordermatrix{
& XA'XA' & YA'XA' & XB'YA' & YB'YA' & XA'XB' & YA'XB' & XB'YB' & YB'YB' \\
XA'XA' & 0 & 0 & 0 & 0 & 0 & 0 & 0 & 0 \\
YA'XA' & 0 & 0 & 0 & 0 & 0 & 0 & 0 & 0 \\
XB'YA' & 1 & 0 & 0 & 0 & 0 & 0 & 0 & 0 \\
YB'YA' & 0 & 1 & 0 & 0 & 0 & 0 & 0 & 0 \\
XA'XB' & 0 & 0 & 0 & 0 & 0 & 0 & 0 & 0 \\
YA'XB' & 0 & 0 & 0 & 0 & 0 & 0 & 0 & 0 \\
XB'YB' & 0 & 0 & 0 & 0 & 1 & 0 & 0 & 0 \\
YB'YB' & 0 & 0 & 0 & 0 & 0 & 1 & 0 & 0
}
\]
in the middle summand and
\[
\kbordermatrix{
& Z (w_1 C') Z (w_1 C') & Z C' Z (w_1 C') & Z (w_1 C') Z C' & Z C' Z C' \\
Z (w_1 C') Z (w_1 C') & 0 & 0 & 0 & 0 \\
Z C' Z (w_1 C') & 1 & 0 & 0 & 0 \\
Z (w_1 C') Z C' & 0 & 0 & 0 & 0 \\
Z C' Z C' & 0 & 0 & 1 & 0
}
\]
in the lower summand.

\item\label{it:IdIdEps'} The 2-morphism \includegraphics[scale=0.4]{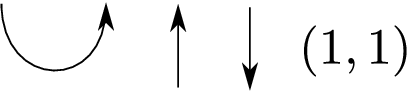} gets assigned the dual of the morphism from $\Lambda Y \Lambda Y$ to $\Lambda Y$ with matrix
\[
\kbordermatrix{
& XA'XA' & YA'XA' & XB'YA' & YB'YA' & XA'XB' & YA'XB' & XB'YB' & YB'YB' \\
XA' & U_1 & 0 & 1 & 0 & 1 \otimes \lambda & 0 & 0 & 0 \\
YA' & 0 & U_2 & 0 & 1 & 0 & 1 \otimes \lambda & 0 & 0 \\
XB' & 0 & 0 & 0 & 0 & 0 & 0 & 1 & 0 \\
YB' & 0 & 0 & 0 & 0 & 0 & 0 & 0 & 1
}
\]
in the middle summand and
\[
\kbordermatrix{
& Z (w_1 C') Z (w_1 C') & Z C' Z (w_1 C') & Z (w_1 C') Z C' & Z C' Z C' \\
Z (w_1 C') & U_1 + U_2 & 1 & 1 & 0 \\
Z C' & U_1 U_2 & 0 & 0 & 1
}
\]
in the lower summand.

\item\label{it:IdIdEta} The 2-morphism \includegraphics[scale=0.4]{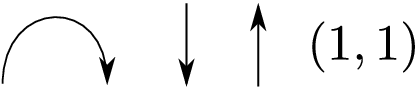} gets assigned the dual of the morphism from $\Lambda Y$ to $\Lambda Y \Lambda Y$ with matrix
\[
\kbordermatrix{
& XA' & YA' & XB' & YB' \\
XA'XA' & 1 & 0 & 0 & 0 \\
YA'XA' & 0 & 1 & 0 & 0 \\
XB'YA' & 0 & 0 & 1 \otimes \lambda & 0 \\
YB'YA' & 0 & 0 & 0 & 1 \otimes \lambda \\
XA'XB' & 0 & 0 & 1 & 0 \\
YA'XB' & 0 & 0 & 0 & 1 \\
XB'YB' & 0 & 0 & U_1 & 0 \\
YB'YB' & 0 & 0 & 0 & U_2
}
\]
in the middle summand and
\[
\kbordermatrix{
& Z (w_1 C') & Z C' \\
Z (w_1 C') Z (w_1 C') & 1 & 0 \\
Z C' Z (w_1 C') & 0 & 1 \\
Z (w_1 C') Z C' & 0 & 1 \\
Z C' Z C' & U_1 U_2 & U_1 + U_2
}
\]
in the lower summand.

\item\label{it:IdDownCrossId} The 2-morphism \includegraphics[scale=0.4]{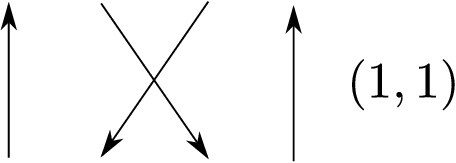} gets assigned the same morphism as in item \eqref{it:IdCrossId}.

\item\label{it:Eps'IdId} The 2-morphism \includegraphics[scale=0.4]{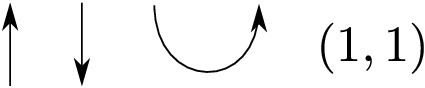} gets assigned the dual of the morphism from $\Lambda Y \Lambda Y$ to $\Lambda Y$ with matrix
\[
\kbordermatrix{
& XA'XA' & YA'XA' & XB'YA' & YB'YA' & XA'XB' & YA'XB' & XB'YB' & YB'YB' \\
XA' & U_1 & 0 & 1 & 0 & 0 & 0 & 0 & 0 \\
YA' & 0 & 0 & 0 & 1 & 0 & 0 & 0 & 0 \\
XB' & 0 & 0 & 0 & 0 & U_1 & 0 & 1 & 0 \\
YB' & 0 & 0 & 0 & 0 & 0 & 0 & 0 & 1
}
\]
in the middle summand and
\[
\kbordermatrix{
& Z (w_1 C') Z (w_1 C') & Z C' Z (w_1 C') & Z (w_1 C') Z C' & Z C' Z C' \\
Z (w_1 C') & U_1 & 1 & 0 & 0 \\
Z C' & 0 & 0 & U_1 & 1
}
\]
in the lower summand.

\item\label{it:DotForNilHecke1} The 2-morphism \includegraphics[scale=0.4]{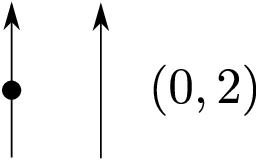} gets assigned the same morphism as in item \eqref{it:IdUpDot}.
 
\item\label{it:DotForNilHecke2} The 2-morphism \includegraphics[scale=0.4]{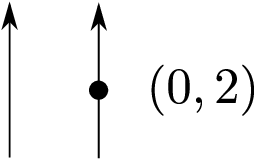} gets assigned the same morphism as in item \eqref{it:UpDotId}.

\end{enumerate}
\end{proposition}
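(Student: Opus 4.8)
The plan is to realize each of the listed string-diagram $2$-morphisms of $\Sc(2,2)^{*,*}$ as an explicit vertical and horizontal composite of the generating $2$-morphisms (dots, crossings, cups, caps) and identity $2$-morphisms on the generating $1$-morphisms, and then to translate this composite into a concrete bimodule morphism via the dictionary of Section~\ref{sec:2Reps}: horizontal composition is realized by box tensor products of bimodule maps (only $f \boxtimes \id$ and $\id \boxtimes g$ are needed, computed by Procedure~\ref{proc:BoxTensorMorphisms}, since any horizontal composite factors as a vertical composite of these), vertical composition is realized by ordinary composition of bimodule maps (matrix multiplication as in Procedure~\ref{proc:DAWellDef}), and the generators go to the maps $\delta^{\up}_{0,2}$, $\delta^{\up}_{1,1}$, $\delta^{\down}_{2,0}$, $\delta^{\down}_{1,1}$, $\chi$, $\varepsilon$, $\varepsilon'$, $\eta$, $\eta'$ and their duals from Section~\ref{sec:BimodMapsFor2Mors}. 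As in the statement, the computation is carried out on the DA side (with composite DA bimodules $Y\Lambda = \Phi$, $\Lambda Y = X$, $Y\Lambda Y$, and so on) and then dualized. This is a purely computational claim; it uses only the assignment on generators and does not presuppose that the assignment defines a functor, which is established later in Section~\ref{sec:SkewHowe2Action}. Each computation splits along $\gl(1|1)$ weight spaces, so we work separately in the middle ($\varepsilon_1 + \varepsilon_2$) and lower ($2\varepsilon_2$) summands, the upper summand being zero.

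The first step is to assemble, once and for all, the primary and secondary matrices of the composite bimodules occurring as sources and targets: $Y\Lambda$, $\Lambda Y$, $Y\Lambda Y$, $\Lambda Y\Lambda$, $Y\Lambda Y\Lambda$, $\Lambda Y\Lambda Y$, $Y\Lambda Y\Lambda Y\Lambda$, and their analogues. Each is an iterated box tensor product of $\Lambda$ and $Y_{\fs}$, obtained by repeated application of Procedure~\ref{proc:BoxTensorBimods}. Since $\Lambda$ and $Y_{\fs}$ each have only two basis elements in each weight-space summand, these tensor products stay small; the only genuine $A_\infty$ ingredient is the family of higher terms $e_1^{k+1}\otimes U_1^{k+1}$, $e_1^{k}\otimes(\lambda, U_1^{k+1})$, $e_1^{k}\otimes(U_2^{k+1},\lambda)$ of $Y_{\fs,\midd}$, which must be carried through the concatenation of input sequences demanded by Procedures~\ref{proc:BoxTensorBimods} and~\ref{proc:BoxTensorMorphisms}. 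With these secondary matrices recorded, each item of the proposition reduces to a short chain of matrix multiplications read off from the generating maps.

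I would then process the items in order of increasing complexity. The cases that are a single generator tensored with an identity (the various $\eta \boxtimes \id$, $\id \boxtimes \eta$, $\id \boxtimes \varepsilon$, dot-times-identity, and crossing-times-identity morphisms, e.g.\ items \eqref{it:IdEta}, \eqref{it:UpDotId}, \eqref{it:IdUpDot}, \eqref{it:IdUpDotId}, \eqref{it:DownDownCap}, \eqref{it:IdEtaId}) follow directly from Procedure~\ref{proc:BoxTensorMorphisms}. The composites involving two or more cups/caps or a long crossing (items such as \eqref{it:ComplicatedCap}, \eqref{it:BigCrossing}, \eqref{it:ComplicatedCup}, \eqref{it:ComplicatedCap2}, \eqref{it:ComplicatedCup2}, \eqref{it:EtaIdId}, \eqref{it:IdCrossId}, \eqref{it:Eps'IdId}) are then further box tensor products and compositions of maps already computed. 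Items flagged as yielding ``the same morphism as in'' an earlier item are verified by direct comparison of the two translated bimodule maps; typically the two string diagrams are planar-isotopic, hence literally the same composite of generators, so no relation of $\Sc(2,2)^{*,*}$ is invoked. Throughout I would use the anti-transpose symmetry of Remark~\ref{rem:UsingSymmetry} (exchanging $U_1 \leftrightarrow U_2$, $w_1 \leftrightarrow w_2$ and reflecting matrices along the anti-diagonal) to pair up many items and halve the work, and I would track the $q$- and $h$-gradings as a consistency check on each matrix.

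The main obstacle is organizational rather than conceptual: there are several dozen items, many with three- or four-fold iterated box tensor products and long basis labels such as $A'XB'YA'X$ or $C'Z(w_1C')ZC'Z$, and the $A_\infty$ higher actions of $Y_{\fs,\midd}$ make it easy to concatenate input sequences in the wrong order when forming $f\boxtimes\id$ versus $\id\boxtimes g$ (the two cases of Procedure~\ref{proc:BoxTensorMorphisms}). The disciplined route is to fix an explicit ordered basis for each composite $1$-morphism at the outset, compute and record the secondary matrices of all sources and targets before touching the $2$-morphisms, and only then read off each entry of the proposition by the prescribed matrix manipulations; with that bookkeeping in place every individual verification is routine.
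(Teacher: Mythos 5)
Your proposal is correct and follows exactly the route the paper takes: the paper offers no proof beyond the remark that these computations "can be obtained using Procedure~\ref{proc:BoxTensorMorphisms}," and your sketch simply spells out that procedure — translate each diagram into a vertical composite of generators box-tensored with identities, compute via the matrix rules for $f \boxtimes \id$ and $\id \boxtimes g$, and dualize. The organizational devices you add (precomputing the secondary matrices of the composite bimodules, exploiting the symmetry of Remark~\ref{rem:UsingSymmetry}, grading checks) are sensible and consistent with how the paper handles the analogous verifications elsewhere.
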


\begin{lemma}
The maps for both types of sideways crossings \includegraphics[scale=0.4]{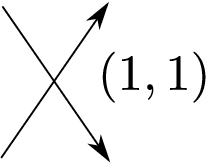} and \includegraphics[scale=0.4]{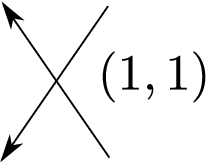} are the identity endomorphism of ${^{\vee}}X$.
\end{lemma}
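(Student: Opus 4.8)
The plan is to compute, in matrix notation, the image of each sideways crossing under the functor constructed in this section and to read off directly that it is the identity matrix. First I would recall that in $\Sc(2,2)^{*,*}$ each sideways crossing is, by its defining string diagram, a vertical composite of horizontal products of the generating cup, cap, and $X$-crossing $2$-morphisms; hence under our functor it is sent to the corresponding composite of the explicit bimodule morphisms fixed in Section~\ref{sec:BimodMapsFor2Mors} (duals of $\eta,\varepsilon,\eta',\varepsilon'$ and $\chi$, each box-tensored with an identity morphism as prescribed by the diagram). Both sideways crossings are endomorphisms of ${^{\vee}}X$: indeed $X=\Lambda\boxtimes Y_{\fs}$, so ${^{\vee}}X\cong{^{\vee}}Y_{\fs}\boxtimes{^{\vee}}\Lambda$, and this AD bimodule is exactly the image of both $\One_{1,1}\E\One_{0,2}\Fc\One_{1,1}$ and $\One_{1,1}\Fc\One_{2,0}\E\One_{1,1}$. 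By compatibility of duality with composition it suffices to show that the dual composite is the identity endomorphism of the DA bimodule $X=\Lambda\boxtimes Y_{\fs}$, which can be checked on the summands $X_{\midd}$ and $X_{\low}$ separately.

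Next I would assemble each composite out of pieces that are already recorded. The box-tensor products of $\eta,\varepsilon,\eta',\varepsilon',\chi$ with identities occurring in the definition of a sideways crossing are, up to the identifications of Section~\ref{sec:BoxTensor}, exactly (duals of) the helper morphisms computed in Proposition~\ref{prop:HelperMorphisms}---in particular the ``down--down'' cap and cup morphisms of items \eqref{it:DownDownCap}, \eqref{it:CapDownDown}, \eqref{it:DownDownCup}, \eqref{it:CupDownDown}, together with the $X$-crossing piece---whose matrices on the middle and lower summands are given there. Forming any remaining box-tensor products via Procedure~\ref{proc:BoxTensorMorphisms} and composing by matrix multiplication (the composition rule for DA bimodule morphisms), the full composite becomes a product of those listed matrices. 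Multiplying them out, separately in the middle summand and in the lower summand, yields the identity matrix in each case; no homotopy correction enters, since the composite equals the identity on the nose. Dualizing back, the first sideways crossing is the identity endomorphism of ${^{\vee}}X$.

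For the second sideways crossing the same scheme applies verbatim, with the analogous cup/cap helper morphisms in place of the first set (and the same $\chi$-piece, since the two $X$-crossing $2$-morphisms of $\Sc(2,2)^{*,*}$ are sent to the same map). Alternatively, one may observe that the two sideways crossings and all their constituent generating $2$-morphisms are interchanged by the symmetry of Remark~\ref{rem:UsingSymmetry}---anti-transposing matrices, swapping $U_1\leftrightarrow U_2$ and $w_1\leftrightarrow w_2$, and reversing sequences of $A_\infty$ inputs---under which the identity matrix is fixed; either way the image is $\id_{{^{\vee}}X}$.

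The main obstacle is the bookkeeping rather than any single computation: one must carefully translate each defining string diagram into the correct ordered sequence of box-tensor products and compositions, track on which tensor factor the duality ${}^{\vee}$ lands, and use the canonical associativity and unitality isomorphisms of Section~\ref{sec:BoxTensor} to identify the intermediate bimodules with the sources and targets of the helper morphisms from Proposition~\ref{prop:HelperMorphisms}. Once those identifications are pinned down the remaining steps are routine matrix multiplications, which is why I expect the verification to go through cleanly for both types of sideways crossing.
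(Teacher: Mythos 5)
Your proposal takes essentially the same route as the paper: decompose each sideways crossing into its three constituent generating 2-morphisms, identify each piece with a precomputed helper morphism from Proposition~\ref{prop:HelperMorphisms}, and multiply the resulting matrices separately on the middle and lower summands to obtain the identity on the nose, with no homotopy correction. One bookkeeping correction: the relevant helper morphisms are not the ``down--down'' cup/cap items \eqref{it:DownDownCap}, \eqref{it:CapDownDown}, \eqref{it:DownDownCup}, \eqref{it:CupDownDown} (those live on $Y\Lambda$-compositions and feed into the cyclicity relation), but rather the $\Lambda Y$-based items \eqref{it:EtaIdId}, \eqref{it:IdCrossId}, \eqref{it:IdIdEps'} for the first sideways crossing and \eqref{it:Eps'IdId}, \eqref{it:IdDownCrossId}, \eqref{it:IdIdEta} for the second; with that substitution your computation is exactly the paper's.
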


\begin{proof}
The map for \includegraphics[scale=0.4]{sideways_simple1.eps} is \eqref{it:IdIdEps'}\eqref{it:IdCrossId}\eqref{it:EtaIdId}, which equals
\[
\resizebox{\textwidth}{!}{
\kbordermatrix{
& XA'XA' & YA'XA' & XB'YA' & YB'YA' & XA'XB' & YA'XB' & XB'YB' & YB'YB' \\
XA' & U_1 & 0 & 1 & 0 & 1 \otimes \lambda & 0 & 0 & 0 \\
YA' & 0 & U_2 & 0 & 1 & 0 & 1 \otimes \lambda & 0 & 0 \\
XB' & 0 & 0 & 0 & 0 & 0 & 0 & 1 & 0 \\
YB' & 0 & 0 & 0 & 0 & 0 & 0 & 0 & 1
}
\kbordermatrix{
& XA'XA' & YA'XA' & XB'YA' & YB'YA' & XA'XB' & YA'XB' & XB'YB' & YB'YB' \\
XA'XA' & 0 & 0 & 0 & 0 & 0 & 0 & 0 & 0 \\
YA'XA' & 0 & 0 & 0 & 0 & 0 & 0 & 0 & 0 \\
XB'YA' & 1 & 0 & 0 & 0 & 0 & 0 & 0 & 0 \\
YB'YA' & 0 & 1 & 0 & 0 & 0 & 0 & 0 & 0 \\
XA'XB' & 0 & 0 & 0 & 0 & 0 & 0 & 0 & 0 \\
YA'XB' & 0 & 0 & 0 & 0 & 0 & 0 & 0 & 0 \\
XB'YB' & 0 & 0 & 0 & 0 & 1 & 0 & 0 & 0 \\
YB'YB' & 0 & 0 & 0 & 0 & 0 & 1 & 0 & 0
}
\kbordermatrix{
& XA' & YA' & XB' & YB' \\
XA'XA' & 1 & 0 & 0 & 0 \\
YA'XA' & 0 & 1 & 0 & 0 \\
XB'YA' & 0 & 0 & 0 & 0 \\
YB'YA' & 0 & U_2 & 0 & 0 \\
XA'XB' & 0 & 0 & 1 & 0 \\
YA'XB' & 0 & 0 & 0 & 1 \\
XB'YB' & 0 & 0 & 0 & 0 \\
YB'YB' & 0 & 0 & 0 & U_2
}
}
\]
in the middle summand and
\[
\resizebox{\textwidth}{!}{
\kbordermatrix{
& Z (w_1 C') Z (w_1 C') & Z C' Z (w_1 C') & Z (w_1 C') Z C' & Z C' Z C' \\
Z (w_1 C') & U_1 + U_2 & 1 & 1 & 0 \\
Z C' & U_1 U_2 & 0 & 0 & 1
}
\kbordermatrix{
& Z (w_1 C') Z (w_1 C') & Z C' Z (w_1 C') & Z (w_1 C') Z C' & Z C' Z C' \\
Z (w_1 C') Z (w_1 C') & 0 & 0 & 0 & 0 \\
Z C' Z (w_1 C') & 1 & 0 & 0 & 0 \\
Z (w_1 C') Z C' & 0 & 0 & 0 & 0 \\
Z C' Z C' & 0 & 0 & 1 & 0
}
\kbordermatrix{
& Z (w_1 C') & Z C' \\
Z (w_1 C') Z (w_1 C') & 1 & 0 \\
Z C' Z (w_1 C') & U_2 & 0 \\
Z (w_1 C') Z C' & 0 & 1 \\
Z C' Z C' & 0 & U_2
}
}
\]
in the lower summand.

The map for \includegraphics[scale=0.4]{sideways_simple2.eps} is \eqref{it:Eps'IdId}\eqref{it:IdDownCrossId}\eqref{it:IdIdEta} or equivalently \eqref{it:Eps'IdId}\eqref{it:IdCrossId}\eqref{it:IdIdEta}, which equals
\[
\resizebox{\textwidth}{!}{
\kbordermatrix{
& XA'XA' & YA'XA' & XB'YA' & YB'YA' & XA'XB' & YA'XB' & XB'YB' & YB'YB' \\
XA' & U_1 & 0 & 1 & 0 & 0 & 0 & 0 & 0 \\
YA' & 0 & 0 & 0 & 1 & 0 & 0 & 0 & 0 \\
XB' & 0 & 0 & 0 & 0 & U_1 & 0 & 1 & 0 \\
YB' & 0 & 0 & 0 & 0 & 0 & 0 & 0 & 1
}
\kbordermatrix{
& XA'XA' & YA'XA' & XB'YA' & YB'YA' & XA'XB' & YA'XB' & XB'YB' & YB'YB' \\
XA'XA' & 0 & 0 & 0 & 0 & 0 & 0 & 0 & 0 \\
YA'XA' & 0 & 0 & 0 & 0 & 0 & 0 & 0 & 0 \\
XB'YA' & 1 & 0 & 0 & 0 & 0 & 0 & 0 & 0 \\
YB'YA' & 0 & 1 & 0 & 0 & 0 & 0 & 0 & 0 \\
XA'XB' & 0 & 0 & 0 & 0 & 0 & 0 & 0 & 0 \\
YA'XB' & 0 & 0 & 0 & 0 & 0 & 0 & 0 & 0 \\
XB'YB' & 0 & 0 & 0 & 0 & 1 & 0 & 0 & 0 \\
YB'YB' & 0 & 0 & 0 & 0 & 0 & 1 & 0 & 0
}
\kbordermatrix{
& XA' & YA' & XB' & YB' \\
XA'XA' & 1 & 0 & 0 & 0 \\
YA'XA' & 0 & 1 & 0 & 0 \\
XB'YA' & 0 & 0 & 1 \otimes \lambda & 0 \\
YB'YA' & 0 & 0 & 0 & 1 \otimes \lambda \\
XA'XB' & 0 & 0 & 1 & 0 \\
YA'XB' & 0 & 0 & 0 & 1 \\
XB'YB' & 0 & 0 & U_1 & 0 \\
YB'YB' & 0 & 0 & 0 & U_2
}
}
\]
in the middle summand and
\[
\resizebox{\textwidth}{!}{
\kbordermatrix{
& Z (w_1 C') Z (w_1 C') & Z C' Z (w_1 C') & Z (w_1 C') Z C' & Z C' Z C' \\
Z (w_1 C') & U_1 & 1 & 0 & 0 \\
Z C' & 0 & 0 & U_1 & 1
}
\kbordermatrix{
& Z (w_1 C') Z (w_1 C') & Z C' Z (w_1 C') & Z (w_1 C') Z C' & Z C' Z C' \\
Z (w_1 C') Z (w_1 C') & 0 & 0 & 0 & 0 \\
Z C' Z (w_1 C') & 1 & 0 & 0 & 0 \\
Z (w_1 C') Z C' & 0 & 0 & 0 & 0 \\
Z C' Z C' & 0 & 0 & 1 & 0
}
\kbordermatrix{
& Z (w_1 C') & Z C' \\
Z (w_1 C') Z (w_1 C') & 1 & 0 \\
Z C' Z (w_1 C') & 0 & 1 \\
Z (w_1 C') Z C' & 0 & 1 \\
Z C' Z C' & U_1 U_2 & U_1 + U_2
}
}
\]
in the lower summand.
\end{proof}

\begin{theorem}\label{thm:SkewHowe2Action}
The relations in $\Sc(2,2)^{*,*}$ hold for the maps defined above, so we have a functor of bicategories from $\Sc(2,2)^{*,*}$ to $2\Rep(\U^-)^{*,*}$.
\end{theorem}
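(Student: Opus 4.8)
The plan is to verify, relation by relation, that the 2-morphisms assigned above to the generating 2-morphisms of $\Sc(2,2)^{*,*}$ satisfy the defining relations \eqref{it:Biadjointness1}--\eqref{it:NilHecke2}. Since $\Sc(2,2)^{*,*}$ is presented by the listed generators and relations, this immediately produces the asserted functor to $2\Rep(\U^-)^{*,*}$. Every 1-morphism and 2-morphism in the source is a horizontal and vertical composite of generators, and by Procedure~\ref{proc:BoxTensorMorphisms} together with the compatibility of duality with $\boxtimes$ from Section~\ref{sec:DualsAD}, each such composite is assigned a bimodule morphism that can be written explicitly in matrix notation. The composite 2-morphisms appearing on the two sides of each relation are precisely the ones tabulated in Proposition~\ref{prop:HelperMorphisms} and in the preceding lemma on sideways crossings, so each relation reduces to comparing two matrices over $\F_2$, $\F_2[e_1]$, or $\F_2[e_1,e_2]$ in the middle and lower weight summands (the upper summand being zero). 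Working in characteristic $2$ kills all signs, and since $2\Rep(\U^-)^{*,*}$ is a homotopy category it suffices to check each relation up to homotopy of 2-morphisms; in fact almost all of the morphisms in question are strict DA or AD bimodule maps, so the checks are literal matrix identities.

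I would organize the verification by type of relation. The biadjointness (zigzag) relations \eqref{it:Biadjointness1}--\eqref{it:Biadjointness4} say that the composites built from $\varepsilon,\eta$ (and dually $\varepsilon',\eta'$) straighten a zigzag to an identity 1-morphism; using the ``$\eta$ then $\varepsilon$'' and ``$\varepsilon'$ then $\eta'$'' composites tabulated in Proposition~\ref{prop:HelperMorphisms} one multiplies the two listed matrices and checks the product is the identity on ${}^{\vee}Y$ respectively ${}^{\vee}\Lambda$. The dot and crossing cyclicity relations \eqref{it:DotCyclic1}--\eqref{it:CrossingCyclic} follow from the explicit matrices for $\delta^{\up},\delta^{\down},\chi$ together with the biadjunction units and counits just verified. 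The bubble relations \eqref{it:NegativeBubble1}--\eqref{it:Deg0Bubble2} are computed directly: a negative bubble is a cap precomposed with a power of a dot and a cup and one checks it vanishes, while a degree-zero bubble composes to the identity; here one must track the $\gl(2)$ region labels carefully, since whether a given bubble is ``negative'' or ``degree zero'' depends on the weight $\lambda\in\{(2,0),(1,1),(0,2)\}$. The extended $\mathfrak{sl}_2$ relations \eqref{it:ExtendedSl2_1}--\eqref{it:ExtendedSl2_4} are where the sideways-crossing lemma enters: having shown both sideways crossings act as the identity endomorphism of ${}^{\vee}X$, these relations become identities among the $\delta$'s, cups, and caps that are read off from the remaining helper items of Proposition~\ref{prop:HelperMorphisms}. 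Finally, for the nilHecke relations \eqref{it:NilHecke1}--\eqref{it:NilHecke2}, the relation $\chi^2 = 0$ is immediate from the matrix for $\chi$, and the nilHecke derivation relation follows by combining the two dot-composite items \eqref{it:DotForNilHecke1}, \eqref{it:DotForNilHecke2} with the single-dot items \eqref{it:UpDotId}, \eqref{it:IdUpDot}.

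The main difficulty is organizational rather than conceptual: there are seventeen relations, each to be checked in two nonzero weight summands over a polynomial ground ring, and the composite 2-morphisms must be matched against the correct entries of Proposition~\ref{prop:HelperMorphisms}, which demands careful tracking of the region labels and of the ordering conventions for basis elements. The anti-diagonal transpose symmetry of Remark~\ref{rem:UsingSymmetry} roughly halves the work, but one must confirm it applies to each relation in turn. The one genuinely delicate point is that a few of the defining squares for the cup and cap 2-morphisms commute only up to an explicit homotopy (as in Proposition~\ref{prop:FirstEpsIs2MorSquare2}), so when a relation factors through such a square one should either invoke the strong-2-morphism refinement of Remark~\ref{rem:EtaStrong} or verify directly that the choice of homotopy does not affect the resulting homotopy class; because the target is the homotopy category $2\Rep(\U^-)^{*,*}$, this is harmless, but it is the place where care is needed.
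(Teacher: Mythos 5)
Your proposal matches the paper's proof: the paper verifies each of the relations \eqref{it:Biadjointness1}--\eqref{it:NilHecke2} by exactly the relation-by-relation matrix computations you describe, composing the entries of Proposition~\ref{prop:HelperMorphisms} (together with the sideways-crossings lemma) in the middle and lower weight summands and checking literal matrix identities, with the upper summand vanishing. The only cosmetic discrepancy is that the negative-bubble relations reduce to $\varepsilon\eta'=0$ with no dots involved, but this does not affect the correctness of your plan.
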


\begin{proof}
We need to check the relations \eqref{it:Biadjointness1}--\eqref{it:NilHecke2} of Section~\ref{sec:CatQGDef}.
\begin{itemize}
\item For relation \eqref{it:Biadjointness1}, we want \eqref{it:EpsId}\eqref{it:IdEta} $= \id =$ \eqref{it:IdEps'}\eqref{it:Eta'Id} in the labeling of Proposition~\ref{prop:HelperMorphisms}. Indeed, the matrix products
\[
\kbordermatrix{
& A'XA' & B'YA' & A'XB' & B'YB' \\
A' & 1 & 0 & 0 & 0 \\
B' & 0 & 0 & 1 & 0
}
\kbordermatrix{
& A' & B' \\
A'XA' & 1 & 0 \\
B'YA' & 0 & 1 \otimes \lambda \\
A'XB' & 0 & 1 \\
B'YB' & 0 & e_1
}
\]
and
\[
\kbordermatrix{
& (w_1 C')Z(w_1 C') & C' Z (w_1 C') & (w_1 C')Z C' & C' Z C' \\
w_1 C' & 1 & 0 & 0 & 0 \\
C' & 0 & 0 & 1 & 0
}
\kbordermatrix{
& w_1 C' & C' \\
(w_1 C')Z(w_1 C') & 1 & 0 \\
C' Z (w_1 C') & 0 & 1 \\
(w_1 C')Z C' & 0 & 1 \\
C' Z C' & e_2 & e_1
}
\]
are both the identity, so \eqref{it:EpsId}\eqref{it:IdEta} $= \id$, and the products
\[
\kbordermatrix{
& A'XA' & B'YA' & A'XB' & B'YB' \\
A' & e_1 & 1 & 1 \otimes \lambda & 0 \\
B' & 0 & 0 & 0 & 1
}
\kbordermatrix{
& A' & B' \\
A'XA' & 0 & 0 \\
B'YA' & 1 & 0 \\
A'XB' & 0 & 0 \\
B'YB' & 0 & 1
}
\]
and
\[
\kbordermatrix{
& (w_1 C')Z(w_1 C') & C' Z (w_1 C') & (w_1 C')Z C' & C' Z C' \\
w_1 C' & e_1 & 1 & 1 & 0\\
C' & e_2 & 0 & 0 & 1
}
\kbordermatrix{
& w_1 C' & C' \\
(w_1 C')Z(w_1 C') & 0 & 0 \\
C' Z (w_1 C') & 1 & 0 \\
(w_1 C')Z C' & 0 & 0 \\
C' Z C' & 0 & 1
}
\]
are both the identity, so $\id =$ \eqref{it:IdEps'}\eqref{it:Eta'Id}.

\item For relation \eqref{it:Biadjointness2}, we want \eqref{it:Eps'Id}\eqref{it:IdEta'} $=\id=$ \eqref{it:IdEps}\eqref{it:EtaId}. Indeed, the matrix products 
\[
\kbordermatrix{
& X A' X & Y A' X & X B' Y & Y B' Y \\
X & U_1 & 0 & 1 & 0 \\
Y & 0 & 0 & 0 & 1
}
\kbordermatrix{
& X & Y \\
X A' X & 0 & 0 \\
Y A' X & 0 & 0 \\
X B' Y & 1 & 0 \\
Y B' Y & 0 & 1
}
\]
and
\[
\kbordermatrix{
& Z(w_1 C')Z & ZC' Z \\
Z & U_1 & 1
}
\kbordermatrix{
& Z \\
Z(w_1 C')Z & 0 \\
ZC' Z & 1
}
\]
are both the identity, so \eqref{it:Eps'Id}\eqref{it:IdEta'} $=\id$, and the matrix products
\[
\kbordermatrix{
& X A' X & Y A' X & X B' Y & Y B' Y \\
X & 1 & 0 & 0 & 0 \\
Y & 0 & 1 & 0 & 0
}
\kbordermatrix{
& X & Y \\
X A' X & 1 & 0 \\
Y A' X & 0 & 1 \\
X B' Y & 0 & 0 \\
Y B' Y & 0 & U_2
}
\]
and
\[
\kbordermatrix{
& Z(w_1 C')Z & ZC' Z \\
Z & 1 & 0
}
\kbordermatrix{
& Z \\
Z(w_1 C')Z & 1 \\
ZC' Z & U_2
}
\]
are both the identity, so $\id=$ \eqref{it:IdEps}\eqref{it:EtaId}.

\item For relation \eqref{it:Biadjointness3}, we want \eqref{it:Eps'IdAgain}\eqref{it:IdEta'Again} $=\id=$ \eqref{it:IdEpsAgain}\eqref{it:EtaIdAgain}; this amounts to \eqref{it:Eps'Id}\eqref{it:IdEta'} $=\id=$ \eqref{it:IdEps}\eqref{it:EtaId} which was proved above.

\item For relation \eqref{it:Biadjointness4}, we want \eqref{it:EpsIdAgain}\eqref{it:IdEtaAgain} $=\id=$ \eqref{it:IdEps'Again}\eqref{it:Eta'IdAgain}; this amounts to \eqref{it:EpsId}\eqref{it:IdEta} $=\id=$ \eqref{it:IdEps'}\eqref{it:Eta'Id} which was proved above.

\item For relation \eqref{it:DotCyclic1}, we want \eqref{it:Eps'IdAgain}\eqref{it:Extra}\eqref{it:IdEta'Again} $=\delta^{\down}_{1,1}=$ \eqref{it:IdEpsAgain}\eqref{it:Extra}\eqref{it:EtaIdAgain}; this amounts to \eqref{it:Eps'Id}\eqref{it:Extra}\eqref{it:IdEta'} $=\delta^{\down}_{1,1}=$ \eqref{it:IdEps}\eqref{it:Extra}\eqref{it:EtaId}. Indeed, the matrix products
\[
\kbordermatrix{
& X A' X & Y A' X & X B' Y & Y B' Y \\
X & U_1 & 0 & 1 & 0 \\
Y & 0 & 0 & 0 & 1
}
\kbordermatrix{
& X A' X & Y A' X & X B' Y & Y B' Y \\
X A' X & U_1 & 0 & 1 & 0 \\
Y A' X & 0 & U_2 & 0 & 1 \\
X B' Y & 0 & 0 & 0 & 0 \\
Y B' Y & 0 & 0 & 0 & 0
}
\kbordermatrix{
& X & Y \\
X A' X & 0 & 0 \\
Y A' X & 0 & 0 \\
X B' Y & 1 & 0 \\
Y B' Y & 0 & 1
}
\]
and
\[
\kbordermatrix{
& Z(w_1 C')Z & ZC' Z \\
Z & U_1 & 1
}
\kbordermatrix{
& Z(w_1 C')Z & ZC' Z \\
Z(w_1 C')Z & U_1 + U_2 & 1 \\
ZC' Z & U_1 U_2 & 0
}
\kbordermatrix{
& Z \\
Z(w_1 C')Z & 0 \\
ZC' Z & 1
}
\]
are equal to the two summands of $\delta^{\down}_{1,1}$, while the matrix products
\[
\kbordermatrix{
& X A' X & Y A' X & X B' Y & Y B' Y \\
X & 1 & 0 & 0 & 0 \\
Y & 0 & 1 & 0 & 0
}
\kbordermatrix{
& X A' X & Y A' X & X B' Y & Y B' Y \\
X A' X & U_1 & 0 & 1 & 0 \\
Y A' X & 0 & U_2 & 0 & 1 \\
X B' Y & 0 & 0 & 0 & 0 \\
Y B' Y & 0 & 0 & 0 & 0
}
\kbordermatrix{
& X & Y \\
X A' X & 1 & 0 \\
Y A' X & 0 & 1 \\
X B' Y & 0 & 0 \\
Y B' Y & 0 & U_2
}
\]
and
\[
\kbordermatrix{
& Z(w_1 C')Z & ZC' Z \\
Z & 1 & 0
}
\kbordermatrix{
& Z(w_1 C')Z & ZC' Z \\
Z(w_1 C')Z & U_1 + U_2 & 1 \\
ZC' Z & U_1 U_2 & 0
}
\kbordermatrix{
& Z \\
Z(w_1 C')Z & 1 \\
ZC' Z & U_2
}
\]
are also equal to the two summands of $\delta^{\down}_{1,1}$.

\item For relation \eqref{it:DotCyclic2}, we want \eqref{it:EpsIdAgain}\eqref{it:IdUpDotId}\eqref{it:IdEtaAgain} $=\delta^{\down}_{2,0}=$ \eqref{it:IdEps'Again}\eqref{it:IdUpDotId}\eqref{it:Eta'IdAgain}, or equivalently \eqref{it:EpsId}\eqref{it:IdUpDotId}\eqref{it:IdEta} $=\delta^{\down}_{2,0}=$ \eqref{it:IdEps'}\eqref{it:IdUpDotId}\eqref{it:Eta'Id}. Indeed, the matrix products
\[
\kbordermatrix{
& A'XA' & B'YA' & A'XB' & B'YB' \\
A' & 1 & 0 & 0 & 0 \\
B' & 0 & 0 & 1 & 0
}
\kbordermatrix{
& A'XA' & B'YA' & A'XB' & B'YB' \\
A'XA' & 0 & 1 & 0 & 0 \\
B'YA' & 0 & e_1 & 0 & 0 \\
A'XB' & 0 & 0 & 0 & 1 \\
B'YB' & 0 & 0 & 0 & e_1
}
\kbordermatrix{
& A' & B' \\
A'XA' & 1 & 0 \\
B'YA' & 0 & 1 \otimes \lambda \\
A'XB' & 0 & 1 \\
B'YB' & 0 & e_1
}
\]
and
\[
\resizebox{\textwidth}{!}{
\kbordermatrix{
& (w_1 C')Z(w_1 C') & C' Z (w_1 C') & (w_1 C')Z C' & C' Z C' \\
w_1 C' & 1 & 0 & 0 & 0 \\
C' & 0 & 0 & 1 & 0
}
\kbordermatrix{
& (w_1 C')Z(w_1 C') & C' Z (w_1 C') & (w_1 C')Z C' & C' Z C' \\
(w_1 C')Z(w_1 C') & 0 & 1 & 0 & 0 \\
C' Z (w_1 C') & e_2 & e_1 & 0 & 0 \\
(w_1 C')Z C' & 0 & 0 & 0 & 1 \\
C' Z C' & 0 & 0 & e_2 & e_1
}
\kbordermatrix{
& w_1 C' & C' \\
(w_1 C')Z(w_1 C') & 1 & 0 \\
C' Z (w_1 C') & 0 & 1 \\
(w_1 C')Z C' & 0 & 1 \\
C' Z C' & e_2 & e_1
}
}
\]
are equal to the two summands of $\delta^{\down}_{2,0}$, while the matrix products
\[
\kbordermatrix{
& A'XA' & B'YA' & A'XB' & B'YB' \\
A' & e_1 & 1 & 1 \otimes \lambda & 0 \\
B' & 0 & 0 & 0 & 1
}
\kbordermatrix{
& A'XA' & B'YA' & A'XB' & B'YB' \\
A'XA' & 0 & 1 & 0 & 0 \\
B'YA' & 0 & e_1 & 0 & 0 \\
A'XB' & 0 & 0 & 0 & 1 \\
B'YB' & 0 & 0 & 0 & e_1
}
\kbordermatrix{
& A' & B' \\
A'XA' & 0 & 0 \\
B'YA' & 1 & 0 \\
A'XB' & 0 & 0 \\
B'YB' & 0 & 1
}
\]
and
\[
\resizebox{\textwidth}{!}{
\kbordermatrix{
& (w_1 C')Z(w_1 C') & C' Z (w_1 C') & (w_1 C')Z C' & C' Z C' \\
w_1 C' & e_1 & 1 & 1 & 0\\
C' & e_2 & 0 & 0 & 1
}
\kbordermatrix{
& (w_1 C')Z(w_1 C') & C' Z (w_1 C') & (w_1 C')Z C' & C' Z C' \\
(w_1 C')Z(w_1 C') & 0 & 1 & 0 & 0 \\
C' Z (w_1 C') & e_2 & e_1 & 0 & 0 \\
(w_1 C')Z C' & 0 & 0 & 0 & 1 \\
C' Z C' & 0 & 0 & e_2 & e_1
}
\kbordermatrix{
& w_1 C' & C' \\
(w_1 C')Z(w_1 C') & 0 & 0 \\
C' Z (w_1 C') & 1 & 0 \\
(w_1 C')Z C' & 0 & 0 \\
C' Z C' & 0 & 1
}
}
\]
are also equal to the two summands of $\delta^{\down}_{2,0}$.

\item For relation \eqref{it:CrossingCyclic}, we want \eqref{it:CupDownDown}\eqref{it:ComplicatedCup}\eqref{it:BigCrossing}\eqref{it:ComplicatedCap}\eqref{it:DownDownCap} $=\chi=$ \eqref{it:DownDownCup}\eqref{it:ComplicatedCup2}\eqref{it:BigCrossing}\eqref{it:ComplicatedCap2}\eqref{it:CapDownDown}. Indeed, the matrix products
\[
\resizebox{\textwidth}{!}{
\kbordermatrix{
& A'XA'X & B'YA'X & A'XB'Y & B'YB'Y \\
A'X & 1 & 0 & 0 & 0 \\
B'Y & 0 & 1 & 0 & 0
}
\kbordermatrix{
& A'XA'XA'X & B'YA'XA'X & A'XB'YA'X & B'YB'YA'X & A'XA'XB'Y & B'YA'XB'Y & A'XB'YB'Y & B'YB'YB'Y \\
A'XA'X & e_1 & 0 & 1 & 0 & 1 & 0 & 0 & 0 \\
B'YA'X & 0 & e_1 & 0 & 1 & 0 & 1 & 0 & 0 \\
A'XB'Y & 0 & 0 & 0 & 0 & 0 & 0 & 1 & 0 \\
B'YB'Y & 0 & 0 & 0 & 0 & 0 & 0 & 0 & 1
}
\kbordermatrix{
& A'XA'XA'X & B'YA'XA'X & A'XB'YA'X & B'YB'YA'X & A'XA'XB'Y & B'YA'XB'Y & A'XB'YB'Y & B'YB'YB'Y \\
A'XA'XA'X & 0 & 0 & 0 & 0 & 0 & 0 & 0 & 0 \\
B'YA'XA'X & 0 & 0 & 0 & 0 & 0 & 0 & 0 & 0 \\
A'XB'YA'X & 1 & 0 & 0 & 0 & 0 & 0 & 0 & 0 \\
B'YB'YA'X & 0 & 1 & 0 & 0 & 0 & 0 & 0 & 0 \\
A'XA'XB'Y & 0 & 0 & 0 & 0 & 0 & 0 & 0 & 0 \\
B'YA'XB'Y & 0 & 0 & 0 & 0 & 0 & 0 & 0 & 0 \\
A'XB'YB'Y & 0 & 0 & 0 & 0 & 1 & 0 & 0 & 0 \\
B'YB'YB'Y & 0 & 0 & 0 & 0 & 0 & 1 & 0 & 0
}
\kbordermatrix{
& A'XA'X & B'YA'X & A'XB'Y & B'YB'Y \\
A'XA'XA'X & 1 & 0 & 0 & 0 \\
B'YA'XA'X & 0 & 1 & 0 & 0 \\
A'XB'YA'X & 0 & 1 & 0 & 0 \\
B'YB'YA'X & 0 & e_1 & 0 & 0 \\
A'XA'XB'Y & 0 & 0 & 1 & 0 \\
B'YA'XB'Y & 0 & 0 & 0 & 1 \\
A'XB'YB'Y & 0 & 0 & 0 & 1 \\
B'YB'YB'Y & 0 & 0 & 0 & e_1
}
\kbordermatrix{
& A'X & B'Y \\
A'XA'X & 0 & 0 \\
B'YA'X & 1 & 0 \\
A'XB'Y & 0 & 0 \\
B'YB'Y & 0 & 1
}
}
\]
and
\[
\resizebox{\textwidth}{!}{
\kbordermatrix{
& (w_1 C') Z (w_1 C') Z & C' Z (w_1 C') Z & (w_1 C') Z C' Z & C' Z C' Z\\
(w_1 C') Z & 1 & 0 & 0 & 0 \\
C' Z & 0 & 1 & 0 & 0
}
\kbordermatrix{
& (w_1 C') Z (w_1 C') Z (w_1 C') Z & C' Z (w_1 C') Z (w_1 C') Z & (w_1 C') Z C' Z (w_1 C') Z & C' Z C' Z (w_1 C') Z & (w_1 C') Z (w_1 C') Z C' Z & C' Z (w_1 C') Z C' Z & (w_1 C') Z C' Z C' Z & C' Z C' Z C' Z \\
(w_1 C') Z (w_1 C') Z & e_1 & 0 & 1 & 0 & 1 & 0 & 0 & 0 \\
C' Z (w_1 C') Z & 0 & e_1 & 0 & 1 & 0 & 1 & 0 & 0 \\
(w_1 C') Z C' Z & e_2 & 0 & 0 & 0 & 0 & 0 & 1 & 0 \\
C' Z C' Z & 0 & e_2 & 0 & 0 & 0 & 0 & 0 & 1
}
\kbordermatrix{
& (w_1 C') Z (w_1 C') Z (w_1 C') Z & C' Z (w_1 C') Z (w_1 C') Z & (w_1 C') Z C' Z (w_1 C') Z & C' Z C' Z (w_1 C') Z & (w_1 C') Z (w_1 C') Z C' Z & C' Z (w_1 C') Z C' Z & (w_1 C') Z C' Z C' Z & C' Z C' Z C' Z \\
(w_1 C') Z (w_1 C') Z (w_1 C') Z & 0 & 0 & 0 & 0 & 0 & 0 & 0 & 0 \\
C' Z (w_1 C') Z (w_1 C') Z & 0 & 0 & 0 & 0 & 0 & 0 & 0 & 0 \\
(w_1 C') Z C' Z (w_1 C') Z & 1 & 0 & 0 & 0 & 0 & 0 & 0 & 0 \\
C' Z C' Z (w_1 C') Z & 0 & 1 & 0 & 0 & 0 & 0 & 0 & 0 \\
(w_1 C') Z (w_1 C') Z C' Z & 0 & 0 & 0 & 0 & 0 & 0 & 0 & 0 \\
C' Z (w_1 C') Z C' Z & 0 & 0 & 0 & 0 & 0 & 0 & 0 & 0 \\
(w_1 C') Z C' Z C' Z & 0 & 0 & 0 & 0 & 1 & 0 & 0 & 0 \\
C' Z C' Z C' Z & 0 & 0 & 0 & 0 & 0 & 1 & 0 & 0
}
\kbordermatrix{
& (w_1 C') Z (w_1 C') Z & C' Z (w_1 C') Z & (w_1 C') Z C' Z & C' Z C' Z\\
(w_1 C') Z (w_1 C') Z (w_1 C') Z & 1 & 0 & 0 & 0 \\
C' Z (w_1 C') Z (w_1 C') Z & 0 & 1 & 0 & 0 \\
(w_1 C') Z C' Z (w_1 C') Z & 0 & 1 & 0 & 0 \\
C' Z C' Z (w_1 C') Z & e_2 & e_1 & 0 & 0 \\
(w_1 C') Z (w_1 C') Z C' Z & 0 & 0 & 1 & 0 \\
C' Z (w_1 C') Z C' Z & 0 & 0 & 0 & 1\\
(w_1 C') Z C' Z C' Z & 0 & 0 & 0 & 1 \\
C' Z C' Z C' Z & 0 & 0 & e_2 & e_1
}
\kbordermatrix{
& (w_1 C')Z & C' Z \\
(w_1 C') Z (w_1 C') Z & 0 & 0 \\
C' Z (w_1 C') Z & 1 & 0 \\
(w_1 C') Z C' Z & 0 & 0 \\
C' Z C' Z & 0 & 1
}
}
\]
equal the two summands of $\chi$, as do the matrix products
\[
\resizebox{\textwidth}{!}{
\kbordermatrix{
& A'XA'X & B'YA'X & A'XB'Y & B'YB'Y \\
A'X & 1 & 0 & 0 & 0 \\
B'Y & 0 & 0 & 1 & 0
}
\kbordermatrix{
& A'XA'XA'X & B'YA'XA'X & A'XB'YA'X & B'YB'YA'X & A'XA'XB'Y & B'YA'XB'Y & A'XB'YB'Y & B'YB'YB'Y \\
A'XA'X & e_1 & 1 & 1 & 0 & 0 & 0 & 0 & 0 \\
B'YA'X & 0 & 0 & 0 & 1 & 0 & 0 & 0 & 0 \\
A'XB'Y & 0 & 0 & 0 & 0 & e_1 & 1 & 1 & 0 \\
B'YB'Y & 0 & 0 & 0 & 0 & 0 & 0 & 0 & 1
}
\kbordermatrix{
& A'XA'XA'X & B'YA'XA'X & A'XB'YA'X & B'YB'YA'X & A'XA'XB'Y & B'YA'XB'Y & A'XB'YB'Y & B'YB'YB'Y \\
A'XA'XA'X & 0 & 0 & 0 & 0 & 0 & 0 & 0 & 0 \\
B'YA'XA'X & 0 & 0 & 0 & 0 & 0 & 0 & 0 & 0 \\
A'XB'YA'X & 1 & 0 & 0 & 0 & 0 & 0 & 0 & 0 \\
B'YB'YA'X & 0 & 1 & 0 & 0 & 0 & 0 & 0 & 0 \\
A'XA'XB'Y & 0 & 0 & 0 & 0 & 0 & 0 & 0 & 0 \\
B'YA'XB'Y & 0 & 0 & 0 & 0 & 0 & 0 & 0 & 0 \\
A'XB'YB'Y & 0 & 0 & 0 & 0 & 1 & 0 & 0 & 0 \\
B'YB'YB'Y & 0 & 0 & 0 & 0 & 0 & 1 & 0 & 0
}
\kbordermatrix{
& A'XA'X & B'YA'X & A'XB'Y & B'YB'Y \\
A'XA'XA'X & 1 & 0 & 0 & 0 \\
B'YA'XA'X & 0 & 1 & 0 & 0 \\
A'XB'YA'X & 0 & 0 & 1 & 0 \\
B'YB'YA'X & 0 & 0 & 0 & 1 \\
A'XA'XB'Y & 0 & 0 & 1 & 0 \\
B'YA'XB'Y & 0 & 0 & 0 & 1 \\
A'XB'YB'Y & 0 & 0 & e_1 & 0 \\
B'YB'YB'Y & 0 & 0 & 0 & e_1
}
\kbordermatrix{
& A'X & B'Y \\
A'XA'X & 0 & 0 \\
B'YA'X & 0 & 0 \\
A'XB'Y & 1 & 0 \\
B'YB'Y & 0 & 1
}
}
\]
and
\[
\resizebox{\textwidth}{!}{
\kbordermatrix{
& (w_1 C') Z (w_1 C') Z & C' Z (w_1 C') Z & (w_1 C') Z C' Z & C' Z C' Z\\
(w_1 C') Z & 1 & 0 & 0 & 0 \\
C' Z & 0 & 0 & 1 & 0
}
\kbordermatrix{
& (w_1 C') Z (w_1 C') Z (w_1 C') Z & C' Z (w_1 C') Z (w_1 C') Z & (w_1 C') Z C' Z (w_1 C') Z & C' Z C' Z (w_1 C') Z & (w_1 C') Z (w_1 C') Z C' Z & C' Z (w_1 C') Z C' Z & (w_1 C') Z C' Z C' Z & C' Z C' Z C' Z \\
(w_1 C') Z (w_1 C') Z & e_1 & 1 & 1 & 0 & 0 & 0 & 0 & 0 \\
C' Z (w_1 C') Z & e_2 & 0 & 0 & 1 & 0 & 0 & 0 & 0 \\
(w_1 C') Z C' Z & 0 & 0 & 0 & 0 & e_1 & 1 & 1 & 0 \\
C' Z C' Z & 0 & 0 & 0 & 0 & e_2 & 0 & 0 & 1
}
\kbordermatrix{
& (w_1 C') Z (w_1 C') Z (w_1 C') Z & C' Z (w_1 C') Z (w_1 C') Z & (w_1 C') Z C' Z (w_1 C') Z & C' Z C' Z (w_1 C') Z & (w_1 C') Z (w_1 C') Z C' Z & C' Z (w_1 C') Z C' Z & (w_1 C') Z C' Z C' Z & C' Z C' Z C' Z \\
(w_1 C') Z (w_1 C') Z (w_1 C') Z & 0 & 0 & 0 & 0 & 0 & 0 & 0 & 0 \\
C' Z (w_1 C') Z (w_1 C') Z & 0 & 0 & 0 & 0 & 0 & 0 & 0 & 0 \\
(w_1 C') Z C' Z (w_1 C') Z & 1 & 0 & 0 & 0 & 0 & 0 & 0 & 0 \\
C' Z C' Z (w_1 C') Z & 0 & 1 & 0 & 0 & 0 & 0 & 0 & 0 \\
(w_1 C') Z (w_1 C') Z C' Z & 0 & 0 & 0 & 0 & 0 & 0 & 0 & 0 \\
C' Z (w_1 C') Z C' Z & 0 & 0 & 0 & 0 & 0 & 0 & 0 & 0 \\
(w_1 C') Z C' Z C' Z & 0 & 0 & 0 & 0 & 1 & 0 & 0 & 0 \\
C' Z C' Z C' Z & 0 & 0 & 0 & 0 & 0 & 1 & 0 & 0
}
\kbordermatrix{
& (w_1 C') Z (w_1 C') Z & C' Z (w_1 C') Z & (w_1 C') Z C' Z & C' Z C' Z\\
(w_1 C') Z (w_1 C') Z (w_1 C') Z & 1 & 0 & 0 & 0 \\
C' Z (w_1 C') Z (w_1 C') Z & 0 & 1 & 0 & 0 \\
(w_1 C') Z C' Z (w_1 C') Z & 0 & 0 & 1 & 0 \\
C' Z C' Z (w_1 C') Z & 0 & 0 & 0 & 1 \\
(w_1 C') Z (w_1 C') Z C' Z & 0 & 0 & 1 & 0 \\
C' Z (w_1 C') Z C' Z & 0 & 0 & 0 & 1 \\
(w_1 C') Z C' Z C' Z & e_2 & 0 & e_1 & 0 \\
C' Z C' Z C' Z & 0 & e_2 & 0 & e_1
}
\kbordermatrix{
& (w_1 C')Z & C' Z \\
(w_1 C') Z (w_1 C') Z & 0 & 0 \\
C' Z (w_1 C') Z & 0 & 0 \\
(w_1 C') Z C' Z & 1 & 0 \\
C' Z C' Z & 0 & 1
}.
}
\]

\item For relation \eqref{it:NegativeBubble1}, we want $\varepsilon \eta' = 0$; indeed, the matrix products
\[
\kbordermatrix{
& A'X & B'Y \\
\Ib_{\u} & 1 & 0
}
\kbordermatrix{
& \Ib_{\u} \\
A'X & 0 \\
B'Y & 1
}
\]
and
\[
\kbordermatrix{
& w_1 C' Z & C' Z \\
\Ib_{\o} & 1 & 0
}
\kbordermatrix{
& \Ib_{\o} \\
w_1 C' Z & 0 \\
C' Z & 1
}
\]
are both zero. 

\item For relation \eqref{it:NegativeBubble2}, we also want $\varepsilon \eta' = 0$; this was just shown.

\item For relation \eqref{it:Deg0Bubble1}, we want $\varepsilon$\eqref{it:IdUpDot}$\eta' = \id$; indeed, the matrix products
\[
\kbordermatrix{
& A'X & B'Y \\
\Ib_{\u} & 1 & 0
}
\kbordermatrix{
& A'X & B'Y \\
A'X & 0 & 1 \\
B'Y & 0 & e_1
}
\kbordermatrix{
& \Ib_{\u} \\
A'X & 0 \\
B'Y & 1
}
\]
and
\[
\kbordermatrix{
& w_1 C' Z & C' Z \\
\Ib_{\o} & 1 & 0
}
\kbordermatrix{
& (w_1 C')Z & C' Z \\
(w_1 C')Z & 0 & 1 \\
C' Z & e_2 & e_1
}
\kbordermatrix{
& \Ib_{\o} \\
w_1 C' Z & 0 \\
C' Z & 1
}
\]
are both the identity.

\item For relation \eqref{it:Deg0Bubble2}, we want $\varepsilon$\eqref{it:UpDotId}$\eta' = \id$; indeed, the matrix products
\[
\kbordermatrix{
& A'X & B'Y \\
\Ib_{\u} & 1 & 0
}
\kbordermatrix{
& A'X & B'Y \\
A'X & e_1 & 1 \\
B'Y & 0 & 0
}
\kbordermatrix{
& \Ib_{\u} \\
A'X & 0 \\
B'Y & 1
}
\]
and
\[
\kbordermatrix{
& w_1 C' Z & C' Z \\
\Ib_{\o} & 1 & 0
}
\kbordermatrix{
& (w_1 C') Z & C' Z \\
(w_1 C') Z & e_1 & 1 \\
C' Z & e_2 & 0
}
\kbordermatrix{
& \Ib_{\o} \\
w_1 C' Z & 0 \\
C' Z & 1
}
\]
are both the identity.

\item Relation \eqref{it:ExtendedSl2_1} holds because both sideways crossings give identity morphisms.

\item For relation \eqref{it:ExtendedSl2_2}, the twice-dotted bubble \includegraphics[scale=0.4]{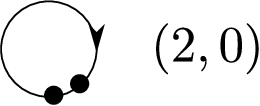} gets assigned $\varepsilon$\eqref{it:IdUpDot}\eqref{it:IdUpDot}$\eta'$ which has matrix
\[
\kbordermatrix{
& A'X & B'Y \\
\Ib_{\u} & 1 & 0
}
\kbordermatrix{
& A'X & B'Y \\
A'X & 0 & 1 \\
B'Y & 0 & e_1
}
\kbordermatrix{
& A'X & B'Y \\
A'X & 0 & 1 \\
B'Y & 0 & e_1
}
\kbordermatrix{
& \Ib_{\u} \\
A'X & 0 \\
B'Y & 1
}
\]
in the middle summand and
\[
\kbordermatrix{
& w_1 C' Z & C' Z \\
\Ib_{\o} & 1 & 0
}
\kbordermatrix{
& (w_1 C')Z & C' Z \\
(w_1 C')Z & 0 & 1 \\
C' Z & e_2 & e_1
}
\kbordermatrix{
& (w_1 C')Z & C' Z \\
(w_1 C')Z & 0 & 1 \\
C' Z & e_2 & e_1
}
\kbordermatrix{
& \Ib_{\o} \\
w_1 C' Z & 0 \\
C' Z & 1
}
\]
in the lower summand, amounting to multiplication by $e_1$ in both cases. Thus, we want to show that $\id = \eta' \varepsilon$\eqref{it:IdUpDot} $+$ \eqref{it:IdUpDot}$\eta' \varepsilon + e_1 \eta' \varepsilon$. Indeed, $\eta' \varepsilon$\eqref{it:IdUpDot} has matrix
\[
\kbordermatrix{
& \Ib_{\u} \\
A'X & 0 \\
B'Y & 1
}
\kbordermatrix{
& A'X & B'Y \\
\Ib_{\u} & 1 & 0
}
\kbordermatrix{
& A'X & B'Y \\
A'X & 0 & 1 \\
B'Y & 0 & e_1
}
\]
in the middle summand and
\[
\kbordermatrix{
& \Ib_{\o} \\
w_1 C' Z & 0 \\
C' Z & 1
}
\kbordermatrix{
& w_1 C' Z & C' Z \\
\Ib_{\o} & 1 & 0
}
\kbordermatrix{
& (w_1 C')Z & C' Z \\
(w_1 C')Z & 0 & 1 \\
C' Z & e_2 & e_1
}
\]
in the lower summand, \eqref{it:IdUpDot}$\eta' \varepsilon$ has matrix
\[
\kbordermatrix{
& A'X & B'Y \\
A'X & 0 & 1 \\
B'Y & 0 & e_1
}
\kbordermatrix{
& \Ib_{\u} \\
A'X & 0 \\
B'Y & 1
}
\kbordermatrix{
& A'X & B'Y \\
\Ib_{\u} & 1 & 0
}
\]
in the middle summand and
\[
\kbordermatrix{
& (w_1 C')Z & C' Z \\
(w_1 C')Z & 0 & 1 \\
C' Z & e_2 & e_1
}
\kbordermatrix{
& \Ib_{\o} \\
w_1 C' Z & 0 \\
C' Z & 1
}
\kbordermatrix{
& w_1 C' Z & C' Z \\
\Ib_{\o} & 1 & 0
}
\]
in the lower summand, and $e_1 \eta' \varepsilon$ has matrix
\[
e_1 \kbordermatrix{
& \Ib_{\u} \\
A'X & 0 \\
B'Y & 1
}
\kbordermatrix{
& A'X & B'Y \\
\Ib_{\u} & 1 & 0
}
\]
in the middle summand and
\[
e_1 \kbordermatrix{
& \Ib_{\o} \\
w_1 C' Z & 0 \\
C' Z & 1
}
\kbordermatrix{
& w_1 C' Z & C' Z \\
\Ib_{\o} & 1 & 0
}
\]
in the lower summand. The matrices in each summand add up to the identity.

\item Relation \eqref{it:ExtendedSl2_3} holds because both sideways crossings give identity morphisms.

\item For relation \eqref{it:ExtendedSl2_4}, the twice-dotted bubble \includegraphics[scale=0.4]{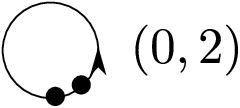} gets assigned $\varepsilon$\eqref{it:UpDotId}\eqref{it:UpDotId}$\eta'$ which has matrix
\[
\kbordermatrix{
& A'X & B'Y \\
\Ib_{\u} & 1 & 0
}
\kbordermatrix{
& A'X & B'Y \\
A'X & e_1 & 1 \\
B'Y & 0 & 0
}
\kbordermatrix{
& A'X & B'Y \\
A'X & e_1 & 1 \\
B'Y & 0 & 0
}
\kbordermatrix{
& \Ib_{\u} \\
A'X & 0 \\
B'Y & 1
}
\]
in the middle summand and
\[
\kbordermatrix{
& w_1 C' Z & C' Z \\
\Ib_{\o} & 1 & 0
}
\kbordermatrix{
& (w_1 C') Z & C' Z \\
(w_1 C') Z & e_1 & 1 \\
C' Z & e_2 & 0
}
\kbordermatrix{
& (w_1 C') Z & C' Z \\
(w_1 C') Z & e_1 & 1 \\
C' Z & e_2 & 0
}
\kbordermatrix{
& \Ib_{\o} \\
w_1 C' Z & 0 \\
C' Z & 1
}
\]
in the lower summand, amounting to multiplication by $e_1$ in both cases. Thus, we want to show that $\id = \eta' \varepsilon$\eqref{it:UpDotId} $+$ \eqref{it:UpDotId}$\eta' \varepsilon + e_1 \eta' \varepsilon$. Indeed, $\eta' \varepsilon$\eqref{it:UpDotId} has matrix
\[
\kbordermatrix{
& \Ib_{\u} \\
A'X & 0 \\
B'Y & 1
}
\kbordermatrix{
& A'X & B'Y \\
\Ib_{\u} & 1 & 0
}
\kbordermatrix{
& A'X & B'Y \\
A'X & e_1 & 1 \\
B'Y & 0 & 0
}
\]
in the middle summand and
\[
\kbordermatrix{
& \Ib_{\o} \\
w_1 C' Z & 0 \\
C' Z & 1
}
\kbordermatrix{
& w_1 C' Z & C' Z \\
\Ib_{\o} & 1 & 0
}
\kbordermatrix{
& (w_1 C')Z & C' Z \\
(w_1 C')Z & e_1 & 1 \\
C' Z & e_2 & 0
}
\]
in the lower summand, \eqref{it:UpDotId}$\eta' \varepsilon$ has matrix
\[
\kbordermatrix{
& A'X & B'Y \\
A'X & e_1 & 1 \\
B'Y & 0 & 0
}
\kbordermatrix{
& \Ib_{\u} \\
A'X & 0 \\
B'Y & 1
}
\kbordermatrix{
& A'X & B'Y \\
\Ib_{\u} & 1 & 0
}
\]
in the middle summand and
\[
\kbordermatrix{
& (w_1 C')Z & C' Z \\
(w_1 C')Z & e_1 & 1 \\
C' Z & e_2 & 0
}
\kbordermatrix{
& \Ib_{\o} \\
w_1 C' Z & 0 \\
C' Z & 1
}
\kbordermatrix{
& w_1 C' Z & C' Z \\
\Ib_{\o} & 1 & 0
}
\]
in the lower summand, and $e_1 \eta' \varepsilon$ has the same matrices as given above. The matrices in each summand add up to the identity.

\item For relation \eqref{it:NilHecke1}, the matrices
\[
\kbordermatrix{
& A'X & B'Y \\
A'X & 0 & 0  \\
B'Y & 1 & 0
}
\kbordermatrix{
& A'X & B'Y \\
A'X & 0 & 0  \\
B'Y & 1 & 0
}
\]
and
\[
\kbordermatrix{
& w_1 C' Z & C' Z \\
w_1 C' Z & 0 & 0  \\
C' Z & 1 & 0
}
\kbordermatrix{
& w_1 C' Z & C' Z \\
w_1 C' Z & 0 & 0  \\
C' Z & 1 & 0
}
\]
are both zero.

\item For relation \eqref{it:NilHecke2}, we want to show that $\id =$ \eqref{it:DotForNilHecke1}$\chi + \chi$\eqref{it:DotForNilHecke2} $=$ \eqref{it:DotForNilHecke2}$\chi + \chi$\eqref{it:DotForNilHecke1}. Equivalently, we want to show that $\id =$ \eqref{it:IdUpDot}$\chi + \chi$\eqref{it:UpDotId} $=$ \eqref{it:UpDotId}$\chi + \chi$\eqref{it:IdUpDot}. Indeed, \eqref{it:IdUpDot}$\chi$ has matrix
\[
\kbordermatrix{
& A'X & B'Y \\
A'X & 0 & 1 \\
B'Y & 0 & e_1
}
\kbordermatrix{
& A'X & B'Y \\
A'X & 0 & 0  \\
B'Y & 1 & 0
}
\]
in the middle summand and
\[
\kbordermatrix{
& (w_1 C')Z & C' Z \\
(w_1 C')Z & 0 & 1 \\
C' Z & e_2 & e_1
}
\kbordermatrix{
& w_1 C' Z & C' Z \\
w_1 C' Z & 0 & 0  \\
C' Z & 1 & 0
}
\]
in the lower summand, while $\chi$\eqref{it:UpDotId} has matrix
\[
\kbordermatrix{
& A'X & B'Y \\
A'X & 0 & 0  \\
B'Y & 1 & 0
}
\kbordermatrix{
& A'X & B'Y \\
A'X & e_1 & 1 \\
B'Y & 0 & 0
}
\]
in the middle summand and
\[
\kbordermatrix{
& w_1 C' Z & C' Z \\
w_1 C' Z & 0 & 0  \\
C' Z & 1 & 0
}
\kbordermatrix{
& (w_1 C') Z & C' Z \\
(w_1 C') Z & e_1 & 1 \\
C' Z & e_2 & 0
}
\]
in the lower summand. Similarly, \eqref{it:UpDotId}$\chi$ has matrix
\[
\kbordermatrix{
& A'X & B'Y \\
A'X & e_1 & 1 \\
B'Y & 0 & 0
}
\kbordermatrix{
& A'X & B'Y \\
A'X & 0 & 0  \\
B'Y & 1 & 0
}
\]
in the middle summand and
\[
\kbordermatrix{
& (w_1 C') Z & C' Z \\
(w_1 C') Z & e_1 & 1 \\
C' Z & e_2 & 0
}
\kbordermatrix{
& w_1 C' Z & C' Z \\
w_1 C' Z & 0 & 0  \\
C' Z & 1 & 0
}
\]
in the lower summand, while $\chi$\eqref{it:IdUpDot} has matrix
\[
\kbordermatrix{
& A'X & B'Y \\
A'X & 0 & 0  \\
B'Y & 1 & 0
}
\kbordermatrix{
& A'X & B'Y \\
A'X & 0 & 1 \\
B'Y & 0 & e_1
}
\]
in the middle summand and
\[
\kbordermatrix{
& w_1 C' Z & C' Z \\
w_1 C' Z & 0 & 0  \\
C' Z & 1 & 0
}
\kbordermatrix{
& (w_1 C')Z & C' Z \\
(w_1 C')Z & 0 & 1 \\
C' Z & e_2 & e_1
}
\]
in the lower summand.

\end{itemize}
\end{proof}

\section{The Soergel category and braid cobordisms}\label{sec:SoergelBraidCob}

\subsection{The Soergel category}\label{sec:Soergel}

Following Elias--Khovanov \cite{EliasKhovanov}, let $\SC_1^{\ungr}$ be the monoidal category denoted $\SC_1(\{1\})$ in Elias--Krasner \cite{EliasKrasner} (equivalently, $\SC_1(2)$ in Mackaay--Sto{\v{s}}i{\'c}--Vaz \cite{MSVSchur}) with its grading ignored. The morphisms in $\SC_1^{\ungr}$ are generated by pictures

\begin{center}
\noindent \includegraphics[scale=0.4]{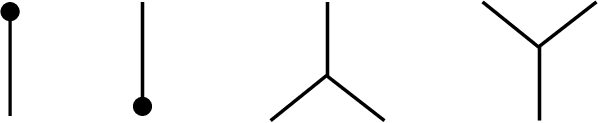}
\end{center}

\noindent modulo certain relations that we will not need to consider explicitly. Composition comes from vertical stacking of pictures, and the monoidal structure comes from horizontal stacking.

\begin{remark}
As Elias--Krasner remark in \cite[Section 5.2]{EliasKrasner}, $\SC_1^{\ungr}$ and its graded versions make sense over $\Z$; we will work with their reductions over $\F_2$. 
\end{remark}

Similarly, let $(\SC'_1)^{\ungr}$ be the monoidal category denoted $\SC'_1(\{1\})$ in \cite{EliasKrasner} (equivalently, $\SC'_1(2)$ in \cite{MSVSchur}) with its grading ignored. Compared with $\SC_1^{\ungr}$, the monoidal category $(\SC'_1)^{\ungr}$ has the same objects and additional generating morphisms $U_1$ and $U_2$ from the monoidal unit object to itself (represented graphically by boxes labeled $1$ or $2$). The additional relations beyond those of $\SC_1^{\ungr}$ are:
\begin{enumerate}
\item\label{it:SCPrimeRel1} \includegraphics[scale=0.4]{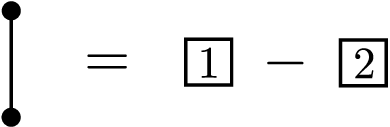}
\item\label{it:SCPrimeRel2} \includegraphics[scale=0.4]{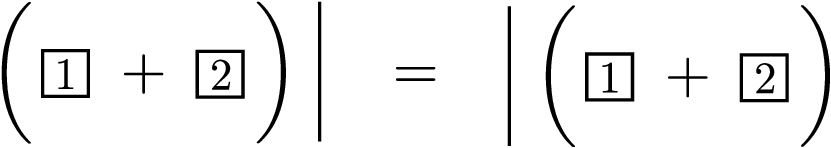}
\item\label{it:SCPrimeRel3} \includegraphics[scale=0.4]{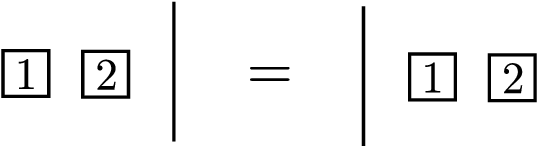}
\end{enumerate}

\begin{remark}
In characteristic two, the categorification relationship between Hecke algebras and these Soergel categories does not necessarily hold; also, the relationship between $\SC'_1$ and $\SC_1$ is more complicated than in characteristic zero. See \cite[Section 5.2]{EliasKrasner} for a more detailed discussion.
\end{remark}

We define a bigraded version $(\SC'_1)^{*,*}$ of $(\SC'_1)^{\ungr}$ by setting
\begin{itemize}
\item for \includegraphics[scale=0.4]{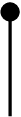}: $\deg^q = -1$, $\deg^h = 2$; \quad for \includegraphics[scale=0.4]{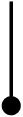}: $\deg^q = -1$, $\deg^h = 0$
\item for \includegraphics[scale=0.4]{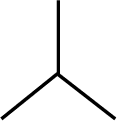}: $\deg^q = 1$, $\deg^h = 0$; \quad for \includegraphics[scale=0.4]{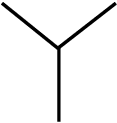}: $\deg^q = 1$, $\deg^h = -2$
\item for \includegraphics[scale=0.4]{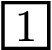} and \includegraphics[scale=0.4]{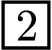}: $\deg^q = -2$ and $\deg^h = 2$.
\end{itemize}
We get a bigraded version $\SC_1^{*,*}$ of $\SC_1^{\ungr}$. The single grading on $\SC_1$ from \cite{EliasKhovanov,EliasKrasner,MSVSchur} is the negative of our $q$-grading.

By \cite[Lemma 6.5]{MSVSchur} we have, in this case and ignoring gradings, a functor from $\SC_1^{\ungr}$ (viewed as a 2-category with one object) to the ungraded version $\Sc(2,2)^{\ungr}$ of the 2-category $\Sc(2,2)^{*,*}$ from Section~\ref{sec:CatQGDef} above. Note that while $\Q$ coefficients are assumed in \cite{MSVSchur}, they are not used in the definition of this functor, which makes sense over $\Z$ and thus over $\F_2$. 

\begin{proposition}
Mackaay--Sto{\v{s}}i{\'c}--Vaz's functor respects the bigradings we define here on $\SC_1^{*,*}$ and $\Sc(2,2)^{*,*}$.
\end{proposition}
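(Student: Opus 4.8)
The plan is to verify that the functor from $\SC_1^{*,*}$ (equivalently, the sub-2-category of $\Sc(2,2)^{*,*}$) into $2\Rep(\U^-)^{*,*}$ defined in Section~\ref{sec:BimodMapsFor2Mors} assigns to each generating morphism of $\SC_1^{*,*}$ an algebraic 2-morphism of the matching bidegree; since all the relations in $\SC_1^{*,*}$ were already checked in Theorem~\ref{thm:SkewHowe2Action} without regard to grading, and composition of 2-morphisms is additive in bidegree, it suffices to check the generators. First I would recall the dictionary: the Soergel generators \includegraphics[scale=0.4]{lp_down.eps} and \includegraphics[scale=0.4]{lp_up.eps} correspond to (the duals of) the cap/cup morphisms $\eta, \varepsilon$ built from $Y$, while \includegraphics[scale=0.4]{soergelLambda.eps} and \includegraphics[scale=0.4]{soergelY.eps} correspond to the trivalent-vertex 1-morphisms ${^{\vee}}\Lambda, {^{\vee}}Y$ and the maps $\eta', \varepsilon'$, and the dotted generators \includegraphics[scale=0.4]{soergelDot1.eps}, \includegraphics[scale=0.4]{soergelDot2.eps} correspond to the dot 2-morphisms $\delta^{\up}, \delta^{\down}$. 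Concretely, one matches the generator-by-generator bidegree assignments in the definition of $(\SC'_1)^{*,*}$ above against the bidegrees of the $\Sc(2,2)^{*,*}$ generators listed in Section~\ref{sec:CatQGDef}.

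The bulk of the verification is then a finite bidegree bookkeeping check. For each Soergel generator I would (i) identify which composite of $\Sc(2,2)^{*,*}$ generators Mackaay--Sto\v{s}i\'c--Vaz's functor sends it to, (ii) read off the bidegree of that composite from the list in Section~\ref{sec:CatQGDef} (recalling that $\deg^q$ is additive and $\deg^h$ is additive under both vertical and horizontal composition), and (iii) confirm it agrees with the bidegree we declared for the Soergel generator. For instance, the Soergel ``dot'' has $\deg^q = -2$, $\deg^h = 2$, matching the four $\Sc(2,2)^{*,*}$ dot 2-morphisms; the Soergel trivalent vertex \includegraphics[scale=0.4]{soergelLambda.eps} with $\deg^q = 1$, $\deg^h = 0$ matches the appropriate cap/cup composite, and so on through the remaining generators. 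Since the 2-morphisms we assigned in Section~\ref{sec:BimodMapsFor2Mors} are already known (from the propositions there) to be homogeneous of the stated bidegrees — indeed we computed the bidegrees of the generators $X, Y, \Lambda$ of the bimodules and of $\lambda$, the $U_i$, etc. — the check reduces to comparing two short tables of integers.

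The one genuinely non-formal point, and the place I expect to spend the most care, is making sure the grading conventions are aligned: Section~\ref{sec:Soergel} notes that ``the single grading on $\SC_1$ from \cite{EliasKhovanov,EliasKrasner,MSVSchur} is the negative of our $q$-grading,'' and there is no homological grading in the literature's $\SC_1$, so one must check that our homological grading on $\SC_1^{*,*}$ is precisely the one forced by requiring compatibility with the homological grading on $\Sc(2,2)^{*,*}$ (which in turn was fixed in Section~\ref{sec:CatQGDef} to make the functor to $2\Rep(\U^-)^{*,*}$ in Theorem~\ref{thm:SkewHowe2Action} degree-preserving). In other words, the content of the proposition is really that the bigrading we put on $\SC_1^{*,*}$ is the unique bigraded lift of the literature's $\SC_1$ for which MSV's functor into $\Sc(2,2)^{*,*}$ is degree-zero; once the sign flip on $\deg^q$ and the definition of $\deg^h$ on the generators are traced through, the proof is immediate. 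I would therefore phrase the argument as: MSV's functor is defined on generators, we have specified bigradings on the generators of both $\SC_1^{*,*}$ and $\Sc(2,2)^{*,*}$ compatibly (checking each of the finitely many generating pictures), and degree-additivity under composition and tensor product does the rest.
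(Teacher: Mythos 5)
Your proposal is correct and matches the paper's proof, which is exactly the one-line observation that the bidegree of each generating 2-morphism of $\SC_1^{*,*}$ agrees with the bidegree of its image under Mackaay--Sto{\v{s}}i{\'c}--Vaz's functor in $\Sc(2,2)^{*,*}$, with additivity of bidegree under composition handling everything else. The only stray point is your opening framing in terms of the composite functor into $2\Rep(\U^-)^{*,*}$ — the proposition concerns only the functor $\SC_1^{*,*}\to\Sc(2,2)^{*,*}$ — but you correct course and carry out the right generator-by-generator comparison.
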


\begin{proof}
One can check that the bidegree of each generating 2-morphism of $\SC_1^{*,*}$ agrees with the bidegree of its image in $\Sc(2,2)^{*,*}$.
\end{proof}

From Theorem~\ref{thm:SkewHowe2Action} we get the following corollary.
\begin{corollary}\label{cor:SoergelFunctor}
The above constructions give a functor of bicategories
\[
\SC_1^{*,*} \to 2\Rep(\U^-)^{*,*}.
\]
\end{corollary}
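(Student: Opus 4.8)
The plan is to compose the functor of Theorem~\ref{thm:SkewHowe2Action} with Mackaay--Sto{\v{s}}i{\'c}--Vaz's functor $\SC_1^{\ungr} \to \Sc(2,2)^{\ungr}$, after upgrading the latter to the bigraded setting. First I would recall that \cite{MSVSchur} constructs a monoidal functor $\SC_1^{\ungr} \to \Sc(2,2)^{\ungr}$; viewing $\SC_1^{\ungr}$ as a bicategory with one object, this is a functor of bicategories, and although \cite{MSVSchur} works over $\Q$, the functor is defined integrally and hence descends to $\F_2$. Theorem~\ref{thm:SkewHowe2Action} gives a functor of bicategories $\Sc(2,2)^{*,*} \to 2\Rep(\U^-)^{*,*}$; forgetting the bigrading on both sides, it gives $\Sc(2,2)^{\ungr} \to 2\Rep(\U^-)^{\ungr}$. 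Composing, one obtains a functor $\SC_1^{\ungr} \to 2\Rep(\U^-)^{\ungr}$.

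The remaining task is to see that this composite respects bigradings, i.e.\ lifts to a functor $\SC_1^{*,*} \to 2\Rep(\U^-)^{*,*}$. By the proposition immediately preceding the corollary statement, Mackaay--Sto{\v{s}}i{\'c}--Vaz's functor respects the bigradings we have put on $\SC_1^{*,*}$ and $\Sc(2,2)^{*,*}$. By Theorem~\ref{thm:SkewHowe2Action}, the functor $\Sc(2,2)^{*,*} \to 2\Rep(\U^-)^{*,*}$ respects bigradings by construction (it sends generating 2-morphisms of the stated bidegrees to the algebraic 2-morphisms defined in Section~\ref{sec:BimodMapsFor2Mors}, whose bidegrees one checks match). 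Therefore the composite sends each generating 2-morphism of $\SC_1^{*,*}$ to a 2-morphism of $2\Rep(\U^-)^{*,*}$ of the same bidegree, and since the relations in $\SC_1^{*,*}$ map (via \cite{MSVSchur}) to relations holding in $\Sc(2,2)^{*,*}$, which in turn hold in $2\Rep(\U^-)^{*,*}$ by Theorem~\ref{thm:SkewHowe2Action}, the composite is a well-defined functor of bigraded bicategories.

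There is essentially no obstacle here beyond bookkeeping: the corollary is a formal consequence of Theorem~\ref{thm:SkewHowe2Action}, the cited result of \cite{MSVSchur}, and the grading-compatibility proposition just above. The one point requiring mild care is that \cite{MSVSchur}'s functor targets $\dot{\U}_q(\gl(2))^*$ and factors through the Schur quotient $\Sc(2,2)^*$; one should note that $\SC_1$ corresponds to the single object $\One_{1,1}$ (with $\E$, $\Fc$ generating the 1-morphisms appearing in $\Sc(2,2)^{*,*}$) so that the image of $\SC_1^{\ungr}$ indeed lands in the part of $\Sc(2,2)^{\ungr}$ where our functor of Theorem~\ref{thm:IntroSkewHowe} is defined. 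Once that identification is in place, the composition is immediate and the statement follows.
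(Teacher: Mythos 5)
Your proposal matches the paper's argument exactly: the corollary is obtained by composing Mackaay--Sto{\v{s}}i{\'c}--Vaz's functor $\SC_1^{*,*} \to \Sc(2,2)^{*,*}$ (grading-compatible by the proposition immediately preceding the corollary) with the functor $\Sc(2,2)^{*,*} \to 2\Rep(\U^-)^{*,*}$ of Theorem~\ref{thm:SkewHowe2Action}. The paper treats this as immediate and gives no further proof, so your fleshed-out bookkeeping, including the remark about the Schur quotient and the object $\One_{1,1}$, is correct and follows the same route.
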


We can extend this functor to the larger domain $(\SC'_1)^{*,*}$. For the 2-morphism \includegraphics[scale=0.4]{soergelDot1.eps} of $(\SC'_1)^{*,*}$, we define an endomorphism of ${^{\vee}}\id_{\A_{1,1}} = \id_{\A_{1,1}}$ to be the dual of multiplication by $U_1$ (which is itself just multiplication by $U_1$), a 2-endomorphism of the identity 1-morphism on $(\A_{1,1},F,\tau)$. Similarly, to the 2-morphism \includegraphics[scale=0.4]{soergelDot2.eps}, we associate multiplication by $U_2$, which is also a 2-endomorphism of the identity 1-morphism. These associations preserve bidegree.

\begin{theorem}
The relations in $(\SC'_1)^{*,*}$ hold for the maps defined above.
\end{theorem}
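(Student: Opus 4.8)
The statement asserts that the three additional relations \eqref{it:SCPrimeRel1}--\eqref{it:SCPrimeRel3} defining $(\SC'_1)^{*,*}$ (beyond those of $\SC_1^{*,*}$, which hold by Corollary~\ref{cor:SoergelFunctor}) are satisfied by the assignments just made, where \includegraphics[scale=0.4]{soergelDot1.eps} and \includegraphics[scale=0.4]{soergelDot2.eps} go to multiplication by $U_1$ and $U_2$ respectively as 2-endomorphisms of $\id_{\A_{1,1}}$. The plan is to translate each of the three Soergel relations into a statement about composites of already-computed bimodule maps and then verify it by matrix computation, exactly in the style of the proof of Theorem~\ref{thm:SkewHowe2Action}. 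The first step is bookkeeping: write down, for each side of each relation, the corresponding string diagram in $\Sc(2,2)^{*,*}$ under Mackaay--Sto{\v s}i{\' c}--Vaz's functor, then apply our functor from $\Sc(2,2)^{*,*}$ to $2\Rep(\U^-)^{*,*}$ to obtain an explicit AD bimodule morphism (the dual of a DA bimodule morphism on the ${^{\vee}}Y$, ${^{\vee}}\Lambda$, ${^{\vee}}X$, or ${^{\vee}}\Phi$ bimodules). Because $U_1$ and $U_2$ act as honest central elements of $\A_{1,1}$, the new box-labelled 2-morphisms correspond simply to multiplication operators, so each composite will be a product of the matrices appearing in Proposition~\ref{prop:HelperMorphisms} together with scalar-type multiplications by $U_1$ or $U_2$.

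Concretely, I would proceed relation by relation. For relation \eqref{it:SCPrimeRel1} — which equates a dotted cap--cup bubble composite with the sum of the two box generators (or with a box generator plus an identity, depending on the precise normalization in \cite{EliasKrasner}) — I would express the left side using the cup/cap maps $\varepsilon',\eta$ (resp. $\varepsilon,\eta'$) and a dot map $\delta^{\up}$ or $\delta^{\down}$, compute the resulting endomorphism of $\id_{\A_{1,1}}$ in matrix form on each of the $\uu,\ou,\uo,\oo$ idempotents, and check it equals multiplication by $U_1$ plus (or times) the appropriate term; the Deg-0-bubble relations \eqref{it:Deg0Bubble1}, \eqref{it:Deg0Bubble2} and the negative-bubble relations \eqref{it:NegativeBubble1}, \eqref{it:NegativeBubble2} already verified in the proof of Theorem~\ref{thm:SkewHowe2Action} do most of this work, since those are precisely bubble evaluations; the new content is only that an \emph{undotted} or \emph{singly-dotted} bubble evaluates to $U_1$ or $U_2$ as a multiplication operator on $\A_{1,1}$, which follows by reading off the relevant $1\times 1$ matrix entries (e.g.\ the entries ``$U_1$'' and ``$U_2$'' appearing in the lower-summand matrices for $\varepsilon',\eta$). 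For relations \eqref{it:SCPrimeRel2} and \eqref{it:SCPrimeRel3}, which involve the box generators slid past a cap/cup or merged via the trivalent generators, I would similarly rewrite each side as a composite of $\varepsilon,\varepsilon',\eta,\eta'$ with the dot maps $\delta^{\up}_{0,2},\delta^{\up}_{1,1},\delta^{\down}_{2,0},\delta^{\down}_{1,1}$ — using the interpretations of those dot maps as left/right multiplication by $U_1$ or $U_2$ on $Y$ and $\Lambda$ recorded in the remark after the dot-is-a-2-morphism proposition — and then reduce to a finite matrix identity over $\F_2[U_1,U_2]$ on the $X$, $\Phi$, or identity bimodules.

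The main obstacle, as elsewhere in Section~\ref{sec:SkewHowe2Action}, is purely organizational rather than conceptual: one must identify each Soergel-relation side with the correct composite of previously-named 2-morphisms (getting the orientations of cups/caps and the placement of dots right, which requires care with the biadjunction conventions and with the distinction between the ``easy'' vertex $\Lambda$ and the ``Y-shaped'' vertex $Y$), and one must carry the $w_1,w_2$ equivariance through the infinitely-generated intermediate bimodules before specializing. Once the composites are written down, the verification is a routine finite matrix computation on the two relevant weight-space summands (middle $\varepsilon_1+\varepsilon_2$ and lower $2\varepsilon_2$), with the upper summand vanishing automatically; since all matrices involved are the small ones already displayed in Proposition~\ref{prop:HelperMorphisms} and the proof of Theorem~\ref{thm:SkewHowe2Action}, and since multiplication by $U_1,U_2$ commutes with everything in sight, each check collapses to comparing a handful of monomial entries. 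I would present the argument in the same compact itemized format as the proof of Theorem~\ref{thm:SkewHowe2Action}, one bullet per relation, exhibiting the matrix products and observing equality.
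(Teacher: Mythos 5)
Your plan matches the paper's proof in approach and substance: the paper verifies relation (1) by computing the matrix of $\varepsilon'\eta$ directly (it is $\mathrm{diag}(U_1,U_2)$ on the middle summand and $U_1+U_2$ on the lower one, i.e.\ multiplication by the central element $U_1+U_2$), and disposes of relations (2) and (3) simply by observing that $U_1+U_2$ and $U_1U_2$ are central in $\A_{1,1}$ (using $U_2\lambda=\lambda U_1=0$). One small correction: the barbell composite is $\varepsilon'\eta$, an endomorphism of $\id_{\A_{1,1}}$ factoring through $X$, so the $\Phi$-bubble evaluations from the proof of Theorem~\ref{thm:SkewHowe2Action} (which are endomorphisms of $\id_{\A_2}$) cannot be reused for it --- but the fresh computation you also describe, reading the relevant entries off the matrices for $\varepsilon'$ and $\eta$, is exactly what is needed.
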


\begin{proof}
We need to check the relations \eqref{it:SCPrimeRel1}--\eqref{it:SCPrimeRel3} in the definition of $(\SC'_1)^{*,*}$.
\begin{itemize}
\item For relation \eqref{it:SCPrimeRel1}, the 2-morphism \includegraphics[scale=0.4]{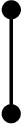} of $(\SC'_1)^{*,*}$ gets sent to the 2-morphism \includegraphics[scale=0.4]{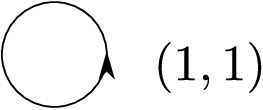} of $\Sc(2,2)^*$. In turn, this 2-morphism gets sent to the dual of $\varepsilon' \eta$, and the matrix of $\varepsilon' \eta$ is
\[
\kbordermatrix{
& XA' & YA' & XB' & YB' \\
\Ib_{\ou} & U_1 & 0 & 1 \otimes \lambda & 0 \\
\Ib_{\uo} & 0 & 0 & 0 & 1
}
\kbordermatrix{
& \Ib_{\ou} & \Ib_{\uo} \\
XA' & 1 & 0 \\
YA' & 0 & 1 \otimes \lambda \\
XB' & 0 & 0 \\
YB' & 0 & U_2
}
= \kbordermatrix{
& \Ib_{\ou} & \Ib_{\uo} \\
\Ib_{\ou} & U_1 & 0 \\
\Ib_{\uo} & 0 & U_2
}
\]
in the middle weight space and
\[
\kbordermatrix{
& Z (w_1 C') & Z C' \\
\Ib_{\oo} & U_1 & 1
}
\kbordermatrix{
& \Ib_{\oo} \\
Z (w_1 C') & 1 \\
Z C' & U_2
}
=
\kbordermatrix{
& \Ib_{\oo} \\
\Ib_{\oo} & U_1 + U_2
}
\]
in the lower weight space.

\item For relation \eqref{it:SCPrimeRel2}, the sum of  \includegraphics[scale=0.4]{soergelDot1.eps} and \includegraphics[scale=0.4]{soergelDot2.eps} gets sent to multiplication by $U_1 + U_2$ which is central in $\A_{1,1}$ (note that $U_2 \lambda = \lambda U_1 = 0$ in the middle summand of $\A_{1,1}$).
\item For relation \eqref{it:SCPrimeRel3}, the product of \includegraphics[scale=0.4]{soergelDot1.eps} and \includegraphics[scale=0.4]{soergelDot2.eps} gets sent to multiplication by $U_1 U_2$ which is central in $\A_{1,1}$.
\end{itemize}
\end{proof}

\subsection{Braid cobordisms}\label{sec:BraidCob}

Let $\BrCob(2)$ denote the monoidal category of two-strand braid cobordisms; objects are diagrams for two-strand braids, and morphisms are generated (over $\F_2$ for us) by movies modulo relations from movie moves. See Elias--Krasner \cite{EliasKrasner} for details. We define a bigraded version of $\BrCob(2)$, denoted $\BrCob(2)^{*,*}$, by assigning the following bidegrees to two-strand movie generators, indexed as in \cite[Section 3]{EliasKrasner}:

\begin{itemize}
\item Birth and death generator 1 (birth of a negative crossing): $\deg^q = -1$, $\deg^h = 0$
\item Birth and death generator 2 (death of a negative crossing): $\deg^q = 1$, $\deg^h = 0$
\item Birth and death generator 3 (birth of a positive crossing): $\deg^q = 1$, $\deg^h = 0$
\item Birth and death generator 4 (death of a positive crossing): $\deg^q = -1$, $\deg^h = 0$
\item Reidemeister 2 generator 1a: $\deg^q = 0$, $\deg^h = 0$
\item Reidemeister 2 generator 1b: $\deg^q = 0$, $\deg^h = 0$
\item Reidemeister 2 generator 2a: $\deg^q = 0$, $\deg^h = 0$
\item Reidemeister 2 generator 2b: $\deg^q = 0$, $\deg^h = 0$.
\end{itemize}

The constructions of \cite{EliasKrasner} give, in this case and ignoring gradings, a monoidal functor from $\BrCob(2)$ to the homotopy category of ungraded complexes in $\SC_1^{\ungr}$. Taking our bigradings into account (and reversing the role of positive and negative crossings in \cite{EliasKrasner} to match our conventions), we define a monoidal functor from $\BrCob^{*,*}$ to the homotopy category of bigraded complexes in $\SC_1^{*,*}$ (where differentials preserve $\deg^q$ and increase $\deg^h$ by one). On objects, we send
\[
\includegraphics[scale=0.4]{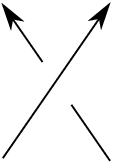} \qquad \qquad \mapsto \qquad \qquad
\xymatrix{
\Bigg| [-1] \ar@/^1.5pc/[rr]^{\includegraphics[scale=0.4]{lp_down.eps}} & \oplus & q \bullet
}
\]
and
\[
\includegraphics[scale=0.4]{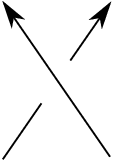} \qquad \qquad \mapsto \qquad \qquad
\xymatrix{
q^{-1} \bullet \ar@/^1.5pc/[rr]^{\includegraphics[scale=0.4]{lp_up.eps}} & \oplus & \Bigg| [-1]
}
\]
lifting the ungraded-complexes version of Elias--Krasner's functor after interchanging positive and negative crossings (we follow the notation of \cite{EliasKrasner} where $\Bigg|$ and $\bullet$ are the objects of $\SC_1$ corresponding to the nonnegative integers $1$ and $0$ respectively; we indicate shifts in $\deg^q$ by powers of $q$ and use $[1]$ for a downward shift by one in $\deg^h$). 

On morphisms, the functor is defined by requiring that it lift the ungraded Elias--Krasner functor after interchanging positive and negative crossings. Specifically:

\begin{itemize}
\item For the birth of a negative crossing, we associate Elias--Krasner's chain map for the birth of a positive crossing and vice-versa.
\item For the death of a negative crossing, we associate Elias--Krasner's chain map for the death of a positive crossing and vice-versa.
\item For the Reidemeister 2 generator 1a, we associate Elias--Krasner's chain map for the Reidemeister 2 generator 2a, and vice-versa.
\item For the Reidemeister 2 generator 1b, we associate Elias--Krasner's chain map for the Reidemeister 2 generator 2b, and vice-versa.
\end{itemize}

\begin{proposition}
The above monoidal functor respects the bigradings on $\BrCob(2)^{*,*}$ and the homotopy category of bigraded complexes in $\SC_1^{*,*}$.
\end{proposition}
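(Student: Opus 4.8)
The plan is to verify that the functor $\BrCob(2)^{*,*} \to K(\SC_1^{*,*})$ is bigraded by checking degree-compatibility on each of the eight generating movies listed above, since a monoidal functor is bigraded precisely when it sends each generating 2-morphism to a 2-morphism of the same bidegree. Because the underlying ungraded functor is already known to exist by Elias--Krasner's work \cite{EliasKrasner}, the only new content is bookkeeping of the two gradings $\deg^q$ and $\deg^h$; I do not need to re-verify any movie-move relations.

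First I would record, for each two-strand braid diagram, the bigraded complex in $\SC_1^{*,*}$ assigned to it by the definitions above: a positive crossing goes to the two-term complex $\big(\, \Bigl|\, [-1] \xrightarrow{\,\text{dot-down}\,} q\,\bullet\,\big)$ and a negative crossing to $\big(\, q^{-1}\bullet \xrightarrow{\,\text{dot-up}\,} \Bigl|\, [-1]\,\big)$; here the $q$-shifts and the $[-1]$ homological shift are exactly as written. Using the bidegrees declared for $\SC_1^{*,*}$ — in particular $\deg^q(\text{dot-down}) = -1$, $\deg^h(\text{dot-down}) = 2$ and $\deg^q(\text{dot-up}) = -1$, $\deg^h(\text{dot-up}) = 0$ — one checks that each differential has $\deg^q = 0$ and $\deg^h = +1$, so these are genuine complexes of bigraded objects; this is the consistency check that makes the target category make sense.

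Next, for each generating movie I would compute the bidegree of the image chain map and compare it with the bidegree assigned to the movie generator in $\BrCob(2)^{*,*}$. For the four birth/death generators, the image is (up to the interchange of positive and negative crossings) one of Elias--Krasner's unit/counit-type chain maps built from a single cup or cap morphism of $\SC_1^{*,*}$, together with the relevant $q$- and $[\,\cdot\,]$-shifts on the complexes; matching the declared bidegrees $(\mp 1, 0)$ of the birth/death generators against the bidegrees of the cup/cap morphisms (namely $\deg^q(\text{cup}) = -1$, $\deg^h(\text{cup}) = 0$ and $\deg^q(\text{cap}) = 1$, $\deg^h(\text{cap}) = 0$, read off from the $\SC_1^{*,*}$ conventions) shows agreement. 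For the four Reidemeister-2 generators, the image chain maps are (again after the positive/negative interchange) Elias--Krasner's homotopy equivalences realizing Reidemeister 2; each is assembled from identity morphisms and the $\SC_1^{*,*}$ generators in bidegree-zero combinations, so the image has bidegree $(0,0)$, matching the declared bidegree of each Reidemeister-2 generator.

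The only real subtlety — and the step I expect to be the main obstacle — is fixing the grading conventions consistently across the three sources \cite{EliasKhovanov, EliasKrasner, MSVSchur}, since the single grading on $\SC_1$ used there is the negative of the $q$-grading here and the homological grading is new. So the bulk of the argument is really a careful translation: one rewrites Elias--Krasner's chain maps in the present bigraded notation, tracks how the $q$-powers and $[-1]$ shifts on the objects interact with the bidegrees of the morphisms, and confirms that each movie generator's declared bidegree is forced by this translation (indeed, the bidegrees were chosen for exactly this purpose). Once the conventions are pinned down, the per-generator checks are routine, and the proposition follows.

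\begin{proof}
A monoidal functor to a bigraded category respects bigradings if and only if it sends each generating $2$-morphism to a $2$-morphism of the same bidegree, so it suffices to check this for the eight two-strand movie generators listed in \cite[Section 3]{EliasKrasner}. By construction, our functor lifts the ungraded Elias--Krasner functor after interchanging the roles of positive and negative crossings, so each movie generator is sent to (the positive/negative swap of) one of Elias--Krasner's chain maps between the two-term bigraded complexes assigned to two-strand braid diagrams above.

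We first record these complexes: a positive crossing is sent to $\big(\,\Bigl|\,[-1] \to q\,\bullet\,\big)$ with differential the dot-down morphism, and a negative crossing to $\big(\,q^{-1}\bullet \to \Bigl|\,[-1]\,\big)$ with differential the dot-up morphism. Using the bidegrees declared for $\SC_1^{*,*}$ — $\deg^q(\text{dot-down}) = \deg^q(\text{dot-up}) = -2$ on the underlying $\SC_1$-generators becomes, combined with the $q$-shift $q$ on the target object, total $q$-degree $0$, and the homological degrees combine with the shift $[-1]$ to give homological degree $+1$ — each differential preserves $\deg^q$ and raises $\deg^h$ by one, so these are complexes of bigraded objects.

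Now we compare bidegrees generator by generator. For the four birth/death generators, the image chain map is built from a single cup or cap $2$-morphism of $\SC_1^{*,*}$ together with the appropriate $q$- and $[\,\cdot\,]$-shifts on the complexes. Reading off the $\SC_1^{*,*}$ conventions, the cup morphisms \includegraphics[scale=0.4]{lp_up.eps} and the cap morphism \includegraphics[scale=0.4]{lp_down.eps} have bidegrees $(-1,0)$ and $(-1,2)$ respectively, and the relevant dot morphisms have bidegree $(-2,2)$; combining with the object shifts, one checks directly that the image of birth-and-death generator $i$ has bidegree equal to the declared bidegree of that generator ($(-1,0)$, $(1,0)$, $(1,0)$, $(-1,0)$ for $i = 1,2,3,4$ respectively). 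For the four Reidemeister-$2$ generators, the image chain maps are Elias--Krasner's homotopy equivalences realizing Reidemeister $2$; each is assembled componentwise from identity morphisms and $\SC_1^{*,*}$-generators in combinations of total bidegree $(0,0)$, matching the declared bidegree $(0,0)$ of each Reidemeister-$2$ generator. Since all eight generators match, the functor respects the bigradings.
\end{proof}
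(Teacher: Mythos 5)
Your proposal is correct and takes essentially the same approach as the paper: the paper's entire proof is the one-line observation that one checks the bidegree of each generating movie of $\BrCob(2)^{*,*}$ against the bidegree of its image, which is exactly the generator-by-generator verification you carry out in more detail. One small internal slip: in the proof body you quote $\deg^q(\text{dot-down}) = \deg^q(\text{dot-up}) = -2$, whereas the paper's convention (and your own plan paragraph) assigns these endpoint morphisms bidegrees $(-1,2)$ and $(-1,0)$ --- the value $(-2,2)$ belongs to the double-dot (box) generators --- and it is the arithmetic $-1+1=0$ that makes the differentials preserve $\deg^q$ and raise $\deg^h$ by one.
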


\begin{proof}
One can check that the bidegree of each generating morphism of $\BrCob(2)^{*,*}$ is the same as the bidegree of its image in the homotopy category of complexes in $\SC_1^{*,*}$.

\end{proof}

As discussed in Remark~\ref{rem:Strong2Mor}, we do not build enough data into 2-morphisms for the mapping cone on a 2-morphism to carry 1-morphism structure, so we do not have a functor from the homotopy category of bigraded complexes in $\SC_1^{*,*}$ (viewed as a bicategory with one object) into $2\Rep(\U^-)^{*,*}$. However, if we let $\ADBimod^{*,*}$ be the $\Z^2$-graded bicategory whose
\begin{itemize}
\item objects are dg categories, 
\item 1-morphisms are finitely generated right bounded AD bimodules, 
\item 2-morphisms are homotopy classes of closed AD bimodule morphisms (of arbitrary bidegree),
\end{itemize}
then the functor of bicategories $\SC_1^{*,*} \to 2\Rep(\U^-)^{*,*} \xrightarrow{\forget} \ADBimod^{*,*}$ extends naturally to a functor from the homotopy category of complexes in $\SC_1^{*,*}$ (1-morphisms are complexes of 1-morphisms in $\SC_1^{*,*}$ etc.) into $\ADBimod^{*,*}$.

\begin{corollary}\label{cor:BrCob}
The above constructions give a functor of bicategories
\[
\BrCob(2)^{*,*} \to \ADBimod^{*,*}.
\]
\end{corollary}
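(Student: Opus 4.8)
The plan is to obtain the functor as a composite
\[
\BrCob(2)^{*,*} \longrightarrow \mathcal{K} \longrightarrow \ADBimod^{*,*},
\]
where $\mathcal{K}$ denotes the homotopy category of bigraded complexes in $\SC_1^{*,*}$, regarded as a bicategory with one object. The first arrow is the bigraded adaptation of Elias--Krasner's functor constructed in the paragraphs immediately preceding this corollary, which (by the Proposition just above) respects bidegrees; so the genuine content lies in the second arrow, namely the extension to complexes of the bidegree-preserving functor of bicategories $\SC_1^{*,*} \to 2\Rep(\U^-)^{*,*} \xrightarrow{\forget} \ADBimod^{*,*}$ provided by Corollary~\ref{cor:SoergelFunctor}.

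First I would make the extension $\mathcal{K} \to \ADBimod^{*,*}$ precise. The key observation is that $\ADBimod^{*,*}$ carries all the structure needed to interpret bounded complexes: finite direct sums and bigrading shifts of finitely generated right bounded AD bimodules exist in it, and the mapping cone of a closed AD bimodule morphism is again a finitely generated right bounded AD bimodule. Crucially, in $\ADBimod^{*,*}$ there is no 1-morphism-of-2-representations data to transport along a cone, so the obstruction of Remark~\ref{rem:Strong2Mor} never arises. Given a bounded complex of 1-morphisms in $\SC_1^{*,*}$, I would send it to the iterated mapping cone in $\ADBimod^{*,*}$ of the images of its differentials, and a chain map to the induced morphism of iterated cones. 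That this construction descends to homotopy classes of chain maps, and sends homotopy equivalences of complexes to homotopy equivalences of AD bimodules, is routine homological algebra, using that the box tensor product of AD bimodule morphisms is well defined up to homotopy (\cite[Lemma 2.3.3]{LOTBimodules}, \cite[Corollary 2.3.5]{LOTBimodules}). Compatibility with horizontal composition and with the bicategorical structure reduces to the facts that $\boxtimes$ of complexes is computed termwise, that $\boxtimes$ of AD bimodules is associative up to canonical isomorphism, and that there are natural identifications of the mapping cone of $f \boxtimes \mathrm{id}_Q$ with $\mathrm{Cone}(f) \boxtimes Q$ (and symmetrically in the other variable) -- precisely the ingredients already used to set up $2\Rep(\U^-)^{*,*}$. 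The bigrading is preserved because it already is on $\SC_1^{*,*} \to 2\Rep(\U^-)^{*,*}$ and cones and shifts interact with $\deg^q$ and $\deg^h$ in the standard way.

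Then I would compose with the bigraded Elias--Krasner functor. Since the latter is monoidal and, by the Proposition above, bidegree-preserving, and since the extension built in the previous step is a bidegree-preserving functor of bicategories, the composite is the asserted functor $\BrCob(2)^{*,*} \to \ADBimod^{*,*}$; the two-strand movie-move relations hold automatically because they already hold in $\mathcal{K}$ by Elias--Krasner's work, our only modifications being a rescaling of gradings and an interchange of the roles of positive and negative crossings. I expect the main obstacle to be bookkeeping rather than substance: carefully formulating the ``complexes up to homotopy'' construction so that it genuinely yields a \emph{weak} (bicategorical, monoidal) functor, i.e.\ verifying the coherence of the canonical isomorphisms relating iterated cones and box tensor products with the associators and unitors of $\ADBimod^{*,*}$. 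Everything else is inherited from results established earlier in the paper.
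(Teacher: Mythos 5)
Your proposal is correct and follows essentially the same route as the paper: the paper also obtains the functor as the composite of the bigraded Elias--Krasner functor $\BrCob(2)^{*,*} \to$ (homotopy category of complexes in $\SC_1^{*,*}$) with the extension to complexes of $\SC_1^{*,*} \to 2\Rep(\U^-)^{*,*} \xrightarrow{\forget} \ADBimod^{*,*}$, noting exactly as you do that the obstruction of Remark~\ref{rem:Strong2Mor} disappears once the 1-morphism-of-2-representations structure is forgotten. The paper leaves the coherence bookkeeping for the cone construction implicit, so your extra detail is a faithful elaboration rather than a deviation.
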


\begin{remark}\label{rem:UpgradingBrCobFunctor}
The above functor sends a positive crossing to the mapping cone of $\eta$ and a negative crossing to the mapping cone of $\varepsilon'$; the map $\eta$ was given the structure of a strong 2-morphism in Proposition~\ref{prop:FirstEpsIs2MorSquare2} (see Remark~\ref{rem:EtaStrong}) while $\varepsilon'$ is a strong 2-morphism with $h=0$. Thus, by Remark~\ref{rem:Strong2Mor}, we can in fact define 1-morphism structure on the AD bimodules (and their dual DA bimodules) we associate to positive and negative crossings. Using this 1-morphism structure, one could check that the AD bimodule maps we assign to the generating braid cobordisms are 2-morphisms, giving us a functor
\[
\BrCob(2)^{*,*} \to 2\Rep(\U^-)^{*,*}.
\]
\end{remark}

\section{Bimodules for positive and negative crossings}\label{sec:PosNegCrossings}

In this section, we will give a matrix description of (the duals of) the positive and negative crossing bimodules arising from Sections \ref{sec:SkewHowe2Action}, \ref{sec:SoergelBraidCob} and relate them to bimodules coming from Heegaard diagrams.

\subsection{Mapping cone bimodule for positive crossing}\label{sec:MappingConePos}

We write $P' = P'_{\upp} \oplus P'_{\midd} \oplus P'_{\low}$ for the 1-morphism of 2-representations of $\U^-$ associated to a positive crossing by Sections \ref{sec:SkewHowe2Action}, \ref{sec:SoergelBraidCob}. Below we will discuss a simplified version $P''$ of $P'$.

\subsubsection{Upper weight space, positive crossing}

We have $P'_{\upp} = P''_{\upp} = q^{-1} \F_2$ (in homological degree zero) as a bimodule over the upper summand $(\A_{1,1})_{2\varepsilon_1} \cong \F_2$ of $\A_{1,1}$.

\subsubsection{Middle weight space, positive crossing}

The DA bimodule
\[
P'_{\midd} \coloneqq \xymatrix{
q^{-1} \mathbb{I}_{(\A_{1,1})_{\varepsilon_1 + \varepsilon_2}} \ar@/^1.5pc/[rr]^{\eta} & \oplus &  X_{\midd}[1]
}
\] 
over the middle summand $(\A_{1,1})_{\varepsilon_1 + \varepsilon_2}$ of $\A_{1,1}$, where $\eta$ is defined in Section~\ref{sec:BimodMapsFor2Mors}, has primary matrix
\[
\kbordermatrix{
& \ou & \uo \\
\ou & XA' \quad \Ib_{\ou} & XB'\\
\uo & YA' & YB' \quad \Ib_{\uo}
}
\]
and secondary matrix
\[
\resizebox{\textwidth}{!}{
\kbordermatrix{
& XA' & YA' & XB' & YB' & \Ib_{\ou} & \Ib_{\uo} \\
XA' & U_1^{k+1} \otimes U_1^{k+1} & \lambda & \begin{matrix} U_1^k \otimes (\lambda,U_1^{k+1}) \\+ U_1^k \otimes (U_2^{k+1},\lambda)\end{matrix} & 0 & 1 & 0 \\
YA' & 0 & U_2^{k+1} \otimes U_1^{k+1} & 0 & \begin{matrix} U_2^k \otimes (U_2^{k+1},\lambda) \\+ U_2^k \otimes (\lambda,U_1^{k+1})\end{matrix} & 0 & 1 \otimes \lambda \\
XB' & 0 & 0 & U_1^{k+1} \otimes U_2^{k+1} & \lambda & 0 & 0 \\
YB' & 0 & 0 & 0 & U_2^{k+1} \otimes U_2^{k+1} & 0 & U_2 \\
\Ib_{\ou} & 0 & 0 & 0 & 0 & U_1^{k+1} \otimes U_1^{k+1} & \lambda \otimes \lambda \\
\Ib_{\uo} & 0 & 0 & 0 & 0 & 0 & U_2^{k+1} \otimes U_2^{k+1}
}.
}
\]
Simplifying $P'_{\midd}$ using Procedure~\ref{sec:PrelimSimplifying}, we get a DA bimodule $P''_{\midd}$ with primary matrix
\[
\kbordermatrix{
& \ou & \uo \\
\ou & & XB'\\
\uo & YA' & YB' \quad \Ib_{\uo}
}
\]
and secondary matrix
\[
\kbordermatrix{
& YA' & XB' & YB' & \Ib_{\uo} \\
YA' & U_2^{k+1} \otimes U_1^{k+1} & 0 & \begin{matrix} U_2^k \otimes (U_2^{k+1},\lambda) \\+ U_2^k \otimes (\lambda,U_1^{k+1})\end{matrix} & 1 \otimes \lambda \\
XB' & 0 & U_1^{k+1} \otimes U_2^{k+1} & \lambda & 0 \\
YB' & 0 & 0 & U_2^{k+1} \otimes U_2^{k+1} & U_2 \\
\Ib_{\uo} & 0 & 0 & 0 & U_2^{k+1} \otimes U_2^{k+1}
}.
\]
The generators have degrees
\begin{itemize}
\item $\deg^q(YA') = 0$, $\deg^h(YA') = 0$,
\item $\deg^q(XB') = 0$, $\deg^h(XB') = 0$,
\item $\deg^q(YB') = 1$, $\deg^h(YB') = -1$,
\item $\deg^q(\Ib_{\uo}) = -1$, $\deg^h(\Ib_{\uo}) = 0$.
\end{itemize}

\subsubsection{Lower weight space, positive crossing}

The dg bimodule
\[
P'_{\low} \coloneqq \xymatrix{
q^{-1} \mathbb{I}_{(\A_{1,1})_{2\varepsilon_2}} \ar@/^1.5pc/[rr]^{\eta} & \oplus & X_{\low}[1]
}
\] 
over the lower summand $(\A_{1,1})_{2\varepsilon_2}$ of $\A_{1,1}$, where $\eta$ is defined in Section~\ref{sec:BimodMapsFor2Mors}, has primary matrix
\[
\kbordermatrix{
& \oo \\
\oo & Z(w_1 C') \quad ZC' \quad \Ib_{\oo}
}.
\]
The differential has matrix
\[
\kbordermatrix{
& Z(w_1 C') & ZC' & \Ib_{\oo} \\
Z(w_1 C')  & 0 & 0 & 1 \\
ZC' & 0 & 0 & U_2 \\
\Ib_{\oo} & 0 & 0 & 0
},
\]
the right action of $U_1$ has matrix
\[
\kbordermatrix{
& Z (w_1 C') & ZC' & \Ib_{\oo} \\
Z(w_1 C')  & U_1 + U_2 & 1 & 0 \\
ZC' & U_1 U_2 & 0 & 0 \\
\Ib_{\oo} & 0 & 0 & U_1
},
\]
and the right action of $U_2$ has matrix
\[
\kbordermatrix{
& Z (w_1 C') & ZC' & \Ib_{\oo} \\
Z(w_1 C')  & 0 & 1 & 0 \\
ZC' & U_1 U_2 & U_1 + U_2 & 0 \\
\Ib_{\oo} & 0 & 0 & U_2
}.
\]

\begin{proposition}\label{prop:SimplifyingPosCrossingLower}
The dg bimodule $P'_{\low}$ is quasi-isomorphic (or homotopy equivalent as DA bimodules) to the ordinary bimodule $P''_{\low}$ with primary matrix
\[
\kbordermatrix{
& \oo \\
\oo & ZC'
}
\]
and secondary matrix
\[
\kbordermatrix{
& ZC' \\
ZC' & U_1^l U_2^k \otimes U_1^k U_2^l
}.
\]
\end{proposition}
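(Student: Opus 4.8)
The plan is to apply the simplification procedure of Section~\ref{sec:PrelimSimplifying} to $P'_{\low}$, realized conceptually as a single Gaussian elimination. First I would note that in the displayed differential matrix of $P'_{\low}$ the entry in row $Z(w_1 C')$ and column $\Ib_{\oo}$ is the identity morphism $\id_{\oo}$ (written ``$1$''), so $(\Ib_{\oo}, Z(w_1 C'))$ is a canceling pair. Equivalently — and this is the content I would actually use — $P'_{\low}$ contains a sub-dg-bimodule $Q$ generated over $(\A_{1,1})_{2\varepsilon_2} = \F_2[U_1, U_2]$ by $\Ib_{\oo}$ and by $w \coloneqq d(\Ib_{\oo}) = Z(w_1 C') + U_2 \cdot ZC'$ (read off the $\Ib_{\oo}$-column of the differential matrix).

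The key checks are that $Q$ is a sub-\emph{bimodule} and that it is acyclic. From the $U_i$-action matrices, $\Ib_{\oo}\cdot U_i = U_i \cdot \Ib_{\oo}$, so $\langle \Ib_{\oo}\rangle$ is closed under the right action; a short computation with the same matrices (using characteristic $2$) gives $w \cdot U_1 = U_1 \cdot w$ and $w \cdot U_2 = U_2 \cdot w$, so $\langle w\rangle$ is closed as well. Since $\{ w, ZC'\}$ is a free basis of the rank-two free left submodule $\langle Z(w_1 C'), ZC'\rangle$, the module $Q = \langle \Ib_{\oo}\rangle \oplus \langle w\rangle$ is free of rank two, and $d$ restricts to the isomorphism $\langle \Ib_{\oo}\rangle \xrightarrow{\ \sim\ } \langle w\rangle$, $p(U_1,U_2)\,\Ib_{\oo}\mapsto p(U_1,U_2)\,w$; hence $Q$ is acyclic. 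Therefore the quotient map $P'_{\low}\to P'_{\low}/Q$ is a strict morphism of DA bimodules inducing an isomorphism on the homology of the underlying left modules, i.e.\ a quasi-isomorphism, and by the fact that quasi-isomorphisms of DA bimodules are homotopy equivalences (\cite[Corollary 2.4.4]{LOTBimodules}, as quoted above) this already gives the ``homotopy equivalent as DA bimodules'' claim.

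It then remains to identify $P'_{\low}/Q$ with $P''_{\low}$. Using the free basis $\{ w, ZC', \Ib_{\oo}\}$ of $P'_{\low}$, the quotient is free of rank one on $ZC'$, with $Z(w_1 C')\equiv U_2\cdot ZC'$. Reducing the $U_i$-action matrices modulo $Q$ gives $ZC'\cdot U_1 = Z(w_1 C')\equiv U_2\cdot ZC'$ and $ZC'\cdot U_2 = Z(w_1 C') + (U_1+U_2)ZC' \equiv U_1\cdot ZC'$; that is, the right action of $U_i$ on $P'_{\low}/Q$ coincides with the left action of $U_{3-i}$. Consequently $\delta^1_2\big(ZC'\otimes U_1^k U_2^l\big) = U_2^k U_1^l\otimes ZC' = U_1^l U_2^k\otimes ZC'$ for all $k,l\ge 0$, which is precisely the secondary matrix displayed for $P''_{\low}$ (the $(k,l)=(0,0)$ term being the implicitly-omitted $\id\otimes\id$), and a comparison of $(\deg^q,\deg^h)$ shows $ZC'$ carries the same bidegree in both bimodules. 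I do not anticipate a real obstacle here: the argument is a one-step elimination, and the only part needing care is the (elementary) verification that $Q$ is closed under the right action and that the swap $U_i\leftrightarrow U_{3-i}$ on the quotient reproduces the stated infinite secondary matrix; running Procedure~\ref{sec:PrelimSimplifying} directly on the canceling pair yields the same bimodule, since there are no self-arrows on $\Ib_{\oo}$ or $Z(w_1 C')$ entering the single correction term.
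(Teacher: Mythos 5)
Your argument is correct and is essentially the paper's proof: the quotient map $P'_{\low} \to P'_{\low}/Q$ that you construct is exactly the map with matrix $\begin{pmatrix} U_2 & 1 & 0\end{pmatrix}$ (on $Z(w_1C')$, $ZC'$, $\Ib_{\oo}$) that the paper writes down and checks to be a quasi-isomorphism of dg bimodules, using the same observation that the homology is free of rank one on $ZC'$. The only cosmetic difference is that you verify that the kernel $Q$ is an acyclic sub-bimodule, whereas the paper verifies directly that the explicit map is a bimodule homomorphism inducing an isomorphism on homology.
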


\begin{proof}
One can check that the map with matrix
\[
\kbordermatrix{
& Z(w_1 C') & ZC' & \Ib_{\oo} \\
Z C' & U_2 & 1 & 0
}
\]
is a homomorphism of dg bimodules. This map sends the generator $ZC'$ of the homology of the mapping cone (free on this one generator as a left module over $\F_2[U_1,U_2]$) to the single generator of the bimodule in the statement of the proposition (also free as a left module over $\F_2[U_1,U_2]$ on its one generator). Thus, the map is a quasi-isomorphism.

\end{proof}

The generator $ZC'$ of $P''_{\low}$ has degrees $\deg^q(Z C') = 1$ and $\deg^h(Z C') = -1$.

\subsection{Decategorification of the bimodule for a positive crossing}

Let $P'' = P''_{\upp} \oplus P''_{\midd} \oplus P''_{\low}$, a DA bimodule over $(\A_{1,1},\A_{1,1})$.

\begin{proposition}
The DA bimodule $P''$ categorifies the map from $K_0(\A_{1,1})$ to $K_0(\A_{1,1})$ with matrix
\[
\kbordermatrix{
& {[P_{\uu}]} & {[P_{\ou}]} & {[P_{\uo}]} & {[P_{\oo}]} \\
{[P_{\uu}]} & q^{-1} & 0 & 0 & 0 \\
{[P_{\ou}]} & 0 & 0 & 1 & 0 \\
{[P_{\uo}]} & 0 & 1 & q^{-1} - q & 0\\
{[P_{\oo}]} & 0 & 0 & 0 &  -q
}. 
\]
Equivalently, the AD bimodule ${^{\vee}}P''$ categorifies the map from $G_0(\A_{1,1})$ to $G_0(\A_{1,1})$ with matrix
\[
\kbordermatrix{
& {[S_{\uu}]} & {[S_{\ou}]} & {[S_{\uo}]} & {[S_{\oo}]} \\
{[S_{\uu}]} & q & 0 & 0 & 0 \\
{[S_{\ou}]} & 0 & 0 & 1 & 0 \\
{[S_{\uo}]} & 0 & 1 & q - q^{-1} & 0\\
{[S_{\oo}]} & 0 & 0 & 0 & -q^{-1}
}. 
\]
\end{proposition}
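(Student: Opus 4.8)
The plan is to read $[P'']$ and $[{^{\vee}}P'']$ directly off the matrix presentations of $P''_{\upp}$, $P''_{\midd}$, $P''_{\low}$ established in Section~\ref{sec:MappingConePos}, using the decategorification recipes of Section~\ref{sec:SplitGG}: for a DA bimodule one discards the secondary matrix and replaces each set in the primary matrix by the $q$-graded Euler characteristic $\sum(-1)^{\deg^h}q^{\deg^q}$ of its elements, reading the result as a matrix with rows indexed by target objects and columns by source objects; for the left dual one additionally transposes and sends $q^i\mapsto q^{-i}$. Since $P''=P''_{\upp}\oplus P''_{\midd}\oplus P''_{\low}$ splits over the three weight-space summands of $\A_{1,1}$, I would handle each summand in turn.

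Concretely, $P''_{\upp}=q^{-1}\F_2$ in homological degree $0$ contributes the entry $q^{-1}$ in the $[P_{\uu}]$-$[P_{\uu}]$ slot; the primary matrix of $P''_{\midd}$, with its four generators of bidegrees $(0,0)$, $(0,0)$, $(1,-1)$, $(-1,0)$, contributes the two off-diagonal $1$'s together with the diagonal entry $-q+q^{-1}=q^{-1}-q$ in the $[P_{\uo}]$-$[P_{\uo}]$ slot; and $P''_{\low}$ (Proposition~\ref{prop:SimplifyingPosCrossingLower}), with its single generator $ZC'$ of bidegree $(1,-1)$, contributes $-q$ in the $[P_{\oo}]$-$[P_{\oo}]$ slot. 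Assembling these entries reproduces the first matrix, and the duality recipe then produces the second by transposing (the matrix is already symmetric in its bidegrees) and inverting $q$.

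As an independent check I would also run the argument through the mapping-cone descriptions of Section~\ref{sec:MappingConePos}: $P''$ is homotopy equivalent to $P'$, decategorification is a homotopy invariant, and on Grothendieck groups the cones $P'_{\midd}$ and $P'_{\low}$ become the differences $q^{-1}[\mathbb{I}]-[X_{\midd}]$ and $q^{-1}[\mathbb{I}]-[X_{\low}]$ (using $[S[1]]=-[S]$, $[qS]=q[S]$, $[\mathbb{I}_A]=\id$, and $X_{\upp}=0$). Combined with the computation of $[X]$ and $[{^{\vee}}X]$ in Section~\ref{sec:SingularCrossing}, this gives $[P'']=q^{-1}\id_{K_0(\A_{1,1})}-[X]$ and $[{^{\vee}}P'']=q\,\id_{G_0(\A_{1,1})}-[{^{\vee}}X]$, from which the stated matrices drop out.

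The argument is entirely bookkeeping; the only place requiring care is the homological shifts on the generators $YB'$ and $ZC'$, which carry the signs producing $q^{-1}-q$ and $-q$. I therefore expect no real obstacle: the structural inputs (well-definedness of $P''$, its homotopy equivalence with $P'$, and the decategorification of $X$) are already established, and what remains is to match entries.
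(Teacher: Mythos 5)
Your computation is correct and is exactly the paper's (implicit) argument: the proposition is read off the primary matrices of $P''_{\upp}$, $P''_{\midd}$, $P''_{\low}$ via the decategorification procedure of Section~\ref{sec:SplitGG}, with the signs $-q$ and $q^{-1}-q$ coming from the homological degree $-1$ of $YB'$ and $ZC'$ precisely as you note. The mapping-cone cross-check via $[P'']=q^{-1}\id-[X]$ is a nice consistency verification but not needed.
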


This latter map can be identified with the braiding acting on $V^{\otimes 2}$ as in Appendix~\ref{sec:SingularNonsingular}.

\subsection{Mapping-cone bimodule for negative crossing}\label{sec:MappingConeNeg}

Similarly to the previous section, we write $N' = N'_{\upp} \oplus N'_{\midd} \oplus N'_{\low}$ for the 1-morphism of 2-representations of $\U^-$ associated to a negative crossing by Sections \ref{sec:SkewHowe2Action}, \ref{sec:SoergelBraidCob}, and we will discuss a simplified version $N''$ of $N'$.

\subsubsection{Upper weight space, negative crossing}

We have $N'_{\upp} = N''_{\upp} = q^{1} \F_2$ (in homological degree zero) as a bimodule over the upper summand $(\A_{1,1})_{2\varepsilon_1} \cong \F_2$ of $\A_{1,1}$.

\subsubsection{Middle weight space, negative crossing}

The DA bimodule
\[
N'_{\midd} \coloneqq \xymatrix{
X_{\midd}[1] \ar@/^1.5pc/[rr]^{\varepsilon'} & \oplus & q \mathbb{I}_{(\A_{1,1})_{\varepsilon_1 + \varepsilon_2}}
}
\]
over the middle summand $(\A_{1,1})_{\varepsilon_1 + \varepsilon_2}$ of $\A_{1,1}$, where $\varepsilon'$ is defined in Section~\ref{sec:BimodMapsFor2Mors}, has primary matrix
\[
\kbordermatrix{
& \ou & \uo \\
\ou & \Ib_{\ou} \quad XA' & XB' \\
\uo & YA' & \Ib_{\uo} \quad YB'
}
\]
and secondary matrix
\[
\resizebox{\textwidth}{!}{
\kbordermatrix{
& \Ib_{\ou} & \Ib_{\uo} & XA' & YA' & XB' & YB' \\
\Ib_{\ou} & U_1^{k+1} \otimes U_1^{k+1} & \lambda \otimes \lambda & U_1 & 0 & 1 \otimes \lambda & 0 \\
\Ib_{\uo} & 0 & U_2^{k+1} \otimes U_2^{k+1} & 0 & 0 & 0 & 1 \\
XA' & 0 & 0 & U_1^{k+1} \otimes U_1^{k+1} & \lambda & \begin{matrix} U_1^k \otimes (\lambda,U_1^{k+1}) \\+ U_1^k \otimes (U_2^{k+1},\lambda)\end{matrix} & 0 \\
YA' & 0 & 0 & 0 & U_2^{k+1} \otimes U_1^{k+1} & 0 & \begin{matrix} U_2^k \otimes (U_2^{k+1},\lambda) \\+ U_2^k \otimes (\lambda, U_1^{k+1})\end{matrix} \\
XB' & 0 & 0 & 0 & 0 & U_1^{k+1} \otimes U_2^{k+1} & \lambda \\
YB' & 0 & 0 & 0 & 0 & 0 & U_2^{k+1} \otimes U_2^{k+1}
}.
}
\]

Simplifying $N'_{\midd}$, we get a DA bimodule $N''_{\midd}$ with primary matrix
\[
\kbordermatrix{
& \ou & \uo \\
\ou & \Ib_{\ou} \quad XA' & XB' \\
\uo & YA' & 
}
\]
and secondary matrix  
\[
\kbordermatrix{
& \Ib_{\ou} & XA' & YA' & XB' \\
\Ib_{\ou} & U_1^{k+1} \otimes U_1^{k+1} & U_1 & 0 & 1 \otimes \lambda \\
XA' & 0 & U_1^{k+1} \otimes U_1^{k+1} & \lambda & \begin{matrix} U_1^k \otimes (\lambda,U_1^{k+1}) \\+ U_1^k \otimes (U_2^{k+1},\lambda)\end{matrix} \\
YA' & 0 & 0 & U_2^{k+1} \otimes U_1^{k+1} & 0 \\
XB' & 0 & 0 & 0 & U_1^{k+1} \otimes U_2^{k+1} \\
}.
\]

The generators have degrees
\begin{itemize}
\item $\deg^q(\Ib_{\ou}) = 1$, $\deg^h(\Ib_{\ou}) = 0$.
\item $\deg^q(XA') = -1$, $\deg^h(XA') = 1$,
\item $\deg^q(YA') = 0$, $\deg^h(YA') = 0$,
\item $\deg^q(XB') = 0$, $\deg^h(XB') = 0$.
\end{itemize}

\subsubsection{Lower weight space, negative crossing}

The DA bimodule
\[
N'_{\low} \coloneqq \xymatrix{
X_{\low}[1] \ar@/^1.5pc/[rr]^{\varepsilon'} & \oplus & q\mathbb{I}_{(\A_{1,1})_{2\varepsilon_2}}
}
\]
over the lower summand $(\A_{1,1})_{2\varepsilon_2}$ of $\A_{1,1}$, where $\varepsilon'$ is defined in Section~\ref{sec:BimodMapsFor2Mors}, has primary matrix
\[
\kbordermatrix{
& \oo \\
\oo & \Ib_{\oo} \quad Z (w_1 C') \quad ZC' \\
}.
\]
The differential has matrix
\[
\kbordermatrix{
& \Ib_{\oo} & Z (w_1 C') &  ZC' \\
\Ib_{\oo} & 0 & U_1 & 1 \\
Z (w_1 C') & 0 & 0 & 0 \\
ZC' & 0 & 0 & 0
},
\]
the right action of $U_1$ has matrix
\[
\kbordermatrix{
& \Ib_{\oo} & Z (w_1 C') & ZC' \\
\Ib_{\oo} & U_1  & 0 & 0 \\
Z (w_1 C') & 0 & U_1 + U_2 & 1 \\
ZC' & 0 & U_1 U_2 & 0
},
\]
and the right action of $U_2$ has matrix
\[
\kbordermatrix{
& \Ib_{\oo} & Z (w_1 C') & ZC' \\
\Ib_{\oo} & U_2 & 0 & 0 \\
Z (w_1 C') & 0 & 0 & 1 \\
ZC' & 0 & U_1 U_2 & U_1 + U_2
}.
\]

\begin{proposition}
The dg bimodule $N'_{\low}$ is quasi-isomorphic (or homotopy equivalent as DA bimodules) to the ordinary bimodule $N''_{\low}$ with primary matrix
\[
\kbordermatrix{
& \oo \\
\oo & Z(w_1 C')
}
\]
and secondary matrix
\[
\kbordermatrix{
& Z(w_1 C') \\
Z(w_1 C') & U_1^l U_2^k \otimes U_1^k U_2^l
}.
\]
\end{proposition}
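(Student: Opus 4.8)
The plan is to exhibit an explicit quasi-isomorphism from $N'_{\low}$ to $N''_{\low}$, exactly paralleling the proof of Proposition~\ref{prop:SimplifyingPosCrossingLower} for the positive crossing. First I would write down the candidate chain map $\psi\co N'_{\low} \to N''_{\low}$; by symmetry with the positive-crossing case (where the surviving generator was $ZC'$ and the map had matrix $\begin{bmatrix} U_2 & 1 & 0 \end{bmatrix}$ sending $ZC'$ to the generator), here the surviving generator should be $Z(w_1 C')$, and the map should have matrix
\[
\kbordermatrix{
& \Ib_{\oo} & Z(w_1 C') & ZC' \\
Z(w_1 C') & ? & 1 & 0
}
\]
where the $?$ entry is determined by requiring $\psi$ to kill the image of the differential on the mapping cone and to commute with the differential; I expect $? = 0$ or a small polynomial in $U_1, U_2$ (the entry of $\varepsilon'$ in the lower summand was $\begin{bmatrix} U_1 & 1\end{bmatrix}$, so the null-homotopy/section structure should force it). Then I would verify that $\psi$ is a homomorphism of ordinary dg bimodules: check compatibility with the differential of $N'_{\low}$ (the three-term matrix above) and with the right actions of $U_1$ and $U_2$ as given by the displayed matrices, comparing against the secondary matrix $U_1^l U_2^k \otimes U_1^k U_2^l$ of $N''_{\low}$.

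The key structural input is the same as before: $N'_{\low}$ is a two-step mapping cone $X_{\low}[1] \xrightarrow{\varepsilon'} q\mathbb{I}$, and its homology as a left $\F_2[U_1,U_2]$-module should be free of rank one. To see this, note that the kernel of the differential (which has matrix $\begin{bmatrix} 0 & U_1 & 1 \\ 0 & 0 & 0 \\ 0 & 0 & 0 \end{bmatrix}$ acting with $\Ib_{\oo}$ as source) is spanned by $U_1 \cdot Z(w_1 C') + ZC'$ together with all of $Z(w_1 C')$ and $ZC'$ that lie outside the image; working modulo the image of the differential (which is the $\F_2[U_1,U_2]$-span of $U_1 Z(w_1 C') + ZC'$, i.e. the image of $\Ib_{\oo}$), one finds the homology is the free rank-one module on $Z(w_1 C')$, with the induced right actions of $U_1$ and $U_2$ matching the secondary matrix of $N''_{\low}$. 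Since the surviving sub-bimodule is quasi-isomorphic to its homology (as in the positive case, the complementary piece generated by $\Ib_{\oo}$ is closed under the right action and one quotients by it, or dually the relevant submodule injects), $\psi$ is a quasi-isomorphism, hence by \cite[Corollary 2.4.4]{LOTBimodules} a homotopy equivalence of DA bimodules.

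The main obstacle — though it is mild — is pinning down the correct surviving generator and the exact form of $\psi$: the differential on $N'_{\low}$ couples $\Ib_{\oo}$ to \emph{both} $Z(w_1 C')$ and $ZC'$ (via $U_1$ and $1$ respectively), so naively cancelling the pair $(\Ib_{\oo}, ZC')$ using the $1$ in the differential is what produces $N''_{\low}$, and I would double-check that this cancellation is exactly the simplification Procedure of Section~\ref{sec:PrelimSimplifying} applied to the entry $\id \otimes ()$ in the $(\Ib_{\oo}, \Ib_{\oo})$... no, rather in row $\Ib_{\oo}$ column $ZC'$ of the differential matrix — wait, the identity-of-$A'$ entry sits where $\delta^1_1$ equals $1$, i.e. the $(Z(w_1C'),\ \Ib_{\oo})$ or $(\Ib_{\oo}, ZC')$ position; I would confirm from the displayed differential matrix that it is $(\Ib_{\oo}, ZC')$ that carries the bare $1$, cancel that pair, and read off that $U_1 Z(w_1 C')$ then feeds into nothing new, leaving precisely the stated primary and secondary matrices. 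Everything else is routine matrix bookkeeping of the kind already carried out for the positive crossing, so I would simply say "the proof is analogous to that of Proposition~\ref{prop:SimplifyingPosCrossingLower}" and record only the cancellation step and the resulting matrices.
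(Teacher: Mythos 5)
Your main route does not work as stated: the quasi-isomorphism cannot be taken as a projection $\psi\co N'_{\low}\to N''_{\low}$ with matrix $\begin{bmatrix} ? & 1 & 0\end{bmatrix}$. The chain-map condition forces $\psi(\Ib_{\oo})=0$ (since $d(ZC')=\Ib_{\oo}$), and then $\psi$ fails to be a right-module map: $\psi\big(Z(w_1C')\cdot U_1\big)=\psi\big((U_1+U_2)Z(w_1C')+U_1U_2\,ZC'\big)=(U_1+U_2)\,g$, whereas $\psi(Z(w_1C'))\cdot U_1 = g\cdot U_1 = U_2\,g$ (note the secondary matrix $U_1^lU_2^k\otimes U_1^kU_2^l$ of $N''_{\low}$ swaps the variables). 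More structurally, neither $\{\Ib_{\oo},ZC'\}$ nor $\{\Ib_{\oo},Z(w_1C')\}$ spans a sub-bimodule, so no such quotient presentation exists; this is where the symmetry with the positive-crossing case breaks. The paper's map goes the other way: it is the inclusion $N''_{\low}\hookrightarrow N'_{\low}$ sending the generator to $Z(w_1C')+U_1\,ZC'$, which is the homology generator and whose left span \emph{is} closed under the right action (e.g.\ $(Z(w_1C')+U_1ZC')\cdot U_1=U_2(Z(w_1C')+U_1ZC')$). Your homology computation is also off: the image of the differential is $\F_2[U_1,U_2]\cdot\Ib_{\oo}$ (the differential maps \emph{into} $\Ib_{\oo}$, not out of it), the kernel is $\F_2[U_1,U_2]\Ib_{\oo}\oplus\F_2[U_1,U_2](Z(w_1C')+U_1ZC')$, and the class $Z(w_1C')$ alone is not even a cycle.

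Your fallback via the cancellation procedure is salvageable: the entry $1=\id\otimes()$ sits in row $\Ib_{\oo}$, column $ZC'$, and cancelling that pair leaves $Z(w_1C')$ with zig-zag corrections (e.g.\ the $U_1\otimes()$ in row $\Ib_{\oo}$, column $Z(w_1C')$ composed with the $1\otimes U_1$ in row $Z(w_1C')$, column $ZC'$ contributes $U_1\otimes U_1$, turning $(U_1+U_2)\otimes U_1$ into $U_2\otimes U_1$), which indeed reproduces the stated secondary matrix. But you would need to actually compute those correction terms rather than assert them, and you should drop the incorrect explicit map.
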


\begin{proof}
As in Proposition~\ref{prop:SimplifyingPosCrossingLower}, one can check that the map with matrix
\[
\kbordermatrix{
& Z(w_1 C') \\
\Ib_{\oo} & 0 \\
Z(w_1 C') & 1 \\
ZC' & U_1
}
\]
is a quasi-isomorphism of dg bimodules.
\end{proof}

The generator of $N''_{\low}$ has degrees $\deg^q(Z(w_1 C')) = -1$ and $\deg^h(Z (w_1 C')) = 1$.

\subsection{Decategorification of the bimodule for a negative crossing}

Let $N'' = N''_{\upp} \oplus N''_{\midd} \oplus N''_{\low}$, a DA bimodule over $(\A_{1,1},\A_{1,1})$.

\begin{proposition}
The DA bimodule $N''$ categorifies the map from $K_0(\A_{1,1})$ to $K_0(\A_{1,1})$ with matrix
\[
\kbordermatrix{
& {[P_{\uu}]} & {[P_{\ou}]} & {[P_{\uo}]} & {[P_{\oo}]} \\
{[P_{\uu}]} & q & 0 & 0 & 0 \\
{[P_{\ou}]} & 0 & q-q^{-1} & 1 & 0 \\
{[P_{\uo}]} & 0 & 1 & 0 & 0 \\
{[P_{\oo}]} & 0 & 0 & 0 & -q^{-1}
}. 
\]
Equivalently, the DA bimodule ${^{\vee}}N''$ categorifies the map from $G_0(\A_{1,1})$ to $G_0(\A_{1,1})$ with matrix
\[
\kbordermatrix{
& {[S_{\uu}]} & {[S_{\ou}]} & {[S_{\uo}]} & {[S_{\oo}]} \\
{[S_{\uu}]} & q^{-1} & 0 & 0 & 0 \\
{[S_{\ou}]} & 0 & q^{-1}-q & 1 & 0 \\
{[S_{\uo}]} & 0 & 1 & 0 & 0 \\
{[S_{\oo}]} & 0 & 0 & 0 & -q
}. 
\]
\end{proposition}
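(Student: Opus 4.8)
The plan is to apply the decategorification procedure for finitely generated DA and AD bimodules from Section~\ref{sec:SplitGG} directly to the explicit primary matrices of $N''_{\upp}$, $N''_{\midd}$, and $N''_{\low}$ recorded in Section~\ref{sec:MappingConeNeg}. First I would observe that $N''$ is finite-dimensional over $\F_2$, since each of its three weight-space summands has finitely many primary-matrix generators and each generator contributes a one-dimensional $(I,I)$-bimodule piece; hence $[N'']\co K_0(\A_{1,1}) \to K_0(\A_{1,1})$ is well-defined on the basis $\{[P_{\uu}], [P_{\ou}], [P_{\uo}], [P_{\oo}]\}$ of $K_0(\A_{1,1})$.

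Next I would compute weight space by weight space, discarding the secondary matrices and replacing each primary-matrix entry by its $q$-graded Euler characteristic $\sum (-1)^{\deg^h} q^{\deg^q}$. In the upper weight space, $N''_{\upp} = q^1 \F_2$ in homological degree zero over $(\A_{1,1})_{2\varepsilon_1} \cong \F_2$ contributes $q$ in the $([P_{\uu}],[P_{\uu}])$ slot. In the middle weight space, the generators $\Ib_{\ou}, XA', YA', XB'$ of $N''_{\midd}$ with the bidegrees listed in Section~\ref{sec:MappingConeNeg} give $q - q^{-1}$ for $N''_{\midd}(\ou,\ou)$, $1$ for $N''_{\midd}(\uo,\ou)$ and $N''_{\midd}(\ou,\uo)$, and $0$ for $N''_{\midd}(\uo,\uo)$. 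In the lower weight space, the single generator $Z(w_1 C')$ of $N''_{\low}$, of bidegree $(\deg^q,\deg^h) = (-1,1)$, contributes $-q^{-1}$ in the $([P_{\oo}],[P_{\oo}])$ slot. Assembling the three blocks gives the stated matrix for $[N'']$.

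For ${^{\vee}}N''$ I would use the left-dual variant of the procedure: discard the secondary matrix, replace set entries by $\sum (-1)^{\deg^h} q^{-\deg^q}$, and transpose. Negating the $q$-exponents sends the diagonal entries $q$, $q - q^{-1}$, $-q^{-1}$ to $q^{-1}$, $q^{-1} - q$, $-q$, leaves the off-diagonal $1$'s fixed, and, since the middle $2\times 2$ block is symmetric, the transpose has no further effect; this yields the stated matrix for $[{^{\vee}}N'']$, which is, as expected, the transpose of $[N'']$ after $q \mapsto q^{-1}$, reflecting that duality reverses both gradings.

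As an independent check I would also recompute $[N'']$ from its mapping-cone description. Since $N''$ is obtained from $N'$ by the simplification procedure and is therefore homotopy equivalent to it, and since each $N'$-summand is the cone of $\varepsilon'$ from a homological shift of $X$ into a $q$-shifted identity bimodule, one gets $[N''] = q\cdot[\mathbb{I}] - [X]$ on the middle and lower weight spaces (with $[N''_{\upp}] = q$), where $[X]$ is read off from the corollary computing $[X]$ in Section~\ref{sec:SingularCrossing}; over $\Z[q,q^{-1}]$ this reproduces the same matrix. There is no serious obstacle here: the only delicate point is bookkeeping, namely confirming that the primary matrices in Section~\ref{sec:MappingConeNeg} correctly encode the bigraded $(I,I)$-bimodules underlying $N''$ after simplification — in particular that the homological shifts $[1]$ on $X_{\midd}$, $X_{\low}$ and the $q^{\pm 1}$ shifts on the identity summands have been carried into the listed generator bidegrees.
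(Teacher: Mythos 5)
Your proposal is correct and follows exactly the route the paper intends (the paper states this proposition without proof, as an immediate application of the decategorification procedure of Section~\ref{sec:SplitGG} to the primary matrices and generator bidegrees recorded in Section~\ref{sec:MappingConeNeg}): the six generators $\Ib_{\uu}$, $\Ib_{\ou}$, $XA'$, $YA'$, $XB'$, $Z(w_1C')$ contribute $q$, $q$, $-q^{-1}$, $1$, $1$, $-q^{-1}$ in the slots you list, and the dual matrix follows by negating $q$-exponents and transposing. Your cross-check via $[N''] = q[\mathbb{I}] - [X]$ against the corollary in Section~\ref{sec:SingularCrossing} is a sound consistency test and matches.
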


This latter map can be identified with the inverse of the braiding acting on $V^{\otimes 2}$ as in Appendix~\ref{sec:SingularNonsingular}.

\subsection{Bimodules from Heegaard diagrams}\label{sec:PositiveCrossingHD}

\begin{figure}
\includegraphics[scale=0.4]{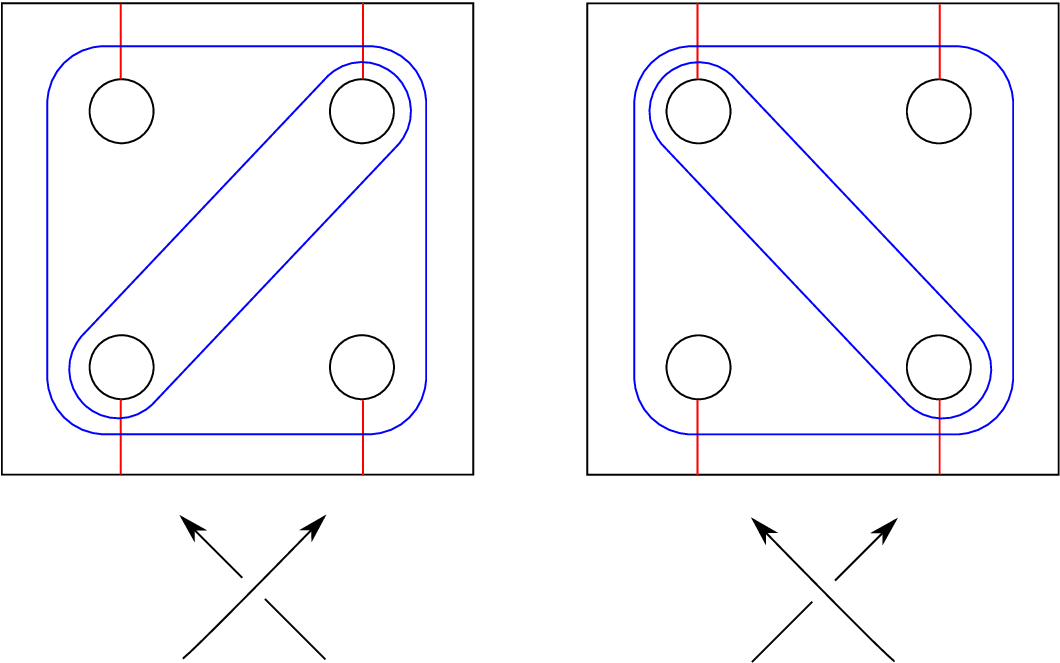}
\caption{Heegaard diagrams for positive and negative crossings.}
\label{fig:NonsingularCrossing}
\end{figure}

We now consider slightly different DA bimodules, motivated directly by holomorphic disk counts in the Heegaard diagrams of Figure~\ref{fig:NonsingularCrossing}.

\begin{definition}
Let $P$ be the DA bimodule over $(\A_{1,1},\A_{1,1})$ with primary matrix
\[
\kbordermatrix{
& \uu & \ou & \uo & \oo \\
\uu & I & & & \\
\ou & & & K & \\
\uo & & J & L \, M \\
\oo & & & & N
}
\]
(treating all three of its summands together). We set
\begin{itemize}
\item $\deg^q(I) = -1$, $\deg^h(I) = 0$,
\item $\deg^q(J) = 0$, $\deg^h(J) = 0$,
\item $\deg^q(K) = 0$, $\deg^h(K) = 0$,
\item $\deg^q(L) = 1$, $\deg^h(L) = -1$,
\item $\deg^q(M) = -1$, $\deg^h(M) = 0$,
\item $\deg^q(N) = 1$, $\deg^h(N) = -1$.
\end{itemize}
The secondary matrix of $P$ is given as follows.
\begin{itemize}
\item The secondary matrix in the top summand (with generator $I$) is zero (which, by convention, corresponds to the identity bimodule over $\F_2$).

\item The secondary matrix in the middle summand is
\[
\kbordermatrix{
& J & K & L & M \\
J & U_2^{k+1} \otimes U_1^{k+1} & 0 & U_2^k \otimes (\lambda, U_1^{k+1}) & 1 \otimes \lambda \\
K & 0 & U_1^{k+1} \otimes U_2^{k+1} & \lambda & 0 \\
L & 0 & 0 & 0 & U_2 \\
M & 0 & 0 & 0 & 0
}.
\]

\item The secondary matrix in the bottom summand is
\[
\kbordermatrix{
& N \\
N & U_1^l U_2^k \otimes U_1^k U_2^l
}.
\]
\end{itemize}
\end{definition}

\begin{figure}
\includegraphics[scale=0.6]{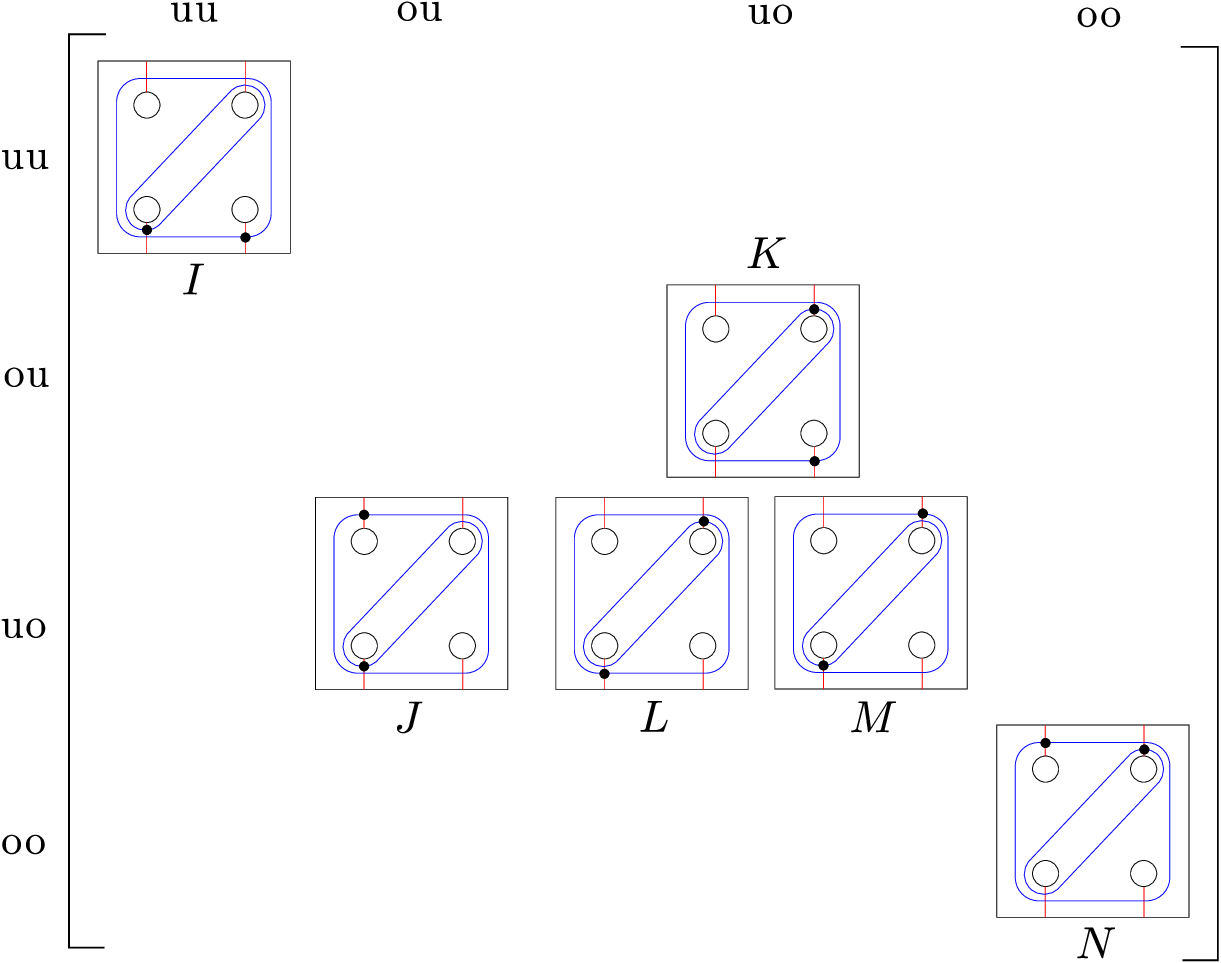}
\caption{Generators of the positive crossing bimodule $P$ in terms of intersection points in the Heegaard diagram of Figure~\ref{fig:NonsingularCrossing}.}
\label{fig:PositiveCrossingGens}
\end{figure}

\begin{figure}
\includegraphics[scale=0.6]{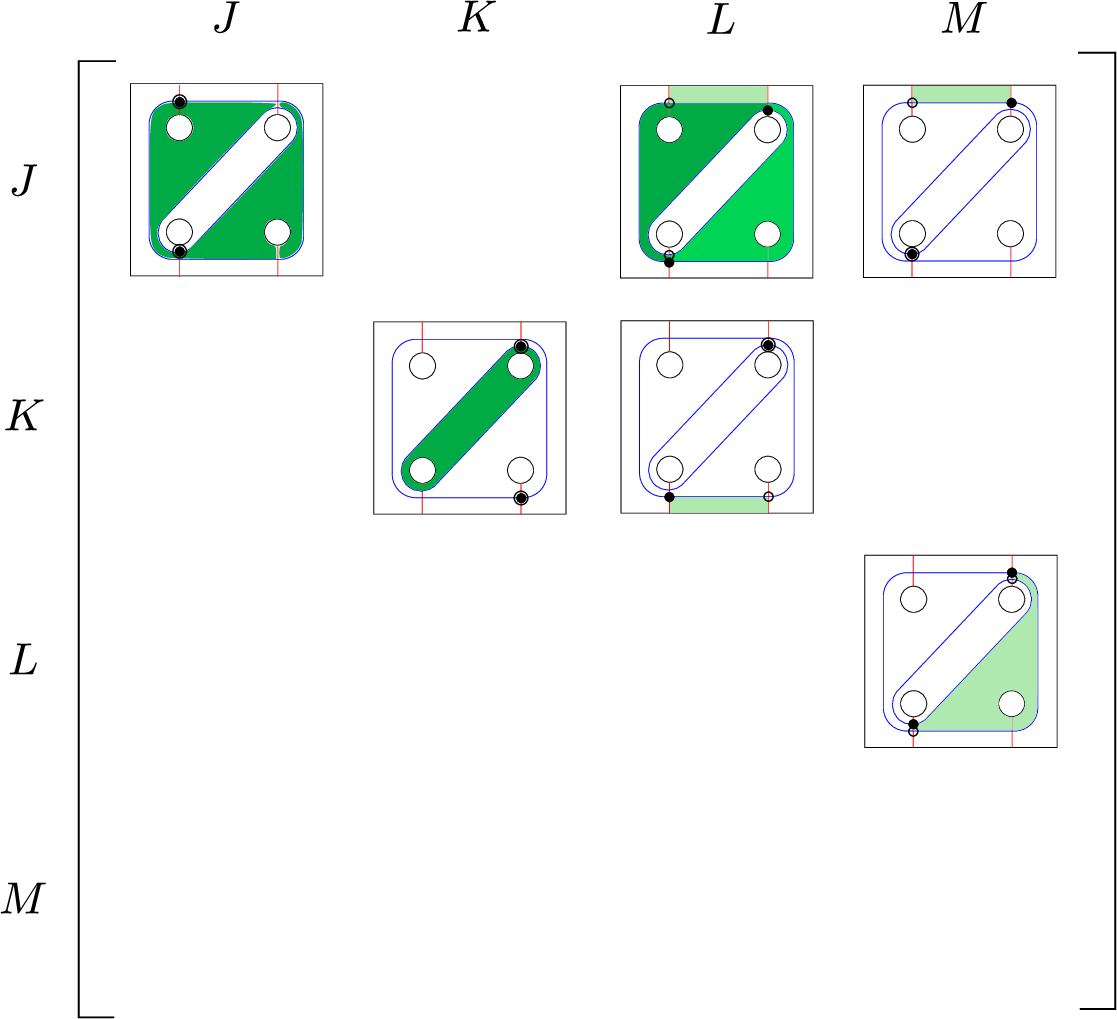}
\caption{Domains giving rise to secondary matrix entries for the positive crossing bimodule $P$ (middle summand).}
\label{fig:PositiveCrossingDomains}
\end{figure}

Figure~\ref{fig:PositiveCrossingGens} shows the generators of $P$ in terms of sets of intersection points in the Heegaard diagram of Figure~\ref{fig:NonsingularCrossing}. Figure~\ref{fig:PositiveCrossingDomains} shows the domains in this Heegaard diagram that give rise to nonzero entries in the middle summand of the secondary matrix of $P$ (the other two summands are simpler and are not shown).

\begin{proposition}
The DA bimodule $P$ for a positive crossing is isomorphic to the simplified mapping cone bimodule $P''$ from Section~\ref{sec:MappingConePos}.
\end{proposition}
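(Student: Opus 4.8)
The plan is to establish the isomorphism $P \cong P''$ by exhibiting an explicit matching of generators, summand by summand, and then verifying that the primary and secondary matrices agree under that matching. Since both $P$ and $P''$ are DA bimodules over $(\A_{1,1},\A_{1,1})$ presented in matrix notation, an isomorphism is determined by a bijection of primary-matrix entries that respects the $(I',I)$-bimodule structure, has degree zero, and intertwines the secondary matrices; this is the standard way one checks two matrix-presented DA bimodules are isomorphic (indeed, when the secondary matrices literally coincide after relabeling, the identity map on generators is the isomorphism).

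First I would handle the upper weight space: both $P_{\upp}$ (the summand with generator $I$, an $(\F_2,\F_2)$-bimodule with trivial secondary matrix) and $P''_{\upp} = q^{-1}\F_2$ have a single generator in $q$-degree $-1$ and homological degree $0$ with zero secondary matrix, so they are manifestly isomorphic via $I \leftrightarrow \Ib_{\uu}$. Next, for the lower weight space, I would match the generator $N$ of $P_{\low}$ with the generator $ZC'$ of $P''_{\low}$: both have $\deg^q = 1$, $\deg^h = -1$, and both have secondary matrix with the single entry $U_1^l U_2^k \otimes U_1^k U_2^l$ (where $k,l$ range over nonnegative integers, with the $\id\otimes\id$ term omitted), so again the bijection on generators gives an isomorphism on the nose. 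The main content is the middle weight space: I would set up the correspondence $J \leftrightarrow YA'$, $K \leftrightarrow XB'$, $L \leftrightarrow YB'$, $M \leftrightarrow \Ib_{\uo}$ between the generators of $P_{\midd}$ and those of the simplified bimodule $P''_{\midd}$ from Section~\ref{sec:MappingConePos}. I would check that under this correspondence the $(I',I)$-bimodule structures agree (the $\ou$/$\uo$ row and column labels match up: $J$ and $\Ib_{\uo}$ sit in row $\uo$ and column $\uo$, while $K$ sits in row $\ou$ and column $\uo$, etc., matching $YA'$, $\Ib_{\uo}$, $XB'$ respectively) and that the $q$- and $h$-degrees coincide (e.g. $\deg^q(J) = 0 = \deg^q(YA')$, $\deg^q(M) = -1 = \deg^q(\Ib_{\uo})$, and so on, as listed). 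Finally I would verify that the secondary matrix of $P_{\midd}$ equals the secondary matrix of $P''_{\midd}$ entry by entry: the diagonal entries $U_2^{k+1}\otimes U_1^{k+1}$, $U_1^{k+1}\otimes U_2^{k+1}$, $U_2^{k+1}\otimes U_2^{k+1}$ for $J,K,L$ (and the $M$ row being zero except off-diagonal) match those for $YA', XB', YB', \Ib_{\uo}$; the entry $U_2^k\otimes(\lambda,U_1^{k+1})$ in row $J$, column $L$ matches part of the entry in row $YA'$, column $YB'$ — but here I must be careful, because the $P''_{\midd}$ secondary matrix has the combined entry $U_2^k\otimes(U_2^{k+1},\lambda) + U_2^k\otimes(\lambda,U_1^{k+1})$ in that slot.

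The main obstacle I anticipate is precisely this last discrepancy: the secondary matrix for $P''_{\midd}$ as written in Section~\ref{sec:MappingConePos} carries the full ``$A$-before-$B$ and $B$-before-$A$'' term $U_2^k\otimes(U_2^{k+1},\lambda) + U_2^k\otimes(\lambda,U_1^{k+1})$ in the $(YA',YB')$ position (inherited from $X_{\midd}$), whereas the Heegaard-diagram bimodule $P$ has only $U_2^k\otimes(\lambda,U_1^{k+1})$ in the $(J,L)$ position, and it has the term $1\otimes\lambda$ in the $(J,M)$ position. So the two presentations are not literally identical matrices; the isomorphism will require a genuine change of basis. Concretely I expect to modify the generator correspondence by a triangular transformation — replacing $J$ by $J$ plus an $\F_2[U_1,U_2]$-multiple of $M$ (or adjusting $M$), using the differential/action entry $1\otimes\lambda$ from $M$ to absorb the extra $U_2^{k+1}\otimes(U_2^{k+1},\lambda)$ term — and then checking that the resulting change-of-basis map is a closed, invertible DA bimodule morphism of degree zero. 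Verifying closedness of this modified map (i.e. that it intertwines the two secondary matrices after the correction) and invertibility is the computational heart of the proof; once the correct triangular correction is identified, the remaining checks are routine matrix multiplications using Procedure~\ref{proc:DAWellDef}-style bookkeeping. I would also double-check the degrees of the correction terms are consistent (e.g. $\deg^q(1\otimes\lambda)$ contributions line up because $\deg^q(\lambda)=1$ and the $q$-degree difference between the relevant generators is $1$), which constrains and hence pins down the form of the change of basis.
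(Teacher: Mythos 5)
Your overall strategy is the paper's: identify the top and bottom summands on the nose via $I \leftrightarrow \Ib_{\uu}$ and $N \leftrightarrow ZC'$, and in the middle weight space produce a unipotent triangular change of basis intertwining the two secondary matrices. You also correctly locate the essential off-diagonal discrepancy, namely the extra term $U_2^k\otimes(U_2^{k+1},\lambda)$ in the $(YA',YB')$ entry of $P''_{\midd}$ versus the $(J,L)$ entry of $P_{\midd}$.

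Two points need correcting. First, the diagonal entries do not all match as you assert: the $(L,L)$ and $(M,M)$ entries of the secondary matrix of $P_{\midd}$ are zero, whereas the $(YB',YB')$ and $(\Ib_{\uo},\Ib_{\uo})$ entries of $P''_{\midd}$ are $U_2^{k+1}\otimes U_2^{k+1}$; these are two further mismatches the isomorphism must absorb. Second, your first-choice correction --- replacing $J$ by $J$ plus an $\F_2[U_1,U_2]$-multiple of $M$ --- is impossible on degree grounds: $\deg^q(J)-\deg^q(M)=1$, while every element of $\F_2[U_1,U_2]$ has even nonpositive $q$-degree, so no strict correction of that shape preserves bidegree. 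The correction is genuinely non-strict: the paper's isomorphism $P\to P''$ is the identity on strict parts ($J\mapsto YA'$, $K\mapsto XB'$, $L\mapsto YB'$, $M\mapsto\Ib_{\uo}$) with one higher component, $L\otimes U_2^{k+1}\mapsto U_2^{k}\,\Ib_{\uo}$ for $k\geq 0$ (the entry $U_2^k\otimes U_2^{k+1}$ in row $\Ib_{\uo}$, column $L$ of the morphism matrix). Composed with the $1\otimes\lambda$ entry in row $YA'$, column $\Ib_{\uo}$ this supplies the missing $U_2^k\otimes(U_2^{k+1},\lambda)$, and composed with the differential entry $U_2$ in row $YB'$, column $\Ib_{\uo}$ (respectively with the $(\Ib_{\uo},\Ib_{\uo})$ diagonal and the multiplication matrix of the morphism) it cancels the two diagonal discrepancies, so this single component repairs all three mismatches at once. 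With that replacement for your proposed correction, the rest of your outline --- closedness, invertibility (the inverse is again unitriangular, with correction $U_2^k\otimes U_2^{k+1}$ in row $M$, column $YB'$), and the degree checks --- goes through as the routine verification you describe.
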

 
\begin{proof}
The top and bottom summands of the two bimodules in question are identical under the identification of $I$ with $\Ib_{\uu}$ and $N$ with $ZC'$ (note that in the mapping cone bimodule, the degree of $\Ib_{\uu}$ is $q^{-1} h^0$ and the degree of $Z C'$ is $q^{1} h^{-1}$). In the middle weight space, an isomorphism from $P$ to the mapping cone bimodule is given by the matrix
\[
\kbordermatrix{
& J & K & L & M \\
YA' & 1 & 0 & 0 & 0 \\
XB' & 0 & 1 & 0 & 0 \\
YB' & 0 & 0 & 1 & 0 \\
\Ib_{\uo} & 0 & 0 & U_2^k \otimes U_2^{k+1} & 1 
}.
\]
One can check that this matrix represents a closed grading-preserving morphism of DA bimodules that is invertible with inverse 
\[
\kbordermatrix{
& YA' & XB' & YB' & \Ib_{\uo} \\
J & 1 & 0 & 0 & 0 \\
K & 0 & 1 & 0 & 0 \\
L & 0 & 0 & 1 & 0 \\
M & 0 & 0 & U_2^k \otimes U_2^{k+1} & 1 
};
\]
recall that the degrees of $YA'$, $XB'$, $YB'$, and $\Ib_{\uo}$ are $q^0 h^0$, $q^0 h^0$, $q^{1} h^{-1}$, and $q^{-1} h^0$ respectively.
\end{proof}

\begin{definition}
Let $N$ be the DA bimodule over $(\A_{1,1},\A_{1,1})$ with primary matrix
\[
\kbordermatrix{
& \uu & \ou & \uo & \oo \\
\uu & I' & & & \\
\ou & & J' \, K'& M' & \\
\uo & & L' & &\\
\oo & & & & N'
}
\]
(treating all three of its summands together). We set
\begin{itemize}
\item $\deg^q(I') = 1$, $\deg^h(I') = 0$,
\item $\deg^q(J') = 1$, $\deg^h(J') = 0$,
\item $\deg^q(K') = -1$, $\deg^h(K') = 1$,
\item $\deg^q(L') = 0$, $\deg^h(L') = 0$,
\item $\deg^q(M') = 0$, $\deg^h(M') = 0$,
\item $\deg^q(N') = -1$, $\deg^h(N') = 1$.
\end{itemize}
The secondary matrix of $N$ is given as follows.
\begin{itemize}
\item The secondary matrix in the top summand (with generator $I'$) is zero.

\item The secondary matrix in the middle summand is
\[
\kbordermatrix{
& J' & K' & L' & M' \\
J' & 0 & U_1 & 0 & 1 \otimes \lambda \\
K' & 0 & 0 & \lambda & U_1^k \otimes (U_2^{k+1},\lambda) \\
L' & 0 & 0 & U_2^{k+1} \otimes U_1^{k+1} & 0 \\
M' & 0 & 0 & 0 & U_1^{k+1} \otimes U_2^{k+1}
}.
\]

\item The secondary matrix in the bottom summand is
\[
\kbordermatrix{
& N' \\
N' & U_1^l U_2^k \otimes U_1^k U_2^l
}.
\]
\end{itemize}
\end{definition}

\begin{figure}
\includegraphics[scale=0.6]{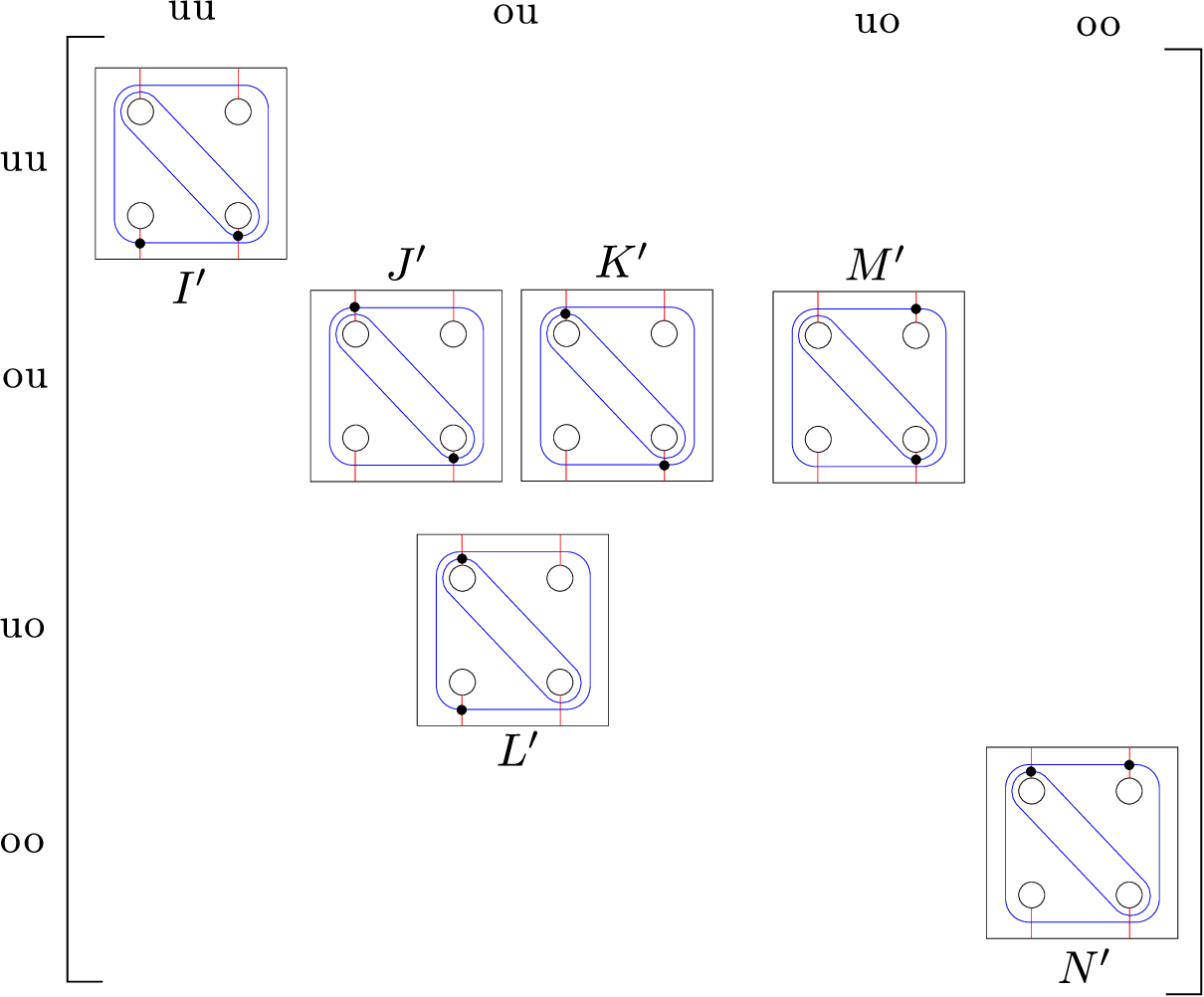}
\caption{Generators of the negative crossing bimodule $N$ in terms of intersection points in the right Heegaard diagram of Figure~\ref{fig:NonsingularCrossing}.}
\label{fig:NegativeCrossingGens}
\end{figure}

\begin{figure}
\includegraphics[scale=0.6]{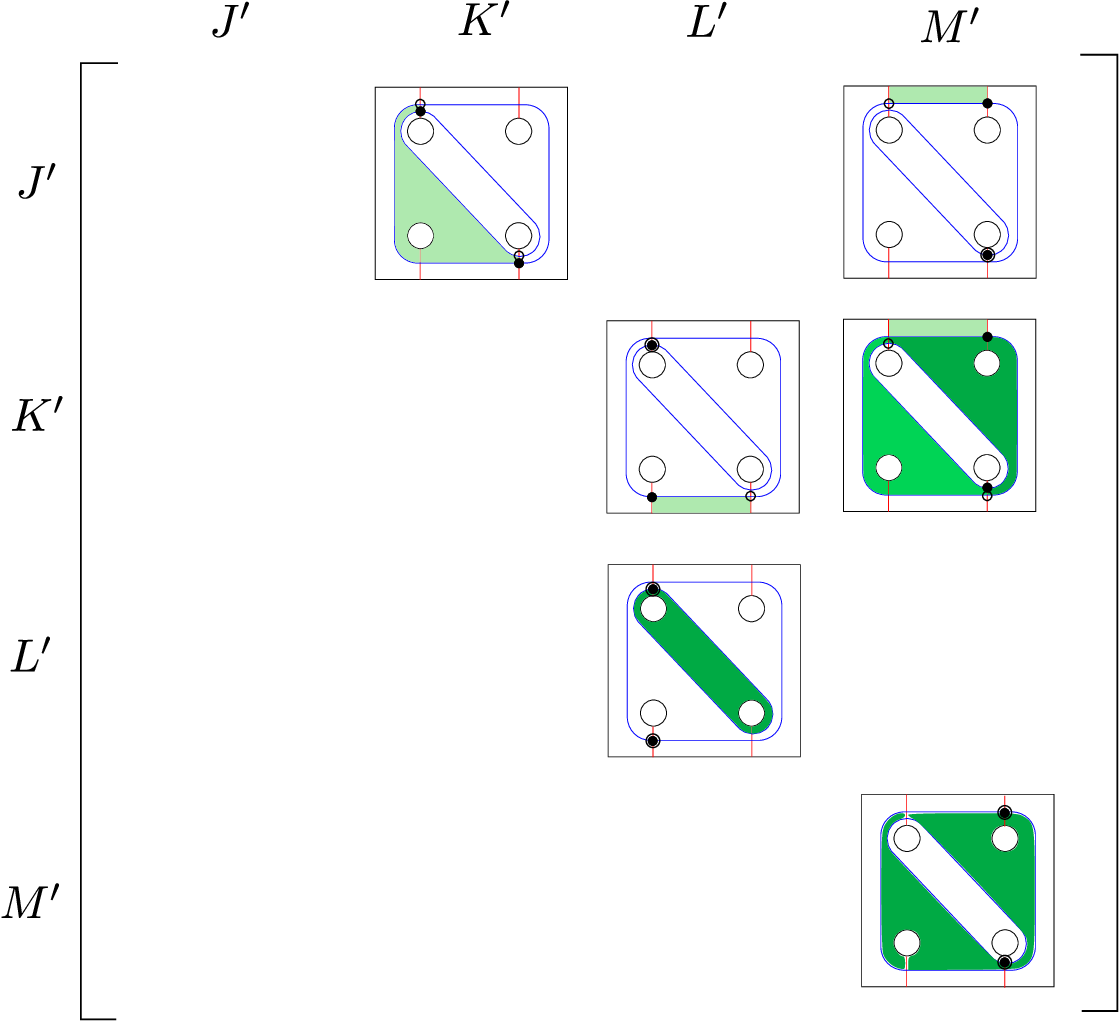}
\caption{Domains giving rise to secondary matrix entries for the negative crossing bimodule $N$ (middle summand).}
\label{fig:NegativeCrossingDomains}
\end{figure}

Figure~\ref{fig:NegativeCrossingGens} shows the generators of $N$ in terms of sets of intersection points in the right Heegaard diagram of Figure~\ref{fig:NonsingularCrossing}. Figure~\ref{fig:NegativeCrossingDomains} shows the domains in this Heegaard diagram that give rise to nonzero entries in the middle summand of the secondary matrix of $N$ (the other two summands are simpler and are not shown).

\begin{proposition}
The DA bimodule $N$ for a negative crossing is isomorphic to the simplified mapping cone bimodule $N''$ from Section~\ref{sec:MappingConeNeg}.
\end{proposition}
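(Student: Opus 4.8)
The plan is to follow the argument used in the proof of the preceding proposition, which identified the positive-crossing bimodule $P$ with the simplified mapping-cone bimodule $P''$. First I would dispose of the top and bottom weight-space summands. The top summands $N_{\upp}$ and $N''_{\upp}$ are each a copy of $q\F_2$ concentrated in homological degree zero, so they agree under $I' \leftrightarrow \Ib_{\uu}$. The bottom summands $N_{\low}$ and $N''_{\low}$ are literally the same ordinary bimodule: each is free of rank one over $\F_2[U_1,U_2]$ with secondary matrix entry $U_1^l U_2^k \otimes U_1^k U_2^l$, and the generator $N'$ has the same bidegree $(\deg^q,\deg^h) = (-1,1)$ as $Z(w_1 C')$, so $N_{\low} \cong N''_{\low}$ under $N' \leftrightarrow Z(w_1 C')$. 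Thus everything reduces to the middle weight space.

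In the middle summand I would write down an explicit DA-bimodule morphism $\Psi\co N_{\midd} \to N''_{\midd}$ whose ``leading'' part sends $J' \mapsto \Ib_{\ou}$, $K' \mapsto XA'$, $L' \mapsto YA'$, $M' \mapsto XB'$ (these four pairs of generators have matching bidegrees, as one reads off from the list in Section~\ref{sec:MappingConeNeg} and the definition of $N$), corrected by a higher term — in analogy with the nonzero entry $U_2^k \otimes U_2^{k+1}$ appearing in the isomorphism $P \to P''$ — that repairs the discrepancy between the all-zero diagonal of the secondary matrix of $N_{\midd}$ and the equivariance entries $U_1^{k+1}\otimes U_1^{k+1}$ in the secondary matrix of $N''_{\midd}$, together with the extra length-two input term $U_1^k\otimes(\lambda,U_1^{k+1})$ in the $(XA',XB')$ slot of $N''_{\midd}$. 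I would then check, via the matrix bookkeeping of Procedure~\ref{proc:DAWellDef}, that $\Psi$ is bilinear over the idempotent ring, bidegree-preserving and closed: one multiplies the matrix of $\Psi$ against the secondary matrices of $N_{\midd}$ and of $N''_{\midd}$ in the two appropriate orders and checks that the difference is cancelled by the multiplication- and differential-derived terms produced by the $A_\infty$ inputs $\lambda$, $U_1^{k+1}$, $U_2^{k+1}$. Finally I would exhibit the inverse matrix $\Psi^{-1}$ (obtained by a similar correction term, as in the $P$ case) and verify $\Psi\circ\Psi^{-1} = \id = \Psi^{-1}\circ\Psi$, which establishes that $\Psi$ is an isomorphism of DA bimodules.

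The main obstacle is guessing the correction terms in $\Psi$ correctly and then carrying out the closedness verification in the middle summand. Because $N_{\midd}$ and $N''_{\midd}$ are genuine (non-formal) DA bimodules — the entries $(\lambda,U_1^{k+1})$ and $(U_2^{k+1},\lambda)$ are length-two $A_\infty$ input sequences, so $\Psi$ itself will have a nonzero $\Psi_2$ — this is not merely a check that two honest bimodule maps commute with the right actions, but a full DA-morphism identity in the sense of Procedure~\ref{proc:DAWellDef}; this is the same feature that forced the nontrivial entry into the isomorphism $P\to P''$, and I expect the computation here to be of comparable length, though entirely mechanical once the form of $\Psi$ is fixed. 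An alternative, perhaps shorter route would be to observe that both $N$ and $N''$ arise by the same ``mapping cone on $\varepsilon'$, then cancel the differential term'' construction applied to $X$, so that $N \cong N''$ follows from tracking the canceling-pair isomorphisms of Section~\ref{sec:PrelimSimplifying}; but the explicit-matrix route parallels the positive-crossing proof most closely and makes the bidegree check transparent.
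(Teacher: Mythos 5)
Your proposal matches the paper's proof: the paper identifies the top and bottom summands directly (via $I' \leftrightarrow \Ib_{\uu}$ and $N' \leftrightarrow Z(w_1 C')$) and in the middle summand writes down exactly the isomorphism you describe, with leading part $J' \mapsto \Ib_{\ou}$, $K' \mapsto XA'$, $L' \mapsto YA'$, $M' \mapsto XB'$ and a single higher correction term $U_1^k \otimes U_1^{k+1}$ in the $(XA',J')$ slot, whose inverse has the same form. The only thing you leave implicit is the precise placement of that correction term, but your description of which discrepancies it must repair pins it down, so this is essentially the same argument.
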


\begin{proof}
The top and bottom summands of the two bimodules in question are identical under the identification of $I'$ with $\Ib_{\uu}$ and $N'$ with $Z (w_1 C')$ (note that in the mapping cone bimodule, the degree of $\Ib_{\uu}$ is $q^{1} h^0$ and the degree of $Z (w_1 C')$ is $q^{-1} h^{1}$). In the middle weight space, an isomorphism from $N$ to the mapping cone bimodule is given by the matrix
\[
\kbordermatrix{
& J' & K' & L' & M' \\
\Ib_{\ou} & 1 & 0 & 0 & 0 \\
XA' & U_1^k \otimes U_1^{k+1} & 1 & 0 & 0 \\
YA' & 0 & 0 & 1 & 0 \\
XB' & 0 & 0 & 0 & 1 
}.
\]
One can check that this matrix represents a closed grading-preserving morphism of DA bimodules that is invertible with inverse
\[
\kbordermatrix{
& \Ib_{\ou} & XA' & YA' & XB' \\
J' & 1 & 0 & 0 & 0 \\
K' & U_1^k \otimes U_1^{k+1} & 1 & 0 & 0 \\
L' & 0 & 0 & 1 & 0 \\
M' & 0 & 0 & 0 & 1 
};
\]
recall that the degrees of $\Ib_{\ou}$, $XA'$, $YA'$, $XB'$ are $q^{1}h^0$, $q^{-1} h^{1}$, $q^0 h^0$, and $q^0 h^0$ respectively.
\end{proof}

\section{Change-of-basis bimodules}\label{sec:ChangeOfBasis}

\subsection{Ozsv{\'a}th--Szab{\'o} canonical basis algebras}

Here we define the Ozsv{\'a}th--Szab{\'o} algebra $\A_{1,1}^{\can}$.

\begin{figure}
\includegraphics[scale=0.4]{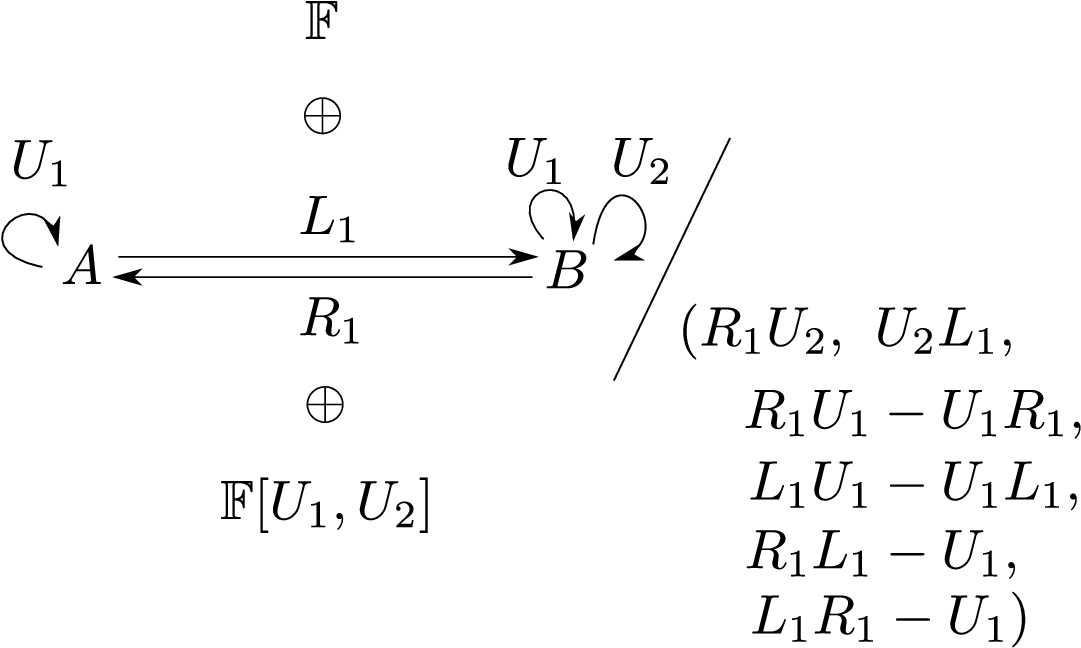}
\caption{The dg category $\A^{\can}_{1,1}$.}
\label{fig:A11CanQuiver}
\end{figure}

\begin{definition}
The dg category $\A^{\can}_{1,1}$ is shown in Figure~\ref{fig:A11CanQuiver}; arrows point from source to target. The differential on $\A^{\can}_{1,1}$ is zero; the gradings are given by
\begin{itemize}
\item $\deg^q(R_1) = -1$, $\deg^h(R_1) = 1$,
\item $\deg^q(L_1) = -1$, $\deg^h(L_1) = 1$,
\item $\deg^q(U_i) = -2$, $\deg^h(U_i) = 2$ for $i \in \{1,2\}$.
\end{itemize}
\end{definition}

\subsection{Bimodules}
We would like to relate the positive and negative crossing bimodules $P$ and $N$ of Section~\ref{sec:PositiveCrossingHD} to the bimodules for positive and negative crossings defined by Ozsv{\'a}th--Szab{\'o} in \cite{OSzNew}. To do so, we will first define categorified change-of-basis bimodules between the algebras $\A_{1,1}$ and $\A_{1,1}^{\can}$. 

\begin{figure}
\includegraphics[scale=0.4]{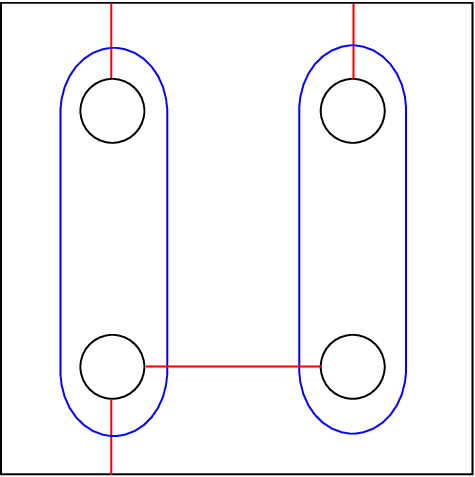}
\caption{Heegaard diagram for changing canonical basis to standard basis}
\label{fig:StandardToCanonical}
\end{figure}

\begin{figure}
\includegraphics[scale=0.4]{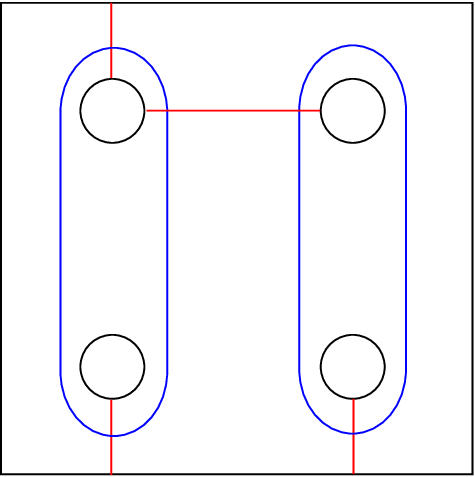}
\caption{Heegaard diagram for changing standard basis to canonical basis}
\label{fig:CanonicalToStandard}
\end{figure}

\begin{definition}\label{def:FirstCOBBimodule}
The change-of-basis DA bimodule over $(\A_{1,1}^{\can},\A_{1,1})$ has primary matrix
\[
\kbordermatrix{
& \uu & \ou & \uo & \oo \\
\varnothing & \kappa_1 & & & \\
A & & \kappa_2 & \kappa_3 & \\
B & & & \kappa_4 & \\
AB & & & & \kappa_5 \\
}; 
\]
we set
\begin{itemize}
\item $\deg^q(\kappa_1) = 0$, $\deg^h(\kappa_1) = 0$,
\item $\deg^q(\kappa_2) = 0$, $\deg^h(\kappa_2) = 0$,
\item $\deg^q(\kappa_3) = -1$, $\deg^h(\kappa_3) = 0$,
\item $\deg^q(\kappa_4) = 0$, $\deg^h(\kappa_4) = 0$,
\item $\deg^q(\kappa_5) = 0$, $\deg^h(\kappa_5) = 0$.
\end{itemize}
This change of basis bimodule has secondary matrix
\[
\kbordermatrix{
& \kappa_2 & \kappa_3 & \kappa_4 \\
\kappa_2 & U_1^{k+1} \otimes U_1^{k+1} & 1 \otimes \lambda & L_1 U_1^k \otimes (\lambda, U_1^{k+1}) \\
\kappa_3 & 0 & 0 & 0 \\
\kappa_4 & 0 & R_1 & U_2^{k+1} \otimes U_2^{k+1}
}
\]
in the middle summand and secondary matrix
\[
\kbordermatrix{
& \kappa_5 \\
\kappa_5 & U_1^k U_2^l \otimes U_1^k U_2^l
}
\]
in the bottom summand.
\end{definition}

\begin{figure}
\includegraphics[scale=0.6]{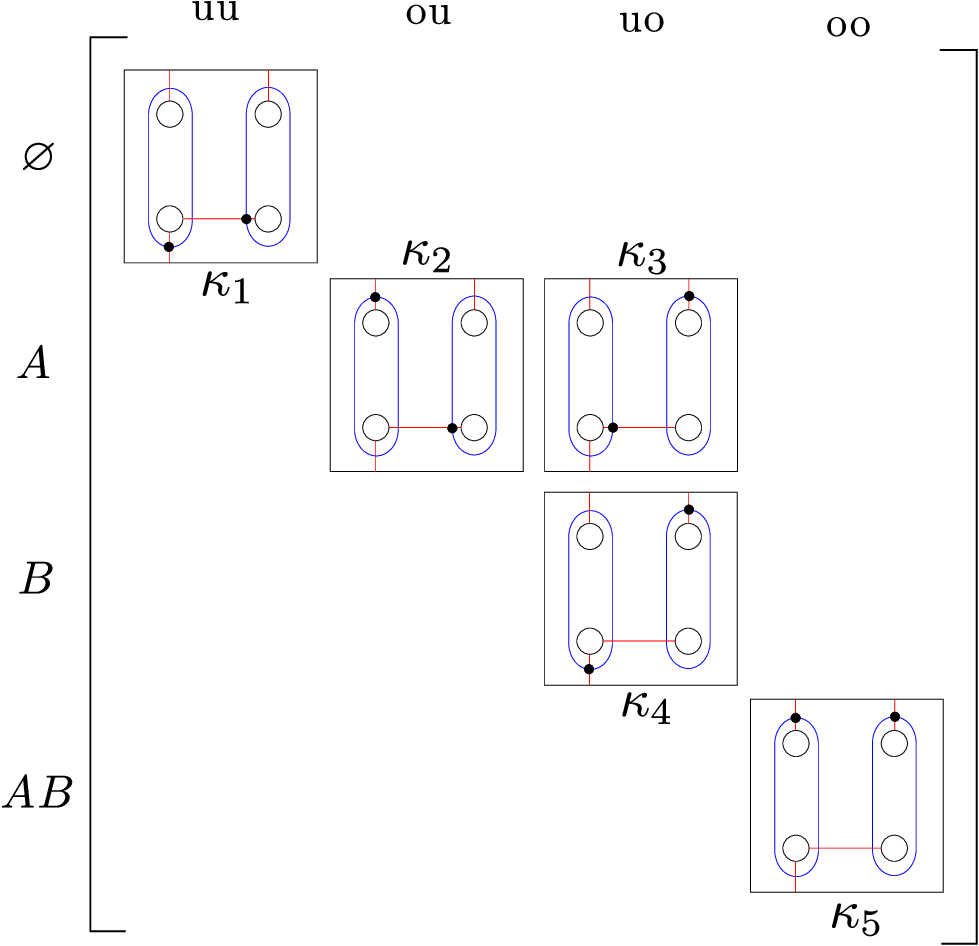}
\caption{Generators of one change of basis bimodule in terms of intersection points.}
\label{fig:KappaGens}
\end{figure}

\begin{figure}
\includegraphics[scale=0.6]{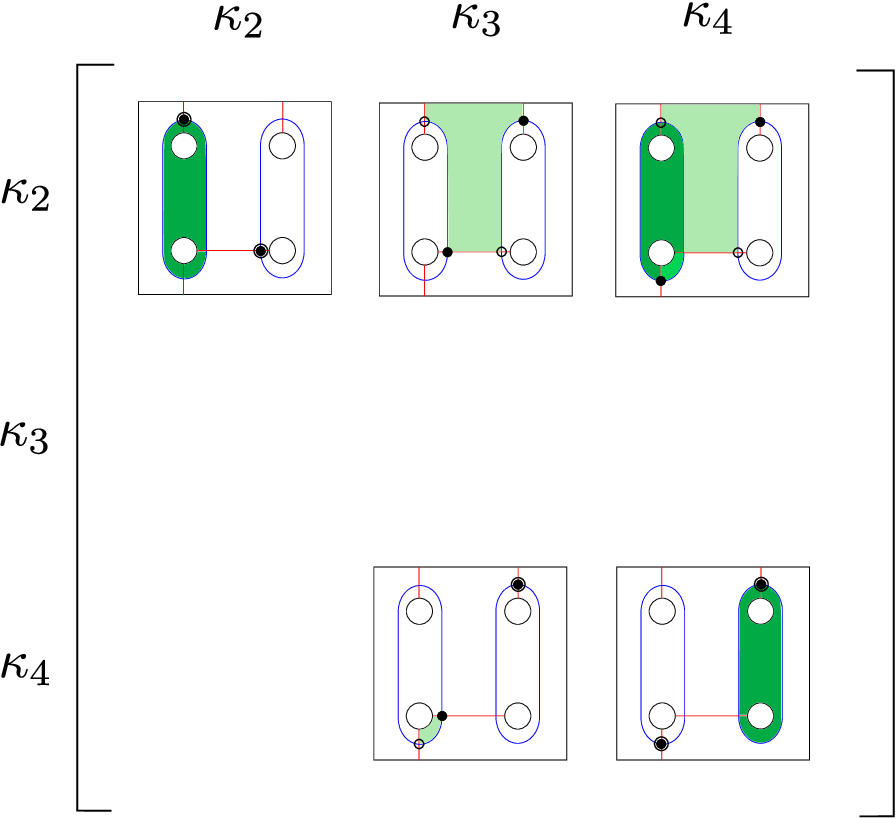}
\caption{Domains for one change of basis bimodule (middle summand).}
\label{fig:KappaDomains}
\end{figure}

\begin{definition}\label{def:SecondCOBBimodule}
The change-of-basis DA bimodule over $(\A_{1,1},\A_{1,1}^{\can})$ has primary matrix
\[
\kbordermatrix{
& \varnothing & A & B & AB\\
\uu & \lambda_1 & & & \\
\ou & & \lambda_2 & \lambda_3 & \\
\uo & & & \lambda_4 & \\
\oo & & & & \lambda_5
};
\]
we set
\begin{itemize}
\item $\deg^q(\lambda_1) = 0$, $\deg^h(\lambda_1) = 0$,
\item $\deg^q(\lambda_2) = 0$, $\deg^h(\lambda_2) = 0$,
\item $\deg^q(\lambda_3) = -1$, $\deg^h(\lambda_3) = 1$,
\item $\deg^q(\lambda_4) = 0$, $\deg^h(\lambda_4) = 0$,
\item $\deg^q(\lambda_5) = 0$, $\deg^h(\lambda_5) = 0$.
\end{itemize}
This change of basis bimodule has secondary matrix
\[
\kbordermatrix{
& \lambda_2 & \lambda_3 & \lambda_4 \\
\lambda_2 & U_1^{k+1} \otimes U_1^{k+1} & U_1^{k+1} \otimes L_1 U_1^k & 0 \\
\lambda_3 &  U_1^k \otimes R_1 U_1^k & U_1^{k+1} \otimes U_1^{k+1} & \lambda \\
\lambda_4 & 0 & 0 & U_2^{k+1} \otimes U_2^{k+1}
}
\]
in the middle summand, and secondary matrix
\[
\kbordermatrix{
& \lambda_5 \\
\lambda_5 & U_1^k U_2^l \otimes U_1^k U_2^l
}
\]
in the bottom summand.
\end{definition}

\begin{figure}
\includegraphics[scale=0.6]{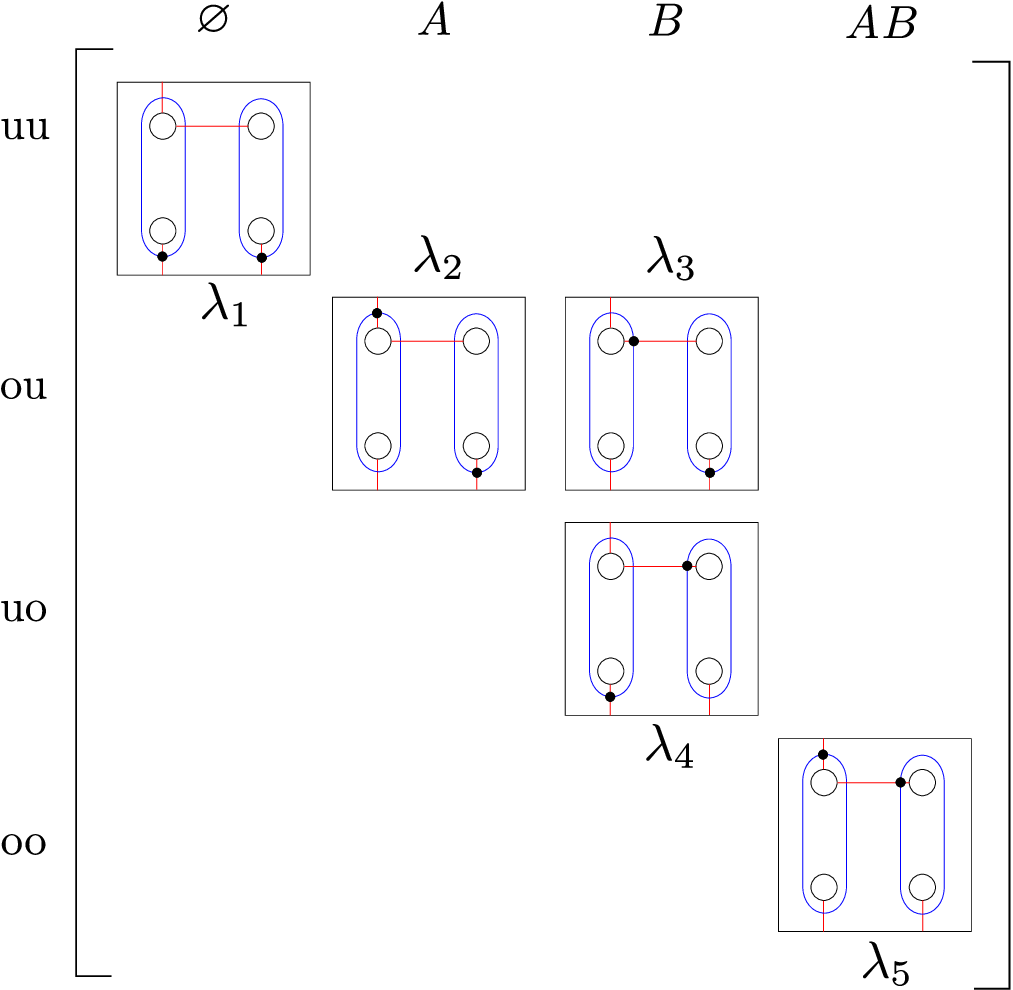}
\caption{Generators of the other change of basis bimodule in terms of intersection points.}
\label{fig:LambdaGens}
\end{figure}

\begin{figure}
\includegraphics[scale=0.6]{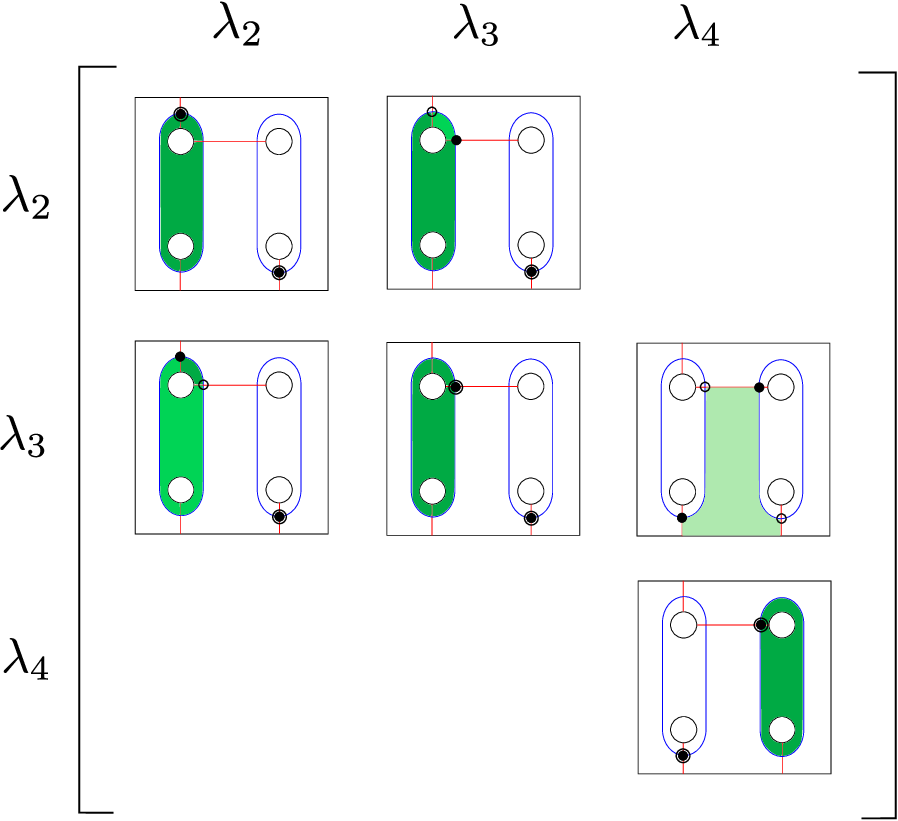}
\caption{Domains for the other change of basis bimodule (middle summand).}
\label{fig:LambdaDomains}
\end{figure}

One can check that these bimodules are well-defined and that their box tensor product either way is homotopy equivalent to the appropriate identity bimodule.

\begin{proposition}
The change-of-basis bimodule from Definition~\ref{def:FirstCOBBimodule} categorifies the map from $K_0(\A_{1,1})$ to $K_0(\A_{1,1}^{\can})$ with matrix
\[
\kbordermatrix{
& {[P_{\uu}]} & {[P_{\ou}]} & {[P_{\uo}]} & {[P_{\oo}]} \\
{[P_{\varnothing}]} & 1 & 0 & 0 & 0 \\
{[P_{A}]} & 0 & 1 & q^{-1} & 0 \\
{[P_{B}]} & 0 & 0 & 1 & 0 \\
{[P_{AB}]} & 0 & 0 & 0 & 1
};
\]
equivalently, its left dual categorifies the map from $G_0(\A_{1,1})$ to $G_0(\A_{1,1}^{\can})$ with matrix
\[
\kbordermatrix{
& {[S_{\varnothing}]} & {[S_{A}]} & {[S_{B}]} & {[S_{AB}]} \\
{[S_{\uu}]} & 1 & 0 & 0 & 0 \\
{[S_{\ou}]} & 0 & 1 & 0 & 0 \\
{[S_{\uo}]} & 0 & q & 1 & 0 \\
{[S_{\oo}]} & 0 & 0 & 0 & 1
}.
\]
Under our identifications, this latter map agrees with the change of basis matrix from the canonical basis (introduced in Section~\ref{sec:NonstandardBases}) to the standard basis of $V^{\otimes 2}$.
\end{proposition}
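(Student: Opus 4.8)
The plan is to compute the decategorification directly from the matrix notation using the procedure recalled in Section~\ref{sec:SplitGG}: discard the secondary matrix, and replace each generator appearing in the primary matrix by $(-1)^j q^i$, where $i=\deg^q$ and $j=\deg^h$. Since every generator $\kappa_1,\ldots,\kappa_5$ has homological degree zero, the only nontrivial signs disappear, and the entries of the decategorified map are monomials in $q$. Reading off the primary matrix of Definition~\ref{def:FirstCOBBimodule}, the generator $\kappa_1$ (in row $\varnothing$, column $\uu$) contributes $q^0=1$; $\kappa_2$ (row $A$, column $\ou$) contributes $1$; $\kappa_3$ (row $A$, column $\uo$) contributes $q^{-1}$; $\kappa_4$ (row $B$, column $\uo$) contributes $1$; and $\kappa_5$ (row $AB$, column $\oo$) contributes $1$. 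This is exactly the displayed matrix for the map $K_0(\A_{1,1})\to K_0(\A_{1,1}^{\can})$, so the first claim is immediate.

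Next I would verify the statement about the left dual. By the decategorification procedure for ${}^{\vee}M$ in Section~\ref{sec:SplitGG}, one transposes the primary matrix and replaces each generator of $\deg^q=i$, $\deg^h=j$ by $(-1)^j q^{-i}$. Transposing sends the entry in row $A$, column $\uo$ (namely $\kappa_3$, with $\deg^q=-1$) to row $\uo$, column $A$, and the $q$-degree flips sign, giving $q^{1}=q$; all other generators have $\deg^q=0$ and are unaffected by the sign flip. This reproduces the second displayed matrix. One should also note that this is consistent with the general relationship $[{}^{\vee}M]$ being (up to the bidegree reversal) the transpose-inverse-type companion of $[M]$; here the two displayed matrices are genuine transposes after the substitution $q\mapsto q^{-1}$, as one expects, which serves as a useful internal check.

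The only remaining point is the identification of the left-dual map with the change-of-basis matrix from the canonical basis to the standard basis of $V^{\otimes 2}$. For this I would appeal to Appendix~\ref{sec:NonstandardBases} (referenced as Section~\ref{sec:NonstandardBases} in the statement), where the canonical basis $\{[S_{\varnothing}],[S_A],[S_B],[S_{AB}]\}$ and the standard tensor-product basis $\{[S_{\uu}],[S_{\ou}],[S_{\uo}],[S_{\oo}]\}$ of $V^{\otimes 2}$ are defined, together with the explicit change-of-basis matrix between them. Under the identifications $G_0(\A_{1,1})\cong V^{\otimes 2}$ (from Corollary~\ref{cor:GeneralKDecat}) and $G_0(\A_{1,1}^{\can})\cong V^{\otimes 2}$ (using $[S_{\varnothing}],[S_A],[S_B],[S_{AB}]$ for the basic idempotents of $\A_{1,1}^{\can}$), one checks term-by-term that the $4\times 4$ matrix computed above coincides with the matrix recorded in Appendix~\ref{sec:NonstandardBases}; the single off-diagonal entry $q$ in the $([S_{\uo}],[S_A])$ position is precisely the coefficient expressing the canonical basis element indexed by $A$ in terms of the standard basis. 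The main obstacle, such as it is, is purely bookkeeping: one must be careful about the direction of the change-of-basis map (canonical-to-standard versus its inverse) and about which algebra's idempotents are matched with which basis on each side, since the bimodule is over $(\A_{1,1}^{\can},\A_{1,1})$ and the induced map goes $K_0(\A_{1,1})\to K_0(\A_{1,1}^{\can})$ while its dual reverses this. Once the conventions of Appendix~\ref{sec:NonstandardBases} are pinned down, the verification is a direct comparison of two small explicit matrices.
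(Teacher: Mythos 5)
Your proposal is correct and is exactly the argument the paper intends (the paper omits the proof, as the claim follows directly from the decategorification procedures of Section~\ref{sec:SplitGG} applied to the primary matrix and degrees in Definition~\ref{def:FirstCOBBimodule}, plus a direct comparison with the explicit matrix in Appendix~\ref{sec:NonstandardBases}). Your reading of the single nontrivial entry coming from $\kappa_3$ and the bidegree reversal under duality is accurate.
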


\begin{proposition}
The change-of-basis bimodule from Definition~\ref{def:SecondCOBBimodule} categorifies the map from $K_0(\A_{1,1}^{\can})$ to $K_0(\A_{1,1})$ with matrix
\[
\kbordermatrix{
& {[P_{\varnothing}]} & {[P_{A}]} & {[P_{B}]} & {[P_{AB}]} \\
{[P_{\uu}]} & 1 & 0 & 0 & 0 \\
{[P_{\ou}]} & 0 & 1 & -q^{-1} & 0 \\
{[P_{\uo}]} & 0 & 0 & 1 & 0 \\
{[P_{\oo}]} & 0 & 0 & 0 & 1
};
\]
equivalently, its left dual categorifies the map from $G_0(\A_{1,1})$ to $G_0(\A_{1,1}^{\can})$ with matrix
\[
\kbordermatrix{
& {[S_{\uu}]} & {[S_{\ou}]} & {[S_{\uo}]} & {[S_{\oo}]} \\
{[S_{\varnothing}]} & 1 & 0 & 0 & 0 \\
{[S_{A}]} & 0 & 1 & 0 & 0 \\
{[S_{B}]} & 0 & -q & 1 & 0 \\
{[S_{AB}]} & 0 & 0 & 0 & 1
}.
\]
Under our identifications, this latter map agrees with the change of basis matrix from the standard basis to the canonical basis of $V^{\otimes 2}$.
\end{proposition}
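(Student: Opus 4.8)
The plan is to prove this in two steps: first decategorify the DA bimodule from Definition~\ref{def:SecondCOBBimodule} to obtain the stated map on $K_0$, then transpose and flip signs appropriately to obtain the map on $G_0$, and finally identify the latter with the standard-to-canonical change of basis for $V^{\otimes 2}$ from Appendix~\ref{sec:NonstandardBases}.

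For the decategorification, I would apply the procedure from Section~\ref{sec:SplitGG}: discard the secondary matrix and replace each primary-matrix entry (a single generator $\lambda_i$ or $\varnothing$) by $(-1)^{\deg^h} q^{\deg^q}$. Reading off the primary matrix in Definition~\ref{def:SecondCOBBimodule}, the generators $\lambda_1, \lambda_2, \lambda_4, \lambda_5$ all have bidegree $q^0 h^0$ and so contribute $1$, while $\lambda_3$ has bidegree $q^{-1} h^1$ and so contributes $-q^{-1}$. Since $\lambda_3$ sits in row $\ou$ (i.e.\ $[P_{\ou}]$) and column $B$ (i.e.\ $[P_B]$), this produces exactly the claimed matrix for $[M]\co K_0(\A_{1,1}^{\can}) \to K_0(\A_{1,1})$ with the single off-diagonal entry $-q^{-1}$ in position $([P_{\ou}],[P_B])$; all diagonal entries are $1$. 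The dual statement on $G_0$ follows from the last decategorification procedure in Section~\ref{sec:SplitGG}: for ${^{\vee}}M$ one transposes the primary matrix and replaces each generator of bidegree $q^i h^j$ by $(-1)^j q^{-i}$. Transposing moves the nontrivial entry to position $([S_B],[S_{\ou}])$ (wait -- more precisely, with the row/column conventions of the stated $G_0$ matrix, to row $[S_B]$ and column $[S_{\ou}]$, consistent with the displayed matrix), and replacing $q^{-1}h^1$ by $-q^{1}$ gives the entry $-q$; the diagonal entries remain $1$. This matches the second displayed matrix in the proposition.

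It remains to identify this $G_0$-matrix with the standard-to-canonical change-of-basis matrix. Using the identifications from Corollary~\ref{cor:GeneralKDecat} and Section~\ref{sec:Fundamental2RepExamples}, $G_0(\A_{1,1}) \cong V^{\otimes 2}$ with $[S_{\uu}], [S_{\ou}], [S_{\uo}], [S_{\oo}]$ corresponding to the standard tensor-product basis vectors, and $G_0(\A_{1,1}^{\can})$ is identified with $V^{\otimes 2}$ in the canonical basis whose elements $[S_\varnothing], [S_A], [S_B], [S_{AB}]$ are defined in Appendix~\ref{sec:NonstandardBases}. One then simply compares: the canonical basis element $S_B$ is, by the formulas in Appendix~\ref{sec:NonstandardBases}, the standard basis vector $S_{\uo}$ minus $q$ times $S_{\ou}$ (the other three canonical basis elements coinciding with their standard counterparts), so expressing the standard basis in terms of the canonical basis yields precisely the claimed matrix with $-q$ in row $[S_B]$, column $[S_{\ou}]$. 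I would also note for consistency that this matrix is the inverse of the canonical-to-standard matrix from the previous proposition (which has $+q$ in the transposed-sign spot), confirming that the two change-of-basis bimodules are mutually inverse as asserted earlier in the section.

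The main obstacle here is purely bookkeeping: getting the row/column ordering conventions and the sign in the homological-grading-induced factor $(-1)^j$ exactly right, and matching the precise form of the canonical basis in Appendix~\ref{sec:NonstandardBases} (in particular which of $S_A, S_B$ carries the $q$-correction and in which direction). The categorical content -- that the bimodule is well-defined and that box tensor product in either order gives the identity up to homotopy -- has already been asserted in the paragraph following Definition~\ref{def:SecondCOBBimodule}, so no genuine homological algebra is needed for this proposition beyond the elementary decategorification recipe; the proof is essentially a two-line matrix computation plus a citation to the appendix.
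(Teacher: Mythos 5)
Your approach is exactly the intended one: the paper states this proposition without an explicit proof, and the justification is precisely the decategorification recipe of Section~\ref{sec:SplitGG} applied to the primary matrix of Definition~\ref{def:SecondCOBBimodule} (only $\lambda_3$, with bidegree $q^{-1}h^1$, contributes a non-unit entry, namely $-q^{-1}$ on $K_0$ and, after negating the $q$-degree and transposing, $-q$ on $G_0$), followed by a direct comparison with the inverse matrix displayed in Appendix~\ref{sec:NonstandardBases}. Your $K_0$ and $G_0$ computations are correct.

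One small factual slip in your parenthetical justification of the last step: you describe the canonical basis as having $S_B = S_{\uo} - q\,S_{\ou}$ with the other three canonical elements equal to standard ones. Per the appendix, it is the other way around: $\ell_1$ (i.e.\ $S_B$) \emph{is} the standard vector $\ket{01} = S_{\uo}$, while $\ell_2$ (i.e.\ $S_A$) equals $S_{\ou} + q\,S_{\uo}$, which is what produces $S_{\ou} = S_A - q\,S_B$ and hence the $-q$ in row $[S_B]$, column $[S_{\ou}]$ when one inverts. Your stated description would instead put a $+q$ in row $[S_A]$, column $[S_{\uo}]$, contradicting the matrix you (correctly) end up with. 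Since the appendix displays the standard-to-canonical matrix explicitly, the clean fix is simply to cite that displayed inverse matrix rather than re-derive it; with that, the proof is complete.
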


\section{Relationship with \texorpdfstring{Ozsvath-Szabo's}{Ozsv{\'a}th--Szab{\'o}'s} Kauffman-states functor}\label{sec:OSzRelationship}

\subsection{Positive crossing}

We now review Ozsv{\'a}th--Szab{\'o}'s bimodule for a positive crossing between two strands, with both strands oriented upwards, in our notation. 

\begin{proposition}
Ozsv{\'a}th--Szab{\'o}'s DA bimodule $P_{\OSz}$ over $\A_{1,1}^{\can}$ for a positive crossing has primary matrix
\[
\kbordermatrix{ & \varnothing & A & B & AB \\
\varnothing & S_{\varnothing} & & & \\
A & & S_A & & \\
B & & W & N_B & \\
AB & & & & N_{AB}
}.
\]
In the top weight space, the secondary matrix is zero and the bimodule is $\F_2$ as a bimodule over itself (generated by $S_{\varnothing}$). The secondary matrix in the middle weight space is
\[
\kbordermatrix{
& S_A & W & N_B \\
S_A & 0 & 0 & L_1 U_1^k \otimes (U_2^{k+1}, L_1) \\
W & R_1 & U_2^{k+1} \otimes U_1^{k+1} & U_2^k \otimes L_1 U_1^k \\
N_B & 0 & U_2^{k+1} \otimes R_1 U_1^k & \begin{matrix} U_2^{k+1} \otimes U_1^{k+1} \\+ U_1^{k+1} \otimes U_2^{k+1} \end{matrix}
}
\]
and the secondary matrix in the bottom weight space is
\[
\kbordermatrix{
& N_{AB} \\
N_{AB} & U_1^l U_2^k \otimes U_1^k U_2^l
}.
\]
\end{proposition}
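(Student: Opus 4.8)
The plan is to extract this bimodule $P_{\OSz}$ directly from Ozsv{\'a}th--Szab{\'o}'s published definition of the positive crossing DA bimodule for two upward-oriented strands \cite{OSzNew}, and rewrite it in the matrix notation of Section~\ref{sec:BorderedAlg}. First I would recall the Ozsv{\'a}th--Szab{\'o} generators for a crossing bimodule: for each idempotent state of the incoming algebra $\A_{1,1}^{\can}$, their construction produces certain ``partial Kauffman state'' generators, which in the two-strand case are the six generators $S_{\varnothing}, S_A, S_B=W, N_B, N_{AB}$ together with the extra generator in the $B$-weight idempotent; matching these against their labelling conventions gives the stated primary matrix, with the $(\varnothing,\varnothing)$ and $(AB,AB)$ entries being the obvious ``identity-like'' pieces $S_{\varnothing}$ and $N_{AB}$, and the middle weight space (idempotents $A$ and $B$) carrying the interesting $3\times 3$ block with generators $S_A$, $W$, $N_B$.

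Next I would transcribe the $\delta^1_i$ structure maps. Ozsv{\'a}th--Szab{\'o} present their bimodules via explicit $A_\infty$ structure relations; the translation to our secondary-matrix format is the bookkeeping described after Definition~\ref{def:DABimod}, where an entry $a'\otimes(a_1,\dots,a_{i-1})$ in row $y$, column $x$ records the term $a'\otimes y$ in $\delta^1_i(x\otimes a_1\otimes\cdots\otimes a_{i-1})$. The top weight space contributes nothing (secondary matrix zero, so $\F_2$ over itself). In the bottom weight space, the single generator $N_{AB}$ carries only the ``diagonal'' family $U_1^l U_2^k\otimes U_1^k U_2^l$, reflecting that $P_{\OSz}$ restricted there is just the appropriately twisted identity bimodule over $(\A_{1,1}^{\can})_{2\varepsilon_2}=\F_2[U_1,U_2]$. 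The middle weight space requires the most care: one reads off the right actions of $R_1$, $L_1$, $U_1$, $U_2$ on $S_A$, $W$, $N_B$ from Ozsv{\'a}th--Szab{\'o}'s formulas, together with the higher terms involving sequences like $(U_2^{k+1},L_1)$ and $(U_2^{k+1},R_1 U_1^k)$ and the lone $A_\infty$ term $(U_2^{k+1},U_1^{k+1})$ on $N_B$; assembling these into a matrix produces the displayed $3\times 3$ secondary matrix. I would also double-check the grading shifts: the $q$- and $h$-degrees of $S_{\varnothing}$, $S_A$, $W$, $N_B$, $N_{AB}$ should be those forced by requiring all structure maps to have bidegree zero with respect to the gradings on $\A_{1,1}^{\can}$ fixed in Section~\ref{sec:ChangeOfBasis}.

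The main obstacle I expect is notational reconciliation rather than any genuine mathematical difficulty: Ozsv{\'a}th--Szab{\'o} use their own conventions for orientation of strands, for which algebra is the ``source'' versus ``target'' of a DA bimodule, for the direction in which $A_\infty$ inputs are concatenated, and for grading normalizations (their Alexander/Maslov bigrading versus our $(\deg^q,\deg^h)$). Getting the generator $W$ identified correctly with what would naively be called $S_B$ in their scheme, and making sure the ``mixed'' higher term $U_2^{k+1}\otimes U_1^{k+1}$ appears on the correct generator $N_B$ with the correct multiplicity, is where sign-free bookkeeping over $\F_2$ helps but index-shift errors are easy. Once the dictionary is pinned down, verifying that the resulting matrices satisfy the DA bimodule relations is a routine application of Procedure~\ref{proc:DAWellDef} — one squares the secondary matrix, forms the multiplication matrix, and checks the sum vanishes — and this can be left to the reader or relegated to the observation that it follows from Ozsv{\'a}th--Szab{\'o}'s verification in \cite{OSzNew} together with the faithfulness of the translation.
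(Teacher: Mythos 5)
Your proposal matches the paper's treatment: the proposition is stated as a transcription of Ozsv{\'a}th--Szab{\'o}'s definition from \cite{OSzNew} into the matrix notation of Section~\ref{sec:BorderedAlg}, with no independent verification beyond reconciling conventions (most importantly the exchange of positive and negative crossings relative to \cite{OSzNew}, and the dictionary between Alexander/Maslov degrees and $(\deg^q,\deg^h)$ recorded in Figure~\ref{fig:AlexMaslov}). Only minor quibbles: there are five generators, not six (the $W$ generator is of Kauffman-state type West sitting in row $B$, column $A$, not a ``$S_B$''), and you should state explicitly that the positive/negative crossing labels are swapped relative to \cite{OSzNew}.
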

The degrees of the generators depend only on their type $\in \{N,E,S,W\}$ and are given by
\begin{itemize}
\item $\deg^q(N) = 1$, $\deg^h(N) = -1$,
\item $\deg^q(E) = 0$, $\deg^h(E) = 0$,
\item $\deg^q(S) = -1$, $\deg^h(S) = 0$,
\item $\deg^q(W) = 0$, $\deg^h(W) = 0$.
\end{itemize}
(for the bimodule under consideration, there are no generators of type $E$.)

\begin{remark}
Actually, Ozsv{\'a}th--Szab{\'o} assign this bimodule to a negative crossing; to match the conventions used here, one should exchange positive and negative crossings in \cite{OSzNew}.
\end{remark}

\begin{remark} In \cite{OSzNew}, Ozsv{\'a}th--Szab{\'o} use dg algebras with extra $C_i$ generators when strands are oriented upwards. These generators help upward-oriented strands interact with downward-oriented strands, so that bimodules for maximum and minimum points can be defined. In \cite{OSzNewer} and \cite{OSzHolo}, these generators no longer appear except in descriptions of Koszul duals of Ozsv{\'a}th--Szab{\'o}'s algebras; the interaction between upward-pointing and downward-pointing strands is mediated differently. In any case, the orientations of the strands impact the grading of the algebras, and we use the grading that corresponds (in Ozsv{\'a}th--Szab{\'o}'s conventions) to upward-pointing strands. Since we do not consider duals or downward-pointing strands in this paper, we do not need to consider the modifications (e.g. curvature in \cite{OSzHolo}) that Ozsv{\'a}th--Szab{\'o} use when dealing with mixed orientations.
\end{remark}

\begin{figure}
\includegraphics[scale=0.6]{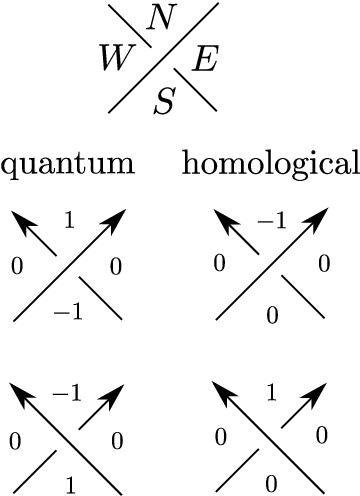}
\caption{Quantum and homological degrees of generators of type $N$, $E$, $S$, and $W$.}
\label{fig:AlexMaslov}
\end{figure}

\begin{remark}
Generators of Ozsv{\'a}th--Szab{\'o}'s bimodule generators correspond to certain ``partial Kauffman states,'' a local version of the Kauffman states defined in \cite{FKT}. At each crossing in a tangle diagram, a partial Kauffman state has a type $\in \{N,E,S,W\}$ depending on which of the four corners adjacent to the crossing is chosen for the state. One can compare Figure~\ref{fig:AlexMaslov} to \cite[Figure 2]{OSzNew}; our quantum degrees are $-2$ times Ozsv{\'a}th--Szab{\'o}'s Alexander degrees (the minus sign is due to the reversal of positive and negative crossings while the $2$ is due to the usual relation $q^2 = t$ between the parameter $t$ of the Alexander polynomial $\Delta_K(t)$ and the quantum parameter $q$), and our homological degrees agree with Ozsv{\'a}th--Szab{\'o}'s. Note that the agreement of homological degrees results from a product of two minus signs, one due to the reversal of positive and negative crossings with respect to \cite{OSzNew} and the other because we use $+1$ differentials while Ozsv{\'a}th--Szab{\'o} use $-1$ differentials.

\end{remark}

\begin{figure}
\includegraphics[scale=0.4]{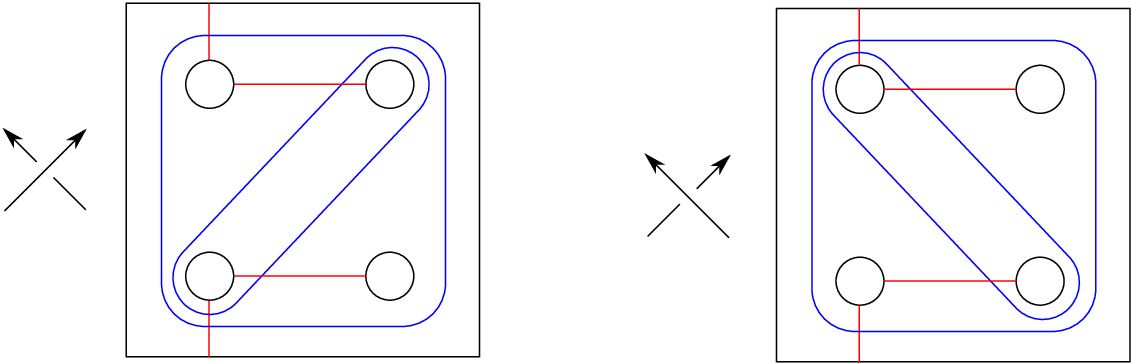}
\caption{Heegaard diagram for Ozsv{\'a}th--Szab{\'o}'s bimodules over $\A_{1,1}^{\can}$ for positive and negative crossings (in our conventions).}
\label{fig:OSzDiagramRightside}
\end{figure}

It follows from \cite{OSzHolo} that the bimodule $P_{\OSz}$ can be obtained by counting holomorphic disks in the Heegaard diagram on the left of Figure~\ref{fig:OSzDiagramRightside}; see Figures \ref{fig:OSzNegativeGens}, \ref{fig:OSzNegativeDomains}.

\begin{figure}
\includegraphics[scale=0.6]{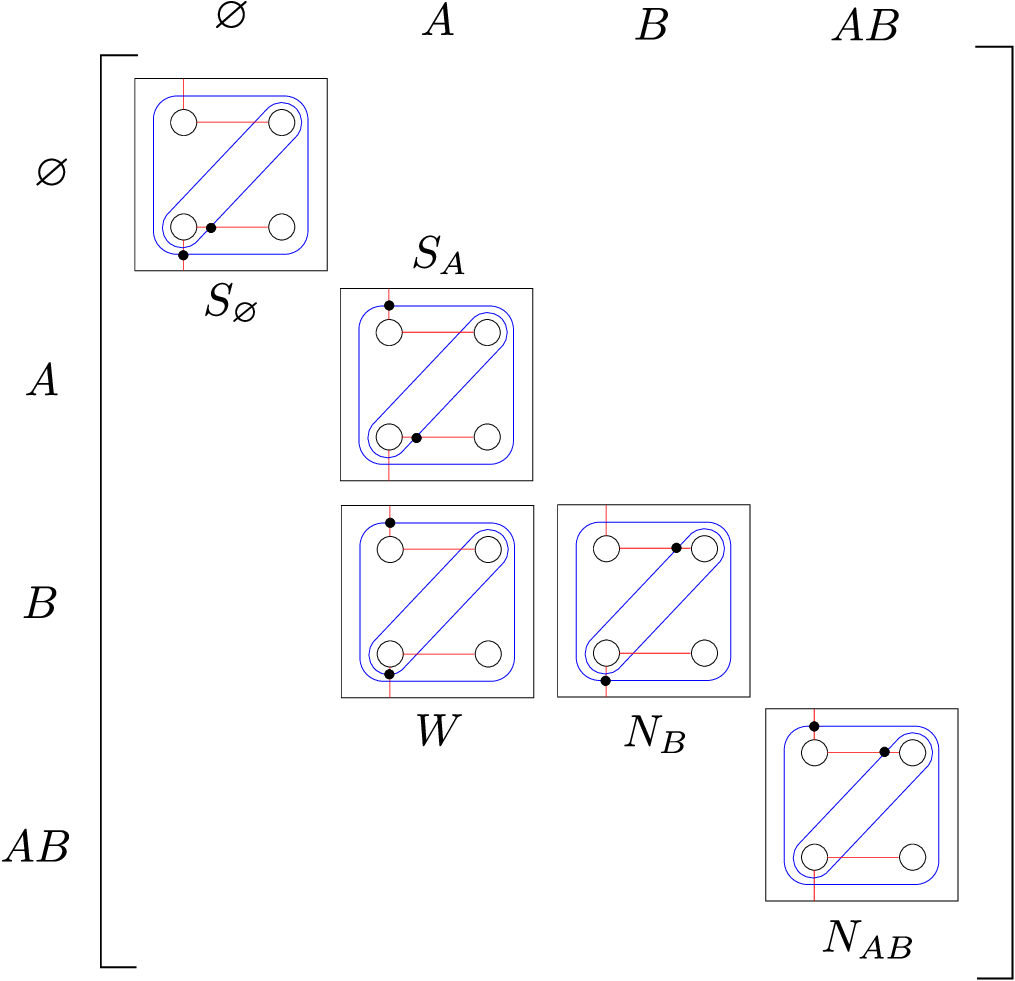}
\caption{Generators of the Ozsv{\'a}th--Szab{\'o} positive crossing bimodule $P_{\OSz}$ in terms of intersection points in the positive-crossing diagram of Figure~\ref{fig:OSzDiagramRightside}.}
\label{fig:OSzNegativeGens}
\end{figure}

\begin{figure}
\includegraphics[scale=0.6]{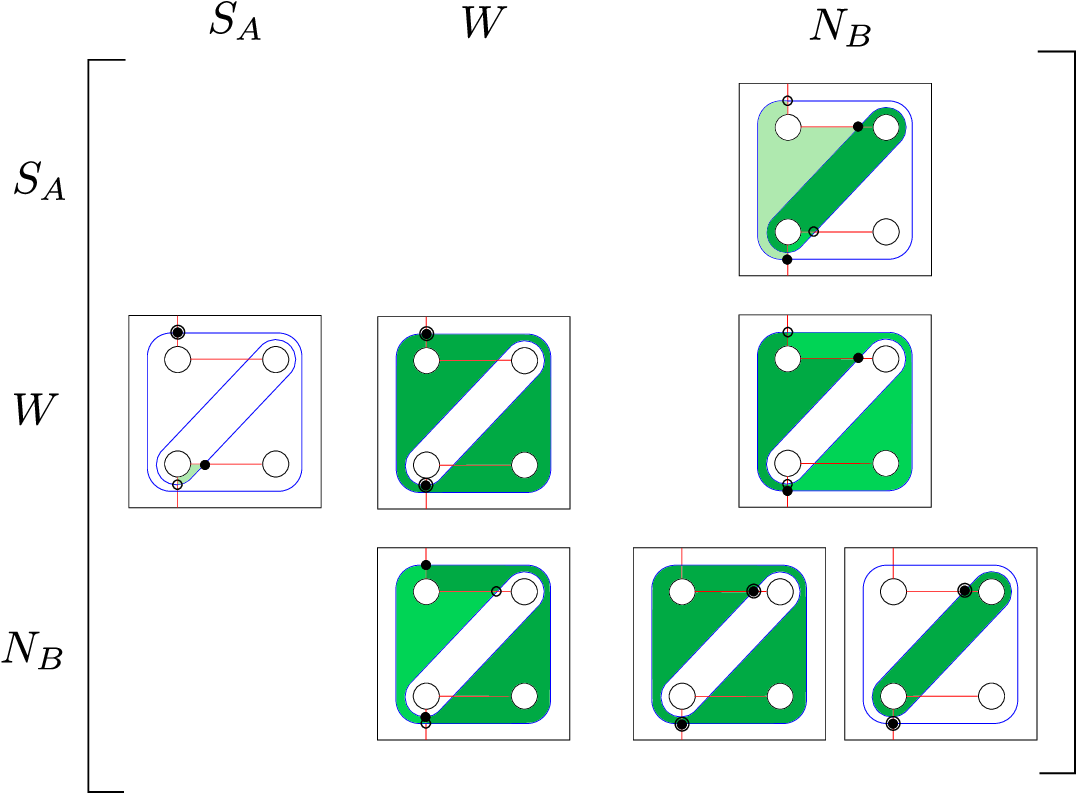}
\caption{Domains giving rise to secondary matrix entries for the Ozsv{\'a}th--Szab{\'o} positive crossing bimodule $P_{\OSz}$ (middle summand).}
\label{fig:OSzNegativeDomains}
\end{figure}

To relate $P_{\OSz}$ with our bimodule $P$, we first change basis on the left of $P_{\OSz}$ to get a DA bimodule over $(\A_{1,1},\A_{1,1}^{\can})$. In other words, we take the box tensor product of $P_{\OSz}$ with the DA bimodule over $(\A_{1,1},\A_{1,1}^{\can})$ from Definition~\ref{def:SecondCOBBimodule}. The result has primary matrix
\[
\kbordermatrix{
& \varnothing & A & B & AB \\
\uu & \lambda_{1} S_{\varnothing} & & & \\
\ou & & \lambda_2 S_A \quad \lambda_3 W & \lambda_3 N_B & \\
\uo & & \lambda_4 W & \lambda_4 N_B & \\
\oo & & & & \lambda_5 N_{AB}
},
\]
secondary matrix
\[
\kbordermatrix{
&\lambda_2 S_A & \lambda_3 W & \lambda_4 W & \lambda_3 N_B & \lambda_4 N_B \\
\lambda_2 S_A & 0 & 0 & 0 & U_1^{k+1} \otimes (U_2^{k+1}, L_1) & 0 \\
\lambda_3 W & 1 & 0 & \lambda & 1 \otimes L_1 & 0 \\
\lambda_4 W & 0 & 0 & U_2^{k+1} \otimes U_1^{k+1} & 0 & U_2^{k} \otimes L_1 U_1^k \\
\lambda_3 N_B & 0 & 0 & 0 & U_1^{k+1} \otimes U_2^{k+1} & \lambda \\
\lambda_4 N_B & 0 & 0 & U_2^{k+1} \otimes R_1 U_1^k & 0 & U_2^{k+1} \otimes U_1^{k+1}
}
\]
in the middle summand, and secondary matrix
\[
\kbordermatrix{
& \lambda_5 N_{AB} \\
\lambda_5 N_{AB} & U_1^l U_2^k \otimes U_1^k U_2^l
}
\]
in the lower summand. Simplifying the above bimodule, we get primary matrix
\[
\kbordermatrix{
& \varnothing & A & B & AB \\
\uu & \lambda_{1} S_{\varnothing} & & & \\
\ou & & & \lambda_3 N_B & \\
\uo & & \lambda_4 W & \lambda_4 N_B & \\
\oo & & & & \lambda_5 N_{AB}
}
\]
and secondary matrix
\[
\kbordermatrix{
& \lambda_4 W & \lambda_3 N_B & \lambda_4 N_B \\
\lambda_4 W &  U_2^{k+1} \otimes U_1^{k+1} & 0 & U_2^k \otimes L_1 U_1^k \\
\lambda_3 N_B & 0 & U_1^{k+1} \otimes U_2^{k+1} & \lambda \\
\lambda_4 N_B & U_2^{k+1} \otimes R_1 U_1^k & 0 & U_2^{k+1} \otimes U_1^{k+1}
}
\]
in the middle summand.

Now we change basis on the right as well, by taking a further box tensor product with the DA bimodule over $(\A_{1,1}^{\can},\A_{1,1})$ from Definition~\ref{def:FirstCOBBimodule}. The result has primary matrix
\[
\kbordermatrix{
& \uu & \ou & \uo & \oo \\
\uu & \lambda_{1} S_{\varnothing} \kappa_1 & & & \\
\ou & & & \lambda_3 N_B \kappa_4 & \\
\uo & & \lambda_4 W \kappa_2 & \lambda_4 N_B \kappa_4 \quad \lambda_4 W \kappa_3  & \\
\oo & & & & \lambda_5 N_{AB} \kappa_5
},
\]
secondary matrix
\[
\kbordermatrix{
& \lambda_4 W \kappa_2 & \lambda_3 N_B \kappa_4 & \lambda_4 N_B \kappa_4 & \lambda_4 W \kappa_3\\
\lambda_4 W \kappa_2 & U_2^{k+1} \otimes U_1^{k+1} & 0 & U_2^k \otimes (\lambda, U_1^{k+1}) & 1 \otimes \lambda \\
\lambda_3 N_B \kappa_4 & 0 & U_1^{k+1} \otimes U_2^{k+1} & \lambda & 0 \\
\lambda_4 N_B \kappa_4 & 0 & 0 & 0 & U_2 \\
\lambda_4 W \kappa_3 & 0 & 0 & 0 & 0 \\
}
\]
in the middle summand, and secondary matrix
\[
\kbordermatrix{
& \lambda_5 N_{AB} \kappa_5 \\
\lambda_5 N_{AB} \kappa_5 & U_1^l U_2^k \otimes U_1^k U_2^l
}
\]
in the lower summand. This bimodule agrees with the positive-crossing bimodule from Section~\ref{sec:PositiveCrossingHD} under the grading-preserving identification
\begin{align*}
& \lambda_1 S_{\varnothing} \kappa_1 \leftrightarrow I \\
& \lambda_4 W \kappa_2 \leftrightarrow J \\
& \lambda_3 N_B \kappa_4 \leftrightarrow K \\
& \lambda_4 N_B \kappa_4 \leftrightarrow L \\
& \lambda_4 W \kappa_3 \leftrightarrow M \\
& \lambda_5 N_{AB} \kappa_5 \leftrightarrow N.
\end{align*}

\subsection{Negative crossing}

\begin{proposition}
Ozsv{\'a}th--Szab{\'o}'s DA bimodule $N_{\OSz}$ over $\A_{1,1}^{\can}$ for a negative crossing (in Ozsv{\'a}th--Szab{\'o}'s papers, a positive crossing as remarked above) has primary matrix
\[
\kbordermatrix{ & \varnothing & A & B & AB \\
\varnothing & S'_{\varnothing} & & & \\
A & & S'_A & & \\
B & & W' & N'_B & \\
AB & & & & N'_{AB}
}.
\]
In the top weight space, the secondary matrix is zero and the bimodule is $\F_2$ as a bimodule over itself (generated by $S'_{\varnothing}$). The secondary matrix in the middle weight space is
\[
\kbordermatrix{
& S'_A & W' & N'_B \\
S_A' & 0 & L_1 & 0 \\
W' & 0 & U_2^{k+1} \otimes U_1^{k+1} & U_2^{k+1} \otimes L_1 U_1^k \\
N'_B & R_1 U_1^k \otimes (R_1, U_2^{k+1}) & U_2^k \otimes R_1 U_1^k & \begin{matrix} U_2^{k+1} \otimes U_1^{k+1} \\+ U_1^{k+1} \otimes U_2^{k+1} \end{matrix}
}
\]
and the secondary matrix in the bottom weight space is
\[
\kbordermatrix{
& N'_{AB} \\
N'_{AB} & U_1^l U_2^k \otimes U_1^k U_2^l
}.
\]
\end{proposition}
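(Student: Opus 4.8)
The plan is to establish the claimed primary and secondary matrices for $N_{\OSz}$ by the same route used for $P_{\OSz}$: identify the generators with partial Kauffman states of type $N$, $S$, $W$ at the negative crossing (there are no type $E$ generators, matching the shape of the primary matrix), read off their $q$- and $h$-degrees from the type-to-degree dictionary recorded in Figure~\ref{fig:AlexMaslov} (so $\deg^q(S') = -1$, $\deg^h(S') = 0$; $\deg^q(W') = 0$, $\deg^h(W') = 0$; $\deg^q(N') = 1$, $\deg^h(N') = -1$), and verify that each secondary-matrix entry is compatible with these gradings. Since all three summands are given separately, I would handle the top weight space first (where the secondary matrix is zero and the bimodule is $\F_2$ over itself on $S'_{\varnothing}$, visibly well-defined), then the bottom weight space (the single generator $N'_{AB}$ with the polynomial-ring secondary matrix $U_1^l U_2^k \otimes U_1^k U_2^l$, which as noted earlier corresponds to an ordinary bimodule over $\F_2[U_1,U_2]$ with no higher terms), leaving only the middle weight space to require real checking.

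For the middle summand I would apply Procedure~\ref{proc:DAWellDef}: compute the square of the stated secondary matrix, compute the multiplication matrix (tracking where an algebra input $U_i^{k+1}$, $R_1 U_1^k$, $L_1 U_1^k$, etc.\ factors through a product of two basis elements of $\A_{1,1}^{\can}$), note that the differential matrices vanish because $\A_{1,1}^{\can}$ has zero differential, and confirm the sum is zero. The same bookkeeping that makes the analogous proposition for $P_{\OSz}$ true should apply here after the reflection exchanging $N \leftrightarrow N'$, $W \leftrightarrow W'$, $S \leftrightarrow S'$, $R_1 \leftrightarrow L_1$, and transposing along the anti-diagonal (this is essentially the symmetry recorded in Remark~\ref{rem:UsingSymmetry}, here realized at the level of the whole bimodule rather than a single morphism); indeed the secondary matrices for $P_{\OSz}$ and $N_{\OSz}$ in the middle summand are exchanged by precisely this operation. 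I would also cross-check against holomorphic-disk counts in the negative-crossing Heegaard diagram on the right of Figure~\ref{fig:OSzDiagramRightside}, using the generators and domains drawn in the figures referenced for the positive crossing with the crossing reversed, as a sanity check that the combinatorial data matches Ozsv\'ath--Szab\'o's construction in \cite{OSzNew,OSzHolo}.

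The main obstacle I expect is purely organizational rather than conceptual: the middle-summand secondary matrix has entries that are infinite sums over the exponent $k$ (and, in the $N'_B$-to-$N'_B$ slot, a genuine sum of two terms $U_2^{k+1} \otimes U_1^{k+1} + U_1^{k+1} \otimes U_2^{k+1}$), so when squaring and forming the multiplication matrix one must carefully reconcile the reindexed infinite families, making sure that every term produced by concatenating input sequences in the squared matrix is cancelled by a corresponding term in the multiplication matrix coming from a factorization like $U_1^{k+1} = U_1 \cdot U_1^k$ or from the relations $R_1 L_1 = U_2^{?}$ and $L_1 R_1 = U_1^{?}$ in $\A_{1,1}^{\can}$ (read off from Figure~\ref{fig:A11CanQuiver}). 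This is the step where an uncaught sign or exponent shift would break well-definedness, so it deserves the bulk of the attention; everything else --- the top and bottom summands, the grading count, and the identification with the picture in Figure~\ref{fig:OSzDiagramRightside} --- is routine.
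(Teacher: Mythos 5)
The paper offers no proof of this proposition: it is a recollection of Ozsv\'ath--Szab\'o's definition from \cite{OSzNew} translated into the present notation (with positive and negative crossings interchanged), together with the remark that the bimodule arises from disk counts in the negative-crossing diagram of Figure~\ref{fig:OSzDiagramRightside} by \cite{OSzHolo}. Your plan --- an independent verification of the DA relations via Procedure~\ref{proc:DAWellDef} plus a comparison with the Kauffman-state generators and the Heegaard diagram --- is therefore a genuinely different and more self-contained route, and it would work; just be aware that the substance of the proposition is the \emph{identification} with Ozsv\'ath--Szab\'o's bimodule, so the comparison with \cite{OSzNew,OSzHolo} is the essential step and the relation-check only establishes internal consistency.

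Two concrete slips need fixing. First, the degrees you quote are those of the \emph{positive}-crossing types: for the negative crossing the paper (bottom row of Figure~\ref{fig:AlexMaslov}) has $\deg^q(N') = -1$, $\deg^h(N') = 1$ and $\deg^q(S') = 1$, $\deg^h(S') = 0$, not the values you wrote. With your values the grading check on, e.g., the entry $L_1$ in row $S'_A$, column $W'$ fails ($-1 + (-1) \neq 0$ in $\deg^q$), whereas with the correct values it passes. Second, the symmetry you invoke is not the one that relates the two secondary matrices: the anti-diagonal transpose preserving the ordering $(S,W,N)$ sends the $(N_B,N_B)$ entry of $P_{\OSz}$ to position $(1,1)$, but the $(S'_A,S'_A)$ entry of $N_{\OSz}$ is $0$. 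What actually carries one matrix to the other is the \emph{ordinary} transpose combined with $R_1 \leftrightarrow L_1$ and reversal of the input sequences --- i.e., the bimodule-dual operation of Section~\ref{sec:DualsAD} followed by the anti-automorphism of $\A_{1,1}^{\can}$ reversing the quiver arrows, consistent with the negative crossing being the mirror (adjoint) of the positive one. If you rely on a symmetry to import the well-definedness of $P_{\OSz}$, it must be this one; as stated, your reflection would produce the wrong matrix.
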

The degrees of the generators depend only on their type $\in \{N',E',S',W'\}$ and are given by
\begin{itemize}
\item $\deg^q(N') = -1$, $\deg^h(N') = 1$,
\item $\deg^q(E') = 0$, $\deg^h(E') = 0$,
\item $\deg^q(S') = 1$, $\deg^h(S') = 0$,
\item $\deg^q(W') = 0$, $\deg^h(W') = 0$
\end{itemize}
as described in the bottom row of Figure~\ref{fig:AlexMaslov} (again, there are no generators of type $E'$.) By \cite{OSzHolo}, the bimodule $N_{\OSz}$ can be obtained by counting holomorphic disks in the negative-crossing Heegaard diagram of Figure~\ref{fig:OSzDiagramRightside}.

\begin{figure}
\includegraphics[scale=0.6]{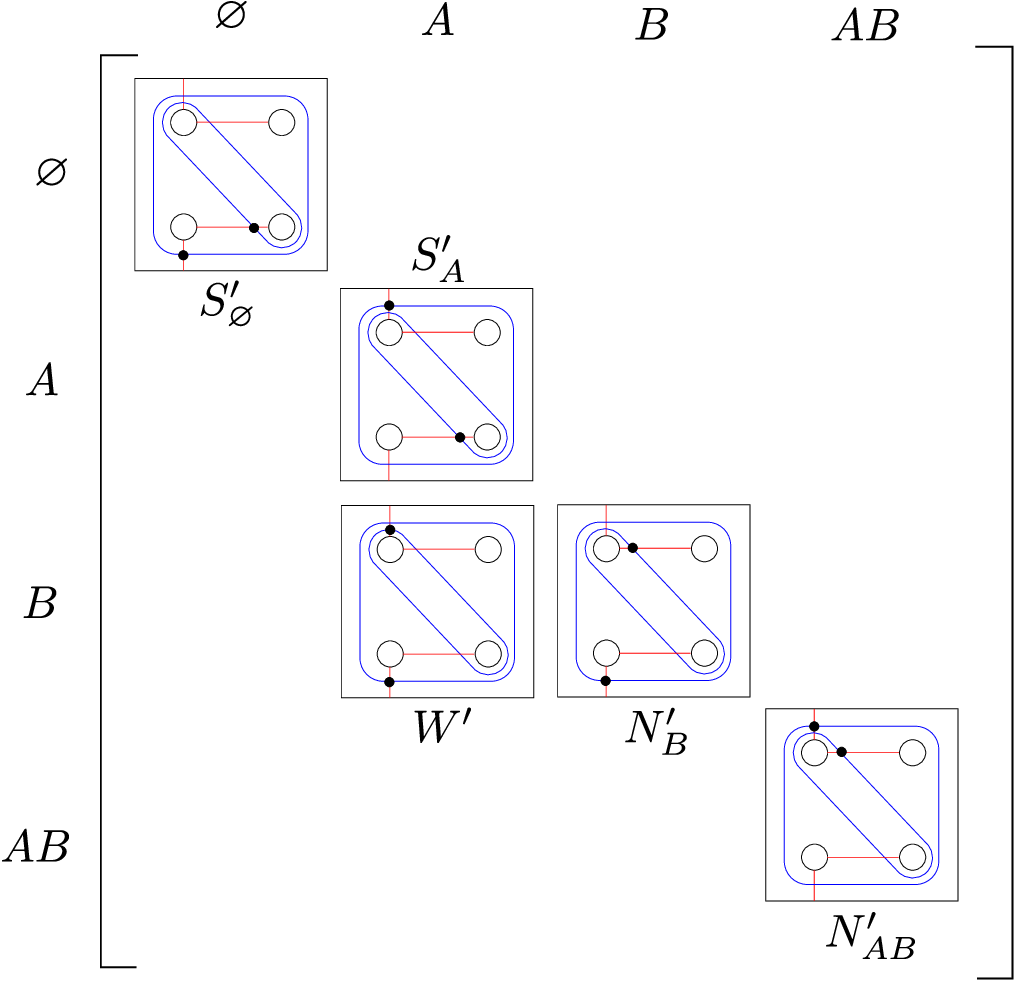}
\caption{Generators of the Ozsv{\'a}th--Szab{\'o} negative crossing bimodule $N_{\OSz}$ in terms of intersection points in the negative-crossing diagram of Figure~\ref{fig:OSzDiagramRightside}.}
\label{fig:OSzPositiveGens}
\end{figure}

\begin{figure}
\includegraphics[scale=0.6]{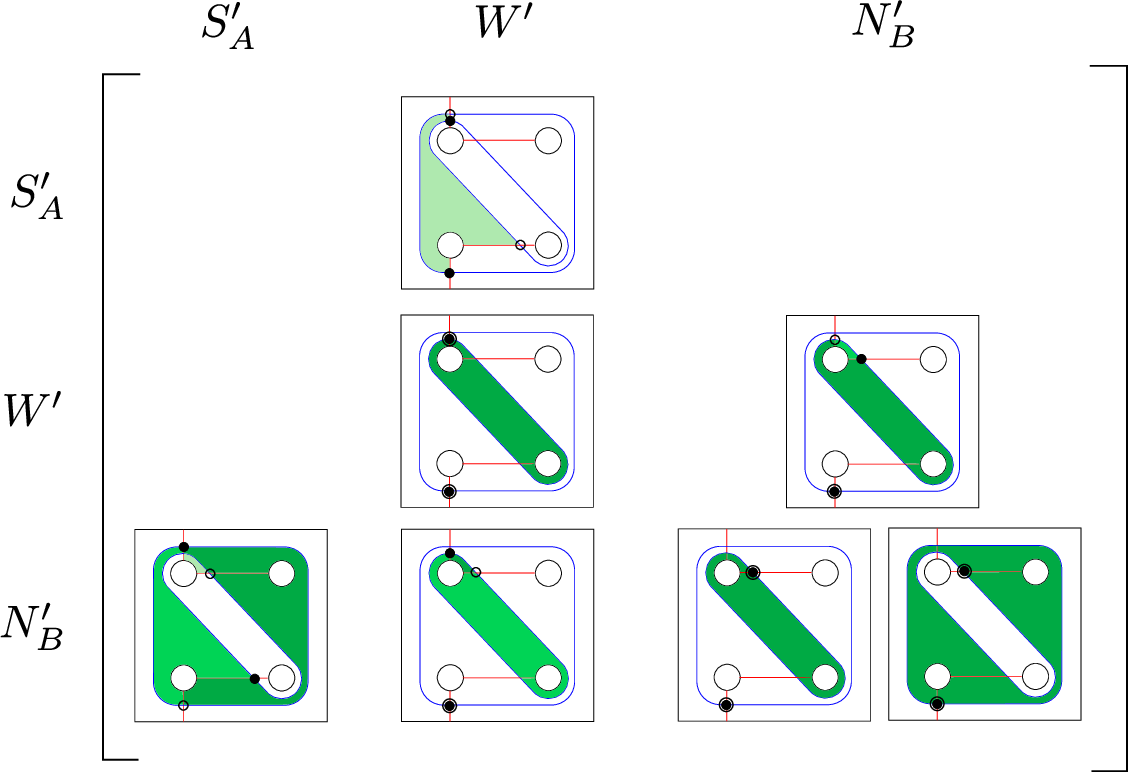}
\caption{Domains giving rise to secondary matrix entries for the Ozsv{\'a}th--Szab{\'o} negative crossing bimodule $N_{\OSz}$ (middle summand).}
\label{fig:OSzPositiveDomains}
\end{figure}

To relate $N_{\OSz}$ with our bimodule $N$, we first change basis on the right of $N_{\OSz}$ to get a DA bimodule over $(\A_{1,1}^{\can},\A_{1,1})$. In other words, we take the box tensor product of $N_{\OSz}$ with the DA bimodule over $(\A_{1,1}^{\can},\A_{1,1})$ from Definition~\ref{def:FirstCOBBimodule}. The result has primary matrix
\[
\kbordermatrix{
& \uu & \ou & \uo & \oo \\
\varnothing & S'_{\varnothing} \kappa_1 & & & \\
A & & S'_A \kappa_2 & S'_A \kappa_3 & \\
B & & W' \kappa_2 & W' \kappa_3 \quad N'_B \kappa_4 & \\
AB & & & & N'_{AB} \kappa_5
},
\]
secondary matrix
\[
\kbordermatrix{
&S'_A \kappa_2 & W' \kappa_2 & S'_A \kappa_3 & W' \kappa_3 & N'_B \kappa_4 \\
S'_A \kappa_2 & 0 & L_1 & 1 \otimes \lambda & 0 & 0 \\
W' \kappa_2 & 0 & U_2^{k+1} \otimes U_1^{k+1} & 0 & 1 \otimes \lambda & U_2^{k+1} \otimes (\lambda, U_1^{k+1}) \\
S'_A \kappa_3 & 0 & 0 & 0 & L_1 & 0 \\
W' \kappa_3 & 0 & 0 & 0 & 0 & 0 \\
N'_B \kappa_4 & 0 & 0 & R_1 U_1^k \otimes U_2^{k+1} & 1 & U_1^{k+1} \otimes U_2^{k+1}
}
\]
in the middle summand, and secondary matrix
\[
\kbordermatrix{
& N'_{AB} \kappa_5 \\
N'_{AB} \kappa_5 & U_1^l U_2^k \otimes U_1^k U_2^l
}
\]
in the lower summand. Simplifying the above bimodule, we get primary matrix
\[
\kbordermatrix{
& \uu & \ou & \uo & \oo \\
\varnothing & S'_{\varnothing} \kappa_1 & & & \\
A & & S'_A \kappa_2 & S'_A \kappa_3 & \\
B & & W' \kappa_2 & & \\
AB & & & & N'_{AB} \kappa_5
},
\]
and secondary matrix
\[
\kbordermatrix{
&S'_A \kappa_2 & W' \kappa_2 & S'_A \kappa_3  \\
S'_A \kappa_2 & 0 & L_1 & 1 \otimes \lambda \\
W' \kappa_2 & 0 & U_2^{k+1} \otimes U_1^{k+1} & R_1 U_1^k \otimes (U_2^{k+1}, \lambda) \\
S'_A \kappa_3 & 0 & 0 & U_1^{k+1} \otimes U_2^{k+1}
}
\]
in the middle summand.

Now we change basis on the left as well, by taking a further box tensor product with the DA bimodule over $(\A_{1,1},\A_{1,1}^{\can})$ from Definition~\ref{def:SecondCOBBimodule}. The result has primary matrix
\[
\kbordermatrix{
& \uu & \ou & \uo & \oo \\
\uu & \lambda_{1} S'_{\varnothing} \kappa_1 & & & \\
\ou & & \lambda_2 S'_A \kappa_2 \quad \lambda_3 W' \kappa_2 & \lambda_2 S'_A \kappa_3 & \\
\uo & & \lambda_4 W' \kappa_2 & & \\
\oo & & & & \lambda_5 N'_{AB} \kappa_5
},
\]
secondary matrix
\[
\kbordermatrix{
& \lambda_2 S'_A \kappa_2 & \lambda_3 W' \kappa_2 & \lambda_4 W' \kappa_2 & \lambda_2 S'_A \kappa_3 \\
\lambda_2 S'_A \kappa_2 & 0 & U_1 & 0 & 1 \otimes \lambda \\
\lambda_3 W' \kappa_2 & 0 & 0 & \lambda & U_1^k \otimes (U_2^{k+1},\lambda) \\
\lambda_4 W' \kappa_2 & 0 & 0 & U_2^{k+1} \otimes U_1^{k+1} & 0 \\
\lambda_2 S'_A \kappa_3 & 0 & 0 & 0 & U_1^{k+1} \otimes U_2^{k+1} \\
}
\]
in the middle summand, and secondary matrix
\[
\kbordermatrix{
& \lambda_5 N'_{AB} \kappa_5 \\
\lambda_5 N'_{AB} \kappa_5 & U_1^l U_2^k \otimes U_1^k U_2^l
}
\]
in the lower summand. This bimodule agrees with the negative-crossing bimodule from Section~\ref{sec:PositiveCrossingHD} under the grading-preserving identification
\begin{align*}
& \lambda_1 S'_{\varnothing} \kappa_1 \leftrightarrow I' \\
& \lambda_2 S'_A \kappa_2 \leftrightarrow J' \\
& \lambda_3 W' \kappa_2 \leftrightarrow K' \\
& \lambda_4 W' \kappa_2 \leftrightarrow L' \\
& \lambda_2 S'_A \kappa_3 \leftrightarrow M' \\
& \lambda_5 N'_{AB} \kappa_5 \leftrightarrow N'.
\end{align*}

\appendix

\section{Representations of \texorpdfstring{$\U_q(\gl(1|1))$}{Uq(gl(1|1))}}\label{sec:Uqgl11Review}

\subsection{Definition and fundamental representations}

All Lie algebras and superalgebras are assumed to have ground field $\C$ (for the ordinary versions) or $\C_q \coloneqq \C(q)$ (for the quantum versions).

\begin{definition}
The Hopf superalgebra $U_q(\gl(1|1))$ is generated as a superalgebra over $\C_q$ by two even generators $q^{H_1}, q^{H_2}$ (with inverses $q^{-H_1}, q^{-H_2}$) and two odd generators $E,F$ with relations
\begin{itemize}
\item $q^{H_1}E = qEq^{H_1}$,
\item $q^{H_2}E = q^{-1}Eq^{H_2}$,
\item $q^{H_1}F = q^{-1}Fq^{H_1}$,
\item $q^{H_2}F = qFq^{H_2}$,
\item $q^{H_1}q^{H_2} = q^{H_2}q^{H_1}$,
\item $E^2 = F^2 = 0$,
\item $EF + FE = \frac{q^{H_1+H_2} - q^{-H_1-H_2}}{q - q^{-1}}$.
\end{itemize}
The comultiplication is given by 
\begin{itemize}
\item $\Delta(E) = E \otimes q^{-H_1-H_2} + 1 \otimes E$, 
\item $\Delta(F) = F \otimes 1 + q^{H_1+H_2} \otimes F$, 
\item $\Delta(q^{H_i}) = q^{H_i} \otimes q^{H_i}$.
\end{itemize}
The counit is given by $\varepsilon(E) = \varepsilon(F) = 0$ and $\varepsilon(q^{H_i}) = 1$. The antipode is given by $S(E) = -Eq^{H_1+H_2}$, $S(F) = -q^{-H_1-H_2}F$, and $S(q^{H_i}) = q^{-H_i}$. 

\end{definition}

\begin{remark}
We follow the conventions of Queffelec--Sartori \cite{QS}, which agree (as far as we are aware) with those in \cite{SartoriAlexander,SartoriCat}.

\end{remark}

If we replace $\gl(1|1)$ by its negative half $\gl(1|1)^-$, we get a simpler superalgebra 
\[
U_q(\gl(1|1)^-) \coloneqq \C_q[F]/(F^2)
\]
without a coproduct. The subalgebra of $U_q(\gl(1|1))$ generated by $F$, $q^{H_1}$, and $q^{H_2}$ is a Hopf subalgebra; we can also work with a related $\C_q$-linear super category that we will call $\Udot_q(\gl(1|1)^-)$ (the dot indicates an idempotented form, and we take idempotents corresponding to the two-dimensional lattice of $\gl(1|1)$ weights). Concretely, $\Udot_q(\gl(1|1)^-)$ has objects $\omega = \omega_1 \varepsilon_1 + \omega_2 \varepsilon_2 \in \Z^2$; morphisms are generated by $F_{\omega}\co \omega \to \omega - \alpha$ for $\omega \in \Z^2$ (all of which are odd) modulo the relations $F_{\omega - \alpha} F_{\omega} = 0$. If $W_1$ and $W_2$ are modules over $\Udot_q(\gl(1|1)^-)$, we set 
\[
(W_1 \otimes W_2)_{\omega} = \bigoplus_{\omega' + \omega'' = \omega} (W_1)_{\omega'} \otimes (W_2)_{\omega''}.
\]
The action of $F_{\omega}$ on the summand $(W_1)_{\omega'} \otimes (W_2)_{\omega''}$ of $(W_1 \otimes W_2)_{\omega}$ is defined to be the action of $F_{\omega'} \otimes \id_{\omega''} + q^{\omega'_1 + \omega'_2} \id_{\omega'} \otimes F_{\omega''}$ with the usual super sign rules.

We can identify modules over $\Udot_q(\gl(1|1)^-)$ with modules over the subalgebra of $U_q(\gl(1|1))$ generated by $F, q^{H_1}$, and $q^{H_2}$ admitting a decomposition such that $q^{H_i}$ acts as an integral power of $q$ on each summand. In particular, all modules over $U_q(\gl(1|1))$ considered below give modules over $\Udot_q(\gl(1|1)^-)$.

\begin{definition}
The defining representation $V$ of $U_q(\gl(1|1))$ has one even basis element $\ket{0}$ and one odd basis element $\ket{1}$ as as super vector space over $\C_q$. The action of $E$ has matrix
\[
\kbordermatrix{
& \ket{0} & \ket{1} \\
\ket{0} & 0 & 1 \\
\ket{1} & 0 & 0 \\
},
\]
the action of $F$ has matrix
\[
\kbordermatrix{
& \ket{0} & \ket{1} \\
\ket{0} & 0 & 0 \\
\ket{1} & 1 & 0 \\
},
\]
the action of $q^{H_1}$ has matrix
\[
\kbordermatrix{
& \ket{0} & \ket{1} \\
\ket{0} & q & 0 \\
\ket{1} & 0 & 1 \\
},
\]
and the action of $q^{H_2}$ has matrix
\[
\kbordermatrix{
& \ket{0} & \ket{1} \\
\ket{0} & 1 & 0 \\
\ket{1} & 0 & q \\
}
\]
\end{definition}

\begin{definition}
The category $\Rep(U_q(\gl(1|1)))$ of finite-dimensional representations of $U_q(\gl(1|1))$ is ribbon (see e.g. \cite{SartoriAlexander}) and thus braided, so the constructions of \cite{BZ} give us braided symmetric and exterior algebras on objects of $\Rep(U_q(\gl(1|1)))$. In particular, if $W$ is an object of $\Rep(U_q(\gl(1|1)))$ and $K \geq 0$, then we have another object $\wedge_q^K W$ of $\Rep(U_q(\gl(1|1)))$.

\end{definition}

\begin{example}[(3.15) of \cite{QS}]
For the vector representation $V$ of $U_q(\gl(1|1))$, the super quantum exterior algebra $\wedge^{*}_q V$ is the quotient of the tensor (super)algebra $T^*(V)$ by the ideal generated by the elements $\ket{0} \otimes \ket{0}$ and $\ket{0} \otimes \ket{1} + q^{-1} \ket{1} \otimes \ket{0}$ of $T^2 (V) = V \otimes V$. 

\end{example}

We can take $\ket{0} \wedge \ket{1}^{K-1}$ and $\ket{1}^K$ as a basis for $\wedge_q^K V$.

\begin{proposition}
The action of $E$ on $\wedge^K_q V$ has matrix
\[
\kbordermatrix{
& \ket{0} \wedge \ket{1}^{K-1} &\ket{1}^K \\
\ket{0} \wedge \ket{1}^{K-1} & 0 & [K]_q \\
\ket{1}^K & 0 & 0
}
\]
where $[K]_q$ is the quantum integer $\frac{q^K - q^{-K}}{q-q^{-1}}$. The action of $F$ on $\wedge^K_q V$ has matrix
\[
\kbordermatrix{
& \ket{0} \wedge \ket{1}^{K-1} & \ket{1}^K \\
\ket{0} \wedge \ket{1}^{K-1} & 0 & 0 \\
\ket{1}^K & 1 & 0
}.
\]
The action of $q^{H_1}$ on $\wedge^K_q V$ has matrix
\[
\kbordermatrix{
& \ket{0} \wedge \ket{1}^{k-1} & \ket{1}^k \\
\ket{0} \wedge \ket{1}^{k-1} & q & 0 \\
\ket{1}^k & 0 & 1
}.
\]
The action of $q^{H_2}$ on $\wedge^K_q V$ has matrix
\[
\kbordermatrix{
& \ket{0} \wedge \ket{1}^{K-1} & \ket{1}^K \\
\ket{0} \wedge \ket{1}^{K-1} & q^{K-1} & 0 \\
\ket{1}^K & 0 & q^K
}.
\]
\end{proposition}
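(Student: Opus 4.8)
The plan is to compute the action of each generator $E$, $F$, $q^{H_1}$, $q^{H_2}$ of $U_q(\gl(1|1))$ on $\wedge_q^K V$ directly, using the explicit presentation of $\wedge_q^* V$ as the quotient of $T^*(V)$ by the ideal generated by $\ket{0}\otimes\ket{0}$ and $\ket{0}\otimes\ket{1}+q^{-1}\ket{1}\otimes\ket{0}$, together with the coproduct on $U_q(\gl(1|1))$ and the action on $V$ recalled just above. First I would record the consequences of the relations in $\wedge_q^* V$: namely $\ket{0}\wedge\ket{0}=0$ and $\ket{0}\wedge\ket{1}=-q^{-1}\ket{1}\wedge\ket{0}$, so that a general monomial in $\ket{0},\ket{1}$ reduces, up to a power of $-q^{-1}$, to either $\ket{0}\wedge\ket{1}^{K-1}$ or $\ket{1}^K$ in degree $K$ (note $\ket{1}^2\neq 0$ since $\ket{1}$ is odd in the super sense but the exterior relation only kills $\ket{0}\wedge\ket{0}$; this point should be stated carefully, matching the conventions of \cite{QS}). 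This confirms that $\{\ket{0}\wedge\ket{1}^{K-1},\ket{1}^K\}$ is a basis, as asserted in the line preceding the statement.

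Next I would compute $q^{H_1}$ and $q^{H_2}$. Since $\Delta(q^{H_i})=q^{H_i}\otimes q^{H_i}$, the operator $q^{H_i}$ acts on a $K$-fold tensor product as the $K$-fold tensor power of its action on $V$, and this descends to $\wedge_q^K V$. From the matrices for $q^{H_1},q^{H_2}$ on $V$ one reads off that $q^{H_1}$ multiplies $\ket{0}\wedge\ket{1}^{K-1}$ by $q$ (one factor of $\ket{0}$, on which $q^{H_1}$ acts by $q$, and $K-1$ factors of $\ket{1}$, on which $q^{H_1}$ acts by $1$) and multiplies $\ket{1}^K$ by $1$; similarly $q^{H_2}$ multiplies $\ket{0}\wedge\ket{1}^{K-1}$ by $q^{K-1}$ and $\ket{1}^K$ by $q^K$. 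These give exactly the claimed matrices. The action of $F$ is almost as easy: $F$ sends $\ket{0}\mapsto\ket{1}$, $\ket{1}\mapsto 0$ on $V$, and using $\Delta(F)=F\otimes 1+q^{H_1+H_2}\otimes F$ iteratively one sees that on $\ket{0}\wedge\ket{1}^{K-1}$ only the term where $F$ hits the single $\ket{0}$ survives, producing $\ket{1}\wedge\ket{1}^{K-1}=\ket{1}^K$ (one must track that the $q^{H_1+H_2}$ factors and super signs all combine to give coefficient $1$, which is the one genuinely bookkeeping-heavy sub-step), while $F\cdot\ket{1}^K=0$ since $F$ kills every factor. Thus $F$ has the claimed matrix.

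The main obstacle will be the computation of $E$ on $\ket{1}^K$, where the coefficient $[K]_q$ must emerge. Here $\Delta(E)=E\otimes q^{-H_1-H_2}+1\otimes E$, so applying $E$ to $\ket{1}^{\otimes K}$ before passing to the quotient produces a sum of $K$ terms, the $j$-th replacing the $j$-th factor $\ket{1}$ by $E\ket{1}=\ket{0}$ and weighting the factors to its left by $q^{-H_1-H_2}=q^{-1}$ (since $q^{H_1+H_2}$ acts on $\ket{1}$ by $q$), with an appropriate super sign. Each such term is $\pm q^{-(j-1)}\ket{1}^{j-1}\wedge\ket{0}\wedge\ket{1}^{K-j}$, and one then uses the exterior relation $\ket{0}\wedge\ket{1}=-q^{-1}\ket{1}\wedge\ket{0}$ to move the $\ket{0}$ to the front, picking up further powers of $-q^{-1}$. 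The plan is to show that after all sign and $q$-power bookkeeping the $j$-th term equals $q^{-2(j-1)}q^{K-1}\ket{0}\wedge\ket{1}^{K-1}$ (or an analogous expression) so that the sum over $j=1,\dots,K$ telescopes into a geometric series equal to $\frac{q^K-q^{-K}}{q-q^{-1}}=[K]_q$ times $\ket{0}\wedge\ket{1}^{K-1}$; that $E\cdot(\ket{0}\wedge\ket{1}^{K-1})=0$ is immediate since every resulting monomial contains $\ket{0}\wedge\ket{0}=0$. Finally I would remark that one could alternatively deduce the $E$-action, and indeed the whole proposition, by identifying $\wedge_q^K V$ with a known irreducible $U_q(\gl(1|1))$-representation and comparing highest-weight data, but the direct computation above is self-contained and matches the conventions already fixed in the paper.
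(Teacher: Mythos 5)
The paper states this proposition without proof---it is a routine verification in the review appendix, and the remark immediately after it offers the alternative route (identifying $\wedge_q^K V$ with the irreducible $L(\varepsilon_1+(K-1)\varepsilon_2)$ of \cite{SartoriAlexander}) that you mention at the end. So your direct computation from the iterated coproduct and the quotient presentation of $\wedge_q^*V$ is exactly the intended justification; your treatment of $q^{H_1}$, $q^{H_2}$, $F$, and of the vanishing $E\cdot(\ket{0}\wedge\ket{1}^{K-1})=0$ is correct, as is your observation that $\ket{1}^{\wedge K}\neq 0$ because only $\ket{0}\otimes\ket{0}$ and $\ket{0}\otimes\ket{1}+q^{-1}\ket{1}\otimes\ket{0}$ are killed. (You should also say explicitly that the $U_q(\gl(1|1))$-action on $V^{\otimes K}$ preserves the defining ideal, so that computing in the tensor algebra and then reducing is legitimate; this is supplied by the braided exterior algebra construction of \cite{BZ} that the paper cites.)

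The one step where your bookkeeping goes wrong is the iterated coproduct of $E$. From $\Delta(E)=E\otimes q^{-H_1-H_2}+1\otimes E$, the term of $\Delta^{(K-1)}(E)$ with $E$ in position $j$ carries $q^{-H_1-H_2}$ on the positions to the \emph{right} of $j$ and $1$ on the positions to the left---not, as you wrote, weights on the factors to the left. This is not a cosmetic point: applied to $\ket{1}^{\otimes K}$, the correct $j$-th term is $(-1)^{j-1}q^{-(K-j)}\,\ket{1}^{j-1}\wedge\ket{0}\wedge\ket{1}^{K-j}$ (the sign from moving the odd operator $E$ past $j-1$ odd vectors), which after applying $\ket{1}\wedge\ket{0}=-q\,\ket{0}\wedge\ket{1}$ repeatedly becomes $q^{2j-1-K}\,\ket{0}\wedge\ket{1}^{K-1}$, and $\sum_{j=1}^{K}q^{2j-1-K}=[K]_q$ as required. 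Had the weight really been $q^{-(j-1)}$ on the left, it would exactly cancel against the $(-q)^{j-1}$ and the super sign, and the sum would come out to $K$ rather than $[K]_q$. Your final claimed per-term value $q^{K-1}q^{-2(j-1)}$ is the reindexing $j\mapsto K+1-j$ of the correct $q^{2j-1-K}$ and sums to the same $[K]_q$, so your target is right; just fix the intermediate expression so that it actually produces it.
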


\begin{remark}
We have an isomorphism between $\wedge^K_q V$ and the irreducible representation $L(\lambda)$ where $\lambda = \varepsilon_1 + (K-1)\varepsilon_2$; an explicit formula for this latter representation is given in \cite[(3.4)]{SartoriAlexander}. The isomorphism identifies $\ket{0} \wedge \ket{1}^{K-1}$ with $v_0^{\lambda}$ and $\ket{1}^K$ with $[K]_q v_1^{\lambda}$ (this factor of $[K]_q$ is responsible for the slightly different appearance of Sartori's formulas).
\end{remark}

\subsection{Canonical bases}\label{sec:NonstandardBases}

Note that as $\C_q$-vector spaces, we can identify $V^{\otimes n}$ with the exterior algebra on a space formally generated by elements $e_1,\ldots,e_n$. Specifically, for $0 \leq k \leq n$, we identify $e_{i_{n-k}} \wedge \cdots \wedge e_{i_1}$ with the standard basis vector of $V^{\otimes n}$ having $\ket{0}$ in positions $i_1,\ldots,i_{n-k}$ and having $\ket{1}$ in the other positions.

\begin{remark}
Due to changes of convention (apparently related to the difference between $V$ and $V^*$), this identification of $V^{\otimes n}$ with an exterior algebra differs from the ones considered in \cite{ManionDecat, LaudaManion}.
\end{remark}

For $2 \leq i \leq n$, we let $\ell_i = qe_{i-1} + e_i$; let $\ell_1 = e_1$.

\begin{definition}
The canonical basis for $V^{\otimes n}$ is
\[
\{ \ell_{i_{n-k}} \wedge \cdots \wedge \ell_{i_{1}} : 0 \leq k \leq n, 1 \leq i_1 < \cdots  < i_{n-k} \leq n \}.
\]
\end{definition}

\begin{example}
The change-of-basis matrix from the canonical basis to the standard basis for $V \otimes V$ has matrix
\[
\kbordermatrix{
& \ell_2 \wedge \ell_1 & \ell_2 & \ell_1 & 1 \\
\ket{00} & 1 & 0 & 0 & 0 \\
\ket{10} & 0 & 1 & 0 & 0 \\
\ket{01} & 0 & q & 1 & 0 \\
\ket{11} & 0 & 0 & 0 & 1
}.
\]
Its inverse has matrix
\[
\kbordermatrix{
& \ket{00} & \ket{10} & \ket{01} & \ket{11} \\
\ell_2 \wedge \ell_1 & 1 & 0 & 0 & 0 \\
\ell_2 & 0 & 1 & 0 & 0 \\
\ell_1 & 0 & -q & 1 & 0 \\
1 & 0 & 0 & 0 & 1
}.
\]
\end{example}

\subsection{Maps from skew Howe duality}\label{sec:AppendixSkewHoweMaps}

Let $\Udot_q(\gl(2))$ denote the idempotented form of $U_q(\gl(2))$. Objects of $\Udot_q(\gl(2))$ are written as $1_{\lambda}$ for $\lambda \in \Z^2$; morphisms are generated by $1_{\lambda + \alpha} E_{\gl(2)} 1_{\lambda}$ from $1_{\lambda}$ to $1_{\lambda + \alpha}$ and $1_{\lambda - \alpha} F_{\gl(2)} 1_{\lambda}$ from $1_{\lambda}$ to $1_{\lambda - \alpha}$ for $\lambda \in \Z^2$, where $\alpha = (1,-1)$. 

In this section, write $\C_q^{1|1}$ instead of $V$ to avoid confusion with representation $\C_q^2$ of $\Udot(\gl(2))$. We will analyze $\wedge^K_q(\C_q^{1|1} \otimes \C_q^2)$ for $K \geq 1$ as a representation of $\Udot_q(\gl(2))$. We start by reviewing the usual names for basis elements of $\C_q^{1|1} \otimes \C_q^2$. Following \cite{QS}, define:
\begin{itemize}
\item $z_{11} = \ket{0}_{1|1} \otimes \ket{0}_{2}$
\item $z_{21} = \ket{1}_{1|1} \otimes \ket{0}_{2}$
\item $z_{12} = \ket{0}_{1|1} \otimes \ket{1}_{2}$
\item $z_{22} = \ket{1}_{1|1} \otimes \ket{1}_{2}$.
\end{itemize}
The elements $z_{11}$ and $z_{12}$ are even, while $z_{21}$ and $z_{22}$ are odd. The quantum exterior algebra $\wedge^{\bullet}_q(\C_q^{1|1} \otimes \C_q^2)$ can be described by imposing the relations
\begin{itemize}
\item $z_{1i} \wedge_q z_{1i} = 0$ for $i = 1,2$,
\item $z_{1i} \wedge_q z_{2i} = -q^{-1} z_{2i} \wedge_q z_{1i}$ for $i = 1,2$,
\item $z_{11} \wedge_q z_{12} = -q^{-1} z_{12} \wedge_q z_{11}$,
\item $z_{11} \wedge_q z_{22} = -z_{22} \wedge_q z_{11}$,
\item $z_{21} \wedge_q z_{12} = - z_{12} \wedge_q z_{21} + (q^{-1} - q) z_{11} \wedge z_{22}$,
\item $z_{21} \wedge z_{22} = q z_{22} \wedge z_{21}$
\end{itemize}
on the tensor algebra of $\C_q^{1|1} \otimes \C_q^2$ (see \cite[relations (4.2)]{QS}).

If $K_1 + K_2 = K$, a basis for the summand $\wedge^{K_1}_q \C_q^{1|1} \otimes \wedge^{K_2}_q \C_q^{1|1}$ of $\wedge^K_q(\C_q^{1|1} \otimes \C_q^2)$ is given by those basis elements of $\wedge^K_q(\C_q^{1|1} \otimes \C_q^2)$ with $K_1$ instances of $z_{i1}$ and $K_2$ instances of $z_{i2}$. For $K_1, K_2 > 0$ there are four such basis elements, namely
\begin{align*}
&\bigg\{ z_{11} \wedge_q (z_{21})^{\wedge_q (K_1 - 1)} \wedge_q z_{12} \wedge_q (z_{22})^{\wedge_q (K_2 - 1)}, \\
& (z_{21})^{\wedge_q K_1} \wedge_q z_{12} \wedge_q (z_{22})^{\wedge_q (K_2 - 1)}, \\
& z_{11} \wedge_q (z_{21})^{\wedge_q (K_1 - 1)} \wedge_q (z_{22})^{\wedge_q K_2}, \\
& (z_{21})^{\wedge_q K_1} \wedge_q (z_{22})^{\wedge_q K_2} \bigg\}.
\end{align*}

If $K_1$ or $K_2$ equals zero, then there are only two such basis elements (those with nonnegative exponents in the wedge products).  When $K_1, K_2 > 0$, we will write these basis elements as $\omega_i$ for $1 \leq i \leq 4$; when $(K_1,K_2) = (K,0)$ or $(0,K)$, we write the basis elements as $\{\omega_1, \omega_2\}$.

Via its coproduct, $\Udot_q(\gl(2))$ acts on the tensor algebra of $\C_q^{1|1} \otimes \C_q^2$ since it acts on $\C_q^2$. The action descends to an action on $\wedge^K(\C_q^{1|1} \otimes \C_q^2)$. We describe this action explicitly below.

First, if $K = 1$, the morphism $1_{0,1} F_{\gl(2)} 1_{1,0}$ of $\Udot_q(\gl(2))$ gives us the morphism from $\wedge^1_q \C_q^{1|1} \otimes \wedge^0_q \C_q^{1|1}$ to $\wedge^0_q \C_q^{1|1} \otimes \wedge^1_q \C_q^{1|1}$ with matrix
\[
\kbordermatrix{
& z_{11} & z_{21} \\
z_{12} & 1 & 0 \\
z_{22} & 0 & 1
}.
\]
The morphism $1_{1,0} E_{\gl(2)} 1_{0,1}$ of $\Udot_q(\gl(2))$ gives us the morphism from $\wedge^0_q \C_q^{1|1} \otimes \wedge^1_q \C_q^{1|1}$ to $\wedge^1_q \C_q^{1|1} \otimes \wedge^0_q \C_q^{1|1}$ with matrix
\[
\kbordermatrix{
& z_{12} & z_{22} \\
z_{11} & 1 & 0 \\
z_{21} & 0 & 1
}.
\]

Now assume $K \geq 2$. The morphism $1_{K-1,1} F_{\gl(2)} 1_{K,0}$ of $\Udot_q(\gl(2))$ gives us the morphism from $\wedge^K_q \C_q^{1|1} \otimes \wedge^0_q \C_q^{1|1}$ to $\wedge^{K-1}_q \C_q^{1|1} \otimes \wedge^1_q \C_q^{1|1}$ with matrix
\[
\kbordermatrix{
& \omega_1 & \omega_2 \\
\omega_1 & 0 & 0 \\
\omega_2 & (-1)^{K-1} & 0 \\
\omega_3 & q^{-1} [K-1] & 0 \\
\omega_4 & 0 & [K]
}.
\]
For $K_1, K_2 > 0$ and $K_1 + K_2 = K$, the morphism $1_{K_1 - 1, K_2 + 1} F_{\gl(2)} 1_{K_1,K_2}$ of $\Udot_q(\gl(2))$ gives us the morphism from $\wedge^{K_1}_q \C_q^{1|1} \otimes \wedge^{K_2}_q \C_q^{1|1}$ to $\wedge^{K_1 - 1}_q \C_q^{1|1} \otimes \wedge^{K_2 + 1}_q \C_q^{1|1}$ with matrix
\[
\kbordermatrix{
& \omega_1 & \omega_2 & \omega_3 & \omega_4 \\
\omega_1 & -[K_1-1] & 0 & 0 & 0 \\
\omega_2 & 0 & -q[K_1] & (-1)^{K_1 - 1} & 0 \\
\omega_3 & 0 & 0 & q^{-1} [K_1 - 1] & 0 \\
\omega_4 & 0 & 0 & 0 & [K_1]
}.
\]
The morphism $1_{0,K} F_{\gl(2)} 1_{1,K-1}$ of $\Udot_q(\gl(2))$ gives us the morphism from $\wedge^{1}_q \C_q^{1|1} \otimes \wedge^{K-1}_q \C_q^{1|1}$ to $\wedge^{0}_q \C_q^{1|1} \otimes \wedge^{K}_q \C_q^{1|1}$ with matrix
\[
\kbordermatrix{
& \omega_1 & \omega_2 & \omega_3 & \omega_4 \\
\omega_1 & 0 & -q & 1 & 0 \\
\omega_2 & 0 & 0 & 0 & 1
}.
\]
The morphism $1_{K,0} E_{\gl(2)} 1_{K-1,1}$ of $\Udot_q(\gl(2))$ gives us the morphism from $\wedge^{K-1}_q \C_q^{1|1} \otimes \wedge^{1}_q \C_q^{1|1}$ to $\wedge^{K}_q \C_q^{1|1} \otimes \wedge^{0}_q \C_q^{1|1}$ with matrix
\[
\kbordermatrix{
& \omega_1 & \omega_2 & \omega_3 & \omega_4 \\
\omega_1 & 0 & (-1)^{K-1} q^{K-1} & 1 & 0 \\
\omega_2 & 0 & 0 & 0 & 1
}.
\]
For $K_1, K_2 > 0$ and $K_1 + K_2 = K$, the morphism $1_{K_1 + 1, K_2 - 1} E_{\gl(2)} 1_{K_1,K_2}$ of $\Udot_q(\gl(2))$ gives us the morphism from $\wedge^{K_1}_q \C_q^{1|1} \otimes \wedge^{K_2}_q \C_q^{1|1}$ to $\wedge^{K_1 + 1}_q \C_q^{1|1} \otimes \wedge^{K_2 - 1}_q \C_q^{1|1}$ with matrix
\[
\kbordermatrix{
& \omega_1 & \omega_2 & \omega_3 & \omega_4 \\
\omega_1 & -[K_2 - 1] & 0 & 0 & 0 \\
\omega_2 & 0 & -[K_2 - 1] & 0 & 0 \\
\omega_3 & 0 & (-1)^{K_1} q^{1 + K_1 - K_2} & [K_2] & 0 \\
\omega_4 & 0 & 0 & 0 & [K_2]
}.
\]
Finally, the morphism $1_{1,K-1} E_{\gl(2)} 1_{0,K}$ of $\Udot_q(\gl(2))$ gives us the morphism from $\wedge^0_q \C_q^{1|1} \otimes \wedge^{K}_q \C_q^{1|1}$ to $\wedge^{1}_q \C_q^{1|1} \otimes \wedge^{K-1}_q \C_q^{1|1}$ with matrix
\[
\kbordermatrix{
& \omega_1 & \omega_2 \\
\omega_1 & 0 & 0 \\
\omega_2 & -[K-1] & 0 \\
\omega_3 & q^{1-K} & 0 \\
\omega_4 & 0 & [K]
}.
\]

\subsection{Singular and nonsingular crossings}\label{sec:SingularNonsingular}

In this section we restrict to $K = 2$. The morphism $1_{1,1} F_{\gl(2)} 1_{2,0} E_{\gl(2)} 1_{1,1}$ gives us the morphism from $\C_q^{1|1} \otimes \C_q^{1|1}$ with matrix
\[
\kbordermatrix{
& \omega_1 & \omega_2 \\
\ket{00} & 0 & 0 \\
\ket{10} & -1 & 0 \\
\ket{01} & q^{-1} & 0 \\
\ket{11} & 0 & q+q^{-1}
}
\kbordermatrix{
& \ket{00} & \ket{10} & \ket{01} & \ket{11} \\
\omega_1 & 0 & -q & 1 & 0 \\
\omega_2 & 0 & 0 & 0 & 1
}
\]
which equals
\[
\kbordermatrix{
& \ket{00} & \ket{10} & \ket{01} & \ket{11} \\
\ket{00} & 0 & 0 & 0 & 0 \\
\ket{10} & 0 & q & -1 & 0 \\
\ket{01} & 0 & -1 & q^{-1} & 0 \\
\ket{11} & 0 & 0 & 0 & q+q^{-1}
}.
\]

The braiding on $\C_q^{1|1} \otimes \C_q^{1|1}$ is defined to be $q(\id) - X$, with inverse $q^{-1}(\id) - X$. Explicitly, the braiding has matrix
\[
\kbordermatrix{
& \ket{00} & \ket{10} & \ket{01} & \ket{11} \\
\ket{00} & q & 0 & 0 & 0 \\
\ket{10} & 0 & 0 & 1 & 0 \\
\ket{01} & 0 & 1 & q-q^{-1} & 0 \\
\ket{11} & 0 & 0 & 0 & -q^{-1}
}
\]
and its inverse has matrix
\[
\kbordermatrix{
& \ket{00} & \ket{10} & \ket{01} & \ket{11} \\
\ket{00} & q^{-1} & 0 & 0 & 0 \\
\ket{10} & 0 & q^{-1} - q & 1 & 0 \\
\ket{01} & 0 & 1 & 0 & 0 \\
\ket{11} & 0 & 0 & 0 & -q
}.
\]

\begin{remark}
We use the conventions of \cite[Section 4.2]{SartoriCat}; our matrix for the inverse braiding is the same as Sartori's matrix for $\check{H}$, which Sartori identifies with the inverse braiding. In \cite[equation (5.13)]{QS} the braiding and its inverse are interchanged.
\end{remark}

\bibliographystyle{fouralpha}
\bibliography{bib_clean}

\end{document}